\newtheorem{thm}{Theorem}[section]
\newtheorem*{thm*}{Theorem}
\newtheorem{cor}[thm]{Corollary}
\newtheorem*{cor*}{Corollary}
\newtheorem{lem}[thm]{Lemma}
\newtheorem*{lem*}{Lemma}
\newtheorem{prop}[thm]{Proposition}
\newtheorem*{prop*}{Proposition}
\newtheorem{claim}[thm]{Claim}
\theoremstyle{definition}
\newtheorem{defn}[thm]{Definition}
\newtheorem*{defn*}{Definition}
\newtheorem*{notation*}{Notation}
\newtheorem*{fact*}{Facts}
\newtheorem*{question}{Question}
\theoremstyle{definition}
\newtheorem{exa}[thm]{Example}
\newtheorem{rmk}[thm]{Remark}
\newtheorem*{rmk*}{Remark}
\def\moverlay{\mathpalette\mov@rlay}
\def\mov@rlay#1#2{\leavevmode\vtop{%
   \baselineskip\z@skip \lineskiplimit-\maxdimen
   \ialign{\hfil$\m@th#1##$\hfil\cr#2\crcr}}}
\newcommand{\charfusion}[3][\mathord]{
    #1{\ifx#1\mathop\vphantom{#2}\fi
        \mathpalette\mov@rlay{#2\cr#3}
      }
    \ifx#1\mathop\expandafter\displaylimits\fi}
\def \mf {\mathfrak}
\def \pmtb {\begin{pmatrix}}
\def \pmte {\end{pmatrix}}
\def \bsmt {\begin{smallmatrix}}
\def \esmt {\end{smallmatrix}}
\def \bml {\begin{align*}}
\def \eml {\end{align*}}
\newcommand{\wqg}{\mathcal{W}_{\mathbb{Q}}(\Gamma)}
\newcommand{\fpi}[1]{\frac{\pi}{#1}}
\newcommand{\qr}{$\mathbb{Q}$-rank $1$\ }
\newcommand{\qrc}{$\mathbb{Q}$-rank $1$,\ }
\newcommand{\qrd}{$\mathbb{Q}$-rank $1$.\ }
\newcommand{\rr}{\mathbb{R}$-rank$\ 1}
\newcommand{\hb}{\mathcal{HB}}
\newcommand{\h}{\mathcal{H}}
\newcommand{\hbg}{\mathcal{HB}^\Gamma}
\newcommand{\hg}{\mathcal{H}^\Gamma}
\newcommand{\hbl}{\mathcal{HB}^\Lambda}
\newcommand{\hl}{\mathcal{H}^\Lambda}
\newcommand{\mn}{\mathcal{N}}
\newcommand{\mrp}{\mathbb{R}_{>0}}
\newcommand{\mrpp}{\mathbb{R}_{\geq 1}}
\newcommand{\mrnn}{\mathbb{R}_{\geq 0}}
\newcommand{\g}{\gamma}
\newcommand{\G}{\Gamma}
\newcommand{\la}{\lambda}
\newcommand{\La}{\Lambda}
\newcommand{\N}{\{1,\dots,N\}}
\title{Sublinear Rigidity of Lattices in Semisimple Lie Groups}
\author{Ido Grayevsky}
\date{}
\begin{document}

\maketitle

\begin{abstract}
Let $G$ be a real centre-free semisimple Lie group without compact factors. I prove that irreducible lattices in $G$ are rigid under two types of sublinear distortions. The first result is that the class of lattices in groups that do not admit $\rr$ factors is \emph{SBE complete}: if $\La$ is an abstract  finitely generated group that is Sublinearly BiLipschitz Equivalent (SBE) to a lattice $\G\leq G$, then $\La$ can be homomorphically mapped into $G$ with finite kernel and image a lattice in $G$. For such $G$ this generalizes the well known quasi-isometric completeness of lattices. The second result concerns sublinear distortions within $G$ itself, and holds without any restriction on the rank of the factors: if $\La\leq G$ is a discrete subgroup that \emph{sublinearly covers} a lattice $\G\leq G$, then $\La$ is itself a lattice. 
\end{abstract}

\section{Introduction}
The quasi-isometric rigidity and classification of irreducible lattices in semisimple Lie groups was established in the 1990's by the accumulated work of many authors - Pansu~\cite{pansu_1989}, Schwartz~\cite{Schwartz}, Kleiner-Leeb~\cite{KleinerLeeb}, Eskin~\cite{EskinLatticeClassification}, Eskin-Farb~\cite{EskinFarb}, Druţu~\cite{DrutuQI}, to name a few which are closely related to this paper. See~\cite{Farb1997QISummary} for a concise survey of the following result. 

\begin{thm}[Quasi-Isometric Completeness, Theorem I in~\cite{Farb1997QISummary}]\label{thm: QI completeness}
Let $G$ be a real finite-centre semisimple Lie group without compact factors, $\G\leq G$ an irreducible lattice and $\La$ an abstract finitely generated group. If $\G$ and $\La$ are quasi-isometric, then there is a group homomorphism $\Phi: \La\rightarrow G$ with finite kernel whose image $\La':=\Phi(\La)$ is a lattice in $G$. Put differently, there is a lattice $\La'\leq G$ and a finite subgroup $F\leq \La$ such that the sequence 

$$1\rightarrow F\rightarrow \La\rightarrow \La'\rightarrow 1$$

is exact. Moreover, $\La'$ is uniform if and only if $\G$ is uniform. This means that the classes of uniform and non-uniform lattices in $G$ are quasi-isometrically complete.

\end{thm}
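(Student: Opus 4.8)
The plan is to promote the abstract quasi-isometry to a genuine isometric action on a model space and then recognise the acting group as a lattice. Write $X=G/K$ for the symmetric space of non-compact type attached to $G$, so that $\mathrm{Isom}(X)^{\circ}$ is an adjoint semisimple group isogenous to $G$. Since $\Lambda$ is finitely generated and quasi-isometric to $\Gamma$, the Milnor--\v{S}varc lemma transports the left-translation action of $\Lambda$ on itself to a cobounded, uniformly proper \emph{quasi-action} of $\Lambda$ on the space on which $\Gamma$ already acts geometrically: this is $X$ when $\Gamma$ is uniform, and the \emph{neutered space} $\Omega\subseteq X$ --- obtained by deleting a $\Gamma$-invariant family of pairwise disjoint open horoballs, one around each cusp --- when $\Gamma$ is non-uniform, where one uses Lubotzky--Mozes--Raghunathan to know that the word metric on a non-uniform $\Gamma$ is bi-Lipschitz to the induced length metric on $\Omega$. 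The whole argument then reduces to two tasks: \emph{straighten} the quasi-action to an honest isometric action, and identify its image as a lattice in $G$.

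\noindent\textbf{The uniform case.} Everything here rests on quasi-isometric rigidity of $X$: every self-quasi-isometry of $X$ lies at bounded distance from an isometry. For $X$ of higher rank or reducible this is the Kleiner--Leeb asymptotic-cone and building argument, which also forces a quasi-isometry of a product to respect the decomposition into irreducible factors (up to permuting isometric ones); for real and complex hyperbolic space it is the Sullivan--Tukia--Chow--Gromov theory, by which a quasiconformal boundary map equivariant for a cocompact group is conformal, hence M\"obius; and for the quaternionic and Cayley hyperbolic spaces it is Pansu's theorem that quasi-isometries are bounded distance from isometries outright. Granting this, the standard straightening of a uniform quasi-action on a QI-rigid space yields a homomorphism $\Lambda\to\mathrm{Isom}(X)$ with finite kernel and discrete cocompact image --- the rescaling of factors that Kleiner--Leeb permits is coarsely trivial here because $\Lambda$ is finitely generated and acts coboundedly. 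A final bookkeeping step, using that $G$ has finite centre and finite index in $\mathrm{Isom}(X)$, upgrades this to $\Phi\colon\Lambda\to G$ with finite kernel and uniform lattice image.

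\noindent\textbf{The non-uniform case.} Now the quasi-action is on $\Omega$, and the rigidity input --- Schwartz in rank one, Eskin in higher rank (building on Kleiner--Leeb and Lubotzky--Mozes--Raghunathan), with Dru\c{t}u for the $\mathbb{Q}$-rank one higher-rank case --- is that every quasi-isometry of $\Omega$ coarsely permutes the deleted horospheres and extends to a self-quasi-isometry of $X$ at bounded distance from an isometry. Straightening then produces an isometric $\Lambda$-action on $X$ that is cocompact on $\Omega$ and preserves the horoball pattern; Schwartz's analysis of the stabilisers of such a pattern identifies the image as a non-uniform lattice --- in fact one commensurable, up to finite kernel, with a conjugate of $\Gamma$ --- and one descends into $G$ as before. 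The clause ``$\Lambda'$ uniform iff $\Gamma$ uniform'' is built into this dichotomy: the neutered space $\Omega$ is not quasi-isometric to $X$ (their coarse geometries at infinity differ), so the model space, hence the type of the lattice produced, is determined in advance by the type of $\Gamma$.

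\noindent\textbf{Main obstacle.} The serious analytic content --- rigidity of self-quasi-isometries in each geometry --- is imported wholesale, so the work specific to this theorem is assembling those inputs and handling the low-rank exceptional behaviour. I expect the real difficulties to be: (a) the rank-one non-uniform case, where reconstructing a lattice from an isometric action that merely preserves a horoball pattern needs Schwartz's delicate peripheral-structure argument (and ultimately arithmeticity inputs \`a la Margulis--Prasad once $\mathbb{R}$-rank is at least $2$); (b) the reducible case, where one must certify that the straightened action lands in a finite extension of $G$ itself, respecting the irreducibility of the original lattice, rather than only in the isometry group of a product; and (c) the repeated descent from $\mathrm{Isom}(X)$ --- which can be strictly larger than $\mathrm{Ad}(G)$ because of diagram or field automorphisms --- down to $G$ with only a finite kernel, where one invokes that a lattice has finite index in its normaliser and that $\Lambda$ carries no unexpected finite normal subgroups.
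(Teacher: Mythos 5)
Your proposal follows essentially the same route the paper itself sketches in Section~3.2.1 for the quasi-isometry case: transport the left $\La$-action to self-quasi-isometries of the model space ($X$ or the neutered space/compact core, via Lubotzky--Mozes--Raghunathan in the non-uniform higher-rank case), straighten each to an isometry using the geometric rigidity results of Pansu, Sullivan--Tukia, Kleiner--Leeb, Schwartz, Eskin(--Farb) and Dru\c{t}u, obtain a homomorphism $\Phi:\La\to G$ with finite kernel and $\G\subset\mn_D(\Phi(\La))$, and conclude via the bounded-distance lattice criterion (Theorems~\ref{qr: thm: eskin bounded case} and~\ref{qr: thm: Schwartz: finite distance imples lattice}). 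The uniform/non-uniform dichotomy is likewise handled the same way in both accounts, so the proposal is correct and matches the paper's approach.
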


In this work I prove that lattices are metrically rigid under two types of sublinear distortions, which I refer to as \textbf{SBE-rigidity} and \textbf{sublinear rigidity}. 

\subsection{SBE-Rigidity}

The first type is completeness under maps which generalize quasi-isometries, called Sublinear BiLipschitz Equivalence (SBE), brought forward by Cornulier~\cite{SBE_Review} in the past decade or so. A function $u:\mrnn\rightarrow \mrpp$ is \emph{sublinear} if $\lim_{r\rightarrow\infty}\frac{u(r)}{r}=0$. For ${a,b\in \mrnn}$ denote ${a\vee b:\max\{a,b\}}$, and for a pointed metric space ${(X,x_0,d_X)}$ and $x,x_1,x_2\in X$, let  ${|x|_X:=d_X(x,x_0)}$,  ${|x_1-x_2|_X:=d_X(x_1,x_2)}$.

\begin{defn}
Let $(X,d_X,x_0), (Y,d_Y,y_0)$ be pointed metric spaces, $L\in \mrpp$ a constant,  ${u:\mrnn\rightarrow \mathbb{R}_{\geq 1}}$ a sublinear function. A map $f:X\rightarrow Y$ is an \emph{ $(L,u)$-SBE} if the following conditions are satisfied:

\begin{enumerate}
    \item $f(x_0)=y_0$,
    \item $\forall x_1,x_2\in X,$ 
    $$\frac{1}{L}|x_1-x_2|_X-u\big(|x_1|_X\vee|x_2|_X\big)\leq |f(x_1)-f(x_2)|_Y\leq L|x_1-x_2|_X+u\big(|x_1|_X\vee|x_2|_X\big),$$
   
   \item $\forall y\in Y\  \exists x\in X$ such that $|y-f(x)|_Y\leq u(|y|_Y)$.
   \end{enumerate}
   
   A map is an \emph{SBE} if it is an $(L,u)$-SBE for some $L$ and $u$ as above.
\end{defn}

The first main result is SBE-completeness for irreducible lattices in groups without $\mathbb{R}$-rank $1$ factors.

\begin{thm}[SBE-Completeness, Theorem~\ref{sbe: thm: main}]\label{thm: SBE main Intro}
Let $G$ be a real centre-free semisimple Lie group without compact or $\rr$ factors. Let $\G\leq G$ be an irreducible lattice and $\La$ an abstract finitely generated group, both considered as metric spaces with some word metric. Assume there is an $(L,u)$-SBE $f: \Lambda\rightarrow \G$ with $u$ a subadditive sublinear function. Then there is a group homomorphism $\Phi:\Lambda\rightarrow G$ with finite kernel whose image $\La':=\Phi(\La)$ is a lattice in $G$. Moreover, $\La'$ is uniform if and only if $\G$ is uniform. 
\end{thm}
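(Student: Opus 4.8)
The plan is to mimic the proof of quasi-isometric completeness (Theorem~\ref{thm: QI completeness}), upgrading every step to tolerate sublinear errors. First recall the model: writing $\mb{X}=G/K$ for the associated symmetric space of non-compact type, an irreducible lattice $\G\le G$ is quasi-isometric to $\mb{X}$ when it is uniform, and to the neutered space $\mn\ss\mb{X}$ (the complement of a $\G$-invariant family of pairwise disjoint horoballs) when it is non-uniform, by Lubotzky--Mozes--Raghunathan and the neutered-space picture of Schwartz/Eskin. A quasi-isometry is in particular an $(L,u)$-SBE with $u$ a constant $\ge 1$, which is subadditive; and SBEs compose with the sublinear functions combining into a subadditive sublinear function provided the inner one is subadditive (this is the routine SBE calculus, cf.~Cornulier~\cite{SBE_Review}). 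Composing $f:\La\to\G$ with the model quasi-isometry, I obtain an $(L',u')$-SBE $F:\La\to M$ with $M\in\{\mb{X},\mn\}$ and $u'$ subadditive sublinear, together with a quasi-inverse $\bar F:M\to\La$.

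\textbf{A sublinear quasi-action and rigidity of the model.}
Conjugating the left-translation action of $\La$ on itself through $F$ gives maps $\f_\la:=F\circ\la\circ\bar F:M\to M$. These form a \emph{sublinear quasi-action}: each $\f_\la$ is an SBE of $M$ with multiplicative constant uniform in $\la$; $\f_{\la_1}\f_{\la_2}$ is sublinearly close to $\f_{\la_1\la_2}$; and $\f_e$ is sublinearly close to the identity. The delicate point is that left translation by $\la$ displaces the basepoint, so the additive/sublinear error of $\f_\la$ a priori grows with $\la$; subadditivity of $u'$ is exactly what lets one bound $u'$ of a displaced argument by a sum of $u'$-values and keep a single subadditive sublinear error valid for all $\la$ (up to harmless loss). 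Now invoke the SBE-rigidity of the model, which is the geometric content of Theorem~\ref{sbe: thm: main}: for $M=\mb{X}$, every SBE of $\mb{X}$ is at sublinear distance from an isometry (the sublinear analogue of Kleiner--Leeb and Eskin--Farb, valid because $G$ has no $\rr$ factor); for $M=\mn$, every SBE of $\mn$ coarsely preserves the horoball pattern and hence extends to an SBE of $\mb{X}$ sublinearly close to a pattern-preserving isometry (the sublinear upgrade of Schwartz's rigidity in \qr and Eskin's in higher \qrp. Assigning to $\la$ the unique isometry $\rho(\la)$ sublinearly approximating $\f_\la$ gives a well-defined map $\rho:\La\to\mr{Isom}(\mb{X})$ — well-defined and a homomorphism because two isometries of $\mb{X}$ sublinearly close on all of $\mb{X}$ coincide, and the quasi-action relations then pass to exact identities. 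Using that $G$ is the identity component of $\mr{Isom}(\mb{X})$ (here $G$ is centre-free without compact factors) together with the irreducibility of $\G$ and Margulis finiteness as in the quasi-isometric case, one arranges, after absorbing a finite-index ambiguity, a homomorphism $\Phi:\La\to G$.

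\textbf{Finite kernel, discrete image, and covolume.}
If $\Phi(\la)=\mr{id}$ then $\f_\la=F\circ\la\circ\bar F$ is sublinearly close to the identity, which forces $|\la|_\La$ to be bounded (otherwise $\f_\la$ displaces the basepoint at linear rate), so $\ker\Phi$ is finite. The orbit map $\la\mapsto\Phi(\la)\cdot o$ is sublinearly close to $F$ followed by the inclusion $M\hookrightarrow\mb{X}$, hence a coarse (proper) embedding, so $\La':=\Phi(\La)$ acts properly on $\mb{X}$ and is discrete. The surjectivity-up-to-$u'$ of $F$ makes the quasi-action sublinearly cobounded on $M$, so $\La'$ admits a fundamental set contained in a bounded neighbourhood of $M$: when $M=\mb{X}$ this is cocompact, and when $M=\mn$ the complement is a $\La'$-cofinite union of horoballs, of finite volume in either case, so $\La'$ is a lattice. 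Finally, $\La'$ is uniform if and only if $\G$ is, because the dichotomy "$\G$ is SBE to $\mb{X}$'' versus "$\G$ is SBE to a proper neutered space'' is itself an SBE-invariant of $\La$ (the cusps contribute features — e.g.\ behaviour of sublinear divergence, or of the sublinear boundary — that are not removed by an SBE), and $\G$, $\La$, $\La'$ all fall into the same case.

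\textbf{Main obstacle.}
The crux is the sublinear bookkeeping in the quasi-action step. In the quasi-isometric setting the conjugation errors are uniformly bounded constants; here the error in $\f_\la$ grows with $\la$, and one must produce a single subadditive sublinear $u'$ governing all $\f_\la$ simultaneously and show that "sublinearly close to a \emph{unique} isometry'' is preserved under composition and conjugation — this is precisely where the subadditivity hypothesis on $u$ is indispensable and is the technical heart of the argument. A secondary obstacle, in the non-uniform case, is the sublinear version of neutered-space/pattern rigidity: the quantitative statements of Schwartz and Eskin that a quasi-isometry coarsely permutes the horoball family and that the limiting isometry preserves it exactly must be re-established with sublinearly-growing, rather than bounded, error terms.
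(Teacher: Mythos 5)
Your overall architecture (conjugate the left-translation action through the SBE, approximate each self-SBE by an isometry via a sublinear version of flat/pattern rigidity, check the assignment is a homomorphism with finite kernel) matches the paper's, and you correctly identify the need for a \emph{single} sublinear error controlling all the $\f_\la$ simultaneously as a genuine technical obstacle (this is Lemma~\ref{sbe: lem: uniform control on the sublinear constants for fixed x} in the paper). But your final step contains a genuine gap that cannot be patched by bookkeeping: from the surjectivity-up-to-$u'$ of $F$ you conclude that the quasi-action is ``sublinearly cobounded'' and then immediately that $\La'$ ``admits a fundamental set contained in a bounded neighbourhood of $M$,'' hence has finite covolume. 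Sublinear is not bounded. What the construction actually yields is only $\G\cdot x_0\subset\mn_{u'}(\La'\cdot x_0)$ for an \emph{unbounded} sublinear function $u'$: the $\La'$-orbit is guaranteed to come within $u'(|x|)$ of a point $x$, and $u'(|x|)\to\infty$. No fundamental-domain argument of the Schwartz/Eskin type applies, because those arguments require a uniform bound $\G\subset\mn_D(\La')$. Bridging exactly this gap is the content of the paper's second main theorem (sublinear rigidity, Theorem~\ref{thm: main Intro}): in the relevant case it is proved not geometrically but by showing that sublinear distortion cannot change the critical exponent and then invoking Leuzinger's criterion that, for property (T) groups, a discrete subgroup with $\delta(\Delta)=2\Vert\rho\Vert$ is a lattice (Theorems~\ref{T: thm: Leuzinger criterion} and~\ref{thm: main for T}; note that the hypothesis ``no compact or $\mathbb{R}$-rank $1$ factors'' forces $G$ to have property (T), which is why this case suffices). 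Your proposal omits this ingredient entirely, and with it the actual reason $\La'$ has finite covolume.

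Two secondary remarks. First, for the non-uniform case the paper does not route through neutered-space/pattern rigidity à la Schwartz; it follows Druţu's asymptotic-cone argument on the compact core $X_0$, which handles uniform and non-uniform lattices in the same framework and for which the SBE upgrade is a matter of rerunning cone arguments with scaling factors chosen from the putatively non-sublinear bounds. (The paper explicitly warns in Section~\ref{sec: SBE for rank 1 factors} that Schwartz-style boundary arguments resist sublinearization because SBE boundary maps need not be quasi-conformal; this is less of an issue here since there are no rank-one factors, but your appeal to ``the sublinear upgrade of Schwartz's rigidity'' is not something you can take off the shelf.) Second, your argument that uniformity is preserved because it is ``an SBE-invariant of $\La$'' is circular as stated; the paper instead deduces it from the two-sided containments $\G\subset\mn_{u}(\La')$ and $\La'\subset\mn_{u}(\G)$ produced by the construction, again via the sublinear rigidity machinery.
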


A main ingredient in the proof of Theorem~\ref{thm: SBE main Intro} is the geometric rigidity of the corresponding symmetric space, which is of independent interest. It states that every self SBE of a lattice in such a space is sublinearly close to an isometry. This generalizes Kleiner and Leeb's result~\cite{KleinerLeeb} on self quasi-isometries of symmetric spaces, as well as Eskin-Farb~\cite{EskinFarb}~\cite{EskinLatticeClassification}, and Drutu's~\cite{DrutuQI} results on self quasi-isometries of their non-uniform lattices. For the definition of the `compact core' of a lattice see Section~\ref{sec: cusps compact core rational Tits}.

\begin{thm}[Sublinear Geometric Rigidity, Theorem~\ref{sbe: thm: self sbe is close to isometry}]\label{thm: sbe close to isometry Intro}
Let $X$ be a symmetric space of noncompact type  without $\rr$ factors, $\G\leq \mathrm{Isom}(X)$ an irreducible lattice and $X_0\subset X$ the compact core of $\G$ in $X$. For any $(L,u)$-SBE $f: X_0\rightarrow X_0$ there is a sublinear function $v=v(L,u)$ and an isometry $g:X\rightarrow X$ such that $d\big(f(x),g(x)\big)\leq v(|x|)$ for all $x\in X_0$. 

\end{thm}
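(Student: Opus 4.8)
The plan is to reduce the statement about self-SBEs of the compact core $X_0$ to the known quasi-isometric rigidity statements for symmetric spaces and for non-uniform lattices, by leveraging the fact that an SBE restricts, on balls of radius $r$ away from the basepoint, to a genuine $(L, o(r))$-quasi-isometry — and then bootstrapping the local data into a global isometry.

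First I would set up the coarse framework: note that $u$ being sublinear and subadditive means that on the annulus of radius $\sim R$ the SBE $f$ is a $(L, \epsilon(R) R)$-quasi-isometry with $\epsilon(R) \to 0$. The heart of the matter is to upgrade this to an \emph{asymptotic} statement. I would pass to the asymptotic cone (or, in the uniform case where $X_0 = X$, work directly with the Kleiner--Leeb machinery on Euclidean buildings/symmetric spaces): an SBE induces a bi-Lipschitz homeomorphism of asymptotic cones, and since the relevant $G$ has no $\mathbb{R}$-rank $1$ factors, the asymptotic cone is a thick Euclidean building or a product of such with symmetric space factors, which is rigid. This gives that $f$ preserves the maximal flats coarsely and, after composing with an appropriate element of $\mathrm{Isom}(X)$, fixes the Tits boundary. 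In the non-uniform case I would invoke the structure of the compact core: $X_0$ is $X$ with an equivariant family of horoballs (one per cusp of $\G$) removed, and Eskin--Farb / Drutu show self-quasi-isometries of such $X_0$ coarsely permute the boundary horospheres and extend to self-quasi-isometries of $X$; I would need the analogous statement for SBEs, which should follow because the horoball-pattern is ``rigid'' — distances along horospheres grow, so a sublinear error cannot smear one horosphere onto another at large scale.

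The key technical step is then to produce the isometry $g$ and control the error $v(|x|)$. Having arranged (after post-composing with an isometry) that $f$ induces the identity on the Tits boundary $\partial_\infty X$ and coarsely preserves every maximal flat, I would argue that $f$ moves each point $x$ a sublinearly-bounded distance: fix $x \in X_0$ with $|x| = R$; consider several maximal flats through (a bounded neighborhood of) $x$ whose directions span, use that $f$ sends each such flat $F$ to within $u$-distance of a parallel flat $F'$, and that the $F'$ must all pass through a common region since the $F$ do — the intersection of the coarse images pins down $f(x)$ up to an error governed by $u(R)$ and the geometry of flat intersections. This yields $d(f(x), g(x)) \le v(R)$ with $v$ built explicitly from $u$, $L$, and the dimension/rank of $X$. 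In the non-uniform case one extra point is needed: the isometry $g$ obtained must actually preserve $X_0$ (equivalently, permute the cusps compatibly), which follows from its action on the boundary horoball pattern matching that of $f$.

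The main obstacle I expect is making the asymptotic-cone / quasi-isometry arguments uniform in scale so that the error is genuinely a single sublinear function $v$ rather than merely ``$o(|x|)$'' with an inexplicit constant — i.e. tracking how the sublinear input $u$ propagates quantitatively through the rigidity theorems (which are usually stated only coarsely). Concretely, the results of Kleiner--Leeb, Eskin--Farb and Drutu are proved for quasi-isometries with additive constants, and one must re-examine their proofs (Morse-type lemmas for flats, quasiflat rigidity, coarse differentiation / the ``tent'' arguments for cusps) to check that replacing the additive constant $C$ by a slowly growing $u(R)$ only degrades the conclusion by another controlled sublinear amount. Once this quantitative bookkeeping is in place — plausibly packaged as a lemma saying ``rigidity is stable under $o(r)$ perturbations'' — the assembly of $g$ and the estimate $d(f(x), g(x)) \le v(|x|)$ is routine, and subadditivity of $u$ is exactly what is needed to keep the composed error function sublinear.
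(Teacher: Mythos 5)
Your proposal is correct and follows essentially the same route as the paper: pass to asymptotic cones where the SBE induces a biLipschitz map, invoke Kleiner--Leeb rigidity of the Euclidean building structure to get flat rigidity and a boundary map on the Tits building, apply Tits' extension theorem to produce the isometry $g$, handle the non-uniform case via Druţu's treatment of the compact core, and then pin down $d(f(x),g(x))$ by the associated-flat estimates. The obstacle you single out — propagating the sublinear input $u$ quantitatively through the coarse rigidity arguments so that the output error is a single sublinear function of $|x|$ — is precisely what the paper's proof addresses, via a contradiction argument in a cone rescaled by the putative non-sublinear bounds $c(x_n)$.
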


The proof of Theorem~\ref{thm: SBE main Intro} is given in Section~\ref{sec: SBE chapter}. It actually requires a somewhat stronger version of Theorem~\ref{thm: sbe close to isometry Intro}, formulated in Lemma~\ref{sbe: lem: uniform control on the sublinear constants for fixed x}. Notice that Theorem~\ref{thm: SBE main Intro} and Theorem~\ref{thm: sbe close to isometry Intro} both hold for uniform as well as non-uniform lattices. I remark that `generalized quasi-isometries' already appeared in the context of geometric rigidity, as a technical tool in Eskin and Farb's work~\cite{EskinFarb}~\cite{EskinLatticeClassification} on quasi-isometries. Indeed much of their work is carried for maps which are even more general than SBE. It seems however that their approach cannot yield a sublinear bound as in Theorem~\ref{thm: sbe close to isometry Intro}, which is necessary for the proof of Theorem~\ref{thm: SBE main Intro}.

\subsection{Sublinear Rigidity}

The second type of rigidity concerns sublinear distortions within the group $G$ itself. The following simple definition is fundamental to this work.

\begin{defn}
For a function $u:\mrnn\rightarrow \mrp$ and a subset $Y\subset X$, define the \emph{$u$-neighbourhood of $Y$} to be

$$\mn_u(Y):=\{x\in X\mid d(x,Y)\leq u(|x|)\}$$

A subset $Y\subset X$ is said to \emph{sublinearly cover} $Z\subset X$ if $Z\subset\mn_u(Y)$ for some sublinear function $u$.
\end{defn}


The second main result states that a discrete subgroup $\La\leq G$ which sublinearly covers a lattice is itself a lattice. I call this property \emph{sublinear rigidity}. Sublinear rigidity holds without any restriction on the rank of the factors of $G$. In particular, it holds for all groups of $\rr$, including $\mathrm{SL}_2(\mathbb{R})$. For clarity I remark that throughout this paper, by a \emph{\qr lattice} I mean either an arithmetic lattice of \qrc or any non-uniform lattice in a $\rr$ group, whether arithmetic or not. For the precise definitions, see Section~\ref{sec: prelims: generalities on lattices} and Theorem~\ref{qr: prelims: sym spc: thm: Leuzinger compact core}. The notion of irreducibility in the non-standard context of a general (non-lattice) subgroup is explained in Section~\ref{sec: indecomposible horospherical lattice} (see Definition~\ref{qr: defn: indecompsable and irreducibale lattice}). 

\begin{thm}[Sublinear Rigidity]\label{thm: main Intro}\label{thm: main}
Let $G$ be a real finite-centre semisimple Lie group without compact factors. Let $\G\leq G$ be an irreducible lattice, $\La\leq G$ a discrete subgroup that sublinearly covers $\G$. If $\G$ is a \qr lattice, assume further that $\La$ is irreducible, and if $\G$ has $\mathbb{Q}$-rank at least $2$, assume that $G$ has trivial centre. Then $\La$ is a lattice in $G$.
\end{thm}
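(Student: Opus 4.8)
The plan is to split into cases according to the $\mathbb{Q}$-rank of $\G$, since the structure of the cusps — and hence the meaning of ``sublinearly covering'' near infinity — is governed by the $\mathbb{Q}$-rank. In all cases the strategy is the same in spirit: I will show that $\La$ is discrete and that the compact core $X_0$ of $\G$ in the symmetric space $X=G/K$ is sublinearly covered by $\La$-orbits in a way that forces $\mathrm{vol}(X/\La)<\infty$, i.e.\ $\La$ is a lattice. The point is that $\G\ss\mn_u(\La)$ as subsets of $X$ (via the orbit map, any fixed basepoint) means every point of the $\G$-orbit is within a sublinearly growing distance of the $\La$-orbit; since $\G$ is a lattice, its orbit is a net in $X_0$ up to bounded error, so $\La$'s orbit is a ``sublinear net'' in $X_0$. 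The first, easy case is $\G$ uniform (equivalently $\mathbb{Q}$-rank $0$): then $X_0=X$ and $\G$ is a cocompact net, so $\La$ is a net in $X$ with sublinearly-bounded codensity; discreteness of $\La$ then gives a uniform positive lower bound on the covolume of fundamental domains of $\La$, and sublinear codensity forces cocompactness, so $\La$ is a uniform lattice. (One must check $\La$ is still discrete — given — and that a discrete subgroup whose orbit is sublinearly dense in $X$ is actually cocompact: a sublinear net in a space of polynomial-or-exponential growth that is an orbit of a discrete group is a genuine net, because the ``thin'' part cannot escape to infinity.)

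The substantive case is $\G$ non-uniform. Here I would use the compact core $X_0\subset X$ (Theorem~\ref{qr: prelims: sym spc: thm: Leuzinger compact core}): $X_0$ is $\G$-cocompact, and $X\sm X_0$ is a disjoint union of horoball-like cusp neighbourhoods permuted by $\G$. Since $\La$ sublinearly covers $\G$, and $\G$ is cobounded in $X_0$, the $\La$-orbit sublinearly covers $X_0$. The first key step is to promote this to: $\La$ stabilizes (setwise, up to sublinear error, then exactly) the decomposition into compact core and cusps — i.e.\ $\La$ must send cusp neighbourhoods sublinearly close to cusp neighbourhoods. This is where the rank hypotheses enter. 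For $\mathbb{Q}$-rank $1$ the cusps are horoballs and the relevant statement is that elements of $\La$ either preserve a given horoball or move it ``far'', and the sublinear-covering hypothesis together with the assumed irreducibility of $\La$ prevents $\La$ from being, e.g., contained in a single cusp's horospherical subgroup or a reducible configuration; I expect to invoke the $\mathbb{Q}$-rank $1$ machinery from Section~\ref{sec: indecomposible horospherical lattice} (Definition~\ref{qr: defn: indecompsable and irreducibale lattice}) to rule out the degenerate cases. For $\mathbb{Q}$-rank $\geq 2$, with $G$ centre-free, I would use the much more rigid structure of rational Tits buildings / the geometry of $X_0$ near infinity: the pattern of intersecting cusp neighbourhoods encodes the rational Tits building, a sublinear cover must respect its combinatorics, and one extracts that $\La$ normalizes the relevant rational parabolic structure, hence $\La$ is commensurable with $\G$ or at least contained in a lattice.

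Concretely, the main steps in order are: (1) translate ``$\La$ sublinearly covers $\G$'' into ``the $\La$-orbit $\mn_u$-covers $X_0$'', and reduce to showing $X/\La$ has finite volume; (2) handle the uniform case by the sublinear-net-implies-net argument above; (3) in the non-uniform case, show $\La$ permutes the cusps of $X_0$ up to sublinear error and deduce that $\La$ preserves the decomposition, using the rank and irreducibility hypotheses to exclude degenerate subgroups of horospherical/parabolic type; (4) on the compact core, sublinear codensity plus discreteness gives cocompactness of $\La$ on $X_0$; (5) on each cusp, show $\La\cap(\text{parabolic})$ acts with finite covolume on the cusp cross-section — this uses that the induced map on the horospherical nilpotent group is a sublinear cover of a lattice in a nilpotent (hence polynomial-growth) group, where sublinear covering upgrades to finite-index/finite-covolume by a volume-growth argument; (6) assemble: finite covolume on $X_0$ plus finite covolume on each of finitely many cusp types gives $\mathrm{vol}(X/\La)<\infty$. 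The hard part will be step (3)–(5) in the $\mathbb{Q}$-rank $\geq 2$ case: controlling how $\La$ interacts with the rational Tits building purely from a sublinear-covering hypothesis, and in particular ensuring that the ``sublinear slack'' $u$ does not allow $\La$ to smear one cusp across several others; this is precisely why the centre-free hypothesis and the $\mathbb{Q}$-rank $\ge 2$ rigidity (superrigidity-flavoured control on parabolics) are needed, and I expect the bulk of the work to go into making the ``up to sublinear error'' statements exact.
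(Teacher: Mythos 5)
There are genuine gaps here, and the most important one is structural. The paper does not split by $\mathbb{Q}$-rank in the way you propose: it splits into (a) $\G$ has property (T), (b) $\G$ uniform, (c) $\G$ of \qrc and these cases are exhaustive by Lemma~\ref{lem: cases} (if $G$ lacks property (T) it has a rank-one factor, and then Prasad forces any non-uniform irreducible lattice to have \qrp. In particular the entire $\mathbb{Q}$-rank $\geq 2$ case is absorbed into case (a) and handled by a soft growth argument: Leuzinger's criterion (Theorem~\ref{T: thm: Leuzinger criterion}) says a discrete subgroup of a property (T) group is a lattice iff its critical exponent equals $2\Vert\rho\Vert$, and sublinear covering cannot decrease the critical exponent (Corollary~\ref{T: cor: La has same critical exponent as Ga}). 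Your proposed route for $\mathbb{Q}$-rank $\geq 2$ --- controlling how $\La$ interacts with the rational Tits building and the cusp combinatorics --- is essentially the open direction the author flags in the introduction, not the proof; no such argument is carried out in the paper, and you give no mechanism for it.

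For the \qr case your steps (3)--(5) assume away the hard part. The hypothesis is one-sided: $\La$ sublinearly covers $\G$, but $\La$ is not assumed to lie in any neighbourhood of $\G$, so there is no a priori reason that $\La$ ``permutes the cusps,'' preserves $X_0$, or meets any parabolic/horospherical subgroup at all. The paper stresses that producing even a single unipotent element of $\La$ is nontrivial; the entire content of Proposition~\ref{qr: prop: Lambda cocompact horosphreres} is to manufacture, from unboundedness of $d_\gamma=d(\gamma x_0,\La x_0)$, a $\La$-free horoball based at $\wqg$ and then a horosphere on which $\La\cap\mathrm{Stab}_G(\h)$ acts cocompactly. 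Your step (5) takes the existence of such a horospherical lattice for granted. Moreover the conclusion is not reached by assembling volume estimates over core and cusps (that is Schwartz's argument, available only when $\G\subset\mn_D(\La)$ for bounded $D$ and $\La$ quasi-preserves $X_0$); instead the paper feeds the horospherical lattice into the Benoist--Miquel arithmeticity criterion in higher real rank, and uses the Kapovich--Liu conical-limit-point criterion in real rank one. Finally, even in the uniform case, ``a sublinearly dense orbit of a discrete group is a genuine net'' is not automatic: the paper proves it by translating arbitrarily large $\La$-free balls back to the basepoint (using that $\La$-freeness is $\La$-invariant) to produce arbitrarily large $\G$-free balls, contradicting cocompactness of $\G$; your parenthetical does not supply that argument.
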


The proof is split into three exhaustive non-mutually exclusive cases: (a) $\G$ has Kazhdan's property (T) (Theorem~\ref{thm: main for T}), (b) $\G$ is uniform (Theorem~\ref{U: thm: main}), and (c) $\G$ is a \qr lattice (Theorem~\ref{qr: thm: main for qr}). The proofs for each case are of very different nature and level of difficulty, with the main challenge lying in the case of \qr lattices. I stress right away that while the proof of SBE-rigidity (Theorem~\ref{thm: SBE main Intro}) uses the sublinear rigidity property, it only requires the case of groups with property (T). The latter is quite straight-forward and is proved in Section~\ref{sec: T}.

One reason to consider sublinear neighbourhoods is that they arise naturally in the presence of SBE. Indeed the essential difference between a quasi-isometry and an SBE is that `far away in the space', the `additive' error term of an SBE gets larger and larger. One is led to consider metric neighbourhoods that grow unboundedly, yet sublinearly with the distance to some (arbitrary) fixed base point. On the other hand, the hypothesis that $u$ should be sublinear is optimal in the sense that $u$ could not be taken to be an arbitrary linear function. Indeed, the geometric meaning of $\G\subset\mn_u(\La)$ is that for every element $\g\in \G$, the ball $B_G\big(\g,u(|\g|)\big)$ `of sublinear radius about $\g$' must intersect $\La$. Observe that if $v$ were the identity function $v(r)=r$, then by definition $v(|g|)=f\big(d(g,e_G)\big)=d(g,e_G)$. In particular the entire group $G$ lies in the $v$-neighbourhood of the trivial subgroup $\{e_G\}$ which is, after all, not a lattice in $G$. 

For uniform lattices and for lattices with property (T), one can however relax the sublinearity assumption:

\begin{thm}[Theorems~\ref{T: thm: main epsilon} and~\ref{U: thm: main epsilon}]

Let $G$ be a real finite-centre semisimple Lie group without compact factors, $\G\leq G$ an irreducible lattice, $\La\leq G$ a discrete subgroup. Fix  $\varepsilon>0$ and assume $\G\subset\mn_u(\La)$ for the function $u(r)=\varepsilon\cdot r$.

\begin{enumerate}
    \item If $\G$ is uniform and $\varepsilon<1$, then $\La$ is a uniform lattice. 
    
    \item If $\G$ has Kazhdan's property (T) and $G$ has trivial centre, then there is a constant $\varepsilon(G)$ such that if $\varepsilon<\varepsilon(G)$ then $\La$ is a lattice. 
\end{enumerate}
\end{thm}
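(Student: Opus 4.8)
The plan is to handle the two items by different means: the uniform case (1) is a soft bootstrap that in fact works for any sublinear $u$, while the property (T) case (2) is a spectral/Følner argument in which the threshold $\varepsilon(G)$ is essentially a Kazhdan constant of $G$. Throughout I fix a left-invariant proper metric $d$ on $G$ and write $|g|:=d(g,e_G)$; the hypothesis $\G\subseteq\mn_u(\La)$ with $u(r)=\varepsilon r$ says exactly that $d(\g,\La)\le\varepsilon|\g|$ for every $\g\in\G$, and left-invariance of $d$ makes $\La$ uniformly discrete.

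For (1): I would first upgrade the covering of $\G$ to a covering of all of $G$. Since $\G$ is uniform there is $R_0$ with $d(g,\G)\le R_0$ for all $g$; choosing $\g\in\G$ with $d(g,\g)\le R_0$ and using $|\g|\le|g|+R_0$ gives $d(g,\La)\le R_0+\varepsilon|\g|\le\varepsilon|g|+C$ with $C:=(1+\varepsilon)R_0$, i.e. $G\subseteq\mn_w(\La)$ for the affine function $w(r)=\varepsilon r+C$. The second and main step uses that $\La$ is a group: given $g\in G$, pick $\la_1\in\La$ with $d(g,\la_1)\le w(|g|)$; then $|\la_1^{-1}g|=d(\la_1,g)\le w(|g|)$, so applying the covering to $\la_1^{-1}g$ yields $\la_2\in\La$ with $d(g,\la_1\la_2)=d(\la_1^{-1}g,\la_2)\le w(|\la_1^{-1}g|)\le w(w(|g|))$. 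Iterating, $d(g,\la_1\cdots\la_k)\le w^{\circ k}(|g|)=\varepsilon^k|g|+C(1+\varepsilon+\cdots+\varepsilon^{k-1})$, and since $\varepsilon<1$ I let $k\to\infty$ to get $d(g,\La)\le C/(1-\varepsilon)$. Hence $\mn_{C/(1-\varepsilon)}(\La)=G$, so $\La\backslash G$ is compact; a discrete subgroup with compact quotient is a uniform lattice. (For a general sublinear $u$ the same iteration applies with $w=u+\text{const}$, the iterates now converging to the fixed point of $w$.)

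For (2): here $\G$ may be non-uniform (e.g. $\mathrm{SL}_3(\mathbb{Z})\le\mathrm{SL}_3(\mathbb{R})$), so the reduction above is unavailable and I would argue through the quasi-regular representation. Property (T) passes from the lattice $\G$ to $G$. Since $G$ and $\La$ are unimodular, $\La\backslash G$ carries a $G$-invariant measure $m$, and $\La$ is a lattice precisely when $m$ is finite, equivalently when the unitary representation $\pi$ of $G$ on $L^2(\La\backslash G,m)$ has a nonzero invariant vector (the $G$-action being transitive, an invariant $L^2$ function is constant). Assume $\La$ is not a lattice; then $\pi$ has no invariant vectors, so property (T) supplies a Kazhdan pair — $\kappa=\kappa(G)>0$ and a compact $Q\ni e_G$ with $\sup_{q\in Q}\|\pi(q)\xi-\xi\|\ge\kappa\|\xi\|$ for all $\xi$. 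Set $D:=\max_{q\in Q}|q|$. Since $g\mapsto d(g,\La)$ is left-$\La$-invariant it descends to $\La\backslash G$, so $A_R:=\{\La g : d(g,\La)\le R\}$ is well defined with $0<m(A_R)<\infty$; and from $|d(gq,\La)-d(g,\La)|\le d(gq,g)=|q|$ it follows that $\pi(q)\mathbbm{1}_{A_R}-\mathbbm{1}_{A_R}$ is supported in $A_{R+D}\setminus A_{R-D}$ for every $q\in Q$. Testing the Kazhdan inequality on $\mathbbm{1}_{A_R}$, it therefore suffices to produce a single radius $R$ with $m(A_{R+D})-m(A_{R-D})<\kappa^2\,m(A_R)$.

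Producing such an $R$ is the crux, and the only place the covering and the smallness of $\varepsilon$ are used. The idea is that $\mn_u(\La)\supseteq\G$ forces $\La$ to be $\varepsilon|\g|$-dense near each $\g\in\G$, so, $\G$ being a lattice, $\mn_{\varepsilon R}(\La)$ captures a definite proportion of the $R$-ball at every scale; consequently $R\mapsto m(A_R)$ is, at suitable scales, already so close to saturated that its increments over a window of fixed width $D$ are an arbitrarily small fraction of $m(A_R)$ once $\varepsilon$ is small — which is exactly the required inequality, with $\varepsilon(G)$ calibrated against $\kappa(G)$ and $D$. I expect the main obstacle to be precisely this estimate when $\G$ is non-uniform: then $\G$ stays uniformly far from the cuspidal directions of $G$, where the hypothesis controls nothing about $\La$, and bounding the cuspidal contribution to $m(A_{R+D})-m(A_{R-D})$ requires the reduction theory of $\G\backslash G$ (Siegel sets and horospherical subgroups). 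The trivial-centre hypothesis enters in these structural identifications. Once the estimate is in place the chosen $R$ contradicts the Kazhdan inequality, so $\La$ is a lattice.
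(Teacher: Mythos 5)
Your argument for item (1) is correct, and it is a genuinely different route from the paper's. The paper argues by contradiction inside the symmetric space: unbounded $d_\g$ produces arbitrarily large $\La$-free balls which, after translating by elements of $\La$ back to the basepoint, force (via the $\varepsilon$-linear constraint with $\varepsilon<1$) arbitrarily large concentric $\G$-free balls, contradicting cocompactness of $\G$; this yields $\G\subset\mn_D(\La)$ and hence cocompactness of $\La$. Your contraction-iteration ($g\mapsto\la_1^{-1}g$, $w^{\circ k}(r)=\varepsilon^kr+C\sum\varepsilon^j\to C/(1-\varepsilon)$) reaches the stronger conclusion $G=\mn_{C/(1-\varepsilon)}(\La)$ directly and is arguably cleaner; both proofs hinge on the same point, namely that $\La$-invariance lets you relocate the covering condition to where the linear bound bites. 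The only caveat is the bookkeeping between the left-invariant metric on $G$ and the metric on $X=G/K$ used in the paper's definition of $\mn_u$; since $K$ is compact this costs only an additive constant, which your $C$ absorbs.

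Item (2) has a genuine gap, and you have located it yourself: the inequality $m(A_{R+D})-m(A_{R-D})<\kappa^2\,m(A_R)$ for some $R$ is asserted, not proved, and it is the entire content of the argument. The Følner framing is set up correctly (the support computation for $\pi(q)\mathbbm 1_{A_R}-\mathbbm 1_{A_R}$ and the finiteness of $m(A_R)$ are fine), but the claim that the covering hypothesis makes $m(A_R)$ ``nearly saturated'' does not follow from anything you write. The hypothesis $\G\subset\mn_{\varepsilon r}(\La)$ lower-bounds the orbit-counting function of $\La$ (many $\La$-points near each $\g$), whereas $m(A_R)$ is the volume of the image of $B(e,R)$ in $\La\backslash G$; converting a lower bound on the number of $\La$-points into an upper bound on the growth of $m(A_R)$ requires controlling the \emph{multiplicity} of the covering of $B(e,R)$ by $\La$-translates of a fundamental domain uniformly over the annulus $A_{R+D}\setminus A_{R-D}$, which is exactly where the mass is concentrated and where the multiplicity can drop to $1$. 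Quantifying how large the critical exponent of a discrete subgroup must be before the Følner condition in $L^2(\La\backslash G)$ fails is, in substance, Leuzinger's theorem, so your route amounts to reproving it from scratch --- and, as you note, the non-uniform case (the only case not already covered by your item (1)) would additionally require reduction theory for the cuspidal contribution, none of which is carried out.

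The paper avoids all of this by quoting Leuzinger's quantitative criterion: a discrete torsion-free non-lattice $\Delta\leq G$ with $G$ Kazhdan satisfies $\delta(\Delta)\leq 2\Vert\rho\Vert-c^*(G)$, while every lattice has $\delta=2\Vert\rho\Vert$. The only thing to prove is then a purely metric growth-discrepancy estimate (a variant of Cornulier's Proposition 3.6): if $\G\subset\mn_u(\La)$ with $u(r)\preceq_\infty\varepsilon r$ and $\varepsilon<\tfrac12$, then $\delta(\La)\geq(1-4\varepsilon)\,\delta(\G)$, via the closest-point projection $\G\cdot x_0\to\La\cdot x_0$. Taking $\varepsilon(G)=c^*(G)/(8\Vert\rho\Vert)$ finishes the proof. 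If you want to salvage your item (2), the realistic fix is to replace the Følner estimate on $m(A_R)$ by this orbit-counting comparison and then invoke Leuzinger's bound rather than attempting a direct spectral argument on $L^2(\La\backslash G)$.
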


Theorem~\ref{thm: main Intro} generalizes the case of $\G$ lying in a bounded neighbourhood $\mn_D(\La)$ for some $D>0$. That case was proved by Eskin and Schwartz in a slightly modified version (see Section~\ref{sec: qr: bounded case} below), and was used in the proof of quasi-isometric completeness (Theorem~\ref{thm: QI completeness}). The result is in fact much stronger in the bounded case: it asserts that $\La$ must be \emph{commensurable} to $\G$ (unless $G$ is locally isomorphic to $\mathrm{SL}_2(\mathbb{R})$, see \cite{Schwartz}). In the sublinear setting I can only prove a limited commensurability result, which stems from a reduction to the bounded case:
\begin{thm}\label{thm: commensurability and uniform statement intro}
Let $G,\G$ and $\La$ be as in Theorem~\ref{thm: main Intro}. If $\G$ is uniform then so is $\La$. If $\G$ is a \qr lattice then: 
\begin{enumerate}
    \item If $\G\not\subset\mn_D(\La)$ for any $D>0$, then also $\La$ is of \qrd
    
    \item Assume $G$ is not locally isomorphic to $\mathrm{SL}_2(\mathbb{R})$. If $\G\subset\mn_D(\La)$ for some $D>0$ and in addition $\G$ sublinearly covers $\La$, then $\La$ is commensurable to $\G$.
\end{enumerate}
\end{thm}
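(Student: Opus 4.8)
The plan is to deduce Theorem~\ref{thm: commensurability and uniform statement intro} from Theorem~\ref{thm: main Intro} together with the known results in the bounded case (Eskin--Schwartz). First I would establish the dichotomy in the $\mathbb{Q}$-rank $1$ case. By Theorem~\ref{thm: main Intro}, $\La$ is already known to be a lattice in $G$, so the task is to control its type. For claim~(1): suppose $\G\not\subset\mn_D(\La)$ for every $D>0$, i.e.\ the covering is genuinely sublinear and not bounded. Since $\La$ is a lattice, it has a well-defined $\mathbb{Q}$-rank (or, if $G$ is $\mathbb{R}$-rank $1$, it is either uniform or non-uniform). I would argue by contradiction: if $\La$ were uniform (equivalently, $\mathbb{Q}$-rank $0$), then $\La$ would be a net in $G$, hence $\mn_u(\La)=G$ for \emph{bounded} $u$, so in particular $\G\subset\mn_D(\La)$ for some fixed $D$, contradicting the hypothesis. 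Thus $\La$ is non-uniform. If $G$ has $\mathbb{Q}$-rank $\geq 2$ this would already contradict $\G$ being $\mathbb{Q}$-rank $1$ via the geometry of the cusps, so the remaining possibility is that $\La$ is $\mathbb{Q}$-rank $1$, as claimed. The ``uniform $\Rightarrow$ uniform'' statement and its converse packaged at the top of the theorem are handled the same way: a uniform $\G$ is a net in $G$, so $\mn_u(\La)\supset\G$ forces $\La$ to be coarsely dense in $G$, hence (being a lattice) uniform; conversely a uniform $\La$ makes $\mn_u(\La)=G$, so the sublinear-covering hypothesis is automatically the bounded one and the Eskin--Schwartz commensurability theory applies.

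For claim~(2) I assume $G$ is not locally isomorphic to $\mathrm{SL}_2(\mathbb{R})$, that $\G\subset\mn_D(\La)$ for some $D>0$, and in addition $\G$ sublinearly covers $\La$. The strategy is to upgrade the one-sided bounded covering to a two-sided bounded covering, i.e.\ to show that $\La\subset\mn_{D'}(\G)$ for some $D'>0$, after which $\La$ and $\G$ are at finite Hausdorff distance in $G$; mutual bounded covering between two lattices in $G$ (with $G$ not locally $\mathrm{SL}_2(\mathbb{R})$) is precisely the situation handled by Schwartz and Eskin, yielding commensurability. To get the reverse bounded covering, I would use that $\La$, now known to be a lattice of the same type as $\G$ (from claim~(1) together with the current bounded hypothesis, or directly), has a compact core whose cusps are controlled by the Tits building; the hypothesis that $\G$ sublinearly covers $\La$ means every $\la\in\La$ is within $u(|\la|)$ of $\G$. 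The point is that the cusps of a $\mathbb{Q}$-rank $1$ lattice are ``thin'' horoball-like regions along rational parabolics, and a sublinear neighbourhood of $\G$ that contains all of $\La$ must in fact contain $\La$'s cusps inside $\G$'s cusps along matching parabolics; along such a cusp the horospherical coordinate grows linearly in $|\cdot|$ while the transverse directions are bounded, so a sublinear transverse error is forced to be a \emph{bounded} error. This is exactly the kind of cusp-by-cusp analysis carried out in Section~\ref{sec: qr: bounded case} and in the proof of Theorem~\ref{qr: thm: main for qr}, and I would invoke those estimates.

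The main obstacle I anticipate is precisely this last step: converting a sublinear one-sided covering $\La\subset\mn_u(\G)$ into a bounded one $\La\subset\mn_{D'}(\G)$. The subtlety is that a priori $\La$ could ``drift'' within the cusps of $\G$ at a sublinearly growing transverse rate, which would be consistent with $\La$ being a lattice but would not immediately give commensurability. Ruling this out requires knowing that the relevant rational parabolics of $\La$ and $\G$ coincide (not merely that they are coarsely aligned), which should follow from the rigidity of the rational Tits building under sublinear perturbations already developed for the proof of Theorem~\ref{thm: main Intro}; once the parabolics match, the linear growth of the horospherical coordinate against the sublinear bound pins the transverse displacement to a constant. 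I would therefore structure the proof to quote the rational-Tits-building matching from the $\mathbb{Q}$-rank $1$ case and then run the elementary ``linear beats sublinear'' comparison in each cusp. Everything outside the cusps is compact-core territory where sublinear and bounded neighbourhoods agree up to an additive constant, so no difficulty arises there.
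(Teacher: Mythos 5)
Your architecture for the uniform statement and for item (2) matches the paper's: reduce everything to the bounded two-sided situation and quote Eskin--Schwartz. For item (2) you correctly isolate the key step, upgrading $\La\subset\mn_u(\G)$ to $\La\subset\mn_{D'}(\G)$, which is exactly Proposition~\ref{qr: prop: bounded and sublinear implies bunded and bounded}; and your ``linear penetration into horoballs beats the sublinear error, once the rational parabolics are matched'' is the mechanism the paper actually runs (via Lemma~\ref{qr: lem: if Lambda is in sublinear of Gamma, then Lambda preserves wqg rational tits building}). One local slip: in the uniform case, ``$\G$ is a net and $\G\subset\mn_u(\La)$ forces $\La$ to be coarsely dense'' is a non sequitur --- it only gives that $\La$ is \emph{sublinearly} dense. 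The conclusion is still reachable (a non-uniform lattice leaves entire horoballs free, and a point at depth $t$ in such a horoball has norm $t+O(1)$, so sublinear density fails; alternatively cite Theorem~\ref{U: thm: main}, whose proof via Lemma~\ref{U: lem: large Lambda free balls near base point imply large Gamma free balls} shows the covering is automatically bounded), but as written that step is not justified.

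The genuine gap is in item (1). Your argument establishes only that $\La$ is non-uniform (correctly: a uniform $\La$ is a net, so $\G\subset\mn_D(\La)$ would hold). The passage from ``non-uniform'' to ``$\mathbb{Q}$-rank $1$'' is asserted, not proved: the sentence ``if $G$ has $\mathbb{Q}$-rank $\geq 2$ this would already contradict $\G$ being $\mathbb{Q}$-rank $1$'' conflates an invariant of the lattice with an invariant of $G$ --- a fixed higher-rank $G$ can contain non-uniform lattices of different $\mathbb{Q}$-ranks, so there is nothing to contradict at the level of $G$, and no soft ``geometry of the cusps'' argument is available (the paper explicitly lists equality of the $\mathbb{Q}$-ranks of $\G$ and $\La$ in general as an open improvement). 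What the paper actually uses is the full strength of Proposition~\ref{qr: prop: Lambda cocompact horosphreres}: when $\G\not\subset\mn_D(\La)$ for any $D$, one produces a horosphere $\h$ bounding a $\La$-free horoball such that $\big(\La\cap \mathrm{Stab}_G(\h)\big)\cdot x_0$ is a cocompact metric lattice in $\h$, and only then does the equivalence in Theorem~\ref{qr: prelims: sym spc: thm: Leuzinger compact core} (cocompact action on the cusp horospheres $\Leftrightarrow$ disjoint horoballs $\Leftrightarrow$ $\mathbb{Q}$-rank $1$) yield that $\La$ has $\mathbb{Q}$-rank $1$. Your proposal needs to route item (1) through that proposition (equivalently, through the ``moreover'' of Theorem~\ref{qr: thm: main for qr}) rather than through the uniform/non-uniform dichotomy alone.
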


Note that the case where $\G$ and $\La$ each sublinearly covers the other arises naturally in the context of SBE-completeness, see Theorem~\ref{sbe: thm: main}.

\subsection{About the Proof}\label{sec: outline of proof}
The proof of sublinear rigidity for \qr lattices (Theorem~\ref{qr: thm: main for qr}) is the cornerstone of this paper. I introduce a novel method to tackle this problem, different from the earlier works of Schwartz and Eskin. I emphasize that the hypothesis of Theorem~\ref{thm: main Intro} is weaker than previous works not only in the relaxation of the bounded condition $\G\subset\mn_D(\La)$ to $\G\subset\mn_u(\La)$ where $u$ is sublinear, but also in that this relation is not symmetric: the subgroup $\La\leq G$ is not assumed to be contained in any neighbourhood of the lattice $\G$. The sublinear relaxation makes it difficult to use Eskin's ergodic arguments in the higher $\mathbb{R}$-rank case, and the one-sided containment is clearly not enough in order to use Schwartz's arguments for general \qr lattices (see Section~\ref{sec: qr: bounded case} for a presentation of their proofs, along with a detailed comparison to the current work). My proof exploits delicate geometric features of the symmetric space and its lattices, brings forth some of the beautiful geometry of horospheres, and highlights why sublinear distortions in this setting are very rigid. One thing to note is that the argument for sublinear rigidity is carried mostly within the symmetric space itself and not in some boundary object. Further below in this section I give a detailed description of the argument, which is quite involved and mostly  geometric.

The proof for SBE completeness (Theorem~\ref{thm: SBE main Intro}) draws mostly on Drutu's proof for Theorem~\ref{thm: QI completeness}. Apart from the property of sublinear rigidity for groups with property (T) (Theorem~\ref{thm: main for T}), the main new ingredient is a method for gaining control on the bounds that arise in the course of her proof. More concretely, this type of control is a higher-rank analogue of the following simple question: it is well known that every $(L,c)$-quasi-geodesic in a $\rr$ symmetric space lies in the $D=D(L,c)$ neighbourhood of a geodesic. How does the bound $D$ depend on $L$ and $c$? This question, which is completely irrelevant in the context of quasi-isometric rigidity, is critical in the SBE setting. While the proof of Theorem~\ref{thm: SBE main Intro}  involves quite a bit of work, it is in essence a reproduction of Drutu's argument in a way which gains uniform control on these bounds. 

I now describe the arguments for the main results, focusing on sublinear rigidity for \qr lattices.

\paragraph{SBE Completeness.}
The scheme for proving SBE-completeness is parallel to the quasi-isometry case: each $\la\in \La$ naturally gives rise to a self SBE $X_0\rightarrow X_0$ of the compact core of $\G$. Each self SBE is close to an isometry by Theorem~\ref{thm: sbe close to isometry Intro}, allowing to embed $\La$ as a discrete subgroup of isometries in $G$ that sublinearly covers $\G$ (Theorem~\ref{sbe: thm: from SBE to sublinearly close}). One then uses Theorem~\ref{thm: main Intro} (specifically the case of lattices with property (T), i.e.\ Theorem~\ref{thm: main for T}) to conclude that $\La$ is a lattice. The proof for Theorem~\ref{thm: sbe close to isometry Intro} heavily relies on Druţu's argument for quasi-isometries in~\cite{DrutuQI}, mainly using properties of the induced biLipschitz map on the asymptotic cone. Since SBE also induce such biLipschitz maps - indeed that was a main motivation for Cornulier to study SBE~\cite{SBE_Review} - it is possible to generally follow Druţu's argument also in the SBE setting. The proof of Theorem~\ref{thm: SBE main Intro} is preformed in Section~\ref{sec: SBE chapter}, using only Theorem~\ref{thm: main for T} as a black box.

\paragraph{Sublinear Rigidity.}

When $G$ has property (T), sublinear rigidity is established using the criterion that a discrete subgroup is a lattice if and only if it has the same exponential growth rate as a lattice (Leuzinger~\cite{leuzingerCriticalExponent}). It is straightforward to verify that sublinear distortion cannot affect this growth rate (Corollary~\ref{T: cor: La has same critical exponent as Ga}). The proof is given in Section~\ref{sec: T}. The case of \qr lattices, which contains the bulk of original ideas appearing in this paper, relies on the following key proposition which might be of independent interest:

\begin{prop}[Proposition~\ref{qr: prop: Lambda cocompact horosphreres}] \label{prop: Lambda cocompact horosphreres Intro}

In the setting of Theorem~\ref{thm: main Intro}, assume that $\G$ is a \qr lattice which does not lie in any bounded neighbourhood of $\La$. Then there exists a horosphere $\h$ based at the rational Tits building associated to $\G$ such that ${\big(\La\cap \mathrm{Stab}_G(\h)\big)\cdot x_0}$ intersects $\h$ in a cocompact metric lattice. Moreover, the interior of the horoball bounded by $\h$ does not intersect the orbit $\La\cdot x_0$.
\end{prop}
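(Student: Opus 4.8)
The plan is to produce the desired horosphere as a limit of the horospheres through $\La\cdot x_0$ that are ``deepest'' from the point of view of $\G$, exploiting the hypothesis that $\G$ is not contained in any bounded neighbourhood of $\La$. First I would recall the geometry of a \qr lattice's compact core: by Leuzinger's description (Theorem~\ref{qr: prelims: sym spc: thm: Leuzinger compact core}), the thin part of $\G\backslash X$ is a union of finitely many cusp neighbourhoods, each lifting to a $\G$-invariant family of disjoint horoballs based at the rational Tits building of $\G$; the compact core $X_0$ is the complement of the interiors of these horoballs, and $\G$ acts cocompactly on $X_0$. The hypothesis $\G\not\subset\mn_D(\La)$ for every $D$ means that there are elements $\g_n\in\G$ whose distance to $\La\cdot x_0$ tends to infinity. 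Since $\G$ acts cocompactly on $X_0$, translating back by elements of $\G$ we may assume the $\g_n x_0$ stay in a fixed compact set of $X_0$, so the points of $\La\cdot x_0$ recede from a fixed compact set; equivalently, after passing to the subgroups $\La_n:=\la_n^{-1}\La$ for suitable $\la_n\in\La$, the translated orbits $\La_n\cdot x_0$ have a point near $x_0$ but are ``escaping into a single cusp direction'' at scale going to infinity. The point is to argue that the only way a discrete subgroup can sublinearly cover $\G$ while having a subsequence of orbit points drifting to infinity is that the orbit must asymptotically ``hug'' a horosphere: the parabolic directions of the cusps of $\G$ are the only directions along which one can travel sublinearly-far while still returning close to every $\g$ (because the horoball neighbourhoods, being based at the rational building, are exactly the regions $\G$ fails to enter).

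The key steps, in order, would be: (1) Set up the escape sequence: pick $\g_n\in\G$ with $d(\g_n x_0,\La x_0)\to\infty$, use cocompactness of $\G\curvearrowright X_0$ to normalize $\g_n x_0$ into a fixed compact $K\subset X_0$, and record the resulting sequence of discrete subgroups $\La'_n\leq G$ (conjugates of $\La$) together with their ``depth'' $r_n:=d(x_0,\La'_n\cdot x_0)\to\infty$ relative to a basepoint near $K$. (2) Identify the direction of escape: for each $n$ choose $\mu_n\in\La'_n$ realizing (approximately) the depth $r_n$, consider the geodesic segment from $x_0$ to $\mu_n x_0$, and extract a limiting geodesic ray $\rho$; the claim is that $\rho$ converges to a point $\xi$ of the rational Tits building of $\G$. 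Here one uses the sublinear covering hypothesis applied to the lattice elements $\g$ lying ``far out'' along $\rho$: if $\rho$ pointed in a non-parabolic (say regular, or more generally non-rational-cusp) direction, then $\G$ would have elements $\g$ with $|\g|$ large but $d(\g,\La' x_0)$ bounded below by a linear function of $|\g|$ (using the geometry of $X_0$ near $\xi$ together with the Diophantine/rationality structure of the cusps), contradicting sublinearity of $u$. (3) Promote the ray to a horosphere: let $\h$ be the horosphere based at $\xi$ passing through $x_0$ (or the limiting horosphere of the horospheres through $\mu_n x_0$ suitably renormalized). Show (i) $\La'\cap\mathrm{Stab}_G(\h)$ acts on $\h$ with orbit a cocompact metric lattice — this comes from unwinding that the escaping orbit points, projected to $\h$, must be $u$-dense there in order to sublinearly cover the parabolically-distorted copy of a lower-rank lattice sitting on $\h$ (here one invokes the structure of $\mathrm{Stab}_G(\h)$ as an amenable/parabolic group and the fact that $\G\cap\mathrm{Stab}_G(\h)$ is cocompact in the horospherical subgroup); and (ii) the open horoball bounded by $\h$ misses $\La'\cdot x_0$ — because $\h$ was chosen to be the ``outermost'' horosphere meeting the orbit, any deeper orbit point would contradict maximality of the depth $r_n$ in the limit. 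Finally conjugate back from $\La'$ to $\La$.

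The hard part will be step (2): proving that the escape direction is genuinely a \emph{rational} point of the Tits building of $\G$, i.e.\ that a discrete subgroup sublinearly covering $\G$ cannot have its orbit drift off in any direction other than along a cusp of $\G$. This is where the ``beautiful geometry of horospheres'' alluded to in the introduction must be brought in: one needs a quantitative statement that, in the directions complementary to the rational cusps, the orbit $\G\cdot x_0$ is \emph{linearly} dense in $X_0$ (so no sublinear neighbourhood of a set escaping in such a direction can contain it), whereas precisely along the rational cusps $\G$ leaves room — room that $\La$ is then forced to fill. Controlling uniformly (in $n$) the interaction between the depth $r_n$, the shape of the horoball, and the $u$-bound, so that the limit horosphere $\h$ inherits both the cocompactness of the horospherical lattice action and the emptiness of its interior, is the technical crux; I expect it to require the explicit reduction theory of the cusps (Siegel sets, the decomposition of the horospherical subgroup, and the cocompactness of $\G\cap N$ in $N$) combined with a compactness/limiting argument for the sequence $\La'_n$ in the Chabauty topology on closed subgroups of $G$.
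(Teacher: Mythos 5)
Your overall shape (extract an escape direction, show it is rational, promote it to a horosphere that is both $\La$-free and $\La$-cocompact) matches the paper's, but there are two concrete problems. First, your normalization in step (1) is the wrong one. Translating by elements of $\G$ so that $\g_n x_0$ lands in a fixed compact set replaces $\La$ by the conjugates $\La'_n$, and a Chabauty limit of these conjugates is a new group carrying no information about $\La$ itself: proving that the limit group meets a horospherical stabilizer in a lattice does not yield the statement for $\La$. The device the paper uses instead is to translate by $\la_{\g_n}^{-1}\in\La$, which fixes the orbit $\La\cdot x_0$ setwise and carries the $\La$-free ball $B(\g_n x_0,d_{\g_n})$ to a $\La$-free ball of the same radius tangent to $x_0$. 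This is what turns the sublinear bound ``far away'' into uniform bounds near $x_0$, and it is also what makes the limiting object an honest $\La$-free \emph{horoball} (a nested union of the translated balls) rather than merely a drift direction. Your step (2) needs that horoball, not just a ray: the argument that a non-rational direction is impossible runs through Hattori's linear-penetration estimate for geodesics entering a horoball whose interior misses $\La\cdot x_0$, applied to conical limit points of $\G$ in the $\fpi{2}$-neighbourhood of the candidate base point.

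Second, your step (3)(i) — the cocompactness of $\big(\La\cap\mathrm{Stab}_G(\h)\big)\cdot x_0$ in $\h$ — is asserted rather than argued, and it is the crux. The phrase ``the escaping orbit points, projected to $\h$, must be $u$-dense in order to sublinearly cover the distorted lattice on $\h$'' is not a mechanism: the relevant $\G$-points sit on a horosphere of $\G$ \emph{parallel to and far from} $\h$ (at distance comparable to $d_\g$, which is unbounded), and nothing a priori forces their closest $\La$-points to lie on a single common horosphere rather than being scattered at varying depths. The paper needs three separate ingredients to close this: a $\G$-free corridor argument showing $\g x_0$ lies linearly deep inside the $\La$-free horoball tangent to $\la_\g x_0$; a lemma that every almost-maximal $\La$-free horosphere lies in a \emph{uniform} neighbourhood $\mn_{D_\La}(\La\cdot x_0)$ (proved, again, by $\La$-translating a putative far point back to $x_0$); and the fact that distinct points of $\wqg$ are at Tits distance $\pi$, which is what forces all the $\la_{\g'}$ onto one horosphere. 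Finally, even once $\La\cdot x_0\cap\h$ is a cocompact metric lattice, passing to $\big(\La\cap\mathrm{Stab}_G(\h)\big)\cdot x_0$ is a separate pigeonhole step requiring the finiteness of the set of $\La$-free horoballs tangent to a given orbit point; your sketch conflates the two statements.
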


I give here a detailed sketch of the arguments and of the ideas one should have in mind when reading the proof for \qr lattices. What is written here is a good enough account if one wishes to understand the main ideas while avoiding the technical details. The complete proof is given in Section~\ref{sec: qr}, using some notation and observations from Section~\ref{sec: u} where I prove sublinear rigidity for uniform lattices.

\subparagraph{Strategy for \qr Lattices.}

The rationale for the proof stems from a conjecture of Margulis, proven by Hee Oh~\cite{Oh1998DiscreteSG} for a large collection of cases, and recently extended to a complete proof by Benoist and Miquel (\cite{BenoistMiquelArithmeticity}, see Theorem~\ref{qr: thm: Benoist-Miquel arithmeticity} below). Their result roughly states that in Lie groups of $\mathbb{R}$-rank greater than $1$ (also known as \emph{higher rank} Lie groups) a discrete Zariski-dense subgroup is a lattice as soon as it intersects a horospherical subgroup in a lattice. This result could be seen as an algebraic converse to the geometric structure of a \qr lattice, whose orbit in $X$ intersects some parabolic horospheres in a cocompact (metric) lattice (see Section~\ref{sec: qr: compact core and rational Tits Building} for details). Proposition~\ref{prop: Lambda cocompact horosphreres Intro} is the geometric analogue of the horospherical lattice criterion, and it almost completes the proof in the higher $\mathbb{R}$-rank case (some non-trivial translation work is needed and is carried in Section~\ref{sec: qr: geometry to algebra}). Furthermore, Proposition~\ref{prop: Lambda cocompact horosphreres Intro} allows one to deduce that every $\G$-conical limit point is $\La$-conical (Corollary~\ref{qr: cor: every Gamma conical is Lambda conical}). The proof for $\rr$ groups is then a simple use of a criterion of Kapovich-Liu for geometrically finite groups (\cite{KapovichLiuGeometricFI}, see Theorem~\ref{qr: thm: Kapovich-Liu conical limit points}). I now give a detailed description of the proof of Proposition~\ref{prop: Lambda cocompact horosphreres Intro}.

\subparagraph{The ABC of Sublinear Constraints.}

Fix a point $x_0\in X=G/K$, identify $\G$ and $\La$ with $\G\cdot x_0$ and $\La\cdot x_0$ respectively. Denote $d_\g:=d(\g,\La)$. Observe that by definition of $d_\g$ the interior of a ball of the form $B(\g x_0,d_\g)$ does not intersect $\La\cdot x_0$. I call such balls (or general metric sets) \emph{$\La$-free}. Moreover, these balls intersect $\La\cdot x_0$ (only) in the bounding sphere: call such balls (sets) \emph{tangent to $\La$}. The $\La$-free and, respectively, $\G$-free regions in $X$ are the main objects of interest in this work. Since Theorem~\ref{thm: main Intro} is known in the case of bounded $\{d_\g\}_{\g\in\G}$ (see Section~\ref{sec: qr: bounded case}), it makes sense to think about large $\La$-free regions as `problematic'. The state of mind of the proof relies on two easy observations that complete each other. 

\begin{enumerate}
    \item The sublinear constraint implies that  $d_{\g_n}\rightarrow \infty$ forces $|\g_n|\rightarrow \infty$, suggesting that `problematic' $\La$-free regions should appear only `far away' in the space. 
    
    \item On the other hand $\La$ is a group, and being $\La$-free is a $\La$-invariant property. In particular any metric situation that can be described in terms of the $\La$-orbit (e.g.\ $B(\g,d_\g)$ is a $\La$-free ball tangent to $\La$) can be translated back to $x_0$. This means that `problematic' regions could actually be found near $x_0$. 
    
\end{enumerate}

The moral of these observations can be formulated into a general principle that lies in the heart of the argument. The sublinear constraint $d_\g\leq u(|\g|)$ gives rise to many other constraints of `sublinear' nature. Each such constraint actually yields a uniform constraint inside any fixed bounded neighbourhood of $x_0$. Since $\La$ and $\G$ are groups, many of these uniform bounds which are produced `near' $x_0$ turn out to be global bounds that depend only on the group and not on a specific orbit point. Put differently: the trick is to describe metric situations in terms of the $\G$ and $\La$ orbits. One then uses the group invariance in order to move these metric situations around the space to a place where the sublinear constraint can be exploited.

\subparagraph{The Argument.}
The above paragraph should more or less suffice the reader to produce a complete proof for uniform lattices. For non-uniform lattices, denote by $\la_\g$ the $\La$-orbit point closest to $\g\in \G$. For $\h$ a parabolic (cusp) horosphere of $\G$, let $\G_\h:=\{\g\in \G\mid \g x_0\in \h\}$, and note that $\G_\h\cdot x_0\subset \h$ is a metric lattice. The ultimate goal is to show that the set $\{\la_\g x_0\}_{\g\in\G_\h}$ is more or less a metric lattice on some horosphere $\h'$ parallel to $\h$. One proceeds by the following steps. 

\begin{enumerate}
    \item \emph{Finding $\La$-free horoballs} (Section~\ref{sec: Lambda free horoballs}): The arbitrarily large $\La$-free balls $B(\g_n,d_{\g_n})$ are translated to $x_0$. The compactness of the unit tangent space at $x_0$ yields a converging direction which is the base point at infinity of a $\La$-free horoball. Translation by $\La$ yields, at each $\La$-orbit point, a $\La$-free horoball tangent to it.
    
    \item \emph{Controlling angles} (Section~\ref{sec: Gamma points deep in Lambda free horoballs}): For every $\g\in \G$ with $d_\g$ uniformly large enough, one associates a point at infinity $\xi\in X(\infty)$ such that $\xi$ is the base point of a $\La$-free horoball tangent to $\la_\g x_0$. The angle between the geodesics $[\la_\g x_0,\g x_0]$ and $[\la_\g x_0,\xi)$ is shown to be small as $d_\g$ grows large. This is used to show that arbitrarily large $d_\g$ give rise to arbitrarily deep $\G$-orbit points inside $\La$-free horoballs. A key step is Lemma~\ref{U: lem: large Lambda free balls near base point imply large Gamma free balls}, producing a $\G$-free corridor between $\la_\g x_0$ and $\g x_0$.
    
    \item \emph{$\La$-cocompact horospheres} (Section~\ref{sec: cocompact horosphere}): One uses uniform bounds near $x_0$ to prove that \emph{every} $\La$-free horoball that is (almost) tangent to $\La$ must lie in a uniformly bounded neighbourhood of $\La\cdot x_0$. If $d_\g$ is large enough for some $\g$ that lies on a parabolic horosphere $\hg$ of $\G$, then any $\g'\in \G_{\hg}$ also admits large $d_{\g'}$. Since the bounds from the previous steps only depend on $d_{\g'}$, all $\la_{\g'}$ are forced to lie on the same horosphere $\hl$ parallel to $\hg$. One concludes that $\La\cdot x_0$ intersects $\hl$ on the nose in a cocompact metric lattice.
\end{enumerate}

\subsection{Possible Improvements, Related and Future Work}

The proof suggests three natural improvements to the statement of Theorem~\ref{thm: main Intro}. The trivial centre assumption for lattices of higher $\mathbb{Q}$-rank could probably be relaxed to allow finite centre, if the same relaxation is applicable to Leuzinger's work on critical exponents in groups with property (T)~\cite{leuzingerCriticalExponent}. The irreducibility of $\La$, assumed in the case of \qr lattices, may be derived directly from the irreducibility of $\G$. Lastly, in view of the geometric characterization of the $\mathbb{Q}$-rank of a lattice (see Corollary D in \cite{LeuzingerRationalRank}), it is reasonable that the $\mathbb{Q}$-rank of $\La$ must equal that of $\G$.

There are problems that arise naturally from this work which seem to require new ideas:

\begin{question}
Let $G$ be a real finite-centre semisimple Lie group without compact factors that admits a $\rr$ factor. Are the classes of uniform and non-uniform lattices of $G$ SBE-complete? In particular, is this true when $G$ is of $\rr$?
\end{question} 

The restriction in Theorem~\ref{thm: sbe close to isometry Intro} (and consequently in Theorem~\ref{thm: SBE main Intro}) to groups without $\rr$ factors stems from the use of Tits' extension theorem stating that certain maps on the spherical building must arise as boundaries of isometries. In the presence of $\rr$ factors, the building structure degenerates and cannot be of use in this context. See Section~\ref{sec: SBE for rank 1 factors} below for a discussion on other ways to approach geometric rigidity for symmetric spaces with $\rr$ factors. SBE of $\rr$ symmetric spaces is the main focus in Pallier's work~\cite{PallierLargeScale}. He investigated the sublinearly large scale geometry of hyperbolic spaces, and proved that two $\rr$ symmetric spaces that are SBE are homothetic, answering a question of Druţu (see Remarks~$1.16$ and $1.17$ in \cite{SBE_Review}). Pallier and Qing~\cite{PallierQing22} recently showed that the \emph{sublinear Morse boundary} is an SBE invariant. The sublinear Morse boundary itself (see~\cite{QingSubLinMorseBoundary},~\cite{qing2020sublinearly})  is yet another manifestation of the importance of sublinear distortions in the study of metric spaces. 

Another challenge is to find non-trivial examples of the settings in Theorems~\ref{thm: SBE main Intro} and~\ref{thm: main Intro}:

\begin{question}
Let $G$ be a real finite-centre semisimple Lie group without compact factors, $\G\leq G$ a lattice.
\begin{enumerate}
    \item Does there exist a finitely generated group that is SBE to $\G$ but not quasi-isometric to it? Or, at least, not known to be quasi-isometric to a lattice?
    
    \item Does there exist $\La\leq G$ discrete that sublinearly covers $\G$ but which does not boundedly cover it? Or a subgroup which $\varepsilon$-linearly covers $\G$ but which does not sublinearly cover it? 
\end{enumerate}

\end{question}

On the side of the proof, it would be very interesting if the geometric ideas that prove Theorem~\ref{thm: main Intro} for \qr lattices could be applied to any $\mathbb{Q}$-rank. While there are apparent places where the proof uses the unique geometry of \qr lattices, most of the geometric arguments leading to Proposition~\ref{prop: Lambda cocompact horosphreres Intro} seem to be susceptible to the higher $\mathbb{Q}$-rank setting. Such a generalization of the proof would definitely shed more light on the mysterious lattice arising from growth considerations in property (T) groups, and in particular on the question of commensurability of $\La$ and $\G$ in that case. It would also be interesting to see whether one can push the geometric argument forward in order to establish a complete geometric analogue of the Benoist-Miquel-Oh criterion. Namely, can one find a direct geometric proof that $\La$ admits finite co-volume (perhaps similarly to Schwartz's argument in the bounded case, see Section~\ref{sec: qr: bounded case} below).

Lastly, one could possibly relate this work to the work of Fraczyk and Gelander~\cite{GelnderFraczykInjRad}, who proved that a discrete subgroup (of a simple higher rank Lie group) is a lattice if and only if it has bounded injectivity radius. While their result seems very much related to the condition $\G\subset \mn_u(\La)$, the nature of their work does not give explicit bounds on the injectivity radius. Specifically, given $r>0$ one cannot tell directly from their results how `far' one must wander in $X$ in order to find a point with injectivity radius $r$. Perhaps one could use the sublinear results of this work to say something about the relation between $|x|_X$ and $\mathrm{InjRad(x)}$.

\subsection{Organization of Paper}
Section~\ref{sec: prelims} gives the necessary preliminaries. These are mainly geometric and mostly concern the proof of Theorem~\ref{thm: main Intro} in the case of \qr lattices, but they are also helpful for an understanding of some geometric statements in other parts of this work. Section~\ref{sec: SBE chapter} is devoted to the proof of Theorem~\ref{thm: SBE main Intro}. In particular it contains the proof of Theorem~\ref{thm: sbe close to isometry Intro}. The proof of Theorem~\ref{thm: SBE main Intro} uses the sublinear rigidity for lattices with property (T), which is proved in Section~\ref{sec: T}. The rest of Theorem~\ref{thm: main Intro}, namely the cases of uniform lattices and of \qr lattices, appear in Sections~\ref{sec: u} and~\ref{sec: qr}, respectively.

\subsection{Acknowledgments}
This paper is based on my DPhil thesis, supervised by Cornelia Druţu. I thank her for suggesting the question of SBE-completeness and for guiding me in my first steps in the theory of Lie groups.  I thank Uri Bader, Tsachik Gelander and the Midrasha on groups at the Weizmann institute, where I learned the basics of symmetric spaces. I thank my thesis examiners Emmanuel Breuillard and Yves Cornulier for their careful inspection and numerous remarks. I thank Elon Lindenstrauss for telling me about Leuzinger's result~\cite{leuzingerCriticalExponent} on property (T), and Or Landesberg, Omri Solan, Elyasheev Leibtag, Itamar Vigdorovich and Tal Cohen for many discussions on different aspects of this paper. 
Finally I thank Gabriel Pallier for explaining his examples in~\cite{PallierQuasiconformality} of some unusual sublinearly quasi-conformal maps on the boundaries of $\rr$ symmetric spaces, and for his interest in this work.

\section{Preliminaries}\label{sec: prelims}

For standard definitions and facts about fundamental domains, see Chapter $5.6.4$ in \cite{DrutuKapovichGGT}. The facts about fundamental domains for \qr lattices appear in Raghunathan's book \cite{raghunathan1972discrete} and in Prasad's work on rigidity of \qr lattices~\cite{PrasadQRank1Rigidity}. In notations and generalities I follow: Borel's book on algebraic groups~\cite{BorelBookLinearAlgGroups}, Helgason's books on Lie groups and symmetric spaces~\cite{helgasonLieGroups,helgasonDifferentialSymmetric}, and Eberline's book on the geometry of symmetric spaces of noncompact type \cite{eberlein1996geometry}.

\subsection{Generalities on Semisimple Lie Groups and their Lattices}\label{sec: prelims: generalities on lattices}
Let $G$ be a real centre-free semisimple Lie group without compact factors. A discrete subgroup $\G\leq G$ is a \emph{lattice} if $\G\backslash G$ carries a finite volume $G$-invariant measure. Equivalently, $\G$ is a lattice if $\G\backslash X$ is a finite volume Riemannian manifold, where $X=G/K$ is the symmetric space of noncompact type corresponding to $G$. A lattice is \emph{irreducible} if its projection to every simple factor of $G$ is dense.  The group $G$ can be viewed as an algebraic group via the adjoint representation. If $G$ is of $\mathbb{R}$-rank greater than $1$, then by the Margulis arithmeticity theorem every irreducible lattice of $G$ is \emph{arithmetic}. The $\mathbb{Q}$-rank $\G$ is the $\mathbb{Q}$-rank of the $\mathbb{Q}$-structure associated to $(G,\G)$ by the arithmeticity theorem. A result of Prasad~\cite{PrasadQRank1Rigidity} states that if $G$ admits a $\rr$ factor, then a non-uniform irreducible lattice of $G$ is of \qrd

The group $G$ has Kazhdan's property (T) if and only if it does not admit an $\mathrm{SO}(n,1)$ or an $\mathrm{SU}(n,1)$ factor, and an irreducible lattice $\G\leq G$ has property (T) if and only if $G$ has property (T). Together with Prasad's result, I may conclude: 

\begin{lem}\label{lem: cases}
Let $G$ be a real centre-free semisimple Lie group without compact factors, and $\G\leq G$ an irreducible lattice. Then at least one of the following occurs: (a) $G$ has property (T) (b) $\G$ is a non-uniform \qr lattice (c) $\G$ is uniform. 
\end{lem}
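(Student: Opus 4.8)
The plan is to combine three essentially structural facts about semisimple Lie groups and their lattices. First I would recall that since $G$ is centre-free semisimple without compact factors, it decomposes (up to the adjoint representation) as a product of simple factors, and an irreducible lattice $\G \le G$ projects densely onto each factor. The dichotomy then comes from splitting on whether $G$ has property~(T): by the classical characterization, $G$ has property~(T) if and only if none of its simple factors is locally isomorphic to $\mathrm{SO}(n,1)$ or $\mathrm{SU}(n,1)$. So assume case~(a) fails, i.e.\ $G$ has at least one $\mathrm{SO}(n,1)$ or $\mathrm{SU}(n,1)$ factor; in particular $G$ admits an $\rr$ factor, since $\mathrm{SO}(n,1)$ and $\mathrm{SU}(n,1)$ are themselves of $\mathbb{R}$-rank~$1$.

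Next I would split on whether $\G$ is uniform. If $\G$ is uniform, we are in case~(c) and there is nothing more to prove. If $\G$ is non-uniform, then since $G$ admits an $\rr$ factor, Prasad's theorem (quoted in the excerpt: a non-uniform irreducible lattice in a group admitting an $\rr$ factor is of $\mathbb{Q}$-rank~$1$) applies directly and gives that $\G$ is a non-uniform \qr lattice, which is case~(b). This exhausts all possibilities: either (a) holds, or (a) fails and then either (c) holds or (b) holds. Hence at least one of (a), (b), (c) occurs, as claimed. Note the three cases are not mutually exclusive (for instance a uniform lattice in a product where some factor has property~(T) and another is $\mathrm{SO}(n,1)$), which matches the phrasing ``at least one''.

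The only mild subtlety — and the step I would be most careful about — is checking that the hypotheses of Prasad's result are genuinely met, namely that the failure of property~(T) really does force an $\rr$ (indeed $\mathbb{R}$-rank~$1$) factor of the specified type, and that $\G$ being an irreducible lattice in $G$ makes it an irreducible non-uniform lattice to which Prasad's theorem applies. Since $G$ is centre-free, $\mathrm{SO}(n,1)$ should be read as $\mathrm{SO}(n,1)^\circ$ (the identity component, i.e.\ the adjoint form), but this does not affect the argument. Everything else is a direct quotation of the facts already recorded in Section~\ref{sec: prelims: generalities on lattices}, so the proof is short: it is really just an organizational lemma assembling the property~(T) classification and Prasad's rigidity theorem into the trichotomy that will drive the case analysis of Theorem~\ref{thm: main Intro}.

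\begin{proof}[Proof sketch]
If $G$ has property~(T) we are in case~(a). Otherwise, $G$ admits a simple factor locally isomorphic to $\mathrm{SO}(n,1)$ or $\mathrm{SU}(n,1)$ for some $n$; in particular $G$ admits an $\rr$ factor. If the irreducible lattice $\G$ is uniform we are in case~(c). If $\G$ is non-uniform, then by Prasad's theorem~\cite{PrasadQRank1Rigidity} a non-uniform irreducible lattice in a semisimple group admitting an $\rr$ factor is of $\mathbb{Q}$-rank~$1$, so $\G$ is a non-uniform \qr lattice and we are in case~(b).
\end{proof}
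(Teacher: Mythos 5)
Your argument is exactly the one the paper intends: the lemma is stated immediately after the paper records that $G$ has property (T) iff it has no $\mathrm{SO}(n,1)$ or $\mathrm{SU}(n,1)$ factor, and Prasad's theorem that a non-uniform irreducible lattice in a group with an $\rr$ factor has $\mathbb{Q}$-rank $1$; your case split on (T) and then on uniformity assembles these in the same way. The proof is correct and matches the paper's.
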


\begin{cor}
Theorem~\ref{thm: main Intro} is an immediate result of Theorems~\ref{thm: main for T},~\ref{U: thm: main}, and~\ref{qr: thm: main for qr}.
\end{cor}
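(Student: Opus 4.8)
The plan is to deduce Theorem~\ref{thm: main Intro} formally from the trichotomy of Lemma~\ref{lem: cases}, quoting one of the three cited theorems in each case. The one substantive point is that Lemma~\ref{lem: cases} is phrased for centre-free $G$, whereas Theorem~\ref{thm: main Intro} permits finite centre, so I would first reduce to the centre-free situation by pushing everything to the adjoint quotient $\pi\colon G\to \bar G:=G/Z(G)$, which is a covering with finite fibres, hence a proper quasi-isometry. Then $\pi(\G)$ is an irreducible lattice in $\bar G$; $\pi(\La)$ is discrete because $\pi$ is proper (indeed $\La Z(G)$ has finite index over $\La$, so it is discrete, and it contains $\ker\pi$); since $\pi$ is coarsely Lipschitz and $|{\cdot}|_G$ and $|{\cdot}|_{\bar G}\circ\pi$ differ by a bounded amount, the covering $\G\subset\mn_u(\La)$ descends to $\pi(\G)\subset\mn_{u'}(\pi(\La))$ with $u'$ again sublinear; $\pi(\La)$ inherits irreducibility from $\La$; and $\pi(\La)$ is a lattice in $\bar G$ if and only if $\La Z(G)$ is a lattice in $G$, if and only if $\La$ is. Property (T), uniformity and the $\mathbb{Q}$-rank are all unchanged under $\pi$, so it suffices to establish the conclusion for the centre-free data $\bar G, \pi(\G), \pi(\La)$.

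Now Lemma~\ref{lem: cases} gives that at least one of (a) $\bar G$ has property (T); (b) $\pi(\G)$ is a non-uniform \qr lattice; (c) $\pi(\G)$ is uniform holds. In case (a), Theorem~\ref{thm: main for T} applies and yields that $\pi(\La)$ is a lattice; note that the remaining hypothesis of Theorem~\ref{thm: main Intro} concerning $\mathbb{Q}$-rank $\geq 2$ lattices is vacuous here, since such lattices force property (T) (by Prasad's result, recalled above) and hence land in case (a), while we are already working in the centre-free group $\bar G$. In case (b), the standing hypothesis of Theorem~\ref{thm: main Intro} that $\La$ --- hence $\pi(\La)$ --- be irreducible is exactly the extra hypothesis of Theorem~\ref{qr: thm: main for qr}, which then gives that $\pi(\La)$ is a lattice. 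In case (c), Theorem~\ref{U: thm: main} applies directly. In every case $\pi(\La)$ is a lattice in $\bar G$, hence $\La$ is a lattice in $G$, which is the assertion of Theorem~\ref{thm: main Intro}.

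I do not expect any genuine obstacle: all the mathematical content is carried by the three cited theorems, and what remains is the bookkeeping of checking that the hypotheses of Theorem~\ref{thm: main Intro} specialize, case by case, to precisely those required by each of them, together with the routine reduction to centre-free groups described above.
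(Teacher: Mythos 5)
Your case analysis is exactly the intended argument: Lemma~\ref{lem: cases} gives the trichotomy, and each branch is handled by the corresponding theorem. The one place you diverge from the paper is the preliminary reduction to the adjoint quotient $\bar G=G/Z(G)$. This reduction is not needed to prove the corollary as stated, and it is worth understanding why: Theorems~\ref{U: thm: main} and~\ref{qr: thm: main for qr} are already stated for finite-centre $G$, so the only branch that forces you into a centre-free group is the property~(T) branch via Theorem~\ref{thm: main for T}; and the only situation in which you \emph{cannot} escape that branch into one of the other two is when $\G$ is non-uniform of $\mathbb{Q}$-rank at least $2$ (by Prasad's result such a $\G$ lives in a group with no $\mathbb{R}$-rank~$1$ factors, hence with property~(T), and it is neither uniform nor of $\mathbb{Q}$-rank~$1$). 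That is precisely the case in which Theorem~\ref{thm: main Intro} \emph{assumes} trivial centre, so the hypotheses of the three theorems are met verbatim with no quotient argument. Your reduction, if fully justified, would do more: it would delete the trivial-centre hypothesis from Theorem~\ref{thm: main Intro} entirely, which the paper explicitly lists in Section~1.4 as only a \emph{probable} improvement, contingent on extending Leuzinger's critical-exponent criterion to finite-centre groups. So you should either flag that you are claiming a strengthening, or drop the reduction. On the substance of the reduction itself I see no error --- since $Z(G)$ acts trivially on $X=G/K$, the orbits $\La\cdot x_0$ and $\pi(\La)\cdot x_0$ coincide, so the sublinear covering condition, the critical exponent, and the lattice property all transfer through $\pi$ without even an error term --- but because the paper's own author treats this relaxation as open, you should present it as a separate claim with its own verification rather than as routine bookkeeping inside an ``immediate'' corollary. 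Also, your remark that the $\mathbb{Q}$-rank $\geq 2$ hypothesis is ``vacuous'' is misleading: it is not vacuous, it is simply unused once you have performed the reduction.
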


\subsection{Cusps and the Rational Tits Building} \label{sec: qr: prelims} \label{sec: qr: sym spc} \label{sec: qr: compact core and rational Tits Building}

The facts about symmetric spaces of noncompact type can be found in Eberline's book~\cite{eberlein1996geometry}. Since the geometry of \qr lattices resembles that of lattices in $\rr$,  the reader could for the most part simply have the image of the hyperbolic plane in mind. If one wishes to see flats that are not geodesics, then a product of two hyperbolic planes is enough. Even the product of the hyperbolic plane and $\mathbb{R}$ is helpful, albeit this space has a Euclidean factor.

\subsubsection{Basic Geometry of Symmetric Spaces of Noncompact Type}

\paragraph{Visual Boundary.} The visual boundary $X(\infty)$ of $X$ is the set of equivalence classes of geodesic rays, where two geodesic rays are equivalent if their Hausdorff distance is finite. For a ray $\eta:[0,\infty)\rightarrow X$, $\eta(\infty)$ denotes the equivalence class of $\eta$ in $X(\infty)$. There are two natural topologies on $X(\infty)$ that will be of use. The \emph{cone topology} is the one given by viewing $X(\infty)$ as the set of all geodesic rays emanating from some fixed base point $x_0$, with topology induced by the unit tangent space at $x_0$. There is a natural topology on $\overline{X}:=X\cup X(\infty)$ such that $\overline{X}$ is the compactification of $X$ and the induced topology on $X(\infty)$ is the cone topology. A well known fact about geodesic rays in nonpositively curved spaces, stating that two `close' geodesic rays fellow travel: 

\begin{lem} \label{qr: prelims: sym spc: cor: small angle geodesic fellow travel}
Given time $T$ and $\varepsilon>0$, there is an angle $\alpha=\alpha(T,\varepsilon)$ so that if $\eta_1,\eta_2$ are two geodesic rays with $\eta_1(0)=\eta_2(0)=x$ for some $x\in X$ and $\measuredangle_x(\eta_1,\eta_2)\leq \alpha$ then $d_X\big(\eta_1(t),\eta_2(t)\big)<\varepsilon$ for all $t\leq T$.
\end{lem}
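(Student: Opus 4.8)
This is the standard fellow-traveling estimate for Hadamard manifolds, and the point to exploit is that $X$ is simultaneously a complete simply connected nonpositively curved manifold (in particular \textup{CAT}$(0)$) \emph{and} homogeneous. Homogeneity is exactly what upgrades a pointwise continuity statement to the uniform bound claimed.

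\emph{Step 1: reduce to a fixed basepoint.} Since $\mathrm{Isom}(X)$ acts transitively on $X$ and preserves distances and angles, it is enough to find $\alpha$ that works for geodesic rays based at one fixed point $x_0$; applying an isometry carrying $x_0$ to $x$ then shows the same $\alpha$ works at an arbitrary $x\in X$.

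\emph{Step 2: compactness.} By Cartan--Hadamard, $\exp_{x_0}\colon T_{x_0}X\to X$ is a diffeomorphism, and the geodesic rays issuing from $x_0$ are precisely the maps $t\mapsto \exp_{x_0}(tv)$ with $v$ a unit vector. Consider
\[
F(v,w,t):=d_X\big(\exp_{x_0}(tv),\exp_{x_0}(tw)\big)
\]
on the compact set $K:=\{(v,w,t): v,w\in U_{x_0}X,\ t\in[0,T]\}$, where $U_{x_0}X$ is the unit sphere of $T_{x_0}X$. The function $F$ is continuous, hence uniformly continuous on $K$, and $F(v,v,t)=0$ for all $v,t$. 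Thus given $\varepsilon>0$ there is $\delta>0$ with $\|v-w\|<\delta\Rightarrow F(v,w,t)<\varepsilon$ for all $t\le T$; and $\|v-w\|<\delta$ holds once $\measuredangle_{x_0}(v,w)<\alpha:=2\arcsin(\delta/2)$. This $\alpha$ depends only on $T$ and $\varepsilon$, which combined with Step 1 proves the lemma. (A quantitative variant: a homogeneous space has sectional curvature bounded below, say by $-\kappa^2$, so by the Rauch comparison theorem $d_X(\eta_1(t),\eta_2(t))$ is at most the distance between the endpoints of two radii of length $t$ subtending angle $\measuredangle_x(\eta_1,\eta_2)$ at the center of $\mathbb H^2_{-\kappa^2}$; this is an explicit increasing function $g(\theta,t)$ with $g(\theta,t)\to 0$ as $\theta\to 0$ for fixed $t$, so one may simply take $\alpha$ with $g(\alpha,T)<\varepsilon$.)

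\emph{Expected obstacle.} There is no serious difficulty here, only one thing to be careful about: one cannot use the naive Euclidean bound $d_X(\eta_1(t),\eta_2(t))\le 2t\sin(\theta/2)$, since in negative curvature geodesics from a common point diverge faster than linearly, which is precisely why the constant $\alpha$ must depend on the time horizon $T$. The proof therefore has to invoke a lower curvature bound (obtained for free from homogeneity, via Step 1 and the Rauch comparison, or else packaged into the uniform-continuity argument of Step 2) rather than relying on the \textup{CAT}$(0)$ inequality alone.
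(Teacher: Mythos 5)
Your proof is correct. The paper states this lemma without proof, treating it as a standard fact about nonpositively curved (indeed, general homogeneous Riemannian) spaces, so there is no argument in the paper to compare against; both of your routes — homogeneity plus uniform continuity of $(v,w,t)\mapsto d_X\big(\exp_{x_0}(tv),\exp_{x_0}(tw)\big)$ on a compact set, and the quantitative Rauch/hinge comparison using the lower sectional curvature bound that homogeneity provides — are valid, and your remark that the CAT$(0)$ inequality alone only bounds $d_X\big(\eta_1(t),\eta_2(t)\big)$ from \emph{below} by the Euclidean quantity $2t\sin(\theta/2)$ correctly identifies why a lower curvature bound (or the compactness argument) is genuinely needed.
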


The \emph{Tits metric} on $X(\infty)$ is defined as follows. Given a totally geodesic submanifold $Y\subset X$, let $Y(\infty)\subset X(\infty)$ be the subset of all points that admit a geodesic ray $\eta$ lying inside $Y$ (or, equivalently, those points that admit a ray lying at bounded Hausdorff distance to $Y$). For any $\xi_1,\xi_2\in X(\infty)$ there exists a flat $F\subset X$ such that $\xi_1,\xi_2\in F(\infty)$. Define $d_T(\xi_1,\xi_2)\in [0,\pi]$ to be the angle between two geodesic rays $\eta_1,\eta_2\subset F$ emanating from some point $x\in F$ and with $\eta_1(\infty)=\xi_1,\eta_2(\infty)=\xi_2$. This is a well defined metric on $X(\infty)$, called the Tits metric. The pair $\big(X(\infty),d_T\big)$ is a geodesic metric space, and isometries of $X$ act on it by isometries. I will use the following relation between the cone and the Tits topologies: 

\begin{prop}[Section $3.1$ in \cite{eberlein1996geometry}]\label{qr: prelims: sym spc: prop: semicontinuity of angular metric}
Let $X$ be a symmetric space of noncompact type. The Tits metric on $X(\infty)$ is semicontinuous with respect to the cone topology: for any $\xi,\zeta\in X(\infty)$ and every $\varepsilon>0$, there exists neighbourhoods of the cone topology $U,V\subset X(\infty)$ of $\xi$ and $\zeta$ respectively such that for all $\xi'\in U,\zeta'\in V$ one has $$\measuredangle(\xi',\zeta')\geq \measuredangle(\xi,\zeta)-\varepsilon$$

Moreover, for any flat $F\subset X$, the cone topology and the Tits topology coincide on $F(\infty)$. 
\end{prop}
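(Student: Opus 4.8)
The plan is to deduce both assertions from the standard description of the angular metric as a supremum of \emph{continuous} functions. Recall that on a symmetric space of noncompact type $X$ the angular (Tits) metric satisfies $\measuredangle(\xi,\zeta)=\sup_{p\in X}\measuredangle_p(\xi,\zeta)$, where $\measuredangle_p(\xi,\zeta)\in[0,\pi]$ is the Alexandrov angle at $p$ between the unique unit-speed rays $[p,\xi)$ and $[p,\zeta)$; this is part of Eberlein's treatment (\cite[\S3.1]{eberlein1996geometry}). First I would show that for each \emph{fixed} $p\in X$ the function $(\xi,\zeta)\mapsto\measuredangle_p(\xi,\zeta)$ is continuous with respect to the cone topology. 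This uses the basepoint-independence of the cone topology: the map sending $\xi$ to the initial velocity of $[p,\xi)$ is a homeomorphism from $X(\infty)$, with the cone topology, onto the unit tangent sphere $U_pX$, no matter which base point $x_0$ was used to define that topology. Consequently $\measuredangle_p(\xi,\zeta)$ is just $\arccos$ of the inner product of two unit tangent vectors depending continuously on $(\xi,\zeta)$, hence continuous.

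From here the semicontinuity is formal: a pointwise supremum of continuous functions is lower semicontinuous. Concretely, given $\xi,\zeta\in X(\infty)$ and $\varepsilon>0$, I would choose $p\in X$ with $\measuredangle_p(\xi,\zeta)\geq\measuredangle(\xi,\zeta)-\varepsilon/2$, then invoke continuity of the single function $\measuredangle_p$ to produce cone-topology neighbourhoods $U\ni\xi$ and $V\ni\zeta$ on which $\measuredangle_p\geq\measuredangle(\xi,\zeta)-\varepsilon$; since $\measuredangle\geq\measuredangle_p$ pointwise, the same inequality holds for $\measuredangle$ on $U\times V$, which is exactly the stated estimate.

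For the ``moreover'' clause, fix a flat $F\subset X$, so that $F$ is isometric to some $\mathbb{R}^k$ and $F(\infty)$ is a round sphere $S^{k-1}$. Because $F$ is totally geodesic and flat, geodesics of $X$ joining points of $F$ stay in $F$, so the Alexandrov angle in $X$ between any two rays lying in $F$ equals the Euclidean angle measured inside $F$; combined with the definition of $d_T$ recalled above (the angle measured in a flat through the two boundary points), this shows that $d_T$ restricted to $F(\infty)$ is precisely the round angular metric on $S^{k-1}$, which induces the standard sphere topology. On the other hand, fixing any $p\in F$, the cone topology on $F(\infty)$ is the subspace topology inherited from $U_pX$ via initial velocities, and these initial velocities sweep out exactly the unit sphere of $T_pF$, again with the standard topology. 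Hence the cone and Tits topologies agree on $F(\infty)$.

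The only nontrivial ingredients are the two facts borrowed from \cite[\S3.1]{eberlein1996geometry}: that $\measuredangle=\sup_p\measuredangle_p$ and that the cone topology is independent of the base point. I do not expect a real obstacle beyond correctly organizing these; the one point to keep in mind is that semicontinuity is a one-sided statement, and it is the representation of $\measuredangle$ as a \emph{supremum} (never an infimum) of continuous functions that forces the inequality to point in the asserted direction.
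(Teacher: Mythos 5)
Your proof is correct. The paper does not actually prove this proposition---it is quoted directly from Eberlein's book as a preliminary---and your argument (lower semicontinuity of the supremum $\measuredangle=\sup_{p}\measuredangle_p$ of the cone-continuous functions $\measuredangle_p$, plus the identification of $d_T$ restricted to $F(\infty)$ with the round angular metric on the unit sphere of $T_pF$) is precisely the standard one given in the cited reference.
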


\paragraph{Busemann Functions, Horoballs and Horospheres.}

Horoballs and horospheres play a crucial role in the proof, a role which stems from their role in the geometric description of the compact core of non-uniform lattices (see Theorem~\ref{qr: prelims: sym spc: thm: Leuzinger compact core} below). A \emph{Busemann function} on $X$ is any function of the form 
$$f_\eta(x)=\lim_{t\rightarrow \infty}d\big(x,\eta(t)\big)-t$$

for some geodesic ray $\eta$ of $X$. A \emph{horoball} $\hb\subset X$ is an open sublevel set of a Busemann function. A \emph{horosphere} $\h\subset X$ is a level set of a Busemann function. Two equivalent geodesic rays $\eta_1,\eta_2$ give rise to Busemann function which differ by a constant, i.e.\  $f_{\eta_1}-f_{\eta_2}=C$ for some $C\in \mathbb{R}$. If $\hb$ is the sublevel set of $f_\eta$, then $\eta(\infty)$ is called the \emph{base point} of the horoball $\hb$ (and respectively of the horosphere $\h=\partial\hb$). The base point of a horoball is well defined, i.e.\ it depends only on $\eta(\infty)$ and not on $\eta$. For every choice of $x\in X,\xi\in X(\infty)$ there is a unique horosphere $\h$ based at $\xi$ with $x\in \h$. I denote this horosphere by $\h(x,\xi)$ and the bounded horoball $\hb(x,\xi)$. The following proposition collects some basic properties that will be of use.

\begin{prop}[Proposition $1.10.5$ in \cite{eberlein1996geometry}] \label{qr: prelims: sym spc: prop: parallel horospheres and other properties}
Let $x\in X, \xi\in X(\infty)$, and let $\h=\h(x,\xi)$, $\hb$ the horoball bounded by $\h$ and $f$ the Busemann function based at $\xi$ with $f(x)=0$.
\begin{enumerate}
    \item For any point $y\in X$, let $\eta$ be the bi-infinite geodesic determined by the geodesic $[y,\xi)$. Then $P_\h(y)=\eta\cap\h$, where $P_\h(y)$ is the unique point closest to $y$ on $\h$.
    
    \item For any point $y\in X$, $f(y)=\pm
    d\big(y,P_\h(y)\big)$. Moreover, $f(y)$ is negative if and only if $y\in \hb$.  
    
    \item If $x'\in X$, then the horospheres $\h=\h(x,\xi)$ and $\h'=\h(x',\xi)$ are equidistant: if $y\in \h,y'\in \h'$, then $d(y,\h')=d(y',\h)$. Such horospheres are called \emph{parallel}.
\end{enumerate}
\end{prop}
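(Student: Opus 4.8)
The plan is to deduce all three items from two standard structural facts about Busemann functions on a symmetric space of noncompact type (indeed, on any Hadamard manifold), which I would recall at the outset: (i) $f=f_\eta$ is convex, $1$-Lipschitz, and of class $C^1$ with $|\nabla f|\equiv 1$, and the integral curves of $-\nabla f$ are precisely the geodesics asymptotic to $\xi$; in particular $f\big(\sigma(t)\big)=f\big(\sigma(0)\big)-t$ along any geodesic ray $\sigma$ pointing at $\xi$; and (ii) the closed sublevel sets $\{f\le c\}$, i.e.\ the closed horoballs based at $\xi$, are convex. Both are available in Eberlein's book, and everything else is bookkeeping with the slope-$(-1)$ behaviour of $f$ along asymptotic geodesics.

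For item (1), fix $y\in X$ and let $\eta_y$ be the complete geodesic asymptotic to $\xi$ through $y$, parametrized with $\eta_y(0)=y$ and $\eta_y(+\infty)=\xi$. By (i), $f\big(\eta_y(t)\big)=f(y)-t$, so $\eta_y$ meets $\h=f^{-1}(0)$ in the single point $p:=\eta_y\big(f(y)\big)$, and $d(y,p)=|f(y)|$. For any $z\in\h$, the $1$-Lipschitz bound gives $d(y,z)\ge|f(y)-f(z)|=|f(y)|=d(y,p)$, so $p$ realizes the distance from $y$ to $\h$. If equality $d(y,z)=|f(y)|$ held for some $z\in\h$, then $f$ would change by $|f(y)|$ along a segment $[y,z]$ of that same length; since $|\nabla f|\equiv 1$, this forces $[y,z]$ to be an integral curve of $\pm\nabla f$, hence a subarc of $\eta_y$, so $z=p$. (When $f(y)>0$ one may instead quote uniqueness of the nearest-point projection onto the closed convex horoball $\{f\le 0\}$.) Thus $P_\h(y)$ is well defined and equals $\eta_y\cap\h$.

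Item (2) is then immediate: $d\big(y,P_\h(y)\big)=d(y,p)=|f(y)|$, and by definition of a horoball as the open set $\{f<0\}$ the sign of $f(y)$ records whether $y\in\hb$. For item (3), recall (as noted in the excerpt) that equivalent asymptotic rays produce Busemann functions differing by a constant, so the Busemann function based at $\xi$ is unique up to an additive constant; hence $\h'=\h(x',\xi)=f^{-1}(c)$ with $c=f(x')$, and $f-c$ is the Busemann function based at $\xi$ vanishing on $\h'$. Applying item (2) to $f-c$ yields $d(y,\h')=|f(y)-c|$ for every $y$; taking $y\in\h$ gives $d(y,\h')=|c|$, and taking $y'\in\h'$ gives $d(y',\h)=|f(y')|=|c|$, so the two horospheres are equidistant.

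Given the two structural inputs, the only genuine point of care is the uniqueness of the closest point $P_\h(y)$: since the complement of a horoball is not convex, one cannot simply invoke convex-projection uniqueness when $y$ lies inside $\hb$, and must instead use the rigidity of the equality case in the $1$-Lipschitz estimate (equivalently, that $f$ has unit gradient with gradient lines the asymptotic geodesics). I expect that step — rather than any of the distance computations — to be the one to handle with care.
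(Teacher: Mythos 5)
Your proof is correct. Note that the paper does not prove this proposition at all — it is quoted as Proposition $1.10.5$ from Eberlein's book — so there is no in-paper argument to compare against; your derivation from the unit-gradient property of Busemann functions and the rigidity of the equality case in the $1$-Lipschitz bound is exactly the standard one, and you correctly isolate the only delicate point (uniqueness of $P_\h(y)$ when $y$ lies inside the horoball, where convex projection cannot be invoked).
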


A Busemann function $f_\eta$ thus naturally determines a filtration of $X$ by the co-dimension $1$ manifolds $\{\h_t\}_t\in\mathbb{R}$. By convention I usually assume that $\h_t:=\{x\in X\mid f_\eta(x)=-t\}$.

\begin{rmk}\label{qr: prelims: rmk: heintze-im hof results about metric on horospheres}
The stabilizer of a point $\xi\in X(\infty)$ acts transitively on the set of horospheres based at $\xi$, so every two such horospheres are isometric. Moreover, there is a close relation between the induced metrics on horospheres with the same base point. Briefly, if $d_\h$ denotes the induced distance on a horosphere $\h\subset X$, then $d_\h\big(P_\h(x),P_\h(y)\big)$ for any two points $x,y\in \h'$ can be bounded uniformly below and above as a function of the distance $d_{\h'}(x,y)$ and the curvature bounds on $X$. See Heintze-Im hof~\cite{GeometryHorospheres} for precise statements. 
\end{rmk}

Using the above properties one can show that two horospheres are parallel if and only if they are based at the same point. In particular for every $x\in X,\xi\in X(\infty)$ it holds that  $\mathrm{Stab}_G\big(\h(x,\xi)\big)\subset \mathrm{Stab}_G(\xi)$. In addition, 
If $\h,\h'$ are two parallel horospheres based at the same $\xi\in X(\infty)$ and $A\subset\h$ is a cocompact metric lattice in $\h$, then $\pi_{\h'}(A)$ is a cocompact metric lattice in $\h'$.

For a point $\xi\in X(\infty)$ and a flat $F$ with $\xi\in F(\infty)$, one readily observes that every horoball $\hb$ based at $\xi$ intersects $F$ in a Euclidean half space. In particular for every $\zeta\in X(\infty)$ with $d_T(\xi,\zeta)<\fpi{2}$, for every geodesic ray $\eta$ with $\eta(\infty)=\zeta$ and every horoball $\hb$ based at $\xi$ there is some $T$ for which $\eta_{\restriction{t>T}}\subset \hb$.

\paragraph{Parabolic and Horospherical Subgroups.}
The isometries of $X$ are classified into elliptic, hyperbolic, and parabolic isometries. Most significant for this paper are the parabolic isometries, i.e.\ those $g\in G$ whose displacement function $x\mapsto gx$ does not attain a minimum in $X$. Every such isometry fixes (at least) one point in $X(\infty)$. A group $P\leq \mathrm{Isom}(X)$ is called \emph{geometrically parabolic} if it is of the form  $G_\xi:=\mathrm{Stab}_G(\xi)$ for some $\xi\in X(\infty)$. Such groups act transitively on $X$, and in particular act transitively on the set of geodesic rays in the equivalence class of $\xi$. The same holds also for the identity component $G_\xi^\circ$. An element $g\in G_\xi$ acts by permutation on the set of horoballs based at $\xi$. This permutation is a translation with respect to the filtration of the space $X$ by horospheres based at $\xi$. Put differently, if $\{\h_t\}_{t\in\mathbb{R}}$ is a filtration of $X$ by horospheres based at $\xi$, then for every $g\in G_\xi$ there is $l(g)\in \mathbb{R}$ such that $g\h_t=\h_{t+l(g)}$.
It is quite clear from all of the above that for every horosphere based on $\xi$, the group $G_\h:=\mathrm{Stab}_G(\h)$ acts transitively on $\h$, and the same holds for $G_\h^\circ$.

I now present a fundamental structure theorem for geometrically parabolic groups. Denote $\mf{g}:=\mathrm{Lie}(G)$, and let $\mf{g}=\mf{k}\oplus\mf{p}$ be a Cartan decomposition defined using the maximal compact subgroup $K\leq G$. Recall that the Lie exponential map $\exp:\mf{g}\rightarrow G$ gives rise to a family of $1$-parameter subgroups of the form $\exp(tX)$ for each $X\in \mf{p}$.

\begin{prop}[Proposition $2.17.3$ in \cite{eberlein1996geometry}]\label{qr: prelims: algebra: prop: stabilizer at infinity by limit of conjugations}
Let $x\in X(\infty)$, and let $X\in \mf{p}$ be the tangent vector at $x_0$ to the unit speed geodesic $[x_0,\xi)$. Let $h_\xi^t$ be the $1$-parameter subgroup defined by $t\mapsto \exp(tX)$. Then an element $g\in G$ fixes $\xi$ if and only if $\lim_{t\rightarrow \infty}h_\xi^{-t}gh_\xi^t$ exists.
\end{prop}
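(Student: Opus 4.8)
The plan is to prove both implications by analyzing the action of the conjugation flow $c_t: g \mapsto h_\xi^{-t} g h_\xi^t$ on $G$, using the fact that $h_\xi^t$ is the transvection flow along the geodesic $\eta(t) = \exp(tX)\cdot x_0$ pointing at $\xi$. The key structural input is the $\mathrm{ad}(X)$-eigenspace decomposition $\mf{g} = \bigoplus_{\alpha} \mf{g}_\alpha$, where $\alpha$ runs over the (real) eigenvalues of $\mathrm{ad}(X)$; this refines the Cartan decomposition and gives $\mf{g} = \mf{g}_- \oplus \mf{g}_0 \oplus \mf{g}_+$ according to the sign of $\alpha$. Under $\mathrm{Ad}(h_\xi^{-t}) = \exp(-t\,\mathrm{ad}(X))$, a vector in $\mf{g}_\alpha$ is scaled by $e^{-t\alpha}$, so $\mathrm{Ad}(h_\xi^{-t})$ contracts $\mf{g}_+$, fixes $\mf{g}_0$, and expands $\mf{g}_-$ as $t \to \infty$. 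The subgroup with Lie algebra $\mf{g}_0 \oplus \mf{g}_+$ is exactly the geometrically parabolic subgroup $G_\xi$ in the connected/centre-free setting, which is what I want to recover.

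First I would establish the easy implication: if $g \in G_\xi$, then $c_t(g)$ converges. Since $G_\xi$ is a closed subgroup with Lie algebra $\mf{p}_\xi := \mf{g}_0 \oplus \mf{g}_+$ and $G_\xi = L_\xi \ltimes N_\xi$ (Levi times horospherical/unipotent radical, with $\mathrm{Lie}(L_\xi) = \mf{g}_0$, $\mathrm{Lie}(N_\xi) = \mf{g}_+$), write $g = \ell n$ accordingly. The flow $c_t$ fixes $L_\xi$ pointwise (it centralizes $h_\xi^t$, which lies in $L_\xi$ up to the geodesic's own direction), while on $N_\xi$ it contracts: for $n = \exp(Y)$ with $Y \in \mf{g}_+$, $c_t(n) = \exp(e^{-t\,\mathrm{ad}(X)} Y) \to \exp(0) = e$ because every eigenvalue appearing in $Y$ is strictly positive. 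Hence $c_t(g) = \ell\cdot c_t(n) \to \ell$, and the limit exists. (A small point: one must check $h_\xi^t$ itself lies in the Levi or at least is fixed by $c_t$, which is immediate since conjugation of a one-parameter group by itself is trivial.)

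The substantive direction is the converse. Suppose $\lim_{t\to\infty} c_t(g)$ exists; I want $g \in G_\xi$, equivalently $g\xi = \xi$. I would argue geometrically rather than purely algebraically, which also makes the connection to the rest of the paper transparent: consider the geodesic rays $\eta(t) = h_\xi^t x_0$ (with $\eta(\infty) = \xi$) and $g\eta(t) = g h_\xi^t x_0 = (g h_\xi^t g^{-1}) (g x_0)$. The convergence of $c_t(g)$ means $h_\xi^{-t} g h_\xi^t \to g_\infty$, equivalently $g h_\xi^t \cdot (h_\xi^{-t} g^{-1}) \to $ a bounded family, so $d_X\big(g\eta(t), \eta(t)\big) = d_X\big(g h_\xi^t x_0, h_\xi^t x_0\big) = d_X\big(h_\xi^{-t} g h_\xi^t x_0, x_0\big)$ stays bounded as $t \to \infty$. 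A ray $g\eta$ that remains a bounded distance from $\eta$ for all positive time has the same endpoint at infinity, i.e. $g\xi = g\eta(\infty) = \eta(\infty) = \xi$; this is exactly the definition of $X(\infty)$ via finite Hausdorff distance of rays. Therefore $g \in \mathrm{Stab}_G(\xi) = G_\xi$, completing the proof.

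The main obstacle is the bookkeeping in the easy direction around the precise Levi decomposition $G_\xi = L_\xi N_\xi$ and verifying that $c_t$ genuinely fixes $L_\xi$ and contracts $N_\xi$ to the identity — this requires knowing that the horospherical radical $N_\xi$ has Lie algebra with strictly positive $\mathrm{ad}(X)$-eigenvalues only (no zero eigenvalue), which is where the choice that $X$ is the \emph{regular-enough} tangent vector to $[x_0,\xi)$ matters, together with the structure theory of parabolic subgroups of semisimple Lie groups. In the converse direction the only delicate point is going cleanly from "$d_X(g\eta(t),\eta(t))$ bounded" to "$g\xi = \xi$", but this is standard in $\mathrm{CAT}(0)$/nonpositively curved geometry (two rays at bounded distance are asymptotic), so I expect no real friction there. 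One should also note the statement is phrased for $\xi \in X(\infty)$ with $X \in \mf p$ the unit tangent of $[x_0,\xi)$, so no genericity of $\xi$ is needed — the eigenvalue decomposition of $\mathrm{ad}(X)$ works for any such $X$, only the dimensions of the pieces vary.
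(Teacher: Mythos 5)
The paper does not prove this proposition at all --- it is quoted verbatim from Eberlein's book as a preliminary --- so there is no internal proof to compare against; I can only assess your argument on its own terms. Your converse direction (limit exists $\Rightarrow$ $g\xi=\xi$) is correct and is exactly the standard geometric argument: convergence of $h_\xi^{-t}gh_\xi^t$ gives boundedness of $d\big(g\eta(t),\eta(t)\big)=d\big(h_\xi^{-t}gh_\xi^t x_0,x_0\big)$, and two rays at bounded distance are asymptotic. (The parenthetical ``$gh_\xi^t\cdot(h_\xi^{-t}g^{-1})\to$ a bounded family'' is garbled --- that product is the identity --- but the displayed chain of equalities that follows is the correct computation.)

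The forward direction has a circularity you should not wave away. You decompose $g=\ell n$ using $G_\xi=(K_\xi A_\xi)N_\xi$ with $\mathrm{Lie}(N_\xi)=\mf{g}_+$. But in Eberlein's development --- and in this paper's own statement of the Langlands decomposition --- the group $N_\xi$ is \emph{defined} as $\ker(T_\xi)$, where $T_\xi(g):=\lim_t h_\xi^{-t}gh_\xi^t$; that is, the decomposition of $G_\xi$ is a consequence of the proposition you are proving, not an input to it. The hard containment is precisely $G_\xi\subseteq P_X$ where $P_X$ is the group with Lie algebra $\mf{g}_0\oplus\mf{g}_+$, and asserting the Levi decomposition of $G_\xi$ assumes it. The standard way to break the circle is to run the forward direction through boundedness rather than through structure theory: if $g\xi=\xi$ then $g\eta$ and $\eta$ are asymptotic rays, so $d\big(h_\xi^{-t}gh_\xi^tx_0,x_0\big)$ is bounded for $t\geq 0$; since the fibres of $G\to X$ are compact, the family $\{h_\xi^{-t}gh_\xi^t\}$ is precompact in $G$; and since $G$ is centre-free one may pass to $\mathrm{Ad}$ and observe that $e^{-t\,\mathrm{ad}(X)}\mathrm{Ad}(g)e^{t\,\mathrm{ad}(X)}$ has block entries $e^{t(\beta-\alpha)}M_{\alpha\beta}$ relative to the eigenspace decomposition, which is bounded as $t\to\infty$ if and only if $M_{\alpha\beta}=0$ for $\beta>\alpha$, in which case it converges to the block-diagonal part. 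This yields ``bounded $\Leftrightarrow$ convergent'' and completes both directions without presupposing the structure of $G_\xi$. With that replacement your outline becomes a complete and non-circular proof.
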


\begin{prop}[Langlands Decomposition, Propositions $2.17.5$ and $2.17.25$ in \cite{eberlein1996geometry}]\label{qr: prop: Eberline Generalized Iwasawa}
Let $\xi\in X(\infty)$ and $h_\xi^t$ as in Proposition~\ref{qr: prelims: algebra: prop: stabilizer at infinity by limit of conjugations}. Let $F$ be a flat containing $[x_0,\xi)$ and $A\leq G$ the maximal abelian subgroup such that $Ax_0=F$. Denote $G_\xi:=\mathrm{Stab}_G(\xi)$, and define $T_\xi:G_\xi\rightarrow G$ by $g\mapsto\lim_{n\rightarrow \infty} h_\xi^{-n}gh_\xi^n$. Then $T_\xi$ is a homomorphism, and there are subgroups $N_\xi,A_\xi,K_\xi\leq G_\xi$ such that: 

\begin{enumerate}
    \item $A_\xi=\exp\big(Z(X)\cap \mf{p}\big)$, where $Z(X)$ is the centralizer of $X$ in $\mf{g}$. Moreover, every element $a\in A_\xi$ lies in some conjugate $A^g=gAg^{-1}$ with the property that  $[x_0,\xi)\subset F^g:=A^gx_0$.
    
    \item $K_\xi\leq K=\mathrm{Stab}_G(x_0)$ is the compact subgroup fixing the bi-infinite geodesic determined by $[x_0,\xi)$.
    
    \item $K_\xi A_\xi=A_\xi K_\xi$.
    
    \item $N_\xi=\mathrm{Ker}(T_\xi)$. It is a connected normal subgroup of $G_\xi$.

    \item $G_\xi=N_\xi A_\xi K_\xi$, and the indicated decomposition of an element is unique.
    
    \item $G= N_\xi A_\xi K$, and the indicated decomposition of an element is unique. In case $\xi$ is a regular point at $X(\infty)$, this decomposition is the Iwasawa decomposition. 
    
    \item $G_\xi$ has finitely many connected components, and $G_\xi^\circ=(K_\xi A_\xi)^\circ N_\xi$.
    
    \item $G_\xi$ is self normalizing. 

\end{enumerate}
\end{prop}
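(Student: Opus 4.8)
This statement is the geometric form of the Langlands decomposition of a (real) parabolic subgroup, so in the paper the natural move is simply to cite Eberlein; but let me sketch the argument one would reconstruct, since it is driven by a single mechanism — the eigenspace grading of $\mathrm{ad}(X)$, where $X\in\mf p$ is the initial velocity of $[x_0,\xi)$, so that $h_\xi^t=\exp(tX)$ and $\mathrm{Ad}(h_\xi^t)=e^{t\,\mathrm{ad}(X)}$. As $X\in\mf p$, the operator $\mathrm{ad}(X)$ is self-adjoint for a compatible inner product, hence diagonalizable with real eigenvalues; write $\mf g=\overline{\mf n}\oplus\mf l\oplus\mf n$ for the sums of the negative, zero and positive eigenspaces. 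Then $\mf l=Z(X)$, and $\mf n,\overline{\mf n}=\theta\mf n$ are nilpotent subalgebras normalized by $\mf l$ (iterated brackets strictly increase the eigenvalue, which is bounded above). Put $N_\xi=\exp\mf n$ and $L=Z_G(X)$. First I would record that conjugation $g\mapsto h_\xi^{-t}gh_\xi^t$ contracts $N_\xi$ to $e$, fixes $L$ pointwise (since $L$ commutes with $h_\xi^t$), and expands $\exp\overline{\mf n}$; combined with Proposition~\ref{qr: prelims: algebra: prop: stabilizer at infinity by limit of conjugations} this identifies $G_\xi$ with $N_\xi L$ (the nontrivial inclusion, that the conjugate-limit fails to exist once the $\exp\overline{\mf n}$-component is nontrivial, is a Bruhat-type fact) and exhibits $T_\xi$ as the projection $n\ell\mapsto\ell$. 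The homomorphism property is then the one-line computation $T_\xi(g_1g_2)=\lim(h_\xi^{-t}g_1h_\xi^t)(h_\xi^{-t}g_2h_\xi^t)=T_\xi(g_1)T_\xi(g_2)$, and since $\mf n\cap\mf l=0$ we get $N_\xi\cap L=\{e\}$, so $N_\xi=\ker T_\xi$ is normal in $G_\xi$ and connected because $\exp$ restricts to a diffeomorphism on the nilpotent algebra $\mf n$ — this is item~(4).

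Next I would decompose the Levi. Since $\theta$ anticommutes with $\mathrm{ad}(X)$ (because $\theta X=-X$), it preserves the zero-eigenspace, so $\mf l=\mf k_\xi\oplus\mf a_\xi$ with $\mf k_\xi=Z(X)\cap\mf k$ and $\mf a_\xi=Z(X)\cap\mf p$. Put $K_\xi=Z_K(X)$; one checks $K_\xi=G_\xi\cap K$ — an element of $K$ fixes $\xi$ iff it fixes the ray $[x_0,\xi)$ iff it centralizes $X$, and then it fixes the whole bi-infinite geodesic, which is item~(2) — and $K_\xi$ is compact. Put $A_\xi=\exp\mf a_\xi$. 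The Cartan decomposition of the reductive group $L$ gives $L=K_\xi A_\xi$, and since $K_\xi$ centralizes $X$ it normalizes $Z(X)$ and its $\theta$-eigenspaces, so $K_\xi A_\xi=A_\xi K_\xi$, which is item~(3); consequently $L$, and hence $G_\xi=N_\xi L$, has finitely many connected components and $G_\xi^\circ=(K_\xi A_\xi)^\circ N_\xi$, which is item~(7). For item~(1): given $Y\in\mf a_\xi$, the abelian subspace $\mathbb RX+\mathbb RY\subseteq\mf p$ extends to a maximal abelian $\mf a'\subseteq\mf p$; then $A':=\exp\mf a'$ is a conjugate of $A$, the flat $F':=A'x_0$ contains $[x_0,\xi)$, and $\exp Y\in A'$, as required.

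It remains to assemble the decompositions and handle self-normalization. Combining $G_\xi=N_\xi L$ with $L=K_\xi A_\xi=A_\xi K_\xi$ gives $G_\xi=N_\xi A_\xi K_\xi$; for uniqueness, the $N_\xi$-factor of $g\in G_\xi$ is well defined because $N_\xi\times L\to N_\xi L$ is a bijection, and the remaining ambiguity is that of the Cartan decomposition of $L$, which is a diffeomorphism $\mf a_\xi\times K_\xi\to L$ — this is item~(5). Since $G_\xi$ acts transitively on $X=G/K$ with point-stabilizer $G_\xi\cap K=K_\xi$ (as noted earlier in this section), we get $G=G_\xi K=N_\xi A_\xi K_\xi K=N_\xi A_\xi K$; uniqueness of this factorization reduces to $N_\xi A_\xi\cap K=\{e\}$, which follows from $N_\xi\cap L=\{e\}$ together with $K\cap\exp\mf p=\{e\}$ (the global polar decomposition of $G$ is a diffeomorphism), and when $\xi$ is regular $\mf a_\xi$ is maximal abelian and the decomposition is the usual Iwasawa one — item~(6). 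The main obstacle is item~(8), that $G_\xi$ is self-normalizing: this does not follow formally from the grading, and I would derive it either from the algebraic fact that a parabolic subgroup equals its own normalizer, or geometrically — if $gG_\xi g^{-1}=G_\xi$ then $G_\xi$ fixes $g\xi$, which forces $g\xi$ into the closed face of the Tits boundary spanned by $\xi$ (on which $G_\xi$ acts through its Levi quotient), so after modifying $g$ by an element of $L\subseteq G_\xi$ one has $g\xi=\xi$, i.e.\ $g\in G_\xi$. Apart from item~(8), the two points that need genuine structure theory rather than bookkeeping are the precise identification $G_\xi=N_\xi L$ and the uniqueness assertions, both standard for reductive groups.
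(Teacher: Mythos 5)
The paper does not prove this proposition at all: it is imported verbatim from Eberlein (Propositions $2.17.5$ and $2.17.25$ in \cite{eberlein1996geometry}), so there is no in-paper argument to compare against. Your reconstruction via the eigenspace grading of $\mathrm{ad}(X)$ is the standard route and is essentially the argument of the cited source; the two points where your sketch leans on external structure theory --- the identification $G_\xi=N_\xi Z_G(X)$ from the existence of the conjugation limit, and the self-normalization in item (8) --- are exactly the parts with genuine content, and you flag both correctly.
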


Viewing $G$ as an algebraic group, the geometrically parabolic subgroups are exactly the (algebraically) non-trivial parabolic subgroups, i.e.\ proper subgroups of $G$ that contain a normalizer of a maximal unipotent subgroup. Proposition~\ref{qr: prop: Eberline Generalized Iwasawa} is a geometric formulation of the algebraic Langlands decomposition of parabolic groups. Recall that a horospherical subgroup is the unipotent radical of a non-trivial parabolic group, or equivalently groups of the form $U_g:=\{u\in G\mid \lim_{n\rightarrow \infty}g^{-n}ug^n=id_G\}$. The latter implies that $N_\xi$ a horospherical subgroup of $G$.

\paragraph{Limit Set.}
An important set associated to a discrete group $\Delta\leq G$ acting by isometries on $X$ is the \emph{limit set} $\mathcal{L}_\Delta$. By definition $\mathcal{L}_\Delta:=\overline{\Delta\cdot x} \cap X(\infty)$, i.e.\ it is the intersection with $X(\infty)$ of the closure, in the compactification $\overline{X}=X\cup X(\infty)$, of an orbit $\Delta\cdot x$. It is clear that $\mathcal{L}_\Delta$ does not depend on the choice of $x\in X$. The limit set of any lattice is always the entire $X(\infty)$. In fact, much more is true: 

\begin{defn}
Let $\Delta\leq G=\mathrm{Isom}(X)$, and $SX$ the unit tangent bundle. A vector $v\in SX$ is \emph{$\Delta$-periodic} if there is $\delta\in \Delta$ and $s>0$ such that $\delta\eta(t)=\eta(t+s)$ for all $t\in \mathbb{R}$, where $\eta$ is the bi-infinite geodesic determined by the vector $v$. 

For a flat $F\subset X$ (including geodesics), denote $\Delta_F:=\{\delta\mid \delta F=F\}$. The flat $F$ is called a \emph{$\Delta$-periodic flat} if there exists a compact set $C\subset F$ such that $\Delta_F C=F$.

\end{defn}

\begin{prop}[Propositions $4.7.3., 4.7.5, 4.7.7$ in \cite{eberlein1996geometry}, Lemma $8.3'$ in \cite{mostow1973strong}]\label{qr: prelims: sym spc: prop: lattice periodic vecotrs are dense}
If $\G\leq G=Isom(X)$ is a lattice, then:
\begin{enumerate}
    \item The subset in $SX$ of $\G$-periodic vectors is dense. 
    
    \item Let $F\subset X$ be any flat, $\eta$ any bi-infinite geodesic in $F$, and denote $v=\dot{\eta}(0)\in SX$ the initial velocity vector.  There is a sequence $v_n\in SX$ of regular vectors such that
    \begin{enumerate}
        
        \item $\lim_{n\rightarrow \infty}v_n=v$.
        
        \item The bi-infinite geodesics $\eta_n$ determined by $v_n$ are all $\G$-periodic.
        
        \item Denote by $F_n$ the (unique) flat containing $\eta_n$. Each $F_n$ is $\G$-periodic. 
    \end{enumerate}
    
    Put differently, the set of $\G$-periodic flats is dense in the set of flats of $X$. 
\end{enumerate}
\end{prop}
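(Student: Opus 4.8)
Both parts follow from a single density statement, which I would establish: the set of \emph{regular} vectors $v\in SX$ that are $\G$-periodic and whose (unique) maximal flat is $\G$-periodic is dense in $SX$. Indeed, part (1) is the case of part (2) in which $F$ is the image of a bi-infinite geodesic, so it suffices to prove part (2); and given an arbitrary flat $F$ and $v=\dot\eta(0)$ with $\eta\subset F$, one first approximates $v$ by a regular vector $w$ — legitimate since the singular directions form a finite union of submanifolds of codimension $\geq 1$ in each unit sphere $S_xX$, so regular vectors are open and dense in $SX$ — and then approximates $w$ by the regular $\G$-periodic vectors supplied by the claim. The flats $F_n$ of part (2) are then forced, since a regular geodesic lies in a unique maximal flat, which here is $\G$-periodic by construction; and the closing ``put differently'' sentence is exactly this density phrased for flats.

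To produce regular $\G$-periodic vectors with dense directions I would run a ping-pong argument with $\mathbb{R}$-regular elements of $\G$. By the theorem of Prasad--Raghunathan on lattices, $\G$ contains an $\mathbb{R}$-regular element $\g_0$; its fixed points $\xi_0^\pm\in X(\infty)$ are regular, its axis is the regular geodesic from $\xi_0^-$ to $\xi_0^+$, and — again by Prasad--Raghunathan — the unique maximal flat $F_0$ through that axis is $\G$-periodic, a finite-index subgroup of $\G\cap Z_G(\g_0)$ acting cocompactly on $F_0$ by translations. Conjugating, for every $h\in\G$ the element $h\g_0 h^{-1}$ is $\mathbb{R}$-regular with fixed points $h\xi_0^\pm$ and $\G$-periodic maximal flat $hF_0$. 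Now fix a target regular geodesic with distinct transverse endpoints $\zeta^-,\zeta^+\in X(\infty)$. Using that a lattice acts minimally on the flag manifold $G/P$ (so that the $\G$-orbit of a regular point at infinity is dense), choose $h_1,h_2\in\G$ with $h_1\xi_0^+$ near $\zeta^+$ and $h_2\xi_0^-$ near $\zeta^-$, perturbing slightly so that the four fixed points of $\g_1:=h_1\g_0 h_1^{-1}$ and $\g_2:=h_2\g_0 h_2^{-1}$ are in general position (transversality being open and dense). For $n,m$ large, the contraction (north--south) dynamics of $\mathbb{R}$-regular elements on $G/P$ make $\sigma_{n,m}:=\g_1^{\,n}\g_2^{\,m}$ an $\mathbb{R}$-regular element whose attracting fixed point tends to $h_1\xi_0^+$ and repelling fixed point tends to $h_2\xi_0^-$; its axis is therefore a regular geodesic with endpoints approximating $(\zeta^-,\zeta^+)$, and its unique maximal flat is $\G$-periodic by Prasad--Raghunathan applied to $\sigma_{n,m}$. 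Letting $n,m\to\infty$ and varying the target yields the desired density. (Alternatively one may quote directly the known fact that, for any lattice, the fixed-point pairs of $\mathbb{R}$-regular elements are dense among transverse pairs of regular points at infinity — precisely what the ping-pong establishes.)

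The main obstacle is this higher-rank ping-pong step: one must verify that $\sigma_{n,m}=\g_1^{\,n}\g_2^{\,m}$ is genuinely $\mathbb{R}$-regular (not merely $\mathbb{R}$-semisimple) and that its attracting and repelling fixed points converge to the prescribed ones, which requires the precise contraction estimates for $\mathbb{R}$-regular elements acting on $G/P$ together with careful bookkeeping of the general-position hypotheses on the four fixed points; the structure theory of parabolic subgroups and the Langlands decomposition (Proposition~\ref{qr: prop: Eberline Generalized Iwasawa}) are what make these estimates available. The deepest cited inputs are the Prasad--Raghunathan theorem (existence of $\mathbb{R}$-regular elements in lattices, and cocompactness of $\G_F$ on the associated maximal flat) and the minimality of the $\G$-action on $G/P$. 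A different route would replace the ping-pong by combining ergodicity of the Weyl-chamber flow on $\G\backslash G$ (Moore's theorem) with a closing lemma for recurrent regular vectors; I would avoid this, since the closing lemma in higher rank is itself delicate whereas the ping-pong construction is explicit.
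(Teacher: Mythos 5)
This proposition is quoted in the paper as an external result --- the bracketed attribution to Eberlein (Propositions $4.7.3$, $4.7.5$, $4.7.7$) and Mostow (Lemma $8.3'$) is the entire ``proof'' --- so there is no in-paper argument to compare yours against. What you have written is essentially a reconstruction of Mostow's Lemma $8.3'$: reduce everything to density of attracting/repelling fixed-point pairs of $\mathbb{R}$-regular elements of $\Gamma$ among transverse pairs of regular boundary points, obtain one such element from Prasad--Raghunathan, spread its fixed points around by minimality of the $\Gamma$-action on $G/P$, and close up with a proximality ping-pong. That is indeed the standard route (Eberlein's own treatment is more dynamical, via the duality condition, but lands in the same place), and your reduction of part (1) to part (2) and of part (2) to the regular case is correct.

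Two steps are thinner than they should be. First, the claim that the maximal flat $F_0$ through an axis of a single $\mathbb{R}$-regular $\gamma_0\in\Gamma$ is automatically $\Gamma$-periodic is not a formal consequence of $\mathbb{R}$-regularity: one element only gives a $\mathbb{Z}$-translation of an $r$-dimensional flat, which is not cocompact once $r\geq 2$. What is needed is that $Z_G(\gamma_0)\cap\Gamma$ is a lattice in the Cartan subgroup $Z_G(\gamma_0)$ (hence cocompact, since that group is compact-by-$\mathbb{R}^r$); this is a genuine theorem --- the Cartan-subgroup form of Prasad--Raghunathan, or closedness of the orbit of the centralizer of a semisimple element --- and should be invoked explicitly rather than folded into ``again by Prasad--Raghunathan.'' Second, as you yourself flag, the whole weight of the argument sits in the deferred step: that $\gamma_1^{\,n}\gamma_2^{\,m}$ is genuinely $\mathbb{R}$-regular (not merely proximal in one representation) with fixed flags converging to the prescribed ones. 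This requires the quantitative $(r,\varepsilon)$-proximality estimates of Abels--Margulis--Soifer/Benoist type, or Mostow's original matrix estimates; without them the write-up is a correct outline rather than a proof. Finally, passing from density of transverse endpoint pairs to density in $SX$ uses continuity of the map sending a transverse pair of regular boundary points to the maximal flat it spans, together with a choice of translated geodesic in $F_n$ near the basepoint (an $\mathbb{R}$-regular element has a whole parallel family of axes foliating its flat, not a single one); this is true but deserves a sentence.
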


\begin{rmk}
See Definition~\ref{qr: sym spc: def: regular geodesics} for the notion of \emph{regular} tangent vectors.
\end{rmk}

Points in the limit set of a group are classified according to how the orbit approaches them.  

\begin{defn}
Let $\Delta\leq G$ be a discrete subgroup, $\xi\in \mathcal{L}_\Delta$. 
\begin{enumerate}
    \item The point $\xi$ is called \emph{conical} if for some (hence any) $x$ and some (hence every) geodesic ray $\eta$ with $\eta(\infty)=\xi$ there is a number $D=D(x,\eta)$ such that for every $T\in \mrp$ there is $t>T$ for which $B\big(\eta(t),D\big)\cap \Delta\cdot x\ne\emptyset$. Since $\Delta$ is discrete, this is equivalent to $\Delta \cdot x\cap \mn_D(\eta)$ being infinite. 
    
    \item The point $\xi$ is called \emph{horospherical} if for every horoball $\hb$ based at $\xi$ and every $x\in X$, $\Delta\cdot x\cap \hb$ is non-empty. In particular, a conical limit point is horospherical.
    
    \item The point $\xi$ is \emph{non-horospherical} if it is not horospherical.
\end{enumerate}
\end{defn}

As a corollary of Proposition~\ref{qr: prelims: sym spc: prop: lattice periodic vecotrs are dense}, one has: 

\begin{cor}\label{qr: prelims: sym spc: cor: lattice conical limit points are dense}
If $\G\leq \mathrm{Isom}(X)$ is a lattice, then
\begin{enumerate}
    \item The set of $\G$-conical limit points is dense in $X(\infty)$ with the cone topology. 
    
    \item $\mathcal{L}_\G=X(\infty)$.
   
\end{enumerate}
\end{cor}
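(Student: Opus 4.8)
The statement to prove is Corollary~\ref{qr: prelims: sym spc: cor: lattice conical limit points are dense}: if $\G\leq\mathrm{Isom}(X)$ is a lattice, then (1) the set of $\G$-conical limit points is dense in $X(\infty)$ for the cone topology, and (2) $\mathcal L_\G=X(\infty)$. The plan is to derive both parts from Proposition~\ref{qr: prelims: sym spc: prop: lattice periodic vecotrs are dense}, which provides an abundance of $\G$-periodic flats and geodesics. The key observation is that the endpoints at infinity of a $\G$-periodic bi-infinite geodesic are automatically $\G$-conical limit points: if $\eta$ is $\G$-periodic, there is $\delta\in\G$ and $s>0$ with $\delta\eta(t)=\eta(t+s)$ for all $t$, so the cyclic group $\langle\delta\rangle$ already translates along $\eta$, hence for any fixed $x=\eta(0)$ the orbit $\G\cdot x$ contains the points $\delta^n x=\eta(ns)$, which lie on $\eta$ itself (so within distance $D=0$, or any $D>0$, of $\eta$) and march off to both endpoints $\eta(\pm\infty)$. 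Thus $\eta(\infty)$ (and $\eta(-\infty)$) satisfies the definition of a conical limit point.

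For part (1), I would argue as follows. Fix any $\xi\in X(\infty)$ and any cone-neighbourhood $U$ of $\xi$; choose a base point $x_0$ and let $v\in S_{x_0}X$ be the initial velocity of the ray $[x_0,\xi)$. Let $\eta$ be the bi-infinite geodesic through $x_0$ determined by $v$, lying in some flat $F$. By Proposition~\ref{qr: prelims: sym spc: prop: lattice periodic vecotrs are dense}(2), there is a sequence of regular vectors $v_n\to v$ whose bi-infinite geodesics $\eta_n$ are $\G$-periodic. Since $v_n\to v$ in $S_{x_0}X$, the endpoints $\eta_n(\infty)\to\xi$ in the cone topology, so for $n$ large $\eta_n(\infty)\in U$. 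By the observation of the previous paragraph each $\eta_n(\infty)$ is a $\G$-conical limit point, so $U$ contains a $\G$-conical limit point. As $\xi$ and $U$ were arbitrary, the $\G$-conical limit points are dense in the cone topology. (Alternatively one could invoke part (1) of the same proposition — density of $\G$-periodic vectors in $SX$ — together with the fact that the endpoint map $SX\to X(\infty)$ is continuous and surjective; the two routes are essentially the same.)

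For part (2), the limit set $\mathcal L_\G$ is closed in $\overline X$ by definition (it is an intersection of a closed set with $X(\infty)$), hence closed in $X(\infty)$ for the cone topology. It contains every $\G$-conical limit point — indeed a conical limit point is in particular a limit of the orbit — so by part (1) it contains a dense subset of $X(\infty)$; being closed, $\mathcal L_\G=X(\infty)$. I expect no real obstacle here: the only points requiring a word of care are the continuity/surjectivity of the endpoint-at-infinity map from $S_{x_0}X$ (with its sphere topology) to $(X(\infty),\text{cone topology})$ — which is precisely how the cone topology is defined — and the elementary check that a $\G$-periodic geodesic has conical endpoints, which is immediate from the definition once one notes that $\langle\delta\rangle\subset\G$ already does the job. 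The genuinely substantive input, Proposition~\ref{qr: prelims: sym spc: prop: lattice periodic vecotrs are dense}, is quoted from Eberlein and Mostow, so the corollary is a short deduction.
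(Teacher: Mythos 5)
Your proposal is correct and follows exactly the route the paper intends: the corollary is stated as an immediate consequence of Proposition~\ref{qr: prelims: sym spc: prop: lattice periodic vecotrs are dense}, and your deduction (periodic geodesics have conical endpoints via the cyclic group $\langle\delta\rangle$, density of periodic vectors transfers to density of conical endpoints in the cone topology, and the closed limit set containing a dense subset must be all of $X(\infty)$) is precisely the intended short argument. No gaps.
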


I finish this section with some results on \emph{geometrically finite} subgroups of isometries in $\rr$.

\begin{defn}
Let $X$ be a $\rr$ symmetric space and $\Delta\leq Isom(X)$ a discrete subgroup. Denote by $\overline{\mathrm{Hull}(\Delta)}$ the closed convex hull in $\overline{X}=X\cup X(\infty)$ of the limit set $\mathcal{L}_\Delta$, and $\mathrm{Hull}(\Delta)=X\cap \overline{\mathrm{Hull}(\Delta)}$. By virtue of negative curvature, $\mathrm{Hull}(\Delta)$ is the union of all geodesics $\eta$ such that $\eta(\infty),\eta(-\infty)\in \mathcal{L}_\Delta$. 
The \emph{convex core} of $\Delta$ is defined to be $\Delta\backslash \mathrm{Hull}(\Delta)\subset \Delta\backslash X$, i.e., the quotient of $\mathrm{Hull}(\Delta)$ by the $\Delta$-action.

\end{defn}

\begin{defn}[Bowditch~\cite{BOWDITCHGeometricalFinitness}, see Theorem $1.4$ in \cite{KapovichLiuGeometricFI}]\label{qr: prelims: sym spc: def: geometrically finite group}

Let $X$ be a $\rr$ symmetric space, i.e.\ a symmetric space of pinched negative curvature. A discrete group $\Delta\leq G=\mathrm{Isom}(X)$ is \emph{geometrically finite} if for some $\delta>0$, the uniform $\delta$-neighbourhood in $\Delta\backslash X$ of the convex core $\mn_\delta\big(\Delta\backslash \mathrm{Hull}(\Delta)\big)$ has finite volume, and there is a bound on the orders of finite subgroups of $\Delta$. A group is \emph{geometrically infinite} if it is not geometrically finite. 

\end{defn}

Immediately from the definition of geometrical finiteness, one gets a simple criterion for a subgroup to be a lattice: 

\begin{cor}\label{qr: prelims: sym spc: cor: geometric finite and whole limit set implies lattice}
Let $X$ be a $\rr$ symmetric space. If $\Delta\leq \mathrm{Isom}(X)$ is geometrically finite and admits $\mathcal{L}_\Delta=X(\infty)$, then $\Delta$ is a lattice in $\mathrm{Isom}(X)$.
\end{cor}

Sublinear distortion does not effect the limit set, as the following lemma shows. 

\begin{lem}\label{qr: lem: limit set of Lambda equals limit set of Gamma}
Let $\G,\La\leq G$ be discrete subgroups and $u:\mrnn\rightarrow\mrp$ a sublinear function. If $\G\subset \mn_u(\La)$ then $\mathcal{L}_\G\subset \mathcal{L}_\La$, i.e.\  every $\G$-limit point is a $\La$-limit point. In particular, if $\G$ is a lattice then $\mathcal{L}_\La=X(\infty)$.
\end{lem}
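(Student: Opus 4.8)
The plan is to take an arbitrary point $\xi\in\mathcal{L}_\G$ and produce, from a sequence in $\G\cdot x_0$ converging to $\xi$ in the cone topology, a companion sequence in $\La\cdot x_0$ converging to the same point. Fix a basepoint $x_0\in X$ and identify $\G$ with $\G\cdot x_0$ and $\La$ with $\La\cdot x_0$. Since $\xi\in\mathcal{L}_\G$, there is a sequence $\g_n\in\G$ with $\g_n x_0\to\xi$ in $\overline{X}$; in particular $|\g_n x_0|_X\to\infty$. By the hypothesis $\G\subset\mn_u(\La)$, for each $n$ there is $\la_n\in\La$ with $d(\g_n x_0,\la_n x_0)\le u(|\g_n x_0|_X)$. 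The first key point is that $|\la_n x_0|_X\to\infty$ as well, since $|\la_n x_0|_X\ge |\g_n x_0|_X-u(|\g_n x_0|_X)$ and $u$ is sublinear, so the right-hand side tends to infinity.

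The second key point is that $\la_n x_0\to\xi$ in the cone topology. The cleanest way is to work with the geodesics $\eta_n=[x_0,\g_n x_0]$ and $\eta'_n=[x_0,\la_n x_0]$ and compare their initial directions at $x_0$. By a standard comparison argument in a $\mathrm{CAT}(0)$ space (triangle $x_0,\g_n x_0,\la_n x_0$), the angle $\measuredangle_{x_0}(\eta_n,\eta'_n)$ is controlled: since the side opposite to $x_0$ has length at most $u(|\g_n x_0|_X)$ while the two adjacent sides have length comparable to $|\g_n x_0|_X$ (using the first point to see $|\la_n x_0|_X$ is of the same order), the law of cosines in the comparison triangle gives $\measuredangle_{x_0}(\eta_n,\eta'_n)\to 0$. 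Because $\eta_n(|\g_n x_0|_X)=\g_n x_0\to\xi$, the directions of $\eta_n$ at $x_0$ converge in the unit tangent space $S_{x_0}X$ to the direction of the ray $[x_0,\xi)$; hence so do the directions of $\eta'_n$, and therefore $\la_n x_0\to\xi$ in $\overline{X}$. This shows $\xi\in\mathcal{L}_\La$, so $\mathcal{L}_\G\subset\mathcal{L}_\La$. The last sentence is then immediate: if $\G$ is a lattice, $\mathcal{L}_\G=X(\infty)$ by Corollary~\ref{qr: prelims: sym spc: cor: lattice conical limit points are dense}, so $\mathcal{L}_\La=X(\infty)$ too.

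The only mildly delicate point is the angle estimate, i.e.\ making precise that a sublinearly-short chord between two long geodesic segments from $x_0$ forces their initial angle to zero; this is where the sublinearity of $u$ is genuinely used, and it is a routine $\mathrm{CAT}(0)$ comparison once one writes $d(\g_n x_0,\la_n x_0)\le u(|\g_n x_0|_X)=o(|\g_n x_0|_X)$. Everything else is bookkeeping with the cone topology on $\overline{X}$. (Alternatively one can phrase the whole argument with Busemann functions or Gromov products instead of angles, but the angle version is the most transparent.)
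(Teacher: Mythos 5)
Your proposal is correct. It follows the same overall strategy as the paper's proof: fix the basepoint, take for each $\g_n$ a closest $\La$-orbit point $\la_n x_0$ at distance at most $u(|\g_n|)=o(|\g_n|)$, and use a nonpositive-curvature comparison to transfer the convergence $\g_n x_0\to\xi$ to $\la_n x_0\to\xi$. The difference is in the comparison step. You control the Alexandrov angle at $x_0$ between $[x_0,\g_n x_0]$ and $[x_0,\la_n x_0]$ via the Euclidean law of cosines, and then invoke the description of the cone topology in terms of truncated cones (divergent norm plus convergent initial directions). The paper instead uses convexity of the function $t\mapsto d\big(\eta_n(t),\eta_n'(t)\big)$ for the two geodesics $\eta_n=[x_0,\g_n x_0]$ and $\eta_n'=[x_0,\la_n x_0]$: since this function vanishes at $t=0$ and is at most $u(T_n)$ at $t=T_n=|\g_n|$, convexity gives the bound $T\cdot u(T_n)/T_n\to 0$ at any fixed time $T$, which yields uniform convergence on compact sets directly --- the paper's working definition of cone-topology convergence --- without passing through angles or the equivalence of the two descriptions of that topology. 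Both routes use sublinearity in exactly the same place ($u(T_n)/T_n\to 0$), and your angle estimate is sound (the comparison angle satisfies $\cos\theta\ge 1-u(T_n)^2/(2ab)$ with $ab\sim T_n^2$, and the Alexandrov angle is bounded by the comparison angle), so this is purely a matter of which standard $\mathrm{CAT}(0)$ tool one prefers; the convexity argument is marginally more self-contained given how the paper sets up the cone topology.
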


\begin{proof}
By definition, one has to show that given a point $\xi\in X(\infty)$ and a sequence $\g_n\in \G$ such that $\g_n x_0\rightarrow \xi$ (in the cone topology on $\overline{X}$), there is a corresponding sequence $\la_n\in \La$ with $\la_n x_0\rightarrow \xi$. Define $\la_n:=\la_{\g_n}$ to be the closest point to $\g_n$ in $\La$, and to ease notation denote $x_n:=\g_n x_0, x_n'=\la_n x_0$. Let also $\eta_n:=[x_0,x_n]$ and $\eta_n':=[x_0,x_n']$ be unit speed geodesics. Finally, let $T_n$ denote the time in which $\eta_n$ terminates, i.e.\ $\eta_n(T_n)=x_n$. 

Convergence in the cone topology $x_n\rightarrow \xi$ is equivalent to the fact that the geodesics $\eta_n$ converge to $\eta:=[x_0,\xi)$ uniformly on compact sets. This means in particular that $T_n\rightarrow\infty$.  Non-positive curvature guarantees that the functions $F_n(t):=d\big(\eta(t),\eta_n(t)\big), F_n'(t):=d\big(\eta(t),\eta_n'(t)\big)$ and $G_n:=d\big(\eta_n(t),\eta_n'(t)\big)$ are convex ($F_n'$ is just a notation, completely unrelated to the derivative of $F_n$). Since $G_n(0)=F_n(0)=F_n'(0)=0$ any of these functions is either constant $0$ or monotonically increasing, so proving uniform convergence of $\eta_n'$ to $\eta$ amounts to proving $\lim_n F_n'(T)=0$ for every $T\in \mrnn$.

Triangle inequality gives $F_n'(T)\leq F_n(T)+G_n(T)$, and by assumption $\lim_n F_n(T)=0$. Notice that $G_n(T_n)=d(x_n,x_n')\leq u(|\g_n|)=u(T_n)$. Writing $T=\frac{T}{T_n}\cdot T_n$,  convexity of $G_n$ implies

$$G_n(T)\leq (1-\frac{T}{T_n})G_n(0)+\frac{T}{T_n}G_n(T_n)\leq 0+\frac{T}{T_n}u(T_n)=T\cdot\frac{u(T_n)}{T_n}$$

As $\lim_n T_n=\infty$ it follows from sublinearity that $\lim_n G_n(T)=0$. I conclude that $\eta_n'$ converge to $\eta$ uniformly on compact sets, therefore $\xi$ lies in the limit set of $\La$.
\end{proof}

\begin{rmk}
In Section~\ref{sec: qr: cocompact horosphere} I prove that in the setting of Proposition~\ref{qr: prop: Lambda cocompact horosphreres}, the set of $\La$-conical limit points contains the set of $\G$-conical limit points (Corollary~\ref{qr: cor: every Gamma conical is Lambda conical}). It holds that the conical limit points of $\G$ are dense in $X(\infty)$ (in the cone topology, see Corollary~\ref{qr: prelims: sym spc: cor: lattice conical limit points are dense}) and therefore $\mathcal{L}_\La=\mathcal{L}_\G=X(\infty)$. In particular, every $\G$-limit point is a $\La$-limit point. The strength of Lemma~\ref{qr: lem: limit set of Lambda equals limit set of Gamma} is that it does not assume anything on $\G$ other than that it is sublinearly covered by $\La$. In particular, Lemma~\ref{qr: lem: limit set of Lambda equals limit set of Gamma} does not require $\G$ to be a lattice.
\end{rmk}

\subsubsection{Cusps, Compact Core, and the Rational Tits Building}\label{sec: cusps compact core rational Tits}

In this section I present some of the structure theory of non-compact quotients of $X$. The focus is on the structure of `cusps' in non-compact finite volume quotients of symmetric spaces, and the `location' of cusps on the visual boundary. 

\paragraph{Cusps and Compact Core.}

Consider $\mathcal{V}=\G\backslash X$, for $\G\leq G$ a non-uniform lattice. This is a locally symmetric space of finite volume. The term `cusps' is an informal name given to those areas in a locally symmetric space through which one can `escape to infinity'. Another description is that cusps are the ends of the complement of a large enough compact set in $\mathcal{V}$. In strictly negative curvature, i.e.\ in $\rr$ locally symmetric spaces, these cusps have a precise description as submanifolds of the form $C\times \mrnn$ for a compact manifold $C$, and metrically $(C,t)$ gets narrower as $t\rightarrow \infty$. There are finitely many cusps, each corresponding to a point at $X(\infty)$ called a `parabolic point'. See e.g.\ Introduction in \cite{BallmannGromovSHroeder} or \cite{LatticesEberline}. A fundamental feature of the cusps is that one can `chop' them out of the quotient manifold $V$ and get a nice compact subset. This could be done in such a way so that:

\begin{enumerate}
    \item The lifts of the chopped parts to the universal cover $X$ are disjoint.
    
    \item Each cusp is covered in $X$ by the $\G$-orbit of a horoball, that is, the lift of a cusp is the $\G$-orbit of a horoball. The respective base points are called \emph{parabolic points} of $\G$ in $X(\infty)$.
    
    \item $\G$ acts on $X \setminus \big(\bigcup_{i\in I} \hb_i\big)$ cocompactly, where $\{\hb_i\}_{i\in I}$ is the set of horoballs coming from the lifts of cusps.
\end{enumerate} 

Since there are only finitely many cusps and $\G$ discrete, there are exactly countably many such horoballs. See for example Section $12.6$ in \cite{DrutuKapovichGGT} where this is illustrated in the case of the real hyperbolic spaces $\mathbb{H}^n$. Formally, one has;

\begin{thm}[Theorem $3.1$ in \cite{LatticesEberline}, see also Introduction therein]\label{qr: prelims: sym spc: thm: Eberline compact core rank 1}
Assume $X$ is of $\rr$, and $\G\leq G$ a non-uniform lattice. The space $\mathcal{V}=\G\backslash X$ has only finitely many (topological) ends and each end is parabolic and Riemannian collared. In particular, each cusp is a quotient of a horoball $\hb$ based at a parabolic limit point $\xi$ such that $\G\cap G_\xi$ acts cocompactly on $\h=\partial\hb$.
\end{thm}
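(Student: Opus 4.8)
The plan is to run the classical thick--thin decomposition for the finite-volume, pinched negatively curved manifold $\mc V=\G\backslash X$ (say $-b^2\le\kappa_X\le -a^2<0$). The Margulis lemma gives a constant $\ep=\ep(X)>0$ such that each connected component $\t C$ of the $\ep$-thin set $X_{<\ep}=\{x\mid\exists\,\g\in\G\sm\{e\},\ d(x,\g x)<\ep\}$ is stabilised by a discrete virtually nilpotent subgroup $\G_{\t C}$ containing every element that displaces some point of $\t C$ by less than $\ep$. First I would show the thick part $\mc V_{\ge\ep}$ is compact: around a thick point the ball $B_X(\cdot,\ep/2)$ embeds in $\mc V$, and the lower curvature bound forces its volume to exceed a constant $v_0=v_0(a,b,\ep)>0$, so a sequence of thick points escaping to infinity would produce infinitely many pairwise disjoint balls of volume $\ge v_0$, contradicting $\mathrm{vol}\,\mc V<\infty$. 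Hence every end of $\mc V$ has a neighbourhood inside $\mc V_{<\ep}$, and the ends are exactly the unbounded components of $\mc V_{<\ep}$.

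Next I would classify such a component $C$. Its group $\G_{\t C}$ is infinite; a discrete virtually nilpotent group of isometries of a negatively curved space is either finite, or virtually cyclic generated by a hyperbolic isometry, or it fixes a unique $\xi\in X(\infty)$ and --- by the elementary fact that a hyperbolic and a parabolic sharing a boundary fixed point generate a non-discrete group --- contains only elliptic and parabolic elements. In the virtually-hyperbolic case $\t C$ is a Margulis tube about a closed geodesic of length $<\ep$, which is bounded; so every unbounded $C$ falls in the last case, with $\G\cap G_\xi=:\G_\xi\le N_\xi K_\xi$ in the notation of the Langlands decomposition (Proposition~\ref{qr: prop: Eberline Generalized Iwasawa}) and $\xi=\xi(C)$ a parabolic limit point. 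Finiteness of the number of such $C$, i.e.\ of the number of ends, then follows from the standard observation that each cusp cross-section carries a definite amount of codimension-one volume while these cross-sections are pairwise disjoint inside a fixed compact part of $\mc V$.

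Then I would make the cusp geometry explicit. Fixing a Busemann function $f$ based at $\xi$ with $f(x_0)=0$, the map $y\mapsto\big(P_{\h_0}(y),f(y)\big)$ (using Proposition~\ref{qr: prelims: sym spc: prop: parallel horospheres and other properties}) identifies $X$ with $\h_0\times\mathbb{R}$, where $\h_0=\h(x_0,\xi)$; in the rank-one case $G_{\h_0}=\mathrm{Stab}_G(\h_0)=N_\xi K_\xi$ acts transitively on $\h_0$, with $K_\xi$ compact and $N_\xi$ nilpotent. Since $\G_\xi\le N_\xi K_\xi$, projection to $N_\xi$ has finite kernel $\G_\xi\cap K_\xi$ and discrete image $\bar\G_\xi\le N_\xi$. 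Consequently the preimage of $C$ is the $\G$-orbit of the horoball $\hb=\hb(x_0,\xi)$ (whose $\G$-translates are disjoint for $\hb$ deep enough), $C\cong\G_\xi\backslash\hb$, and in the above coordinates $C$ is $(\bar\G_\xi\backslash N_\xi)\times[0,\infty)$ modulo the finite group $\G_\xi\cap K_\xi$: a Riemannian collar with cross-section $\G_\xi\backslash\h_0$. This gives ``each end is parabolic and Riemannian collared'' and ``each cusp is a quotient of a horoball based at a parabolic limit point''. To upgrade finite volume of the cross-section to cocompactness, note $\G_\xi\backslash\hb$ embeds in $\mc V$ and so has finite volume; since $\kappa_X\le -a^2<0$ the induced volume on $\h_t=\{f=-t\}$ decays at least like $e^{-(n-1)at}$ as $t\to+\infty$, so $\mathrm{vol}(\G_\xi\backslash\hb)$ is comparable to $\mathrm{vol}(\G_\xi\backslash\h_0)$, which is therefore finite. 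As $\h_0$ with its induced metric is the simply connected nilpotent --- hence unimodular --- Lie group $N_\xi$ with a left-invariant metric, finiteness of $\mathrm{vol}(\bar\G_\xi\backslash N_\xi)$ together with Malcev's theorem (every lattice in a simply connected nilpotent Lie group is cocompact) forces $\bar\G_\xi$, hence $\G\cap G_\xi$, to act cocompactly on $\h=\h_0=\partial\hb$.

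The hardest part to write carefully is step three: producing the honest product description $C\cong(\bar\G_\xi\backslash N_\xi)\times[0,\infty)$ rather than the merely coarse statement ``$C$ lies within bounded distance of a horoball'' requires knowing precisely that $\G_\xi$ has no hyperbolic part and embeds, up to a finite subgroup, as a discrete cocompact subgroup of $N_\xi$. The final cocompactness step is then just the exponential contraction of horospherical volume plus Malcev's theorem, and everything else is routine Margulis-lemma bookkeeping in pinched negative curvature.
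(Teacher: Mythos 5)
The paper does not prove this statement; it quotes it from Eberlein~\cite{LatticesEberline}, whose own argument is run in the setting of visibility manifolds via the duality condition, not via the Margulis lemma. Your thick--thin decomposition is the other classical route, and its architecture is sound: compactness of the thick part by the disjoint-balls volume count, classification of the unbounded thin components as cusps with parabolic stabilizer fixing a unique $\xi\in X(\infty)$, the collar structure in Busemann coordinates, and cocompactness of $\G\cap G_\xi$ on $\h$ from finiteness of the cross-sectional volume. What your route buys is a single Margulis constant controlling all cusps at once, and an argument that works verbatim for any finite-volume manifold of pinched negative curvature; Eberlein's argument trades that for applicability to nonpositively curved visibility manifolds.

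Two steps need repair. First, the map $N_\xi K_\xi\to N_\xi$, $nk\mapsto n$, is not a group homomorphism ($K_\xi$ normalizes but does not centralize $N_\xi$), so the ``image $\bar\G_\xi\le N_\xi$'' is not a subgroup and Malcev's theorem cannot be applied to it. The correct route is the one the paper itself uses in Section~\ref{sec: qr: geometry to algebra}: since $K_\xi$ is compact and $\mathrm{vol}(\G_\xi\backslash\h)<\infty$, the group $\G_\xi$ is a lattice in $\mathrm{Stab}_G(\h)$, and Mostow's Lemma~\ref{qr: prelims: translation: lem: Mostow nilpotent radical intersection} gives that $\G_\xi\cap N_\xi$ is cocompact in $N_\xi$; as $N_\xi$ acts simply transitively on $\h$, this yields cocompactness of $\G_\xi$ on $\h$. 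Second, the horospherical volume comparison is quoted in the wrong direction: to deduce $\mathrm{vol}(\G_\xi\backslash\h_0)<\infty$ from $\mathrm{vol}(\G_\xi\backslash\hb)<\infty$ you need the Jacobian of the projection $\h_0\to\h_t$ to be bounded \emph{below} by $e^{-(n-1)bt}$, which comes from the lower curvature bound $\kappa\ge -b^2$; the upper curvature bound gives the upper estimate on the Jacobian, which proves the converse implication. Both repairs are routine, and with them your proof is complete.
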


For symmetric spaces of higher rank, a similar construction is available (see \cite{LeuzingerLocSymSpaces}). By removing a countable family of horoballs from $X$, one obtains a subspace on which $\G$ acts cocompactly. There are two main differences from the situation in $\rr$. One is that an orbit map $\g\mapsto \g x$ is a quasi-isometric embedding of $\G$ (with the word metric) into $X$. 

\begin{thm}[Lubozki-Mozes-Raghunathan, Theorem A in \cite{lubotzky_mozes_raghunathan_2000}]\label{prelims: thm: lubozki mozes raghunathan LMS QIE}
Let $G$ be a semisimple Lie group of higher $\mathbb{R}$-rank, $d_G$ a left invariant metric induced from some Riemannian metric on $G$. Let $\G$ an irreducible lattice, $d_\G$ the corresponding word metric on $\G$. Then $d_{G\restriction \G\times \G}$ and $d_\G$ are Lipschitz equivalent. 
\end{thm}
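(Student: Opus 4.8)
The plan is to split the asserted bi-Lipschitz equivalence into its two inequalities and to concentrate all of the difficulty in the unipotent subgroups coming from $\mathbb{Q}$-parabolics. One inequality is free and uses nothing about the rank: if $S$ is a finite generating set of $\G$ and $R_0=\max_{s\in S}d_G(e,s)$, then left-invariance of $d_G$ gives $d_G(\g_1,\g_2)=d_G(e,\g_1^{-1}\g_2)\le R_0\,d_\G(\g_1,\g_2)$, and since $\G$ is discrete in $G$ there is $\ep_0>0$ with $d_G(\g_1,\g_2)\ge\ep_0$ whenever $\g_1\ne\g_2$. So the theorem reduces to a bound $d_\G(\g_1,\g_2)\le C\,d_G(\g_1,\g_2)+C'$, i.e.\ to the statement that the inclusion $\G\hookrightarrow G$ is a quasi-isometric embedding (that $\G$ is \emph{undistorted} in $G$). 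If $\G$ is uniform this is the \v{S}varc--Milnor lemma, so I would immediately reduce to the non-uniform case. Since $G$ has higher $\mathbb{R}$-rank and $\G$ is irreducible, Margulis arithmeticity lets me pass to a commensurable subgroup (undistortedness is a commensurability invariant) and assume $\G=\mathbf G(\mathbb Z)$ for a semisimple $\mathbb Q$-group $\mathbf G$ with $\mathbf G(\mathbb R)$ isogenous to $G$ up to compact factors; non-uniformity forces the $\mathbb{Q}$-rank of $\mathbf G$ to be at least $1$ (Godement's criterion).

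Next I would feed in reduction theory. Fix a minimal $\mathbb{Q}$-parabolic $\mathbf P_0=\mathbf M\mathbf A\mathbf N$, with $\mathbf A$ the maximal $\mathbb{Q}$-split torus and $\mathbf N$ its unipotent radical; there is a Siegel set $\mathfrak S$ and a finite set $F\subset\G$ with $G=\G\,F\,\mathfrak S$ whose $\G$-translates are locally finite. The point is to produce, for a given $\g\in\G$ with $d_G(e,\g)=R$, an algebraic factorisation: modulo a factor of word length $O(R)$, the element $\g$ is a product of pieces each of which either lies in a fixed compact subset of $G$, or lies in the integral points of a maximal torus at $d_G$-distance $O(R)$ from $e$, or lies in $\mathbf U(\mathbb Z)$ with matrix entries of size at most $e^{O(R)}$, where $\mathbf U$ is the unipotent radical of a proper $\mathbb{Q}$-parabolic of $\mathbf G$ --- arranged so that the remaining word lengths sum to $O(R)$. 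Compact and torus pieces contribute $O(R)$ for elementary reasons, so everything comes down to the key estimate $d_\G(e,u)=O(\log\|u\|)$ for $u\in\mathbf U(\mathbb Z)$. This is best possible, as $d_G(e,u)\asymp\log\|u\|$ already.

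To prove the key estimate I would use the higher-rank hypothesis via a dichotomy on the $\mathbb{Q}$-rank of $\mathbf G$. If it is at least $2$, I would exploit commutation relations among $\mathbb{Q}$-root subgroups: choosing roots $\alpha,\beta$ with $\alpha+\beta$ again a root, the commutator pairing $\mathbf U_\alpha\times\mathbf U_\beta\to\mathbf U_{\alpha+\beta}$ (modulo higher root groups) multiplies coefficients, so an element of $\mathbf U_{\alpha+\beta}(\mathbb Z)$ with coefficient $m$ is a single commutator of elements whose coefficients are $\lesssim\sqrt m$ (after peeling off a bounded remainder). Iterating this halving, the word length $f(R)$ of a unipotent of log-size $R$ obeys $f(R)\le 2f(R/2)+O(1)$, so $f(R)=O(R)$; one then works up the tower of root subgroups constituting $\mathbf N$. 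If instead the $\mathbb{Q}$-rank is $1$ while the $\mathbb{R}$-rank of $G$ is at least $2$, there is no such internal structure, but the (anisotropic) maximal torus normalising $\mathbf N$ has infinite integral points by Dirichlet's unit theorem, and these act on $\mathbf U(\mathbb Z)$ by hyperbolic dilations; writing a general coefficient as a bounded sum of unit-rescaled bounded coefficients and undoing each rescaling by a torus word of length $O(\log\|u\|)$ gives the same bound. Assembling the factorisation with these estimates yields $d_\G(e,\g)=O(R)=O(d_G(e,\g))$, which is the reverse inequality.

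The main obstacle is the key estimate together with the exact form of the reduction-theory factorisation. On the algebraic side one must pin down, inside $\mathbf U(\mathbb Z)$, the right filtration by root subgroups and the commutation (or dilation) relations \emph{with integral structure constants}, and control both the bounded remainders and the non-abelian nesting of the root groups; setting the $\mathbb{Q}$-rank dichotomy up cleanly is itself delicate, as the two regimes rest on genuinely different mechanisms (root-group commutators versus dilation by units). On the bookkeeping side one has to extract from reduction theory --- Siegel sets and local finiteness of the $\G$-translates --- the statement that an element at $d_G$-distance $R$ really does decompose into pieces of the asserted sizes, and to keep track of the passages between $\G$ and the commensurable $\mathbf G(\mathbb Z)$, the compact factors, and the cases where the torus piece does or does not itself lie in $\G$.
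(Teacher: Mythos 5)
The paper does not prove this statement: it is imported verbatim as Theorem A of Lubotzky--Mozes--Raghunathan and used as a black box (to quasi-isometrically identify $\G$ with the compact core $X_0$), so there is no internal proof to compare against. Your outline does correctly reconstruct the architecture of the actual LMR argument: the ``easy'' inequality from discreteness and finite generation, the reduction via Siegel sets to the single key estimate $d_\G(e,u)=O(\log\|u\|)$ for $u\in\mathbf U(\mathbb Z)$ in the unipotent radical of a proper $\mathbb Q$-parabolic, and the fact that higher rank enters only through that estimate.

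There is, however, a genuine gap in your $\mathbb Q$-rank $\geq 2$ branch. The commutator-halving recursion does not close as written: a single commutator $[x,y]=xyx^{-1}y^{-1}$ of two words of length $f(R/2)$ has length $4f(R/2)$, not $2f(R/2)$, so the recursion $f(R)\leq 4f(R/2)+O(1)$ only yields $f(R)=O(R^2)$, i.e.\ $d_\G(e,u)=O\big((\log\|u\|)^2\big)$ --- polynomial but not Lipschitz control. Worse, the recursion is circular: in, say, $\mathrm{SL}_3(\mathbb Z)$ the ``lower'' root groups $\mathbf U_\alpha,\mathbf U_\beta$ are Weyl-conjugate to $\mathbf U_{\alpha+\beta}$, so one cannot bound their word length by the same quantity one is trying to establish. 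The mechanism that actually works (and is the one LMR use) is the one you already describe in your $\mathbb Q$-rank $1$ branch, suitably generalized: find a semisimple integral element $A$ of the Levi whose conjugation action on $\mathbf U(\mathbb Z)$ has an expanding eigenvalue (a unit of a number field when the torus is anisotropic, a hyperbolic element of the Levi such as a hyperbolic matrix in $\mathrm{SL}_2(\mathbb Z)$ acting on $\mathbb Z^2\subset\mathrm{SL}_3(\mathbb Z)$ otherwise), expand the coefficient of $u$ in base $\lambda$ as $u=\prod_i A^i v_i A^{-i}$ with $\|v_i\|=O(1)$ and $i=O(\log\|u\|)$, and telescope the conjugates Horner-style into a word $v_0Av_1A\cdots Av_kA^{-k}$ of length $O(\log\|u\|)$. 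With that replacement (and the nontrivial bookkeeping from reduction theory that you rightly flag), the sketch matches the genuine proof.
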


This result plays a significant preliminary role in the proofs of quasi-isometric rigidity for non-uniform lattices in higher rank symmetric spaces in both \cite{DrutuQI} and \cite{EskinLatticeClassification}. The second difference in higher rank spaces is that the horoballs could not in general be taken to be disjoint. However, in the special case of \qr lattices the horoballs can be taken to be disjoint. Recall that the $\mathbb{Q}$-structure of $(G,\G)$ is a $\mathbb{Q}$-structure on $G=\mathbf{G}(\mathbb{R})$ in which $\G$ is an arithmetic lattice. The following theorem sums up the relevant properties for \qr lattices. 

\begin{thm} [Theorem $4.2$ and Proposition $2.1$ in \cite{Leuzinger1995Exhaustion}, see also Remarks $3$ and $4$ in \cite{LeuzingerLocSymSpaces}, Section $13$ in \cite{raghunathan1972discrete}, and Proposition $2.1$ in \cite{PrasadQRank1Rigidity}]\label{qr: prelims: sym spc: thm: Leuzinger compact core}
Assume $X$ is of higher rank, and $\G\leq G$ an irreducible torsion-free non-uniform lattice. On the locally symmetric space $\mathcal{V}=\G\backslash X$ there exists a continuous and piece-wise real analytic exhaustion function
$h: \mathcal{V} \rightarrow [0, \infty)$ such that, for any $s>0$, the sublevel set $\mathcal{V}(s) := \{h < s\}$ is
a compact submanifold with corners of $\mathcal{V}$. Moreover the boundary of $\mathcal{V}(s)$,
which is a level set of $h$, consists of projections of subsets of horospheres
in $X$.

The $\G$-action on the above set of horospheres has finitely many orbits, and the following conditions are equivalent: 
\begin{enumerate}
    \item The corresponding horoballs bounded by these horospheres can be taken to be disjoint.
    \item For each such horosphere $\h$ the action of $\G\cap \mathrm{Stab}_G(\h)$ on $\h$ is cocompact.
    \item The $\mathbb{Q}$-structure of $(G,\G)$ has \qrc i.e.\ $\G$ is a \qr lattice.
\end{enumerate}
\end{thm}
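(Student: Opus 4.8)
\textbf{Proof plan for Theorem~\ref{qr: prelims: sym spc: thm: Leuzinger compact core}.}

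The plan is to treat this as a packaging result: most of the content is already available in the cited literature (Leuzinger's exhaustion function, Raghunathan's reduction theory, Prasad's analysis of \qr lattices), and the task is to extract the precise structural statement and prove the equivalence of the three conditions. First I would invoke \cite{Leuzinger1995Exhaustion} (Theorem~$4.2$ and Proposition~$2.1$) to obtain the exhaustion function $h:\mathcal V\to[0,\infty)$ with the stated regularity, compactness of sublevel sets $\mathcal V(s)$, and the description of level sets as projections of subsets of horospheres; this is essentially a black box, built from the theory of precise reduction theory and the study of the restriction of the distance-to-a-basepoint function to $\mathcal V$. The finiteness of the number of $\G$-orbits of horospheres in the boundary construction comes from the finiteness of the number of $\G$-conjugacy classes of rational parabolic subgroups, which is a standard consequence of reduction theory for arithmetic groups (Borel--Harish-Chandra, as in \cite{raghunathan1972discrete}, Section~$13$).

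The heart of the proof is the cycle of implications $(1)\Rightarrow(2)\Rightarrow(3)\Rightarrow(1)$. For $(3)\Rightarrow(1)$ and $(3)\Rightarrow(2)$, I would appeal to Prasad's description in \cite{PrasadQRank1Rigidity} (Proposition~$2.1$) and Raghunathan: when the $\mathbb Q$-rank is $1$, each rational parabolic $\mathbf P$ is minimal, its rational unipotent radical $\mathbf U$ together with $\G\cap\mathbf P$ gives a Siegel domain whose horoball can be shrunk to be disjoint from its $\G$-translates (precisely because the rational split torus is $1$-dimensional, so the ``$\mathbb A$-direction'' giving the Busemann coordinate is a single geodesic ray and disjointness is a matter of pushing $s$ high enough), and the arithmetic group $\G\cap\mathbf P(\mathbb R)$ acts cocompactly on the horosphere $\h$ because $\G\cap\mathbf U(\mathbb R)$ is a cocompact lattice in $\mathbf U(\mathbb R)$ and the Langlands/Iwasawa decomposition (Proposition~\ref{qr: prop: Eberline Generalized Iwasawa}) identifies $\h$ with $N_\xi$ up to the compact fiber $K_\xi$. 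For $(1)\Rightarrow(2)$: if the horoballs $\{\hb_i\}$ are disjoint, then $\G$ acts cocompactly on $X\setminus\bigcup_i\hb_i$ (the complement is the lift of the compact core $\mathcal V(s)$), and since each horosphere $\h_i$ lies in the closure of this complement, $\mathrm{Stab}_G(\h_i)\cap\G$ must act cocompactly on $\h_i$ — otherwise one could escape to infinity inside $\h_i$ within $\mathcal V(s)$, contradicting compactness. The implication $(2)\Rightarrow(3)$ is where the real tension lies: one must rule out the possibility that $\G$ has $\mathbb Q$-rank $\ge 2$ yet still has cocompact action on each boundary horosphere. Here the point is that in $\mathbb Q$-rank $\ge2$ the rational parabolic subgroups of different types are nested, their Langlands ``$A_\xi$'' factors have dimension $\ge 2$, and the rational split torus acting on a horosphere based at a non-minimal-type boundary point leaves a noncompact quotient; concretely $\G\cap\mathrm{Stab}_G(\h)$ projects to an infinite-covolume subgroup of the Levi, so the action on $\h$ cannot be cocompact. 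I would make this precise by contrapositive, using the structure of the rational Tits building: if $\mathbb Q$-rank $\ge 2$, pick a boundary point $\xi$ of the horosphere construction corresponding to a proper non-minimal rational parabolic; the Levi factor of this parabolic is itself a semisimple group of positive $\mathbb Q$-rank containing an arithmetic lattice, and the horosphere fibers over the associated locally symmetric space of that Levi, which is noncompact — so the $\G$-action on $\h$ is not cocompact.

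The main obstacle I anticipate is precisely this last step, $(2)\Rightarrow(3)$, or equivalently showing that in $\mathbb Q$-rank $\ge 2$ one genuinely cannot take the horoballs disjoint: this requires a careful bookkeeping of which horospheres appear in Leuzinger's construction (do they correspond to all rational parabolics or only to minimal ones?) and of how the Busemann filtration interacts with the rational root system of dimension $\ge 2$. If the construction uses horospheres based at \emph{all} rational parabolic types, the cocompactness of $\G\cap\mathrm{Stab}_G(\h)$ fails immediately for the non-minimal ones; if it uses only minimal ones, the failure is instead in the disjointness, because the Siegel sets for distinct minimal rational parabolics of $\mathbb Q$-rank $\ge2$ overlap in a way that cannot be removed by raising the level $s$ (the geometry in the $\ge 2$-dimensional split torus direction forces overlap). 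Sorting out which of these two mechanisms is the ``right'' one, and citing the correct statement from \cite{LeuzingerLocSymSpaces} and \cite{Leuzinger1995Exhaustion} for it, is the delicate part; everything else is assembling known reduction theory.
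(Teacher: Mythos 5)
The paper does not prove this statement at all: it is quoted as background, with the proof deferred entirely to the cited sources (Leuzinger's exhaustion theorem, Raghunathan \S 13, Prasad's Proposition $2.1$). Your plan of treating the exhaustion function as a black box and then establishing the cycle $(1)\Rightarrow(2)\Rightarrow(3)\Rightarrow(1)$ via reduction theory is exactly the intended reading, and your diagnosis that the only real tension sits in $(2)\Rightarrow(3)$ (equivalently, in why higher $\mathbb{Q}$-rank forces either overlap of the horoballs or non-cocompactness on the full horospheres) matches what Leuzinger's Remarks $3$ and $4$ address. In that sense there is nothing to compare: you are reassembling the same references the paper points to.

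One imprecision is worth correcting, because it touches the place where the \qr hypothesis actually does work in $(3)\Rightarrow(2)$. You assert that the Langlands decomposition ``identifies $\h$ with $N_\xi$ up to the compact fiber $K_\xi$,'' so that cocompactness of $\G\cap\mathrm{Stab}_G(\h)$ on $\h$ would follow from $\G\cap\mathbf U(\mathbb R)$ being a cocompact lattice in $\mathbf U(\mathbb R)$. This is false whenever the $\mathbb{R}$-rank exceeds the $\mathbb{Q}$-rank: by the paper's own Corollary~\ref{qr: prelims: translation: cor: structure of horosphere stabilizer}, $\mathrm{Stab}_G(\h)^\circ=(K_\xi A_\xi^\perp)^\circ N_\xi$ acts transitively on $\h$, so $\h$ carries an $A_\xi^\perp$-direction (and the noncompact anisotropic part $M$ of the Levi) in addition to the unipotent direction. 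Cocompactness of $\G\cap\mathrm{Stab}_G(\h)$ on $\h$ therefore requires, beyond the standard fact that $\G\cap U$ is cocompact in $U$, that the image of $\G\cap P$ in the Levi quotient modulo the one-dimensional $\mathbb{Q}$-split torus be a \emph{cocompact} lattice there; this is where $\mathbb{Q}$-rank $1$ enters, via the $\mathbb{Q}$-anisotropy of the semisimple part of the Levi and Godement's compactness criterion. Without this correction your argument for $(3)\Rightarrow(2)$ does not close, and the symmetric-looking failure mode in $(2)\Rightarrow(3)$ (a $\mathbb{Q}$-isotropic Levi factor producing a noncompact fiber) is exactly the negation of this point, so getting it right is not optional.
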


\begin{defn}
In the setting of Theorem~\ref{qr: prelims: sym spc: thm: Eberline compact core rank 1} and  Theorem~\ref{qr: prelims: sym spc: thm: Leuzinger compact core}, the horoballs and horospheres that appear in the statement are called (global) \emph{ horoballs (horospheres) of $\G$}. Base points of these are called \emph{parabolic limit points of $\G$}.
\end{defn}

The proof of the key Proposition~\ref{prop: Lambda cocompact horosphreres Intro} (Proposition \ref{qr: prop: Lambda cocompact horosphreres}) makes essential use in this geometric characterization of \qr lattices. The horoballs described above are called \emph{horoballs of $\G$}, and the \emph{compact core} of $\G$ is the complement in $X$ of the horoballs of $\G$. The group $\G$ acts on it cocompactly. The following corollary describes the orbit of \qr lattices in $X$, and especially some finiteness properties which are of use. 

\begin{cor}  \label{qr: prelims: sym spc: cor: every ball intersects finitely many horoballs of Gamma}
Let $\G\leq G$ be a \qr lattice, $x\in X$, and $\xi$ a parabolic limit point. There is a unique horosphere $\h$ based at $\xi$ such that both following conditions hold: 
\begin{enumerate}
    \item $\G\cdot x\cap \h$ is a cocompact metric lattice in $\h$.
    
    \item $\G\cdot x\cap \hb=\emptyset$, where $\hb$ is the horoball bounded by $\h$.
\end{enumerate} 

Call $\h$ an \emph{$x$-horosphere of $\G$ at $\xi$}, and the corresponding bounded horoball an \emph{$x$-horoball of $\G$}. Moreover, one has: 
\begin{enumerate}
    
    \item For every $C$ there is a bound $K=K(x,C)$ so that $B(x,C)$ intersects at most $K$ $x$-horospheres of $\G$.

    \item There is $D=D(x)>0$ such that $\h\subset \mn_D(\G\cdot x\cap \h)$ for any $x$-horosphere $\h$ of $\G$. The constant $D$ is called \emph{the compactness number of $(\G, x)$}.
    
    \item There is a number $N=N(\G)$ such that every point $x\in X$ admits exactly $N$ $x$-horospheres of $\G$ that intersect $x$. These are called \emph{the horospheres of $(\G, x)$}.
    
\end{enumerate}

\end{cor}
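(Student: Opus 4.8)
The plan is to produce $\h$ as the \emph{lowest} horosphere based at $\xi$ that the orbit $\G\cdot x$ meets, and then to read off the three quantitative assertions from two structural facts: that $\G\cap\mathrm{Stab}_G(\h')$ acts cocompactly on \emph{every} horosphere $\h'$ based at $\xi$, and that every $x$-horosphere of $\G$ is parallel --- hence uniformly close --- to one of the locally finite family of global horospheres of $\G$ furnished by Theorem~\ref{qr: prelims: sym spc: thm: Leuzinger compact core}. Throughout I write $\h^x_\zeta$ for the $x$-horosphere of $\G$ at a parabolic limit point $\zeta$ and use the equivariance $\g\,\h^x_\zeta=\h^x_{\g\zeta}$, which is immediate from uniqueness since $\g(\G x)=\G x$ and $\g$ carries the pair (horosphere based at $\zeta$, bounding horoball) to the corresponding pair based at $\g\zeta$ preserving both defining conditions.

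For existence I would fix the Busemann function $f$ based at $\xi$ with $f(x)=0$ and set $c_0:=\inf_{\g\in\G}f(\g x)$. Since $\xi$ is a parabolic limit point of the \qr lattice $\G$, Theorem~\ref{qr: prelims: sym spc: thm: Leuzinger compact core} supplies a global horosphere $\h_\xi$ of $\G$ at $\xi$ on which $\G_{\h_\xi}:=\G\cap\mathrm{Stab}_G(\h_\xi)$ acts cocompactly. Because an isometry fixing one horosphere based at $\xi$ translates the entire filtration by $0$ and hence fixes all of them, $\G_{\h_\xi}$ stabilises --- and, via the $\mathrm{Stab}_G(\xi)$-equivariance of the nearest-point projection between parallel horospheres together with the fact recorded just after Remark~\ref{qr: prelims: rmk: heintze-im hof results about metric on horospheres} --- acts cocompactly on every horosphere based at $\xi$. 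I would then pick $\g_n$ with $f(\g_n x)\to c_0$, project each $\g_n x$ to the level set $\{f=c_0\}$ (at distance $|f(\g_n x)-c_0|\to 0$), push these projections by suitable elements of $\G_{\h_\xi}$ (which preserve $f$) into a fixed compact fundamental domain, and use properness of the $\G$-action to force a subsequence of $\{\g_n x\}$ to be a single orbit point; this shows $c_0$ is finite and attained, say at $\g_0$. With $\h:=\{f=c_0\}$ and $\hb$ the bounded horoball, $f\ge c_0$ on $\G\cdot x$ gives $\G x\cap\hb=\emptyset$, while $\G x\cap\h$ is discrete and contains the cocompact metric lattice $\G_{\h_\xi}\cdot(\g_0 x)$, hence is itself one. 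Uniqueness is immediate: any horosphere $\{f=c'\}$ with both properties has $c'\le c_0$ by the second property, and $c'<c_0$ makes $\G x\cap\{f=c'\}$ empty, contradicting the first.

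The remaining assertions I would derive as follows. First, $\h^x_\zeta$ and the global horosphere $\h_\zeta$ are both based at $\zeta$, hence equidistant by Proposition~\ref{qr: prelims: sym spc: prop: parallel horospheres and other properties}(3), at a distance $R(x)$ depending only on $x$ and the $\G$-orbit of $\zeta$. Assertion~(2) then follows by reducing, through the finitely many $\G$-orbits of parabolic limit points (Theorem~\ref{qr: prelims: sym spc: thm: Leuzinger compact core}), to representatives $\xi_1,\dots,\xi_k$: for each $i$ the group $\G\cap\mathrm{Stab}_G(\h^x_{\xi_i})$ (which coincides with $\G_{\h_{\xi_i}}$) acts cocompactly on $\h^x_{\xi_i}$, so any point of $\G x\cap\h^x_{\xi_i}$ has $D_i(x)$-dense orbit in it; setting $D:=\max_i D_i$ and transporting along $\G$ via the equivariance gives $\h^x_\zeta\subset\mn_D(\G x\cap\h^x_\zeta)$ for every $\zeta$. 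For assertion~(1), recall the global horospheres of $\G$ form a locally finite family in $X$ (disjoint bounding horoballs, $\G$ cocompact with finitely many orbits on the complement, as in the discussion around Theorems~\ref{qr: prelims: sym spc: thm: Eberline compact core rank 1} and~\ref{qr: prelims: sym spc: thm: Leuzinger compact core}); if infinitely many distinct $x$-horospheres met $B(x,C)$, infinitely many would share an orbit type $i$ and then the global horospheres $\g_n\h_{\xi_i}$ would all meet $B(x,C+R_i(x))$, a contradiction --- the finite count is $K(x,C)$. Applying this with $C=0$ shows only finitely many $x$-horospheres pass through $x$, and the equivariance makes this count the same for all points of $\G\cdot x$; this is $N$, and these horospheres are the horospheres of $(\G,x)$, giving~(3).

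I expect the only genuine obstacle to be the attainment of the infimum $c_0$ in the existence step: the rest is bookkeeping with equivariance, parallelism and the compactness number, but attainment really requires cocompactness of $\G_{\h_\xi}$ on \emph{all} horospheres based at $\xi$ rather than merely on the distinguished global one, together with the compactness--properness argument above, and it is here that the \qr hypothesis (through Theorem~\ref{qr: prelims: sym spc: thm: Leuzinger compact core}) is essential. A minor point to keep in mind is that the $x$-horospheres need be neither pairwise disjoint nor nested in the global horoballs, so local finiteness for them has to be imported from the global family by way of the equidistance of parallel horospheres, not established directly.
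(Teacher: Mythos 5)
Your overall route is the same as the paper's: use the \qr cocompactness of $\G\cap\mathrm{Stab}_G(\h)$ on \emph{every} horosphere based at $\xi$, take the extremal level of the Busemann function on the orbit, and get the uniform constants by reducing to the finitely many $\G$-orbits of parabolic points via equivariance. Two steps, however, assert more than they prove. The first is the finiteness of $c_0=\inf_{\g}f(\g x)$. Your minimizing-sequence argument begins by projecting $\g_n x$ onto the level set $\{f=c_0\}$, which only exists if $c_0>-\infty$; so the argument establishes attainment \emph{given} finiteness, and the claim that it "shows $c_0$ is finite" is circular. Finiteness needs a separate input: some horoball based at $\xi$ must be disjoint from $\G\cdot x$. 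The paper supplies exactly this ("there is some horoball $\hb$ based at $\xi$ so that $\G\cdot x\notin\hb$", coming from the fact that the global horoballs of $\G$ form a disjoint $\G$-invariant family, so the depth of $\G\cdot x$ inside them equals the depth of $x$ itself and is bounded). You flag attainment as the only genuine obstacle, but the actual obstacle is this lower bound, and it is where the cusp structure of Theorem~\ref{qr: prelims: sym spc: thm: Leuzinger compact core} enters a second time.

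The second issue is item (1). You reduce local finiteness of the $x$-horospheres to local finiteness of the family of \emph{global} horospheres of $\G$, which you "recall" but which is neither proved in your argument nor recorded in the paper's preliminaries; disjointness of the horoballs plus countability does not by itself prevent infinitely many of them from meeting a fixed ball, and proving that it cannot happen requires essentially the same cocompactness-plus-discreteness reasoning you are trying to avoid. The paper argues directly: every $x$-horosphere meeting $B(x,C)$ contributes a point of $\G\cdot x$ in $B(x,C+D)$, which contains only finitely many orbit points by discreteness, and the number of $x$-horospheres through a single orbit point is bounded by the torsion-free/discreteness argument of item (3) (infinitely many through one point would force infinitely many in one $\G$-orbit, hence coincidences $\g_n'\g_n=\g_m'\g_m$ sending $\xi_1$ to distinct base points). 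Either prove the local finiteness of the global family by that same mechanism or argue directly as the paper does; as written, the citation carries exactly the content of the statement being proved. With these two patches the proposal is correct and coincides with the paper's proof.
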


\begin{proof}
Let $x\in X$. Since $\G$ is a \qr lattice, the stabilizer in $\G$ of a parabolic limit point $\xi\in X(\infty)$ acts cocompactly on each horosphere based at $\xi$, and in particular on $\h_x:=\h(x,\xi)$. Let $D(x,\h)$ be such that $\h_x\subset\mn_D(\G\cdot x\cap \h_x)$. A priori $D(x)$ depends on $\h$, but the fact that $\G$ acts by isometries implies that for every horosphere of the form $\h':=\g\h=\h(\g x,\g\xi)$ for some $\g\in \G$, one has $\h'\subset\mn_D(\G\cdot x\cap \h')$. So $D(x)$ depends only on the $\G$-orbit of $\h$. Since $\G$ acts on the set of parabolic limit points with finitely many orbits (that is to say $\G\backslash X$ has finitely many cusps) one may take $D=D(x)$ to be the maximum of the respective bounds on each orbit. This gives the required compactness number.

Thinking of horoballs of $\G$ as lifts of ends of the complement of some compact subset of $\mathcal{V}=\G\backslash X$, one sees that there is some horoball $\hb$ based at $\xi$ so that $\G\cdot x\notin \hb$. Denote $\h=\partial \hb$, and $y=P_{\overline{\hb}}(x)\in \h$ be the projection on the closed convex set that is the closure of the horoball $\hb$. Finally, Let $\eta=[x,y]$ and denote $l=d(x,y)$. The existence of the required horosphere is equivalent to the fact that the following non-empty set admits a maximum:

$$\h_{x,\xi}:=\{t\in [0,l]\mid\G\cdot x\cap \h\big(\eta(t),\xi\big)\ne\emptyset\}$$

Indeed $0\in \h_{x,\xi}$ and it is a bounded set, so it admits a supremum $T$. Moreover, this set is discrete: if towards contradiction $t$ were an accumulation point, then for any small $\varepsilon>0$ the geodesic segment $\eta_{\restriction(t-\varepsilon,t+\varepsilon)}$ would intersect infinitely many horospheres based at $\xi$ which $\G\cdot x$ intersects $D$-cocompactly. Orbit points on different horospheres are in particular different points, therefore the set $B\big(\eta(t),D+\varepsilon\big)\cap\G\cdot x$ would be infinite, contradicting discreteness of $\G$. I conclude that $T$ is a maximum, and that $\h\big(\eta(T),\xi\big)$ is the unique desired horosphere.

The argument above generally shows that there cannot be an accumulation point in $X$ of horospheres that $\G\cdot x$ intersects $D$-cocompactly. In particular, for every $C$, the ball $B(x,C)$ intersects only finitely many $x$-horospheres of $\G$, say $K(C)$, proving item $1$ in the `moreover' statement. 

For the last statement, simply note that the horoballs of $\G$ are the $\G$-translates of finitely many horoballs. In the terminology of the statement, infinitely many horospheres of $(\G, x)$ imply that infinitely many of them are in the same $\G$-orbit. Suppose these are $\{\h_n\}_{n\in\mathbb{N}}$, with base points $\xi_n$ that are evidently pairwise different. Finally let  $\g_n\in\G$ for which $\g_n\h_1=\h_n$. Since $\G\cap \mathrm{Stab}_G(\h_n)$ acts cocompactly on $\h_n$, there is $\g_n'\in \G\cap \mathrm{Stab}_G(\h_n)$ that maps $\g_n x$ to $B(x,D)$. Discreteness of $\G$ implies that the set $\g_n'\g_n x$ is finite, and since $\G$ is torsion-free this implies $\g_n'\g_n=\g_m'\g_m$ for infinitely many $n,m\in\mathbb{N}$. However $\g_ n'\g_n\xi_1=\xi_n$ and $\xi_n\ne\xi_m$ for all $n\ne m$, a contradiction.
\end{proof}

\begin{rmk}
For the most part, I am interested in a fixed base point $x_0$ and the $x_0$-horospheres and horoballs. By a slight abuse of terminology I omit $x_0$ and call these objects `\emph{horospheres of $\G$}' and `horoballs of $\G$', respectively, denoting the associated cocompactness number $D_\G$.
\end{rmk}

Corollary~\ref{qr: prelims: sym spc: cor: every ball intersects finitely many horoballs of Gamma} allows to upgrade a metric lattice of $\h$ to a metric lattice coming from $\mathrm{Stab}_G(\h)$ only.

\begin{lem}\label{qr: lem: cocompact metric lattice of horospheremay be intersected with the stabilizer of horosphere}
Assume that a torsion-free discrete group $\Delta\leq G=\mathrm{Isom}(X)$ admits the following: 
\begin{enumerate}
    \item There is a bound $N$ such that at each point $x\in \Delta\cdot x_0$ there are at most $N$ horoballs that are tangent to $x$ and are $\Delta$-free, i.e.\ whose interior does not intersect $\Delta\cdot x_0$.
    \item There is a horosphere $\h\subset X$ such that
        \begin{enumerate}
            \item The set $\Delta\cdot x_0\cap \h$ is a cocompact metric lattice in $\h$.
            \item The open horoball $\hb$ bounded by $\h$ is $\Delta$-free.
        \end{enumerate}
    \end{enumerate}  

Then $\big(\Delta\cap \mathrm{Stab}_G(\h)\big)\cdot x_0$ is also a cocompact metric lattice in $\h$.
\end{lem}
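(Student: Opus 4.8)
The plan is to exploit the $\Delta$-invariance of the ``$\Delta$-free'' property in order to transport the horoball $\hb$ back to the base point $x_0$, and then to use the finiteness hypothesis~(1) to see that only finitely many cosets of $\Delta_\h:=\Delta\cap\mathrm{Stab}_G(\h)$ contribute to the metric lattice $\Delta\cdot x_0\cap\h$. First, a harmless normalization. Let $\xi\in X(\infty)$ be the base point of $\h$. Every element of $\mathrm{Stab}_G(\h)$ fixes $\xi$ and acts trivially on the filtration of $X$ by horospheres based at $\xi$, so $\mathrm{Stab}_G(\h)=\mathrm{Stab}_G(\h_0)$ for the parallel horosphere $\h_0:=\h(x_0,\xi)$, and in particular $\Delta_\h\cdot x_0\subseteq\h_0$. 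Since cocompact metric lattices correspond to one another under the nearest-point projection between parallel horospheres (the remarks following Proposition~\ref{qr: prelims: sym spc: prop: parallel horospheres and other properties}, cf.\ Remark~\ref{qr: prelims: rmk: heintze-im hof results about metric on horospheres}), it suffices to prove that $\Delta_\h\cdot x_0$ is a cocompact metric lattice in $\h_0$.

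Now put $S:=\{\delta\in\Delta\mid \delta x_0\in\h\}$, so that $\Delta\cdot x_0\cap\h=S\cdot x_0$, which is nonempty by~(2a). Fix $\delta\in S$. Since $\hb$ is $\Delta$-free and $\delta x_0\in\partial\hb=\h$, the horoball $\hb$ is a $\Delta$-free horoball tangent to the orbit point $\delta x_0$; applying the isometry $\delta^{-1}\in\Delta$ and using $\delta^{-1}(\Delta\cdot x_0)=\Delta\cdot x_0$, the horoball $\delta^{-1}\hb$ is a $\Delta$-free horoball tangent to $x_0$. By hypothesis~(1) there are at most $N$ such horoballs, so I may choose $\delta_1,\dots,\delta_k\in S$ with $k\leq N$ such that for every $\delta\in S$ one has $\delta^{-1}\hb=\delta_j^{-1}\hb$ for some $j$. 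This equation rewrites as $\delta_j\delta^{-1}\in\mathrm{Stab}_G(\hb)=\mathrm{Stab}_G(\h)$, hence $\delta\delta_j^{-1}\in\Delta_\h$, i.e.\ $\delta\in\Delta_\h\,\delta_j$; conversely $\Delta_\h\,\delta_j\subseteq S$ because $\mathrm{Stab}_G(\h)$ preserves $\h$ and $\delta_j x_0\in\h$. Therefore $S=\bigcup_{j=1}^{k}\Delta_\h\,\delta_j$, and applying this to $x_0$,
$$\Delta\cdot x_0\cap\h\;=\;\bigcup_{j=1}^{k}\Delta_\h\cdot(\delta_j x_0),$$
a finite union of $\Delta_\h$-orbits of the points $y_j:=\delta_j x_0\in\h$.

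It remains to upgrade this to the statement that a single $\Delta_\h$-orbit is cocompact. The group $\Delta_\h$ is discrete in $G$ and $\mathrm{Stab}_G(\h)$ acts transitively on $\h$ with compact point stabilizers, so $\Delta_\h$ acts properly discontinuously by isometries on the complete Riemannian manifold $(\h,d_\h)$; hence all its orbits in $\h$ are discrete and $\Delta_\h\backslash\h$ is a complete, locally compact length space. By~(2a) there is $D>0$ with $\h\subseteq\mn_D(\Delta\cdot x_0\cap\h)$ in the induced metric, i.e.\ $\h\subseteq\mn_D\big(\bigcup_{j=1}^{k}\Delta_\h y_j\big)$, so in the quotient every point lies within $D$ of one of the finitely many points $[y_1],\dots,[y_k]$; thus $\Delta_\h\backslash\h$ is bounded, and Hopf--Rinow forces it to be compact. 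Consequently every $\Delta_\h$-orbit in $\h$ is cocompact and, being discrete, is a cocompact metric lattice in $\h$. Passing to $\h_0$ via the parallel projection, which is $\mathrm{Stab}_G(\h)$-equivariant and hence $\Delta_\h$-equivariant, the orbit $\Delta_\h\cdot x_0$ projects onto (a set at bounded distance from) $\pi_{\h_0}(\Delta_\h y_1)$ and is therefore itself a cocompact metric lattice in $\h_0$, as required.

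The main obstacle is the middle paragraph: one must see that the single $\Delta$-free horoball $\hb$ can be moved by group elements to a $\Delta$-free horoball tangent to $x_0$, so that hypothesis~(1)---the only input bounding the ``size'' of $\Delta$---forces $S$ to split into at most $N$ cosets of $\Delta_\h$. Once that is in place, the final step (a finite union of isometric orbits being cocompact implies each orbit is cocompact) is a routine Hopf--Rinow argument in the quotient, and the passage between parallel horospheres uses only facts already recorded in Section~\ref{sec: prelims}.
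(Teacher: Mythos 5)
Your proof is correct and follows essentially the same route as the paper's: translate the $\Delta$-free horoball $\hb$ by $\delta^{-1}$ for each $\delta$ with $\delta x_0\in\h$ to get a $\Delta$-free horoball tangent to $x_0$, apply hypothesis (1) as a pigeonhole to split $\Delta\cdot x_0\cap\h$ into at most $N$ cosets of $\Delta\cap\mathrm{Stab}_G(\h)$, and conclude cocompactness from (2a). The only (cosmetic) differences are that the paper normalizes so that $x_0\in\h$ by a $\Delta$-translation instead of passing to the parallel horosphere $\h_0$, and closes with an explicit bound $L=\max_j d(x_0,\delta_j x_0)$ rather than your Hopf--Rinow argument in the quotient.
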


\begin{proof}
This is the Pigeonhole Principle. Fix $x\in\Delta\cdot x_0\cap \h$, and let $\{\hb_i\}_{i\in \{1,\dots,N\}}$ be the finite set of $\Delta$-free horoballs tangent to $x$. Assume w.l.o.g that $\h$ is the bounding horosphere of $\hb_1$. Fix a $\Delta$-orbit point $\delta_0 x_0\in \Delta\cdot x_0\cap \h$. Up to translating by some element of $\Delta$, I may assume $x=x_0$. For any other $\Delta$-orbit point $\delta x_0\in \Delta\cdot x_0\cap \h$ let $i(\delta)\in \{1,\dots,N\}$ be the index of the horoball $\delta^{-1} \hb_1$. Notice that from hypothesis $1$ it indeed follows that $\delta^{-1} \hb_1\in \{\hb_i\}_{i\in \{1,\dots,N\}}$, because the action of $\Delta$ is by isometries. Define $\delta_i$ to be the element in $\Delta$ for which:
\begin{enumerate}
    \item $i(\delta_i)=i$, i.e.\ $\delta_i^{-1}(\hb_1)=\hb_i$
    \item $d(\delta_i x_0,x_0)$ is minimal among all such $\delta\in \Delta$ which satisfy $i(\delta)=i$.
\end{enumerate}

For some indices $i\in \{1,\dots,N\}$ there is a $\delta\in \Delta$ with $i=i(\delta)$, while for others there might not be. I only care about those $i$ for which there is such $\delta$. Assume w.l.o.g that these are $i\in \{1,\dots M\}$ for $M\leq N$, and let $L:=\max_{1\leq i\leq M}\{d(x_0,\delta_i x_0)\}<\infty$.  For such an index $i_0$, the $\Delta$-orbit points that share the same $i(\delta)=i_0$ are in the same $\Delta\cap \mathrm{Stab}_G(\h)$ orbit, namely

$$\{\delta x_0\mid i(\delta)=i_0\}\subset \big(\Delta\cap \mathrm{Stab}_G(\h)\big)\cdot \delta_{i_0} x_0$$

Indeed, if $\delta^{-1}{\hb_1}=\hb_{i_0}$ then by definition $\delta\delta_{i_0}^{-1}\hb_1=\delta \hb_{i_0}=\hb_1$, hence $\delta\delta_{i_0}^{-1}\in \mathrm{Stab}_G(\h)$ is an element mapping $\delta_{i_0}x_0$ to $\delta x_0$. Let now $\delta x_0\in \h$. Its distance from the orbit $\big(\Delta\cap \mathrm{Stab}_G(\h)\big)\cdot x_0$ is $\big(\Delta\cap \mathrm{Stab}_G(\h)\big)$-invariant, therefore

$$d\Big(\delta x_0,\big(\Delta\cap \mathrm{Stab}_G(\h)\big)\cdot x_0\Big)=d\Big(\delta_{i(\delta)} x_0,\big(\Delta\cap \mathrm{Stab}_G(\h)\big)\cdot x_0\Big)\leq d(\delta_{i(\delta)},x_0)$$

The right-hand side is uniformly bounded by $L$, proving that 

$$(\Delta\cdot x_0\cap \h)\subset\mn_{L}\Big(\big(\Delta\cap \mathrm{Stab}_G(\h)\big)\cdot x_0\Big)$$

The fact that $\Delta \cdot x_0\cap \h$ is a cocompact metric lattice in $\h$ renders $(\Delta\cap \mathrm{Stab}_G(\h)\big)\cdot x_0$ a cocompact metric lattice in $h$ as well, as claimed. 

\end{proof}

\paragraph{Real and Rational Tits Buildings.}

The location of the parabolic points in $X(\infty)$ also plays an important role in the geometry of $X$. In case $\G$ is an arithmetic lattice, the natural framework to consider these points is the so called \emph{rational Tits building}. This is a building structure on the subset of parabolic points at $X(\infty)$, sometimes referred to as `rational points' in this case. They are exactly those points in $X(\infty)$ whose stabilizers  are $\mathbb{Q}$-defined (algebraic) parabolic groups of $G$ (see Section~\ref{sec: qr: geometry to algebra} for more details). I present this object, denoted $\wqg$, together with the more familiar \emph{real Tits building} structure on $X(\infty)$ with the Tits metric. The main goal is to present the results of Hattori~\cite{HattoriLimitSet}, that give a good description of the rational Tits building in terms of conical and horospherical limit points. In case $G$ is of $\rr$, by $\wqg$ I mean the (countable) set of parabolic limit points of $\G$ (so that $X(\infty)\setminus\wqg$ is comprised of conical limit points only, see Theorem~\ref{qr: sym spc: thm: in rank 1 all horospherical limit points are conical}).

\begin{defn}\label{qr: sym spc: def: regular geodesics}
A geodesic $\eta$ is said to be \emph{regular} if it is contained in a unique maximal flat $F\subset X$. The point $\eta(\infty)\in X(\infty)$ is called a \emph{regular} point of $X(\infty)$. A point $\xi\in X(\infty)$ is \emph{singular} if it is not regular. Regularity does not depend on the choice of representative geodesic ray $\eta$ for $\xi$. 
\end{defn}

A \emph{Weyl chamber} of $X(\infty)$, or an open \emph{spherical chamber}, is any connected component in the Tits topology of $X(\infty)\setminus \mathcal{S}$, where $\mathcal{S}\subset X(\infty)$ is the subset of singular points at $X(\infty)$. 

\begin{prop}[Propositions $2.2$ and $3.2$ in \cite{JiSymmetricSpacesBuildings} and Section $8$ in \cite{BallmannGromovSHroeder}]\label{qr: prelims: sym spc: prop: real building structure}
The Weyl chambers induce a simplicial complex structure on $X(\infty)$ that is a \emph{spherical Tits building}. The apartments of the building are exactly the sets of the form $F(\infty)\subset X(\infty)$ for all flats $F\subset X$, and the chambers are exactly the Weyl chambers at $X(\infty)$. Moreover, the Tits metric completely determines the building structure, and vice versa, and $\big(X(\infty),d_T\big)$ is a metric realization of the Tits building at $X(\infty)$.
\end{prop}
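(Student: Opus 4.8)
The plan is to pass through the building attached to the $BN$-pair of $G$ and to match the combinatorics of parabolic subgroups with the geometric stratification of $X(\infty)$ by Weyl chambers; the building axioms then become the Bruhat-decomposition properties of $G$. Concretely, regard $G$ as a semisimple algebraic group as in Section~\ref{sec: prelims: generalities on lattices}, fix a maximal $\mathbb{R}$-split torus $A\le G$ with $F_0:=Ax_0$ a maximal flat of $X$, and write $r:=\operatorname{rank}X=\dim F_0$, $N:=N_G(A)$, $Z:=Z_G(A)$, $W:=N/Z$ for the restricted Weyl group, and $B$ for a minimal parabolic containing $A$. The identifications I would set up are: maximal flats of $X$ $\leftrightarrow$ maximal $\mathbb{R}$-split tori $\leftrightarrow$ $G$-conjugates of $A$; under $\xi\mapsto G_\xi$ (Proposition~\ref{qr: prop: Eberline Generalized Iwasawa}) the stabilizers run over all parabolic subgroups of $G$, with $\xi$ regular exactly when $G_\xi$ is minimal; and, crucially, $\mathcal{S}\cap F_0(\infty)$ is exactly the union of the walls of the $W$-action on the unit sphere $F_0(\infty)\cong S^{r-1}$, since by the structure theory of the restricted root system a unit direction in $F_0$ is singular iff it lies on a reflection hyperplane of $W$. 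Hence the Weyl chambers lying in $F_0(\infty)$ are precisely the chambers of the spherical Coxeter complex $(S^{r-1},W)$, the $W$-invariant subspheres $F'(\infty)$ for sub-flats $F'\subset F_0$ are its faces, and `Weyl chamber at infinity $\mapsto$ its pointwise $G$-stabilizer' extends to an inclusion-reversing bijection from the poset of simplices of the Weyl-chamber complex onto the poset of parabolic subgroups of $G$.

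Next I would declare the apartments to be $\{F(\infty) : F\text{ a maximal flat}\}$, each a finite (hence spherical) Coxeter complex by the previous step, the chambers the closed Weyl chambers, and check the two building axioms. That any two simplices lie in a common apartment reduces to the statement that any two Weyl chambers $C_1,C_2$ at infinity lie in one $F(\infty)$: choosing regular interior points $\xi_i\in C_i$, the fact recalled above in the discussion of the Tits metric that $\xi_1,\xi_2\in F(\infty)$ for some flat $F$, combined with the enlargement of any flat to a maximal one and the identity ``the Weyl chamber of $X(\infty)$ through a regular point of $F(\infty)$ equals the Coxeter chamber of $F(\infty)$ through it'', gives it; general simplices follow since each is a face of a chamber. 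For the compatibility axiom — for apartments $\Sigma,\Sigma'$ sharing a chamber $C$ there is an isomorphism $\Sigma\to\Sigma'$ fixing $\Sigma\cap\Sigma'$ — I would use the group theory: the apartments through $C$ correspond to the maximal $\mathbb{R}$-split tori of the minimal parabolic $G_\xi$ (with $\xi$ regular in $C$), these are permuted transitively by $R_u(G_\xi)$, and a conjugating element fixes $C$ at infinity and fixes $\Sigma\cap\Sigma'$ pointwise. The cleanest presentation is to observe that $(B,N)$ is a Tits system in $G$ with Weyl group $W$, quote Tits's theorem that its flag complex is a building — spherical since $W$ is finite, and thick when $X$ is irreducible of rank $\ge 2$ — and note that the geometric apartment/chamber data just built is $G$-equivariantly identified with the algebraic one through the dictionary above; this also yields the ``exactly'' clauses (apartments are precisely the $F(\infty)$ for maximal $F$, chambers precisely the Weyl chambers).

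Finally, on each apartment $F(\infty)$ the Tits metric restricts, by its definition together with the coincidence of the cone and Tits topologies on $F(\infty)$ (Proposition~\ref{qr: prelims: sym spc: prop: semicontinuity of angular metric}), to the standard angular metric of $S^{r-1}$, the walls being totally geodesic subspheres meeting at the Coxeter angles of $W$; thus $(X(\infty),d_T)$ is exactly the space obtained by gluing these round spherical Coxeter complexes along the simplicial pattern of the building, i.e.\ its canonical CAT$(1)$ metric realization. Conversely the simplicial structure is recovered from $d_T$ in the standard way — the closed chambers are the maximal isometrically embedded spherical simplices of the Coxeter type, and the walls are the codimension-one totally geodesic subspheres fixed by the reflections of the apartments — so metric and building structure determine each other. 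I expect the only genuine work to lie in the compatibility axiom: rephrasing ``two apartments through a common chamber'' as conjugacy of maximal split tori inside a minimal parabolic and verifying that the conjugator fixes the common intersection pointwise. Everything else is either the structure theory of the restricted root system (for the walls and the Coxeter chambers) or a direct appeal to Tits's theory of $BN$-pairs, which is how I would organize the write-up, citing \cite{JiSymmetricSpacesBuildings} and \cite{BallmannGromovSHroeder} for the details.
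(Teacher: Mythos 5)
This proposition is quoted background: the paper gives no proof of its own and simply cites Ji and Ballmann--Gromov--Schroeder, and your BN-pair/parabolic-subgroup dictionary is precisely the standard argument carried out in those references. Your sketch is correct in outline (including your accurate identification of the compatibility axiom, i.e.\ the transitivity of the unipotent radical on apartments through a chamber together with the pointwise-fixing of the intersection, as the one step needing real work), so there is nothing to reconcile with the paper beyond noting that the apartments are the boundaries of \emph{maximal} flats, as you correctly use.
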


None of the rich theory of buildings is used directly in this paper. Given a non-uniform lattice of $\G\leq G$ the \emph{rational Tits building} $\wqg$ is a building structure on the subset of parabolic points. It is not in general a sub-building of the real spherical building. Recall that flats of $X$ correspond to real maximal split tori in $G$. Since $G$ is an algebraic group defined over $\mathbb{Q}$, one can consider the maximal $\mathbb{Q}$-split tori. The \emph{rational flats} of $X$ are then the $G(\mathbb{Q})$-orbits of maximal $\mathbb{Q}$-split tori of $G$, and the \emph{rational boundary} are all points $\xi\in X(\infty)$ such that $\xi\in F_\mathbb{Q}(\infty)$ for some rational flat $F_\mathbb{Q}$ of $X$. One defines \emph{regular rational directions} and  \emph{rational Weyl chambers} in an analogous way to the real case, this time taking only rational flats into account. For further details details see  \cite{JiSymmetricSpacesBuildings}, and Section $2$ in \cite{HattoriLimitSet}.

\begin{thm} [Theorem A in \cite{HattoriLimitSet}] \label{qr: thm: HattoriA}
Let $X=G/K$ be a symmetric space of noncompact type and of higher rank, and let $\G\leq \mathrm{Isom}(X)$ be an irreducible non-uniform lattice. Then $\wqg$ does not include horospherical limit points. The $\frac{\pi}{2}$-neighbourhood 

$$\mathcal{N}_\fpi2\big(\wqg\big):=\{\xi\in X(\infty)\mid d_T\big(\xi,\wqg\big)< \fpi2 \}$$

does not include conical limit points. 
\end{thm}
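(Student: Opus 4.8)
The hypotheses (higher rank, irreducible, non-uniform) force $\G$ to be arithmetic, so it carries a $\mathbb{Q}$-structure and $\wqg$ is defined. I would prove the two assertions in order, the first carrying the genuine content and the second being a soft consequence of it together with nonpositive curvature.

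\emph{Step 1: no point of $\wqg$ is horospherical.} This is the geometric face of reduction theory. Fix $\xi\in\wqg$, so that $P:=\mathrm{Stab}_G(\xi)$ is a proper parabolic subgroup of $G$ defined over $\mathbb{Q}$. Using the $\mathbb{Q}$-refinement of the Langlands decomposition (Proposition~\ref{qr: prop: Eberline Generalized Iwasawa} applied over $\mathbb{Q}$), the Busemann function $f_\xi$ is, up to an additive constant, a \emph{nonnegative} combination $\sum_j a_j\log|\chi_j|$ of the logarithms of the fundamental $\mathbb{Q}$-rational characters $\chi_j$ of $P$ read off along the $\mathbb{Q}$-split flat through $[x_0,\xi)$ — nonnegative because $\xi$ is the dominant direction in that flat's chamber. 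Borel's reduction theory says precisely that these characters are bounded below along the arithmetic orbit $\G\cdot x_0$, which is exactly the statement $\G\cdot x_0\cap\{f_\xi<-s_0\}=\emptyset$ for $s_0$ large: the horoball $\{f_\xi<-s_0\}$ based at $\xi$ is $\G$-free, hence $\xi$ is not horospherical. For $\G$ of $\mathbb{Q}$-rank $1$ this is already contained in Theorem~\ref{qr: prelims: sym spc: thm: Leuzinger compact core} (the horoballs of $\G$ are $\G$-free and their base points exhaust $\wqg$ modulo $\G$). In higher $\mathbb{Q}$-rank I would instead invoke Leuzinger's reduction-theoretic exhaustion function of $\G\backslash X$: $x_0$ lies in a compact core, the cusp attached to $P$ lifts to the $\G$-orbit of a horoball based at $\xi$, and $\G$-freeness of that horoball is the same conclusion.

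\emph{Step 2: $\mn_{\fpi{2}}(\wqg)$ contains no conical point.} I would argue by contraposition: suppose $\xi\in X(\infty)$ is conical and some $\zeta\in\wqg$ has $d_T(\xi,\zeta)<\fpi{2}$; I claim $\zeta$ is then horospherical, contradicting Step 1. Conicality gives $D>0$ and $\g_n\in\G$ with $\g_n x_0\in\mn_D\big([x_0,\xi)\big)$ and $|\g_n x_0|\to\infty$, say $d\big(\g_n x_0,\eta(t_n)\big)\leq D$ with $\eta=[x_0,\xi)$ and $t_n\to\infty$. Since $d_T(\zeta,\xi)<\fpi{2}$, the elementary fact about horoballs and the Tits metric recorded after Proposition~\ref{qr: prelims: sym spc: prop: parallel horospheres and other properties} applies: a flat through $\zeta$ meets every horoball based at $\zeta$ in a Euclidean half-space, so a geodesic ray whose endpoint makes Tits-angle $<\fpi{2}$ with $\zeta$ eventually lies inside every horoball based at $\zeta$. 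Hence for each $c\in\mathbb{R}$ the ray $\eta$ is eventually contained in $\{f_\zeta<c\}$; as $f_\zeta$ is $1$-Lipschitz, $\mn_D(\{f_\zeta<c\})\subseteq\{f_\zeta<c+D\}$, so $f_\zeta(\g_n x_0)\to-\infty$. Thus $\G\cdot x_0$ meets every horoball based at $\zeta$, i.e.\ $\zeta$ is horospherical — the desired contradiction. Consequently every conical limit point $\xi$ satisfies $d_T\big(\xi,\wqg\big)\geq\fpi{2}$. (Note that Step 1 alone, combined with the implication ``conical $\Rightarrow$ horospherical'' from the definitions, already shows $\wqg$ has no conical points; the added value of Step 2 is the quantitative $\fpi{2}$-gap.)

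\emph{The main obstacle.} Everything in Step 2 is soft: it uses only nonpositive curvature, the $1$-Lipschitz property of Busemann functions, and the comparison of the cone and Tits topologies. The real work is Step 1 — extracting from higher-rank reduction theory the single clean statement that for \emph{every} $\mathbb{Q}$-parabolic $P$ there is a $\G$-free horoball based at $\xi_P$, uniformly over all conjugacy types of $\mathbb{Q}$-parabolics, not only the maximal ones, and in all $\mathbb{Q}$-ranks. In $\mathbb{Q}$-rank $1$ this is immediate from the preliminaries; in higher $\mathbb{Q}$-rank the horoballs of $\G$ need not be disjoint, and one must check that a horoball based at a non-maximal (e.g.\ minimal, hence regular) rational parabolic point is still $\G$-free. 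This is where the nonnegativity of the coefficients $a_j$ enters: a lower bound on each $\mathbb{Q}$-character attached to the maximal parabolics containing $P$ forces a lower bound on $f_{\xi_P}$ itself, so $\xi_P$ inherits non-horosphericity from the vertices of its simplex.
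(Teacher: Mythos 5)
This statement is imported verbatim from Hattori (Theorem A of \cite{HattoriLimitSet}); the paper gives no proof of it, so there is nothing internal to compare against. Your proposal is a correct reconstruction of Hattori's own argument: Step 2 is precisely his deduction via the penetration estimate (Lemma~\ref{qr: prelims: sym spc: lem: Hattori penetration}) and the $1$-Lipschitz property of Busemann functions, and Step 1 is his reduction-theoretic input, namely that the fundamental $\mathbb{Q}$-characters are bounded below on the arithmetic orbit, so every $\xi\in\wqg$ bases a $\G$-free horoball. The only place where your write-up is a sketch rather than a proof is the one you flag yourself: in higher $\mathbb{Q}$-rank the claim that $f_\xi$ is a \emph{nonnegative} combination of the $\log|\chi_j|$ for every (not necessarily maximal) rational parabolic direction, and the passage from Siegel-set bounds to a bound on the whole orbit $\G\cdot x_0$, need to be carried out via precise reduction theory; in $\mathbb{Q}$-rank $1$ your appeal to Theorem~\ref{qr: prelims: sym spc: thm: Leuzinger compact core} does close the argument.
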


In \qr the converse statement also holds:

\begin{thm} [Theorem B in \cite{HattoriLimitSet}] \label{qr: thm: HattoriB}
Let $X=G/K$ be a symmetric space of noncompact type and of higher rank, and let $\G\leq \mathrm{Isom}(X)$ be an irreducible non-uniform lattice. Let

$$\mathcal{V}=\{\xi\in X(\infty)\mid d_T\big(\xi,\wqg\big)\geq\frac{\pi}{2}\}$$ 

Suppose that  $\G$ is a \qr lattice. Then $\mathcal{V}$ consists of conical limit points only. 

\end{thm}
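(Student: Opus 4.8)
I would prove the contrapositive: if $\xi\in X(\infty)$ is \emph{not} a $\G$-conical limit point, then $d_T\big(\xi,\wqg\big)<\fpi2$, so that $\xi\notin\mathcal V$. After passing to a torsion-free finite-index subgroup of $\G$ (which changes neither the conicality of $\xi$ nor the set $\wqg$) I may assume $\G$ torsion-free, so that the structure results of Section~\ref{sec: cusps compact core rational Tits} apply. Fix the base point $x_0$ and let $\eta=[x_0,\xi)$ be the unit-speed ray to $\xi$. Unwinding the definition, $\xi$ is conical precisely when $\liminf_{t\to\infty}d\big(\eta(t),\G x_0\big)<\infty$, and it suffices to test this single ray.

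\textbf{Step 1: an escaping geodesic is trapped in one cusp.} Since $\G$ is a \qr lattice, Theorem~\ref{qr: prelims: sym spc: thm: Leuzinger compact core} together with Corollary~\ref{qr: prelims: sym spc: cor: every ball intersects finitely many horoballs of Gamma} provides a $\G$-invariant family of \emph{pairwise disjoint} open horoballs $\{\hb_i\}_{i\in I}$, based at parabolic limit points $\zeta_i\in\wqg$, whose complement $X_0=X\setminus\bigcup_i\hb_i$ contains $\G x_0$ and carries a cocompact $\G$-action. If $\xi$ is not conical then $d\big(\eta(t),\G x_0\big)\to\infty$, hence by cocompactness $d\big(\eta(t),X_0\big)\to\infty$; in particular $\eta(t)\in\bigcup_i\hb_i$ for all large $t$. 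A tail of $\eta$ is connected and the $\hb_i$ are disjoint open sets, so there is a single index $i_0$ with $\eta(t)\in\hb_{i_0}$ for all large $t$. Write $\zeta_0:=\zeta_{i_0}\in\wqg$ and choose a Busemann function $f$ based at $\zeta_0$ with $\hb_{i_0}=\{f<c_0\}$; then for large $t$ the nearest part of $X_0$ to $\eta(t)$ lies on $\partial\hb_{i_0}=\{f=c_0\}$, so $d\big(\eta(t),X_0\big)=c_0-f\big(\eta(t)\big)$ (Proposition~\ref{qr: prelims: sym spc: prop: parallel horospheres and other properties}) and therefore $f\big(\eta(t)\big)\to-\infty$.

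\textbf{Step 2: deep penetration forces small Tits angle.} I claim $d_T(\xi,\zeta_0)<\fpi2$. Choose a flat $F\subset X$ with $\xi,\zeta_0\in F(\infty)$ and a ray $\eta_F\subset F$ with $\eta_F(\infty)=\xi$. Since $\eta$ and $\eta_F$ represent the same point at infinity they lie at finite Hausdorff distance, and since Busemann functions are $1$-Lipschitz, $f\big(\eta(t)\big)\to-\infty$ forces $f\big(\eta_F(s)\big)\to-\infty$ as $s\to\infty$. But $f$ restricts on $F$ to an \emph{affine} function (on a flat with $\zeta_0$ in its boundary $f$ agrees up to a constant with the linear Busemann function of the Euclidean space $F$), so $s\mapsto f\big(\eta_F(s)\big)$ is affine with slope $-\cos\measuredangle_F(\xi,\zeta_0)=-\cos d_T(\xi,\zeta_0)$, the last equality being the definition of the Tits metric on $F(\infty)$. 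An affine function of $s$ tends to $-\infty$ as $s\to+\infty$ only if its slope is negative, so $\cos d_T(\xi,\zeta_0)>0$, i.e.\ $d_T(\xi,\zeta_0)<\fpi2$. As $\zeta_0\in\wqg$ this gives $d_T\big(\xi,\wqg\big)<\fpi2$, so $\xi\notin\mathcal V$, completing the contrapositive.

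\textbf{Main obstacle.} The crux is Step 1, and it is exactly here that the \qr hypothesis is indispensable: for higher $\mathbb Q$-rank the cusp horoballs cannot be taken pairwise disjoint (Theorem~\ref{qr: prelims: sym spc: thm: Leuzinger compact core}), so a geodesic escaping into the cusps need not be trapped in a single one and the dichotomy collapses. Granting the disjoint, $\G$-cocompactly collared horoball family, Step 2 is a routine convexity/linearity computation. The only other points needing care are the elementary equivalence of conicality with $\liminf_t d\big(\eta(t),\G x_0\big)<\infty$ and its independence of the chosen ray, both of which are standard.
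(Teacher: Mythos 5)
Your proof is correct. Note that the paper does not prove this statement at all — it is imported verbatim as Theorem B of Hattori's paper — so there is no internal proof to compare against; your argument is essentially Hattori's original one, run in contrapositive form, and it uses exactly the two ingredients the paper records: the disjoint, $\G$-cocompactly collared horoball decomposition characterizing \qr lattices (Theorem~\ref{qr: prelims: sym spc: thm: Leuzinger compact core}) and the linear penetration rate into horoballs. In fact your Step 2 is precisely the contrapositive of Lemma~\ref{qr: prelims: sym spc: lem: Hattori penetration} (cases $\alpha\geq\fpi2$ give a Busemann function bounded below along the ray), so you could replace the affine-restriction computation by a citation to that lemma; the only genuinely \qr-specific input is, as you say, the trapping of an escaping ray in a single cusp horoball in Step 1.
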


In groups of $\rr$ one has the following well known fact: 

\begin{thm}[Proposition $5.4.2$ and Theorem $6.1$ in \cite{bowditch1995geometrical}, see also Theorem $12.29$ in \cite{DrutuKapovichGGT}]\label{qr: sym spc: thm: in rank 1 all horospherical limit points are conical}
Let $X$ be a $\rr$ symmetric space, $\G\leq \mathrm{Isom(X)}$ a lattice. Then every $\xi\in X(\infty)$ is either conical or non-horospherical. 
\end{thm}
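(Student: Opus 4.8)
The plan is to read the theorem off the thick--thin decomposition of the quotient $\mathcal V=\G\backslash X$ together with the elementary geometry of horoballs in pinched negative curvature. The uniform case is immediate: then $\G\cdot x_0$ is a net in $X$, so for \emph{every} $\xi\in X(\infty)$ the ray $[x_0,\xi)$ stays within a bounded distance of $\G\cdot x_0$ and $\xi$ is conical. So assume $\G$ is non-uniform. By Theorem~\ref{qr: prelims: sym spc: thm: Eberline compact core rank 1} and the discussion preceding it, there are finitely many parabolic points $\xi_1,\dots,\xi_k\in X(\infty)$ with horoballs $\hb_1,\dots,\hb_k$ based at them, such that the $\G$-translates $\{\g\hb_j\}$ are pairwise disjoint, the group $\G$ acts cocompactly on the $\G$-invariant compact core $X_0:=X\setminus\bigcup_{j,\g}\g\hb_j^\circ$, and $\G\cap G_{\xi_j}$ acts cocompactly on $\partial\hb_j$. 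We may take the base point $x_0$ to lie in $X_0$; then $\G\cdot x_0\subseteq X_0$. Write $\mathcal P:=\G\cdot\{\xi_1,\dots,\xi_k\}$ for the countable set of parabolic points of $\G$.

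The first step is to observe that $\mathcal P$ consists of non-horospherical points. Non-horosphericity is visibly a $\G$-invariant property, so it is enough to treat $\xi_j$ itself; and there $\hb_j^\circ$ is a horoball based at $\xi_j$ with $\G\cdot x_0\cap\hb_j^\circ\subseteq X_0\cap\hb_j^\circ=\emptyset$, which is exactly the defining condition for $\xi_j$ to be non-horospherical.

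The second step is to show that every $\xi\in X(\infty)\setminus\mathcal P$ is conical. Let $\eta\colon[0,\infty)\to X$ be the geodesic ray from $x_0$ with $\eta(\infty)=\xi$, and consider whether $\eta$ re-enters the compact core at arbitrarily large times. If not, then $\eta([T,\infty))\subseteq\bigcup_{j,\g}\g\hb_j^\circ$ for some $T$; since the open horoballs $\g\hb_j^\circ$ are pairwise disjoint, they are each clopen in their union, so the connected components of $\bigcup_{j,\g}\g\hb_j^\circ$ are precisely the individual horoballs, and the connected set $\eta([T,\infty))$ lies inside a single one, say $\g_0\hb_{j_0}^\circ$. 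But a geodesic ray that is eventually contained in a horoball based at $\zeta$ must be asymptotic to $\zeta$; hence $\xi=\eta(\infty)=\g_0\xi_{j_0}\in\mathcal P$, contradicting the choice of $\xi$. Therefore $\eta(t_n)\in X_0$ for some $t_n\to\infty$. Cocompactness of the $\G$-action on $X_0$ provides a compact $C\subseteq X_0$ with $\G C=X_0$; writing $\eta(t_n)=\g_n c_n$ with $c_n\in C$ gives $d\big(\eta(t_n),\g_n x_0\big)=d(c_n,x_0)\leq R:=\sup_{c\in C}d(c,x_0)<\infty$, so $B\big(\eta(t_n),R\big)$ meets $\G\cdot x_0$ for all $n$ with $t_n\to\infty$, i.e.\ $\xi$ is conical.

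Combining the two steps, $X(\infty)$ is the disjoint union of the conical points of $\G$ and the parabolic points $\mathcal P$ of $\G$, and the latter are non-horospherical; this is the assertion. The only non-formal ingredient is the claim that a geodesic ray eventually trapped in a horoball is asymptotic to its base point, and this is where I expect the work to sit: it is standard in spaces of pinched negative curvature (asymptotic rays converge, so otherwise the Busemann function $f_\zeta$ would tend to $+\infty$ along $\eta$, contradicting confinement to the horoball), but it genuinely fails in general nonpositive curvature --- e.g.\ in $\mathbb H^2\times\mathbb R$ a ray can remain in a horoball forever while escaping to a different boundary point --- which is exactly why rank one is assumed here.
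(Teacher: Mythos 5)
Your proof is correct. Note, however, that the paper does not prove this statement at all: it is quoted from Bowditch (Propositions $5.4.2$ and Theorem $6.1$ of \cite{bowditch1995geometrical}) and Dru\c{t}u--Kapovich, where it sits inside the general theory of geometrically finite groups in pinched negative curvature, with the dichotomy conical/bounded-parabolic established for arbitrary geometrically finite groups. Your route is more specialized but self-contained for the lattice case: you read the dichotomy directly off the Margulis/Eberlein thick--thin decomposition (Theorem~\ref{qr: prelims: sym spc: thm: Eberline compact core rank 1} together with the disjointness of the lifted cusp horoballs), getting conicality of every non-parabolic point from cocompactness of $\G$ on the compact core, and non-horosphericity of parabolic points from the $\G$-freeness of the cusp horoballs. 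Both steps are sound, including the topological point that the connected ray tail must sit in a single horoball of the disjoint family. What this buys is a short proof using only facts already quoted in the paper; what it costs is reliance on the cusp decomposition, which Bowditch's argument does not need and which is special to lattices. One remark: the single non-formal ingredient you flag --- a ray eventually trapped in a horoball based at $\zeta$ is asymptotic to $\zeta$ --- is exactly Lemma~\ref{qr: prelims: sym spc: lem: Hattori penetration} combined with Remark~\ref{qr: prelims: sym spc: rmk: Hattori penetration also good for rank 1} (take $d_T(\xi,\zeta)=\pi$, so the Busemann function grows linearly along the ray), so you could cite that instead of re-deriving it; your observation that this fails in $\mathbb H^2\times\mathbb R$ correctly locates where the rank-one hypothesis enters.
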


\begin{cor} \label{qr: cor: limit points characterization of wqg rational building}
When $\G\leq G$ is a \qr lattice, the following holds: 
\begin{enumerate}
    \item $\wqg=\{\xi\in X(\infty)\mid \mn_{\frac{\pi}{2}}(\xi) \text{ does not contain conical limit points} \}$
    \item Any two points $\xi,\xi'\in \wqg$ are at Tits distance $=\pi$.
\end{enumerate}
\end{cor}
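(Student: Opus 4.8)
The plan is to derive Corollary~\ref{qr: cor: limit points characterization of wqg rational building} from Hattori's two theorems together with the rank-one fact. I will treat the higher rank case and the $\rr$ case separately, since $\wqg$ has different meanings there.

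First, for the higher rank case, I prove item (1). For the inclusion $\subseteq$: if $\xi\in\wqg$, then by Theorem~\ref{qr: thm: HattoriA} the neighbourhood $\mathcal{N}_{\fpi2}(\wqg)$ contains no conical limit points, and since $\mn_{\fpi2}(\xi)\subseteq\mathcal{N}_{\fpi2}(\wqg)$ (as $\xi\in\wqg$), it follows that $\mn_{\fpi2}(\xi)$ contains no conical limit points. Here I should be slightly careful about the open/closed and strict/non-strict distinction between $\mn_{\fpi2}$ and $\mathcal{N}_{\fpi2}$: the definition of $\mn_u(\xi)$ uses a non-strict inequality $d_T\leq \fpi2$ while Hattori's $\mathcal{N}_{\fpi2}$ uses strict $<\fpi2$; I will note that Theorem~\ref{qr: thm: HattoriB} (in the \qr case) says precisely that points at Tits distance $\geq\fpi2$ from $\wqg$ are conical, so the only non-conical points within distance exactly $\fpi2$ of $\xi$ would be points of $\wqg$ again — hence $\mn_{\fpi2}(\xi)$ still contains no conical limit point, giving the inclusion as stated. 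For the reverse inclusion $\supseteq$: suppose $\xi\notin\wqg$. Then $\xi$ is not a parabolic point, and I must produce a conical limit point within Tits distance $\fpi2$ of $\xi$. By Theorem~\ref{qr: thm: HattoriB} (the \qr hypothesis is in force), the set of points at Tits distance $\geq\fpi2$ from $\wqg$ consists of conical points only; so if $d_T(\xi,\wqg)\geq\fpi2$ then $\xi$ itself is conical, and $\xi\in\mn_{\fpi2}(\xi)$ does the job. Otherwise $d_T(\xi,\wqg)<\fpi2$, so there is $\zeta\in\wqg$ with $d_T(\xi,\zeta)<\fpi2$; I then want a conical point near $\xi$, which I get from density of conical limit points (Corollary~\ref{qr: prelims: sym spc: cor: lattice conical limit points are dense}) combined with semicontinuity of the Tits metric (Proposition~\ref{qr: prelims: sym spc: prop: semicontinuity of angular metric}) — taking conical points $\xi'$ converging to $\xi$ in the cone topology, semicontinuity forces $d_T(\xi',\xi)$ small, in particular $<\fpi2$, so $\xi'\in\mn_{\fpi2}(\xi)$.

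Next, item (2) in higher rank: any two $\xi,\xi'\in\wqg$ are at Tits distance exactly $\pi$. Since they are both parabolic (rational) points, they lie in a common rational apartment $F_{\mathbb{Q}}(\infty)$; more simply, I can argue that $d_T(\xi,\xi')\geq\fpi2$ is impossible to be strict-less, because Theorem~\ref{qr: thm: HattoriA} says $\mathcal{N}_{\fpi2}(\wqg)$ contains no conical points while Theorem~\ref{qr: thm: HattoriB} says everything at distance $\geq\fpi2$ from $\wqg$ is conical — so a point of $\wqg$ cannot be at distance strictly between $0$ and $\fpi2$ from another, nor at distance in $[\fpi2,\pi)$ unless\ldots here I need to pin down why the distance is exactly $\pi$ and not some intermediate value. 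The cleanest route: if $\xi\neq\xi'$ are both rational, pick a rational flat $F_{\mathbb{Q}}$ with $\xi\in F_{\mathbb{Q}}(\infty)$; then $\xi'\in\mathcal{N}_{\fpi2}(\wqg)$ forces, via Hattori, that $\xi'$ is not conical, consistent with $\xi'\in\wqg$, but to get $=\pi$ I use that the opposite point $-\xi$ (the antipode of $\xi$ in $F_{\mathbb{Q}}(\infty)$) is also rational and at Tits distance $\pi$, and that $\wqg$, being the boundary of the rational building which is thick of the right type, has the property that distinct chambers/vertices are either adjacent (distance $<\pi$) or opposite — and Hattori's theorems rule out the adjacent case because adjacency would put one within $\fpi2$ of the other. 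I will state this as: the rational building has rank equal to the $\mathbb{Q}$-rank, and Hattori's results show all rational points are pairwise non-adjacent, hence opposite, hence at distance $\pi$.

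The main obstacle I anticipate is item (2) and the strict-versus-non-strict subtlety in item (1): Hattori's theorems are stated with a mix of strict and non-strict inequalities at $\fpi2$, and the corollary is stated with $\mn_{\fpi2}$ which is non-strict, so I must check that no conical limit point sits at Tits distance exactly $\fpi2$ from a rational point — this is exactly the content needed, and it follows because by Theorem~\ref{qr: thm: HattoriB} a point at distance $\geq\fpi2$ from $\wqg$ (in particular $=\fpi2$) is conical only if\ldots no wait, Theorem~\ref{qr: thm: HattoriB} says such points \emph{are} conical, which is the wrong direction; so I actually need Theorem~\ref{qr: thm: HattoriA}'s statement that $\mathcal{N}_{\fpi2}(\wqg)$ (strict) has no conical points, and separately argue the boundary sphere $d_T(\cdot,\wqg)=\fpi2$ — but a point at distance exactly $\fpi2$ is a limit of points at distance $<\fpi2$ (non-conical) and by upper semicontinuity of "being conical"\ldots this needs care. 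I will resolve it by working entirely in a single rational apartment: fix $\zeta\in\wqg$ realizing $d_T(\xi,\wqg)$, place both in a flat, and use that on a flat the Tits and cone topologies agree (Proposition~\ref{qr: prelims: sym spc: prop: semicontinuity of angular metric}) so that the apartment picture is the round sphere, where "distance $\leq\fpi2$ to a vertex" is a genuine closed hemisphere and I can perturb $\xi$ toward a conical point inside the open hemisphere. This sidesteps the boundary-sphere issue. Finally I fold in the $\rr$ case, which is immediate: there $\wqg$ is by definition the set of parabolic limit points, Theorem~\ref{qr: sym spc: thm: in rank 1 all horospherical limit points are conical} says every boundary point is conical or non-horospherical, so the complement of $\wqg$ is exactly the conical points; item (1) then reads "$\xi\in\wqg$ iff $\mn_{\fpi2}(\xi)$ has no conical point", but in rank one $\mn_{\fpi2}(\xi)=X(\infty)\setminus\{-\xi\}$ roughly — actually in rank one the Tits metric is the discrete $\{0,\pi\}$-type metric with $d_T(\xi,\xi')=\pi$ for all $\xi\neq\xi'$, so $\mn_{\fpi2}(\xi)=\{\xi\}$, and the condition "$\{\xi\}$ contains no conical limit point" is exactly "$\xi$ is not conical", i.e.\ "$\xi$ is parabolic", i.e.\ "$\xi\in\wqg$"; and item (2) is the tautology that any two distinct points are at Tits distance $\pi$. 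I will present the rank-one case first as a warm-up since it transparently motivates the higher-rank statement.
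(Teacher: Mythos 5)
Your treatment of the $\rr$ case and of the easy half of item (1) (the containment $\wqg\subseteq\{\dots\}$ via Theorem~\ref{qr: thm: HattoriA}, and the sub-case $d_T(\xi,\wqg)\geq\fpi{2}$ via Theorem~\ref{qr: thm: HattoriB}) matches the paper. But the remaining steps have a genuine gap: everything hinges on one observation that you never establish, namely that \emph{every point $\zeta$ at Tits distance exactly $\fpi{2}$ from a point $\xi'\in\wqg$ is conical}. The paper proves this by placing $\xi'$ and $\zeta$ in a common flat $F$, noting that a geodesic ray toward $\zeta$ then stays inside a single horosphere based at $\xi'$, and invoking the \qr hypothesis that $\G$ acts cocompactly on horospheres based at rational points. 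This observation is the sole source of conical points in the argument, and both remaining steps need it: for the reverse containment of item (1) when $d_T(\xi,\wqg)<\fpi{2}$, one takes $\xi'\in\wqg$ realizing the distance and finds $\zeta$ in a common flat with $d_T(\zeta,\xi')=\fpi{2}$ but $d_T(\zeta,\xi)<\fpi{2}$ (conical by the observation, non-conical by hypothesis); for item (2), one prolongs the Tits geodesic from $\xi$ through $\xi'$ inside a flat to length $\pi$ and, if $d_T(\xi,\xi')<\pi$, locates a point at distance exactly $\fpi{2}$ from $\xi$ but strictly less than $\fpi{2}$ from $\xi'$, which is simultaneously conical and non-conical. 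Your substitutes do not work. For item (1) you propose to produce a Tits-close conical point from cone-topology density plus Proposition~\ref{qr: prelims: sym spc: prop: semicontinuity of angular metric}, but that proposition is a \emph{lower} semicontinuity statement: it bounds $d_T(\xi',\xi)$ from below, not above, and cone-close points are routinely at Tits distance $\pi$. For item (2), "distinct vertices are either adjacent or opposite, and adjacency puts one within $\fpi{2}$" is false in a spherical building (in type $A_2$, same-type vertices of adjacent chambers are at distance $2\pi/3$); moreover the rational building of a \qr lattice has rank one, so its combinatorics say nothing directly about real Tits distances between its points.

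Two smaller points. Your resolution of the strict/non-strict issue in the containment $\wqg\subseteq\{\dots\}$ concludes the wrong thing: by the key observation above, points at Tits distance \emph{exactly} $\fpi{2}$ from $\xi\in\wqg$ \emph{are} conical, so the statement would be false for the closed neighbourhood; the correct reading of $\mn_{\fpi{2}}(\xi)$ here is the open one, matching the strict inequality in the definition of $\mathcal{N}_{\fpi{2}}$ in Theorem~\ref{qr: thm: HattoriA}, and then that containment really is just Hattori's theorem with nothing to patch. Your final "work in a single apartment and perturb toward a conical point" plan also does not close the gap, because you still have no criterion identifying which points of the apartment are conical — that is again exactly the missing observation.
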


\begin{proof}

When $G$ is of $\rr$ both statements hold trivially due to Theorem~\ref{qr: sym spc: thm: in rank 1 all horospherical limit points are conical}. In higher rank, they follow from the following observation: for any point $\xi'\in\wqg$ and any point $\zeta \in X(\infty)$ with $d(\zeta,\xi')=\frac{\pi}{2}$, $\zeta$ is conical. To see this notice that $\zeta$ lies on the boundary of a horosphere based at $\xi'$: take a flat $F\subset X$ with $\xi',\zeta\in F(\infty)$. Any geodesic with limit $\zeta$ is contained in (a Euclidean) horosphere based at $\xi'$. The fact that $\G$ is cocompact on the horospheres based at $\wqg$ implies that $\zeta$ is conical.

The second item of the corollary follows: let $\xi,\xi'\in\wqg$ and $c:[0,\alpha]$ a Tits geodesic joining them. There is a flat $F\subset X$ containing both $\xi,\xi'$ as well as $c\subset F(\infty)$. Every point $\zeta$ that is at Tits distance $\frac{\pi}{2}$ from either $\xi$ or $\xi'$ is conical, and no point inside the $\fpi{2}$ neighbourhood of either $\xi$ or $\xi'$ is conical. In $F(\infty)$ the Tits metric is the same as the Tits metric on the Euclidean space with the same rank. Therefore one may prolong the geodesic $c$ so that $c(0)=\xi,c(\alpha)=\xi'$ and $c(\pi)=\xi''$. If $d_T(\xi,\xi')<\pi$, then there is a point along this prolonged geodesic that is at Tits distance exactly $\fpi{2}$ from $\xi$ (so it is conical by the first paragraph), but at Tits distance strictly less than $\fpi{2}$ from $\xi'$ (so it cannot be conical by Theorem~\ref{qr: thm: HattoriA}). Therefore $d_T(\xi,\xi')=\pi$.

For the first item, one containment is just Hattori's Theorem~\ref{qr: thm: HattoriA}. For the other containment, an argument similar to the one above works. Assume for some $\xi\in X(\infty)$ that $\mn_{\fpi{2}}(\xi)$ consists of non-conical limit points. In particular $\xi$ itself is not conical, and by Theorem~\ref{qr: thm: HattoriB} it holds that  $d(\xi,\wqg)<\frac{\pi}{2}$. Let $\xi'\in\wqg$ be a point realizing this distance. As above, this gives rise to a flat $F$ containing $\xi,\xi'$ and another point $\zeta$ that is at Tits distance $\fpi{2}$ from $\xi'$ but at Tits distance strictly less than $\fpi{2}$ from $\xi$. The first forces $\zeta$ to be conical, and the latter forces it to be non-conical, a contradiction. 
\end{proof}

Hattori's characterization relies on a simple lemma which will also be of use in the sequel. It relates the (linear) penetration rate of a geodesic into a horoball to the Tits distance. 

\begin{lem}[Lemma 3.4 in \cite{HattoriLimitSet}] \label{qr: prelims: sym spc: lem: Hattori penetration}

Let $X$ be a symmetric space of noncompact type and of higher rank. Let $\eta_1,\eta_2: [0,\infty)\rightarrow X$ be two geodesic rays, $\alpha:=d_T\big(\eta_1(\infty),\eta_2(\infty)\big)$ and $b_2$ the Busemann function corresponding to $\eta_2$.  Then there exists a positive constant $C_1$, depending only on $\eta_1$ and $\eta_2$, such that:
\begin{enumerate}
    \item If $\alpha>\fpi2$, then for all $t\geq 0$
    $$b_2\big(\eta_1(t)\big)\geq-t\cdot\cos{\alpha}-C_1$$ 
    
    \item If $\alpha=\fpi2$, then $b_2\big(\eta_1(t)\big)$ is monotone non-increasing in $t$ and $-C_1\leq b_2\big(\eta_1(t)\big)$.

    \item If $\alpha<\fpi2$, then for all $t\geq 0$
    $$b_2\big(\eta_1(t)\big)\leq-t\cdot\cos{\alpha}-C_1$$ 
\end{enumerate}

\end{lem}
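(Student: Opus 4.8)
The plan is to reduce the whole statement to an affine computation inside a single Euclidean flat, incurring only a bounded, $(\eta_1,\eta_2)$-dependent error. First I would fix a flat $F\subset X$ with $\eta_1(\infty),\eta_2(\infty)\in F(\infty)$ — such a flat exists by the property of the Tits metric recalled above — and choose unit speed rays $\bar\eta_1,\bar\eta_2$ lying inside $F$ and emanating from a common point $q\in F$, with $\bar\eta_i(\infty)=\eta_i(\infty)$. By the very definition of $d_T$, the Euclidean angle $\measuredangle_q(\bar\eta_1,\bar\eta_2)$ in $F$ equals $\alpha=d_T\big(\eta_1(\infty),\eta_2(\infty)\big)$; that this angle is independent of which flat through the two boundary points was chosen is guaranteed by Proposition~\ref{qr: prelims: sym spc: prop: semicontinuity of angular metric} (coincidence of the cone and Tits topologies on $F(\infty)$).

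Next I would compare the given rays to the flat ones. Since $\eta_i$ and $\bar\eta_i$ are asymptotic and $X$ has nonpositive curvature, one gets a uniform bound $d\big(\eta_i(t),\bar\eta_i(t)\big)\le C(\eta_i)$ for all $t$, using convexity of the distance between two geodesics together with the finiteness of the Hausdorff distance between asymptotic rays. Moreover $b_2=b_{\eta_2}$ and $b_{\bar\eta_2}$, being Busemann functions of the asymptotic rays $\eta_2$ and $\bar\eta_2$, differ by an additive constant. Since Busemann functions are $1$-Lipschitz, combining these two facts yields a constant $C_1=C_1(\eta_1,\eta_2)$ with
$$\big|\,b_2\big(\eta_1(t)\big)-b_{\bar\eta_2}\big(\bar\eta_1(t)\big)\,\big|\le C_1\qquad\text{for all }t\ge 0 .$$
Inside $F$, identified with a Euclidean space with origin $q$, the Busemann function based at $\bar\eta_2(\infty)$ is the affine map $x\mapsto-\langle x,v_2\rangle$ with $v_2=\dot{\bar\eta}_2(0)$, so $b_{\bar\eta_2}\big(\bar\eta_1(t)\big)=-t\,\langle v_1,v_2\rangle=-t\cos\alpha$ with $v_1=\dot{\bar\eta}_1(0)$. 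Hence $\big|\,b_2\big(\eta_1(t)\big)+t\cos\alpha\,\big|\le C_1$.

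From this single estimate the three cases follow (for a suitable choice of the constant $C_1$ in each case). When $\alpha>\fpi2$ it gives directly $b_2(\eta_1(t))\ge -t\cos\alpha-C_1$, and when $\alpha<\fpi2$ it gives the corresponding upper bound, which are assertions (1) and (3). When $\alpha=\fpi2$ the estimate reads $|b_2(\eta_1(t))|\le C_1$, so in particular $-C_1\le b_2(\eta_1(t))$; and $t\mapsto b_2(\eta_1(t))$ is convex, being the composition of the convex Busemann function $b_2$ with a geodesic, so a convex function on $[0,\infty)$ that is bounded above is non-increasing, which is the remaining claim in (2).

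The step I expect to be the main obstacle is the reduction in the second paragraph: one must check that the bound on $d\big(\eta_i(t),\bar\eta_i(t)\big)$ and the additive constant relating $b_{\eta_2}$ to $b_{\bar\eta_2}$ genuinely depend only on the data $(\eta_1,\eta_2)$ — equivalently, on the given rays and the auxiliary flat, which is itself pinned down by that data — and that the angle $\alpha$ is well defined independently of the flat. These are standard features of symmetric spaces of noncompact type (convexity of the distance function, finiteness of Hausdorff distance between asymptotic rays, and the fact that on $F(\infty)$ the Tits metric is the Euclidean angular metric), but this is precisely where the geometry enters; once granted, all three cases fall out of the formula $b_2\big(\eta_1(t)\big)=-t\cos\alpha+O(1)$ recorded above.
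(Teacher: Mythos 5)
The paper does not prove this lemma at all — it is quoted verbatim from Hattori (Lemma 3.4 in \cite{HattoriLimitSet}) as a preliminary — so there is no internal proof to compare against. Your argument is the standard one and is essentially sound: pass to a flat $F$ with $\eta_1(\infty),\eta_2(\infty)\in F(\infty)$, replace each $\eta_i$ by the asymptotic ray $\bar\eta_i\subset F$ from a common basepoint (bounded error by convexity of $t\mapsto d(\eta_i(t),\bar\eta_i(t))$ plus finiteness of the Hausdorff distance), use that asymptotic rays have Busemann functions differing by a constant and that Busemann functions are $1$-Lipschitz, and then compute affinely in $F$. The one step you state without justification is that $b_{\bar\eta_2}$ restricted to $F$ \emph{is} the Euclidean Busemann function of $F$; this needs the observation that $F$ is convex in $X$, so $d_X$ and $d_F$ agree on $F$ and the defining limit can be computed inside $F$. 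That is one sentence and standard.

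The only substantive discrepancy is the sign of the constant in item (3). Your reduction yields $b_2\big(\eta_1(t)\big)\leq -t\cos\alpha + C_1$, and no "suitable choice of the constant" can upgrade this to the printed $\leq -t\cos\alpha - C_1$ with $C_1>0$: at $t=0$ the printed inequality forces $b_2\big(\eta_1(0)\big)<0$, which fails whenever $\eta_1(0)$ lies outside the relevant horoball. So the inequality as transcribed in the statement is false in general; the correct form (and the one Hattori's argument gives) has $+C_1$ on the right, and that weaker form is all that is ever used in this paper — every application only needs that $b_2\circ\eta_1$ decays linearly when $\alpha<\fpi2$. You should say this explicitly rather than hide it behind "a suitable choice of the constant," but it is a defect of the statement, not of your proof.
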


\begin{rmk}\label{qr: prelims: sym spc: rmk: Hattori penetration also good for rank 1}
If $X$ is a $\rr$ symmetric space, maximal flats are geodesics and so every two points $\xi,\zeta\in X(\infty)$ admit $d_T(\xi,\zeta)=\pi$. It follows from strict negative curvature that Lemma~\ref{qr: prelims: sym spc: lem: Hattori penetration} is true also in this case. 
\end{rmk}

\section{SBE Completeness} \label{sec: SBE chapter}

\subsection{Definitions}
Denote $a\vee b:=\max(a,b)$ for $a,b\in \mrnn$. For a pointed metric space $(X,x_0,d_X)$ and $x,x_1,x_2\in X$,  denote $|x|_X:=d_X(x,x_0)$ and $|x_1-x_2|_X:=d_X(x_1,x_2)$ (or simply $|x|$ and $|x_1-x_2|$ when there is no ambiguity about the space $X$).

Following Cornulier \cite{SBE_Review}, Pallier \cite{PallierLargeScale} makes the following definition:

\begin{defn} \label{def: admissible}
A function $u:\mrnn\rightarrow \mrpp$ is \emph{admissible} if it satisfies the following conditions:
\begin{itemize}
    \item $u$ is non-decreasing.
    \item $u$ grows sublinearly: $\limsup\limits_{r\mapsto\infty} \frac{u(r)}{r} =0$.
    \item $u$ is doubling: $\frac{u(tr)}{u(r)}$ is bounded above for all $t > 0$.
\end{itemize}
\end{defn}

The focus in this paper is condition $2$, namely that the function $u$ is strictly sublinear. I moreover require it to be subadditive, resulting in the following terminology which I use from now on. 

\begin{defn}\label{def: sublinear}
A function $u:\mathbb{R}_{\geq 0}\rightarrow \mathbb{R}$ is \emph{sublinear} if it is admissible and subadditive, i.e.\ $u(t+s)\leq u(t)+u(s)$ for all $t,s>0$.

\end{defn}

From now on by an SBE I mean an $(L,u)$-SBE where $u$ is sublinear in the sense of Definition~\ref{def: sublinear}. In analogy with quasi-isometries, $L$ and $u$ are called the \emph{SBE constants}.

\subsection{Statements and Argument}

Two finitely generated groups $\Gamma$ and $\Lambda$ are said to be SBE if they are SBE when viewed as metric spaces with some word metrics and base points $e_\Gamma,e_\Lambda$. Observe that every quasi-isometry is an SBE, and in particular the metric spaces arising from two different word metrics on a finitely generated group are SBE. An SBE admits an SBE-inverse, defined in a similar fashion to quasi-inverses of quasi-isometries.

\begin{defn}
A class of groups $\mathcal{A}$ is said to be \emph{SBE complete} if, for every finitely generated group $\Lambda$ that is SBE to some group $\Gamma\in \mathcal{A}$, there is a short exact sequence 
$$1\rightarrow F\rightarrow \Lambda\rightarrow \La_1\rightarrow 1$$

for a finite group $F\leq \Lambda$ and some $\La_1\in \mathcal{A}$.
\end{defn}

In this chapter I prove:

\begin{thm}[Reformulation of Theorem~\ref{thm: SBE main Intro}] \label{sbe: thm: main} 
Let $G$ be a real centre-free semisimple Lie group without compact or $\rr$ factors.

\begin{enumerate}
    \item The class of uniform lattices of $G$ is SBE complete. 
    \item The class of non-uniform lattices of $G$ is SBE complete. 
\end{enumerate}
\end{thm}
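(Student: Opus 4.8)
The plan is to reduce Theorem~\ref{sbe: thm: main} to the sublinear-geometric-rigidity statement (Theorem~\ref{thm: sbe close to isometry Intro}, with the uniform-control refinement of Lemma~\ref{sbe: lem: uniform control on the sublinear constants for fixed x}) together with the sublinear rigidity property for lattices with property (T) (Theorem~\ref{thm: main for T}). So suppose $\Lambda$ is a finitely generated group and $f\colon\Lambda\to\Gamma$ is an $(L,u)$-SBE onto an irreducible lattice $\Gamma\leq G$, with $u$ sublinear. Fix a base point $x_0\in X=G/K$ and let $X_0\subset X$ be the compact core of $\Gamma$, so that the orbit map $\gamma\mapsto\gamma x_0$ is a quasi-isometric embedding of $\Gamma$ into $X_0$ (Theorem~\ref{prelims: thm: lubozki mozes raghunathan LMS QIE} in higher rank, trivially in the uniform case; in both cases $\Gamma\cdot x_0$ is a cocompact net in $X_0$). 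Composing, every $\lambda\in\Lambda$ acts on $\Lambda$ by left translation, which transports under $f$ (and an SBE-quasi-inverse) to a self-SBE $F_\lambda\colon X_0\to X_0$; the map $\lambda\mapsto F_\lambda$ is a ``quasi-action'' by SBEs with uniformly bounded constants — here is where I need the uniform-control refinement, because each $F_\lambda$ a priori has its own constants and I must bound them independently of $\lambda$.

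\textbf{From SBEs to isometries.} By Theorem~\ref{thm: sbe close to isometry Intro}, each $F_\lambda$ is sublinearly close to an isometry $g_\lambda\in G=\mathrm{Isom}(X)$, and by the uniform control the sublinear gauge $v$ can be chosen independently of $\lambda$: $d\big(F_\lambda(x),g_\lambda(x)\big)\leq v(|x|)$ for all $x\in X_0$. The next step is to promote $\lambda\mapsto g_\lambda$ to an honest homomorphism $\Phi\colon\Lambda\to G$. One checks that $g_{\lambda\mu}$ and $g_\lambda g_\mu$ are two isometries of $X$ that are sublinearly close on $X_0$ — combine the cocycle-type estimate $d\big(F_{\lambda\mu}(x),F_\lambda(F_\mu(x))\big)=O\big(v(|x|)+v(|F_\mu(x)|)\big)$ with the fact that $|F_\mu(x)|\leq L|x|+O(1)$, so the composite error is still sublinear in $|x|$. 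Two isometries of a symmetric space of noncompact type that are sublinearly close on a cocompact net of $X_0$ must in fact coincide (a sublinear function divided by $r$ tends to $0$, while a nontrivial relative isometry displaces points of $X_0\subset X$ by at least a linear-in-$|x|$ amount along suitable directions — this is the step I'd isolate as its own small lemma, analogous to Druţu's rigidity of the boundary map). Hence $\Phi(\lambda):=g_\lambda$ is a homomorphism. Its kernel consists of $\lambda$ with $F_\lambda$ sublinearly close to the identity; by the same displacement estimate these $\lambda$ move the net $\Gamma\cdot x_0$ a bounded amount, so $\ker\Phi$ is finite, and $\Phi$ has discrete image (any sequence $\Phi(\lambda_n)\to\mathrm{id}$ would force $\lambda_n$ into the finite kernel eventually).

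\textbf{Identifying the image as a lattice.} Set $\Lambda':=\Phi(\Lambda)\leq G$, a discrete subgroup. The orbit map $\lambda\mapsto g_\lambda x_0$ is, up to the sublinear error $v$, the same as $\lambda\mapsto F_\lambda(x_0)$, which tracks $f$ composed with the orbit map of $\Gamma$; since $f$ is a surjective SBE onto a cocompact net of $X_0$, the orbit $\Lambda'\cdot x_0$ sublinearly covers $\Gamma\cdot x_0$, hence sublinearly covers $\Gamma$ inside $G$ (Theorem~\ref{sbe: thm: from SBE to sublinearly close}). Now invoke Theorem~\ref{thm: main Intro}: since $G$ has no $\rr$ factors it has no $\mathrm{SO}(n,1)$ or $\mathrm{SU}(n,1)$ factor, so $G$ has property (T), and Theorem~\ref{thm: main for T} applies directly (no irreducibility or centre hypotheses are needed in the property (T) case) to conclude that $\Lambda'$ is a lattice in $G$. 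Finally, uniform-vs-nonuniform is preserved: an SBE is a quasi-isometric-type map at bounded scale, so $\Lambda$ has linear (resp.\ superlinear) growth of balls iff $\Gamma$ does, equivalently $X_0$ is quasi-isometric to $\Gamma$ which is finitely generated only when $\Gamma$ is a lattice of the stated type; more directly, $\Lambda'\cdot x_0$ is cocompact in $X$ iff $X_0=X$ iff $\Gamma$ is uniform, because sublinear covering of a net does not change whether the orbit is cobounded. This establishes both items~(1) and~(2).

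\textbf{Main obstacle.} The essential difficulty is not the soft homomorphism-building argument sketched above — that follows the Druţu template — but securing the \emph{uniform} control on SBE constants needed to run Theorem~\ref{thm: sbe close to isometry Intro} simultaneously over the whole family $\{F_\lambda\}_{\lambda\in\Lambda}$; without a single sublinear gauge $v$ working for all $\lambda$, one cannot even state that $g_{\lambda\mu}$ and $g_\lambda g_\mu$ are sublinearly close. This is precisely the content the paper isolates in Lemma~\ref{sbe: lem: uniform control on the sublinear constants for fixed x}, and tracing the dependence of the ``quasi-geodesic stays near a geodesic'' bounds (in the higher-rank sense, via the asymptotic cone) on the SBE constants is where the real work lies. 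A secondary technical point is handling the non-uniform case: one must know the orbit map $\Gamma\to X_0$ is a quasi-isometric embedding (Theorem~\ref{prelims: thm: lubozki mozes raghunathan LMS QIE}) so that a self-SBE of $\Gamma$ genuinely induces a self-SBE of $X_0$ with controlled constants, and one must use that Theorem~\ref{thm: sbe close to isometry Intro} is stated for the compact core rather than all of $X$.
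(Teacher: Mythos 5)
Your proposal is correct and follows essentially the same route as the paper: build the self-SBEs $f_\la$ of $X_0$ with uniformly controlled constants, apply sublinear geometric rigidity together with the uniform-control refinement of Lemma~\ref{sbe: lem: uniform control on the sublinear constants for fixed x} to get $\Phi:\La\to G$, verify it is a homomorphism with finite kernel via the fact that sublinearly close isometries coincide (Lemma~\ref{sbe: lem: sublinear distance isometries}), and conclude with the property (T) case of sublinear rigidity. You also correctly identify the uniform control on the SBE constants across the family $\{f_\la\}$ as the genuine technical obstacle, which is exactly where the paper concentrates its effort.
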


\begin{rmk}
The proof I present is quite indifferent to whether the lattice $\G$ is uniform or not. In order to have a unified proof and to ease notation, I fix the convention that for both uniform and non-uniform lattices, $X_0$ denotes the compact core of the lattice. This just means that $X=X_0$ in case $\G$ is uniform. My proof heavily relies on that of Druţu in \cite{DrutuQI}, where the theorems are stated for non-uniform lattices. Nonetheless one readily sees that her proofs work perfectly well for uniform lattices. Indeed the arguments of \cite{DrutuQI} are only much simpler in the uniform case.
\end{rmk}

\subsubsection{The Quasi-Isometry Case}\label{sec: quasi-isomtery case description}
The outline of my proof for Theorem~\ref{sbe: thm: main} is identical to that of quasi-isometric completeness, which I now describe briefly. The main step is that for any quasi-isometry $f:X_0\rightarrow X_0$ of the compact core of $\G$, there exists an isometry $g:X\rightarrow X$ such that $f,g$ are boundedly close, i.e.\ there is some $D>0$ for which $d\big(f(x),g(x)\big)<D$ for all $x\in X_0$.

Let $\La$ be an abstract group with a quasi-isometry $q:\La\rightarrow \G$. Using Lubotzki-Mozes-Raghunathan (\cite{lubotzky_mozes_raghunathan_2000}, see Theorem~\ref{prelims: thm: lubozki mozes raghunathan LMS QIE} above), $\G$ is quasi-isometrically embedded in $X$ as the compact core $X_0$. One can thus extend $q$ to a quasi-isometry $q_0:\La\rightarrow X_0$. A conjugation trick allows to associate to each $\la\in \La$ a quasi-isometry $f_\la: X_0\rightarrow X_0$ defined by $f_\la:=q\circ L_\la\circ q^{-1}$ ($L_\la:\La\rightarrow \La$ is the left multiplication by $\la$ in $\La$).

By the first paragraph, there exists $g_\la\in \mathrm{Isom}(X)$ that is boundedly close to $f_\la$. Moreover, the proof also shows that the bound $D$ depends only on the quasi-isometry constants of $f_\la$. These could be seen to depend only on $q$ and not on any specific $\la$. From this one concludes that the map $\la\mapsto g_\la$ is a group homomorphism $\Phi:\La\rightarrow G$. It is then straightforward to show that $\Phi$ has finite kernel and that $\G\subset \mn_D\big(\mathrm{Im}(\Phi)\big)$. One then uses the higher rank case of Theorem~\ref{sec: qr: bounded: Schwartz} (see Section~\ref{sec: qr: bounded case}) to deduce that $\mathrm{Im(\Phi)}$ is a non-uniform lattice in $G$ that is commensurable to $\G$.

\subsubsection{The SBE Case}
Moving to SBE, the first step is to find an isometry of $X$ that is close to a self SBE of $X_0$. Druţu's proof is preformed in the asymptotic cone of $X$, which allows for a smooth transition to the SBE setting. This indeed yields sublinear geometric rigidity, formulated in 
Theorem~\ref{sbe: thm: self sbe is close to isometry}: for a self SBE $f:X_0\rightarrow X_0$, one can find an isometry $g:X\rightarrow X$ that is close to it. The difference is that in the SBE setting, these maps are only \emph{sublinearly close}:

\begin{defn}
Let $(X,x_0)$ be a pointed metric space. Two maps $f,g:X\rightarrow X$ are said to be
\emph{sublinearly close} on $X$ if there is a sublinear function $u$ such that $d\big(f(x),g(x)\big)\leq u(|x|)$ for all $x\in X$.
\end{defn}

From this point on, one would have liked to continue as in the quasi-isometry case: define the map $\Phi:\La\rightarrow G$ as in Section~\ref{sec: quasi-isomtery case description} above, and show that $\G\subset \mn_u\big(\mathrm{Im}(\Phi)\big)$ for some sublinear function $u$. That $\mathrm{Im}(\Phi)$ is a lattice would then follow from Theorem~\ref{thm: main}, proving Theorem~\ref{sbe: thm: main}. 

There is however one additional obstacle that is unique to the SBE setting. Namely the SBE constants of $f_\la$ do depend on $\la$, and the resulting sublinear bound on $d\big(f_\la(x),g_\la(x)\big)$ is not enough to define $\Phi$ properly. As far as I can see, one needs to get some uniform control on that bound in terms of the SBE constants. This is the essence of Lemma~\ref{sbe: lem: uniform control on the sublinear constants for fixed x}. This type of uniform control is often needed when working with SBE, see e.g.\ Section I.3 in Pallier's thesis~\cite{pallierTheses2019}. I am then able to continue as in the quasi-isometry case, proving: 

\begin{thm}\label{sbe: thm: from SBE to sublinearly close}
Let $G$ be as in Theorem~\ref{sbe: thm: main}. In the notations described above, the map $\Phi:\La\rightarrow G$ is a group homomorphism with $\mathrm{Ker}(\Phi)$ finite, and there is a sublinear function $u$ such that for $\La_1:=\mathrm{Im}(\Phi)$ it holds that $\G\subset \mn_u(\La_1)$ and $\La_1\subset \mn_u(\G)$.
\end{thm}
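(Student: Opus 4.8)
The plan is to follow the quasi-isometry scheme outlined in Section~\ref{sec: quasi-isomtery case description}, but carefully tracking quantitative dependencies so that the sublinear errors can be made uniform over $\La$. Let $q:\La\rightarrow \G$ be the given $(L,u)$-SBE, and extend it (using Theorem~\ref{prelims: thm: lubozki mozes raghunathan LMS QIE} for the higher-rank non-uniform case, trivially otherwise) to an SBE $q_0:\La\rightarrow X_0$ with constants $(L',u')$ depending only on $(L,u)$ and the Lubotzky--Mozes--Raghunathan constants. Fix an SBE-quasi-inverse $\bar q_0$. For each $\la\in\La$ define $f_\la:=q_0\circ L_\la\circ\bar q_0:X_0\rightarrow X_0$, where $L_\la$ is left-multiplication by $\la$. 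Since $L_\la$ is an isometry of $\La$, the composition $f_\la$ is again an SBE; crucially, its SBE constants can be bounded in terms of $(L',u')$ alone, \emph{independently of $\la$} — the only $\la$-dependence is a base-point translation, which the triangle inequality absorbs into the additive sublinear term. By Theorem~\ref{sbe: thm: self sbe is close to isometry} each $f_\la$ is sublinearly close to an isometry $g_\la\in\mathrm{Isom}(X)$; the point of invoking the strengthened Lemma~\ref{sbe: lem: uniform control on the sublinear constants for fixed x} is that the sublinear bound $d(f_\la(x),g_\la(x))\le v(|x|)$ can be taken with $v=v(L',u')$ \emph{the same function for all $\la$}.

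Next I would verify that $\Phi:\la\mapsto g_\la$ is a homomorphism. For $\la,\mu\in\La$, the maps $f_{\la\mu}$ and $f_\la\circ f_\mu$ agree up to the error coming from $\bar q_0\circ q_0\simeq \mathrm{id}$, which is a uniform sublinear discrepancy; hence $g_{\la\mu}$ and $g_\la g_\mu$ are sublinearly close isometries of $X$, with a uniform sublinear bound. Two isometries of a symmetric space of noncompact type that are sublinearly close must coincide: on any maximal flat the displacement of $g_{\la\mu}^{-1}g_\la g_\mu$ from the identity is either zero or grows at least linearly along some geodesic, contradicting sublinearity unless it is the identity. (This is where absence of Euclidean factors — guaranteed since $G$ is semisimple — and the uniformity of the bound are both used.) Therefore $g_{\la\mu}=g_\la g_\mu$, so $\Phi$ is a homomorphism into $G=\mathrm{Isom}(X)^\circ$ (or a finite extension thereof, handled as in Drutu). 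Finiteness of $\mathrm{Ker}(\Phi)$ follows as in the quasi-isometry case: if $g_\la=\mathrm{id}$ then $f_\la$ is sublinearly close to the identity on $X_0$, which forces $L_\la$ to be sublinearly close to the identity on $\La$ via the SBE $q_0$; a sublinear bound on a word-metric left-translation means $|\la^{-1}\cdot\gamma|$ is sublinearly close to $|\gamma|$ for all $\gamma$, and standard arguments (cf.~Drutu) bound the number of such $\la$.

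Finally, for the covering statements: given $\gamma\in\G\subset X_0$, pick $\la\in\La$ with $q_0(\la)$ within $u'(|\gamma|)$ of $\gamma$ (condition~(3) of SBE). Then $g_\la(x_0)=g_\la(q_0(\bar q_0(x_0)))$ is, by the uniform sublinear closeness of $g_\la$ to $f_\la$ together with the coarse identities $q_0\bar q_0\simeq\mathrm{id}$, within a sublinear distance of $q_0(\la)$, hence within a sublinear distance of $\gamma$; since the sublinear function is uniform in $\la$ and $u'$ is sublinear, the composite bound is sublinear in $|\gamma|$, giving $\G\subset\mn_w(\La_1)$ for $\La_1=\Phi(\La)$ and a single sublinear $w$. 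The reverse containment $\La_1\subset\mn_w(\G)$ is symmetric, using the SBE-quasi-inverse $\bar q_0$ in place of $q_0$ (or directly: $g_\la(x_0)\approx f_\la(x_0)=q_0(L_\la(\bar q_0(x_0)))\in q_0(\La)\subset X_0\subset$ a bounded neighbourhood of $\G$, then push the base-point dependence through). The main obstacle is the uniformity in the second paragraph — ensuring the sublinear function $v$ bounding $d(f_\la,g_\la)$ does not degrade with $\la$ — which is precisely why Lemma~\ref{sbe: lem: uniform control on the sublinear constants for fixed x} is stated as a strengthening of Theorem~\ref{thm: sbe close to isometry Intro}, and I would lean on it as a black box here; everything else is bookkeeping along Drutu's template.
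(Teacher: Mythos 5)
Your architecture is the paper's: form $f_\la=q_0\circ L_\la\circ\bar q_0$, approximate each $f_\la$ by an isometry via the geometric rigidity theorem, show $\Phi$ is a homomorphism using the fact that sublinearly close isometries coincide, and run the kernel and covering estimates at the basepoint. The genuine problem is the quantitative claim everything rests on. You assert that the SBE constants of $f_\la$ are independent of $\la$ ("the only $\la$-dependence is a base-point translation, which the triangle inequality absorbs"), and consequently that $d\big(f_\la(x),g_\la(x)\big)\leq v(|x|)$ for a single sublinear $v$ independent of $\la$. Neither is what is true. The computation in Lemma~\ref{sbe: lem: uniform control on the SBE constants for all Lambda} gives that $f_\la$ is an $(L^2,v_\la)$-SBE with $v_\la(|x|)=(L+1)u(|x|)+u(|\la|)$: the additive error genuinely contains the unbounded term $u(|\la|)$, and this is precisely the "additional obstacle unique to the SBE setting" that the paper isolates. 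Near the basepoint $f_\la$ may deviate from \emph{every} isometry by an amount comparable to $u(|\la|)$ (its SBE constants permit that much slack), so a bound $v(|x|)$ uniform in $\la$ is not provided by Theorem~\ref{sbe: thm: self sbe is close to isometry} and should not be expected to hold; indeed it would immediately give $\Phi(\La)\subset\mn_{v(0)}(\G)$, a \emph{bounded} neighbourhood, which is far stronger than the theorem claims. The actual content of Lemma~\ref{sbe: lem: uniform control on the sublinear constants for fixed x} is the two-variable statement: $d\big(f_\la(x),g_\la(x)\big)\leq w(|\la|,|x|)$ with $w$ sublinear in each variable separately, so that at the fixed point $x_0$ one gets a bound $u^{x_0}(|\la|)$ that is sublinear in $|\la|$ but unbounded.

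The repair is local, because the two-variable bound evaluated at $x_0$ is exactly what your later steps need. For the kernel: if $g_\la=\mathrm{id}$ then $|f_\la(x_0)|\leq u^{x_0}(|\la|)$ while $|f_\la(x_0)|=|q_0(\la)|\geq \frac{1}{L}|\la|-u(|\la|)$, and sublinearity of both error terms bounds $|\la|$. For the coverings: $d\big(g_{\la_\g}(x_0),\g x_0\big)\leq u^{x_0}(|\la_\g|)+(\text{SBE and QI errors})$, and since $|\la_\g|$ is linear in $|\g|$ the composite is sublinear in $|\g|$. Separately, your justification of the homomorphism step is shaky: the claim that the displacement of $g_{\la\mu}^{-1}g_\la g_\mu$ "is either zero or grows at least linearly along some geodesic" is not correct as stated, and in any case the sublinear closeness is only available on $X_0$, not on all of $X$. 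The correct route (Lemma~\ref{sbe: lem: sublinear distance isometries}) uses convexity of the displacement function along the dense family of $\G$-periodic geodesics, which stay in $X_0$, to force the boundary map to be the identity, and then the absence of Euclidean de Rham factors to conclude $g=\mathrm{id}$.
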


Theorem~\ref{sbe: thm: main} is an immediate corollary of Theorem~\ref{sbe: thm: from SBE to sublinearly close} and (the property (T) case of) Theorem~\ref{thm: main}.

\subsubsection{Outline}
This section is divided into two parts that correspond to the steps of the proof. Section~\ref{sec: an sbe is close to an isometry} deals with the task of finding an isometry that is sublinearly close to an SBE, and Section~\ref{sec: from SBE to sublinearly close} establishes the properties of the map $\Phi:\La\rightarrow G$. I keep Section~\ref{sec: an sbe is close to an isometry} slim and concise. The main reason for this choice is that the proof of sublinear geometric rigidity (Theorem~\ref{sbe: thm: self sbe is close to isometry}) is merely a mimic of Druţu's argument in \cite{DrutuQI}, or an adaptation of it to the SBE setting. While these adaptations are somewhat delicate, giving a complete detailed proof would require reproducing Druţu's argument more or less in full. I felt that this is not desirable, and instead I only indicate the required adaptations. I believe that a reader who is familiar with Druţu's argument and with asymptotic cones could easily produce a complete proof using these indications. In particular, there is no preliminary section. I do not present buildings or dynamical results that go into Druţu's argument. I only shortly present asymptotic cones and some ideas from Druţu's proof of the quasi-isometry version of Theorem~\ref{sbe: thm: self sbe is close to isometry}.  Section~\ref{sec: from SBE to sublinearly close} is elementary.

\subsection{SBE are Close to Isometries}\label{sec: an sbe is close to an isometry}

In this section I indicate how to adapt Druţu's arguments in \cite{DrutuQI} in order to prove geometric rigidity: 

\begin{thm}[Theorem~\ref{thm: sbe close to isometry Intro}]\label{sbe: thm: self sbe is close to isometry}

There is a sublinear function $v=v(L,u)$ such that for every $\big(L,u\big)$-SBE $f:X_0\rightarrow X_0$, there exists a unique isometry $g=g(f)\in \mathrm{Isom}(X)$ such that $d\big(f(x),g(x)\big)\leq v(|x|)$ for all $x\in X_0$.
\end{thm}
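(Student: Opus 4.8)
The plan is to follow Druţu's argument for self quasi-isometries of the compact core $X_0$, re-running it with an SBE in place of a quasi-isometry. The reason this works is that an $(L,u)$-SBE with $u$ sublinear induces, on every asymptotic cone $\operatorname{Cone}_\omega(X_0,(o_n),(s_n))$, an honest $L$-biLipschitz homeomorphism: the additive error $u(|x_1|\vee|x_2|)$ becomes negligible after rescaling by $s_n$ precisely because $u$ is sublinear (and subadditive, so it is well-behaved under the scaling sequence). Moreover the surjectivity-type condition (3) in the definition of SBE passes to a genuine surjectivity statement in the cone, again by sublinearity. So the first step is to record carefully that $\operatorname{Cone}_\omega(X_0)=\operatorname{Cone}_\omega(X)$ (since the horoballs removed from $X$ are sublinearly thin in the relevant sense — here one uses that $\G$ is a lattice and, in the non-uniform higher rank case, the LMR quasi-isometric embedding of Theorem~\ref{prelims: thm: lubozki mozes raghunathan LMS QIE}, which makes $X_0$ quasi-isometric to $X$ as coarse spaces and hence gives the same cone), and that $f$ induces an $L$-biLipschitz self-homeomorphism $f_\omega$ of this cone.

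The second step is Druţu's structural input: the asymptotic cone of $X$ is a Euclidean building (Kleiner–Leeb), a biLipschitz self-homeomorphism of it preserves the building structure — in particular it maps maximal flats to maximal flats and induces an automorphism of the spherical building at infinity — and here the hypothesis that $X$ has \emph{no} $\rr$ factor is used, via Tits' extension theorem, to conclude that this boundary automorphism is realized by an isometry of $X$. This is exactly the point where the rank-one restriction in the statement comes from, as the paper notes. Out of this one extracts, for each scaling sequence, an isometry; the content of Druţu's argument is that these isometries are compatible and assemble (after taking $\omega$-limits and using the homogeneity of $X$) into a single isometry $g\in\operatorname{Isom}(X)$ with the property that $d(f(x),g(x))/|x|\to 0$ along every sequence $|x|\to\infty$ — i.e.\ $f$ and $g$ are sublinearly close. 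Uniqueness of $g$ is easy: two isometries that are both sublinearly close to $f$ are sublinearly close to each other, and an isometry of a symmetric space of noncompact type that moves points a sublinear amount must be the identity (a nontrivial isometry has a positive lower bound on its displacement either on some flat or an exponentially growing displacement, contradicting sublinearity).

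The third and genuinely new step — and the one I expect to be the main obstacle — is upgrading ``there exists a sublinear $v$'' to ``there exists a sublinear $v=v(L,u)$ depending only on the SBE constants.'' The bare asymptotic-cone argument is non-quantitative: it produces $g$ and the sublinear bound by a compactness/ultralimit argument, with no control on $v$. To get the uniform $v(L,u)$ one argues by contradiction: if no such $v$ existed, there would be a sequence of $(L,u)$-SBEs $f_k$ (same constants) and points $x_k\in X_0$ with $d(f_k(x_k),g(f_k)(x_k))\ge k\cdot w(|x_k|)$ for a candidate sublinear bound $w$ that one is trying to rule out; feeding the $f_k$ and the scales $|x_k|$ into a single asymptotic cone (a diagonal/ultralimit over $k$) produces a limiting biLipschitz map that is \emph{not} within the building's isometry-controlled neighborhood of its linearization, contradicting the building rigidity of step two. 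Making this precise requires being careful that the class of $(L,u)$-SBEs is ``compact enough'' in the ultralimit — which is where subadditivity and the doubling property of $u$ (Definition~\ref{def: admissible}, Definition~\ref{def: sublinear}) are used — and this is essentially the content of the strengthening referred to in the excerpt as Lemma~\ref{sbe: lem: uniform control on the sublinear constants for fixed x}. So concretely I would: (i) set up cones and the induced biLipschitz map; (ii) invoke Kleiner–Leeb plus Tits (using no $\rr$ factors) and Druţu's assembly to get $g$ and sublinear closeness; (iii) run the contradiction/diagonal argument over the family of $(L,u)$-SBEs to make the sublinear bound $v$ depend only on $(L,u)$; (iv) dispatch uniqueness. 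Step (iii) is the crux.
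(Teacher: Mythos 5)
Your overall strategy is the same as the paper's: re-run Druţu's cone argument with the SBE in place of a quasi-isometry (the induced cone map is still $L$-biLipschitz because $u$ is sublinear), invoke Kleiner--Leeb and Tits' extension theorem (which is exactly where the absence of $\mathbb{R}$-rank $1$ factors enters), and obtain the uniform dependence $v=v(L,u)$ by a contradiction argument carried out in a cone whose scaling factors are the offending distances. Your steps (ii)--(iv) match the paper's Sub-Steps 1--3 and its Lemma~\ref{sbe: lem: sublinear distance isometries}.

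Step (i), however, contains a genuine error: it is not true that $\operatorname{Cone}_\omega(X_0)=\operatorname{Cone}_\omega(X)$, and the justification you offer (that $X_0$ is quasi-isometric to $X$) is false for non-uniform lattices. Theorem~\ref{prelims: thm: lubozki mozes raghunathan LMS QIE} makes the orbit map $\Gamma\to X_0$ a quasi-isometry; it does not make the inclusion $X_0\hookrightarrow X$ one. The removed horoballs are linearly thick, not sublinearly thin: a horoball tangent to $x_0$ and based at $\xi$ contains $B\big(\eta(t),t\big)$ for $\eta=[x_0,\xi)$, so it survives as a nonempty open set in every asymptotic cone, and $C(X_0)$ is a proper subset of $C(X)$ (this is precisely why, say, $\mathrm{SL}_n(\mathbb{Z})$ is not quasi-isometric to $\mathrm{SL}_n(\mathbb{R})/\mathrm{SO}(n)$). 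Bridging this discrepancy is what the paper calls the heart of Druţu's proof: one uses the density of $\Gamma$-periodic flats (Proposition~\ref{qr: prelims: sym spc: prop: lattice periodic vecotrs are dense}) to show that $C(X_0)$ still contains enough apartments of the Euclidean building $C(X)$ to determine the building structure and extend the boundary map. The saving grace is that this part of Druţu's argument involves only $\Gamma$ and the orbit quasi-isometry, not the map $f$, so it imports into the SBE setting unchanged --- but it cannot be replaced by the identification of cones you assert. A minor further quibble: your justification for why an isometry with sublinear displacement is trivial is not quite right as stated; the correct route is that displacement along a geodesic is convex, a convex function bounded above by a sublinear function is bounded, hence $\partial g=\mathrm{id}$ on the dense set of $\Gamma$-periodic directions, and then $g=\mathrm{id}$ because $X$ has no Euclidean de Rham factor.
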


The proof of Theorem~\ref{sbe: thm: main} requires some control on the sublinear distance between $f$ and $g$, in terms of the sublinear constants of $f$. This is the meaning of the following lemma.

\begin{lem}\label{sbe: lem: uniform control on the sublinear constants for fixed x}
Let $\{f_r\}_{r\in \mrp}$ be a family of $(L',v_r)$-SBE $f_r:X_0\rightarrow X_0$, where $v_r=L'\cdot v+v(r)$ for some sublinear function $v\in O(u)$ and a constant $L'$. Let $g_r$ be the associated isometry given by Theorem~\ref{sbe: thm: self sbe is close to isometry}. Then for any $x\in X_0$, there is a sublinear function $u^x\in O(u)$ such that $d\big(f_r(x),g_r(x)\big)\leq u^x(r)$.
\end{lem}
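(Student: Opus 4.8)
The plan is to revisit the proof of Theorem~\ref{sbe: thm: self sbe is close to isometry} and track precisely where the sublinear function $v(L,u)$ comes from, so that one can see which parts of it are ``universal'' (depending only on the ambient structure of $X$, $X_0$ and on $L'$) and which parts scale with the additive constant $v(r)$ appearing in the SBE bound of $f_r$. The key point is that the family $\{f_r\}$ has a \emph{uniform} multiplicative constant $L'$ and an additive error of the controlled form $v_r = L'\cdot v + v(r)$, with $v\in O(u)$; so the only source of $r$-dependence is the constant shift $v(r)$. First I would fix $x\in X_0$ and consider the sequence (or net) $r\to\infty$. For each $r$ the isometry $g_r$ is the one produced by Theorem~\ref{sbe: thm: self sbe is close to isometry} applied to $f_r$, and by construction (via the asymptotic cone argument à la Druţu) $g_r$ is the unique isometry sublinearly close to $f_r$; the closeness function it provides a priori depends on the SBE constants of $f_r$, i.e.\ on $v_r$, hence on $r$.

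Next I would make the dependence explicit. Going through Druţu's asymptotic-cone argument as adapted in Section~\ref{sec: an sbe is close to an isometry}, the bound $d(f_r(x),g_r(x))\le v(L',v_r)(|x|)$ is obtained, and one should be able to read off from that argument that $v(L',v_r)$ depends on $v_r$ only through its ``size'' in a controlled, essentially affine way: roughly $v(L',v_r)(t) \preceq \Psi_{L'}(t) + \Phi_{L'}(v_r(t))$ for functions $\Psi_{L'},\Phi_{L'}$ depending only on $L'$ and on $X$, where $\Phi_{L'}$ is at worst linear (this is the content of the quantitative form of the ``Morse-type'' lemmas used there — every $(L',c)$-quasigeodesic stays within $D(L',c)$ of a geodesic, with $D$ controlled linearly in $c$; the sublinear analogue is exactly what the refined version of Theorem~\ref{sbe: thm: self sbe is close to isometry} gives). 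Substituting $v_r(t) = L'\,v(t) + v(r)$ and evaluating at $t = |x|$, the term $\Psi_{L'}(|x|)$ and $\Phi_{L'}(L'\,v(|x|))$ are constants depending only on $x$, $L'$ and $v$, while $\Phi_{L'}(v(r))$ is (at worst) a linear function of $v(r)$, hence $O(v(r))\subseteq O(u(r))$ since $v\in O(u)$ and $u$ is admissible (in particular $u$ is unbounded by sublinearity being nontrivial, so constants are absorbed). Define $u^x(r)$ to be this sum; it is non-decreasing (as $v$ is non-decreasing and the constants are just constants), it is sublinear because $v$ is and a sum of a constant with a sublinear function is sublinear, and it lies in $O(u)$. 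This gives $d(f_r(x),g_r(x))\le u^x(r)$ as required.

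The main obstacle will be step two: extracting from the proof of Theorem~\ref{sbe: thm: self sbe is close to isometry} the \emph{quantitative} dependence of the closeness function on the SBE constants, rather than merely its existence. Druţu's original quasi-isometry argument is not quantitative in this sense — it does not care how the bound $D$ depends on the quasi-isometry constants — so this requires genuinely redoing the relevant steps (the passage to the asymptotic cone, the identification of the induced bi-Lipschitz map with the boundary of an isometry via Tits' theorem, and the ``descent'' back from the cone to $X$) while keeping all constants explicit and checking that the additive SBE error enters only linearly. The cleanest route is probably to prove a uniform version of Theorem~\ref{sbe: thm: self sbe is close to isometry} directly for families with a common multiplicative constant: a compactness/ultralimit argument over the family $\{f_r\}$ shows that if the conclusion failed with a uniform $O(u)$ bound, one could pass to a subsequence and a rescaled ultralimit where a nontrivial bi-Lipschitz self-map of the asymptotic cone fails to agree with the isometry forced on it by the building structure — contradicting the rigidity already established. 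Once this uniform statement is in place, specializing to a fixed $x$ and reading off the bound as a function of $r$ is immediate, and the $O(u)$ membership follows from $v\in O(u)$ together with admissibility of $u$.
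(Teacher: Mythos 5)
Your proposal is correct and, in its final form, takes essentially the same route as the paper: the paper's Sub-step~3 assumes towards contradiction that the bound $c_n(x)$ is not sublinear in $n$, rescales the asymptotic cone by $c_n(x)$ so that $\lim_\omega u(n)/c_n(x)=0$ forces the cone maps $C(f_n)$ to remain biLipschitz, and then derives the same contradiction as in the existence step of Theorem~\ref{sbe: thm: self sbe is close to isometry}. The explicit affine tracking of constants that you sketch first is not what the paper does (and, as you note, would require genuinely quantitative versions of Druţu's lemmas); the rescaled-ultralimit contradiction you identify as the ``cleanest route'' is precisely the argument used.
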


\begin{rmk}
Combined with Theorem~\ref{sbe: thm: self sbe is close to isometry}, a different way to phrase the above statement is to say that the function $D:\La\times X_0\rightarrow \mrnn$ defined by $D(\la,x)=d\big(f_\la(x),g_\la (x)\big)$ is sublinear in each variable. I.e., there is a function $u:\mrnn\times\mrnn\rightarrow\mrpp$ such that $u$ is sublinear in each variable and $D(\la,x)\leq u(|\la|,|x|)$.
\end{rmk}

\paragraph{Outline.}
I begin with a short presentation of asymptotic cones. I then give an account of the original proof of Theorem~\ref{sbe: thm: self sbe is close to isometry} when $f:X_0\rightarrow X_0$ is a quasi-isometry. I present the routines required to modify the proof for the SBE setting. I exemplify the modification procedure in a specific representative example, and finish with a road map for proving Theorem~\ref{sbe: thm: self sbe is close to isometry} and Lemma~\ref{sbe: lem: uniform control on the sublinear constants for fixed x} using the aforementioned routines. 

\subsubsection{Asymptotic Cones}

\begin{defn}\label{def: asymptotic cones}
Let $(X,d)$ be a metric space. Fix an ultrafilter $\omega$, a sequence of points $x_n\in X$ and a sequence of scaling factors $\imath_n\xrightarrow[{\omega}]{} \infty$. The \emph{asymptotic cone of $X$ w.r.t. $x_n,\imath_n$}, denoted $C(X)$, is the metric $\omega$-ultralimit of the sequence of pointed metric spaces $(X,\frac{1}{\imath_n}\cdot d,x_n)$. The metric on $C(X)$ is denoted $d_\omega$.
\end{defn}

See Section $2.4$ in \cite{DrutuQI} for an elaborate account, including the definitions of ultrafilters and ultralimits. The strength of SBE is that they induce biLipschitz maps between the respective asymptotic cones. 

\begin{lem}[See e.g.\ Cornulier~\cite{SBE_Review}]
Let $f:X\rightarrow Y$ be an $(L,u)$-SBE. Then $f$ induces an $L$-biLipschitz map $C(f): C(X)\rightarrow C(Y)$ between the corresponding asymptotic cones with the same scaling factors $C(X)=(X,\frac{1}{\imath_n}d_X,x^0_n)$ and $C(Y)=(Y,\frac{1}{\imath_n}d_Y,y^0_n)$. 
\end{lem}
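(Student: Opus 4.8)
The plan is to define $C(f)$ directly on representing sequences and to show that the additive error term $u(\,\cdot\,)$ of the SBE becomes negligible once distances are rescaled by $\imath_n$. Fix the asymptotic cone $C(X)$ with scaling factors $\imath_n$ and base sequence $x^0_n$, set $y^0_n:=f(x^0_n)$, and let $C(Y)$ be the cone built from the same $\imath_n$ and the base sequence $y^0_n$. (As always in the asymptotic cone construction — and in every case relevant to this paper, where the base sequence may be taken constant — one works under the standing assumption $\lim_\omega |x^0_n|_X/\imath_n<\infty$.) Define $C(f)\big([(x_n)]\big):=[(f(x_n))]$. The one recurring computation is: if $(z_n)$ is any sequence with $\tfrac{1}{\imath_n}d_X(z_n,x^0_n)$ bounded $\omega$-a.e., then $\tfrac{1}{\imath_n}u(|z_n|_X)\to 0$ along $\omega$. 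Indeed, sublinearity and monotonicity of $u$ give, for every $\ep>0$, a constant $C_\ep$ with $u(r)\le \ep r+C_\ep$ for all $r$; since $|z_n|_X\le |x^0_n|_X+d_X(z_n,x^0_n)=O(\imath_n)$ along $\omega$, one gets $\limsup_\omega \tfrac{1}{\imath_n}u(|z_n|_X)\le \ep M$ for a fixed $M$, and $\ep\to 0$ finishes it. The same holds with $|z_n|_X$ replaced by $|z_n|_X\vee|z'_n|_X$, and — since the SBE bound forces $y^0_n=f(x^0_n)$ to lie at scaled distance $O(1)$ from $y_0$ — with $X$ replaced by $Y$.

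Given this, dividing the two-sided SBE inequality
\[
\tfrac{1}{L}d_X(x_n,x'_n)-u\big(|x_n|_X\vee|x'_n|_X\big)\le d_Y\big(f(x_n),f(x'_n)\big)\le L\,d_X(x_n,x'_n)+u\big(|x_n|_X\vee|x'_n|_X\big)
\]
by $\imath_n$ and passing to the $\omega$-limit shows simultaneously that: (i) $(f(x_n))$ represents a point of $C(Y)$ whenever $(x_n)$ represents a point of $C(X)$; (ii) $C(f)$ is independent of the chosen representative (apply the inequality to $x'_n$ equivalent to $x_n$); and (iii) $\tfrac1L d_\omega(p,q)\le d_\omega\big(C(f)p,C(f)q\big)\le L\,d_\omega(p,q)$ for all $p,q\in C(X)$. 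Hence $C(f)$ is a well-defined $L$-biLipschitz, in particular injective, map.

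It remains to see that $C(f)$ is onto, and this is precisely where axiom (3) of the SBE definition is used. Given $[(y_n)]\in C(Y)$, choose for each $n$ a point $x_n\in X$ with $d_Y\big(y_n,f(x_n)\big)\le u(|y_n|_Y)$. A short bootstrap — feed this into the lower SBE bound for the pair $x_n,x^0_n$, bound $u(|x_n|_X\vee|x^0_n|_X)\le \ep\big(|x^0_n|_X+d_X(x_n,x^0_n)\big)+C_\ep$, and absorb the $\ep\,d_X(x_n,x^0_n)$ term on the left by taking $\ep<\tfrac1{2L}$ — shows $\tfrac1{\imath_n}d_X(x_n,x^0_n)$ is $\omega$-bounded, so $(x_n)$ represents a point of $C(X)$; and $\tfrac1{\imath_n}d_Y\big(f(x_n),y_n\big)\le \tfrac1{\imath_n}u(|y_n|_Y)\to 0$ along $\omega$, so $C(f)\big([(x_n)]\big)=[(y_n)]$. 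Thus $C(f)$ is an $L$-biLipschitz bijection.

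The only genuinely substantive point — and the step to watch — is the vanishing of the rescaled error $\tfrac1{\imath_n}u(\cdot)$: this uses nothing about $f$ beyond sublinearity of $u$, but it does require the arguments of $u$ to remain $O(\imath_n)$, which is why the base sequences must be controlled relative to the scaling factors. In the settings of this paper (word metrics on groups, or $X_0\subset X$ a symmetric space with a constant base point) this is automatic, so no real difficulty arises; everything else is the routine bookkeeping of ultralimits.
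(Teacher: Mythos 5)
Your proof is correct and is the standard argument (the paper itself gives no proof of this lemma, only the citation to Cornulier): divide the two-sided SBE inequality by $\imath_n$, observe that the rescaled error $u(\cdot)/\imath_n$ vanishes along $\omega$, and use axiom (3) of the SBE definition to get surjectivity. You are also right to flag the hypothesis $\lim_\omega|x^0_n|_X/\imath_n<\infty$ --- some control of the base sequence relative to the scaling factors is genuinely needed for the rescaled error term to vanish, and it does hold in the cones used in this paper.
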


\subsubsection{The Argument}

\paragraph{A High-Level Description.}

The core of the argument lies in elevating an SBE $f_0:X_0\rightarrow X_0$ to an isometry $g_0\in G=\mathrm{Isom}(X)$. There are two gaps to fill: first, $\G$ is non-uniform and so $f_0$ is not even defined on the whole space $X$. And obviously,  $f_0$ is just an SBE. 

Assume for a moment that $\Gamma$ is uniform and that $f$ is defined on the whole space $X$. Elevating $f: X\rightarrow X$ to an isometry is done by considering the map $C(f):C(X)\rightarrow C(X)$ that $f$ induces on an asymptotic cone $C(X)$. This map is biLipschitz, and the work of Kleiner and Leeb \cite{KleinerLeeb} allows one to conclude that $C(f)$ is, up to a scalar, an isometry. In turn, this isometry induces an isometry $\partial g$ on the spherical building structure of $X(\infty)$. This is done by the relation between the Euclidean building structure of $C(X)$ and the spherical building structure of $\partial_\infty X$. A theorem of Tits~\cite{tits1974buildings} associates to $\partial q$ a unique isometry $g\in \mathrm{Isom(X)}$ that induces $\partial f$ as its boundary map. By construction, it is then not too difficult to see that $g$ and $f$ are `close'. In case $\G$ is non-uniform, an SBE $f:\G\rightarrow \G$ does not readily yield a cone map on $C(X)$, but only on $C(X_0)$. Overcoming this difficulty requires substantial work and is the heart of Druţu's proof. In short, she uses dynamical results stating that the vast majority of flats in $X$ are close enough to $X_0$. As mentioned in Section~\ref{sec: qr: sym spc}, the building structure on $X(\infty)$ is determined by the boundaries of flats $F\subset X$. To some extent this is true also for the (Euclidean) building structure of $C(X)$. Therefore the fact that the majority of flats in $X$ are `close enough' to $X_0$ results in the fact that $C(X_0)$ composes a large enough portion of $C(X)$. This is a very rough sketch of the logic behind Druţu's argument.

The procedure described above results in an isometry $g\in \mathrm{Isom}(X)$ associated to $f_0$. To complete the argument one needs to verify that the map $f_0\mapsto g$ is a group homomorphism between $\mathrm{SBE}(\G)=\mathrm{SBE}(X_0)$ and $\mathrm{Isom}(X)$. Composing this map with a representation of $\La$ into $\mathrm{SBE}(\G)$ yields a map $\La\rightarrow G$ by $\la\mapsto f_\la\mapsto g_\la:=g_{f_\la}$. A computation then shows that this map has finite kernel and that $\G$ lies in a sublinear neighbourhood of the image.

\paragraph{Flat Rigidity.}
The adaptations that are required for the SBE setting lie mainly in the part of Druţu's work that concerns \emph{flat rigidity}. That is, the proof that the quasi-isometry $q_0$ maps a flat $F\subset X_0$ to within a uniformly bounded neighbourhood of another flat $F'\subset X_0$. This is proved by passing to the cone map and using an analogous result for biLipschitz images of apartments in Euclidean buildings.

\begin{rmk}
Druţu's argument requires many geometric and combinatorial definitions - some classical and widely known (e.g., Weyl chambers of a symmetric space $X$) some less known (e.g.\ an asymptotic cone with respect to an ultrafilter $\omega$) and some new (e.g., the \emph{horizon} of a set $A\subset X$). I use her definitions, terminology and notations freely without giving the proper preliminaries or even the definitions. I assume most readers are familiar to some extent with most of these objects. For the new definitions, I try to say as little as needed to allow the reader to follow the argument. 
\end{rmk}

The proof consists of $6$ steps:
\begin{enumerate}
    \item The \emph{horizon} of an image of a Weyl chamber is contained in the \emph{horizon} of a finite union of Weyl chambers, and the number of chambers in this union depends only on the Lipschitz constant. (Lemmata $3.3.5$, $3.3.6$ in \cite{DrutuQI}, consult Remark~\ref{rmk: horizon} below for a sketchy definition of horizon).
    
    \item The horizon of an image of a flat coincides with the horizon of a finite union of Weyl chambers, and the number of chambers in this union depends only on the Lipschitz constant. 
    
    \item \label{item: fan}The union of Weyl chambers in the previous step limits to an apartment in the Tits building at $X(\infty)$. Such a union is called a \emph{fan over an apartment}. 
    
    \item For each Weyl chamber $W$ there corresponds a unique chamber $W'$ such that $q_0(W)$ and $W'$ have the same horizon. This amounts to an induced map on the Weyl chambers of the Tits building at $X(\infty)$ (Lemma $4.2.1$ in \cite{DrutuQI}).
    
    \item \label{item: close flats} Given a flat $F$ through a point $x$, the unique flat $F'$ asymptotic to the union of Weyl chambers obtained in step~\ref{item: fan} is at uniform bounded distance from $q_0(x)$. The bound depends only on the quasi-isometry constants. The flat $F'$ is called the flat \emph{associated} to $F$.
    
    \item If $F_1$ and $F_2$ are two flats through $x$ which intersect along a hyperplane $H$, then the boundaries at $X(\infty)$ of the associated flats $F_1'$ and $F_2'$ intersect along a hyperplane of the same codimension as $H$.  
\end{enumerate}

\begin{rmk}\label{rmk: horizon}
For a precise definition of horizon see \cite{DrutuQI}, section 3. For now, it suffices to say the following. The horizon of a set $A\subset X$ is contained in the horizon of a set $B\subset X$ if, looking far away at $A$ from some point $x\in X$, $A$ appears to be contained in an $\varepsilon$-neighbourhood of $B$. This intuition is made precise by considering the angle at $x$ that a point $a\in A$ makes with the set $B$. Two sets have the same horizon if each set's horizon is contained in the other. In the case $A$ and $B$ have the same horizon, an important aspect is the distance $R$ starting from which $A$ and $B$ seem to be $\varepsilon$-contained in one another. Call this distance the \emph{horizon radius}. It depends on $x$ and $\varepsilon$.
\end{rmk}

The proofs for most of these steps have similar flavour: in any asymptotic cone $C(X)$, $q_0$ induces a biLipschitz map. Kleiner and Leeb \cite{KleinerLeeb} proved many results about such maps between cones of higher rank symmetric spaces. One assumes towards contradiction that some assertion fails (say, in step~\ref{item: close flats}, assume that there is no bound on the distance between $q_0(x_n)$ and the associated flat $F'_n$). This gives an unbounded sequence of scalars (say, $d\big(q_0(x_n),F'_n\big)=\imath_n\rightarrow \infty$). These scalars are used to define a cone in which one obtains a contradiction to some fact about biLipschitz maps (say, that the point $[q_0(x_n)]_\omega$ is at $d_\omega$-distance $1$ from $[F'_n]_\omega$, while it should lie in  $[F'_n]_\omega$).

Typically, the bounds obtained this way depend on the quasi-isometry constants. \emph{A priori}, they also depend on the specific point $x\in X$ or flat $F\subset X$ in which you work (e.g.\ the horizon radius for the chambers in step \ref{item: fan} or the bound on $d\big(q_0(x),F'\big)$ in step \ref{item: close flats}). However, it is easy to see that in the quasi-isometry setting, the bounds are actually independent of the choice of point/flat/chamber. This independence stems from the fact that one can pre-compose $q_0$ with an isometry translating any given point/flat/chamber to a fixed point/flat/chamber (resp.), \emph{without changing the quasi-isometry constants} (see e.g.\ Remark 3.3.11 in \cite{DrutuQI}). The fact that these bounds depend only on the quasi-isometry constants is essential for the proof that the map $\La\rightarrow \mathrm{Isom}(X)$ has the desired properties. 

Moving to the SBE setting, the essential difference is exactly that the bounds one obtains depend on the specific point, Weyl chamber or flat. Indeed it is clear that these bounds should depend on the size $|x|$, as they depend on the additive constant in the quasi-isometry case. It is sensible to guess though that the bounds only grow sublinearly in $|x|$, which is enough in order to push the argument forward. In the next section I show how to elevate a typical cone argument from the quasi-isometry setting to the SBE setting. I focus on showing that the bound one obtains depend only on the SBE constants $(L,u)$ and sublinearly $|x|$.

\subsubsection{Generalization to SBE: Adapting Cone Arguments}

To adapt for the SBE setting, split each step into three sub-steps: the first two amount to proving Theorem~\ref{sbe: thm: self sbe is close to isometry}, and the third step amounts to proving Lemma~\ref{sbe: lem: uniform control on the sublinear constants for fixed x}.

\paragraph{Sub-Step 1.} Repeat the argument of the quasi-isometry setting \emph{verbatim}, to obtain a bound $c=c(x)$ which depend on the point $x$.

\paragraph{Sub-Step 2.} Assume towards contradiction that there is a sequence of points $x_n$ for which $\lim_\omega \frac{c(x_n)}{|x_n|}\ne 0$. This means $\lim_\omega \frac{|x_n|}{c(x_n)}\ne \infty$, and so the point $(x_n)_n$ lies in the cone $C(X)=Cone\big(X,x_0,c(x_n)\big)$, and one may proceed as in the corresponding quasi-isometry setting to obtain a contradiction. 

\paragraph{Sub-step 3.} Fix $x\in X$ and a sequence of SBE as in Lemma~\ref{sbe: lem: uniform control on the sublinear constants for fixed x}, i.e.\ $\{f_n\}_{n\in\mathbb{N}}$ with the same Lipschitz constant and with sublinear constants $v_n(s)=v(s)+u(n)$, for some sublinear functions $u,v$. Denote by $c_n(x)$ the constants that were achieved in the previous steps for $x$ and the SBE map $f_n$, and assume towards contradiction that $|c_n(x)|$ is not bounded above by any function sublinear in $n$. This means in particular that $\lim_\omega \frac{u(n)}{c_n(x)}=0$. One concludes that the cone map $C(f_{r_n})$ is biLipschitz, and gets a contradiction in the same manner as in the first step.

\begin{exa} \label{exmp: distance to associated flat}

In order to give the reader a sense of what is actually required, I now demonstrate this procedure in full in a specific claim. I chose to do this for proposition $4.2.7$ of \cite{DrutuQI}, which is complicated enough to require some attention to details, but not too much. The statement is as follows: 

\begin{prop}[SBE version of Proposition $4.2.7$ in~\cite{DrutuQI}] \label{prop: sublinear bound on distance to associated flat}
Let $f:X\rightarrow X$ be an $(L,u)$-SBE, and $F\subset X$ a flat through $x$ to which $f$ associates a fan over
an apartment, $\cup_{i=0}^pW_i$. If $F'$ is the maximal flat asymptotic to
the fan, then $d\big(f(x),F'\big)\leq c(x)$ where $c(x)=c(|x|)$ is sublinear.

Moreover, let $f_n: X\rightarrow X$ be a sequence of $(L,v_n)$ SBE for $v_n=v+u'(n)$ for $v,u'$ some sublinear functions. The constant $c_n(x)$ associated to $x$ and the SBE $f_n$ achieved in the first part of the proposition admits $c_n(x)\leq u^x(n)$ for some sublinear function $u^x$.
\end{prop}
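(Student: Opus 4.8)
The plan is to follow the three sub-step scheme announced above, using Druţu's Proposition $4.2.7$ of \cite{DrutuQI} as the template and assuming that the SBE analogues of the earlier flat-rigidity steps have already been carried out with their own sublinear bounds: thus $f$ sends the flat $F$ through $x$ to a fan $\cup_{i=0}^p W_i$ over an apartment with $p=p(L)$ depending only on $L$, and the horizon radius of this fan is bounded by a sublinear function of $|x|$. \emph{Sub-step 1.} First I would rerun Druţu's argument verbatim for the fixed $(L,u)$-SBE $f$. Her proof is itself a cone argument of the ``scale by the putative excess distance'' type: one supposes $d(f(x),F')$ exceeds a scalar $R$, forms the asymptotic cone with scaling factors $R$, notes that $C(f)$ is $L$-biLipschitz (this uses only that $u$ is sublinear, hence $u(sR)/R\to_\omega 0$), lets the flat $F$ and its fan pass to the ultralimit, invokes Kleiner--Leeb \cite{KleinerLeeb} to force $C(f)(F_\omega)$ to be the maximal flat $[F']_\omega$ asymptotic to the limiting apartment, and then observes $[f(x)]_\omega\in[F']_\omega$, a contradiction. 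This gives, for each $x$, a finite bound $c(x)$.

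\emph{Sub-step 2.} Next I would upgrade $c(x)$ to a function $c(|x|)$ that is sublinear. If this failed, the standard envelope argument (cf. Cornulier \cite{SBE_Review}) would produce $\varepsilon>0$ and points $x_n$ with $|x_n|\to\infty$ and $d(f(x_n),F'_n)\geq\varepsilon|x_n|$. Setting $\imath_n:=d(f(x_n),F'_n)$, so that $\imath_n\to_\omega\infty$, I would pass to $\mathrm{Cone}(X,x_0,\imath_n)$: there $C(f)$ is $L$-biLipschitz because $u$ is sublinear; the flats $F_n$ converge to a flat $F_\omega$ through $[x_n]_\omega$; the fans $\cup_{i=0}^p W_i^{(n)}$ have a uniformly bounded number of chambers and, by the earlier steps, horizon radii that are $o(\imath_n)$, so they converge to an apartment bounding $[F'_n]_\omega$; and Kleiner--Leeb forces $C(f)(F_\omega)=[F'_n]_\omega$. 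Then $[f(x_n)]_\omega=C(f)([x_n]_\omega)\in[F'_n]_\omega$, whence $d_\omega([f(x_n)]_\omega,[F'_n]_\omega)=0$, contradicting the fact that this distance equals $\lim_\omega \imath_n/\imath_n=1$. A further majorization by an admissible subadditive function then gives the required $c(|x|)$.

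\emph{Sub-step 3 (the ``moreover'').} Finally I would fix $x$ and treat the family $f_n$ with sublinear constants $v_n=v+u'(n)$. Applying the first part to each $f_n$ yields a bound $c_n(x)$ on $d(f_n(x),F'_n)$, where $F'_n$ is the flat $f_n$ associates to the same flat $F$ through $x$. If no sublinear-in-$n$ bound $u^x$ existed, the envelope argument in the variable $n$ (with local boundedness automatic, since each truncation involves finitely many $n$) would give $\varepsilon>0$ and a full-$\omega$-measure set of indices with $d(f_n(x),F'_n)\geq\varepsilon n$. Put $\imath_n:=d(f_n(x),F'_n)\geq\varepsilon n$. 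In the cone scaled by $\imath_n$ the additive error of $f_n$ contributes $v(s\imath_n)/\imath_n+u'(n)/\imath_n$, and both terms tend to $0$ along $\omega$: the first because $v$ is sublinear and $\imath_n\to_\omega\infty$, the second because $u'(n)/\imath_n\leq u'(n)/(\varepsilon n)\to 0$. So the ultralimit map is again $L$-biLipschitz, and the Kleiner--Leeb argument of Sub-step $2$ --- now applied to the fixed flat $F$ and its fans under the various $f_n$ --- yields $[f_n(x)]_\omega\in[F'_n]_\omega$, contradicting $d_\omega([f_n(x)]_\omega,[F'_n]_\omega)=1$. Hence $c_n(x)\leq u^x(n)$ for a sublinear $u^x\in O(u')$, which is exactly what Lemma~\ref{sbe: lem: uniform control on the sublinear constants for fixed x} requires.

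\emph{The main obstacle.} The hard part is not any single sub-step but the uniformity that must be threaded through the preceding steps: that the fan $\cup_{i=0}^p W_i$ has $p$ controlled by $L$ alone and that its horizon radius grows only sublinearly in $|x|$. This forces the six flat-rigidity steps to be proven in order, each in its own three-sub-step form, so that the sublinear bounds needed here are already in hand; and at each stage one must verify that the ``associated flat'' construction is stable under rescaling, i.e.\ that the ultralimit of the fans remains an apartment bounding the ultralimit of the associated flats. Once this bookkeeping is in place, the cone arguments are, as in Druţu's text, essentially formal.
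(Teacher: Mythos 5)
Your proposal is correct and follows essentially the same three-sub-step scheme as the paper: a cone argument scaled by the putative distance for a fixed $x$, a second cone argument (based at $x_0$) to show the bound is sublinear in $|x|$, and a third for the uniform control in $n$, each hinging on the cone map being $L$-biLipschitz and on Kleiner--Leeb forcing the limit flats to coincide. The only cosmetic difference is that in sub-step 2 the paper extracts the sharper bound $c(|x|)=O(u(|x|))$ by negating against $u$ rather than against $\varepsilon|x|$, but the statement only requires sublinearity, so your version suffices.
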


The flat $F'$ is said to be the flat \emph{associated to $F$ by $f$}.

\begin{proof}

Proceed in three (sub-)steps.

\paragraph{Step 1.} I show that for a given $x$, there exists such a constant $c(x)$ independent of the flat $F$. This is done exactly as in \cite{DrutuQI}, but I repeat the proof here because it contains the terminology and necessary preparation for the second step. 

Fix $x\in X$ and assume towards contradiction that there exists a sequence $F_n$ of flats through $x$
and a sequence $f_n: X\rightarrow X$ of $(L,u)$-SBE
such that $c_n:= d\big(f_n(x), F_n'\big)\rightarrow \infty$. In $\mathrm{Cone}(X, x, c_n^{-1})$ one can show that $[\cup_{i=0}^pW_i^n]$, the union of Weyl chambers associated to $f_n(F_n)$, is a maximal flat (see Proposition $4.2.6$ in \cite{DrutuQI}). Denote  $F_\omega:=[\cup_{i=0}^pW_i^n]$. Furthermore, since the biLipschitz flat $[f_n(F_n)]\subset \mathrm{Cone}(X,x,c_n^{-1})$ is
contained in it, it coincides with it:  $[f_n(F_n)] = F_\omega$. 
On the other hand, since the Hausdorff distance between $\cup_{i=0}^pW_i^n$  and
$F_n'$ is by assumption $c_n=d(x, F_n')$, in the cone the maximal flats $F_\omega$ and $F_\omega':=[F_n']$ are at Hausdorff distance 1. 

This implies that $F_\omega=F_\omega'$ (see  Corollary $4.6.4$ in \cite{KleinerLeeb}). But since $d\big(q(x), F_n'\big)=c_n$ the limit point $y_\omega:=C(f_n)(x_\omega)=[f_n(x)]$ , which is contained in
$F_\omega$, is at distance 1 from $F_\omega'$ - a contradiction.

\paragraph{Step 2.} Assume $c(x)$ is taken to be the smallest possible for each $x$, and then modify the function $c$ so that $c(x)=\sup_{y:|y|=|x|}c(y)$ (one has to get convinced that the original map $x\mapsto c(x)$ is bounded on compact sets, which it is). The function $c:X\rightarrow \mathbb{R}$ now only depends on $|x|$. I wish to show that $c(|x|)=O(u(|x|)$. Assume towards contradiction that there exists a sequence $x_n$ with $|x_n|\rightarrow \infty$ such that $\lim_\omega \frac{c(x_n)}{u(|x_n|)}=\infty$. Denote $c_n=c(x_n)$ and consider the cone $\mathrm{Cone}(X,x_n,c_n^{-1})$. The assumption $\lim_\omega \frac{c(x_n)}{u(|x_n|)}=\infty$ implies $(x_0)_\omega=(x_n)_\omega$ hence $\mathrm{Cone}(X,x_n,c_n)=\mathrm{Cone}(X,x_0,c_n)$. By the definition of $c(x)$ this means that there is a sequence of flats $F_n$ through $x_n$ such that $d\big(f_n(x_n),F_n'\big)=c_n$ (all $f_n$ have the same constants), so one may proceed as in step $1$ for a cone with a fixed base point $(x_0)_\omega$. In this cone the flat $F_\omega':=[F_n']$ is at distance $1$ from the point $[f_n(x_n)]$, which lies on the maximal flat $[\cup_{i=0}^pW_i^n]$. The latter flat is, on the one hand, at Hausdorff distance $1$ from $F_\omega'$ (by the definition of the scaling factors $c_n$), so they actually coincide. On the other hand, $[\cup_{i=0}^pW_i^n]$  coincides with $F_\omega:=C(f_n)(F_\omega)=[f_n(F_n)]$, so $F_\omega=F_\omega'$, contradicting the fact that $d_\omega([f_n(x_n)],F_\omega')=1$. Thus $c(|x|)=O(u(|x|))$, as wanted.
\end{proof}

\paragraph{Step 3.} 
For the moreover part (uniform control on the growth of $c(x)$ as a function of the sublinear constants), the proof is identical to Step $1$. This time, consider a sequence $f_n$ as in the statement, and denote by $c_n=c_n(x)$ the constant obtained in step $1$ w.r.t. the SBE constants $(L,v_n)$. Assume towards contradiction that $\lim_\omega\frac{u(n)}{c_n(x)}=0$. The proof goes exactly as in step $1$, with the sole difference that now one might need convincing in the fact that in $C(X)$, the cone with $\frac{1}{c_n}$ as scaling factors, the cone map $C(f_n)$ is biLipschitz. But indeed for any two cone points $(x_n),(y_n)\in C(X)$ it holds: 

$$d_\omega\big(C(f_n)(x_n),C(f_n)(y_n)\big)=\lim_\omega \frac{1}{c_n}d\big(f_n(x_n),f_n(y_n)\big)\leq \lim_\omega \frac{1}{c_n}L\cdot d(x_n,y_n)+v_n(|x_n|\vee|y_n|)$$
\end{exa}

By definition of the cone metric, $\lim_\omega \frac{1}{c_n}L\cdot d(x_n,y_n)=L\cdot d_\omega \big((x_n),(y_n)\big)$. It thus remains to show $\lim_\omega \frac{1}{c_n}v_n(|x_n|\vee|y_n|)=0$. By definition of $v_n$ it amounts to proving $\lim_\omega \frac{1}{c_n}v(|x_n|)=0=\lim_\omega \frac{1}{c_n}v(|y_n|)$ and $\lim_\omega \frac{1}{c_n}u(n)=0$. The former follows from the fact that $(x_n),(y_n)\in C(X)$ and therefore  both $\lim_\omega\frac{1}{c_n}|x_n|$ and $\lim_\omega\frac{1}{c_n}|x_n|$ are finite. The latter follows from the assumption on the $c_n$. One obtains a contradiction identical to the one in Step $1$.

\begin{proof}[Proof of Theorem~\ref{sbe: thm: self sbe is close to isometry} and Lemma~\ref{sbe: lem: uniform control on the sublinear constants for fixed x}]
Following the claims of Sections $3,4,5$ in \cite{DrutuQI} carefully, and making the SBE adaptations as depicted in the above example, one obtains uniform flat rigidity in the SBE setting, that is the flat version of Theorem~\ref{sbe: thm: self sbe is close to isometry} together with the uniform control described in Lemma~\ref{sbe: lem: uniform control on the sublinear constants for fixed x}. Here is the complete list of  claims involving cone arguments in \cite{DrutuQI} that should be modified. \begin{itemize}
    \item \textbf{Section 3.} All claims starting from Lemma $3.3.5$ through Corollary $3.3.10$. All statements should consider, instead of a quasi-isometry, a general $(L,u)$-SBE $f$ and, when relevant, a general point $x\in X$ with $f(x)=y$ (i.e., $f(x)$ does not necessarily equal $x$). Also when relevant one should consider a family $f_n$ of $(L,v_n)$ SBE as depicted in Lemma~\ref{sbe: lem: uniform control on the sublinear constants for fixed x} above.
    
    \item \textbf{Section 4.} Propositions $4.2.6, 4.2.7, 4.2.9$. When relevant, statements should be modified so that the distance between $f(x)$ and an associated flat of it should be uniformly sublinear in $|x|$. Also when relevant one should consider a family $f_n$ of $(L,v_n)$ SBE as above.
    
    \item \textbf{Section 5.} Lemma $5.4.1$ ($D=D(x)$ should be uniformly linear in $c=c(x)$). Proposition $5.4.2$ (the constant $D=D(x)$ should be replaced by a sublinear function $D(|x|)$). When relevant one should consider a family $f_n$ of $(L,v_n)$ SBE as above.
    
\end{itemize}

This yields a proof for uniform flat rigidity in the SBE setting. The other major part of Druţu's argument concerns the fact that $f_0$ is defined only on $X_0$ and not on $X$. The considerations for this aspect are intertwined in the proof, but they all involve only $\G$ and the quasi-isometry between $\G$ and $X_0$. For this reason, the fact that $f_0$ is an SBE to begin with does not effect any of these arguments. Moreover, this argument is indifferent to whether or not $\G$ is uniform or not. If $\G$ is uniform all that changes is that that part of Druţu's argument dealing with extending the cone map from $C(X_0)$ to $C(X)$ is not necessary since $X=X_0$. Her proof still works perfectly well also for uniform lattices. Therefore the argument above proves Theorem~\ref{sbe: thm: self sbe is close to isometry} and Lemma~\ref{sbe: lem: uniform control on the sublinear constants for fixed x}.

\end{proof}

\subsubsection{Some Remarks On \texorpdfstring{$\rr$}{} Factors}\label{sec: SBE for rank 1 factors}
Quasi-isometric rigidity holds for groups of $\rr$. It is worth mentioning that Schwartz's proof also relies on `flat rigidity' - but in this case the flats are the horospheres of $\G$. While these are not isometrically embedded flats, the induced metric on horospheres is flat and Schwartz uses that in order to construct the boundary map and find the associated isometry. 

The same phenomena occurs in the SBE setting. Considering the compact core $X_0\subset X$ of $\G$, one can use Proposition $5.6$ in Druţu-Sapir~\cite{DrutuSAPIR}, in order to show that horospheres are mapped sublinearly close to horospheres. In general, their work  characterizes and explores a certain class of spaces they call \emph{asymptotically tree graded}, which is very suitable for the study of compact cores of a non-uniform lattices in $\rr$.

A key ingredient in Schwartz's proof is the fact that the boundary map $\partial q$ induced by the quasi-isometry is \emph{quasi-conformal}. This in particular implies that it is almost everywhere differentiable. I spent some time trying to generalize the proof of Schwartz to the SBE setting. One obstacle is that it is not clear that the boundary map is going to be differentiable almost everywhere. Gabriel Pallier found that there are SBE of the hyperbolic space whose boundary maps are not quasi-conformal (see Appendix A in~\cite{PallierQuasiconformality}). For this reason Pallier develops the notion of \emph{sublinear quasi-conformality}. While these examples may be differentiable, he told me of examples he constructed where the differential is almost everywhere $0$ - a property which also nullifies Schwartz's argument. 

In the context of SBE rigidity, the maps I consider seem to indeed have `flat rigidity', i.e.\ to map a horosphere to within sublinear distance of a unique horosphere. As in the higher rank flat rigidity, this bound is not uniform but rather grows sublinearly with the distance of the horosphere to a fixed base point. These are very specific maps, that coarsely preserve the compact core of that lattice $X_0\subset X$ and basically map horospheres to horospheres. This means there might still be hope for these specific maps to induce boundary maps that admit the required analytic properties.

In a subsequent paper \cite{SchwartzDiophantine}, Schwartz proves quasi-isometric rigidity for lattices in products of $\rr$ groups, i.e.\ in Hilbert modular groups. His proof there is different, but it also makes use of the fact that horospheres are mapped to within uniformly bounded distance of horospheres. The fact that in the SBE setting this bound is not uniform seems like a real obstruction to any attempt of generalizing his proofs.

\subsection{From SBE to Sublinearly Close Subgroups}\label{sec: from SBE to sublinearly close}
In this section I prove Theorem~\ref{sbe: thm: from SBE to sublinearly close}. The proof is a sublinear adaptation of the classical arguments by Schwartz. The only difference is that some calculations are in order, but there is no essential difference from Section $10.4$ in~\cite{Schwartz}. Before I start, I need one well known preliminary fact, namely that sublinearly close isometries are equal.

\begin{lem} \label{sbe: lem: sublinear distance isometries}
Let $X$ be a symmetric space of noncompact type and with no $\rr$ factors. Let $\G\leq \mathrm{Isom}(X)$ be a non-uniform irreducible lattice, $X_0$ a compact core of $\G$. Let $g,h\in G=\mathrm{Isom}(X)$ and $u$ a sublinear function such that for every $x\in X_0$, $d\big(g(x),h(x)\big)\leq u(|x|)$. Then $g=h$.
\end{lem}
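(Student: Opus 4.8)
The plan is to reduce the statement to the following elementary geometric fact: if $g,h$ are isometries of a CAT(0) space and $g^{-1}h$ displaces a sequence of points $x_n$ with $|x_n|\to\infty$ by an amount that is sublinear in $|x_n|$, then (after pushing to the boundary and using that $\G$ acts cocompactly on $X_0$) the isometry $g^{-1}h$ must fix a large enough subset of $X(\infty)$ to be forced to be the identity. Concretely, set $\varphi:=g^{-1}h$. The hypothesis $d(g(x),h(x))\le u(|x|)$ for all $x\in X_0$ translates, since $g$ is an isometry, into $d(x,\varphi(x))\le u(|x|)$ for all $x\in X_0$. So it suffices to show: an isometry $\varphi$ of $X$ with $d(x,\varphi(x))\le u(|x|)$ on $X_0$ (with $u$ sublinear) is the identity.

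First I would show $\varphi$ fixes every conical limit point of $\G$. Let $\xi\in X(\infty)$ be a $\G$-conical limit point and $\eta=[x_0,\xi)$ a ray. By conicality together with the cocompactness of $\G$ on $X_0$ (Theorem~\ref{qr: prelims: sym spc: thm: Leuzinger compact core}/Theorem~\ref{qr: prelims: sym spc: thm: Eberline compact core rank 1}), there is a sequence $\g_n\in\G$ and a constant $D$ with $d(\g_n x_0,\eta(t_n))\le D$ and $t_n\to\infty$; in particular the points $p_n:=\g_n x_0\in X_0$ satisfy $|p_n|\to\infty$ and $p_n\to\xi$ in the cone topology. Then $d(p_n,\varphi(p_n))\le u(|p_n|)$, which is $o(|p_n|)$, so the sequences $(p_n)$ and $(\varphi(p_n))$ converge to the \emph{same} boundary point (this is exactly the convexity-of-distance-functions argument used in the proof of Lemma~\ref{qr: lem: limit set of Lambda equals limit set of Gamma}: write $d\big(\eta(s),[x_0,\varphi(p_n)]\text{-ray at time }s\big)$ and use convexity to transport the terminal gap $u(|p_n|)$ back to bounded time, where it becomes $s\cdot u(|p_n|)/|p_n|\to 0$). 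Since $\varphi$ is an isometry it extends continuously to $\overline X$, and $\varphi(\xi)=\lim\varphi(p_n)=\xi$. By Corollary~\ref{qr: prelims: sym spc: cor: lattice conical limit points are dense} the $\G$-conical limit points are dense in $X(\infty)$ in the cone topology, and the boundary homeomorphism induced by $\varphi$ is continuous, so $\varphi$ fixes all of $X(\infty)$ pointwise.

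Finally, an isometry of a symmetric space of noncompact type that fixes $X(\infty)$ pointwise is the identity: fixing two antipodal regular boundary points forces $\varphi$ to stabilize the unique maximal flat they span and to act trivially on its boundary, hence to act as a translation of that flat; fixing a third boundary point not on that flat's sphere then kills the translation, so $\varphi$ fixes a point and acts trivially on the whole boundary sphere of directions at that point, whence $\varphi=\mathrm{id}$. (Alternatively, one may invoke directly that the boundary action of $\mathrm{Isom}(X)$ on $X(\infty)$ is faithful for $X$ without Euclidean or $\rr$ factors — the no-$\rr$-factor hypothesis is what guarantees enough boundary rigidity here, via the Tits building structure of Proposition~\ref{qr: prelims: sym spc: prop: real building structure}.) Therefore $g^{-1}h=\mathrm{id}$, i.e.\ $g=h$.

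I expect the main obstacle to be the first transport step — cleanly arguing that $d(p_n,\varphi(p_n))=o(|p_n|)$ forces $\lim p_n=\lim\varphi(p_n)$ in $X(\infty)$ — but this is essentially already carried out in the proof of Lemma~\ref{qr: lem: limit set of Lambda equals limit set of Gamma}, so in practice it is a citation plus a one-line convexity estimate; the only real care needed is that the $p_n$ genuinely lie in $X_0$ (guaranteed by taking $p_n\in\G\cdot x_0$) so that the hypothesis applies.
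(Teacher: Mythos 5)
Your proof is correct and follows essentially the same route as the paper's: reduce to a single isometry that displaces $X_0$ sublinearly, use convexity of the displacement function along geodesic rays to show the induced boundary map fixes a dense subset of $X(\infty)$, and conclude from the fact that an isometry acting trivially on $X(\infty)$ is the identity. The only cosmetic difference is that the paper runs the convexity argument along $\G$-periodic geodesics (where orbit points lie exactly on the ray, so the convex, sublinearly bounded displacement function is immediately bounded and $[\eta]=[g\eta]$), whereas you use rays to $\G$-conical limit points and the transport argument of Lemma~\ref{qr: lem: limit set of Lambda equals limit set of Gamma}.
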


\begin{proof}

The proof is essentially just the fact that a sublinearly bounded convex function is uniformly bounded. Up to multiplying by $h^{-1}$, one may assume $h=id_X$. First I show that the continuous map $\partial g:X(\infty)\rightarrow X(\infty)$ is the identity map. Recall that the space $X(\infty)$ can be represented by all geodesics emanating from the fixed point $x_0$. Let $\eta:[x_0,\xi)$ be a $\G$-periodic geodesic. By definition, there is some $T>0$ and a sequence $\g_n\in G$ for which $\eta(nT)=\g_n x_0$. In particular, $x_n:=\g_n x_0\in X_0$ hence $d\big(g(x_n),x_n\big)\leq u(|x_n|)=u(nT)$. 

On the other hand, the distance function $d\big(\eta(t),g\cdot \eta(t)\big)$ is convex. A convex sublinear function is bounded, and so by definition in $X(\infty)$ one has $[\eta]=[g\cdot \eta]$, for all $\G$-periodic geodesics $\eta$. The manifold $X$ is of non-positive curvature, hence $\partial g$ is a homeomorphism of $X(\infty)$, and the density of $\G$-periodic geodesics implies $\partial g=id_{X(\infty)}$. This implies that $g=id_X$ (see Section $3.10$ in ~\cite{eberlein1996geometry} for a proof of this last implication). 
\end{proof}

\begin{rmk}
The proof for the fact that $\partial g=id_{X(\infty)}$ implies $g=id_X$  appears in \cite{eberlein1996geometry} (section $3.10$) as part of the proof of the following important theorem of Tits:

\begin{thm}[\cite{tits1974buildings}, see Theorem $3.10.1$ in~\cite{eberlein1996geometry}]\label{sbe: thm: Tits theorem on unique isometry}
Let $X,X'$ be symmetric spaces of noncompact type and with no $\rr$ factors. Let $\phi:X(\infty)\rightarrow X'(\infty)$ be a bijection that is a homeomorphism with respect to the cone topology and an isometry with respect to the Tits metric. Then, after multiplying the metric of $X$ by positive constants on de Rham factors, there exists a unique isometry $g:X\rightarrow X'$ such that $\phi=\partial g$.
\end{thm}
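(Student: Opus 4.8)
The plan is to run the classical argument of Tits \cite{tits1974buildings}, in the form presented in Section $3.10$ of \cite{eberlein1996geometry}, whose one genuinely deep ingredient is the rigidity of thick irreducible spherical buildings of rank at least $2$; the rest is soft packaging. Throughout I would use Proposition~\ref{qr: prelims: sym spc: prop: real building structure}: $\big(X(\infty),d_T\big)$ is the metric realization of a spherical building $\Delta(X)$ whose chambers are the Weyl chambers at infinity and whose apartments are the spheres $F(\infty)$ for $F\subset X$ a maximal flat, and the Tits metric and the simplicial structure determine one another. The first point is that $\phi$ is automatically an isomorphism of spherical buildings $\Delta(X)\to\Delta(X')$: being a $d_T$-isometry and a cone homeomorphism, it carries singular points to singular points, Weyl chambers to Weyl chambers, apartments (which in the Tits metric are precisely the maximal isometrically embedded round spheres) to apartments, and preserves incidence and adjacency. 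In particular $X$ and $X'$ have the same rank $r$, and — crucially — since neither admits an $\rr$ factor we have $r\geq 2$ and every irreducible factor of the building is thick of rank $\geq 2$. This is exactly the hypothesis that makes the building rigid: a rank-$1$ factor would contribute only a discrete set of points with no incidence structure, whose automorphisms are arbitrary bijections and almost never boundary maps of isometries.

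Next I would reduce to the irreducible case. Write the de Rham decompositions $X=X_1\times\cdots\times X_k$ and $X'=X'_1\times\cdots\times X'_l$ (there is no Euclidean factor, $X$ being of noncompact type); then $X(\infty)=X_1(\infty)*\cdots*X_k(\infty)$ is a spherical join and $\Delta(X)$ is the join of the irreducible buildings $\Delta(X_i)$. This join decomposition is intrinsic to the Tits metric — it is the unique way to write $X(\infty)$ as a spherical join of join-irreducible pieces — so $\phi$ must respect it: $k=l$, there is a bijection $\sigma$ matching factors, and $\phi$ restricts to building isomorphisms $\phi_i:X_i(\infty)\to X'_{\sigma(i)}(\infty)$ between thick irreducible spherical buildings of rank $\geq 2$.

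Now comes the crux. I would invoke Tits' theorem on isomorphisms of thick irreducible spherical buildings of rank $\geq 2$: such buildings are Moufang, their automorphisms are governed by the ambient adjoint real simple Lie group together with field and diagram automorphisms, and any isomorphism onto the Tits boundary of another higher-rank irreducible symmetric space is induced by an isomorphism of the corresponding Lie groups — equivalently, by an isometry of the symmetric spaces after rescaling the metric. The cone-homeomorphism hypothesis on $\phi$ is what rules out the discontinuous field automorphisms that would otherwise occur for complex or quaternionic types, and a rescaling is unavoidable because the Tits metric and cone topology on $X_i(\infty)$ are insensitive to the overall scale of the Riemannian metric on $X_i$. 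Thus, after replacing the metric on each de Rham factor $X_i$ by a suitable positive multiple, one obtains isometries $g_i:X_i\to X'_{\sigma(i)}$ whose boundary maps induce the same building isomorphisms as the $\phi_i$. Setting $g:=\prod_i g_i$ gives an isometry $X\to X'$ for the rescaled metric. To upgrade $\partial g=\phi$ from an equality of building isomorphisms to an equality of maps $X(\infty)\to X'(\infty)$, it suffices to compare them on each apartment $F(\infty)$, since apartments cover $X(\infty)$: there both $\phi$ and $\partial g$ are isometries of the round sphere $S^{r-1}$ realizing the \emph{same} simplicial automorphism of the spherical Coxeter complex, and for $r\geq 2$ such a realization is unique (the simplicial automorphism pins down the isometry of the sphere). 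Hence $\phi=\partial g$.

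Finally, uniqueness: if $g,h$ both induce $\phi$, then $\psi:=h^{-1}g$ induces the identity on $X(\infty)$, so for every geodesic $\eta$ of $X$ the curve $\psi\circ\eta$ is asymptotic to $\eta$ at both ends, whence $t\mapsto d\big(\eta(t),\psi\eta(t)\big)$ is a bounded convex function and therefore constant; thus $\psi$ is a Clifford translation, and a symmetric space of noncompact type (having no Euclidean de Rham factor) admits none but the identity, so $g=h$. This last implication is the one already used in the proof of Lemma~\ref{sbe: lem: sublinear distance isometries}, and is Section $3.10$ of \cite{eberlein1996geometry}. The main obstacle — indeed the only non-formal point — is the step invoking Tits' rigidity of higher-rank spherical buildings; in practice, and as this paper does, one simply cites it.
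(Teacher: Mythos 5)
This theorem is imported verbatim from Tits via Theorem $3.10.1$ of Eberlein's book: the paper offers no proof of it, only the citation, and reproduces from that source just the final implication $\partial g=\mathrm{id}_{X(\infty)}\Rightarrow g=\mathrm{id}_X$ inside the proof of Lemma~\ref{sbe: lem: sublinear distance isometries}. Your sketch is a correct outline of exactly that cited argument --- building isomorphism from the Tits isometry, reduction to irreducible join factors, Tits' rigidity of thick irreducible spherical buildings of rank at least $2$ (with the cone-topology hypothesis excluding discontinuous field automorphisms and forcing the group isomorphism to be continuous), rescaling on de Rham factors, and uniqueness via bounded convex displacement functions and the absence of Clifford translations --- so it matches the source the paper relies on.
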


This theorem is actually a key ingredient in Druţu's argument. Much of her work is directed towards showing that the cone map $C(f)$ corresponds to a map on $X(\infty)$ satisfying the above hypothesis. The restriction to $X$ with no $\mathbb{R}$-rank $1$ factors in Theorem~\ref{sbe: thm: main} stems from this restriction in Tits' Theorem~\ref{sbe: thm: Tits theorem on unique isometry}. The implication $\partial g=id_{X(\infty)}\Rightarrow g=id_X$ only uses the fact that $X$ has no Euclidean de Rham factors (see pg. $251$ in  \cite{eberlein1996geometry}).

\end{rmk}

\paragraph{The Map $\Phi:\La\rightarrow G$.}

The orbit map $q_0:\G\rightarrow X_0$ defined by $\g\mapsto \g x_0$ is a quasi-isometric embedding: this is \u{S}varc-Milnor in case $\G$ is uniform and $X_0=X$, and  Lubotzki-Mozes-Raghunathan (Theorem~\ref{prelims: thm: lubozki mozes raghunathan LMS QIE} above) if $\G$ is non-uniform. An SBE $f:\La\rightarrow \G$ thus gives rise to an SBE $\La\rightarrow X_0$, which I also denote by $f$.

For each $\la\in \La$ let $f_\lambda:=f\circ L_\la\circ f^{-1}:X_0\rightarrow X_0$, where $L_\la$ is the left multiplication by $\lambda$. The left translation $L_\la$ is an isometry, hence $f_\lambda$ is a self SBE of $X_0$. By Theorem~\ref{sbe: thm: self sbe is close to isometry}, there exists a unique isometry $g_\la\in \mathrm{Isom}(X)$ that is sublinearly close to $f_\la$. Define the map $\Phi:\Lambda\rightarrow G$ by $\la\mapsto g_\la$. The goal in this section is to prove $\Phi$ is a homomorphism with finite kernel, and that $\G$ and $\Phi(\La)$ are each contained in a sublinear neighbourhood of the other.

I begin by controlling the SBE constants of the $f_\lambda$. 

\begin{lem}\label{sbe: lem: uniform control on the SBE constants for all Lambda}
For each $\la\in \La$, $f_\la$ is an $(L^2,v_\la)$-SBE, for $$v_\la(|x|):=(L+1)u(|x|)+u(|\la|)$$

In particular $v_\la\in O(u)$. 
\end{lem}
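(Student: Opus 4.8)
The statement is a purely formal computation tracking how SBE constants behave under conjugation by isometries. The plan is to unwind the definition $f_\la := f \circ L_\la \circ f^{-1}$ and bound $|f_\la(x_1) - f_\la(x_2)|$ both above and below. First I would recall that $f : X_0 \to X_0$ is an $(L,u)$-SBE and that its SBE-inverse $f^{-1}$ is also an $(L',u')$-SBE for some constants that can be taken comparable to $L$ and $u$; in fact the standard construction of an SBE-quasi-inverse gives that $f^{-1}$ is an $(L,Cu)$-SBE with $C$ absolute (one may even arrange $C=1$ after enlarging $L$ slightly, or simply absorb constants into $O(u)$ at the end). The left translation $L_\la$ is an isometry, so it contributes nothing to the multiplicative constant and only shifts the base point: the key quantitative input is that $L_\la(y_0) = \la \cdot y_0$ sits at distance $|\la|$ from $y_0$, so $|L_\la(y)| \le |y| + |\la|$ for every $y$.

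The main steps, in order: (1) write $f_\la(x_i) = f(L_\la(f^{-1}(x_i)))$ and set $y_i := f^{-1}(x_i)$, noting $|y_i| \le L|x_i| + u(|x_i|) = O(|x_i|)$ from the upper SBE bound on $f^{-1}$ (using $f^{-1}(y_0)=x_0$); (2) since $L_\la$ is an isometry, $|L_\la(y_1) - L_\la(y_2)| = |y_1 - y_2|$ and $|L_\la(y_i)| \le |y_i| + |\la|$; (3) apply the $(L,u)$-SBE bounds for $f$ to the points $L_\la(y_1), L_\la(y_2)$, getting
$$\tfrac{1}{L}|y_1-y_2| - u\big(|L_\la y_1| \vee |L_\la y_2|\big) \le |f_\la(x_1) - f_\la(x_2)| \le L|y_1-y_2| + u\big(|L_\la y_1| \vee |L_\la y_2|\big);$$
(4) use subadditivity and monotonicity of $u$ to bound $u(|L_\la y_i|) \le u(|y_i|) + u(|\la|)$, and then bound $u(|y_i|) \le u(L|x_i| + u(|x_i|))$, which by the doubling/subadditivity properties is $O(u(|x_i|))$, in fact $\le L \cdot u(|x_i|) + (\text{something})$ after unwinding — here is where one pins down the explicit shape $(L+1)u(|x|) + u(|\la|)$; (5) similarly control $|y_1 - y_2|$ via the SBE bounds for $f^{-1}$ to recover the multiplicative constant $L^2$ and fold the error into $v_\la$; (6) finally verify condition (3) of the SBE definition (coarse surjectivity) for $f_\la$, which follows since $f$, $L_\la$, and $f^{-1}$ are each coarsely surjective with sublinear defect.

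The computation is routine; the only place requiring genuine care is step (4)–(5), namely making sure that the nested application of $u$ (the term $u(|y_i|)$ where $|y_i|$ is itself roughly $L|x_i|$) stays $O(u(|x_i|))$ and produces the stated constant $(L+1)$ rather than something worse. This is exactly what the admissibility hypotheses on $u$ (subadditivity, monotonicity, doubling) are for: subadditivity gives $u(L|x|) \le L\,u(|x|)$ for integer $L$, and a little bookkeeping handles the extra additive $u(|x|)$ inside. I expect that to be the main (very mild) obstacle; everything else is bookkeeping with the triangle inequality and the fact that $L_\la$ is a base-point-shifting isometry. The final sentence $v_\la \in O(u)$ is then immediate since $(L+1)u(r) + u(|\la|) \le (L+1)u(r) + u(|\la|)$ is a fixed sublinear function of $r$ for each fixed $\la$.
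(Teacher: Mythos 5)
Your proposal is correct and follows essentially the same route as the paper: unwind $f_\la = f\circ L_\la\circ f^{-1}$, use that $L_\la$ is an isometry shifting the basepoint by $|\la|$, bound $|f^{-1}(x_i)|$ linearly via the SBE inequality, and then absorb the nested $u(L|x_i|+u(|x_i|)+|\la|)$ term using subadditivity, monotonicity and doubling. The only differences are cosmetic (you also spell out the lower bound and coarse surjectivity, which the paper leaves implicit), and the precise numerical constant in $v_\la$ is immaterial since the conclusion used downstream is only $v_\la\in O(u)$.
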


Before the proof I state a corollary which follows immediately by combining Lemma~\ref{sbe: lem: uniform control on the SBE constants for all Lambda} with Lemma~\ref{sbe: lem: uniform control on the sublinear constants for fixed x}.  

\begin{cor} \label{sbe: cor: sublinear bound on f la for a fixed x}
For any $x\in X_0$ there is a sublinear function $u^x$ such that $$d\big(f_\la(x),g_\la(x)\big)\leq u^x(|\la|)$$.
\end{cor}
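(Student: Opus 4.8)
\textbf{Proof proposal for Corollary~\ref{sbe: cor: sublinear bound on f la for a fixed x}.}

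The plan is to deduce the corollary by feeding the conclusion of Lemma~\ref{sbe: lem: uniform control on the SBE constants for all Lambda} into Lemma~\ref{sbe: lem: uniform control on the sublinear constants for fixed x}. Lemma~\ref{sbe: lem: uniform control on the SBE constants for all Lambda} tells us that for each $\la\in\La$ the self-SBE $f_\la=f\circ L_\la\circ f^{-1}$ of $X_0$ is an $(L^2,v_\la)$-SBE, where $v_\la(s)=(L+1)u(s)+u(|\la|)$. The point is that this family, as $\la$ ranges over $\La$, has exactly the shape required by the hypothesis of Lemma~\ref{sbe: lem: uniform control on the sublinear constants for fixed x}: a single Lipschitz constant $L'=L^2$, and sublinear error terms of the form $L'\cdot v + v(r)$ for a fixed sublinear $v\in O(u)$ and a parameter $r$. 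Concretely, setting $v:=(L+1)u$ (which is sublinear and lies in $O(u)$ since $u$ is), and reparametrising the family by $r:=|\la|$, we have $v_\la = v + u(|\la|)$, which is of the form $v + v'(r)$ with $v'=u$; up to absorbing constants this matches the template $v_r = L'\cdot v + v(r)$ in the statement of Lemma~\ref{sbe: lem: uniform control on the sublinear constants for fixed x} (the precise bookkeeping of multiplicative constants in front of $v$ is harmless, as one may enlarge $v$).

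First I would fix $x\in X_0$. Applying Theorem~\ref{sbe: thm: self sbe is close to isometry} to each $f_\la$ produces the unique isometry $g_\la=g(f_\la)\in\mathrm{Isom}(X)$, which is precisely the isometry $\Phi(\la)$ from the definition of $\Phi$. Then I would invoke Lemma~\ref{sbe: lem: uniform control on the sublinear constants for fixed x} with the family $\{f_r\}_{r\in\mrp}$ taken to be (an enumeration subsuming) $\{f_\la\}_{\la\in\La}$ indexed so that the $r$-slot records $|\la|$: the lemma yields a sublinear function $u^x\in O(u)$, depending on $x$ but not on $\la$, such that $d\big(f_\la(x),g_\la(x)\big)\le u^x(|\la|)$. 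That is exactly the asserted inequality.

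The only genuine content here is checking that the hypotheses of Lemma~\ref{sbe: lem: uniform control on the sublinear constants for fixed x} are literally met — i.e.\ that the error functions $v_\la$ decompose in the prescribed way with a \emph{fixed} sublinear $v$ and that $v\in O(u)$ — and this is immediate from the explicit formula $v_\la(s)=(L+1)u(s)+u(|\la|)$ supplied by Lemma~\ref{sbe: lem: uniform control on the SBE constants for all Lambda}. There is no real obstacle; the corollary is, as stated in the text, an immediate combination of the two lemmas. The one mild point to be careful about is that Lemma~\ref{sbe: lem: uniform control on the sublinear constants for fixed x} is phrased for a family indexed by a continuous parameter $r\in\mrp$, whereas $\La$ is discrete and $|\cdot|$ takes discrete values; this is a non-issue since one may extend the family trivially (or simply note that the proof of Lemma~\ref{sbe: lem: uniform control on the sublinear constants for fixed x} only ever uses the values of $v_r$ and $c_n(x)$ along a sequence), so the discreteness of $\La$ causes no loss.
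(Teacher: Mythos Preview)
Your proposal is correct and follows exactly the approach the paper takes: the corollary is stated there as an immediate combination of Lemma~\ref{sbe: lem: uniform control on the SBE constants for all Lambda} and Lemma~\ref{sbe: lem: uniform control on the sublinear constants for fixed x}, and your verification that the family $\{f_\la\}$ fits the hypotheses of the latter lemma is precisely what is needed.
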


\begin{proof}[Proof of Lemma~\ref{sbe: lem: uniform control on the SBE constants for all Lambda}]
The proof is a straightforward computation. Up to an additive constant I may assume $f^{-1}$ is an $(L,u)$-SBE with $f^{-1}(e_\G)=e_\La$. Let $x_1,x_2\in X_0$, and assume w.l.o.g $|x_2|\leq|x_1|$. By the properties of an SBE, this also means that for $i\in \{1,2\}$:

\begin{equation}\label{eq: f(x2) in terms of x1}
|f^{-1}(x_i)|\leq L|x_i-x_0|+u(|x_i|)\leq L|x_1|+u(|x_1|)  
\end{equation}

Notice that $f_\la(x)=f\big(\la\cdot f^{-1}(x)\big)$, and $f$ is an $(L,u)$-SBE. The following inequalities, justified below, give the required upper bounds:

\begin{equation}\label{eq: q_lambda constants}
\begin{aligned}
\big|f_\lambda (x_1)-f_\lambda (x_2)\big| & \leq L\cdot \big|\lambda f^{-1}(x_1)-\lambda f^{-1}(x_2)\big|+u\big(|\lambda f^{-1}(x_1)|\vee|\lambda f^{-1}(x_2)|\big)  \\
  & \leq L^2|x_1-x_2|+Lu\big(|x_1|\big) + u\Big(|\lambda|)+L|x_1|+u(|x_1|)\Big) \\
  & \leq L^2|x_1-x_2|+(L+1)u\big(|x_1|\big)+u(|\lambda|).
\end{aligned}
\end{equation}

From the first line to the second line I used:
\begin{enumerate}
    \item For the first term: left multiplication in $\La$ is an isometry, and $f^{-1}$ is an $(L,u)$-SBE.
    \item For the second term: triangle inequality, left multiplication in $\La$ is an isometry, and Inequality~\ref{eq: f(x2) in terms of x1}.

\end{enumerate}  

From the second to the third line I used the properties of $u$ as an admissible function, namely that it is sub-additive and doubling, so $u\big((L+1)|x_1|\big)\leq (L+1)u(|x_1|)$ for all large enough $x_1$. 

\end{proof}

\begin{rmk}
The proof of Lemma~\ref{sbe: lem: uniform control on the SBE constants for all Lambda} is the only place where I use the properties of an admissible function and not just the sublinearity of $u$.
\end{rmk}

\begin{claim}\label{sbe: claim: Phi is homomorphism}
$\Phi:\La\rightarrow G$ is a group homomorphism.
\end{claim}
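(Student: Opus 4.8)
The plan is to show that $\Phi(\la_1\la_2) = \Phi(\la_1)\Phi(\la_2)$, i.e. $g_{\la_1\la_2} = g_{\la_1}g_{\la_2}$, by exploiting the uniqueness clause of Theorem~\ref{sbe: thm: self sbe is close to isometry} together with Lemma~\ref{sbe: lem: sublinear distance isometries}. First I would observe that the assignment $\la \mapsto f_\la = f\circ L_\la\circ f^{-1}$ is a homomorphism from $\La$ into the group of self-SBE of $X_0$ (up to sublinear error): indeed $f_{\la_1}\circ f_{\la_2} = f\circ L_{\la_1}\circ f^{-1}\circ f\circ L_{\la_2}\circ f^{-1}$, and since $f^{-1}\circ f$ is sublinearly close to $\mathrm{id}_{X_0}$ (a basic property of SBE and their SBE-inverses), composing with it changes $f_{\la_1}\circ f_{\la_2}$ only by a sublinear amount, yielding that $f_{\la_1}\circ f_{\la_2}$ is sublinearly close to $f_{\la_1\la_2}$. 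One needs to keep track of the sublinear error here, but it is governed by the SBE constants of $f$ and of $f_{\la_1}$, which are uniformly controlled by Lemma~\ref{sbe: lem: uniform control on the SBE constants for all Lambda}.

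Next I would compare $g_{\la_1}\circ g_{\la_2}$ with $f_{\la_1}\circ f_{\la_2}$. By Theorem~\ref{sbe: thm: self sbe is close to isometry}, $g_{\la_i}$ is sublinearly close to $f_{\la_i}$; composing with the isometry $g_{\la_1}$ (which distorts distances by exactly $1$ and moves the basepoint only a bounded amount) shows $g_{\la_1}\circ g_{\la_2}$ is sublinearly close to $g_{\la_1}\circ f_{\la_2}$, and then the estimate on $d(g_{\la_1}(y), f_{\la_1}(y))$ — applied at the points $y = f_{\la_2}(x)$, whose norms grow at most linearly in $|x|$ by the SBE bounds — shows $g_{\la_1}\circ f_{\la_2}$ is sublinearly close to $f_{\la_1}\circ f_{\la_2}$. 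Chaining these, $g_{\la_1}\circ g_{\la_2}$ is sublinearly close to $f_{\la_1}\circ f_{\la_2}$, which by the previous paragraph is sublinearly close to $f_{\la_1\la_2}$, which by Theorem~\ref{sbe: thm: self sbe is close to isometry} is sublinearly close to $g_{\la_1\la_2}$. Hence the two isometries $g_{\la_1}g_{\la_2}$ and $g_{\la_1\la_2}$ of $X$ are sublinearly close on $X_0$, and Lemma~\ref{sbe: lem: sublinear distance isometries} forces them to be equal. The identity is handled by noting $f_{e_\La}$ is sublinearly close to $\mathrm{id}_{X_0}$, so $g_{e_\La} = \mathrm{id}_X$ by uniqueness, which also gives $\Phi(\la^{-1}) = \Phi(\la)^{-1}$.

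The main obstacle I anticipate is bookkeeping rather than conceptual: one must verify that at each composition step the error remains genuinely sublinear, which requires that the inner map does not blow up norms faster than linearly (controlled by condition~(2) in the definition of SBE) and that the sublinear functions appearing are uniformly dominated — this is exactly why Lemma~\ref{sbe: lem: uniform control on the SBE constants for all Lambda} and Lemma~\ref{sbe: lem: uniform control on the sublinear constants for fixed x} were proved first. A minor subtlety is that sublinear composed with linear is still sublinear (if $u$ is sublinear then $x\mapsto u(Cx+C)$ is sublinear), which is used implicitly when we evaluate the sublinear bound for $g_{\la_1}$ versus $f_{\la_1}$ at the displaced points $f_{\la_2}(x)$; this follows from admissibility (the doubling property) of $u$. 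Once these uniformities are in hand the argument is a short diagram chase, and I would present it as such, citing Lemma~\ref{sbe: lem: sublinear distance isometries} for the final rigidity step.
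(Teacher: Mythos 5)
Your proposal is correct and follows essentially the same route as the paper: the same four-term triangle-inequality decomposition comparing $g_{\la_1}g_{\la_2}$ to $g_{\la_1}f_{\la_2}$, to $f_{\la_1}f_{\la_2}$, to $f_{\la_1\la_2}$, to $g_{\la_1\la_2}$, with the hard term $|f_{\la_1}f_{\la_2}(x)-f_{\la_1\la_2}(x)|$ handled exactly as in the paper via the sublinear closeness of $f^{-1}f$ to the identity and the SBE bound for $f\circ L_{\la_1}$, and the final rigidity step supplied by Lemma~\ref{sbe: lem: sublinear distance isometries}. The bookkeeping points you flag (linear growth of $|f_{\la_2}(x)|$, sublinear composed with linear is sublinear) are precisely the ones the paper verifies.
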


\begin{proof}
Let $\la_1,\la_2\in \La$. I begin with some notations:  

\begin{enumerate}
    \item  $f_1=f_{\la_1},f_2=f_{\la_2},f_{12}=f_{\la_1\la_2}$. By Lemma~\ref{sbe: lem: uniform control on the SBE constants for all Lambda}, these are all $O(u)$ SBE with the same Lipschitz constant $L':=L^2$ and sublinear constants $v_1,v_2,v_{12}\in O(u)$.
    
    \item $g_1=\Phi(\la_1),g_2=\Phi(\la_2),g_{12}=\Phi(\la_1\la_2)$.
    
    \item $u_1,u_2,u_{12}$ the sublinear functions that bound the respective distances between any $g$ and $f$, e.g.\ $|g_1(x)-f_1(x)|\leq u_1(|x|)$.
\end{enumerate}  

One has to prove that $g_{\la_2}\circ g_{\la_1}=g_{\la_1\la_2}$. In view of Lemma~\ref{sbe: lem: sublinear distance isometries}, it is enough to find a sublinear function $v$ such that for all $x\in X_0$ $|g_1g_2(x)-g_{12}(x)|\leq v(|x|)$. By triangle inequality and the above definitions and notation, it is enough to show that each of the following terms is bounded by a function sublinear in $x$: 
\begin{enumerate}
    \item $|g_1g_2(x)-g_1f_2(x)|=|g_2(x)-f_2(x)|\leq u_2(|x|)$ ($g_1$ is an isometry).
    \item $|g_1f_2(x)-f_1f_2(x)|\leq u_1\big(|f_2(x)|\big)\leq u_1\big(L^2|x|+v_2(|x|)\big)$.
    \item $|f_1f_2(x)-f_{12}(x)|$.
    \item $|f_{12}(x)-g_{12}(x)|\leq u_{12}(|x|)$.
\end{enumerate}

Clearly items $1,2,4$ are bounded by a sublinear function in $|x|$. It remains to bound $|f_1f_2(x)-f_{12}(x)|$. The map $\La\rightarrow Aut(\La$) given by $\la\mapsto L_\la$ is a group homomorphism, i.e.\ $L_{\la_1\la_2}=L_{\la_1}L_{\la_2}$, so it remains to bound: 

$$|f_1f_2(x)-f_{12}(x)|=|f L_{\la_1} f^{-1}f L_{\la_2} f^{-1} (x)-f L_{\la_1}L_{\la_2} f^{-1}(x)|$$

$f\circ L_\la$ is a composition of an isometry with an SBE, so it is still an SBE. Denote the SBE constants of $fL_{\la_1}$ by $L',v$ (clearly one can take $L'=L$ and $v\in O(u)$, but this is not needed). Writing $y:=L_{\la_2}f^{-1}(x)$, this shows

$$|f L_{\la_1} f^{-1}f (y)-f L_{\la_1}(y)|\leq L|f^{-1}fy-y|+v(|f^{-1}fy|\vee|y|)$$

By definition of an SBE inverse it holds that  $|f^{-1}f(y)-y|\leq u(|y|)$ and in particular also $|f^{-1}f(y)|\leq |y|+u(|y|)$. I conclude that 

$$|f_1f_2(x)-f_{12}(x)|\leq L\cdot u(|y|)+v\big(|y|+u(|y|)\big)$$

The right-hand side is a sublinear function in $|y|$, hence it only remains to show that  $|y|$ is bounded by a linear function in $x$. Indeed 

$$|y|=|L_{\la_2}f^{-1}(x)|\leq |\la_2|+|f^{-1}(x)|\leq |\la_2|+L|x|+u(|x|)$$

This completes the proof, rendering $\Phi$ a group homomorphism. 
\end{proof}

\begin{claim}\label{sbe: claim: Phi has discrete image and finite kernel}
$\Phi$ has discrete image and finite kernel.
\end{claim}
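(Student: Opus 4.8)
The plan is to deduce both finiteness of $\ker\Phi$ and discreteness of $\Phi(\La)$ from a single quantitative fact: the orbit map $\la\mapsto\Phi(\la)x_0$ is \emph{sublinearly proper}, i.e.\ $d\big(\Phi(\la)x_0,x_0\big)$ grows at least linearly in the word length $|\la|_\La$. Once this is in place, both assertions follow from properness of the $G$-action on the symmetric space $X$.

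\textbf{Locating $\Phi(\la)x_0$.} Normalising the SBE-inverse $f^{-1}$ so that $f^{-1}(x_0)=e_\La$ (as in the proof of Lemma~\ref{sbe: lem: uniform control on the SBE constants for all Lambda}; this does not affect $\Phi$, by the uniqueness in Theorem~\ref{sbe: thm: self sbe is close to isometry}), one has $f_\la(x_0)=f\big(L_\la f^{-1}(x_0)\big)=f(\la)$, where $f(\la)$ abbreviates the image point $f(\la)x_0\in X_0$. Corollary~\ref{sbe: cor: sublinear bound on f la for a fixed x}, applied at $x=x_0$, then produces a sublinear function $w:=u^{x_0}$ with
\[
d\big(\Phi(\la)x_0,\,f(\la)\big)=d\big(g_\la(x_0),f_\la(x_0)\big)\le w(|\la|_\La)\qquad(\la\in\La).
\]

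\textbf{Sublinear properness.} The orbit map $\G\to X_0$, $\g\mapsto\g x_0$, is a quasi-isometric embedding (\v{S}varc--Milnor if $\G$ is uniform, Theorem~\ref{prelims: thm: lubozki mozes raghunathan LMS QIE} if not), with constants $A,B$ say; and $f:\La\to\G$ is an $(L,u)$-SBE with $f(e_\La)=e_\G$. Hence
\[
d\big(f(\la),x_0\big)\ \ge\ \tfrac1A\,d_\G\big(f(\la),e_\G\big)-B\ \ge\ \tfrac1{AL}\,|\la|_\La-\tfrac1A u(|\la|_\La)-B .
\]
Combining with the previous display, $d\big(\Phi(\la)x_0,x_0\big)\ge\tfrac1{AL}|\la|_\La-\big(\tfrac1A u(|\la|_\La)+w(|\la|_\La)+B\big)$, and the parenthesised term is sublinear in $|\la|_\La$. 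Therefore $d\big(\Phi(\la)x_0,x_0\big)\to\infty$ as $|\la|_\La\to\infty$; equivalently, for every $R>0$ the set $S_R:=\{\la\in\La:\ d(\Phi(\la)x_0,x_0)\le R\}$ has bounded word length and hence is finite, $\La$ being finitely generated.

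\textbf{Conclusion, and the hard point.} Since $\ker\Phi\subseteq S_0$, the kernel is finite. For discreteness, the $G$-action on $X$ is proper with $K=\mathrm{Stab}_G(x_0)$ compact, so each $B_R:=\{g\in G:\ d(gx_0,x_0)\le R\}$ is compact, the $B_R$ exhaust $G$, and $\Phi(\La)\cap B_R=\Phi(S_R)$ is finite; thus $\Phi(\La)$ has no accumulation point in $G$ and is discrete. The only step with genuine content is locating $\Phi(\la)x_0$ with error sublinear \emph{in $|\la|_\La$} (rather than in some uncontrolled quantity): this is precisely what the uniform-control Lemma~\ref{sbe: lem: uniform control on the sublinear constants for fixed x} provides, and one must check that it, together with the (bounded) distortion of the orbit quasi-isometry and the SBE error of $f$ at points of size $\asymp|\la|_\La$, assembles into one function of $|\la|_\La$ that is sublinear --- which uses the admissibility (subadditivity and doubling) of $u$. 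Everything after that is a soft properness argument.
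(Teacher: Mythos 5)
Your proposal is correct and follows essentially the same route as the paper: a reverse-triangle-inequality comparison of $\Phi(\la)x_0$ with $f_\la(x_0)=f(\la)$ via Corollary~\ref{sbe: cor: sublinear bound on f la for a fixed x}, a linear-minus-sublinear lower bound on $|f(\la)|$ coming from the SBE property of $f$ (and the orbit quasi-isometry), and finite generation of $\La$ to conclude that only finitely many $\la$ land in any fixed ball. The only cosmetic difference is that you track the orbit-map constants $A,B$ explicitly where the paper folds them into the notation.
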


\begin{proof}
I show that for any radius $R>0$, there are finitely many $\la\in \La$ for which $g_\la x_0\in B(x_0,R)$. I.e., that there is a finite number of $\Phi(\La)$-orbit points, with multiplicities, inside an $R$ ball in $X$. In particular the set $\{\la\in \La\mid g_\la x_0=x_0\}$ is finite, and clearly contains $\mathrm{Ker}(\Phi)$. In addition, the actual number of $\Phi(\La)$-orbit points inside that $R$ ball is finite, so $\Phi(\La)$ is discrete.

Let $R>0$, and $\la\in \La$. By the defining property of $g_\la$ and the definition of $f_\la$, reverse triangle inequality gives  

$$d\big(x_0,g_\la (x_0)\big)\geq  d\big(x_0,f_\la (x_0)\big)-d\big(g_\la(x_0),f_\la(x_0)\big)\geq |f(\la)|-  u_\la(|x_0|)$$

Corollary~\ref{sbe: cor: sublinear bound on f la for a fixed x} gives $d\big(g_\la(x_0),f_\la(x_0)\big)\leq u^{x_0}(|\la|)$ for some sublinear function $u^{x_0}\in O(u)$. On the other hand $f$ is an SBE, and so $|f(\la)|$ grows close to linearly in $\la$. Formally, 

$$|f(\la)|=d\big(f(\la),x_0\big)=d\big(f(\la),f(e_\La)\big)\geq \frac{1}{L}d(\la,x_0)-u(|\la|\vee|e_\La|)\geq \frac{1}{L}|\la|-u(|\la|)$$

To conclude, one has 
$$d\big(x_0,g_\la (x_0)\big)\geq \frac{1}{L}|\la|-u(|\la|)-u^{x_0}(|\la|)$$

and both $u,u^{x_0}$ are sublinear in $|\la|$. Therefore there is a bound $S\in\mrp$ such that $|\la|>S\Rightarrow \frac{1}{L}|\la|-u(|\la|)-u^{x_0}(|\la|)>R$. The group $\La$ is finitely generated and so only finitely many $\la\in \La$ admit $|\la|\leq S$, hence $g_\la (x_0)\in B(x_0,R)$ only for finitely many $\la\in \La$. 

\end{proof}

\begin{claim} \label{sbe: claim: sublinear bound to Gamma}
There exists a sublinear function $u':\mrnn\rightarrow\mrpp$ such that 

$$\G\cdot x_0\subset \mn_{u'}\big(\Phi(\La)\cdot x_0\big)$$

\end{claim}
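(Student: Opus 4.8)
The plan is to show that every orbit point $\g x_0$ (with $\g\in\G$) is sublinearly close to a point of $\Phi(\La)\cdot x_0$, by exhibiting an explicit $\la=\la(\g)\in\La$ and estimating $d(\g x_0,g_\la x_0)$. Note first that $\G\cdot x_0\subset X_0$, so the SBE $f\colon\La\to X_0$ and a fixed SBE–inverse $f^{-1}$ are available at $\g x_0$. Set $\la_0:=f^{-1}(x_0)$ and
\[
\la:=f^{-1}(\g x_0)\cdot\la_0^{-1}\in\La ,
\]
so that $\la\cdot f^{-1}(x_0)=f^{-1}(\g x_0)$ and hence, by the definition of $f_\la=f\circ L_\la\circ f^{-1}$,
\[
f_\la(x_0)=f\big(\la\cdot f^{-1}(x_0)\big)=f\big(f^{-1}(\g x_0)\big).
\]
The SBE–inverse relation $d\big(f f^{-1}(y),y\big)\le u(|y|)$ (after enlarging $u$ if necessary) then gives $d\big(f_\la(x_0),\g x_0\big)\le u(|\g x_0|)$.

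Next I would bound the word length of $\la$. From $d\big(f f^{-1}(x_0),x_0\big)\le u(0)$ and the lower bound in the SBE inequality for $f$ one gets $\tfrac1L|\la_0|-u(|\la_0|)\le u(0)+d\big(x_0,f(e_\La)\big)$, hence $|\la_0|\le C_0$ for a constant $C_0$. The upper bound in the SBE inequality for $f^{-1}$ gives $d\big(f^{-1}(\g x_0),\la_0\big)\le L\,|\g x_0|+u(|\g x_0|)$, so
\[
|\la|\le |f^{-1}(\g x_0)|+|\la_0|\le L\,|\g x_0|+u(|\g x_0|)+2C_0=:\rho(|\g x_0|),
\]
a function which is $O(|\g x_0|)$ and, being of the form $Lr+u(r)+2C_0$ with $C_0\ge 0$, is non-decreasing and subadditive.

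Now I would apply Corollary~\ref{sbe: cor: sublinear bound on f la for a fixed x} at the fixed point $x_0$: it furnishes a sublinear function $u^{x_0}$ with $d\big(f_\la(x_0),g_\la(x_0)\big)\le u^{x_0}(|\la|)$. Combining the three estimates via the triangle inequality,
\[
d\big(\g x_0,\,g_\la x_0\big)\le d\big(\g x_0,f_\la x_0\big)+d\big(f_\la x_0,g_\la x_0\big)\le u(|\g x_0|)+u^{x_0}\big(\rho(|\g x_0|)\big).
\]
The right-hand side depends on $\g$ only through $|\g x_0|$, so put $u'(r):=1+u(r)+u^{x_0}\big(\rho(r)\big)$. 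Since $u^{x_0}$ is non-decreasing, doubling, subadditive and sublinear, and $\rho$ is non-decreasing, subadditive and $O(r)$, the composite $u^{x_0}\circ\rho$ is again a sublinear function; hence so is $u'$, and $u'\ge 1$, i.e.\ $u'\colon\mrnn\to\mrpp$ is sublinear in the sense of Definition~\ref{def: sublinear}. As $g_\la x_0\in\Phi(\La)\cdot x_0$, this yields $\G\cdot x_0\subset\mn_{u'}\big(\Phi(\La)\cdot x_0\big)$.

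Everything here is elementary once Corollary~\ref{sbe: cor: sublinear bound on f la for a fixed x} is available; the one genuinely load-bearing point is that the bound on $d(f_\la x_0,g_\la x_0)$ must be \emph{sublinear in $|\g x_0|$}, rather than sublinear with constants depending uncontrollably on $\la$. This is precisely what the uniform control of Lemma~\ref{sbe: lem: uniform control on the sublinear constants for fixed x} supplies, and it enters the argument only through the observation that $|\la(\g)|$ grows at most linearly in $|\g x_0|$; a naive argument using Theorem~\ref{sbe: thm: self sbe is close to isometry} alone would leave the error term without any control in terms of $\g$.
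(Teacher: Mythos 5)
Your proposal is correct and follows essentially the same route as the paper: pick $\la\in\La$ with $f_\la(x_0)=f\big(f^{-1}(\g x_0)\big)$, bound $d\big(f_\la(x_0),\g x_0\big)$ by the SBE-inverse relation, bound $|\la|$ linearly in $|\g x_0|$, and invoke Corollary~\ref{sbe: cor: sublinear bound on f la for a fixed x} at the fixed point $x_0$. The only (cosmetic) difference is that the paper normalizes $f^{-1}(x_0)=e_\La$ without loss of generality and takes $\la_\g=f_\G^{-1}(\g)$ directly, whereas you correct for the basepoint with the bounded element $\la_0^{-1}$.
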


\begin{proof}
I claim that there is a sublinear function $u_0$, depending only on $f$ and $q_0$, such that for all $\g\in G$,  $d\big(g_\la(x_0),\g(x_0)\big)\leq u_0(|\g|)$.

As before, I only have control on $g_\la$ via $f_\la$, and so I use triangle inequality to get:

$$d\big(g_\la(x_0),\g(x_0)\big)\leq d\big(g_\la(x_0),f_\la (x_0)\big)+d\big(f_\la(x_0),\g(x_0)\big)$$

By Corollary~\ref{sbe: cor: sublinear bound on f la for a fixed x}, $ d\big(g_\la(x_0),f_\la (x_0)\big)\leq u^x_0(|\la|)$ for a sublinear function $u^{x_0}$.

From now I distinguish between the SBE $f_\G:\La \rightarrow\G$ and the the composition $f_0=q_0\circ f_\G$ of $f_\G$ with the orbit quasi-isometry $q_0:\G\rightarrow X_0$. Define $\la_\g:=f_\G^{-1}(\g)$. I show that $d\big(f_{\la_\g}(x_0),\g(x_0)\big)$ is bounded by a function sublinear in $\g$. Indeed, recall that I  assumed without loss of generality $q_0\circ f_\G^{-1}(x_0)=e_\La$. Moreover, $\G$ is assumed to be torsion-free, and so there is no ambiguity or trouble in defining the restriction of the map $q_0^{-1}$ to the orbit $\G\cdot x_0$ to be of the form $q_0^{-1}(\g x_0)=\g$. All together, this gives 

$$d\big(f_{\la_\g}(x_0),\g(x_0)\big)=d\big(f_0(\la_\g),\g(x_0)\big)=d\big(q_0\circ f_\G f_\G^{-1}(\g), q_0(\g) \big)$$

Since $f_\G$ is an SBE $d\big(f_\G f_\G^{-1}(\g),\g\big)\leq u(\g)$. The fact that $q_0$ is an $(L',C)$-quasi-isometry implies that 

$$d\big(q_0\circ f_\G f_\G^{-1}(\g), q(\g)\big)\leq Ld\big(f_\G f_\G^{-1}(\g),\g\big)+C\leq L'u(|\g|)+C$$

Combining everything, one has

$$d\big(g_{\la_\g}(x_0),\g(x_0)\big)\leq u^{x_0}(|\la_\g|)+L'u(|\g|)+C$$

As before, $|\la_\g|=|f^{-1}(\g)|\leq |\g|+u(|\g|)\leq 2|\g|$, where the last inequality holds for all large enough $\g$.  What matters is that $|\la_\g|$ is linear in $|\g|$. I conclude that indeed $\G\cdot x_0\subset \mn_{u'}\big(\Phi(\La)\cdot x_0\big)$ for the sublinear function $u'=u^{x_0} +L'u+C\in O(u)$, as wanted. (To be pedantic, $u'=u^{x_0}\circ 2 +L'u+C\in O(u)$ where $2$ is the `multiplication by $2$' function, $r\mapsto 2r$). 

\end{proof}

\begin{claim} \label{sbe: claim: sublinear bound to Lambda}
There exists a sublinear function $u':\mrnn\rightarrow\mrpp$ such that 

$$\Phi(\La)\cdot x_0\subset \mn_{u'}(\G\cdot x_0)$$
\end{claim}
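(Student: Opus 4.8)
The plan is to mirror the argument of Claim~\ref{sbe: claim: sublinear bound to Gamma}, exploiting the symmetry provided by the fact that $f^{-1}:X_0\to\La$ is again an SBE and that, up to the additive normalization, $f^{-1}(x_0)=e_\La$. Given $\la\in\La$, recall that $f_\la(x_0)=f\big(\la\cdot f^{-1}(x_0)\big)=f(\la)$, and that $f=q_0\circ f_\G$ maps $\La$ into $\G\cdot x_0\subset X_0$. Hence the point $\g_\la:=f_\G(\la)\in\G$ satisfies $\g_\la x_0=f_\la(x_0)$, so $\g_\la x_0$ is the natural candidate for the $\G$-orbit point close to $g_\la x_0$.

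By Corollary~\ref{sbe: cor: sublinear bound on f la for a fixed x} applied at $x=x_0$, there is a sublinear $u^{x_0}\in O(u)$ with $d\big(g_\la(x_0),f_\la(x_0)\big)\le u^{x_0}(|\la|)$, hence $d\big(g_\la x_0,\g_\la x_0\big)\le u^{x_0}(|\la|)$. It remains to re-express this bound sublinearly in terms of $|g_\la x_0|$ rather than $|\la|$. For this I would invoke the lower bound obtained in the proof of Claim~\ref{sbe: claim: Phi has discrete image and finite kernel}, namely $d\big(x_0,g_\la x_0\big)\ge \frac{1}{L}|\la|-u(|\la|)-u^{x_0}(|\la|)$. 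Since the right-hand side grows at least linearly in $|\la|$, there is $S>0$ such that $|\la|>S$ forces $|\la|\le 2L\cdot|g_\la x_0|$; the finitely many $\la$ with $|\la|\le S$ (the group $\La$ is finitely generated) contribute only a bounded error, say $C_0$.

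Putting these together, for every $\la\in\La$ one gets $d\big(g_\la x_0,\g_\la x_0\big)\le u^{x_0}\big(2L\cdot|g_\la x_0|\big)+C_0=:u'\big(|g_\la x_0|\big)$. The function $u'$ is sublinear and lies in $O(u)$: indeed $u^{x_0}\in O(u)$, and the admissibility of $u$ — specifically the doubling property — ensures that precomposition with the dilation $r\mapsto 2Lr$ and addition of the constant $C_0$ keeps the resulting function in $O(u)$. This yields $\Phi(\La)\cdot x_0\subset\mn_{u'}(\G\cdot x_0)$, as required. The only point requiring a little care is the passage from a bound in $|\la|$ to a bound in $|g_\la x_0|$, which is exactly what the linear lower bound from Claim~\ref{sbe: claim: Phi has discrete image and finite kernel} supplies; no new ideas beyond those already used for Claim~\ref{sbe: claim: sublinear bound to Gamma} are needed.
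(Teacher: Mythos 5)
Your proof is correct and follows essentially the same route as the paper: identify $f_\la(x_0)=f(\la)\cdot x_0\in\G\cdot x_0$ and apply Corollary~\ref{sbe: cor: sublinear bound on f la for a fixed x} at $x_0$. The only difference is that you also carry out the (harmless, and strictly speaking needed) conversion of the bound from a function of $|\la|$ to a function of $|g_\la x_0|$ via the linear lower bound from Claim~\ref{sbe: claim: Phi has discrete image and finite kernel}, a step the paper's proof leaves implicit.
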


\begin{proof}
Let $\la\in \La$. Let $\g=f(\la)$ and consider the distance $d(g_\la x_0,\g x_0)$. From triangle inequality and Corollary~\ref{sbe: cor: sublinear bound on f la for a fixed x} one has

$$d(g_\la x_0,\g x_0)\leq d(g_\la x_0,f_\la x_0)+d(f_\la x_0,\g x_0)\leq u'(|\la|)+d(f_\la x_0,\g x_0)$$

By definition of $f_\la$ it holds that  $f_\la(x_0)=f(\la)\cdot x_0=\g\cdot x_0$ and so $d\big(f_\la(x_0),\g x_0\big)=0$.
\end{proof}

\begin{proof}[Proof of Theorem~\ref{sbe: thm: from SBE to sublinearly close}]
The theorem follows immediately from Claims~\ref{sbe: claim: Phi is homomorphism}, \ref{sbe: claim: Phi has discrete image and finite kernel}, \ref{sbe: claim: sublinear bound to Gamma} and~\ref{sbe: claim: sublinear bound to Lambda}
\end{proof} 

\begin{proof}[Proof of Theorem~\ref{sbe: thm: main}]
The theorem follows immediately from Theorem~\ref{sbe: thm: from SBE to sublinearly close} and Theorem~\ref{thm: main}.
\end{proof}

\section{Lattices with Property (T)}\label{sec: T}

Recall that a lattice in a locally compact group $G$ has property (T) if and only if $G$ has property (T). In this section I prove: 

\begin{thm}\label{thm: main for T}
Let $G$ be a real centre-free semisimple Lie group without compact factors, $\G\leq G$ a lattice, $\La\leq G$ a discrete subgroup such that $\G\subset\mn_u(\La)$ for some sublinear function $u$. If $\G$ has property (T), then $\La$ is a lattice.
\end{thm}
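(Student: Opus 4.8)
The plan is to reduce the statement to Leuzinger's characterisation of lattices among discrete subgroups of a property (T) group via the critical exponent, so that the only thing I actually have to prove here is that sublinear covering does not decrease this exponent. For a discrete subgroup $\Delta\leq G$ write $\delta_\Delta:=\limsup_{R\to\infty}\frac{1}{R}\log\#\big(\Delta\cdot x_0\cap B(x_0,R)\big)$ for its critical exponent (equivalently, its exponential orbit growth rate), and let $h$ denote the exponential volume growth rate of $X$. Since $\Delta$ is discrete, $\Delta\cap K$ is finite and, picking $\varepsilon$ with $\{g\in G\mid d(x_0,gx_0)\leq 2\varepsilon\}\cap\Delta=\Delta\cap K$, distinct points of $\Delta\cdot x_0$ are $2\varepsilon$-separated; comparing disjoint $\varepsilon$-balls with $\mathrm{vol}\,B(x_0,R+\varepsilon)$ gives $\delta_\Delta\leq h$ for \emph{every} discrete subgroup, while $\delta_\G=h$ because $\G$ is a lattice. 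By Leuzinger~\cite{leuzingerCriticalExponent}, when $G$ has property (T) a discrete subgroup $\Delta\leq G$ with $\delta_\Delta=h$ is a lattice. Hence it suffices to prove $\delta_\La\geq h$; this is Corollary~\ref{T: cor: La has same critical exponent as Ga}.

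To establish $\delta_\La\geq h$, I would use the covering $\G\subset\mn_u(\La)$ directly at the level of orbit counts. For $\g\in\G$ fix $\la_\g\in\La$ with $d(\g x_0,\la_\g x_0)\leq u(|\g|)$. Since $u$ is non-decreasing, the map $\g\mapsto\la_\g$ sends $\G\cdot x_0\cap B(x_0,R)$ into $\La\cdot x_0\cap B(x_0,R+u(R))$, and its fibres are controlled: if $\la_\g=\la_{\g'}=\la$ with $|\g|,|\g'|\leq R$ then $\g x_0,\g' x_0\in B(\la x_0,u(R))$, so each fibre has at most $C(u(R))$ elements, where $C(r):=\sup_{x\in X}\#\big(\G\cdot x_0\cap B(x,r)\big)$. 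Translating any ball containing an orbit point back to $x_0$ by an element of $\G$ (an isometry preserving $\G\cdot x_0$) yields $C(r)\leq\#\big(\G\cdot x_0\cap B(x_0,2r)\big)$, and since $\G$ is finitely generated with a Lipschitz orbit map this is bounded exponentially, $C(r)\leq A_0e^{A_1 r}$. Altogether $\#\big(\G\cdot x_0\cap B(x_0,R)\big)\leq C(u(R))\cdot\#\big(\La\cdot x_0\cap B(x_0,R+u(R))\big)$.

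Applying $\frac{1}{R}\log(\cdot)$ and letting $R\to\infty$ then closes the loop: the left side tends to $\delta_\G=h$; on the right, $\frac{1}{R}\log C(u(R))\leq\frac{1}{R}\big(\log A_0+A_1u(R)\big)\to 0$ precisely because $u$ is sublinear, and since $R+u(R)\sim R$ the remaining term has $\limsup$ equal to $\delta_\La$ after the harmless reparametrisation $R\mapsto R+u(R)$. Thus $h\leq\delta_\La\leq h$, so $\delta_\La=h$ and Leuzinger's criterion makes $\La$ a lattice. I expect no real obstacle: the genuine mathematical content is entirely in Leuzinger's theorem, and the one point deserving a line of care is the uniform (over all centres) exponential bound on $C(r)$, handled by the separation argument above. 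The whole argument is really the remark that a sublinear perturbation alters counting only by an $e^{o(R)}$ factor — which is exactly why the hypothesis that $u$ be sublinear, rather than of a fixed linear rate, is the one that makes the orbit growth rate, and hence lattice-ness under property (T), invariant.
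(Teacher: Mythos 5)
Your proposal is correct and follows essentially the same route as the paper: reduce to Leuzinger's critical-exponent criterion for property (T) groups, then show that sublinear covering cannot drop the exponent, via the map $\g\mapsto\la_\g$ whose fibres are bounded by the packing function of $\G$ on balls of radius $u(R)$ — this is exactly the closest-point-projection counting the paper runs through (a modified form of) Cornulier's growth-discrepancy proposition. The only difference is cosmetic: the paper carries out the same count with explicit constants so as to obtain the stronger $\varepsilon$-linear version (Theorem~\ref{T: thm: main epsilon}), whereas you treat only the sublinear case, which is all the stated theorem requires.
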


The focus of this paper is sublinear distortion, however for lattices that have property (T) (and also for uniform lattices, see Section~\ref{sec: u}), a slightly stronger result holds. I call it \emph{$\varepsilon$-linear rigidity}.

\begin{defn}\label{T: def: asymptotically smaller function}
Let $f,g:\mrnn\rightarrow\mrp$ be two monotonically increasing functions. Call $f$  \emph{asymptotically smaller} than $g$ if $\limsup{\frac{f}{g}}\leq 1$. Denote this relation by $f\preceq_\infty g$.
\end{defn}

\begin{thm} \label{T: thm: main epsilon}
Let $G$ be a real centre-free semisimple Lie group without compact factors, $\G\leq G$ a lattice and $\La\leq G$ a discrete subgroup. If $\G$ has property (T) then there exists $\varepsilon=\varepsilon(G)>0$ depending only on $G$ such that if $\G\subset\mn_u(\La)$ for some function $u(r)\preceq_\infty  \varepsilon r$, then $\La$ is a  lattice. 
\end{thm}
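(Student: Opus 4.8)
The plan is to run the critical-exponent argument of Theorem~\ref{thm: main for T} while bookkeeping the quantitative loss produced by $\varepsilon$-linear, as opposed to sublinear, distortion. Write $h=h(X)>0$ for the volume entropy of $X$ and, for a discrete subgroup $\Delta\leq G$, let $\delta(\Delta)=\limsup_{R\to\infty}\tfrac1R\log\#\big(\Delta x_0\cap B(x_0,R)\big)$ be its critical exponent; recall $\delta(\Delta)\leq h$ always and $\delta(\G)=h$ because $\G$ is a lattice. The decisive consequence of property (T) that I will invoke is Leuzinger's critical-exponent gap~\cite{leuzingerCriticalExponent}: there is $\delta_0=\delta_0(G)<h$ such that every discrete subgroup $\Delta\leq G$ with $\delta(\Delta)>\delta_0$ is a lattice. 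The whole point is then to pick $\varepsilon(G)$ small enough to force $\delta(\La)>\delta_0$.

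Two elementary estimates feed the counting. First, since $\delta(\G)=h$, for every $\eta>0$ there is a sequence $R_n\to\infty$ with $\#\big(\G x_0\cap B(x_0,R_n)\big)\geq e^{(h-\eta)R_n}$. Second, there is a uniform packing bound $\#\big(\G x_0\cap B(y,t)\big)\leq C e^{ht}$ for all $y\in X$ and $t\geq 1$, with $C=C(\G)$: passing to a finite-index torsion-free subgroup (Selberg), the $\G$-orbit is $D$-separated with bounded multiplicity, so the disjoint balls of radius $D/2$ — of volume $v_0>0$ independent of the centre by homogeneity of $X$ — around orbit representatives inside $B(y,t)$ lie in $B(y,t+D/2)$, and $\mathrm{vol}(B(y,t+D/2))\leq C'e^{h(t+D/2)}$.

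Now fix $\varepsilon<\varepsilon(G)$ and $u$ with $u(r)\preceq_\infty\varepsilon r$, and choose $\varepsilon'$ with $\varepsilon<\varepsilon'<\varepsilon(G)$, so that $u(r)\leq\varepsilon' r$ for all large $r$. For $\g\in\G$ pick $\la_\g\in\La$ with $d(\g x_0,\la_\g x_0)=d(\g x_0,\La x_0)\leq u(|\g|)$. When $|\g|\leq R$ with $R$ large, $|\la_\g|\leq(1+\varepsilon')R$, so the assignment $\g x_0\mapsto\la_\g x_0$ sends $\G x_0\cap B(x_0,R)$ into $\La x_0\cap B(x_0,(1+\varepsilon')R)$, and each fibre lies in $\G x_0\cap B(\la x_0,\varepsilon' R)$, hence has at most $Ce^{h\varepsilon' R}$ points by the packing bound. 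Taking $R=R_n$,
$$\#\big(\La x_0\cap B(x_0,(1+\varepsilon')R_n)\big)\;\geq\;C^{-1}e^{(h(1-\varepsilon')-\eta)R_n},$$
so $\delta(\La)\geq\dfrac{h(1-\varepsilon')-\eta}{1+\varepsilon'}$; letting $\eta\downarrow 0$ gives $\delta(\La)\geq\dfrac{h(1-\varepsilon')}{1+\varepsilon'}$.

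It remains to fix $\varepsilon(G)$. The function $t\mapsto h(1-t)/(1+t)$ is continuous and strictly decreasing, with value $h>\delta_0$ at $t=0$, hence it exceeds $\delta_0$ on a maximal interval $[0,t_0)$ with $t_0>0$; set $\varepsilon(G):=t_0$. Then any admissible choice above has $\varepsilon'<\varepsilon(G)=t_0$, so $\delta(\La)\geq h(1-\varepsilon')/(1+\varepsilon')>\delta_0$, and Leuzinger's gap forces $\La$ to be a lattice. The only genuine difficulty is external — it is the existence of the gap $\delta_0(G)<h$, which is exactly where property (T) enters; everything else is the soft observation that $\varepsilon$-linear distortion shrinks the critical exponent by only the factor $(1-\varepsilon')/(1+\varepsilon')\to 1$. (Letting $\varepsilon'\to 0$ as $R\to\infty$ — the case of genuinely sublinear $u$ — instead yields $\delta(\La)=h$ directly, recovering Theorem~\ref{thm: main for T} without appealing to the gap.)
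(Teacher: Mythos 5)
Your proof is correct and follows essentially the same route as the paper: Leuzinger's property-(T) critical-exponent gap combined with a fibre-counting argument for the closest-point assignment $\g\mapsto\la_\g$, showing that $\varepsilon$-linear covering costs only a bounded factor in the critical exponent (the paper packages the same pigeonhole via a modified version of Cornulier's growth-discrepancy proposition and obtains the slightly weaker factor $1-4\varepsilon$ in place of your $(1-\varepsilon')/(1+\varepsilon')$). The only cosmetic imprecision is the packing bound $\mathrm{vol}\big(B(y,s)\big)\leq C'e^{hs}$, which in higher rank holds only up to a polynomial factor in $s$; this is harmless for the exponent computation.
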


Clearly Theorem~\ref{T: thm: main epsilon} implies Theorem~\ref{thm: main for T}. I thank Emmanuel Breuillard for suggesting this generalization. From now and until the end of this section, the standing assumptions are those of Theorem~\ref{T: thm: main epsilon}.

\paragraph{Lattice Criterion.} 
For groups with property (T) I use a criterion by Leuzinger, stating that being a lattice is determined by the exponential growth rate.

\begin{defn}

Given a pointed metric space $(X,d_X,x_0)$, denote: 
\begin{enumerate}
    \item $b_X(r)=|B(x_0,r)|$
    \item $b_X^u(r)=\sup_{x\in X}|B(x,r)|$
\end{enumerate}   
\end{defn}
When a group $\Delta$ acts on a pointed metric space $X$, the orbit $\Delta\cdot x_0$ together with the metric induced from $X$ is a pointed metric space $(\Delta\cdot x_0,d_{X\restriction_{\Delta\cdot x_0}}, x_0)$. In this setting $b_{\Delta\cdot x_0}(r)=|B_X(x_o,r)\cap\Delta\cdot x_0|$. When the action is by isometries, i.e.\ $\Delta\leq \mathrm{Isom}(X)$, it is straightforward to observe that this quantity does not depend on the centre of the ball, and so $b_{\Delta\cdot x_0}^u(r)=b_{\Delta\cdot x_0}(r)$. The pointed metric spaces of interest are the $\G$ and $\La$ orbits in the symmetric space $X=G/K$. Throughout this section there is no risk of ambiguity, and I simply write $b_\Delta(r)$ for $b_{\Delta\cdot x_0}(r)$ and $  b_\Delta^u(r)$ for $b_{\Delta\cdot x_0}^u(r)$.

\begin{defn}

Let $X$ be a symmetric space and $\Delta\leq G=\mathrm{Isom}(X)^\circ$ a subgroup of isometries. The \emph{critical exponent} of $\Delta$ is defined to be  

$$\delta(\Delta):=\limsup_{r\rightarrow \infty}\frac{\log\big(b_{\Delta\cdot x_0}(r)\big)}{r}$$
\end{defn}

To a semisimple Lie group $G$ one can associate a quantity $\Vert \rho\Vert$, where $\rho=\rho(G)$ is the half sum of positive roots in the root system of $(\mf{g},\mf{a})$ (see Section $2$ in \cite{leuzingerCriticalExponent}).

\begin{thm}[Theorem 2 in \cite{leuzingerCriticalExponent}]\label{T: thm: Leuzinger criterion with epsilon bound}
Let $G$ be a real centre-free semisimple Lie group without compact factors. Let $\Delta$ be a discrete, torsion-free subgroup of $G$ that is not a lattice. If G has Kazhdan's property $(T)$, then there is a constant $c^*(G)$ (depending on $G$ but not on $\Delta$) such that $\delta(\Delta)\leq 2\Vert\rho\Vert-c^*(G)$.
\end{thm}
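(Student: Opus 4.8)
The plan is to play the critical exponent $\delta(\Delta)$ against the bottom $\lambda_0(M)$ of the $L^2$-spectrum of the Laplacian on the locally symmetric \emph{manifold} $M:=\Delta\backslash X$ (torsion-freeness of $\Delta$ enters only through making $M$ a manifold, so that this spectral language applies). Two facts pull in opposite directions. On one hand $\lambda_0(X)=\Vert\rho\Vert^2$ while $2\Vert\rho\Vert$ is the exponential volume-growth rate of $X$, attained as $\delta(\G)$ by every lattice $\G$; one expects, and can prove, that a critical exponent close to this maximal value forces $\lambda_0(M)$ close to $0$. On the other hand property (T) of $G$ prevents $\lambda_0(M)$ from being small unless $M$ has finite volume, i.e.\ unless $\Delta$ is a lattice, and it does so \emph{uniformly}. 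Quantifying both and combining them yields $\delta(\Delta)\le 2\Vert\rho\Vert-c^*(G)$.

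First, I would establish $\lambda_0(M)\le\delta(\Delta)\bigl(2\Vert\rho\Vert-\delta(\Delta)\bigr)$ whenever $\delta(\Delta)\ge\Vert\rho\Vert$ (if $\delta(\Delta)<\Vert\rho\Vert$ the conclusion is automatic). This is the higher-rank analogue of the Brooks--Elstrodt--Patterson--Sullivan upper bound: for $s>\delta(\Delta)$ the Poincaré series $h_s(x)=\sum_{\gamma\in\Delta}e^{-s\,d(x_0,\gamma x)}$ converges, is positive and descends to $M$; since the mean curvature of geodesic spheres in $X$ tends to $2\Vert\rho\Vert$, the radial exponential $e^{-s\,d(x_0,\cdot)}$ is \emph{asymptotically} an eigenfunction with eigenvalue $s(2\Vert\rho\Vert-s)$, so $h_s$ is an approximate $s(2\Vert\rho\Vert-s)$-eigenfunction on $M$. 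Truncating $h_s$ to large balls and controlling the gradient/boundary error produces a genuine compactly supported test function with Rayleigh quotient $\le s(2\Vert\rho\Vert-s)+o(1)$; letting $s\downarrow\delta(\Delta)$ gives the inequality.

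Second, $\lambda_0(\Delta\backslash X)$ equals the bottom of the spectrum of the Casimir operator on the $K$-fixed vectors of $L^2(\Delta\backslash G)$, and this representation has no nonzero $G$-invariant vector precisely because $\Delta$ is not a lattice. Property (T) provides a Kazhdan pair $(Q,\kappa)$ valid for \emph{all} unitary representations without invariant vectors, which translates into a spectral gap $\lambda_0(\Delta\backslash X)\ge c_0(G)>0$ with $c_0(G)$ depending only on $Q,\kappa$ and the Riemannian data of $G/K$ — not on $\Delta$. Combining the two steps in the range $\Vert\rho\Vert\le\delta(\Delta)<2\Vert\rho\Vert$ gives $c_0(G)\le\delta(\Delta)\bigl(2\Vert\rho\Vert-\delta(\Delta)\bigr)$; since $t\mapsto t(2\Vert\rho\Vert-t)$ is continuous, strictly decreasing on $[\Vert\rho\Vert,2\Vert\rho\Vert]$ and vanishes at $2\Vert\rho\Vert$, and since $c_0(G)\le\lambda_0(X)=\Vert\rho\Vert^2$ in any case, this confines $\delta(\Delta)$ to $[\Vert\rho\Vert,\delta_{\max}]$ with $\delta_{\max}<2\Vert\rho\Vert$ depending only on $c_0(G)$; taking $c^*(G):=\min\{2\Vert\rho\Vert-\delta_{\max},\,\Vert\rho\Vert\}$ handles all cases. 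I expect the genuine obstacle to be the higher-rank form of the first step: geodesic spheres in $X$ are not homogeneous and spherical functions carry polynomial and chamber-wall corrections to the pure exponential $e^{-\Vert\rho\Vert d}$, so making the approximate-eigenfunction estimate and the truncation bound rigorous and uniform requires real work; the property (T) input of the second step is, by comparison, standard (this is exactly the division of labour carried out by Leuzinger).
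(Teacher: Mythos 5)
The paper does not prove this statement -- it is imported verbatim as Theorem 2 of Leuzinger's paper -- and your reconstruction is exactly Leuzinger's argument: the generalized Elstrodt--Patterson--Sullivan upper bound $\lambda_0(\Delta\backslash X)\le\delta(\Delta)\bigl(2\Vert\rho\Vert-\delta(\Delta)\bigr)$ for $\delta(\Delta)\ge\Vert\rho\Vert$, played against the uniform spectral gap $\lambda_0(\Delta\backslash X)\ge c_0(G)>0$ that property (T) forces on any non-lattice $\Delta$. Your outline is correct, and you correctly identify the higher-rank approximate-eigenfunction/truncation estimate as the part requiring genuine work.
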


It is known that the critical exponent of a discrete subgroup $\Delta\leq G$ is bounded above by $2\Vert\rho\Vert$ (see Section $2.2$ in \cite{leuzingerCriticalExponent}). Moreover, every lattice $\G\leq G$ admits $\delta(\G)=2\Vert\rho\Vert$ (Example $2.3.5$ in \cite{leuzingerCriticalExponent}, Theorem C in \cite{Albuquerque1999}). Combining these facts with Theorem~\ref{T: thm: Leuzinger criterion with epsilon bound} yield: 

\begin{thm}[Theorem B in \cite{leuzingerCriticalExponent}]\label{T: thm: Leuzinger criterion}

Let $G$ be a real centre-free semisimple Lie group without compact factors. Let $\Delta$ be a discrete, torsion-free subgroup of $G$. If $G$ has Kazhdan's property $(T)$, then $\Delta$ is a lattice iff $\delta(\Delta)= 2\Vert\rho\Vert$.
\end{thm}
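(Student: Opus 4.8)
The plan is to obtain the equivalence by combining Theorem~\ref{T: thm: Leuzinger criterion with epsilon bound} with the two facts recalled immediately before the statement: that $\delta(\Delta)\le 2\Vert\rho\Vert$ for every discrete subgroup $\Delta\le G$, and that every lattice in $G$ has critical exponent exactly $2\Vert\rho\Vert$ (Example $2.3.5$ in~\cite{leuzingerCriticalExponent}, Theorem C in~\cite{Albuquerque1999}). So the proof is really just a short deduction organising these inputs, carried out in the two directions of the ``iff''.

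For the forward implication, suppose $\Delta$ is a lattice. Then the cited computation of the critical exponent of a lattice gives $\delta(\Delta)=2\Vert\rho\Vert$ directly; note that this direction uses nothing about property (T), only that $\Delta$ has finite covolume.

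For the reverse implication I would argue by contraposition. Assume $\Delta$ is \emph{not} a lattice. Since $\Delta$ is discrete and torsion-free and $G$ has Kazhdan's property (T), Theorem~\ref{T: thm: Leuzinger criterion with epsilon bound} applies and produces a constant $c^*(G)>0$, depending only on $G$ and not on $\Delta$, with $\delta(\Delta)\le 2\Vert\rho\Vert-c^*(G)<2\Vert\rho\Vert$. In particular $\delta(\Delta)\ne 2\Vert\rho\Vert$. Contrapositively, $\delta(\Delta)=2\Vert\rho\Vert$ forces $\Delta$ to be a lattice. Combining the two implications yields the asserted equivalence.

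The entire substance of the argument is packed into Theorem~\ref{T: thm: Leuzinger criterion with epsilon bound}, which I treat as a black box and would not reprove here: it is there that property (T)---via a uniform spectral gap and its consequences for the exponential volume growth of non-lattice discrete subgroups---produces a \emph{uniform} deficit in the critical exponent away from the lattice value $2\Vert\rho\Vert$. The only ``obstacle'' at the level of the statement above is therefore simply that one must have that uniform gap available; once it is, nothing beyond the bookkeeping in the previous two paragraphs is needed.
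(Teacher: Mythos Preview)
Your proposal is correct and matches the paper's own derivation exactly: the paper simply says that combining the upper bound $\delta(\Delta)\le 2\Vert\rho\Vert$, the fact that lattices attain $\delta=2\Vert\rho\Vert$, and Theorem~\ref{T: thm: Leuzinger criterion with epsilon bound} yields the result. There is nothing more to add.
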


\paragraph{Line of Proof and the Use of $\varepsilon$-Linearity.} 

The proof of Theorem~\ref{T: thm: main epsilon} goes by showing that $\varepsilon$-linear distortion cannot decrease the exponential growth rate by much. This fact is essentially manifested in a proposition by Cornulier~\cite{SBE_Review}, stated here in Proposition~\ref{T: prop: Cornulier growth discrepency}. This is the only use I make of $\varepsilon$-linearity, and the computations involved are straightforward. Theorem~\ref{T: thm: main epsilon} then follows from Theorem~\ref{T: thm: Leuzinger criterion with epsilon bound}.

\subsection{Proof of Theorem~\ref{T: thm: main epsilon}}

The way to relate $\G\cdot x_0$ and $\La\cdot x_0$ is via the \emph{closest point projection}.

\begin{defn}\label{def: closest point projection}
Let $X$ be a metric space, $Y,Z\subset X$ two closed subsets of $X$. The \emph{closest point projection} of $Z$ on $Y$ is the set theoretic map $p_Y:Z\rightarrow Y$ defined by $p_Y(z):=y_z$, where $y_z\in Z$ is any point $y\in Y$ realizing the distance $d(z,Y)=d(z,y)$.  If $X$ is a proper metric space and $Y$ discrete, then there are at most finitely many such points. In any case of multiple points, $p_Y$ chooses one arbitrarily.
\end{defn}

Again the case of interest is where $X=G/K$, $x_0=eK$, the two subsets are the orbits $\G\cdot x_0$ and $\La\cdot x_0$, and the projection is $p_{\La\cdot x_0}:\G\cdot x_0\rightarrow\La\cdot x_0$. Also here there is no risk in denoting this projection by $p_\La$.

In his study on SBE, Cornulier proves the following growth discrepancy result. 

\begin{prop}[Proposition 3.6 in \cite{SBE_Review}]\label{T: prop: Cornulier growth discrepency}

Let $X,Y$ be two pointed metric spaces. Let $u$ be a non-decreasing sublinear function and $p:X\rightarrow Y$ a map such that for some $L, R_0>0$:
\begin{enumerate}
    \item $|p(x)|\leq \max(|x|,R_0)$, i.e.\ $p(|x|)\leq |x|$ for all large enough $x\in X$.
    \item $d_Y\big(p(x),p(x')\big)\geq \frac{1}{L}d_X(x,x')-u(\max\{|x|, |x'|\})$
    \end{enumerate}
    
Then for all $r>R_0$, $b_Y(r)\geq b_X(r)/b_X^u\big(L\cdot u(r)\big)$. 
\end{prop}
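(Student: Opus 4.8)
The plan is to estimate $b_X(r)$ by partitioning the ball $B_X(x_0,r)$ into fibers of $p$, and bounding separately (i) the number of nonempty fibers and (ii) the cardinality of each fiber. For (i), I would invoke hypothesis~1: if $x\in B_X(x_0,r)$ and $r>R_0$, then $|p(x)|\le\max(|x|,R_0)\le\max(r,R_0)=r$, so $p$ maps $B_X(x_0,r)$ into $B_Y(y_0,r)$. Hence the number of distinct values taken by $p$ on $B_X(x_0,r)$ — equivalently, the number of nonempty sets $p^{-1}(y)\cap B_X(x_0,r)$ — is at most $\big|B_Y(y_0,r)\big|=b_Y(r)$.

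For (ii), I would use hypothesis~2 to control the diameter of each fiber inside $B_X(x_0,r)$. If $x,x'\in B_X(x_0,r)$ satisfy $p(x)=p(x')$, then, using that $u$ is non-decreasing and $|x|,|x'|\le r$,
$$0=d_Y\big(p(x),p(x')\big)\ge\frac{1}{L}\,d_X(x,x')-u\big(\max\{|x|,|x'|\}\big)\ge\frac{1}{L}\,d_X(x,x')-u(r),$$
so $d_X(x,x')\le L\cdot u(r)$. Thus $p^{-1}(y)\cap B_X(x_0,r)$ has diameter at most $L\cdot u(r)$; being a subset of the ball $B_X\big(x,L\cdot u(r)\big)$ about any one of its own points $x$, it has cardinality at most $\sup_{z\in X}\big|B_X\big(z,L\cdot u(r)\big)\big|=b_X^u\big(L\cdot u(r)\big)$.

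Combining the two bounds and using that the fibers partition $B_X(x_0,r)$ (which is a finite set, since the orbits of interest are discrete in a proper space),
$$b_X(r)=\big|B_X(x_0,r)\big|=\sum_{y}\big|p^{-1}(y)\cap B_X(x_0,r)\big|\le b_Y(r)\cdot b_X^u\big(L\cdot u(r)\big),$$
the sum ranging over the finitely many $y$ with nonempty fiber; rearranging gives the claim. The argument is short and I do not expect a real obstacle; the one point that deserves care is the passage in (ii) from "diameter $\le L\cdot u(r)$" to "cardinality $\le b_X^u(L\cdot u(r))$" — in a general metric space a set of diameter $D$ need not sit inside a ball of radius $D$ about a prescribed centre, but it does sit inside such a ball about any of its own points, which is precisely why the \emph{supremal} counting function $b_X^u$, rather than $b_X$, must appear in the denominator. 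No torsion-freeness or further discreteness beyond finiteness of balls in the orbits is used.
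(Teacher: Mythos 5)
Your proof is correct, and it is essentially the standard fiber-counting argument behind Cornulier's Proposition 3.6, which the paper simply cites rather than reproves: hypothesis 1 bounds the number of nonempty fibers of $p$ over $B_X(x_0,r)$ by $b_Y(r)$, hypothesis 2 bounds each fiber's diameter by $L\cdot u(r)$ and hence its cardinality by $b_X^u\big(L\cdot u(r)\big)$, and summing over fibers gives the claim. Your remark on why the supremal counting function $b_X^u$ (centred at a point of the fiber itself) must appear is exactly the right point of care.
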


I need a slightly modified version of Proposition~\ref{T: prop: Cornulier growth discrepency}:

\begin{prop}\label{T: prop: Upgrade Cornulier growth discrepency}

Let $X,Y$ be two pointed metric spaces. Let $u$ be a non-decreasing function that admits $u(r)\preceq_\infty \varepsilon r$ for some $\varepsilon<1$, and $p:X\rightarrow Y$ a map such that for some $L, R_0>0$:
\begin{enumerate}
    \item $|p(x)|\leq \max(|x|+u(|x|),R_0)$, i.e.\ $|p(x)|\leq|x|+u(|x|)$ for all large enough $x\in X$.
    
    \item $d_Y\big(p(x),p(x')\big)\geq \frac{1}{L}d_X(x,x')-u(\max\{|x|, |x'|\})$
    \end{enumerate}
    
Then for all $r>R_0$, 
$$b_Y(r)\geq b_X\big(r-u(r)\big)/b_X^u\Big(L\cdot u\big(r-u(r)\big)\Big)$$

\end{prop}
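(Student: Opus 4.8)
The plan is to reproduce Cornulier's proof of Proposition~\ref{T: prop: Cornulier growth discrepency} with a single modification. Since the weaker hypothesis $(1)$ only guarantees $|p(x)| \leq |x| + u(|x|)$ rather than $|p(x)| \leq |x|$, I will feed $p$ a ball of radius $r - u(r)$ rather than $r$, so that its image still lands inside the $r$-ball of $Y$; the fibre estimate is then unchanged.

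First I would fix $r > R_0$ and set $\rho := r - u(r)$. Because $u(r) \preceq_\infty \varepsilon r$ with $\varepsilon < 1$, there is $r_0$ with $u(r) \leq r$ for $r \geq r_0$; after enlarging $R_0$ to $\max(R_0, r_0)$ if necessary (which only weakens the conclusion) we may assume $\rho \geq 0$, and in any case where $\rho < 0$ the asserted inequality is vacuous since $b_X$ vanishes on negative radii. The map $t \mapsto t + u(t)$ is non-decreasing, so for every $x \in B_X(x_0,\rho)$ one has $|p(x)| \leq \max(|x| + u(|x|), R_0) \leq \max(\rho + u(\rho), R_0)$; using that $u$ is non-decreasing and $\rho \leq r$ gives $\rho + u(\rho) \leq (r - u(r)) + u(r) = r$, and also $R_0 < r$, so $p$ restricts to a map $B_X(x_0,\rho) \to B_Y(y_0, r)$.

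Next I would estimate the fibres of this restriction. If $x, x' \in B_X(x_0,\rho)$ satisfy $p(x) = p(x')$, then hypothesis $(2)$ yields $0 = d_Y(p(x),p(x')) \geq \frac{1}{L} d_X(x,x') - u(\max\{|x|,|x'|\}) \geq \frac{1}{L} d_X(x,x') - u(\rho)$, hence $d_X(x,x') \leq L\,u(\rho) = L\,u(r - u(r))$. Thus each fibre of $p$ restricted to $B_X(x_0,\rho)$ has diameter at most $L\,u(r - u(r))$, so it contains at most $b_X^u\big(L\,u(r - u(r))\big)$ points. Counting the points of $B_X(x_0,\rho)$ fibre by fibre gives $\big|p(B_X(x_0,\rho))\big| \geq b_X(r - u(r)) / b_X^u\big(L\,u(r - u(r))\big)$, and since $p(B_X(x_0,\rho)) \subseteq B_Y(y_0,r)$ we conclude $b_Y(r) \geq b_X(r - u(r)) / b_X^u\big(L\,u(r - u(r))\big)$, which is the claim.

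I do not anticipate a genuine obstacle here: the argument is structurally identical to Cornulier's, and the only points requiring a moment's care are the monotonicity check that $\rho + u(\rho) \leq r$ (so that the shrunken domain ball still maps into the target $r$-ball) and the harmless adjustment of $R_0$ to dispose of small $r$ for which $r - u(r)$ might be negative — this last being precisely where the hypothesis $\varepsilon < 1$ is used, since otherwise the stated bound could be vacuous for all $r$.
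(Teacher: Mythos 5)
Your argument is correct and is exactly the intended one: the paper's proof is literally ``repeat \emph{verbatim} the proof of Proposition 3.6 in \cite{SBE_Review}'', and your shrinking of the domain ball to radius $r-u(r)$ (so that the weaker hypothesis $|p(x)|\leq |x|+u(|x|)$ still lands the image in $B_Y(y_0,r)$) is precisely the one modification that ``verbatim'' glosses over, with the fibre count via hypothesis (2) unchanged. The monotonicity check $\rho+u(\rho)\leq r$ and the disposal of small $r$ are handled correctly.
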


\begin{proof}
Repeat \emph{verbatim} the proof for Proposition $3.6.$ in \cite{SBE_Review}.
\end{proof}

    



\begin{cor} \label{T: cor: La has same critical exponent as Ga}
Let $u(r)\preceq_\infty \varepsilon \cdot r$ for some $\varepsilon<\frac{1}{2}$. Assume that $\G\subset \mn_u(\La)$. Then $\delta(\La)\geq (1-4\varepsilon)\cdot \delta(\G)$.
\end{cor}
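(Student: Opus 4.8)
The plan is to feed the closest-point projection $p:=p_\La\colon\G\cdot x_0\to\La\cdot x_0$ into the modified growth-discrepancy estimate (Proposition~\ref{T: prop: Upgrade Cornulier growth discrepency}), and then convert the resulting ball-counting inequality into the desired inequality on critical exponents by taking logarithms, dividing by the radius, and passing to $\limsup$. Throughout I identify $\G$ and $\La$ with the orbits $\G\cdot x_0,\La\cdot x_0$ in $X=G/K$ (both discrete, since $\G,\La$ are discrete) and I use that for a group acting by isometries $b_\G^u=b_\G$.

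\emph{Checking the hypotheses.} I take $L=1$ and use the function $2u$ in the role of the sublinear function of Proposition~\ref{T: prop: Upgrade Cornulier growth discrepency}; since $u(r)\preceq_\infty\varepsilon r$ we have $2u(r)\preceq_\infty 2\varepsilon r$ with $2\varepsilon<1$, as required. Because $\G\subset\mn_u(\La)$, any closest point $p(\g x_0)=\la x_0$ satisfies $|\la x_0|\le|\g x_0|+d(\g x_0,\La\cdot x_0)\le|\g x_0|+u(|\g x_0|)\le|\g x_0|+2u(|\g x_0|)$, which is condition (1). For condition (2), the triangle inequality gives
$$d\big(p(\g x_0),p(\g' x_0)\big)\ \ge\ d(\g x_0,\g' x_0)-u(|\g x_0|)-u(|\g' x_0|)\ \ge\ d(\g x_0,\g' x_0)-2u\big(|\g x_0|\vee|\g' x_0|\big),$$
using monotonicity of $u$. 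Hence Proposition~\ref{T: prop: Upgrade Cornulier growth discrepency} applies and yields $b_\La(r)\ge b_\G\big(r-2u(r)\big)\big/b_\G\big(2u(r-2u(r))\big)$ for all large $r$.

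\emph{Passing to critical exponents.} Fix $\delta''<\delta(\G)<\delta'$ and $\varepsilon'\in(\varepsilon,\tfrac12)$. By definition of $\delta(\G)$ there is a sequence $s_n\to\infty$ with $b_\G(s_n)\ge e^{\delta'' s_n}$, and since $\delta'>\delta(\G)\ge0$ and $b_\G$ is non-decreasing there is $C\ge1$ with $b_\G(t)\le Ce^{\delta' t}$ for all $t\ge0$. Set $r_n:=s_n/(1-2\varepsilon')$. For $n$ large, $u(r_n)\le\varepsilon' r_n$, hence $r_n-2u(r_n)\ge s_n$ so $b_\G(r_n-2u(r_n))\ge e^{\delta'' s_n}$, while $2u(r_n-2u(r_n))\le 2u(r_n)\le 2\varepsilon' r_n$ gives $b_\G(2u(r_n-2u(r_n)))\le Ce^{2\delta'\varepsilon' r_n}$. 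Substituting into the displayed inequality and dividing $\log(\cdot)$ by $r_n$,
$$\frac{\log b_\La(r_n)}{r_n}\ \ge\ \delta''\,\frac{s_n}{r_n}-\frac{\log C}{r_n}-2\delta'\varepsilon'\ =\ \delta''(1-2\varepsilon')-\frac{\log C}{r_n}-2\delta'\varepsilon'.$$
Letting $n\to\infty$ and using $r_n\to\infty$ gives $\delta(\La)\ge\limsup_n\frac{\log b_\La(r_n)}{r_n}\ge\delta''(1-2\varepsilon')-2\delta'\varepsilon'$; finally letting $\delta''\uparrow\delta(\G)$, $\delta'\downarrow\delta(\G)$, $\varepsilon'\downarrow\varepsilon$ yields $\delta(\La)\ge\delta(\G)(1-2\varepsilon)-2\varepsilon\,\delta(\G)=(1-4\varepsilon)\delta(\G)$.

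I do not expect a genuine obstacle here: the substance is Proposition~\ref{T: prop: Upgrade Cornulier growth discrepency} (a verbatim reprise of Cornulier's argument) together with elementary $\limsup$ manipulations. The only point requiring a little care is the bookkeeping in the last paragraph — choosing $r_n$ so that $r_n-2u(r_n)$ dominates $s_n$ while still growing like $s_n/(1-2\varepsilon)$, and keeping the denominator $b_\G(2u(\cdots))$ controlled via the auxiliary constants $\delta'',\delta',\varepsilon'$ — but this is routine.
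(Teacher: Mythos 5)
Your proof is correct and follows essentially the same route as the paper's (which proves the slightly more general Corollary~\ref{T: cor: general formulation of critical exponent discrepancy}): the closest-point projection is fed into Proposition~\ref{T: prop: Upgrade Cornulier growth discrepency} with $L=1$ and the function $2u$, and the resulting ball-counting inequality is then converted into the critical-exponent bound. The only difference is cosmetic: the paper performs the final conversion via $\limsup$ arithmetic and a change of variables, whereas you use auxiliary constants $\delta'',\delta',\varepsilon'$ and an explicit sequence $r_n$ — both are routine and valid.
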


\begin{rmk}
Restricting to $\varepsilon<\frac{1}{2}$ stems from a $2$ factor that appears in the proof and could possibly be dropped using a slightly more sophisticated approach. For my needs this is more than enough, since in any case I eventually restrict attention to a small interval around $0$. 
\end{rmk}

Corollary~\ref{T: cor: La has same critical exponent as Ga} can be formulated in a slightly more general fashion. Using the notation $\delta(W):=\limsup_{r\rightarrow \infty}\frac{\log\big(b_W(r)\big)}{r}$ for a general subset $W$ in a general metric space $X$, the following version holds: 

\begin{cor}\label{T: cor: general formulation of critical exponent discrepancy}
Let $(X,x_0)$ be a pointed metric space, $Y,Z\subset X$ two subsets, and $u(r)\preceq_\infty \varepsilon r$ for some $\varepsilon<\frac{1}{2}$. Assume that $b_Z^u(r)=b_Z(r)$. If $Z\subset \mn_u(Y)$, then $\delta(Y)\geq (1-4\varepsilon)\cdot \delta(Z)$. Moreover, if $u$ is sublinear, then $\delta(Z)=\delta(Y)$.
\end{cor}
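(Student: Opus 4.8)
### Proof proposal for Corollary~\ref{T: cor: general formulation of critical exponent discrepancy}

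The plan is to deduce the statement directly from the modified growth-discrepancy estimate in Proposition~\ref{T: prop: Upgrade Cornulier growth discrepency}, applied to the closest-point projection $p = p_Y : Z \to Y$. First I would verify that $p$ satisfies the two hypotheses of that proposition, with the ambient pointed metric space $X$ playing the role of the target for size estimates. Hypothesis~(1): for $z \in Z$, since $Z \subset \mathcal{N}_u(Y)$ we have $d(z, p(z)) \leq u(|z|)$, so by the triangle inequality $|p(z)| \leq |z| + u(|z|)$, which is exactly condition~(1). Hypothesis~(2): this is the standard fact that closest-point projection onto a set contracts distances up to the projection error; concretely $d\big(p(z), p(z')\big) \leq d(z,z') + d(z, p(z)) + d(z', p(z')) \leq d(z,z') + u(|z|) + u(|z'|) \leq d(z,z') + 2u(\max\{|z|,|z'|\})$, and in the reverse direction $d(z,z') \leq d\big(p(z), p(z')\big) + 2u(\max\{|z|,|z'|\})$, giving condition~(2) with $L = 1$ after absorbing the factor $2$ into $u$ (replacing $u$ by $2u$, still asymptotically $\leq 2\varepsilon r$). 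The hypothesis $b_Z^u(r) = b_Z(r)$ is needed because Proposition~\ref{T: prop: Upgrade Cornulier growth discrepency} bounds $b_Y(r)$ in terms of $b_Z$ and $b_Z^u$; without it one only controls balls centred at $x_0$.

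Next I would feed these into Proposition~\ref{T: prop: Upgrade Cornulier growth discrepency} to obtain, for all large $r$,
$$b_Y(r) \;\geq\; b_Z\big(r - u(r)\big)\,\big/\,b_Z^u\!\Big(L\cdot u\big(r - u(r)\big)\Big) \;=\; b_Z\big(r-u(r)\big)\,\big/\,b_Z\!\big(L\cdot u(r-u(r))\big),$$
using $b_Z^u = b_Z$ and $L=1$ (after the rescaling of $u$). Taking $\log$, dividing by $r$, and passing to $\limsup$: the numerator contributes $\limsup_r \tfrac{1}{r}\log b_Z(r - u(r))$. Since $u(r) \preceq_\infty \varepsilon r$ we have $r - u(r) \geq (1-\varepsilon) r$ for large $r$, and as $b_Z$ is non-decreasing and $\log b_Z$ is asymptotically sub-linear (with rate $\delta(Z)$), this $\limsup$ equals $(1-\varepsilon)\delta(Z)$ — more carefully, $\limsup_r \tfrac{\log b_Z((1-\varepsilon)r)}{r} = (1-\varepsilon)\limsup_s \tfrac{\log b_Z(s)}{s} = (1-\varepsilon)\delta(Z)$ by the substitution $s = (1-\varepsilon)r$. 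The denominator contributes $\limsup_r \tfrac{1}{r}\log b_Z(u(r-u(r))) \leq \limsup_r \tfrac{1}{r}\log b_Z(\varepsilon r) = \varepsilon\,\delta(Z)$ by the same substitution. Hence $\delta(Y) \geq (1-\varepsilon)\delta(Z) - \varepsilon\delta(Z) = (1-2\varepsilon)\delta(Z)$; keeping track of the factor $2$ that entered when we replaced $u$ by $2u$ (which turns $\varepsilon$ into $2\varepsilon$ in the two estimates above) yields the stated bound $\delta(Y) \geq (1-4\varepsilon)\delta(Z)$.

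For the "moreover" clause, suppose $u$ is sublinear in the strict sense (Definition~\ref{def: sublinear}), i.e.\ $u(r)/r \to 0$. Then the above argument applies with $\varepsilon$ arbitrarily small: for every $\varepsilon > 0$ one has $u(r) \preceq_\infty \varepsilon r$, so $\delta(Y) \geq (1-4\varepsilon)\delta(Z)$ for all $\varepsilon>0$, whence $\delta(Y) \geq \delta(Z)$. The reverse inequality $\delta(Z) \geq \delta(Y)$ follows symmetrically: one shows that a sublinear neighbourhood cannot increase the growth rate either, e.g.\ by a direct covering argument — each ball $B(x_0, r) \cap Y$ is covered by the $u$-images of nearby points, but more simply, since $Z \subset \mathcal{N}_u(Y)$ gives every $z$ a companion $p(z) \in Y$ with $|p(z)| \leq |z| + u(|z|)$ and the fibres of $p$ have size controlled by $b_Z^u$ evaluated at a sublinear radius, a parallel estimate yields $\delta(Z) \leq \delta(Y)$. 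Combining, $\delta(Z) = \delta(Y)$.

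The main obstacle I anticipate is purely bookkeeping rather than conceptual: tracking exactly how the constant degrades from the clean $(1-2\varepsilon)$ one gets from a single application of Proposition~\ref{T: prop: Upgrade Cornulier growth discrepency} to the stated $(1-4\varepsilon)$, because of the doubling of $u$ forced by the triangle-inequality slack in establishing hypothesis~(2) with $L=1$. One could instead keep $L=3$ (say) and $u$ unchanged, but then the $L \cdot u(r-u(r))$ term in the denominator scales by $3\varepsilon$ rather than $\varepsilon$; either way one lands at a bound of the form $(1 - c\varepsilon)\delta(Z)$ with $c$ a small absolute constant, and the remark following the statement already flags that the precise constant is not optimized. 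The substitution lemma $\limsup_r \frac{\log b_Z(\alpha r)}{r} = \alpha\,\delta(Z)$ for $\alpha > 0$ should be isolated as a one-line observation since it is used twice.
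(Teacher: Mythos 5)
Your proof of the main inequality $\delta(Y)\geq(1-4\varepsilon)\delta(Z)$ is correct and follows exactly the paper's route: closest-point projection $p_Y:Z\to Y$, verification of the two hypotheses of Proposition~\ref{T: prop: Upgrade Cornulier growth discrepency} with $L=1$ and $u$ replaced by $u'=2u$ (the factor $2$ coming from the quadrilateral inequality), then the $\log$/$\limsup$ arithmetic using the substitution $\limsup_r\frac{1}{r}\log b_Z(\alpha r)=\alpha\,\delta(Z)$; the bookkeeping $(1-2\cdot 2\varepsilon)=(1-4\varepsilon)$ matches the paper's.

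The gap is in the ``moreover'' clause. Your derivation of $\delta(Y)\geq\delta(Z)$ for sublinear $u$ by letting $\varepsilon\to 0$ in the main inequality is fine (and arguably cleaner than the paper's direct computation of the two limits). But your paragraph on the reverse direction is incoherent: you announce that you need $\delta(Z)\geq\delta(Y)$, and then sketch a fibre-counting argument whose conclusion, as you yourself write, is $\delta(Z)\leq\delta(Y)$ --- i.e.\ the direction you already have. Indeed $\delta(Z)\geq\delta(Y)$ cannot follow from the stated hypotheses: they are entirely asymmetric in $Y$ and $Z$ (nothing forces $Y\subset\mn_{u}(Z)$), so one can take $Z$ to be a single point and $Y$ an exponentially growing separated net in a tree, giving $\delta(Z)=0<\delta(Y)$ while $Z\subset\mn_u(Y)$ for a bounded (hence sublinear) $u$. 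For what it is worth, the paper's own proof of the ``moreover'' statement also only establishes $\delta(Y)\geq\delta(Z)$ and stops there, and only this inequality is ever used (via Corollary~\ref{T: cor: La has same critical exponent as Ga} and Theorem~\ref{T: thm: Leuzinger criterion with epsilon bound}); the asserted equality would require an additional hypothesis such as $Y\subset\mn_{u}(Z)$, under which your fibre-counting sketch does give the missing inequality with the roles of $Y$ and $Z$ exchanged. So: either restrict the claim to the one-sided inequality, or add the symmetric hypothesis and then run your covering argument in the correct direction.
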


In particular, Corollary~\ref{T: cor: La has same critical exponent as Ga} holds even when the group $G$ does not have property $(T)$. Corollary~\ref{T: cor: La has same critical exponent as Ga} follows from Corollary~\ref{T: cor: general formulation of critical exponent discrepancy} because the fact that $\G$ is a group of isometries implies  $b_\G^u(r)=b_\G(r)$.

\begin{proof}[Proof of Corollary~\ref{T: cor: general formulation of critical exponent discrepancy}]
Consider the closest point projection $p_Y:Z\rightarrow Y$, denote $y_z:=p_Y(z)$ and observe: 

\begin{enumerate}
    \item $|y_z|\leq |z| + u(|z|)$.
    \item $d\big(y_z,y_{z'} \big)\geq  d(z,z')-2u(\max\{|z|,|z'|\})$.
\end{enumerate}

The first item follows from the fact $y_z\in \overline{B}\big(z,u(|z|)\big)$ and triangle inequality. The second item follows from the quadrilateral inequality, i.e., using triangle inequality twice along the quadrilateral $[z,z',y_{z'},y_z]$.

The above properties allow me to use Proposition~\ref{T: prop: Upgrade Cornulier growth discrepency} with constant $L=1$ and function $u'=2u$ to get 

$$b_Y(r)\geq b_Z\big(r-u'(r)\big)/b_Z^u\Big(u'\big(r-u'(r)\big)\Big)$$

Since I assume $b_Z^u=b_Z$, I can omit the superscript $u$ in the last expression. Recalling the definition  $\delta(W)=\limsup_{r\rightarrow \infty}\frac{b_W(r)}{r}$, it remains to prove: 

$$\limsup_{r\rightarrow\infty}\frac{1}{r}\cdot \log\bigg(b_Z\big(r-u'(r)\big)/b_Z\Big(u'\big(r-u'(r)\big)\Big)\bigg)\geq (1-4\varepsilon)\cdot \delta(Z)$$

The proof of this inequality involves nothing more than $\log$ rules and arithmetic of limits: 

\begin{equation} \label{T: eq: negligible denominator}
\begin{split}
&\limsup_{r\rightarrow\infty}\frac{1}{r}\cdot \log\bigg(b_Z\big(r-u'(r)\big)/b_Z\Big(u'\big(r-u'(r)\big)\Big)\bigg) \\
& = \limsup_{r\rightarrow\infty}\frac{1}{r}\cdot\Bigg( \log\bigg(b_Z\big(r-u'(r)\big)\bigg)-\log\bigg(b_Z\Big(u'\big(r-u'(r)\big)\Big)\bigg)\Bigg) \\
& \geq \limsup_{r\rightarrow\infty}\Bigg[\frac{1}{r}\cdot \log\bigg(b_Z\big(r-u'(r)\big)\bigg)-\limsup_{s\rightarrow\infty}\bigg[\frac{1}{s}\log\bigg(b_Z\Big(u'\big(s-u'(s)\big)\Big)\bigg)\bigg]\Bigg] \\
& = \limsup_{r\rightarrow\infty}\frac{1}{r}\cdot \log\bigg(b_Z\big(r-u'(r)\big)\bigg) - \limsup_{s\rightarrow\infty}\frac{1}{s}\log\bigg(b_Z\Big(u'\big(s-u'(s)\big)\Big)\bigg) \\
& \geq  \limsup_{r\rightarrow\infty}\frac{1}{r}\cdot \log\bigg(b_Z\big(r-2\varepsilon r\big)\bigg) - \limsup_{s\rightarrow\infty}\frac{1}{s}\log\bigg(b_Z\Big(2\varepsilon s\Big)\bigg) \\
& = (1-2\varepsilon) \delta(Z)-2\varepsilon \delta(Z) = (1-4\varepsilon)\delta(Z)
\end{split}
\end{equation}

Below I justify the steps in the above inequalities:
\begin{enumerate}
    \item First equality is by rules of $\log$.
    \item Second and third inequalities are by arithmetic of limits: let $(a_n)_n,(b_n)_n$ be two sequences of positive numbers, and $A=\limsup_n a_n,\  B=\limsup_n b_n$. Then $\limsup(a_n-b_n)\geq \limsup_n (a_n-B)=A-B$.
    \item Fourth inequality: $u'(r)<2\varepsilon(r)$ for all large enough $r$.
    \item Fifth equality: definition of $\delta$.
\end{enumerate}

This completes the proof in the general case, which is what is needed for the proof of Theorem~\ref{T: thm: main epsilon}. For the more refined statement in the case $u$ is sublinear, one has to show a bit more. From inequality~\ref{T: eq: negligible denominator} (specifically from the fourth line of the inequality) it is clearly enough to prove:
\begin{enumerate}
    \item  $\limsup_{r\rightarrow\infty}\frac{1}{r}\cdot \log\bigg(b_Z\big(r-u'(r)\big)\bigg)=\delta(Z)$.
    \item $\limsup_{s\rightarrow\infty}\frac{1}{s}\log\bigg(b_Z\Big(u'\big(s-u'(s)\big)\Big)\bigg)=0$.
\end{enumerate}

Starting from the second item, indeed it holds that
$$\frac{1}{s} \log\bigg(b_Z\Big(u'\big(s-u'(s)\big)\Big)\bigg) = \frac{u'\big(s-u'(s)\big)}{s}\cdot\frac{\log \bigg(b_Z \Big(u'\big(s-u'(s)\big)\Big)\bigg)}{u'\big(s-u'(s)\big)}$$

Clearly $\limsup$ of the right factor in the above product is bounded by $\delta(Z)$, and in particular it is uniformly bounded. On the other hand sublinearity of $u'$ implies that the left factor tends to $0$. I conclude that this product tends to $0$ as $s$ tends to $\infty$. 

It remains to prove $\limsup_{r\rightarrow\infty}\frac{1}{r}\cdot \log\bigg(b_Z\big(r-u'(r)\big)\bigg) = \delta(Z)$. In a similar fashion, 

$$\frac{1}{r} \log\Big(b_Z\big(r-u'(r)\big)\Big) = \frac{r-u'(r)}{r}\cdot\frac{\log\Big(b_Z\big(r-u'(r)\big)\Big)}{r-u'(r)}$$

The left factor limits to $1$ by sublinearity of $u'$. The right factor is nearly the expression in the definition of $\delta(Z)$, and I want to prove that indeed taking $\limsup$ of it equals $\delta(Z)$. \emph{A priori} $\{r-u'(r)\}_{r\in\mrp}$ is just a subset of $\mrp$, so changing variable and writing $t:=r-u'(r)$ requires a justification. But there is no harm in assuming that $u'$ is a non-decreasing continuous function, hence $\mathbb{R}_{\geq R}\subset\{r-u'(r)\}_{r\in \mrp}$ for some $R\in \mrp$. Therefore for any sequence $r_n\rightarrow \infty$ there is a sequence $r_n'$ with $r_n=r_n'-u'(r_n')$ for all large enough $n$ (note that in particular $r_n'\rightarrow \infty$). In the other direction, for every sequence $r_n'\rightarrow \infty$ there is clearly a sequence $r_n\rightarrow \infty$ for which $r_n=r_n'-u'(r_n')$. I conclude 
$$\limsup_{r\rightarrow \infty}\frac{\log\Big(b_Z\big(r-u'(r)\big)\Big)}{r-u'(r)}=\limsup_{r\rightarrow \infty}\frac{1}{r}\cdot \log\big(b_Z(r)\big)=\delta(Z)$$

This completes the proof.

\end{proof}

\begin{proof}[Proof of Theorem~\ref{T: thm: main epsilon}]
Define $\varepsilon(G)=\frac{c^*(G)}{4\cdot 2\Vert\rho\Vert}$, and assume $u(r)\preceq_\infty \varepsilon(G)\cdot r$. Notice that $\varepsilon(G)<\frac{1}{2}$, and since $\delta(\G)=2\Vert\rho\Vert$ Corollary~\ref{T: cor: La has same critical exponent as Ga} gives

$$\delta(\La)\geq \big(1-4\varepsilon(G)\big)\cdot 2\Vert\rho\Vert=2\Vert\rho\Vert-4\varepsilon(G)\cdot 2\Vert\rho\Vert\geq 2\Vert\rho\Vert-c^*(G)$$

By Theorem~\ref{T: thm: Leuzinger criterion with epsilon bound}, $\La$ is a lattice. 
\end{proof}

\begin{rmk}
The question of existence of interesting groups that coarsely cover a lattice is a key question that arises naturally from this paper. The first question that comes to mind is whether there exist groups that are not commensurable to a lattice but that sublinearly, or even $\varepsilon$-linearly, cover one. Perhaps the growth rate point of view could be used to rule out groups that cover a lattice $\varepsilon$-linearly but not sublinearly. 
\end{rmk}

\section{Uniform Lattices}\label{sec: u}
In this section I prove: 

\begin{thm} \label{U: thm: main}
Let $G$ be a finite-centre semisimple Lie group without compact factors. Let $\G\leq G$ be a lattice, $\La\leq G$ a discrete subgroup such that $\G\subset\mn_u(\La)$ for some sublinear function $u$. If $\G$ is uniform, then $\La$ is a uniform lattice. 
\end{thm}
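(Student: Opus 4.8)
I would work in the symmetric space $X=G/K$ with basepoint $x_0=eK$, and identify $\G$ and $\La$ with the orbits $\G\cdot x_0$ and $\La\cdot x_0$. Since $\G$ is uniform it acts cocompactly on $X$, so there is a constant $R>0$ with $X\subset\mn_R(\G\cdot x_0)$ in the ordinary (bounded) sense. The first observation is that, because $\La$ is assumed discrete, proving that $\La$ is a uniform lattice is the same as proving that the orbit $\La\cdot x_0$ is cobounded in $X$, i.e.\ $\sup_{x\in X}d(x,\La\cdot x_0)<\infty$: a discrete subgroup of $G$ acts properly discontinuously on the proper geodesic space $X$, a properly discontinuous cobounded isometric action on such a space has compact quotient, and compactness of $K$ then forces $\La\backslash G=\La\backslash G/K$-bundle$(K)$ to be compact. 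So the plan is to establish coboundedness of $\La\cdot x_0$ by contradiction, in the spirit of the ``ABC of sublinear constraints'': a large $\La$-free region, described purely in terms of the $\La$-orbit, can be moved by an element of $\La$ to sit tangent to $\La$ at $x_0$, and then the sublinear bound $\G\subset\mn_u(\La)$, evaluated at a $\G$-orbit point near the centre of that region, is violated.

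\textbf{The argument.} Assume $\La\cdot x_0$ is not cobounded. Since $\G\cdot x_0$ is $R$-dense, also $\sup_{\g\in\G}d(\g x_0,\La\cdot x_0)=\infty$, so I may pick $\g_n\in\G$ with $d_n:=d(\g_n x_0,\La\cdot x_0)\to\infty$ and $\la_n\in\La$ realizing this distance. The open ball $B(\g_n x_0,d_n)$ is $\La$-free; applying the isometry $\la_n^{-1}\in G$ (which preserves $\La\cdot x_0$ since $\la_n\in\La$) produces a $\La$-free open ball $B(z_n,d_n)$ with $z_n:=\la_n^{-1}\g_n x_0$ and $|z_n|=d(\g_n x_0,\la_n x_0)=d_n$, so $x_0\in\La\cdot x_0$ lies on its bounding sphere. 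Taking $m_n$ to be the midpoint of the geodesic $[x_0,z_n]$ gives $B(m_n,d_n/2)\subset B(z_n,d_n)$, hence $B(m_n,d_n/2)$ is $\La$-free, while $|m_n|=d_n/2$. Now use cocompactness of $\G$ to pick $\g_n'\in\G$ with $d(\g_n'x_0,m_n)\le R$. Writing $t_n:=|\g_n'x_0|$, the triangle inequality gives $d(\g_n'x_0,\La\cdot x_0)\ge d_n/2-R$ and $t_n\le d_n/2+R$, so $d(\g_n'x_0,\La\cdot x_0)\ge t_n-2R$. But $\G\subset\mn_u(\La)$ forces $d(\g_n'x_0,\La\cdot x_0)\le u(t_n)$, whence $u(t_n)/t_n\ge 1-2R/t_n$; since $t_n\ge d_n/2-R\to\infty$ this contradicts $\lim_{r\to\infty}u(r)/r=0$. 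Therefore $\La\cdot x_0$ is cobounded and $\La$ is a uniform lattice.

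\textbf{Main obstacle.} I do not expect a real obstacle here — the theorem should fall out of the elementary geometric manoeuvre above, which is exactly why the introduction says the uniform case is essentially immediate from the ``ABC'' philosophy. The one point deserving care is the bookkeeping around the translation by $\la_n^{-1}$: one must check at once that this isometry keeps the ball $\La$-free (where $\la_n\in\La$ is used) \emph{and} that it relocates the configuration so that its radius $d_n$ and its distance $d_n/2$ to the basepoint become comparable; this comparability is precisely what lets the sublinearity $u(r)/r\to 0$ bite. Everything else — that $\G$-cocompactness survives, that a midpoint ball of half the radius is contained in the original ball, and that a discrete cobounded orbit in $X$ yields a uniform lattice in $G$ — is routine.
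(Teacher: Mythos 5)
Your proposal is correct and follows essentially the same route as the paper: translate a large $\La$-free ball by an element of $\La$ so that it is tangent to the orbit at $x_0$, locate a $\G$-orbit point deep inside it (via cocompactness of $\G$) whose norm is comparable to its distance from $\La\cdot x_0$, and contradict the sublinearity of $u$. The paper packages this as Lemma~\ref{U: lem: large Lambda free balls near base point imply large Gamma free balls} (rescaled concentric $\G$-free balls, stated in the more general $\varepsilon$-linear setting of Theorem~\ref{U: thm: main epsilon}), but your midpoint version is the same manoeuvre specialised to the sublinear case.
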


As in the case of lattices with property (T), uniform lattices admit the stronger version of $\varepsilon$-linear rigidity, for any $\varepsilon<1$:

\begin{thm} \label{U: thm: main epsilon}
In the setting of Theorem~\ref{U: thm: main}, the conclusion holds also under the relaxed assumption that $u(r)\preceq_\infty  \varepsilon r$ for any $0<\varepsilon<1$.
\end{thm}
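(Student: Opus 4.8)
The plan is to reduce to the bounded (cocompact) case: I will show that the hypothesis $\G\subset\mn_u(\La)$ with $u(r)\preceq_\infty\varepsilon r$ and $\varepsilon<1$ already forces $\G\subset\mn_D(\La\cdot x_0)$ for some constant $D$, where as usual $X=G/K$ and $\G,\La$ are identified with their orbits of a fixed basepoint $x_0$. Granting this, cocompactness of $\G$ (equivalently $X\subset\mn_{D_0}(\G\cdot x_0)$ for some $D_0$, since $\G\backslash X$ is compact) gives $X\subset\mn_{D+D_0}(\La\cdot x_0)$, so $\La$ is a discrete cocompact subgroup of $G$, hence a uniform lattice. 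This simultaneously yields Theorem~\ref{U: thm: main}, since a sublinear function satisfies $u(r)\preceq_\infty\varepsilon r$ for every $\varepsilon>0$.

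So suppose towards a contradiction that $d_\g:=d(\g x_0,\La\cdot x_0)$ is unbounded over $\g\in\G$, and choose $\g_n\in\G$ with $d_{\g_n}\to\infty$. Let $\la_n x_0$ be a closest point of $\La\cdot x_0$ to $\g_n x_0$ and put $c_n:=\la_n^{-1}\g_n x_0$. Applying the isometry $\la_n^{-1}$, which preserves $\La\cdot x_0$, the ball $B(c_n,d_{\g_n})$ is \emph{$\La$-free} (its interior misses $\La\cdot x_0$, since otherwise $B(\g_n x_0,d_{\g_n})$ would not be), is tangent to $\La\cdot x_0$ at $x_0$, and has $d(x_0,c_n)=d_{\g_n}\to\infty$. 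By compactness of the unit sphere $S_{x_0}X$, after passing to a subsequence the geodesics $[x_0,c_n]$ converge to a ray $\eta=[x_0,\xi)$; then, and this is where non-positive curvature does the work, the normalized distance functions $d(\,\cdot\,,c_n)-d(x_0,c_n)$ converge uniformly on compact sets to the Busemann function $b_\xi$ normalized by $b_\xi(x_0)=0$. Hence any $y$ with $b_\xi(y)<0$ lies in the interior of $B(c_n,d_{\g_n})$ for all large $n$, so the horoball $\hb(x_0,\xi)=\{b_\xi<0\}$ is itself $\La$-free; in particular every point of $\La\cdot x_0$ has $b_\xi\geq 0$.

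It remains to contradict the sublinear constraint by exhibiting $\G$-orbit points that are forced deep into this $\La$-free horoball. Using $X\subset\mn_{D_0}(\G\cdot x_0)$, for each $t>0$ pick $\gamma_t\in\G$ with $d(\gamma_t x_0,\eta(t))\leq D_0$. Since $b_\xi$ is $1$-Lipschitz and $b_\xi(\eta(t))=-t$, we get $b_\xi(\gamma_t x_0)\leq-t+D_0$, so for $t>D_0$ the point $\gamma_t x_0$ lies in $\hb(x_0,\xi)$; comparing Busemann values with points of $\La\cdot x_0$ (all of which have $b_\xi\geq0$) gives $d_{\gamma_t}=d(\gamma_t x_0,\La\cdot x_0)\geq t-D_0$. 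But $|\gamma_t x_0|\leq t+D_0$, so monotonicity of $u$ and the hypothesis $\G\subset\mn_u(\La)$ give $t-D_0\leq u(t+D_0)$. Fixing $\varepsilon'\in(\varepsilon,1)$, we have $u(r)\leq\varepsilon' r$ for all large $r$, whence $t-D_0\leq\varepsilon'(t+D_0)$, i.e.\ $t\leq D_0\frac{1+\varepsilon'}{1-\varepsilon'}$ for all large $t$ — impossible. This contradiction proves $\G\subset\mn_D(\La)$ for some $D$, and the theorem follows as explained.

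The only genuinely geometric step is the construction of the $\La$-free horoball out of an unbounded family $\{d_{\g_n}\}$: it relies both on the group invariance of $\La$-freeness, used to relocate the large balls so that they become tangent at $x_0$, and on the convergence of normalized distance functions to a Busemann function in non-positive curvature, used to pass to the limit. Everything afterwards is soft, and the constant $\frac{1+\varepsilon'}{1-\varepsilon'}$ is exactly what pins the threshold at $\varepsilon<1$; with a merely sublinear $u$ the final inequality $t-D_0\leq u(t+D_0)$ fails outright for large $t$, which is why no restriction on $u$ beyond sublinearity is needed for Theorem~\ref{U: thm: main}.
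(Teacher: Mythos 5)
Your proof is correct, and it follows the paper's overall reduction -- show that $\varepsilon$-linear covering with $\varepsilon<1$ already forces $\G\subset\mn_D(\La)$ for some constant $D$, then conclude by cocompactness of $\G$ -- but the mechanism for ruling out unbounded $\{d_\g\}_{\g\in\G}$ is different. The paper stays entirely at the level of balls: Lemma~\ref{U: lem: large Lambda free balls near base point imply large Gamma free balls} shows that a single sufficiently large $\La$-free ball tangent to a point near $x_0$ contains a concentric $\G$-free ball of proportional radius, which is impossible when $\G$ is uniform; no limit object is needed, and the quantitative form of that lemma is reused later in the $\mathbb{Q}$-rank $1$ analysis (Proposition~\ref{qr: prop: gamma lies deep inside the la ga horoball}). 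You instead pass to the limit and produce a $\La$-free horoball $\hb(x_0,\xi)$ -- exactly the construction the paper performs in Lemma~\ref{qr: lem: existence of Lambda free horoballs} for the non-uniform case -- and then contradict $u(r)\preceq_\infty\varepsilon r$ by pushing $\G$-orbit points linearly deep into that horoball via cocompactness. Both arguments are sound; yours trades the elementary concentric-ball computation for the (standard, but worth justifying or citing) fact that in a Hadamard manifold the normalized distance functions $d(\cdot,c_n)-d(x_0,c_n)$ converge uniformly on compacta to the Busemann function when $c_n\to\xi$ in the cone topology, and it cleanly isolates why $\varepsilon<1$ is the right threshold: the Busemann function of the $\La$-free horoball decays at unit speed along $\eta$ while $|\eta(t)|$ grows at unit speed, so the two linear rates collide precisely when $\varepsilon\geq 1$.
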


Clearly Theorem~\ref{U: thm: main epsilon} implies Theorem~\ref{U: thm: main}. Throughout this section, the standing assumptions are those of Theorem~\ref{U: thm: main epsilon}.

\paragraph{Lattice Criterion.}

A discrete subgroup is a uniform lattice if and only if it admits a relatively compact fundamental domain. The criterion I use is the immediate consequence that if $\G$ is uniform and $u$ is bounded (i.e.\ $\G\subset \mn_D(\La)$ for some $D>0$), then $\La$ is a uniform lattice.

\paragraph{Line of Proof and Use of $\varepsilon$-Linearity.}

The goal is to show that the $\varepsilon$-linearity of $u$ forces $\G\subset \mn_D(\La)$ for some $D>0$, i.e.\ that $\G$ actually lies inside a bounded neighbourhood of $\La$. The proof is by way of contradiction. If there is no such $D>0$ then there are arbitrarily large balls that do not intersect $\La$. The proof goes by finding such large $\La$-free balls that are all tangent to some fixed arbitrary point $x\in X$ (see Figure~\ref{fig: basic setting}). The $\varepsilon$-linearity then gives rise to concentric $\G$-free balls that are arbitrarily large, contradicting the fact that $\G$ is a uniform lattice.

\begin{rmk}
The main difference from the non-uniform case is that for a non-uniform lattice $\G$, the space $X$ does admit arbitrarily large $\G$-free balls. This situation requires different lattice criteria and much extra work. Still the proof for the uniform case, though essentially no more than a few lines, lies the foundations for and presents the logic of the much more involved case of \qr lattices. 
\end{rmk}

\subsection{Notations and Terminology}

\label{sec: U: Notations}



The following definitions will be used repeatedly in both this section and in Section~\ref{sec: qr}. It mainly fixes terminology and notation of the geometric situation illustrated in Figure~\ref{fig: basic setting}. 

\begin{defn}
Let $H\leq G=\mathrm{Isom}(X)^\circ$. A set $U\subset X$ is called \emph{$H$-free} if $H\cdot x_0\cap \mathrm{Int}(U)=\emptyset$, where $\mathrm{Int}(U)$ is the topological interior of $U$. That is, $U$ is called $H$-free if its interior does not intersect the $H$-orbit $H\cdot x_0$. 
\end{defn}

\begin{defn}
Denote $P_{\La}(\g x_0)=P_\La(\g)=\la_\g x_0$.

\begin{enumerate}
    \item $d_\g:=d(\g x_0,\la_\g x_0)$.
    
    \item $B_\g:=B(\gamma x_0,d_\g)$. It is a $\La$-free ball centred at $\g x_0$ and tangent to $\la_\g x_0$.
    
    \item $x_\g':=\la_\g^{-1}\g x_0$. Notice $|x_\g'|=d_\g$.
    
    \item $B_\g':=\la_\g^{-1}B_\g=B(x_\g',d_\g)$. It is $\La$-free as a $\La$-translate of the $\La$-free ball $B_\g$, and is tangent to $x_0$.
   
    \item For $s\in \mrp$ and a ball $B=B(x,r)$, denote $sB:=B(x,sr)$, the rescaled ball with same centre and radius $sr$.
    
    \item For a sequence $\g_n$, denote by $\la_n,d_n,B_n,B_n',x_n'$ the respective $\la_{\g_n}, d_{\g_n}$, etc.
\end{enumerate}
\end{defn}

\begin{figure}
    \centering
    \begin{tikzpicture}[x=0.75pt,y=0.75pt,yscale=-1,xscale=1]
\path (0,300); 

\draw  [fill={rgb, 255:red, 0; green, 0; blue, 0 }  ,fill opacity=1 ] (273,128) .. controls (273,126.9) and (273.9,126) .. (275,126) .. controls (276.1,126) and (277,126.9) .. (277,128) .. controls (277,129.1) and (276.1,130) .. (275,130) .. controls (273.9,130) and (273,129.1) .. (273,128) -- cycle ;
\draw   (241,70.5) .. controls (241,35.43) and (269.43,7) .. (304.5,7) .. controls (339.57,7) and (368,35.43) .. (368,70.5) .. controls (368,105.57) and (339.57,134) .. (304.5,134) .. controls (269.43,134) and (241,105.57) .. (241,70.5) -- cycle ;
\draw  [color={rgb, 255:red, 208; green, 2; blue, 27 }  ,draw opacity=1 ] (257.67,70.5) .. controls (257.67,44.63) and (278.63,23.67) .. (304.5,23.67) .. controls (330.37,23.67) and (351.33,44.63) .. (351.33,70.5) .. controls (351.33,96.37) and (330.37,117.33) .. (304.5,117.33) .. controls (278.63,117.33) and (257.67,96.37) .. (257.67,70.5) -- cycle ;
\draw    (306.5,70.5) -- (275,126) ;
\draw  [fill={rgb, 255:red, 0; green, 0; blue, 0 }  ,fill opacity=1 ] (302.5,70.5) .. controls (302.5,69.4) and (303.4,68.5) .. (304.5,68.5) .. controls (305.6,68.5) and (306.5,69.4) .. (306.5,70.5) .. controls (306.5,71.6) and (305.6,72.5) .. (304.5,72.5) .. controls (303.4,72.5) and (302.5,71.6) .. (302.5,70.5) -- cycle ;
\draw    (285,28) -- (304.5,68.5) ;
\draw [color={rgb, 255:red, 208; green, 2; blue, 27 }  ,draw opacity=1 ]   (394,64) -- (332.99,69.81) ;
\draw [shift={(331,70)}, rotate = 354.56] [color={rgb, 255:red, 208; green, 2; blue, 27 }  ,draw opacity=1 ][line width=0.75]    (10.93,-3.29) .. controls (6.95,-1.4) and (3.31,-0.3) .. (0,0) .. controls (3.31,0.3) and (6.95,1.4) .. (10.93,3.29)   ;
\draw    (409,20) -- (351.92,36.77) ;
\draw [shift={(350,37.33)}, rotate = 343.63] [color={rgb, 255:red, 0; green, 0; blue, 0 }  ][line width=0.75]    (10.93,-3.29) .. controls (6.95,-1.4) and (3.31,-0.3) .. (0,0) .. controls (3.31,0.3) and (6.95,1.4) .. (10.93,3.29)   ;
\draw   (41,171.5) .. controls (41,136.43) and (69.43,108) .. (104.5,108) .. controls (139.57,108) and (168,136.43) .. (168,171.5) .. controls (168,206.57) and (139.57,235) .. (104.5,235) .. controls (69.43,235) and (41,206.57) .. (41,171.5) -- cycle ;
\draw    (104,171.5) -- (72.5,227) ;
\draw  [fill={rgb, 255:red, 0; green, 0; blue, 0 }  ,fill opacity=1 ] (102.5,171.5) .. controls (102.5,170.4) and (103.4,169.5) .. (104.5,169.5) .. controls (105.6,169.5) and (106.5,170.4) .. (106.5,171.5) .. controls (106.5,172.6) and (105.6,173.5) .. (104.5,173.5) .. controls (103.4,173.5) and (102.5,172.6) .. (102.5,171.5) -- cycle ;
\draw    (54,103) -- (83.59,132.59) ;
\draw [shift={(85,134)}, rotate = 225] [color={rgb, 255:red, 0; green, 0; blue, 0 }  ][line width=0.75]    (10.93,-3.29) .. controls (6.95,-1.4) and (3.31,-0.3) .. (0,0) .. controls (3.31,0.3) and (6.95,1.4) .. (10.93,3.29)   ;
\draw  [fill={rgb, 255:red, 0; green, 0; blue, 0 }  ,fill opacity=1 ] (75,227) .. controls (75,225.62) and (73.88,224.5) .. (72.5,224.5) .. controls (71.12,224.5) and (70,225.62) .. (70,227) .. controls (70,228.38) and (71.12,229.5) .. (72.5,229.5) .. controls (73.88,229.5) and (75,228.38) .. (75,227) -- cycle ;
\draw    (167,212) .. controls (235.95,217.91) and (259.3,184.04) .. (268.59,144.8) ;
\draw [shift={(269,143)}, rotate = 102.68] [color={rgb, 255:red, 0; green, 0; blue, 0 }  ][line width=0.75]    (10.93,-3.29) .. controls (6.95,-1.4) and (3.31,-0.3) .. (0,0) .. controls (3.31,0.3) and (6.95,1.4) .. (10.93,3.29)   ;

\draw (277,112) node [anchor=north west][inner sep=0.75pt]    {$x_{0}$};
\draw (235.36,107.38) node [anchor=north west][inner sep=0.75pt]  [font=\footnotesize,rotate=-1.84]{};
\draw (278,60) node [anchor=north west][inner sep=0.75pt]    {$x_{\gamma } '$};
\draw (285,85) node [anchor=north west][inner sep=0.75pt]    {$=d_{\gamma }$};
\draw (304.5,23.67) node [anchor=north west][inner sep=0.75pt]  [font=\footnotesize,rotate=-60.58]  {$=s\cdot d_{\gamma }$};
\draw (401.9,55) node [anchor=north west][inner sep=0.75pt]  [color={rgb, 255:red, 208; green, 2; blue, 27 }  ,opacity=1 ]  {$\Gamma -free$};
\draw (397,-4) node [anchor=north west][inner sep=0.75pt]    {$\Lambda -free$};
\draw (74,153) node [anchor=north west][inner sep=0.75pt]    {$\gamma x_{0}$};
\draw (93,187) node [anchor=north west][inner sep=0.75pt]    {$=d_{\gamma }$};
\draw (29,78) node [anchor=north west][inner sep=0.75pt]    {$\Lambda -free$};
\draw (45,229) node [anchor=north west][inner sep=0.75pt]    {$\lambda _{\gamma } x_{0}$};
\draw (228,202) node [anchor=north west][inner sep=0.75pt]    {$L_{\lambda _{\gamma }^{-1}}$};

\end{tikzpicture}

    \caption[Basic setting of $\La$-free balls]{Basic Setting and Lemma~\ref{U: lem: large Lambda free balls near base point imply large Gamma free balls}. A $\Lambda$-free ball about $\g x_0$ of radius $d_\g$, translated by $\la_\g^{-1}$ to a ball tangent to $x_0$. The linear ratio between $|x_\g'|=d_\g$ and the $\La$-free radius forces the red ball to be $\G$-free.}
    \label{fig: basic setting}
\end{figure}
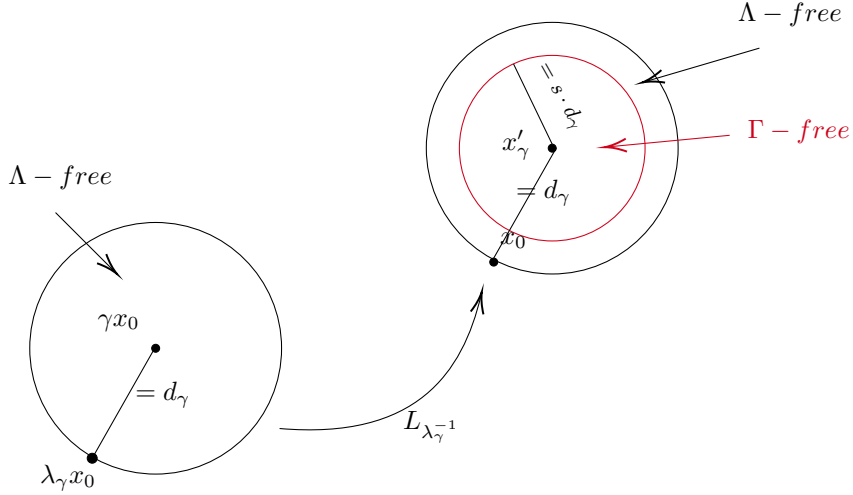

\subsection{Proof of Theorem~\ref{U: thm: main epsilon}}

\begin{lem} \label{U: lem: large Lambda free balls near base point imply large Gamma free balls}

Let $x\in X$. There exists $S=S(x,u)\in (0,1)$ such that for every $s\in (0,S)$ there is $R=R(s,S)$ such that if $r>R$ and $B=B(y,r)$ is a $\La$-free ball tangent to $x$, then $sB$ is $\G$-free.

In particular, the existence of arbitrarily large $\La$-free balls that are all tangent to a fixed point $x\in X$ implies the existence of arbitrarily large $\G$-free balls. 
\end{lem}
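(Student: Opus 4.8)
The plan is a short triangle--inequality argument whose only real input is the asymptotic linearity $u(r)\preceq_\infty\varepsilon r$ with $\varepsilon<1$: a $\La$-free ball tangent to $x$ pins down the norm $|\g|$ of any $\G$-orbit point lying inside a slightly shrunk copy of it, after which the containment $\G\subset\mn_u(\La)$ becomes untenable once the radius is large. So I expect the whole proof to be elementary, with the only care needed in the bookkeeping of error terms and in checking that the threshold radius depends only on the permitted data.

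First I would fix a constant $\varepsilon'\in(\varepsilon,1)$ and set $S:=\frac{1-\varepsilon'}{2(1+\varepsilon')}\in(0,1)$; this depends only on $u$ (via $\varepsilon$), the extra factor $2$ being slack for lower-order error terms, and $\varepsilon<1$ is exactly what makes $S>0$. For $s\in(0,S)$ one has $s(1+\varepsilon')<1-\varepsilon'$, so $g_0:=(1-s)-\varepsilon'(1+s)>0$. Now let $B=B(y,r)$ be a $\La$-free ball tangent to $x$, i.e.\ $d(y,x)=r$, and assume toward a contradiction that $\g x_0\in\mathrm{Int}(sB)$ for some $\g\in\G$, that is $d(\g x_0,y)<sr$. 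Then, bounding through $y$ and $x$,
$$|\g|=d(\g x_0,x_0)\le d(\g x_0,y)+d(y,x)+d(x,x_0)<(1+s)r+|x|.$$
Choosing $\la\in\La$ with $d(\g x_0,\la x_0)\le u(|\g|)$ (possible since $\G\subset\mn_u(\La)$) and using that $B$ is $\La$-free, so $d(\la x_0,y)\ge r$, one gets
$$r\le d(\la x_0,y)\le d(\la x_0,\g x_0)+d(\g x_0,y)<u\big((1+s)r+|x|\big)+sr,$$
hence $(1-s)r<u\big((1+s)r+|x|\big)$, using that $u$ is non-decreasing.

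To close the argument I would set $\delta:=g_0/(2\varepsilon')$ and let $R=R(s,S)$ be large enough that $r>|x|/\delta$ (so $(1+s)r+|x|<(1+s+\delta)r$) and $r\ge R_0$, where $R_0=R_0(u,\varepsilon')$ is a threshold past which $u(t)\le\varepsilon' t$ (which exists since $u(t)\preceq_\infty\varepsilon t\le\varepsilon' t$). Then for $r>R$,
$$(1-s)r<u\big((1+s)r+|x|\big)\le\varepsilon'(1+s+\delta)r=\big((1-s)-\tfrac{g_0}{2}\big)r,$$
which is absurd. Hence no such $\g$ exists and $sB$ is $\G$-free, which is the first assertion (with $R$ depending only on $s$, $S$, and the fixed data $x,u$). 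The ``in particular'' statement then follows immediately: take $s:=S/2$; if there are $\La$-free balls tangent to $x$ of radius $r\to\infty$, the corresponding balls $sB$ are $\G$-free of radius $sr\to\infty$. The main (and only) obstacle is the routine bookkeeping: tracking the shift $|x|$ and the $o(r)$ slack hidden in $u\preceq_\infty\varepsilon r$, and confirming that $R$ depends only on $s,S$ and not on the particular ball $B$ or the particular point $\g x_0$.
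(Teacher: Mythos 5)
Your proof is correct and follows essentially the same route as the paper's: assume a $\G$-orbit point lies in $sB$, extract the two inequalities $(1-s)r\le u(|\g|)$ (from $\La$-freeness of $B$) and $|\g|\le(1+s)r+|x|$ (triangle inequality through $y$ and $x$), and derive a contradiction with $\varepsilon<1$ once $s$ is small and $r$ is large. The only cosmetic differences are that you make the threshold $S$ explicit via an auxiliary $\varepsilon'\in(\varepsilon,1)$ and invoke monotonicity of $u$ to evaluate it at the upper bound for $|\g|$, where the paper instead notes $(1-s)r\le|\g|$ to apply the asymptotic bound at $|\g|$ itself; both are sound.
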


There is a slightly stronger version of  Lemma~\ref{U: lem: large Lambda free balls near base point imply large Gamma free balls} if $u$ is sublinear:

\begin{lem}\label{U: lem: sublinear version of large Lambda free balls near base point imply large Gamma free balls}
Let $G,\G,\La$ and $u$ be as in Theorem~\ref{U: thm: main} (in particular, $u$ is a sublinear function and $\G\subset\mn_u(\G)$). For every $x\in X$ and every $s\in (0,1)$ there exists $R=R(x,s)>0$ such that for every $r>R$, if $B=B(y,r)$ is a $\La$-free ball tangent to $x$ then $sB$ is $\G$-free.
\end{lem}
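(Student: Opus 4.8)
The plan is a short proof by contradiction, following the philosophy of Lemma~\ref{U: lem: large Lambda free balls near base point imply large Gamma free balls} but exploiting that here the error function $u$ is genuinely sublinear, so there is room to spare. (As stated in Theorem~\ref{U: thm: main}, the hypothesis is $\G\subset\mn_u(\La)$ with $u$ sublinear.) Fix $x\in X$ and $s\in(0,1)$, let $B=B(y,r)$ be a $\La$-free ball tangent to $x$ — so that $d(y,x)=r$ — and suppose towards a contradiction that some $\g x_0$ lies in $\mathrm{Int}(sB)=\{z:d(z,y)<sr\}$.

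First I would use the sublinear covering hypothesis to produce a nearby $\La$-orbit point: pick $\la x_0\in\La\cdot x_0$ with $d(\g x_0,\la x_0)\le u(|\g x_0|)$. Since $B$ is $\La$-free, $\la x_0\notin\mathrm{Int}(B)$, i.e.\ $d(\la x_0,y)\ge r$. Two triangle inequalities then give
$$r\le d(\la x_0,y)\le d(\la x_0,\g x_0)+d(\g x_0,y)< u(|\g x_0|)+sr,$$
hence $(1-s)r<u(|\g x_0|)$. On the other hand, using $d(y,x)=r$ and $d(\g x_0,y)<sr$,
$$|\g x_0|\le d(\g x_0,y)+d(y,x)+|x|<(1+s)r+|x|,$$
so by monotonicity of $u$ we obtain $(1-s)r<u\big((1+s)r+|x|\big)$.

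To finish, I would choose $R$ by sublinearity: setting $w:=(1+s)r+|x|$ one has
$$\frac{u\big((1+s)r+|x|\big)}{(1-s)r}=\frac{1+s}{1-s}\cdot\frac{u(w)}{w-|x|}\xrightarrow[r\to\infty]{}0,$$
since $w\to\infty$ and $u(w)/w\to0$. Thus there is $R=R(x,s)>0$ with this ratio $<1$ for all $r>R$, contradicting $(1-s)r<u(w)$. Hence for $r>R$ no such $\g$ exists and $sB$ is $\G$-free. Note that neither subadditivity nor the doubling property of $u$ is needed; monotonicity and strict sublinearity suffice (alternatively, for $r\ge|x|$ one has $(1+s)r+|x|\le 3r$ and could invoke doubling, but the substitution above avoids it).

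I do not expect a genuine obstacle in this sublinear version: the gap $(1-s)r$ is linear in $r$ while the error $u(\cdot)$ is sublinear, so the estimate closes with room to spare, and the constant $R$ depends only on $x$ and $s$ precisely because the hypothesis fixes all the $\La$-free balls to be tangent to the single point $x$. The one place deserving care in the write-up is making the two triangle-inequality estimates and the final limit fully explicit; the genuinely delicate statement is the $\varepsilon$-linear Lemma~\ref{U: lem: large Lambda free balls near base point imply large Gamma free balls}, where $u$ is only asymptotically bounded by $\varepsilon r$ with $\varepsilon<1$ and one must track constants, shrink $s$ below a threshold $S=S(x,u)$, and compare $u$ at comparable scales via the doubling property — but that is not what is being proved here.
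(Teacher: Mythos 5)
Your proof is correct and follows essentially the same route as the paper's proof of Lemma~\ref{U: lem: large Lambda free balls near base point imply large Gamma free balls}, of which the paper explicitly declares this lemma to be a ``slightly simpler version'' (omitting its proof for that reason): a $\G$-orbit point in $\mathrm{Int}(sB)$ is the centre of a $\La$-free ball of radius $(1-s)r$ while $|\g|\leq (1+s)r+|x|$, forcing $(1-s)r\leq u\big((1+s)r+|x|\big)$, which fails for all large $r$ by sublinearity. Your observation that only monotonicity and strict sublinearity of $u$ are needed here, and that the genuinely delicate bookkeeping belongs to the $\varepsilon$-linear version, is also accurate.
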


I omit the proof of Lemma~\ref{U: lem: sublinear version of large Lambda free balls near base point imply large Gamma free balls}, which is a slightly simpler version of the proof of Lemma~\ref{U: lem: large Lambda free balls near base point imply large Gamma free balls}.

\begin{proof}[Proof of Lemma~\ref{U: lem: large Lambda free balls near base point imply large Gamma free balls}.]

The proof is more easily read if one assumes $x=x_0$ and $u(r)=\varepsilon r$ so I begin with this case. Assume $B=B(y,R)$ is $\La$-free for some $y\in X, r>0$. The assumption $x=x_0$ gives $|y|=d(y,x_0)=r$. For a fixed $s\in (0,1)$, assume $sB\cap \G\cdot x_0\ne\emptyset$. This gives rise to an element $\g\in \G$ such that:
\begin{enumerate}
    \item $|\g|=d(\g x_0, x_0)\leq d(y,x_0)+d(\g x_0,y)=(1+s)r$. In particular, $d(\g x_0,\La\cdot x_0)\leq \varepsilon(1+s)r$.
    \item $B\big(\g x_0,(1-s)r\big)\subset B(y,r)$, so it is $\La$-free.
\end{enumerate}

I conclude that for such $s$ one has the inequality $(1-s)r\leq \varepsilon(1+s)r$, i.e.\ $\frac{1-s}{1+s}\leq \varepsilon$. The constant $\varepsilon$ is fixed and smaller than $1$, while $\frac{1-s}{1+s}$ limit to $1$ monotonically from below as $s>0$ tend to $0$. I conclude that there is a segment $(0,S)\subset (0,1)$ such that for all $s\in (0,S)$, $sB$ is $\G$-free (independently of $r$).

Assume now that $x\ne x_0$ and $u(r)\preceq_\infty \varepsilon r$. As above, if $\g x_0\in B(y,sr)$ then it is the centre of a $\La$-free ball of radius $(1-s)r$, and so $(1-s)r\leq u(|\g|)$. I wish to use the $\varepsilon$-linear bound on $u$ as I did before, only this time $u$ is only asymptotically smaller than $\varepsilon r$. To circumvent this I just need to show that $|\g|$ is large enough. Indeed since $B\big(\g x_0,(1-s)r\big)$ is $\La$-free it does not contain $x_0\in\La\cdot x_0$ and in particular $(1-s)r\leq d(x_0, \g x_0)=|\g|$. For some $R_1(s)=R_1(s,u)$ one therefore has for all $r>R_1(s)$

$$(1-s)r\leq u(|\g|)\leq \varepsilon|\g|$$

On the other hand $|y|\leq d(x,y)+d(x,x_0)=r+|x|$. Consequently $|\g|\leq (1+s)r+|x|$ and, for $r>R_1(s)$, 

$$(1-s)r\leq u(|\g|)\leq \varepsilon|\g|\leq \varepsilon \big((1+s)r+|x|\big)$$

This means that $s$ for which $\G\cdot x_0\cap B(y,sr)\ne\emptyset$ must admit, for all $r>R_1(s)$,

\begin{equation} \label{U: eq: constraint on s if gamma in By,sr}
  \frac{1-s}{1+s+\frac{|x|}{r}}=\frac{(1-s)r}{(1+s)r+|x|}\leq \varepsilon<1  
\end{equation}

The rest of the proof is just Calculus 1, and concerns with finding $S=S(x,\varepsilon,u)\in (0,1)$ so that for any $s\in (0,S)$ there is $R(s)$ such that all $r>R(s)$ satisfy

\begin{equation} \label{U: eq: constraint on s if it is small and r large}
  \varepsilon<\frac{1-S}{1+S+\frac{|x|}{r}}\leq \frac{1-s}{1+s+\frac{|x|}{r}} 
\end{equation}
$$$$

The lemma readily follows from inequalities~\ref{U: eq: constraint on s if gamma in By,sr},\ref{U: eq: constraint on s if it is small and r large}.

Explicitly, fix $\varepsilon'>\varepsilon$. As before, monotonic approach of $\frac{1-s}{1+s}$ to $1$ allows to fix $S\in (0,1)$ for which $\varepsilon<\varepsilon'<\frac{1-s}{1+s}$ for all $s\in (0,S)$. Next note that for any fixed $s\in (0,S)$, $\lim_{r\rightarrow \infty}\frac{1-s}{1+s+\frac{|x|}{r}}=\frac{1-s}{1+s}$, and that the approach in monotonically increasing with $r$. Since $\varepsilon<\varepsilon'$, this limit implies that for some $R_2>R_1(S)$, all $r>R_2$ admit $\varepsilon<\frac{1-S}{1+S+\frac{|x|}{r}}$. Finally notice that for any fixed $r$ the function $\frac{1-s}{1+s+\frac{|x|}{r}}$ is again monotonically increasing as $s$ tends to $0$ from above. Therefore inequality~\ref{U: eq: constraint on s if it is small and r large} holds for every $s\in  (0,S)$ and all $r>R_2(S)$ (capital $S$ is intentional and important). 

To conclude the proof, notice that if moreover $r>R_1(s)$ (again lowercase $s$ is intentional and important) then inequalities \ref{U: eq: constraint on s if gamma in By,sr},\ref{U: eq: constraint on s if it is small and r large} both hold. This means that for any $s\in (0,S)$ there is $R(s):=\max\{R_1(s),R_2(S)\}$ such that $r>R(s)\Rightarrow B(y,sr)$ is $\G$-free. The constants $R_1(s),R_2(S)$ have the desired dependencies, hence so does $R(s)$, proving the lemma. 
\end{proof}

\begin{cor} [Theorem~\ref{U: thm: main epsilon}]
There is a uniform bound on $\{d_\g\}_{\g\in\G}$, i.e., $\G\subset \mn_D (\La)$ for some $D>0$. In particular, $\La$ is a uniform lattice.
\end{cor}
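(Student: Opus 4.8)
The plan is to argue by contradiction, reducing everything to Lemma~\ref{U: lem: large Lambda free balls near base point imply large Gamma free balls} and then invoking the uniform-lattice criterion. Suppose the set $\{d_\g\}_{\g\in\G}$ is unbounded, and choose a sequence $\g_n\in\G$ with $d_n:=d_{\g_n}\to\infty$. By construction $B_n=B(\g_n x_0,d_n)$ is a $\La$-free ball tangent to $\la_n x_0$. Translating by the isometry $\la_n^{-1}\in\La$ — which preserves both $\La$-freeness (since $\La\cdot x_0$ is $\La$-invariant) and the radius — produces $B_n'=B(x_n',d_n)$, a $\La$-free ball tangent to the single fixed point $x_0$. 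Thus I obtain arbitrarily large $\La$-free balls all tangent to $x_0$.

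Next I apply Lemma~\ref{U: lem: large Lambda free balls near base point imply large Gamma free balls} with $x=x_0$; this is the $\varepsilon$-linear version, which is exactly what the hypothesis $u(r)\preceq_\infty\varepsilon r$ permits. It produces $S=S(x_0,u)\in(0,1)$; fixing any $s\in(0,S)$ and the resulting threshold $R=R(s,S)$, I conclude that for all $n$ large enough that $d_n>R$ the rescaled ball $sB_n'$ is $\G$-free. Since $s d_n\to\infty$, this yields arbitrarily large $\G$-free balls in $X$. But $\G$ is a uniform lattice, so $X=X_0$ and $X\subset\mn_{D_\G}(\G\cdot x_0)$ for some $D_\G>0$; in particular no $\G$-free ball of radius exceeding $D_\G$ can exist — a contradiction. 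Hence $\{d_\g\}_{\g\in\G}$ is bounded by some $D>0$, i.e.\ $\G\subset\mn_D(\La)$.

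For the concluding assertion, combine cocompactness of $\G$ with the bound just obtained: $X=\mn_{D_\G}(\G\cdot x_0)\subset\mn_{D+D_\G}(\La\cdot x_0)$, so $\La\cdot x_0$ is cobounded in $X$ and therefore $\La\backslash X$ is compact; since $\La$ is discrete, it is a uniform lattice in $G$. The only genuinely nontrivial input here is Lemma~\ref{U: lem: large Lambda free balls near base point imply large Gamma free balls}, which is already established; the remaining steps are the routine bookkeeping that the translation $\la_n^{-1}$ preserves $\La$-freeness and tangency, and the standard fact that a discrete cobounded subgroup of $G$ is a uniform lattice. The one point worth double-checking is that the $\varepsilon$-linear (not merely sublinear) hypothesis matches the hypothesis of Lemma~\ref{U: lem: large Lambda free balls near base point imply large Gamma free balls}, which it does.
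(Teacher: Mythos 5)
Your proof is correct and follows exactly the paper's intended argument: translate the arbitrarily large $\La$-free balls $B_{\g_n}$ back to $x_0$ by $\la_{\g_n}^{-1}$, apply Lemma~\ref{U: lem: large Lambda free balls near base point imply large Gamma free balls} to produce arbitrarily large $\G$-free balls, and contradict the coboundedness of the uniform lattice $\G$; the final deduction that $\La$ is a uniform lattice from $\G\subset\mn_D(\La)$ is precisely the lattice criterion stated at the start of Section~\ref{sec: u}. No issues.
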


\section{\texorpdfstring{$\mathbb{Q}$}{-}-rank 1 Lattices}\label{sec: qr}

In this section I prove: 

\begin{thm} \label{qr: thm: main for qr}
Let $G$ be a real finite-centre semisimple Lie group without compact factors, $\G\leq G$ an irreducible non-uniform \qr lattice, $\La\leq G$ a discrete irreducible subgroup. If $\G\subset\mn_u(\La)$ for some sublinear function $u$, then $\La$ is a lattice. Moreover, if $\G\not\subset\mn_D(\La)$ for any $D>0$, then $\La$ is also of \qrd
\end{thm}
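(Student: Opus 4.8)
The plan is to split the argument according to whether $\G$ is contained in a \emph{bounded} neighbourhood of $\La$. If $\G\subset\mn_D(\La)$ for some $D>0$, I invoke the bounded case treated in Section~\ref{sec: qr: bounded case} (due, in a slightly modified form, to Eskin and Schwartz): a discrete subgroup that coarsely contains a \qr lattice is already a lattice. In that regime the extra hypothesis ``$\G\not\subset\mn_D(\La)$ for all $D$'' is vacuous, so only the first conclusion is claimed and we are done. The substance of the theorem is therefore the complementary case $\G\not\subset\mn_D(\La)$ for every $D>0$, in which the hypotheses of the key Proposition~\ref{qr: prop: Lambda cocompact horosphreres} are met: there is a horosphere $\h$, based at a point $\xi$ of the rational Tits building $\wqg$, such that $\big(\La\cap\mathrm{Stab}_G(\h)\big)\cdot x_0$ is a cocompact metric lattice in $\h$ and the open horoball $\hb$ bounded by $\h$ is $\La$-free.

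From there I separate according to the $\mathbb{R}$-rank of $G$. Suppose $G$ has higher $\mathbb{R}$-rank. The goal is to verify the hypotheses of the Benoist--Miquel--Oh arithmeticity criterion (Theorem~\ref{qr: thm: Benoist-Miquel arithmeticity}): that $\La$ is discrete, Zariski dense, and meets a horospherical subgroup in a lattice. Zariski density I would deduce from the irreducibility of $\La$ (Definition~\ref{qr: defn: indecompsable and irreducibale lattice}) together with $\mathcal{L}_\La=X(\infty)$ (Lemma~\ref{qr: lem: limit set of Lambda equals limit set of Gamma}), since a proper Zariski-closed overgroup of $\La$ would either confine $\mathcal{L}_\La$ to a proper closed subset of $X(\infty)$ or contradict indecomposability. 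The horospherical input is the ``geometry to algebra'' step of Section~\ref{sec: qr: geometry to algebra}: via the Langlands decomposition $G_\xi=N_\xi A_\xi K_\xi$ of Proposition~\ref{qr: prop: Eberline Generalized Iwasawa}, in which $N_\xi$ is horospherical and preserves every horosphere based at $\xi$, I translate the cocompactness of $\big(\La\cap\mathrm{Stab}_G(\h)\big)\cdot x_0$ in $\h$ into the statement that $\La\cap N_\xi$ is a (cocompact) lattice in $N_\xi$; here one uses that $\h$ fibres over the symmetric space of the Levi factor of $P_\xi$ with $N_\xi$-orbits as fibres, and that the semisimple part of that Levi is $\mathbb{Q}$-anisotropic for the $\mathbb{Q}$-structure carried by $\G$, so that the non-compact Levi directions are absorbed. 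Benoist--Miquel--Oh then gives that $\La$ is a lattice.

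Suppose instead $G$ is of $\rr$. Here Proposition~\ref{qr: prop: Lambda cocompact horosphreres} gives, through Corollary~\ref{qr: cor: every Gamma conical is Lambda conical}, that every $\G$-conical limit point is $\La$-conical; with Corollary~\ref{qr: prelims: sym spc: cor: lattice conical limit points are dense} this makes the $\La$-conical set dense, while $\mathcal{L}_\La=X(\infty)$ by Lemma~\ref{qr: lem: limit set of Lambda equals limit set of Gamma}. By Theorem~\ref{qr: sym spc: thm: in rank 1 all horospherical limit points are conical} the remaining limit points are non-horospherical, i.e.\ among the countably many parabolic points, and the cocompactness clause of Proposition~\ref{qr: prop: Lambda cocompact horosphreres} (applied at the cusps witnessing $\G\not\subset\mn_D(\La)$) exhibits them as bounded parabolic points of $\La$. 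The Kapovich--Liu criterion (Theorem~\ref{qr: thm: Kapovich-Liu conical limit points}) then shows $\La$ is geometrically finite, and since $\mathcal{L}_\La=X(\infty)$, Corollary~\ref{qr: prelims: sym spc: cor: geometric finite and whole limit set implies lattice} identifies $\La$ as a lattice. The ``moreover'' is then quick in both regimes: the $\La$-free horoball $\hb$ is unbounded, so $\La\cdot x_0$ is not cocompact and the lattice $\La$ is non-uniform; if $G$ has a $\rr$ factor this forces $\La$ to be of \qr by Prasad's theorem (equivalently, by the convention of Section~\ref{sec: prelims: generalities on lattices}); if not, then $\La$ is an irreducible higher-rank lattice, hence arithmetic, and after passing to a torsion-free finite-index subgroup Theorem~\ref{qr: prelims: sym spc: thm: Leuzinger compact core} applies: the cocompactness of $\La\cap\mathrm{Stab}_G(\h)$ on $\h$ realizes $\La\backslash\hb$ as a cusp of $\La\backslash X$ whose stabilizer acts cocompactly on the bounding horosphere, which is condition $(2)$ there and so forces $\mathbb{Q}$-rank one (a higher-$\mathbb{Q}$-rank lattice admits no such cusp, its maximal $\mathbb{Q}$-parabolics having Levi factors of positive $\mathbb{Q}$-rank).

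The step I expect to be hardest is the geometry-to-algebra translation in the higher-rank case: passing from the purely metric assertion that $\big(\La\cap\mathrm{Stab}_G(\h)\big)\cdot x_0$ is a cocompact metric lattice in $\h$ to the algebraic input ``$\La\cap N_\xi$ is a lattice in $N_\xi$'' demanded by Benoist--Miquel--Oh, while simultaneously securing the Zariski density of $\La$. This is where one must combine the Langlands decomposition of $P_\xi$, the $\mathbb{Q}$-structure inherited from $\G$, and the anisotropy of the relevant Levi factor, and where an \emph{a priori} abstract metric lattice in a horosphere has to be upgraded to a genuine lattice in a unipotent group; getting the bookkeeping of the Levi and compact directions right, so that no covolume is lost in the translation, is the delicate point.
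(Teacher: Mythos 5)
Your overall architecture is exactly the paper's: dispose of the case $\G\subset\mn_D(\La)$ by the Eskin/Schwartz bounded results, otherwise invoke Proposition~\ref{qr: prop: Lambda cocompact horosphreres} and Corollary~\ref{qr: cor: every Gamma conical is Lambda conical}, then conclude via Kapovich--Liu (Corollary~\ref{qr: cor: conical limit set criterion}) in $\rr$ and via Benoist--Miquel (Theorem~\ref{qr: thm: Benoist-Miquel arithmeticity}) in higher rank, with the $\mathbb{Q}$-rank statement coming from the $\La$-free horoball and Theorem~\ref{qr: prelims: sym spc: thm: Leuzinger compact core}. However, the two sub-steps you sketch for the higher-rank ``geometry to algebra'' translation contain genuine gaps. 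First, Zariski density does not follow from ``$\mathcal{L}_\La=X(\infty)$ plus irreducibility'' by the dichotomy you propose: a proper Zariski-closed overgroup need not confine the limit set, since a parabolic subgroup $\mathrm{Stab}_G(\zeta)$ acts transitively on $X$ and is neither normal nor obviously incompatible with a full limit set. Ruling out $\La\leq\mathrm{Stab}_G(\zeta)$ is precisely the hard half of the density argument; the paper does it (Corollary~\ref{qr: cor: Lambda is Zariski dense}) by producing a unipotent $\la\in\La\cap N_\zeta$ (via the horospherical lattice, Dani's Borel density theorem, the Flat Strip Theorem and the fact that the centre of a parabolic lies in the centre of $G$) and then contradicting discreteness of $\La$ using the decay of the displacement function of a unipotent along horoballs; the second condition of the geometric criterion is handled by the non-convexity of horospheres.

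Second, your route from ``$\big(\La\cap\mathrm{Stab}_G(\h)\big)\cdot x_0$ is a cocompact metric lattice in $\h$'' to ``$\La\cap N_\xi$ is a lattice in $N_\xi$'' leans on the $\mathbb{Q}$-structure carried by $\G$ and the $\mathbb{Q}$-anisotropy of the Levi of $P_\xi$. But that $\mathbb{Q}$-structure says nothing a priori about $\La$, which at this stage is only a discrete subgroup with a metrically cocompact intersection with $\mathrm{Stab}_G(\h)$; you cannot use $\G$'s arithmetic to absorb the Levi directions of $\La$. The paper's argument is purely Lie-theoretic: $\La\cap\mathrm{Stab}_G(\h)$ is a cocompact lattice in $\mathrm{Stab}_G(\h)$ (since $K$ is compact), $\mathrm{Stab}_G(\h)^\circ=(K_\xi A_\xi^\perp)^\circ N_\xi$ has no compact normal semisimple factors, $N_\xi$ is its maximal connected normal nilpotent subgroup, and Mostow's lemma (Lemma~\ref{qr: prelims: translation: lem: Mostow nilpotent radical intersection}) then forces $\La\cap N_\xi$ to be a cocompact lattice in $N_\xi$. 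You correctly identified this translation as the delicate point, but the mechanism you propose for it would not go through as stated.
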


If $\G\subset\mn_D(\La)$ for some $D>0$, then $\La$ could be a uniform lattice. An obvious obstacle for that is if $\La\subset\mn_{u'}(\G)$ for some sublinear function $u'$. This condition turns out to be sufficient for commensurability. 

\begin{prop}[Proposition~\ref{qr: prop: bounded and sublinear implies bunded and bounded}]\label{qr: prop: bounded and sublinear implies bounded}
Let $G$ be a real finite-centre semisimple Lie group without compact factors, $\G\leq G$ an irreducible \qr lattice, $\La\leq G$ a discrete subgroup such that $\G\subset\mn_D(\La)$ for some $D>0$, and $\La\subset\mn_u(\G)$ for some sublinear function $u$. Then $\La\subset \mn_{D'}(\G)$ for some $D'$.
\end{prop}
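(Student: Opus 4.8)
The plan is to argue by contradiction and reduce to the bounded case, following the principle highlighted in the introduction: a $\La$‑orbit point far from $\G$ should be transported back near $x_0$, where the sublinear constraint turns into a bound \emph{linear} in the displacement and can be exploited. After replacing $\G$ and $\La$ by torsion‑free finite‑index subgroups — which alters each of the two neighbourhood relations only by an additive constant — I may assume both groups are torsion‑free, so that Corollary~\ref{qr: prelims: sym spc: cor: every ball intersects finitely many horoballs of Gamma} is available; as usual I identify $\G,\La$ with the orbits $\G\cdot x_0,\La\cdot x_0$. Suppose $\sup_{\la\in\La}d(\la x_0,\G\cdot x_0)=\infty$ and pick $\la_n\in\La$ with $\delta_n:=d(\la_n x_0,\G\cdot x_0)\to\infty$. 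Since $\G$ is a \qr lattice it acts cocompactly on the complement of its (disjoint) horoballs, so for $n$ large $\la_n x_0$ lies at depth $t_n$ inside a horoball $\hb_n$ of $\G$; because $\hb_n$ is $\G$‑free while $\G\cdot x_0$ meets $\partial\hb_n$ in a $D_\G$‑cocompact net, $t_n\le\delta_n\le t_n+D_\G$, and hence $t_n\to\infty$.

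Now I translate to the basepoint. Choose $\g_n\in\G$ with $\g_n x_0\in\G\cdot x_0\cap\partial\hb_n$ within $D_\G$ of $P_{\partial\hb_n}(\la_n x_0)$, and set $w_n:=\g_n^{-1}\la_n x_0$. Then $w_n$ sits at depth $t_n$ in the horoball $\g_n^{-1}\hb_n$ of $\G$, whose boundary horosphere passes through $x_0$; by Corollary~\ref{qr: prelims: sym spc: cor: every ball intersects finitely many horoballs of Gamma} there are only $N(\G)$ such horospheres, so after passing to a subsequence $\g_n^{-1}\hb_n=\hb$ is a fixed $\G$‑free horoball with $x_0\in\partial\hb$. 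The gain is that $d(w_n,x_0)\le t_n+D_\G$ (since $x_0\in\partial\hb$ lies at distance $t_n$ from $w_n$ up to $D_\G$) while $d(w_n,\G\cdot x_0)=d(\la_n x_0,\G\cdot x_0)=\delta_n\ge t_n$. I then claim that $B(w_n,\tfrac12 t_n)$ is $\La$‑free for $n$ large: if $d(w,w_n)<\tfrac12 t_n$ then $d(w,\G\cdot x_0)>\delta_n-\tfrac12 t_n\ge\tfrac12 t_n$ and $|w|<\tfrac32 t_n+D_\G$, so $d(w,\G\cdot x_0)>\tfrac12 t_n>u\!\left(\tfrac32 t_n+D_\G\right)\ge u(|w|)$ once $t_n$ is large (this is the one place sublinearity of $u$ enters), whence $w\notin\La\cdot x_0$ by $\La\subset\mn_u(\G)$. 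Consequently $d(w_n,\La\cdot x_0)\in\big[\tfrac12 t_n,\ t_n+D_\G\big]$ (the upper bound because $x_0\in\La\cdot x_0$), so picking $\tilde\nu_n\in\La$ realising this distance and translating by $\tilde\nu_n^{-1}\in\La$ produces $\La$‑free balls $B\big(\tilde\nu_n^{-1}w_n,\ d(w_n,\La\cdot x_0)\big)$ of radius $\ge\tfrac12 t_n\to\infty$, all tangent to $x_0$.

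From here the argument proceeds as in Section~\ref{sec: u}. Because $\G\subset\mn_D(\La)$ with $D$ constant, a $\La$‑free ball $B(y,r)$ forces $B(y,r-D)$ to be $\G$‑free: a $\G$‑orbit point at distance $d$ from the bounding sphere would be the centre of a $\La$‑free ball of radius $d$, so $d\le D$ (this is the bounded‑$u$ instance of Lemma~\ref{U: lem: large Lambda free balls near base point imply large Gamma free balls}). Hence the concentric balls $B\big(\tilde\nu_n^{-1}w_n,\ d(w_n,\La\cdot x_0)-D\big)$ are simultaneously $\G$‑free and $\La$‑free, of radius tending to infinity, with $x_0$ at distance $\le D$ from every bounding sphere. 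Passing to a limit of the corresponding directions yields a horoball $\hb'$, based at some $\xi'\in X(\infty)$ with $x_0$ at bounded distance from $\partial\hb'$, which is both $\G$‑free and $\La$‑free and on whose boundary $\G\cdot x_0$ — hence, via $\G\subset\mn_D(\La)$, a $(D+D_\G)$‑dense subset of $\La\cdot x_0$ — accumulates. In particular $\xi'$ is non‑horospherical for $\G$, so by Hattori's description (Theorems~\ref{qr: thm: HattoriA},~\ref{qr: thm: HattoriB} and Corollary~\ref{qr: cor: limit points characterization of wqg rational building}) it is a parabolic limit point of $\G$, and $\hb'$ is, up to a bounded push‑in, one of the finitely many horoballs of $(\G,x_0)$ from Corollary~\ref{qr: prelims: sym spc: cor: every ball intersects finitely many horoballs of Gamma}.

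The main obstacle is the final contradiction: one must rule out that $\La$ admits a $\La$‑free horoball $\hb'$ coinciding up to bounded distance with a horoball of $\G$ on whose boundary $\La\cdot x_0$ is cocompact. I expect this to be handled by the cocompactness machinery of Section~\ref{sec: qr}, in the spirit of Lemma~\ref{qr: lem: cocompact metric lattice of horospheremay be intersected with the stabilizer of horosphere}: the $D_\G$‑cocompact $\G$‑net on $\partial\hb'$, together with $\G\subset\mn_D(\La)$ and disjointness of the $\G$‑horoballs, supplies $\La$‑orbit points arbitrarily far out along $\partial\hb'$ (each within $D$ of the corresponding net), while $\hb'$ being $\La$‑free confines $\La\cdot x_0$ to a bounded neighbourhood of $\partial\hb'$ near $x_0$; transporting such a configuration back to $x_0$ by a suitable element of $\La\cap\mathrm{Stab}_G(\partial\hb')$ and using that horoballs with a fixed basepoint are parallel (Proposition~\ref{qr: prelims: sym spc: prop: parallel horospheres and other properties}) should again produce a $\La$‑orbit point whose norm is linear in a large parameter yet which lies a distance linear in that parameter from $\G\cdot x_0$, contradicting sublinearity of $u$. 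Making this transport precise — in particular identifying the relevant $\La$‑element — is exactly where the \qr hypothesis on $\G$ is used in full, parallel to the proof of Proposition~\ref{qr: prop: Lambda cocompact horosphreres}. Once a contradiction is reached, $\{d(\la x_0,\G\cdot x_0):\la\in\La\}$ is bounded, i.e.\ $\La\subset\mn_{D'}(\G)$ for some $D'$, which is then fed (with $\G\subset\mn_D(\La)$) into the bounded case of Eskin–Schwartz to obtain the commensurability statement of Theorem~\ref{thm: commensurability and uniform statement intro}.
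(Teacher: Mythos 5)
Your first two-thirds are sound and close in spirit to the paper's opening moves: the observation that $\la_n x_0$ far from $\G\cdot x_0$ must sit at depth $t_n\to\infty$ in a $\G$-horoball, the single correct use of sublinearity to show $B(w_n,\tfrac12 t_n)$ is $\La$-free, the translation to $x_0$, the passage to a limiting $\La$-free horoball based (by the argument of Lemma~\ref{qr: lem: Lambda free horoballs are based at rational tits building wqg}, which applies since $\G\subset\mn_D(\La)$) at a point of $\wqg$, and the companion $\G$-free horoball via $\G\subset\mn_D(\La)$ — all of this is correct. The problem is the endgame, which you explicitly leave open, and which does not close from the configuration you have reached. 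A $\La$-free horoball based at a parabolic point, lying at bounded distance inside a $\G$-horoball whose boundary carries a $(D+D_\G)$-almost-$\La$-cocompact net, is \emph{not} a contradiction: it is exactly the structure that Proposition~\ref{qr: prop: Lambda cocompact horosphreres} establishes for a genuine $\La$, and exactly what happens when $\La$ really is at bounded Hausdorff distance from $\G$. Two specific assertions in your sketch also fail: $\hb'$ being $\La$-free does not "confine $\La\cdot x_0$ to a bounded neighbourhood of $\partial\hb'$" (freeness says nothing about $\La$-points outside $\hb'$), and you cannot invoke an element of $\La\cap\mathrm{Stab}_G(\partial\hb')$ to transport the configuration — proving that this intersection is nontrivial, let alone cocompact on $\partial\hb'$, is the hard content of Section~\ref{sec: qr: cocompact horosphere} and is not available here.

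The information you lose is traceable to translating by $\g_n^{-1}\in\G$ rather than by a $\La$-element: $w_n=\g_n^{-1}\la_n x_0$ is no longer a $\La$-orbit point, so the datum "a $\La$-orbit point sits at unbounded depth inside a $\G$-horoball" degenerates into "there exist large $\La$-free balls", which by itself is consistent. The paper's proof instead first shows every $\G$-horoball contains a $\La$-free horoball, then passes to $1$-almost-maximal such horoballs $\hbl_n\subset\hbg_n$ tangent to $\La$-points $\la_n'x_0$ (still at unbounded depth), and translates by $\la_n'^{-1}\in\La$. This preserves two $\La$-invariant facts simultaneously: the translated $\hbl_n$ become $\La$-free horoballs tangent to $x_0$ (pinned down by pigeonhole to a single base point $\xi'\in\wqg$), while the translated outer horospheres $\la_n'^{-1}\hg_n$ become a sequence of $(D+2D_\G)$-almost-$\La$-cocompact horospheres based at the \emph{same} $\xi'$ receding to infinity. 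The contradiction is then obtained by choosing a second parabolic point $\zeta\in\wqg$, using $d_T(\xi',\zeta)=\pi$ (Corollary~\ref{qr: cor: limit points characterization of wqg rational building}) and Hattori's penetration lemma to run a geodesic from $\xi'$ to $\zeta$ that must cross every one of the receding almost-cocompact horospheres while eventually lying arbitrarily deep inside a $\La$-free horoball based at $\zeta$. Your argument never produces the receding family of almost-cocompact horospheres with a common base point, and without it (or some substitute) I do not see how to finish along your route.
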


From Eskin's and Schwartz's results on groups at finite Hausdorff distance (see Section~\ref{sec: qr: bounded case}), I conclude: 

\begin{cor}\label{cor: commensurablity for two sided inclusion}
In the setting of Proposition~\ref{qr: prop: bounded and sublinear implies bounded} and unless $G$ is locally isomorphic to $\mathrm{SL}_2(\mathbb{R})$, $\La$ is commensurable to $\G$.
\end{cor}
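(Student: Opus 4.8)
The plan is to deduce this corollary formally from Proposition~\ref{qr: prop: bounded and sublinear implies bounded} together with the rigidity results of Eskin and Schwartz for subgroups lying at \emph{bounded} Hausdorff distance from a \qr lattice, as recalled in Section~\ref{sec: qr: bounded case} (Schwartz~\cite{Schwartz} in the $\mathbb{R}$-rank $1$ case, Eskin~\cite{EskinLatticeClassification} in the higher-rank case).

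First I would apply Proposition~\ref{qr: prop: bounded and sublinear implies bounded}. Its hypotheses are precisely those in force here: $\G\subset\mn_D(\La)$ for some $D>0$, and $\La\subset\mn_u(\G)$ for a sublinear function $u$. Its conclusion is a constant $D'>0$ with $\La\subset\mn_{D'}(\G)$. Combined with $\G\subset\mn_D(\La)$, this shows that the orbits $\G\cdot x_0$ and $\La\cdot x_0$ lie at finite Hausdorff distance at most $\max\{D,D'\}$ in $X=G/K$; in particular $\La$ is a discrete subgroup at bounded Hausdorff distance from the lattice $\G$.

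Second, I would invoke the bounded-distance theorem from Section~\ref{sec: qr: bounded case}: a discrete subgroup at finite Hausdorff distance from a \qr lattice $\G$ is itself a lattice, and since $\G$ is \qr so is $\La$; moreover, unless $G$ is locally isomorphic to $\mathrm{SL}_2(\mathbb{R})$, such a subgroup is commensurable to $\G$. Applying this to $\La$ gives the statement. (When $G$ is of $\mathbb{R}$-rank $1$ this is Schwartz's commensurability theorem, whose genuine exception is exactly $\mathrm{SL}_2(\mathbb{R})$; when $G$ has higher $\mathbb{R}$-rank it is the \qr form of Eskin's theorem, applicable once one knows $\La$ is \qr.)

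There is essentially no obstacle at the level of this corollary: all the substance sits inside Proposition~\ref{qr: prop: bounded and sublinear implies bounded}, namely the upgrade from the sublinear containment $\La\subset\mn_u(\G)$ to a genuinely bounded one, and in the already-cited Eskin--Schwartz results. The only minor point to verify is that those results ask nothing of $\La$ beyond discreteness (and, where they are stated for torsion-free groups, that one may pass to a finite-index torsion-free subgroup without affecting commensurability with $\G$).
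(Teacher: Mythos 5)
Your proposal is correct and follows exactly the paper's route: Proposition~\ref{qr: prop: bounded and sublinear implies bounded} upgrades $\La\subset\mn_u(\G)$ to $\La\subset\mn_{D'}(\G)$, and then Theorems~\ref{qr: thm: eskin bounded case} and~\ref{qr: thm: Schwartz: finite distance imples lattice} (the Eskin and Schwartz bounded-Hausdorff-distance results of Section~\ref{sec: qr: bounded case}) give commensurability outside the $\mathrm{SL}_2(\mathbb{R})$ case. The only slight imprecision is that Eskin's higher-rank statement already yields commensurability directly from the two bounded inclusions without first knowing that $\La$ is of \qrc but this does not affect the argument.
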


\begin{proof}[Proof of Theorem~\ref{thm: commensurability and uniform statement intro}]

The theorem is an immediate result of Theorem~\ref{qr: thm: main for qr} and Corollary~\ref{cor: commensurablity for two sided inclusion}
\end{proof}

\subsection{Strategy}
\paragraph{Lattice Criteria.}

I use three different lattice criteria, depending on the $\mathbb{R}$-rank of $G$ and on whether or not $\G\subset\mn_D(\La)$ for some $D>0$. My proof is motivated by a conjecture of Margulis, that can be viewed as an algebraic converse to the geometric structure of the compact core described in Theorem~\ref{qr: prelims: sym spc: thm: Leuzinger compact core}. Over the past $30$ years this conjecture was resolved in many major cases by Oh, Benoist and Miquel, and was recently proved in full generality by Benoist-Miquel (see Section $1.2$ in~\cite{BenoistMiquelArithmeticity} for an overview of the milestones in solving this conjecture).  

\begin{thm}[Theorem $2.16$ in~\cite{BenoistMiquelArithmeticity}] \label{qr: thm: Benoist-Miquel arithmeticity}
Let $G$ be a semisimple real algebraic Lie group of real rank at least $2$ and $U$ be a non-trivial horospherical subgroup of $G$. Let $\Delta$ be a
discrete Zariski dense subgroup of $G$ that contains an indecomposable lattice
$\Delta_U$ of $U$. Then $\Delta$ is a non-uniform irreducible arithmetic lattice of $G$.
\end{thm}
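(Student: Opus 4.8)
The plan is to follow the strategy of Benoist and Miquel, which builds the arithmetic structure by hand and generalizes Hee Oh's earlier treatment~\cite{Oh1998DiscreteSG} of subgroups containing lattices in \emph{two} opposite horospherical subgroups. After reducing to the case $G$ simple (and, if convenient, adjoint) — the semisimple case follows by projecting to simple factors, the image of $U$ being horospherical in each factor and the indecomposability of $\Delta_U$ forbidding a product situation — write $Q := N_G(U) = LU$ for the parabolic with unipotent radical $U$, let $Q^- = LU^-$ be an opposite parabolic, and let $a_t$ be a one-parameter subgroup in the centre of $L$ contracting $U$ and expanding $U^-$. Since $\Delta_U$ is a lattice in the simply connected nilpotent group $U$, Mal'cev rigidity endows $\mathfrak{u} = \mathrm{Lie}(U)$ with a canonical $\mathbb{Q}$-structure $\mathfrak{u}_{\mathbb{Q}}$, the $\mathbb{Q}$-span of $\log\Delta_U$, and the conjugation action of $N_\Delta(U)$ on $U$ — which commensurates $\Delta_U$ — preserves $\mathfrak{u}_{\mathbb{Q}}$.

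The decisive, and hardest, step is to upgrade this one-sided rational datum to a genuine $\mathbb{Q}$-form on all of $\mathfrak{g}$, and this is exactly where $\mathrm{rk}_{\mathbb{R}}(G) \geq 2$ is used essentially. The heart of the matter is to produce a compatible lattice $\Delta_{U^-} := \Delta \cap U^-$ in the opposite horospherical subgroup, reducing to Oh's situation. I expect this to be the main obstacle: it requires homogeneous-dynamics input — recurrence and non-divergence for unipotent-type flows on $G/\Delta$, together with the Zariski density of $\Delta$ to prevent degeneracy — applied to Chabauty limits of the conjugates $a_{-t}\Delta a_t$. In $\mathbb{R}$-rank $1$ this simply fails (a Schottky subgroup of $\mathrm{SL}_2(\mathbb{R})$ containing a parabolic one-parameter group is discrete and Zariski dense but not a lattice), which is precisely why Theorem~\ref{thm: main} must treat the $\mathbb{Q}$-rank $1$ case by the independent geometric route developed in this paper. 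One must also arrange that $\mathfrak{u}^-_{\mathbb{Q}}$, the $\mathbb{Q}$-span of $\log\Delta_{U^-}$, is compatible with $\mathfrak{u}_{\mathbb{Q}}$ — so that the bracket $[\cdot,\cdot]\colon \mathfrak{u}\times\mathfrak{u}^- \to \mathfrak{g}$ and the induced Levi action carry the two lattices consistently; Zariski density of $\langle \Delta_U,\Delta_{U^-}\rangle$ makes such a structure essentially unique once it exists.

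Granting the opposite lattice, the remainder is algebraic. Since $G$ is simple of $\mathbb{R}$-rank $\geq 2$ and $Q$ is proper, $\mathfrak{u}\cup\mathfrak{u}^-$ generates $\mathfrak{g}$ as a Lie algebra (the subalgebra they generate is a nonzero ideal), so $\mathfrak{u}_{\mathbb{Q}}$ and $\mathfrak{u}^-_{\mathbb{Q}}$ generate a $\mathbb{Q}$-form $\mathfrak{g}_{\mathbb{Q}}$ of $\mathfrak{g}$ with $\mathbb{Q}$-rational bracket; this yields a connected semisimple $\mathbb{Q}$-group $\mathbf{G}$ with $\mathbf{G}(\mathbb{R})$ isogenous to $G$, for which $Q$ is defined over $\mathbb{Q}$ with unipotent radical $U$. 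Next I would show $\Delta \subseteq \mathbf{G}(\mathbb{Q})$ up to finite index: $\Delta$ permutes the horospherical subgroups meeting it in a lattice, one shows this family consists of radicals of $\mathbb{Q}$-parabolics, and discreteness of $\Delta$ — not merely Zariski density — is what pins down the $\mathbb{Q}$-structure and rules out the non-arithmetic normalizers, so $\Delta$ normalizes $\mathfrak{g}_{\mathbb{Q}}$ commensurably. Finally, $\Delta$ contains the lattice $\Delta_U$ in the unipotent radical of the $\mathbb{Q}$-parabolic $Q$, so $\mathbf{G}$ has $\mathbb{Q}$-rank $\geq 1$; by reduction theory (Godement's criterion, Siegel sets, and finiteness of the volume of the corresponding cuspidal region), together with Borel density and discreteness, $\Delta$ is forced to be a lattice — necessarily non-uniform, since it contains the nontrivial unipotents of $\Delta_U$, and irreducible and arithmetic, since $\mathfrak{g}_{\mathbb{Q}}$ was built from the indecomposable $\mathfrak{u}_{\mathbb{Q}}$.
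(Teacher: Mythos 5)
This statement is not proved in the paper at all: it is Benoist--Miquel's arithmeticity theorem, imported verbatim as Theorem 2.16 of \cite{BenoistMiquelArithmeticity} and used as a black-box lattice criterion (alongside Leuzinger's critical-exponent criterion and Kapovich--Liu). So there is no ``paper's own proof'' to compare against; what you have written is a reconstruction of the published argument of Benoist--Miquel, building on Oh \cite{Oh1998DiscreteSG}. At the level of strategy your reconstruction is faithful: the Mal'cev $\mathbb{Q}$-structure on $\mathfrak{u}$ from the lattice $\Delta_U$, the identification of the opposite horospherical lattice $\Delta\cap U^-$ as the crux, the generation of a $\mathbb{Q}$-form of $\mathfrak{g}$ from the two rational nilpotent pieces, and the role of $\mathrm{rk}_{\mathbb{R}}(G)\geq 2$ (with the thin rank-one counterexample --- though note a discrete group cannot literally contain a parabolic one-parameter group; you mean a lattice in one) are all the right landmarks.

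As a proof, however, the proposal has two acknowledged but unfilled holes, and they are precisely where all the work lives. First, the construction of $\Delta_{U^-}$: you name the tools (non-divergence, Chabauty limits of $a_{-t}\Delta a_t$, Zariski density to prevent degeneracy) but do not execute the argument, and this step is essentially the entire content of \cite{BenoistMiquelArithmeticity}; it also requires a preliminary reduction to \emph{reflexive} horospherical subgroups (those conjugate to an opposite), which your outline omits. Second, the passage from ``$\Delta$ contains lattices in $U$ and $U^-$ and lies in $\mathbf{G}(\mathbb{Q})$'' to ``$\Delta$ has finite covolume'' is not a soft consequence of reduction theory: one needs the theorem (Raghunathan, Venkataramana, Oh) that the subgroup generated by compatible lattices in opposite horospherical subgroups of a $\mathbb{Q}$-isotropic $\mathbb{Q}$-group contains a finite-index subgroup of $\mathbf{G}(\mathbb{Z})$, and this is again a substantial input rather than an application of Godement's criterion. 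So treat your text as an accurate road map of the external reference, not as a proof.
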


See Definition~\ref{qr: defn: indecompsable and irreducibale lattice} for the precise meaning of an \emph{indecomposable} horospherical lattice.

For $\rr$ groups, one has the following theorem by Kapovich and Liu, stating that a group is geometrically finite so long as `most' of its limit points are conical. Recall $\mathcal{L}(\Delta)$ is the limit set of $\Delta\leq \mathrm{Isom}(X)$, and  $\mathcal{L}_{\mathrm{con}}(\Delta)$ is the set of its conical limit points.

\begin{thm}[Theorem $1.5$ in \cite{KapovichLiuGeometricFI}]\label{qr: thm: Kapovich-Liu conical limit points}
Let $X$ be a $\rr$ symmetric space. A discrete subgroup $\Delta\leq \mathrm{Isom}(X)$ is geometrically infinite if and only if the set $\mathcal{L}(\Delta)\setminus\mathcal{L}_{\mathrm{con}}(\Delta)$ of non-conical limit points has the cardinality of the continuum.
\end{thm}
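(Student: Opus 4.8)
The plan is to prove the two implications separately; the clause ``cardinality of the continuum'' is, up to a routine descriptive-set-theoretic observation, no stronger than ``uncountable''. Fix a basepoint $x_0\in X$; in the Hadamard manifold $X$ each $\xi\in X(\infty)$ is joined to $x_0$ by a unique unit-speed ray $\eta_\xi$, varying continuously with $\xi$ in the cone topology, and unwinding the definition, $\xi\in\mathcal{L}_{\mathrm{con}}(\Delta)$ iff there is $D\in\mathbb{N}$ such that for every $n$ there are $\delta\in\Delta$ and $t\geq n$ with $d\big(\delta x_0,\eta_\xi(t)\big)\leq D$. This exhibits $\mathcal{L}_{\mathrm{con}}(\Delta)$, hence also $\mathcal{L}(\Delta)\setminus\mathcal{L}_{\mathrm{con}}(\Delta)$, as a Borel subset of the Polish space $X(\infty)$; by the perfect set property a Borel set is either countable or contains a Cantor set, in the latter case of cardinality $2^{\aleph_0}$. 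It therefore suffices to show that geometric finiteness forces the non-conical limit set to be countable, and geometric infiniteness forces it to be uncountable.

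For the first: if $\Delta$ is geometrically finite, Bowditch's characterisations \cite{bowditch1995geometrical,BOWDITCHGeometricalFinitness} give a disjoint decomposition $\mathcal{L}(\Delta)=\mathcal{L}_{\mathrm{con}}(\Delta)\sqcup\mathcal{P}(\Delta)$ in which $\mathcal{P}(\Delta)$ is the set of bounded parabolic fixed points and the $\Delta$-action on it has finitely many orbits; as $\Delta$ is countable, $\mathcal{P}(\Delta)=\mathcal{L}(\Delta)\setminus\mathcal{L}_{\mathrm{con}}(\Delta)$ is countable. For the second I would work in the convex core $C=\Delta\backslash\mathrm{Hull}(\Delta)$, starting from the rigidity lemma that every \emph{proper} geodesic ray $\bar\eta$ in $C$ (one leaving every compact subset) has a non-conical endpoint: lifting $\bar\eta$ to $\eta\subset\mathrm{Hull}(\Delta)$, a conical endpoint would yield $\delta_n\in\Delta$ and $t_n\to\infty$ with $d\big(\delta_n x_0,\eta(t_n)\big)\leq D$, forcing $\bar\eta$ back into a fixed compact set infinitely often, a contradiction, while $\eta(\infty)\in\mathcal{L}(\Delta)$ because $\eta\subset\mathrm{Hull}(\Delta)$. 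Now geometric infiniteness means (modulo the torsion clause, discussed below) that the $\varepsilon$-thick part of $C$ is non-compact, which produces a sequence of thick-part points escaping every compact set: the raw material for building escaping rays.

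To manufacture uncountably many non-conical limit points from this, I would build a Cantor set of proper geodesic rays in $C$: connect the escaping thick-part points by geodesics (which stay in the convex core $C$) and, at an increasing sequence of scales, branch, using that $\mathrm{Hull}(\Delta)$ is not pinched at a thick point, so two rays leaving such a point at a definite angle eventually separate (by convexity of the distance function in non-positive curvature, cf.\ Lemma~\ref{qr: prelims: sym spc: cor: small angle geodesic fellow travel}). Done carefully this yields a continuum of proper rays, staying in the thick part, with pairwise distinct endpoints, each non-conical by the rigidity lemma; together with the first implication this gives the equivalence. The hard part is precisely this construction, where three issues need care: (i) the rays must be kept in the thick part and must be \emph{genuinely} proper — not drifting into a cusp or re-accumulating in a compact set — which dictates the choice of the branching scales; (ii) the branches must provably separate, using the curvature estimate quantitatively with controlled nesting, so that the endpoints really are distinct; and (iii) the torsion clause of geometric finiteness must be handled separately, since a group can fail to be geometrically finite merely by containing finite subgroups of unbounded order, a case which (in the present paper's application, Corollary~\ref{qr: prelims: sym spc: cor: geometric finite and whole limit set implies lattice} applied to a discrete $\La\leq G$ with $\mathcal{L}_\La=X(\infty)$) one usually circumvents by reducing to the torsion-free situation at the outset.
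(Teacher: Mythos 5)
First, a point of reference: the paper does not prove this statement at all --- it is quoted verbatim as Theorem~$1.5$ of Kapovich--Liu and used as a black box (its only role is to feed Corollary~\ref{qr: cor: conical limit set criterion}), so your proposal can only be measured against the original proof. Your reduction and your forward direction are sound: the conical limit set is Borel (for fixed $D$ and $\delta$ the relevant parameter $t$ ranges over a compact interval, so the set is a countable union/intersection of closed sets), hence the perfect set property does convert ``uncountable'' into ``continuum''; and for geometrically finite $\Delta$, Bowditch's equivalence of the finite-volume-plus-bounded-torsion definition with the decomposition of $\mathcal{L}(\Delta)$ into conical points and finitely many $\Delta$-orbits of bounded parabolic points gives countability of the non-conical part. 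Your rigidity lemma (a proper ray in the convex core has a non-conical endpoint) is also correct.

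The converse, however, has a genuine gap, located exactly where you wrote ``done carefully this yields a continuum of proper rays'': that sentence is the content of the Kapovich--Liu paper, not a routine construction. Concretely: (i) non-compactness of the thick part of the convex core does not by itself produce even one proper ray with non-conical endpoint --- geodesics from a basepoint to escaping thick points subconverge to rays whose endpoints may perfectly well be conical (recurrent rays), and ruling this out is precisely why Kapovich--Liu first prove, as a separate theorem generalizing Bonahon's argument, that a geometrically infinite $\Delta$ admits a sequence of \emph{closed} geodesics in $\Delta\backslash\mathrm{Hull}(\Delta)$ exiting every compact set; the continuum of non-conical points is then manufactured by concatenating long runs along these escaping closed geodesics. (ii) Your branching step needs a quantitative local-to-global statement (broken geodesics with long pieces and definite angles in pinched negative curvature stay uniformly close to genuine geodesics) both to obtain actual rays in the limit and to keep them proper with pairwise distinct endpoints; nothing in the proposal supplies this, and ``two rays eventually separate'' is not enough, since the objects being compared are limits of nested concatenations, not single geodesics issuing from one point. (iii) The torsion clause is flagged but not resolved: an infinitely generated discrete $\Delta$ can be geometrically infinite solely because its finite subgroups have unbounded order, with nothing escaping to infinity in the convex core, and the theorem as stated still asserts a continuum of non-conical limit points in that case; reducing to the torsion-free situation is legitimate for the application in this paper, but it does not prove the quoted theorem.
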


As a direct corollary I obtain the following criterion:

\begin{cor}\label{qr: cor: conical limit set criterion}
Let $X$ be a $\rr$ symmetric space, $\G\leq G=\mathrm{Isom}(X)$ a non-uniform lattice and $\La\leq G$ a discrete subgroup. If $\mathcal{L}(\La)=X(\infty)$ and $\mathcal{L}_{\mathrm{con}}(\G)\subset\mathcal{L}_{\mathrm{con}}(\La)$, then $\La$ is a lattice. 
\end{cor}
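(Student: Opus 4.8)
The plan is to read this off the Kapovich--Liu dichotomy (Theorem~\ref{qr: thm: Kapovich-Liu conical limit points}), applied to both $\G$ and $\La$, combined with the geometric-finiteness lattice criterion of Corollary~\ref{qr: prelims: sym spc: cor: geometric finite and whole limit set implies lattice}. First I would observe that $\G$, being a (non-uniform) lattice, is geometrically finite in the sense of Definition~\ref{qr: prelims: sym spc: def: geometrically finite group}: since $\mathcal{L}_\G=X(\infty)$ (Corollary~\ref{qr: prelims: sym spc: cor: lattice conical limit points are dense}) one has $\mathrm{Hull}(\G)=X$, so the convex core $\G\backslash\mathrm{Hull}(\G)=\G\backslash X$ has finite volume, and any discrete subgroup of a Lie group has a uniform bound on the orders of its finite subgroups. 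Hence $\G$ is not geometrically infinite.

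Next, Theorem~\ref{qr: thm: Kapovich-Liu conical limit points} forces $\mathcal{L}(\G)\setminus\mathcal{L}_{\mathrm{con}}(\G)=X(\infty)\setminus\mathcal{L}_{\mathrm{con}}(\G)$ to \emph{not} have the cardinality of the continuum. Concretely, by Theorem~\ref{qr: sym spc: thm: in rank 1 all horospherical limit points are conical} this exceptional set consists precisely of the non-horospherical points of $\G$, which by the cusp structure of Theorem~\ref{qr: prelims: sym spc: thm: Eberline compact core rank 1} is the $\G$-orbit of the finitely many parabolic points attached to the cusps of $\G\backslash X$, hence a countable set; but all that is needed below is that it is not of continuum cardinality.

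Now I would transfer this to $\La$. The hypothesis $\mathcal{L}_{\mathrm{con}}(\G)\subseteq\mathcal{L}_{\mathrm{con}}(\La)$ gives
\[
\mathcal{L}(\La)\setminus\mathcal{L}_{\mathrm{con}}(\La)=X(\infty)\setminus\mathcal{L}_{\mathrm{con}}(\La)\ \subseteq\ X(\infty)\setminus\mathcal{L}_{\mathrm{con}}(\G),
\]
and since $X(\infty)$ itself has cardinality $2^{\aleph_0}$, a subset of a set of cardinality strictly less than $2^{\aleph_0}$ again has cardinality strictly less than $2^{\aleph_0}$; so $\mathcal{L}(\La)\setminus\mathcal{L}_{\mathrm{con}}(\La)$ also fails to have the cardinality of the continuum. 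Applying Theorem~\ref{qr: thm: Kapovich-Liu conical limit points} to the discrete group $\La$ then yields that $\La$ is geometrically finite; combined with the standing assumption $\mathcal{L}(\La)=X(\infty)$, Corollary~\ref{qr: prelims: sym spc: cor: geometric finite and whole limit set implies lattice} gives that $\La$ is a lattice.

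There is no serious obstacle here: the statement is genuinely a short formal consequence of the Kapovich--Liu dichotomy. The only point deserving a sentence of justification is that a lattice qualifies as geometrically finite under Definition~\ref{qr: prelims: sym spc: def: geometrically finite group}, which is the verification carried out in the first paragraph.
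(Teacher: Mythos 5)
Your argument is exactly the paper's proof: apply the Kapovich--Liu dichotomy to $\G$ to see that $X(\infty)\setminus\mathcal{L}_{\mathrm{con}}(\G)$ does not have the cardinality of the continuum, transfer this to $\La$ via the hypothesis $\mathcal{L}_{\mathrm{con}}(\G)\subseteq\mathcal{L}_{\mathrm{con}}(\La)$, and conclude from geometric finiteness together with $\mathcal{L}(\La)=X(\infty)$ via Corollary~\ref{qr: prelims: sym spc: cor: geometric finite and whole limit set implies lattice}. The only blemish is the side claim that \emph{any} discrete subgroup of a Lie group has a uniform bound on the orders of its finite subgroups --- this fails for infinitely generated discrete groups in general, but it does hold for the lattice $\G$ (e.g.\ by Selberg's lemma), which is all you actually use.
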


\begin{proof}
Since $\G$ is a lattice, $\mathcal{L}(\G)=X(\infty)$ and it is geometrically finite. Theorem~\ref{qr: thm: Kapovich-Liu conical limit points} implies the cardinality of $X(\infty)\setminus \mathcal{L}_{\mathrm{con}}(\G)$ is strictly smaller than the continuum. The assumption  $\mathcal{L}_{\mathrm{con}}(\G)\subset\mathcal{L}_{\mathrm{con}}(\La)$ implies the same holds for $\La$, and in particular that $\La$ is geometrically finite. The assumption that $\mathcal{L}(\La)=X(\infty)$ implies that $\La$ is geometrically finite if and only if it is a lattice. 
\end{proof}

\begin{cor}\label{qr: cor: bounded case real rank 1 one sided}
Let $X$ be a $\rr$ symmetric space, $\G\leq G=\mathrm{Isom}(X)$ a non-uniform lattice and $\La\leq G$ a discrete subgroup. If $\G\subset\mn_D(\La)$ for some $D>0$, then $\La$ is a lattice.
\end{cor}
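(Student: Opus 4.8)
The plan is to obtain this as an immediate consequence of the conical limit set criterion, Corollary~\ref{qr: cor: conical limit set criterion}. That criterion tells us that $\La$ is a lattice as soon as we know two things: that $\mathcal{L}(\La)=X(\infty)$, and that $\mathcal{L}_{\mathrm{con}}(\G)\subseteq\mathcal{L}_{\mathrm{con}}(\La)$. So the whole argument reduces to verifying these two inclusions under the hypothesis $\G\subset\mn_D(\La)$.

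For the first inclusion I would observe that the constant function $c\equiv\max(D,1)$ is sublinear in the sense of this paper (it is non-decreasing, satisfies $\limsup_{r\to\infty}c(r)/r=0$, is doubling, and is subadditive since $c\le 2c$), and that $\G\subset\mn_D(\La)\subseteq\mn_c(\La)$. Since $\G$ is a lattice we have $\mathcal{L}(\G)=X(\infty)$ by Corollary~\ref{qr: prelims: sym spc: cor: lattice conical limit points are dense}, so Lemma~\ref{qr: lem: limit set of Lambda equals limit set of Gamma} applies and yields $\mathcal{L}(\La)=X(\infty)$ directly.

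For the second inclusion the key point is simply that a uniformly bounded perturbation of an orbit cannot destroy conicality of a limit point. I would take $\xi\in\mathcal{L}_{\mathrm{con}}(\G)$ and fix a unit-speed geodesic ray $\eta$ from $x_0$ with $\eta(\infty)=\xi$; by definition there is a constant $D'>0$ and elements $\g_n\in\G$ with $\g_n x_0\in\mn_{D'}(\eta)$ and $|\g_n x_0|\to\infty$. Reading $\G\subset\mn_D(\La)$ orbit-wise as $\G\cdot x_0\subseteq\mn_D(\La\cdot x_0)$, choose $\la_n\in\La$ with $d(\g_n x_0,\la_n x_0)\le D$. Then $\la_n x_0\in\mn_{D'+D}(\eta)$ and $|\la_n x_0|\to\infty$, so the $\la_n x_0$ are infinitely many distinct $\La$-orbit points lying inside a fixed tubular neighbourhood of $\eta$. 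By the discreteness characterization of conical limit points recalled in Section~\ref{sec: qr: prelims} (a bounded tube along some ray to $\xi$ meets the orbit infinitely often), and since $\xi\in\mathcal{L}(\La)=X(\infty)$ by the previous step, this means precisely $\xi\in\mathcal{L}_{\mathrm{con}}(\La)$. Corollary~\ref{qr: cor: conical limit set criterion} then concludes that $\La$ is a lattice.

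I do not anticipate any genuine obstacle here: all the substance has already been packaged into Corollary~\ref{qr: cor: conical limit set criterion} (ultimately the finiteness criterion of Kapovich and Liu, Theorem~\ref{qr: thm: Kapovich-Liu conical limit points}) and into the limit-set stability Lemma~\ref{qr: lem: limit set of Lambda equals limit set of Gamma}. The only mild point to keep track of is that the hypothesis is one-sided — $\La$ is not assumed to lie in any neighbourhood of $\G$ — but both tools invoked use only the inclusion $\G\subset\mn_D(\La)$, so this costs nothing.
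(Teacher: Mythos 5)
Your proposal is correct and follows exactly the route the paper takes: both verify the two hypotheses of Corollary~\ref{qr: cor: conical limit set criterion} (full limit set plus preservation of conical limit points), the only difference being that the paper dismisses both inclusions as immediate from the definitions while you spell them out via Lemma~\ref{qr: lem: limit set of Lambda equals limit set of Gamma} and the bounded-perturbation argument. No gaps.
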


\begin{proof}
By definition of the limit set and of conical limit points, it is clear that every $\G$-limit point is a $\La$-limit point, and every $\G$-conical limit point is also $\La$-conical limit point. I conclude from Corollary~\ref{qr: cor: conical limit set criterion} that $\La$ is a lattice.
\end{proof}

Also in higher rank the inclusion $\G\subset\mn_D(\La)$ implies that $\La$ is a lattice. This result is due to Eskin.

\begin{thm}[Eskin, see Remark~\ref{qr: rmk: problem with bounded case proofs} below]\label{qr: thm: eskin bounded case}
Let $G$ be a real finite-centre semisimple Lie group without compact factors and of higher rank, $\G\leq G$ an irreducible non-uniform lattice, $\La\leq G$ a discrete subgroup such that $\G\subset\mn_D(\La)$ for some $D>0$. Then $\La$ is a lattice. If moreover $\La\subset\mn_D(\G)$ then $\La$ and $\G$ are commensurable. 
\end{thm}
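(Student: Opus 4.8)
The statement is due to Eskin~\cite{EskinLatticeClassification}, and I would present it by indicating how it fits the scheme of this paper; the subtleties in locating this exact formulation in the literature are discussed in Remark~\ref{qr: rmk: problem with bounded case proofs}. For the two-sided case — $\G$ and $\La$ at bounded Hausdorff distance, which is the situation that arises in the proof of quasi-isometric completeness — one invokes Eskin's quasi-isometric rigidity theorem together with Schwartz's commensurability argument~\cite{Schwartz}.

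For the one-sided conclusion I would argue as follows, restricting the sketch to $\mathbb{Q}$-rank $1$ (the only case used in this paper; the general non-uniform case is Eskin's). Since $\G$ acts cocompactly on its compact core $X_0$ (Theorem~\ref{qr: prelims: sym spc: thm: Leuzinger compact core}) and $\G\cdot x_0\subset\mn_D(\La\cdot x_0)$, the orbit $\La\cdot x_0$ is $D'$-dense in $X_0$ for some $D'=D'(D,\G)$. A standard argument then gives that $\La$ is Zariski dense in $G$: coarse density in $X_0$ forces $\La$ to contain hyperbolic elements with axes in a Zariski-dense family of directions, so the Zariski closure of $\La$ cannot be a proper algebraic subgroup. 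By Margulis arithmeticity $\G$ is arithmetic, and Theorem~\ref{qr: prelims: sym spc: thm: Leuzinger compact core} together with Corollary~\ref{qr: prelims: sym spc: cor: every ball intersects finitely many horoballs of Gamma} show that for a cusp horosphere $\h$ of $\G$ based at a parabolic limit point $\xi$ the group $\G\cap\mathrm{Stab}_G(\h)$ acts cocompactly on $\h$; in particular $\G\cap N_\xi$ is a lattice in the horospherical subgroup $N_\xi$. The points of $(\G\cap N_\xi)\cdot x_0\subset\h$ each lie within $D$ of $\La\cdot x_0$, and — this is the heart of the matter — one shows that $\La$ itself contains a lattice of $N_\xi$: running the argument of Proposition~\ref{qr: prop: Lambda cocompact horosphreres} with the constant function $u\equiv D$ (for which the whole mechanism of Sections~\ref{sec: u},~\ref{sec: qr} collapses to the elementary pigeonhole of Lemma~\ref{qr: lem: cocompact metric lattice of horospheremay be intersected with the stabilizer of horosphere}) produces a horosphere $\h'$ parallel to $\h$ on which $\big(\La\cap\mathrm{Stab}_G(\h')\big)\cdot x_0$ is a cocompact metric lattice, hence an indecomposable lattice of $N_\xi$ inside $\La$ (indecomposability being inherited from the corresponding property for $\G$). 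The Benoist--Miquel criterion (Theorem~\ref{qr: thm: Benoist-Miquel arithmeticity}) now yields that $\La$ is a non-uniform irreducible arithmetic lattice, proving the first assertion.

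For the second assertion, if in addition $\La\subset\mn_D(\G)$ then $\G$ and $\La$ are at bounded Hausdorff distance, and one concludes as classically: for each $\la\in\La$ the lattice $\la\G\la^{-1}$ is again at bounded Hausdorff distance from $\G$, which forces $\la$ into the commensurator $\mathrm{Comm}_G(\G)$, and discreteness of $\La$ together with $\G\leq\mathrm{Comm}_G(\G)\cap\La$ then gives that $\La$ is commensurable with $\G$ (see~\cite{Schwartz},~\cite{EskinFarb},~\cite{EskinLatticeClassification}). The main obstacle in making this self-contained is the ``horospherical lattice in $\La$'' step for a \emph{general} non-uniform lattice $\G$: there the cusp horoballs of $\G$ need not be disjoint and $\G\cap\mathrm{Stab}_G(\h)$ need not act cocompactly on $\h$, so the clean picture of Theorem~\ref{qr: prelims: sym spc: thm: Leuzinger compact core} is unavailable and one must instead use Eskin's finer estimates on how the thick part sits in $X$. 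For this reason I take the general statement from~\cite{EskinLatticeClassification} as a black box, while the $\mathbb{Q}$-rank $1$ instance needed here follows from the argument just sketched.
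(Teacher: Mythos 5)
Your route is not the paper's, and it has a genuine gap that makes the one-sided statement unreachable by your method. The paper proves this theorem by an ergodic argument: one fixes a minimal parabolic $P\leq G$, averages an arbitrary probability measure on $\La\backslash G$ along a tempered F\o{}lner sequence of the amenable group $P$, extracts a weak* limit $\mu$, uses unique ergodicity of the $P$-action on $\G\backslash G$ together with $\G\subset\mn_D(\La)$ to see that $\mu$ charges a compact set and is therefore non-zero, and then invokes Mozes's theorem that any $P$-invariant measure on $\La\backslash G$ is automatically $G$-invariant; commensurability under the two-sided hypothesis is Shah's corollary. This argument is insensitive to whether $\La$ turns out to be uniform or non-uniform, which is essential.

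Your approach, by contrast, aims to produce a horospherical lattice inside $\La$ and apply Benoist--Miquel. This cannot work under the one-sided hypothesis $\G\subset\mn_D(\La)$ alone: any \emph{uniform} lattice $\La\leq G$ satisfies $\G\cdot x_0\subset X=\mn_R(\La\cdot x_0)$ for $R$ the codiameter, yet contains no unipotent elements and intersects no horospherical subgroup in a lattice, and the Benoist--Miquel conclusion (that $\La$ is non-uniform) is simply false for it. Concretely, the step where you claim Proposition~\ref{qr: prop: Lambda cocompact horosphreres} ``collapses to the pigeonhole'' for $u\equiv D$ fails at the outset: that proposition's entire mechanism is driven by the hypothesis that $\{d_\g\}$ is \emph{unbounded} (which is what produces $\La$-free horoballs), whereas here $d_\g\leq D$ and there need be no $\La$-free horoball at all; the $\La$-points within $D$ of $\G\cdot x_0\cap\h$ need not lie on a common parallel horosphere, and Lemma~\ref{qr: lem: cocompact metric lattice of horospheremay be intersected with the stabilizer of horosphere} requires a $\La$-free horoball as a hypothesis. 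Your Zariski-density sketch (``hyperbolic elements with axes in a Zariski-dense family of directions'') is also not justified from coarse density of the orbit alone. Finally, deferring the general higher $\mathbb{Q}$-rank case to \cite{EskinLatticeClassification} as a black box defeats the stated purpose of this section: Remark~\ref{qr: rmk: problem with bounded case proofs} records that complete proofs of these statements were never given in print, which is precisely why the ergodic argument is written out here.
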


Theorem~\ref{qr: thm: eskin bounded case} was used in the proof of quasi-isometric rigidity for higher rank non-uniform lattices in \cite{DrutuQI} and \cite{EskinLatticeClassification}. In the (earlier) $\rr$ case, Schwartz~\cite{Schwartz} used an analogous statement that requires one extra assumption. 

\begin{thm}[Schwartz, see Section $10.4$ in \cite{Schwartz} and Remark~\ref{qr: rmk: problem with bounded case proofs} below]\label{qr: thm: Schwartz: finite distance imples lattice}
Let $G$ be a real finite-centre simple Lie group of $\rr$, $\G\leq G$ an irreducible non-uniform lattice, $\La\leq G$ a discrete subgroup such that both $\G\subset\mn_D(\La)$ and $\La\subset\mn_D(\G)$ for some $D>0$. Then $\La$ is a lattice. If moreover $G$ is not locally isomorphic to $\mathrm{SL}_2(\mathbb{R})$, then $\La$ and $\G$ are commensurable.
\end{thm}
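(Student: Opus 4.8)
The assertion has two halves, and the first is already available: finite Hausdorff distance between the orbits $\G\cdot x_0$ and $\La\cdot x_0$ forces $\mathcal L_\La=X(\infty)$ and makes every $\G$-conical limit point $\La$-conical, so Corollary~\ref{qr: cor: bounded case real rank 1 one sided} (equivalently Corollary~\ref{qr: cor: conical limit set criterion}) gives that $\La$ is a lattice. So the work is the commensurability statement, and I would set it up as follows. Since the hypothesis is symmetric, the reverse inclusion of conical limit sets holds too, so $\mathcal L_{\mathrm{con}}(\G)=\mathcal L_{\mathrm{con}}(\La)$; because in $\mathbb R$-rank one every limit point of a lattice is conical or a bounded parabolic point (Theorem~\ref{qr: sym spc: thm: in rank 1 all horospherical limit points are conical}), $\G$ and $\La$ have exactly the same set $P\subset X(\infty)$ of parabolic points, and in particular $\La$ is non-uniform. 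By the Milnor--\v{S}varc lemma applied to the cocompact actions on the neutered spaces, $\G$ (word metric) is quasi-isometric to its neutered space $\Omega_\G=X\setminus\bigcup\hb_i$, and likewise for $\La$; and since $\Omega_\G$ and $\Omega_\La$ each lie in a bounded $X$-neighbourhood of the corresponding orbit, the two neutered spaces are at finite Hausdorff distance in $X$, with the closest-point projection inducing a quasi-isometry $\Omega_\G\to\Omega_\La$ that is moreover a bounded distance from the inclusion $\Omega_\G\hookrightarrow X$.

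\textbf{Pattern rigidity.} Now I would invoke Schwartz's rigidity for neutered spaces of rank-one lattices (\cite{Schwartz}, \S10): a quasi-isometry $\Omega_\G\to\Omega_\La$ lies at bounded distance from an isometry $g\colon X\to X$ carrying the horoball pattern $\{\hb_i\}$ of $\G$ onto the horoball pattern of $\La$. Applied to our projection, $g$ lies at bounded distance from $\mathrm{id}_X$; since $X$ has no Euclidean factor, an isometry of bounded displacement is the identity, so $g=\mathrm{id}_X$. Hence \emph{$\G$ and $\La$ have literally the same horoball pattern} $\mathcal P=\{\hb_i\}$ in $X$. (The verification that $g$ genuinely permutes the pattern, rather than only coarsely, uses that an isometric image of $\hb_i$ at finite Hausdorff distance from a horoball is a parallel horoball at the same base point, together with the cocompactness of the cusp stabilizers on horospheres and Lemma~\ref{qr: lem: cocompact metric lattice of horospheremay be intersected with the stabilizer of horosphere}, exactly as in the setup of Corollary~\ref{qr: prelims: sym spc: thm: Eberline compact core rank 1}.)

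\textbf{Conclusion.} Let $H:=\mathrm{Stab}_G(\mathcal P)$, the group of isometries permuting the countable family $\mathcal P$. A connected subgroup of $H$ must fix each $\hb_i$ (a connected group cannot act nontrivially on a discrete set) and hence fix each of the infinitely many base points $\xi_i\in X(\infty)$, which is impossible for a nontrivial connected subgroup of a rank-one group; so $H$ is discrete. It fixes $\Omega_\G=\Omega_\La$ setwise and contains both $\G$ and $\La$, each of which is cocompact there; being discrete, $H$ acts properly discontinuously on $\Omega_\G$, so $[H:\G]<\infty$ and $[H:\La]<\infty$. Therefore $\G$ and $\La$ are both finite-index in $H$, hence commensurable, which completes the argument. (This route avoids splitting into the arithmetic and non-arithmetic cases — one does not need Margulis's commensurator theorem here, only the discreteness of $H$.)

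\textbf{Where the difficulty lies.} Everything except the pattern-rigidity input of the second paragraph is bookkeeping with limit sets, horospheres, and the stabilizer lemma. The genuine content — and the only place where the hypothesis that $G$ is not locally isomorphic to $\mathrm{SL}_2(\mathbb R)$ is used — is the statement that a quasi-isometry of the $\G$-neutered space is close to an isometry carrying $\G$'s pattern to $\La$'s. Proving it requires analysing the induced boundary map on $X(\infty)$: one shows it is quasiconformal, hence differentiable almost everywhere, and then rigid in the manner of Mostow and Tukia. This mechanism collapses for $\mathrm{SL}_2(\mathbb R)$, where the horospheres are one-dimensional and $X(\infty)$ is a circle, so horoball packings of $\mathbb H^2$ admit bounded-distortion reshufflings with no isometric model (cf.\ the flexibility phenomena touched on in the discussion of \cite{Schwartz} and of Pallier's examples above); I would invoke this analytic rigidity as a black box rather than reprove it.
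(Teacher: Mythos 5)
Your handling of the first half is fine: Corollary~\ref{qr: cor: bounded case real rank 1 one sided} does give that $\La$ is a lattice from $\G\subset\mn_D(\La)$ alone. Note, though, that this is not the paper's route: the paper's featured argument (Theorem~\ref{qr: thm: Schwartz argument}) builds a finite-volume fundamental domain for $\La$ directly from the quasi-preserved compact core — a compact piece plus finitely many cones over compact subsets of horospheres — with no appeal to Kapovich--Liu, conical limit points, or any boundary analysis; the only work is checking that the two metric inclusions force $\La$ to quasi-preserve $X_0$ and $\wqg$. The genuine gap is in your commensurability argument, at the step ``$\G$ and $\La$ have literally the same horoball pattern, so $H=\mathrm{Stab}_G(\mathcal P)$ contains both.'' Pattern rigidity applied to the closest-point projection (an isometry-close-to-the-identity situation) yields only that $\G$'s horoballs and $\La$'s horoballs have the \emph{same base points} and are \emph{parallel and boundedly displaced}; the packings themselves are normalized by the respective orbits (as in Corollary~\ref{qr: prelims: sym spc: cor: every ball intersects finitely many horoballs of Gamma}) and will in general sit at different heights at each cusp. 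Consequently $\la\in\La$ carries a horoball $\hb_i$ of $\G$'s packing to a horoball that is only parallel to and boundedly far from some $\hb_j$, so $\La\not\leq\mathrm{Stab}_G(\mathcal P)$. The obvious repairs fail: the set of isometries displacing each horoball of the packing by a bounded amount is not a group (constants add under composition), and the stabilizer of the base-point set $P\subset X(\infty)$, which does contain both groups, is \emph{not discrete} when $\G$ is arithmetic — it contains the commensurator of $\G$, which is dense. So there is no ready-made common discrete supergroup, and bridging ``parallel, boundedly displaced patterns'' to commensurability is exactly the delicate content of Schwartz's Section 10.4, which the paper does not reprove but cites (see Remark~\ref{qr: rmk: problem with bounded case proofs}).

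Two smaller points. First, even where $H$ does stabilize the packing on the nose, ``every connected subgroup of $H$ is trivial, hence $H$ is discrete'' is not a valid inference ($\mathbb{Q}\leq\mathbb{R}$ has no nontrivial connected subgroups); one should instead observe that $H$ preserves the neutered space setwise, hence cannot be dense in $G$, and then use Borel density and simplicity to conclude that a non-dense subgroup containing the lattice $\G$ is discrete. Second, your identification of where the $\mathrm{SL}_2(\mathbb{R})$ exclusion enters (only in the pattern-rigidity input for commensurability) is consistent with the statement, since the lattice conclusion is asserted for $\mathrm{SL}_2(\mathbb{R})$ as well.
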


\begin{rmk}\label{qr: rmk: problem with bounded case proofs}
Theorem~\ref{thm: main Intro} should be viewed as a generalization of the bounded case depicted in Theorems~\ref{qr: thm: eskin bounded case} and~\ref{qr: thm: Schwartz: finite distance imples lattice}, which were known to experts in the field in the late 1990's. Complete proofs for these statements were never given in print, and I take the opportunity to include them here. See Section~\ref{sec: qr: bounded case}, where I also prove Proposition~\ref{qr: prop: bounded and sublinear implies bounded}. I thank Rich Schwartz and Alex Eskin for supplying me with their arguments and allowing me to include them in this paper. I also thank my thesis examiner Emmanuel Breuillard for encouraging me to find and make these proofs public.
\end{rmk}

\paragraph{Line of Proof and Use of Sublinearity.}
Lattices of \qr admit a concrete geometric structure (see Section~\ref{sec: qr: compact core and rational Tits Building}). This structure is manifested in the geometry of an orbit of such a lattice in the corresponding symmetric space $X=G/K$. One important geometric property is the existence of a set of horoballs which the orbit of the lattice intersects only in the bounding horospheres, and in each such horosphere the orbit forms a (metric) cocompact lattice. 

Corollary~\ref{qr: cor: bounded case real rank 1 one sided} and Theorem~\ref{qr: thm: eskin bounded case} reduce the proof to the case where $\G\not\subset\mn_D(\La)$ for any $D>0$. In that case, the essence lies in proving the existence of horospheres in $X$ which a $\La$-orbit intersects in a cocompact lattice. This is proved purely geometrically, using the geometric structure of \qr lattices and the sublinear constraint. Together with some control on the location of these horospheres, I prove two strong properties:
\begin{enumerate}
    \item $\La\cdot x_0$ intersects a horosphere $\h\subset X$ in a cocompact lattice (Proposition~\ref{qr: prop: Lambda cocompact horosphreres}).
    \item Every $\G$-conical limit point is also a $\La$-conical limit point (Corollary~\ref{qr: cor: every Gamma conical is Lambda conical}).
\end{enumerate}

The $\rr$ case of Theorem~\ref{qr: thm: main for qr} follows directly from Corollary~\ref{qr: cor: conical limit set criterion} using the second item above. The higher rank case requires a bit more, namely to deduce from the above items that $\La$ meets the hypotheses of the Benoist-Miquel Theorem~\ref{qr: thm: Benoist-Miquel arithmeticity}. To that end I use a well known geometric criterion (Lemma~\ref{qr: lem: geometric criterion for zariski dense subgroups}) in order show that $\La$ is Zariski dense, and a lemma of Mostow (Lemma~\ref{qr: prelims: translation: lem: Mostow nilpotent radical intersection}) to show that $\La$ intersects a horospherical subgroup in a cocompact lattice.

\paragraph{Outline for Section~\ref{sec: qr}.} 
Section~\ref{sec: qr: cocompact horosphere} is the core of the original mathematics of this paper. It is devoted to proving that $\La\cdot x_0$ intersects some horospheres in a cocompact lattice. The proof is quite delicate and somewhat involved, and I include a few figures and a detailed informal overview of the proof. The figures are detailed and may take a few moments to comprehend, but I believe they are worth the effort.

Section~\ref{sec: qr: bounded case} deals with the case where $\G\subset\mn_D(\La)$, and elaborates on Schwartz's and Eskin's proofs of Theorem~\ref{qr: thm: eskin bounded case} and Theorem~\ref{qr: thm: Schwartz: finite distance imples lattice}. Section~\ref{sec: qr: geometry to algebra} is devoted to the translation of the geometric results of Section~\ref{sec: qr: cocompact horosphere} to the algebraic language used in Theorem~\ref{qr: thm: Benoist-Miquel arithmeticity}. Though the work is indeed mainly one of translation, some of it is non-trivial. Finally, in Section~\ref{sec: qr: conclusion} I put everything together for a complete proof of Theorem~\ref{qr: thm: main for qr}.

I highly recommend the reader to have a look at the uniform case in Section~\ref{sec: u} before reading this one.



\subsection{A \texorpdfstring{$\La$}{}-Cocompact Horosphere} \label{sec: qr: cocompact horosphere}

Recall that $d_\g:=d(\g x_0,\la_\g x_0)$. In this section I prove: 

\begin{prop}[Proposition~\ref{prop: Lambda cocompact horosphreres Intro}] \label{qr: prop: Lambda cocompact horosphreres}

If $\{d_\g\}_{\g\in\G}$ is unbounded, then there exists a horosphere $\h$ based at $\wqg$ such that  $\big(\La\cap \mathrm{Stab}_G(\h)\big)\cdot x_0$ intersects $\h$ in a cocompact metric lattice. Moreover, the bounded horoball $\hb$ is $\La$-free.
\end{prop}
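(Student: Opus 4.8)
The plan is to carry out the three-step programme sketched in Section~\ref{sec: outline of proof}, extracting everything from the unboundedness of $\{d_\g\}_{\g\in\G}$.

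\emph{Step 1 (a $\La$-free horoball at every $\La$-orbit point).} First I would pick $\g_n\in\G$ with $d_n:=d_{\g_n}\to\infty$ and pass, in the notation of Section~\ref{sec: U: Notations}, to the $\La$-free balls $B_n'=B(x_n',d_n)$, which are tangent to $x_0$ with $|x_n'|=d_n$. By compactness of the unit sphere in $T_{x_0}X$ (or along an ultrafilter) the initial directions of $[x_0,x_n']$ subconverge to a point $\xi\in X(\infty)$; since $X$ is nonpositively curved the balls $B_n'$ converge, uniformly on bounded sets, to the horoball $\hb(x_0,\xi)$ tangent to $x_0$, which is $\La$-free because any $\La$-orbit point in its interior would lie in some $B_n'$. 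Translating by $\La$, at every $\la x_0\in\La\cdot x_0$ the horoball $\la\cdot\hb(x_0,\xi)$ is $\La$-free and tangent to $\la x_0$; combined with a bound $N$ coming from the uniform-neighbourhood argument of this section, there are only finitely many $\La$-free horoballs tangent to any given $\La$-orbit point.

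\emph{Step 2 (large $d_\g$ forces $\g x_0$ deep in a $\La$-free horoball).} For $\g\in\G$ I would translate by $\la_\g^{-1}$ so that $B_\g'=B(x_\g',d_\g)$ is a $\La$-free ball of radius $d_\g$ tangent to $x_0$; Lemma~\ref{U: lem: large Lambda free balls near base point imply large Gamma free balls} then gives a concentric $\G$-free ball of radius comparable to $d_\g$, i.e.\ a $\G$-free corridor from $x_0$ to $x_\g'$, which translates back to a $\G$-free corridor from $\la_\g x_0$ to $\g x_0$. A large $\La$-free ball tangent to a point is close to a $\La$-free horoball tangent to that point, and by Step~1 there are only finitely many such horoballs at $\la_\g x_0$; so combining the small-angle fellow-travelling of Lemma~\ref{qr: prelims: sym spc: cor: small angle geodesic fellow travel} with Hattori's penetration estimate Lemma~\ref{qr: prelims: sym spc: lem: Hattori penetration}, one should get that the angle at $\la_\g x_0$ between $[\la_\g x_0,\g x_0]$ and the base direction of some $\La$-free horoball $\hb_\g$ tangent to $\la_\g x_0$ tends to $0$ as $d_\g\to\infty$, and hence that $\g x_0$ penetrates $\hb_\g$ to a depth tending to infinity with $d_\g$. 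Crucially the threshold and the penetration rate should depend only on $u$ and on $(\G,x_0)$, not on $\g$ — this uniformity is obtained precisely by translating the configuration back near $x_0$ and using that $\La$ is a group, so the sublinear constraint becomes a uniform bound inside bounded neighbourhoods of $x_0$.

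\emph{Step 3 (pinning down one $\La$-cocompact horosphere).} By Theorem~\ref{qr: prelims: sym spc: thm: Leuzinger compact core} and Corollary~\ref{qr: prelims: sym spc: cor: every ball intersects finitely many horoballs of Gamma}, $\G$ acts cocompactly on the complement of its pairwise-disjoint horoballs, so $d_\g$ is bounded on the compact core and the unbounded values occur on $\G$-orbit points lying on the parabolic horospheres of $\G$; as there are finitely many $\G$-orbits of such horospheres, $\{d_\g:\g\in\G_{\hg}\}$ is unbounded for some parabolic horosphere $\hg$ of $\G$. Since $\G\cap\mathrm{Stab}_G(\hg)$ acts cocompactly on $\hg$ and the sublinear constraint forces $|\g'|\to\infty$ whenever $d_{\g'}\to\infty$, a short argument should give that $d_{\g'}$ is uniformly large for \emph{all} $\g'\in\G_{\hg}$ once it is large for one of them. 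Applying Step~2 to all these $\g'$, each $\g'x_0$ sits deep in a $\La$-free horoball tangent to $\la_{\g'}x_0$; a uniform-neighbourhood argument near $x_0$ — translating the tangency data to $x_0$ and using that $\La$ is a group, so that every $\La$-free horoball which is (almost) tangent to $\La\cdot x_0$ lies in a uniformly bounded neighbourhood of it — then forces all of these horoballs to be based at one and the same point, which one identifies with the base point of $\hg$, hence a point of $\wqg$. Consequently the $\la_{\g'}x_0$ all lie on a single horosphere $\hl$ parallel to $\hg$ (so also based at that point of $\wqg$), and the horoball bounded by $\hl$ is $\La$-free. Since $\G_{\hg}\cdot x_0$ is a cocompact metric lattice in $\hg$ and $\g'\mapsto\la_{\g'}x_0$ distorts distances boundedly — by the uniform control of Step~2 with Remark~\ref{qr: prelims: rmk: heintze-im hof results about metric on horospheres} and Proposition~\ref{qr: prelims: sym spc: prop: parallel horospheres and other properties} — the set $\La\cdot x_0\cap\hl$ is a cocompact metric lattice in $\hl$. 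Finally I would invoke Lemma~\ref{qr: lem: cocompact metric lattice of horospheremay be intersected with the stabilizer of horosphere}, applied to $\La$ (after passing to a finite-index torsion-free subgroup if necessary, which does not affect the conclusion), to upgrade this to a cocompact metric lattice coming from $\La\cap\mathrm{Stab}_G(\hl)$, completing the proof.

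The hard part will be Step~2 together with the base-point-pinning in Step~3: turning the soft statement ``$\g x_0$ is far from $\La\cdot x_0$'' into the sharp statement ``$\g x_0$ is deep inside one particular $\La$-free horoball'', with every error term controlled by data independent of $\g$, and then running the pigeonhole over the cocompact lattice $\G_{\hg}$ to force a single common base point. This is precisely where the fine geometry of horospheres in $X$ — small-angle fellow-travelling, Hattori's penetration lemma, parallelism of horospheres, and the Heintze--Im~Hof comparison of the induced metrics — is used in an essential and quantitative way.
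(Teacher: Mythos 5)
Your outline reproduces the paper's architecture almost step for step ($\La$-free horoballs as limits of the balls $B_n'$; deep penetration of $\g x_0$ for large $d_\g$; almost-cocompactness of maximal $\La$-free horospheres; a pigeonhole over the $\G$-cocompact metric lattice on $\hg$; the upgrade via Lemma~\ref{qr: lem: cocompact metric lattice of horospheremay be intersected with the stabilizer of horosphere}). The crux, however --- forcing all the horoballs tangent to the various $\la_{\g'}x_0$ to share a single base point --- is asserted rather than argued, and it does not follow from the uniform-neighbourhood statement alone. Two ingredients you never establish are doing the real work there. First, every $\La$-free horoball is based at $\wqg$ (Lemma~\ref{qr: lem: Lambda free horoballs are based at rational tits building wqg}): this needs the sublinear constraint, $\G$-cocompactness along its horospheres, Hattori's penetration estimate (Lemma~\ref{qr: prelims: sym spc: lem: Hattori penetration}) and the limit-point characterization of $\wqg$ (Corollary~\ref{qr: cor: limit points characterization of wqg rational building}); your sketch identifies the base point with that of $\hg$ only \emph{after} the fact, yet the proposition itself demands $\wqg$-membership. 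Second, distinct points of $\wqg$ are at Tits distance $\pi$; this is what powers the contradiction: if $\xi^{i(\g')}_{\la_{\g'}}\ne\xi^{i(\g)}_{\la_\g}$, the geodesic from $\g' x_0$ toward $\xi^{i(\g')}_{\la_{\g'}}$ must exit $\hbl_{\la_\g,i(\g)}$ at a point $z$ of $\hl_{\la_\g,i(\g)}$ that is still roughly $s^2 d_\g$ deep inside $\hbl_{\la_{\g'},i(\g')}$, hence the centre of a $\La$-free ball of radius $>2D_\La$ sitting on a horosphere that Lemma~\ref{qr: lem: almost Lambda cocompact horospheres} says is $D_\La$-almost $\La$-cocompact. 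Without these two facts your pigeonhole has no teeth.

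Separately, the opening of your Step 3 is off: every point of $\G\cdot x_0$ lies in the compact core and on horospheres of $\G$, so ``$d_\g$ is bounded on the compact core and the unbounded values occur on the parabolic horospheres'' is not a meaningful dichotomy and is not how the relevant $\hg$ is found. The paper picks a single $\g$ with $sd_\g$ large, lets $\hbl_{\la_\g,i(\g)}$ be the maximal $\La$-free horoball it penetrates (Proposition~\ref{qr: prop: gamma lies deep inside the la ga horoball}), and takes $\hg$ to be the $\G$-horosphere based at the \emph{same} point of $\wqg$, which then sits at depth $L\geq sd_\g$ inside $\hbl_{\la_\g,i(\g)}$; the chain $sd_\g\leq L\leq d_{\g'}\leq L+D_\La$ for every $\g'x_0\in\hg$ then falls out of $\La$-freeness plus almost-maximality. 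That is the ``short argument'' you allude to, but it runs through the $\hbl\leftrightarrow\hbg$ correspondence, not through any boundedness of $d_\g$ on the compact core.
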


Throughout Section~\ref{sec: qr: cocompact horosphere} the standing assumptions are that $\{d_\g\}_\g\in \G$ is unbounded, and $\G$ is an irreducible \qr lattice.

\paragraph{The Argument.}

The proof is by chasing down the geometric implications of unbounded $d_\g$. These implications are delicate, but similar in spirit to the straight-forward proof for uniform lattices. The proof consists of the following steps:

\begin{enumerate}
    \item Unbounded $d_\g$ results in $\La$-free horoballs $\hbl$ tangent to $\La$-orbit points. Each such horoball is based at $\wqg$, giving rise to corresponding horoballs of $\G$, denoted $\hbg$.
    
    \item  If $d_\g$ is large, then $\g x_0$ must lie deep inside a unique $\La$-free horoball tangent to $\la_\g x_0$. I use: 
    \begin{enumerate}
        \item A bound on the distance $d(\hl,\hg)$.
        \item A bound on the angle $\angle_{\la_\g x_0}([\la_\g x_0,\g x_0],[\la_\g x_0,\xi))$, where $\xi\in X(\infty)$ is the base point of a suitable $\La$-free horoball tangent to $\la_\g x_0$.
    \end{enumerate}
    
    \item There exist horospheres of $\G$, say $\hg$, such that if $\g x_0\in \hg$ then large $d_\g$ implies large $\La$-free areas along the bounding horosphere of some $\hbl$. 
    
    \item If $\hbl$ is (uniformly) boundedly close to some $\La$-orbit point, then I show that $\hl$ is \emph{almost $\La$-cocompact}, that is $\hl\subset \mn_D(\La\cdot x_0)$ for some universal $D=D(\La)$. Together with the previous step, this yields a uniform bound on $d_\g$ along certain horospheres of $\G$.
    
    \item Finally I elevate the almost cocompactness to actual cocompactness and show $\hl\subset \mn_D(\La\cdot x_0\cap \hl)$ for some $D>0$. This immediately elevates to $\hl\subset (\La\cap \mathrm{Stab}_G(\hl))\cdot x_0$, proving the proposition. 
    \end{enumerate}
    
\paragraph{The Properties of $\G$.}
The geometric properties of $\G$ that are used in the proof are: 

\begin{enumerate}
     \item In higher rank, the characterization of $\wqg$ using conical / non-horospherical limit points (Corollary~\ref{qr: cor: limit points characterization of wqg rational building}). In $\rr$, the dichotomy of limit points being either non-horospherical or conical (Theorem \ref{qr: sym spc: thm: in rank 1 all horospherical limit points are conical}). 
     
    \item $\G$-cocompactness along the horospheres of $\G$.
    \item For every point $x\in X$ and $C>0$ there is a bound $K(C)$ on the number of horospheres of $\G$ that intersect $B(x,C)$ (Corollary~\ref{qr: prelims: sym spc: cor: every ball intersects finitely many horoballs of Gamma}).
\end{enumerate}

\subsubsection{\texorpdfstring{$\La$}{}-Free Horoballs}\label{sec: Lambda free horoballs}
I retain the notations and objects defined in Section~\ref{sec: U: Notations}.

\begin{lem} \label{qr: lem: existence of Lambda free horoballs}
There exists a $\La$-free horoball tangent to $x_0$.
\end{lem}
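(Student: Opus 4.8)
The plan is to extract a $\La$-free horoball as a geometric limit of the large $\La$-free balls $B_\g'$ that sit tangent to $x_0$. Since $\{d_\g\}_{\g\in\G}$ is unbounded by the standing assumption, I first choose a sequence $\g_n\in\G$ with $d_n:=d_{\g_n}\to\infty$. Recall from the notation of Section~\ref{sec: U: Notations} that $B_n'=B(x_n',d_n)$ is $\La$-free (being a $\La$-translate of the $\La$-free ball $B_{\g_n}$) and tangent to $x_0$, where $|x_n'|=d_n$. So I have a sequence of ever-larger $\La$-free balls, each tangent to the fixed point $x_0$, with centres $x_n'$ lying on the sphere of radius $d_n$ about $x_0$.

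Next I pass to the visual boundary. Write $\eta_n:=[x_0,x_n']$ for the unit-speed geodesic segment from $x_0$ towards the centre of $B_n'$; equivalently $x_n'=\eta_n(d_n)$. The initial velocity vectors $\dot\eta_n(0)$ live in the unit tangent sphere at $x_0$, which is compact, so after passing to a subsequence I may assume $\dot\eta_n(0)\to v$ for some unit vector $v\in T_{x_0}X$. Let $\xi:=\eta(\infty)\in X(\infty)$, where $\eta$ is the geodesic ray from $x_0$ with $\dot\eta(0)=v$. I claim the horoball $\hbl:=\hb(x_0,\xi)$ based at $\xi$ and passing through $x_0$ is the desired $\La$-free horoball tangent to $x_0$: tangency at $x_0$ holds by construction (it is the unique horoball based at $\xi$ with $x_0$ on its bounding horosphere). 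For $\La$-freeness, suppose towards contradiction that some $\la x_0$ lies in the open horoball $\hbl$. Then $\la x_0$ has strictly negative Busemann value $f_\xi(\la x_0)=:-2\delta<0$. The balls $B_n'=B(\eta_n(d_n),d_n)$ are exactly the balls $\hb(x_0,\eta_n(\infty))$ truncated — more precisely, by the convexity/comparison facts recorded in Proposition~\ref{qr: prelims: sym spc: prop: parallel horospheres and other properties} and Lemma~\ref{qr: prelims: sym spc: cor: small angle geodesic fellow travel}, a ball of radius $d_n$ tangent to $x_0$ with centre on the ray towards $\eta_n(\infty)$ exhausts, as $d_n\to\infty$ and $\eta_n(\infty)\to\xi$, any compact portion of the interior of $\hbl$. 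Concretely: since $\la x_0$ is at bounded distance from $x_0$ and $f_\xi(\la x_0)<0$, for $n$ large the angle $\measuredangle_{x_0}\big([x_0,\la x_0],\eta_n\big)$ is close to $\measuredangle_{x_0}\big([x_0,\la x_0],\eta\big)<\pi/2$, and a CAT(0) comparison estimate then shows $\la x_0\in B(\eta_n(d_n),d_n)=B_n'$ for all large $n$ — contradicting $\La$-freeness of $B_n'$.

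The main obstacle is making the last geometric-limit step rigorous: one must verify that the $\La$-free balls $B_n'$, which are all tangent to $x_0$ but have centres drifting off to infinity in the converging directions $\dot\eta_n(0)\to v$, do converge (in the sense of eventually containing any fixed compact subset of the interior) to the horoball $\hbl=\hb(x_0,\xi)$. This is a standard fact about horoballs as limits of balls through a common point in a CAT(0) space, and the tools are all available in the excerpt — the convexity of distance and Busemann functions, the continuity of the Busemann cocycle in the cone topology, and the fellow-traveling Lemma~\ref{qr: prelims: sym spc: cor: small angle geodesic fellow travel} — but the bookkeeping (uniform over the compact set containing $\la x_0$, and over the subsequence) needs a little care. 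Everything else is routine: compactness of the unit tangent sphere gives the converging direction, and $\La$-freeness of $\hbl$ transfers from $\La$-freeness of the $B_n'$ by this limiting argument.
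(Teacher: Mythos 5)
Your proposal is correct and follows essentially the same route as the paper: translate the unboundedly large $\La$-free balls to be tangent at $x_0$, use compactness of the unit tangent sphere to extract a limit direction $\xi$, and show the horoball $\hb(x_0,\xi)$ is $\La$-free because the balls $B_n'$ eventually engulf any fixed interior point (the paper phrases this as $\hb\subset\mn_\varepsilon\big(\bigcup_n B_n'\big)$ for every $\varepsilon>0$ via Lemma~\ref{qr: prelims: sym spc: cor: small angle geodesic fellow travel}, you phrase it as a contradiction with a putative orbit point of negative Busemann value). The only cosmetic caveat is that the angle-at-$x_0$ remark is not by itself what closes the argument — the clean estimate is $d(\la x_0,\eta(T))<T-\delta$ for a fixed large $T$ combined with $\eta_n(T)\to\eta(T)$ — but you supply exactly that via the fellow-traveling step, so the proof goes through.
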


\begin{proof}
Since $\{d_\g\}_{\g\in\G}$ is unbounded, there are $\g_n\in\G$ with $d_n=d_{\g_n}=d(\g_n,\la_n)\rightarrow \infty$ monotonically, where $\la_n\in \La$ is a $\La$-orbit point closest to $\g_n$. Denote $x_n'=\la_n^{-1}\g_n x_0$, $\eta_n:=[x_0,x_n']$, and $v_n\in S_{x_0}X$ the initial velocity vectors $v_n:=\dot{\eta_n}(0)$. The tangent space $S_{x_0}X$ is compact, so up to a subsequence, $v_{n}$ converge monotonically in angle to a direction $v\in S_{x_0}X$. Let $\eta$ be the unit speed geodesic ray emanating from $x_0$ with initial velocity $\dot{\eta}(0)=v$. Denote $\xi:=\eta(\infty)$ the limit point of $\eta$ in $X(\infty)$. 

I claim that the horoball $\hb:=\cup_{t>0}B\big(\eta(t),t\big)$, based at $\xi$ and tangent to $x_0$, is $\Lambda$-free. Let $t>0$ and consider $\eta(t)$. For every $\varepsilon>0$, there is some angle $\alpha=\alpha(t,\varepsilon)$ such that any geodesic $\eta'$ with $\angle_{x_0}(\eta,\eta')<\alpha$ admits $d\big(\eta(t),\eta'(t)\big)<\varepsilon/2$. The convergence $v_n\rightarrow v$ implies $d\big(\eta(t),\eta_n(t)\big)<\varepsilon/2$ for all but finitely many $n\in\mathbb{N}$. In particular, $B\big(\eta(t),t\big)\subset \mn_\varepsilon \Big(B\big(\eta_n(t),t\big)\Big)$ for all such $n\in\mathbb{N}$.

For a fixed $t\leq d_n$, it is clear from the definitions that $B\big(\eta_n(t),t\big)\subset B_n'=B(x_n',d_n)$. One has $d_n\rightarrow \infty$, and so for a fixed $t>0$ it holds that $t < d_n$ for all but finitely many $n\in\mathbb{N}$. I conclude that for any fixed $t>0$ there is $n$ large enough such that 

$$B\big(\eta(t),t\big)\subset \mn_\varepsilon \Big(B\big(\eta_n(t),t\big)\Big)\subset \mn_\varepsilon B_n'$$

I conclude that for every $\varepsilon>0$, $\hb\subset \bigcup_n \mn_\varepsilon(B_n')=\mn_\varepsilon \big(\bigcup_n B_n'\big)$. This implies that any point in the interior of $\hb$ is contained in the interior of one of the $\La$-free balls $B_n'$, proving $\hb$ is $\La$-free.

\end{proof}

\begin{lem}\label{qr: lem: Lambda free horoballs are based at rational tits building wqg}
Suppose $\hb$ is a $\La$-free horoball, based at some point $\xi\in X(\infty)$. Then $\xi\in\wqg$. 
\end{lem}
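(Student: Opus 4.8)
The plan is to argue by contradiction, extracting a deep $\La$-orbit point inside $\hb$ from the fact that $\G$ sublinearly covers $\La$. Suppose $\xi\notin\wqg$. Since $\G$ is a \qr lattice, Corollary~\ref{qr: cor: limit points characterization of wqg rational building}(1) applies: $\xi\notin\wqg$ means that $\mn_{\frac{\pi}{2}}(\xi)$ contains a $\G$-conical limit point, i.e.\ there is a $\G$-conical $\zeta\in X(\infty)$ with $\alpha:=d_T(\xi,\zeta)<\fpi2$. (In the $\mathbb{R}$-rank $1$ case any two boundary points are at Tits distance $\pi$, so necessarily $\zeta=\xi$ and $\xi$ itself is $\G$-conical — the argument below then collapses to the statement ``$\xi$ conical $\Rightarrow$ every horoball at $\xi$ meets $\La\cdot x_0$''.) Write $\hb=\{b<0\}$, where $b$ is the $1$-Lipschitz Busemann function based at $\xi$ normalised so that the bounding horosphere is $\{b=0\}$.

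Next I would quantify how fast a ray to $\zeta$ sinks into $\hb$. Fix a geodesic ray $\eta$ with $\eta(\infty)=\zeta$. Applying Lemma~\ref{qr: prelims: sym spc: lem: Hattori penetration}(3) with $\eta_1=\eta$ and $\eta_2$ a ray to $\xi$ (and Remark~\ref{qr: prelims: sym spc: rmk: Hattori penetration also good for rank 1} in the rank-one case), there is a constant $C$ with $b(\eta(t))\leq -t\cos\alpha-C$ for all $t\geq0$; since $\alpha<\fpi2$ we have $\cos\alpha>0$, so $\eta$ penetrates $\hb$ at linear rate. Because $\zeta$ is $\G$-conical there are $\g_n\in\G$ and times $t_n\to\infty$ (divergence of $t_n$ follows from discreteness of $\G$) with $d(\g_nx_0,\eta(t_n))\leq D$ for a fixed $D$. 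Then $b(\g_nx_0)\leq b(\eta(t_n))+D\leq -t_n\cos\alpha+(D-C)\to-\infty$, so $\g_nx_0$ lies arbitrarily deep in $\hb$, while at the same time $|\g_nx_0|\leq d(\g_nx_0,\eta(t_n))+t_n+|\eta(0)|\leq t_n+D+|\eta(0)|$.

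Finally I would invoke the sublinear covering $\G\subset\mn_u(\La)$. Let $\la_nx_0$ be a $\La$-orbit point closest to $\g_nx_0$; then $d(\g_nx_0,\la_nx_0)\leq u(|\g_nx_0|)\leq u\big(t_n+D+|\eta(0)|\big)=o(t_n)$ by sublinearity of $u$. Since $b$ is $1$-Lipschitz,
$$b(\la_nx_0)\ \leq\ b(\g_nx_0)+d(\g_nx_0,\la_nx_0)\ \leq\ -t_n\cos\alpha+o(t_n)+(D-C)\ <\ 0$$
for all large $n$, so $\la_nx_0\in\hb$, contradicting the hypothesis that $\hb$ is $\La$-free. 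Hence $\xi\in\wqg$. I expect the only genuinely non-formal point to be the higher-rank reduction: one must convert ``$\xi\notin\wqg$'' into a linearly-penetrating ray through $\hb$ that is approximated by $\G$-orbit points, and this is precisely what the Tits-distance characterization of $\wqg$ combined with Hattori's penetration estimate supplies; once that linear rate is in hand, the clash with the sublinear error term $u$ is a one-line estimate.
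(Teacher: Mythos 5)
Your proof is correct and follows essentially the same route as the paper: both rest on Hattori's penetration estimate (Lemma~\ref{qr: prelims: sym spc: lem: Hattori penetration}) to get linear penetration of a ray toward a conical point in $\mn_{\pi/2}(\xi)$, and on the clash between that linear depth and the sublinear bound $d(\g x_0,\La\cdot x_0)\leq u(|\g|)$, concluding via Corollary~\ref{qr: cor: limit points characterization of wqg rational building}. The only difference is organizational (you argue by contradiction from $\xi\notin\wqg$, while the paper directly shows no point of $\mn_{\pi/2}(\xi)$ is conical), and your write-up usefully makes explicit the Busemann-function estimates the paper leaves informal.
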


\begin{proof}
For any geodesic $\eta$ with limit $\xi$, the size $d(x_0,\g x_0)$ of the $\G$-orbit points $\g x_0$ that lie boundedly close to $\eta$ grows linearly in the distance to any fixed horosphere based at $\xi$, and in particular to $\h=\partial\hb$. The sublinear constraint $d(\g x_0,\la_\g x_0)\leq u(|\g|)$ together with the fact that $\hb$ is $\La$-free imply that the size of such $\g$ is bounded. In $\rr$ every limit point is either conical or in $\wqg$, proving the lemma in this case.  For higher rank, the above argument actually shows more: it shows that a point $\xi'\in \mn_\frac{\pi}{2}(\xi)$ is not conical, because every geodesic with limit $\xi'\in \mn_\frac{\pi}{2}(\xi)$ entres $\hb$ at a linear rate (Lemma~\ref{qr: prelims: sym spc: lem: Hattori penetration}).  Hattori's characterization of $\wqg$ (Corollary~\ref{qr: cor: limit points characterization of wqg rational building}) implies $\xi\in\wqg$. 

\end{proof}

\begin{defn}
Given a $\La$-free horoball $\hbl$, Lemma~\ref{qr: lem: Lambda free horoballs are based at rational tits building wqg} gives rise to a horoball of $\G$ based at the same point at $X(\infty)$. Call this the \emph{horoball corresponding to $\hbl$}, and denote it by $\hbg$. The corresponding horosphere is denoted $\hg$.
\end{defn}

\begin{rmk}
In the course of my work I had had a few conversations with Omri Solan regarding the penetration of geodesics into $\La$-free horoballs. Assuming $\La\subset\mn_u(\G)$ implies that $\La$ preserves $\wqg$ (see Lemma~\ref{qr: lem: if Lambda is in sublinear of Gamma, then Lambda preserves wqg rational tits building}). This is the case in the motivating setting where  $\La$ is an abstract finitely generated group that is SBE to $\G$, see Claim~\ref{sbe: claim: sublinear bound to Lambda} in Chapter~\ref{sec: SBE chapter}. In the case of $SL_2(\mathbb{R})$ Omri suggested to use the action of $\La$ on the Bruhat-Tits tree of $SL_2(\mathbb{Q}_p)$ (for all primes $p$) and the classification of these elements into elliptic and hyperbolic elements (separately for each $p$) in order to deduce that $\La$ actually lies in $SL_2(\mathbb{Z})$. We did not pursue that path nor its possible generalization to the $SL_n$ case and general Bruhat-Tits buildings.
\end{rmk}

\subsubsection{A \texorpdfstring{$\G$-orbit}{} point Lying Deep Inside a \texorpdfstring{$\La$}{}-Free Horoball} \label{sec: Gamma points deep in Lambda free horoballs}

I established the existence of $\La$-free horoballs. It may seem odd that the first step in proving $\La\cdot x_0$ is `almost everywhere' is proving the existence of $\La$-free regions. But this fits perfectly well with the algebraic statement that non-uniform lattices must admit unipotent elements (see Proposition $5.3.1$ in \cite{IntroArithmeticDAVEWITTEMORRIS}). The goal of this section is to obtain some control on the location of the $\La$-free horoballs, in order to conclude that some $\g x_0$ lies deep inside $\hbl$. This results in yet more $\La$-free regions, found on the bounding horosphere.

I need one property of sublinear functions. I thank Panos Papazoglou for noticing a mistake in the original formulation.

\begin{lem} \label{qr: lem: property 1 of sublinear functions}
Let $u$ be a sublinear function, $f,g:\mrnn\longrightarrow\mrp$ two positive monotone functions with $\lim_{x\rightarrow \infty}f(x)+g(x)=\infty$. If for all large enough $x$ it holds that $f(x)\leq u\big(f(x)+g(x)\big)$, then for every $1<s$ and all large enough $x$ it holds that $f(x)\leq u\big(s\cdot g(x)\big)$. In particular  $f(x)\leq u'\big(g(x)\big)$ for some sublinear function $u'$.
\end{lem}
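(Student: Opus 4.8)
The statement is an elementary fact about sublinear functions, so the proof is a short direct manipulation — no deep input is needed. The hypothesis is $f(x) \leq u\big(f(x)+g(x)\big)$ for all large $x$, together with $f+g \to \infty$ and monotonicity of $f,g$; the goal is to upgrade the argument $f(x)+g(x)$ inside $u$ to $s\cdot g(x)$ for an arbitrary $s>1$ (at the cost of shrinking the domain to large enough $x$), and then to conclude existence of a single sublinear $u'$ with $f \leq u'\circ g$.

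First I would dispose of the main step via a dichotomy on the relative sizes of $f(x)$ and $g(x)$. Fix $s > 1$ and set $s' := s-1 > 0$. For each large $x$, either $f(x) \leq s'\, g(x)$, in which case $f(x)+g(x) \leq s'g(x) + g(x) = s\, g(x)$ and monotonicity of $u$ gives immediately $f(x) \leq u\big(f(x)+g(x)\big) \leq u\big(s\, g(x)\big)$; or $f(x) > s'\, g(x)$, in which case $f(x)+g(x) < f(x) + \tfrac{1}{s'} f(x) = \tfrac{s}{s'} f(x)$, so the hypothesis combined with monotonicity of $u$ yields $f(x) \leq u\big(\tfrac{s}{s'} f(x)\big)$. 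In this second case I use sublinearity of $u$: since $u(r)/r \to 0$, for any constant $c = s/s'$ we have $u(c r)/r \to 0$ as $r \to \infty$, hence $u\big(c f(x)\big) < f(x)$ for all sufficiently large values of $f(x)$ — but this contradicts $f(x) \leq u\big(c f(x)\big)$ unless $f(x)$ stays bounded as $x$ ranges over the set where the second case holds. One must be slightly careful here: the second case could in principle occur along a sequence $x_n$ with $f(x_n)$ bounded. In that event $g(x_n)$ is also bounded (since $f(x_n) > s' g(x_n)$), contradicting $f+g \to \infty$; so the second case occurs only for $x$ in a bounded set, i.e. not for large $x$. Therefore for all large enough $x$ the first case holds and $f(x) \leq u\big(s\, g(x)\big)$, which is the main assertion. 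I expect the bookkeeping around "which case holds for which $x$, and why $f+g\to\infty$ forces the good case eventually" to be the only subtle point — it is where the earlier (erroneous) formulation presumably went wrong, per the acknowledgment to Papazoglou.

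For the final sentence — existence of a single sublinear $u'$ with $f(x) \leq u'\big(g(x)\big)$ for all large $x$ — I would simply take $s = 2$ (say) in the previous paragraph, so $f(x) \leq u\big(2 g(x)\big)$ for all large $x$, and define $u'(r) := u(2r)$. This $u'$ is non-decreasing, subadditive (since $u'(r_1+r_2) = u(2r_1 + 2r_2) \leq u(2r_1) + u(2r_2) = u'(r_1)+u'(r_2)$), and sublinear: $u'(r)/r = 2\, u(2r)/(2r) \to 0$. It is also doubling because $u$ is. Hence $u'$ is sublinear in the sense of the paper's Definition, and $f \leq u' \circ g$ for large argument, completing the proof. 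If one prefers a bound valid for \emph{all} $x \geq 0$ rather than just large $x$, one absorbs the finitely-controlled initial segment by adding a constant (using that $f$ is bounded on compact sets), which does not affect sublinearity. The only genuine obstacle is the case analysis above; everything else is routine.
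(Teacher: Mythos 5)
Your proof is correct and follows essentially the same route as the paper: both arguments reduce to showing that $f(x)\leq (s-1)\,g(x)$ for all large $x$ and then apply monotonicity of $u$ to $f(x)+g(x)\leq s\,g(x)$. The only cosmetic difference is that the paper obtains this eventual comparison directly from $\tfrac{f(x)}{f(x)+g(x)}\leq \tfrac{u(f(x)+g(x))}{f(x)+g(x)}\to 0$, whereas you reach it by a dichotomy and a contradiction with $f+g\to\infty$; both are sound.
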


\begin{proof}
Assume as one may that $u$ is non-decreasing. By definition of sublinearity $\lim_{x\rightarrow\infty}\frac{u\big(f(x)+g(x)\big)}{f(x)+g(x)}=0$, so by hypothesis $\lim_{x\rightarrow\infty}\frac{f(x)}{f(x)+g(x)}=0$. This means that for every $\varepsilon>0$ one has  $f(x)\leq \varepsilon\cdot g(x)$ for all large enough $x$, resulting in 
$$f(x)\leq u\big(f(x)+g(x)\big)\leq u\big((1+\varepsilon)\cdot g(x)\big)$$

Notice that for a fixed $s>0$, the function $u'(x)=u(sx)$ is sublinear, as $$\lim_{x\rightarrow \infty}\frac{u(sx)}{x}=\lim_{x\rightarrow \infty} s\cdot \frac{u(sx)}{sx}=0$$ 

\end{proof}

\begin{lem} \label{qr: lem: distance bound between hbl and hbg near x_0}
Let $C>0$. There is $L=L(C)$ such that if $\hbl$ is any $\La$-free horoball tangent to a point $x\in B(x_0,C)$ then $d(\hl,\hg)\leq L$. Moreover, there is a sublinear function $u'$ such that:

 \begin{equation*}
L(C) \leq 
    \begin{cases}
        u'(C) & \text{if } \hbg\subset \hbl \\
        C & \text{if } \hbl\subset \hbg
    \end{cases}
\end{equation*}

\end{lem}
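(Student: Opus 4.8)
The plan is to control the distance $d(\hl,\hg)$ by relating it to the sizes of the $\G$-orbit points that sit on $\hl$ (whose existence follows from sublinear covering) and on $\hg$ (whose existence is guaranteed because $\hg$ is a horosphere of the \qr lattice $\G$, so $\G\cdot x_0\cap\hg$ is a $D_\G$-cocompact metric lattice by Corollary~\ref{qr: prelims: sym spc: cor: every ball intersects finitely many horoballs of Gamma}). Write $\xi\in\wqg$ for the common base point, and let $f$ be the Busemann function based at $\xi$ normalized so that $f\equiv 0$ on $\hg$. Since the horoballs $\hbl,\hbg$ are parallel (Proposition~\ref{qr: prelims: sym spc: prop: parallel horospheres and other properties}), $\hl=\{f=-l\}$ where $l:=d(\hl,\hg)$ if $\hbg\subset\hbl$, and $\hl=\{f=+l\}$ if $\hbl\subset\hbg$ (up to sign conventions). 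The two cases need slightly different treatments, which is exactly why the bound in the statement is a case split.

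First I would dispose of the easy case $\hbl\subset\hbg$. Here $\hl$ lies \emph{outside} the $\G$-horoball $\hbg$, i.e.\ inside the compact core of $\G$, on which $\G$ acts cocompactly. The horoball $\hbl$ is tangent to a point $x\in B(x_0,C)$, so $x\in\hl$ and $d(x_0,\hl)\le d(x_0,x)\le C$; since $\hg$ separates $x_0$ from $\hl$ in this configuration (as $x_0\notin\hbg$, being on a horosphere of $\G$ through $x_0$ — here one uses that the $\La$-free horoball is tangent to a point near $x_0$ and its corresponding $\hg$ is a horosphere of $(\G,x_0)$), one gets $d(\hl,\hg)\le d(x_0,\hl)\le C$ directly from Proposition~\ref{qr: prelims: sym spc: prop: parallel horospheres and other properties}(2), the fact that the Busemann function measures signed distance to the horosphere. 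This gives the clean bound $L(C)\le C$.

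The substantive case is $\hbg\subset\hbl$, i.e.\ the $\La$-free horoball is \emph{larger} than the corresponding $\G$-horoball, so $\hl$ dips into the $\G$-cusp. The strategy: pick a $\G$-orbit point $\g x_0\in\hg$ close to $P_{\hg}(x)$ (within $D_\G$ of it, using cocompactness of $\G\cdot x_0$ on $\hg$), so $|\g|\le C'+D_\G$ for $C'$ depending on $C$. Now the geodesic from $\g x_0$ towards $\xi$ must penetrate $\hbl$ (it enters every horoball based at $\xi$), so by the time it crosses $\hl$ it has travelled distance $l=d(\hl,\hg)$. But $\La\cdot x_0$ does not meet the interior of $\hbl$, so the point where this geodesic hits $\hl$ — call it $y$, with $|y|$ bounded linearly in $C$ and $l$ — satisfies $d(\g x_0, \La\cdot x_0)\ge$ (distance from $\g x_0$ to $\hl$) $= l$. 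Wait: more carefully, one shows a $\G$-orbit point sitting near $\hl$ is forced by the sublinear constraint $d_\g\le u(|\g|)$ and $\La$-freeness of $\hbl$. Concretely: since $\hg\subset\hbl$ and $\G\cdot x_0\cap\hg$ is nonempty with bounded size $\le C'+D_\G$, and since these $\g$'s have $d_\g=d(\g x_0,\La\cdot x_0)\ge d(\g x_0,\hl)$ because $\hbl$ is $\La$-free, we get $d(\g x_0,\hl)\le d_\g\le u(|\g|)\le u(C'+D_\G)$. But $d(\g x_0,\hl)=l$ for $\g x_0\in\hg$ (all points of $\hg$ are equidistant from the parallel $\hl$), giving $l\le u(C'+D_\G)$, a \emph{bounded} quantity once $C$ is fixed. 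Then I would invoke Lemma~\ref{qr: lem: property 1 of sublinear functions} with $f(C)=l$-type terms and $g(C)$ the $g$-part to upgrade this to $L(C)\le u'(C)$ for a genuine sublinear $u'$ — the point of that lemma being to absorb the additive constants $D_\G$, $C'-C$ into a sublinear reparametrization so that the bound is sublinear in $C$ itself rather than merely bounded for each fixed $C$. The main obstacle I anticipate is bookkeeping the precise relation between $C$ (the radius around $x_0$), $C'$ (the size bound on the relevant $\G$-orbit point near $P_{\hg}(x)$), and how $|\g|$ enters $u$, so that the final inequality genuinely has the form $f\le u(f+g)$ required to apply Lemma~\ref{qr: lem: property 1 of sublinear functions}; handling the degenerate sub-case where $\hl=\hg$ (so $l=0$) and checking that $x\in\hl$ really does control $d(x_0,\hg)$ are the places where one must be careful about which horosphere of $\G$ is "the" corresponding one.
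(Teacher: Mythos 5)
Your proposal follows essentially the same route as the paper: the case $\hbl\subset\hbg$ is handled by the separation of $x_0$ from $\hl$ by the $\G$-free horosphere $\hg$, and the case $\hbg\subset\hbl$ by producing $\g x_0\in\hg$ with $|\g|\le C+l+D_\G$, using $\La$-freeness of $\hbl$ to get $l\le u(C+l+D_\G)$, and then invoking Lemma~\ref{qr: lem: property 1 of sublinear functions} to obtain the sublinear bound $L(C)\le u'(C)$. Two small wording issues, neither a real gap: in the first case $\hl$ lies \emph{inside} $\hbg$ (not outside — that is precisely what makes the separation argument work), and your $C'$ depends on $l$ as well as on $C$, but you correctly flag that the resulting inequality has the self-referential form $f\le u(f+g)$, which is exactly how the paper closes the loop.
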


\begin{proof}
If $\hbl\subset\hbg$, then clearly $d(\hl,\hg)\leq C$, simply because $\hbg$ is $\G$-free and in particular cannot contain $x_0$. Therefore $\hg$ must separate $\hl$ from $x_0$ and in particular $d(\hl,\hg)\leq C$.

Assume that $\hbg\subset\hbl$, and denote $l=d(\hl,\hg)$. The horoball $\hbg$ is a horoball of $\G$, hence $\G\cdot x_0$ is $D_\G$-cocompact along $\hg$ and there is an element $\g\in \G$ with $|\g|\leq C+l+D_\G$ and $\g x_0\in \hg$. Since $\hbl$ is $\La$-free one has 

$$l\leq d(\g x_0,\la_\g x_0)\leq u(|\g|)\leq u(C+l+D_\G)$$

$C,D_\G$ are fixed, so this inequality can only occur for boundedly small $l$, say $l<L'(C)$ ($D_\G$ is a universal constant and may be ignored). Consult figure~\ref{fig: bound on distance of hbl and hbg horoballs at x_0} for a geometric visualization of this situation. 

It remains to show that $L'(C)$ is indeed sublinear in $C$. First define $L(C)$ to be the minimal $L$ that bounds the distance $d(\hl,\hg)$ for all possible $\hbl$ tangent to a point $x\in B(x_0,C)$. This is indeed a minimum, since by Corollary~\ref{qr: prelims: sym spc: cor: every ball intersects finitely many horoballs of Gamma} there are only finitely many horoballs of $\G$ intersecting $B\big(x_0,C+L'(C)\big)$. For every $C$ there is thus a horoball $\hbg_C$ and an element $\g\in \G$ such that $\g x_0\in \hg$, $d(\hl_C,\hg_C)=L(C)$ and $|\g|\leq C+L(C)+D_\G$. The fact that $\hbl_C$ is $\La$-free implies 
$$L(C)=d(\hl_C,\hg_C)\leq u(|\g|)\leq u\big(C+L(C)+D_\G\big)$$

Lemma~\ref{qr: lem: property 1 of sublinear functions} implies that $L(C)\leq u'(C)$ for some sublinear function $u'$.
\end{proof}

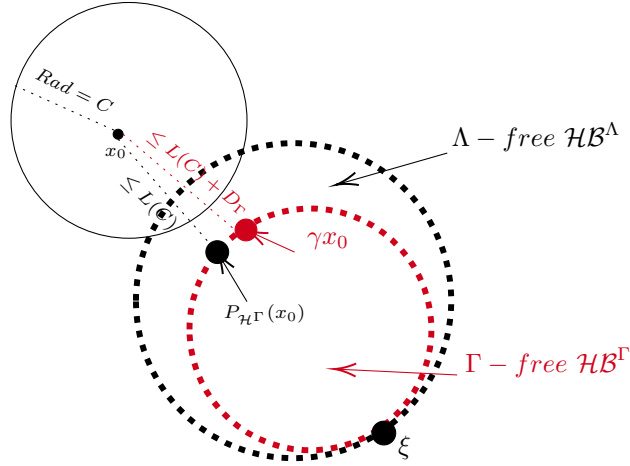
\begin{figure}
    \centering

\begin{tikzpicture}[x=0.75pt,y=0.75pt,yscale=-1,xscale=1]

\draw  [fill={rgb, 255:red, 0; green, 0; blue, 0 }  ,fill opacity=1 ] (120.5,74.5) .. controls (120.5,73.12) and (119.38,72) .. (118,72) .. controls (116.62,72) and (115.5,73.12) .. (115.5,74.5) .. controls (115.5,75.88) and (116.62,77) .. (118,77) .. controls (119.38,77) and (120.5,75.88) .. (120.5,74.5) -- cycle ;
\draw   (64,67.5) .. controls (64,34.64) and (90.64,8) .. (123.5,8) .. controls (156.36,8) and (183,34.64) .. (183,67.5) .. controls (183,100.36) and (156.36,127) .. (123.5,127) .. controls (90.64,127) and (64,100.36) .. (64,67.5) -- cycle ;
\draw  [dash pattern={on 0.84pt off 2.51pt}]  (66,50) -- (120.5,74.5) ;
\draw  [dash pattern={on 2.53pt off 3.02pt}][line width=2.25]  (127,158.5) .. controls (127,114.59) and (162.59,79) .. (206.5,79) .. controls (250.41,79) and (286,114.59) .. (286,158.5) .. controls (286,202.41) and (250.41,238) .. (206.5,238) .. controls (162.59,238) and (127,202.41) .. (127,158.5) -- cycle ;
\draw    (284,84) -- (229.92,99.45) ;
\draw [shift={(228,100)}, rotate = 344.05] [color={rgb, 255:red, 0; green, 0; blue, 0 }  ][line width=0.75]    (10.93,-3.29) .. controls (6.95,-1.4) and (3.31,-0.3) .. (0,0) .. controls (3.31,0.3) and (6.95,1.4) .. (10.93,3.29)   ;
\draw  [color={rgb, 255:red, 208; green, 2; blue, 27 }  ,draw opacity=1 ][dash pattern={on 2.53pt off 3.02pt}][line width=2.25]  (154,173) .. controls (154,139.31) and (181.31,112) .. (215,112) .. controls (248.69,112) and (276,139.31) .. (276,173) .. controls (276,206.69) and (248.69,234) .. (215,234) .. controls (181.31,234) and (154,206.69) .. (154,173) -- cycle ;
\draw  [fill={rgb, 255:red, 0; green, 0; blue, 0 }  ,fill opacity=1 ] (258,225.33) .. controls (258,228.65) and (255.31,231.33) .. (252,231.33) .. controls (248.69,231.33) and (246,228.65) .. (246,225.33) .. controls (246,222.02) and (248.69,219.33) .. (252,219.33) .. controls (255.31,219.33) and (258,222.02) .. (258,225.33) -- cycle ;
\draw [color={rgb, 255:red, 208; green, 2; blue, 27 }  ,draw opacity=1 ]   (289,196) -- (229,193.1) ;
\draw [shift={(227,193)}, rotate = 2.77] [color={rgb, 255:red, 208; green, 2; blue, 27 }  ,draw opacity=1 ][line width=0.75]    (10.93,-3.29) .. controls (6.95,-1.4) and (3.31,-0.3) .. (0,0) .. controls (3.31,0.3) and (6.95,1.4) .. (10.93,3.29)   ;
\draw  [dash pattern={on 0.84pt off 2.51pt}]  (118,77) -- (168,134) ;
\draw  [color={rgb, 255:red, 208; green, 2; blue, 27 }  ,draw opacity=1 ][fill={rgb, 255:red, 208; green, 2; blue, 27 }  ,fill opacity=1 ] (188,122.83) .. controls (188,119.8) and (185.54,117.33) .. (182.5,117.33) .. controls (179.46,117.33) and (177,119.8) .. (177,122.83) .. controls (177,125.87) and (179.46,128.33) .. (182.5,128.33) .. controls (185.54,128.33) and (188,125.87) .. (188,122.83) -- cycle ;
\draw  [fill={rgb, 255:red, 0; green, 0; blue, 0 }  ,fill opacity=1 ] (173.75,134) .. controls (173.75,130.82) and (171.18,128.25) .. (168,128.25) .. controls (164.82,128.25) and (162.25,130.82) .. (162.25,134) .. controls (162.25,137.18) and (164.82,139.75) .. (168,139.75) .. controls (171.18,139.75) and (173.75,137.18) .. (173.75,134) -- cycle ;
\draw [color={rgb, 255:red, 208; green, 2; blue, 27 }  ,draw opacity=1 ]   (207,134) -- (184.32,123.66) ;
\draw [shift={(182.5,122.83)}, rotate = 24.5] [color={rgb, 255:red, 208; green, 2; blue, 27 }  ,draw opacity=1 ][line width=0.75]    (10.93,-3.29) .. controls (6.95,-1.4) and (3.31,-0.3) .. (0,0) .. controls (3.31,0.3) and (6.95,1.4) .. (10.93,3.29)   ;
\draw    (183,159) -- (169.03,135.71) ;
\draw [shift={(168,134)}, rotate = 59.04] [color={rgb, 255:red, 0; green, 0; blue, 0 }  ][line width=0.75]    (10.93,-3.29) .. controls (6.95,-1.4) and (3.31,-0.3) .. (0,0) .. controls (3.31,0.3) and (6.95,1.4) .. (10.93,3.29)   ;
\draw [color={rgb, 255:red, 208; green, 2; blue, 27 }  ,draw opacity=1 ] [dash pattern={on 0.84pt off 2.51pt}]  (120.5,74.5) -- (177,122.83) ;

\draw (77.93,38.48) node [anchor=north west][inner sep=0.75pt]  [font=\scriptsize,rotate=-25.83]  {$Rad=C$};
\draw (110,80) node [anchor=north west][inner sep=0.75pt]  [font=\scriptsize,rotate=0]  {$x_0$};
\draw (285,67) node [anchor=north west][inner sep=0.75pt]    {$\Lambda -free\ \mathcal{HB}^{\Lambda }$};
\draw (258,225.33) node [anchor=north west][inner sep=0.75pt]    {$\xi $};
\draw (292,181) node [anchor=north west][inner sep=0.75pt]  [color={rgb, 255:red, 208; green, 2; blue, 27 }  ,opacity=1 ]  {$\Gamma -free\ \mathcal{HB}^{\Gamma }$};
\draw (124.5,91.5) node [anchor=north west][inner sep=0.75pt]  [font=\scriptsize,rotate=-44.86]  {$\leq L( C)$};
\draw (212,123) node [anchor=north west][inner sep=0.75pt]  [color={rgb, 255:red, 208; green, 2; blue, 27 }  ,opacity=1 ]  {$\gamma x_{0}$};
\draw (170,159) node [anchor=north west][inner sep=0.75pt]  [font=\scriptsize]  {$P_{\mathcal{H}^{\Gamma }}( x_{0})$};
\draw (136.69,68.46) node [anchor=north west][inner sep=0.75pt]  [font=\scriptsize,color={rgb, 255:red, 208; green, 2; blue, 27 }  ,opacity=1 ,rotate=-39.32]  {$\leq L( C) +D_\G$};

\end{tikzpicture}

    \caption[Bound on the distance of horoballs near $x_0$]{Lemma~\ref{qr: lem: distance bound between hbl and hbg near x_0}. A $\La$-free horoball $\hbl$ intersects a ball of radius $C$ about $x_0$. The associated $\G$-free horoball $\hbg$ is boundedly close, essentially due to the uniform cocompactness of $\G\cdot x_0$ along the $\G$ horospheres.}
    \label{fig: bound on distance of hbl and hbg horoballs at x_0}
\end{figure}

The following is an immediate corollary, apparent already in the above proof. 

\begin{cor}\label{qr: cor: finitely many lambda horoballs at x_0}
For every $C>0$ there is a bound $K=K(C)$ and a fixed set $\xi_1,\xi_2,\dots \xi_K\in \wqg\subset X(\infty)$ so that every $\La$-free horoball $\hbl$ which is tangent to some point $x\in B(x_0,C)$ is based at $\xi_i$ for some $i\in\{1,\dots,K\}$. In particular, for any specific point $x\in \mathcal{N}_C(\La\cdot x_0)$ there are at most $K$ $\Lambda$-free horoballs tangent to $x$.
\end{cor}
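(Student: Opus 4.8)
The plan is to push everything back to horoballs tangent to points of the fixed ball $B(x_0,C)$ by using that being $\La$-free is a $\La$-invariant property, and then to bound the number of possible base points using the finiteness statement in Corollary~\ref{qr: prelims: sym spc: cor: every ball intersects finitely many horoballs of Gamma}. This is essentially a rereading of the proof of Lemma~\ref{qr: lem: distance bound between hbl and hbg near x_0}, so I expect no new difficulty.

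First I would treat horoballs tangent to a point of $B(x_0,C)$ directly. Let $\hbl$ be a $\La$-free horoball tangent to some $x\in B(x_0,C)$, with base point $\xi$. By Lemma~\ref{qr: lem: Lambda free horoballs are based at rational tits building wqg} we have $\xi\in\wqg$, so there is an associated horoball $\hbg$ of $\G$ based at the same point $\xi$, and by Lemma~\ref{qr: lem: distance bound between hbl and hbg near x_0} one has $d(\hl,\hg)\leq L(C)$ (this bound holds in both cases $\hbg\subset\hbl$ and $\hbl\subset\hbg$). Since $\hl$ and $\hg$ are based at the same point, Proposition~\ref{qr: prelims: sym spc: prop: parallel horospheres and other properties}(3) says they are parallel, hence $d(y,\hg)$ is a constant equal to $d(\hl,\hg)\leq L(C)$ for every $y\in\hl$; applying this with $y=x$ shows that $\hg$ meets $B\big(x_0,C+L(C)\big)$. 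Now Corollary~\ref{qr: prelims: sym spc: cor: every ball intersects finitely many horoballs of Gamma} gives a bound $K:=K\big(x_0,C+L(C)\big)$ on the number of horospheres of $\G$ meeting $B\big(x_0,C+L(C)\big)$; let $\xi_1,\dots,\xi_K\in\wqg$ be their (well-defined) base points. Then every $\La$-free horoball tangent to a point of $B(x_0,C)$ is based at some $\xi_i$, and we set $K(C):=K$.

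For the ``in particular'' clause I would translate by $\La$. Fix $x\in\mathcal{N}_C(\La\cdot x_0)$ and choose $\la\in\La$ with $d(x,\la x_0)\leq C$. If $\hbl$ is any $\La$-free horoball tangent to $x$, then $\la^{-1}\hbl$ is again $\La$-free (being a $\La$-translate of a $\La$-free set), it is tangent to $\la^{-1}x$, and $d(\la^{-1}x,x_0)=d(x,\la x_0)\leq C$; hence $\la^{-1}\hbl$ is based at some $\xi_i$ by the previous paragraph, so $\hbl$ is based at $\la\xi_i$. Finally, a $\La$-free horoball tangent to $x$ is uniquely determined by its base point, since through a given point $x$ there is a unique horosphere based at a prescribed point at infinity (Proposition~\ref{qr: prelims: sym spc: prop: parallel horospheres and other properties}); therefore there are at most $K$ such horoballs, one for each of $\la\xi_1,\dots,\la\xi_K$. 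The only point requiring a little care is the passage from the distance bound $d(\hl,\hg)\leq L(C)$ to the statement that $\hg$ meets a ball of radius $C+L(C)$, which is exactly where parallelism of horospheres with a common base point is used; everything else is bookkeeping.
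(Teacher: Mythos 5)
Your proposal is correct and follows essentially the same route as the paper's proof: apply Lemma~\ref{qr: lem: distance bound between hbl and hbg near x_0} to place the associated $\G$-horoball within $B\bigl(x_0,C+L(C)\bigr)$, invoke the finiteness from Corollary~\ref{qr: prelims: sym spc: cor: every ball intersects finitely many horoballs of Gamma}, and use that a horoball is determined by its base point and a tangency point together with $\La$-invariance for the final clause. Your treatment is slightly more explicit about the parallelism step and about the base points becoming $\la\xi_i$ after translation, but the argument is the same.
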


\begin{proof}
Let $\hbl$ be a horoball tangent to a point $x\in B(x_0,C)$. Lemma~\ref{qr: lem: distance bound between hbl and hbg near x_0} bounds $d(\hbl,\hbg)$ by $L(C)$, hence $\hbg$ is tangent to a point $x'\in B\big(x_0,C+L(C)\big)$. By Corollary~\ref{qr: prelims: sym spc: cor: every ball intersects finitely many horoballs of Gamma} there are only finitely many possibilities for such $\hbg$. In particular there are finitely many base-points for these horoballs, say $\xi_1,\xi_2,\dots,\xi_{K(C)}\in \wqg$. Finally, recall that a horoball is determined by a base point and a point $x\in X$ tangent to it, so the last statement of the corollary holds for any $x\in B(x_0,C)$. But the property in question is $\La$-invariant so the same holds for any point $x\in \La\cdot B(x_0,C)=\mn_C(\La\cdot x_0)$.
\end{proof}


The bound on $d(\hbl,\hbg)$ given by Lemma~\ref{qr: lem: distance bound between hbl and hbg near x_0} further strengthen the relation between $\hbl$ and $\hbg$. The ultimate goal is to show that the $\hbl$-s play the role of the $\G$-horoballs in the geometric structure of \qr lattices, namely to show that $\La\cdot x_0$ is cocompact on the $\hl$-s. This requires to actually find $\La$-orbit points somewhere in $X$, and not just $\La$-free regions as was done up to now. As one might suspect, these points arise as $\la_\g x_0$ corresponding to points $\g x_0\in \hg$, which exist in abundance since $\G\cdot x_0\cap \hg$ is a cocompact lattice in $\hg$. 

The hope is that a $\La$-free horoball $\hbl$ tangent to $\la_\g x_0$ would correspond to a horoball of $\G$ tangent to $\g x_0$. This would have forced all the $\la_\g$ to actually lie on the same bounding horosphere, and $\{\la_\g x_0\mid\g x_0\in \hg\}$ would then be a cocompact lattice in $\hl$. This hope turns out to be more or less true, but it requires some work. The goal of the rest of this section is to establish a relation between a $\La$-free horoball $\hbl$ tangent to $\la_\g x_0$ and $\g x_0$. I start with some notations.

\begin{defn}
In light of Corollary~\ref{qr: cor: finitely many lambda horoballs at x_0}, there is a finite number $N$ of $\La$-free horoballs tangent to $x_0$. Denote:
\begin{enumerate}
    \item $\{\hbl_i\}_{1=1}^N$ are the $\Lambda$-free horoballs tangent to $x_0$. 
    
    \item $\xi^i\in\wqg$ is the base point of $\hbl_i$.
    
    \item $v^i\in S_{x_0}X$ is the unit tangent vector in the direction $\xi^i$.
    
    \item $\eta^i:=[x_0,\xi_i)$ is the unit speed geodesic ray emanating from $x_0$ with limit $\xi^i$. In particular $v^i=\frac{d}{dt}\eta^i(0)$.
    
    \item $\hbg_i$ is the horoball of $\G$ that corresponds to $\hbl_i$, based at $\xi^i$.
    
    \item $\hbl_{\la,i}, \xi^i_\la,\eta^i_\la$ are the respective $\lambda$-translates of the objects above. For example,  $\hbl_{\la,i}:=\lambda\cdot \hbl_i$.
    
    \item $\h$ decorated by the proper indices denotes the horosphere bounding $\hb$, the horoball with respective indices, e.g.\ $\hl_i:=\partial \hbl_i$. 
    \item For an angle $\alpha>0$ and a tangent vector $v_0\in S_xX$, define 
    \begin{enumerate}
        \item The \emph{$\alpha$-sector of $v$ in $S_xX$} is the set $\{v\in S_xX\mid v\in \mn_\alpha(v_0)\}$. Recall that the metric on $S_xX$ is the angular metric. 
        \item The \emph{$\alpha$-sector of $v$ in $X$} are all points $y\in X$ for which the tangent vector at $0$ of the unit speed geodesic $[x,y]$ lies in the $\alpha$-sector of $v$ in $SxX$.
    \end{enumerate}
    
\end{enumerate}\end{defn}

\begin{lem}\label{qr: lem: large d_g impliy small angle between [la_g,g] and hbl}
For every angle $\alpha\in(0,\fpi2)$ there exists $D=D(\alpha)$ such that if $d_\g>D$ then for some $i\in\{1,\dots,N\}$, $\g x_0$ lies inside the $\alpha$-sector of $v^i_{\la_\g}$ at 
$\la_\g x_0$. Furthermore whenever $\alpha$ is uniformly small enough, there is a unique such $i=i(\g)$, independent of $\alpha$.
\end{lem}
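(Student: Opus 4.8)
The plan is to re-run the compactness argument of Lemma~\ref{qr: lem: existence of Lambda free horoballs} in a uniform fashion and then transport the conclusion from $x_0$ to $\la_\g x_0$ by applying the isometry $\la_\g$. Recall that $B_\g'=\la_\g^{-1}B_\g=B(x_\g',d_\g)$ is a $\La$-free ball tangent to $x_0$, with $x_\g'=\la_\g^{-1}\g x_0$ and $|x_\g'|=d_\g$; when $d_\g>0$ let $w_\g\in S_{x_0}X$ be the initial velocity of the unit speed geodesic $[x_0,x_\g']$. Since $\la_\g$ is an isometry carrying $x_0\mapsto \la_\g x_0$, $x_\g'\mapsto \g x_0$ and $v^i\mapsto v^i_{\la_\g}$, the assertion that $\g x_0$ lies in the $\alpha$-sector of $v^i_{\la_\g}$ at $\la_\g x_0$ is equivalent to $\measuredangle_{x_0}(w_\g,v^i)\leq \alpha$. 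So it suffices to prove: for every $\alpha\in(0,\fpi2)$ there is $D(\alpha)$ such that $d_\g>D(\alpha)$ forces $w_\g$ into the $\alpha$-sector (in $S_{x_0}X$) of some $v^i$.

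\emph{Existence.} Suppose this fails for some $\alpha$. Then there are $\g_n\in\G$ with $d_{\g_n}\to\infty$ and $\measuredangle_{x_0}(w_{\g_n},v^i)\geq\alpha$ for all $i$ and all $n$. By compactness of $S_{x_0}X$, after passing to a subsequence $w_{\g_n}\to w$, so $\measuredangle_{x_0}(w,v^i)\geq\alpha$ for every $i$. The data $(x_{\g_n}',d_{\g_n},w_{\g_n})$---centres at distance $d_{\g_n}\to\infty$ from $x_0$ along directions $w_{\g_n}\to w$, radii $d_{\g_n}$, balls $\La$-free---are exactly the data to which the proof of Lemma~\ref{qr: lem: existence of Lambda free horoballs} applies. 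Running that argument verbatim with $\eta$ the ray from $x_0$ in direction $w$ produces a $\La$-free horoball $\hb=\bigcup_{t>0}B\big(\eta(t),t\big)$ tangent to $x_0$. But $\{\hbl_1,\dots,\hbl_N\}$ is, by definition, the complete (finite, by Corollary~\ref{qr: cor: finitely many lambda horoballs at x_0}) list of $\La$-free horoballs tangent to $x_0$, so $\hb=\hbl_i$ for some $i$, whence $w=v^i$, contradicting $\measuredangle_{x_0}(w,v^i)\geq\alpha>0$. This proves the existence statement, and by the equivalence above it gives the first claim of the lemma.

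\emph{Uniqueness for small $\alpha$.} The horoballs $\hbl_1,\dots,\hbl_N$ are pairwise distinct, and a horoball is determined by its base point together with a boundary point tangent to it, so the base points $\xi^1,\dots,\xi^N$ are pairwise distinct; hence so are the directions $v^1,\dots,v^N\in S_{x_0}X$. Put $\delta_0:=\min_{i\neq j}\measuredangle_{x_0}(v^i,v^j)>0$ and assume $\alpha<\delta_0/2$. Then the $\alpha$-sectors of the $v^i$ in $S_{x_0}X$ are pairwise disjoint, so $w_\g$ lies in at most one of them; combined with the previous step, for $d_\g>D(\alpha)$ there is a unique index $i=i(\g)$ with $\g x_0$ in the $\alpha$-sector of $v^i_{\la_\g}$ at $\la_\g x_0$. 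Finally, if $\alpha'<\alpha$ then the $\alpha'$-sector of each $v^i$ is contained in its $\alpha$-sector, so whenever $d_\g$ exceeds both $D(\alpha)$ and $D(\alpha')$ the index produced for $\alpha'$ coincides with $i(\g)$; thus $i(\g)$ does not depend on the (sufficiently small) angle.

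There is no genuine obstacle in this lemma: the entire content is the compactness-plus-limiting-horoball mechanism already established for Lemma~\ref{qr: lem: existence of Lambda free horoballs}. The only care needed is the bookkeeping that the $\La$-free ball about $\g x_0$ of radius $d_\g$, once translated by $\la_\g^{-1}$ to be tangent to $x_0$, feeds that mechanism, together with the elementary observation that the finitely many directions $v^i$ are angularly separated, which yields uniqueness.
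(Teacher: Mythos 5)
Your proof is correct and follows essentially the same route as the paper's: translate by $\la_\g^{-1}$ to reduce to the base point, argue by contradiction via compactness of $S_{x_0}X$ and the limiting-horoball mechanism of Lemma~\ref{qr: lem: existence of Lambda free horoballs}, and use the angular separation of the finitely many directions $v^i$ for uniqueness. Your treatment of the independence of $i(\g)$ from $\alpha$ via nested sectors is slightly more explicit than the paper's, but the argument is the same.
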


\begin{proof}
Translation by the isometry $\lambda_\g^{-1}$ preserves angles and distances, so it is enough to prove that there is an $i$ for which $x'_\g:=\la^{-1}\g x_0$ lies inside the $\alpha$-sector of $v^i$, and that this $i$ is unique if $\alpha$ is uniformly small. Assume towards contradiction that there is $\alpha\in(0,\fpi2)$ and a sequence $\g_n\in \G$, $\la_n:=\la_{\g_n}\in \La$ with $d_{\g_n}$ unbounded, and $x'_n:=\la_n^{-1}\g_n x_0$ not lying in the union of the $\alpha$-sectors of $v^i$. By perhaps taking smaller $\alpha$ I may assume all the $\alpha$-sectors of the $v^i$ in $S_{x_0}X$ are 
pairwise disjoint. This can be done because there are only finitely many $v^i$. 

Compactness of $S_{x_0}X$ allows me to take a converging subsequence $v'_{n}:=\dot{[x_0,x'_n]}$, with limit direction $v'$. Denote by $\eta'$ the geodesic ray emanating from $x_0$ with initial velocity $v'$. The exact same argument of Lemma~\ref{qr: lem: existence of Lambda free horoballs} proves that $\eta'(\infty)$ is the base point of a $\La$-free horoball tangent to $x_0$. But this means $v'=v^i$ for some $i\in\{1,\dots,N\}$, contradicting the fact that all $v'_n$ lie outside the $\alpha$-sectors of the $v^i$. This proves that there is a bound $D=D(\alpha)$ such that if $d_\g>D$ then $x_\g'$ lies within the $\alpha$-sector of some $v^i$.

The proof clearly shows that whenever $\alpha$ is small enough so that the $\alpha$-sectors of the $v^i$ are disjoint, $x_\g'$ lies in the $\alpha$-sector of a unique $v^i$ as soon as $d_\g>D(\alpha)$.

\end{proof}

\begin{rmk}
In the proof of Lemma~\ref{qr: lem: existence of Lambda free horoballs} I used compactness of $S_{x_0}X$ to induce a converging subsequence of directions. Lemma~\ref{qr: lem: large d_g impliy small angle between [la_g,g] and hbl} actually shows that the fact there are finitely many $\La$-free horoballs tangent to $x_0$ implies \emph{a posteriori} that there was not much choice in the process - all directions $[x_0,x_\g']$ must fall into one of the finitely many directions $v^i$.

\end{rmk}

Next, I want to control the actual location of certain points with respect to the horoballs of interest, and not just the angles. This turns out to be a more difficult of a task than one might suspect, since control on angles does not immediately give control on distances. 

Recall that large $\La$-free balls near $x_0$ imply large concentric $\G$-free balls. The precise quantities and bounds are given by Lemma~\ref{U: lem: large Lambda free balls near base point imply large Gamma free balls} (one can use Lemma~\ref{U: lem: sublinear version of large Lambda free balls near base point imply large Gamma free balls} to obtain a slightly cleaner statement).

\begin{prop} \label{qr: prop: gamma lies deep inside the la ga horoball}
Let $S\in (0,1)$ be the constant given by Lemma~\ref{U: lem: large Lambda free balls near base point imply large Gamma free balls}, and let $s\in (0,S)$. There is a bound $D=D(s)$ such that $d_\g>D$ implies that $\g x_0$ lies $sd_\g$ deep in $\hbl_{\la_\g}$.
\end{prop}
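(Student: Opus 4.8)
\emph{Plan.} The strategy is to translate the configuration by $\la_\g^{-1}$, replace the depth statement by a non-positive curvature comparison against the geodesic ray aimed at the base point of the relevant $\La$-free horoball, and record in parallel the $\G$-free corridor that is used later. Write $x_\g':=\la_\g^{-1}\g x_0$, so $|x_\g'|=d_\g$ and $B_\g':=B(x_\g',d_\g)$ is a $\La$-free ball tangent to $x_0$. Let $\{\hbl_i\}_{i=1}^N$ be the finitely many $\La$-free horoballs tangent to $x_0$ (Corollary~\ref{qr: cor: finitely many lambda horoballs at x_0}), with base points $\xi^i\in\wqg$, unit directions $v^i=\dot\eta^i(0)\in S_{x_0}X$ (where $\eta^i=[x_0,\xi^i)$), and associated horoballs $\hbg_i$ of $\G$. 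First I would fix $\alpha\in(0,\fpi2)$ small enough that the $\alpha$-sectors of the $v^i$ are pairwise disjoint and $\alpha<1-s$; Lemma~\ref{qr: lem: large d_g impliy small angle between [la_g,g] and hbl} then supplies $D_0=D_0(\alpha)$ so that for $d_\g>D_0$ the point $x_\g'$ lies in the $\alpha$-sector of a unique $v^{i(\g)}$, and we set $\hbl_{\la_\g}:=\hbl_{\la_\g,i(\g)}=\la_\g\,\hbl_{i(\g)}$.

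\emph{Main step.} Since $\measuredangle_{x_0}\big(x_\g',\eta^{i(\g)}(d_\g)\big)\le\alpha$ and $d(x_0,x_\g')=d\big(x_0,\eta^{i(\g)}(d_\g)\big)=d_\g$, the comparison triangle with apex $x_0$ gives $d\big(x_\g',\eta^{i(\g)}(d_\g)\big)\le\alpha d_\g$. Normalizing the Busemann function $b$ of $\hbl_{i(\g)}$ by $b(x_0)=0$, one has $b(\eta^{i(\g)}(t))=-t$, so by the $1$-Lipschitz property $b(x_\g')\le-(1-\alpha)d_\g<0$; by Proposition~\ref{qr: prelims: sym spc: prop: parallel horospheres and other properties} this says exactly that $x_\g'$ lies inside $\hbl_{i(\g)}$ at depth $d\big(x_\g',\hl_{i(\g)}\big)\ge(1-\alpha)d_\g>sd_\g$. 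Translating back by the isometry $\la_\g$ (which preserves depths) gives $\g x_0$ at depth $>sd_\g$ in $\hbl_{\la_\g}$, which is the assertion with $D(s):=D_0(\alpha)$. In parallel — and this is why the statement is phrased for $s\in(0,S)$ — Lemma~\ref{U: lem: large Lambda free balls near base point imply large Gamma free balls} applied at $x_0$ with any $s'\in(s,S)$ shows that for $d_\g$ large the concentric ball $B(x_\g',s'd_\g)$ is $\G$-free; since $\G$ is \qrc its horoballs are pairwise disjoint and $\G\cdot x_0$ is $D_\G$-cocompact off their union (Theorem~\ref{qr: prelims: sym spc: thm: Leuzinger compact core}, Corollary~\ref{qr: prelims: sym spc: cor: every ball intersects finitely many horoballs of Gamma}), so the connected ball $B(x_\g',s'd_\g-D_\G)$ lies inside a single horoball of $\G$, which by the comparison estimate above and the matching of base directions must be $\hbg_{i(\g)}$. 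This is the $\G$-free corridor between $\la_\g x_0$ and $\g x_0$ needed in the next section.

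\emph{Expected obstacle.} The comparison inequality itself is routine; the delicate point is the identification in the second half — ensuring that the large $\G$-free ball around $x_\g'$ lands in the \emph{one} horoball $\hbg_j$ whose base direction agrees with that of $\g x_0$. This is exactly where the small-angle output of Lemma~\ref{qr: lem: large d_g impliy small angle between [la_g,g] and hbl} (to pin the direction $v^{i(\g)}$) combines with the disjointness of $\G$-horoballs (the \qr hypothesis) and the bounded distance $d(\hl_i,\hg_i)$ from Lemma~\ref{qr: lem: distance bound between hbl and hbg near x_0} (to see that $\eta^{i(\g)}$, hence $x_\g'$, is genuinely deep in $\hbg_{i(\g)}$ rather than merely near it). The remaining bookkeeping — keeping the clean bound $sd_\g$ rather than $sd_\g-O(1)$ when one passes through $\hbg_{i(\g)}$ — is absorbed by the auxiliary choice $s'\in(s,S)$, since $(s'-s)d_\g$ dominates the fixed constants $D_\G$ and $\max_i d(\hl_i,\hg_i)$ once $d_\g$ is large.
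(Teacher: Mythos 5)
Your main step rests on a comparison inequality that points the wrong way. From $\measuredangle_{x_0}\big(x_\g',\eta^{i(\g)}(d_\g)\big)\le\alpha$ and the equal side lengths $d_\g$ you conclude $d\big(x_\g',\eta^{i(\g)}(d_\g)\big)\le\alpha d_\g$. In nonpositive curvature the Riemannian angle at the apex is bounded \emph{above} by the Euclidean comparison angle, so a small angle yields a \emph{lower} bound on the opposite side, never an upper one; geodesics issuing from $x_0$ at a fixed small angle diverge exponentially in rank one directions. Concretely, in $\mathbb{H}^2$ the Busemann function based at $\xi$ normalized by $b(x_0)=0$ satisfies $b(p)=\log(\cosh r-\sinh r\cos\theta)$ for $p$ at distance $r$ from $x_0$ and angle $\theta$ from $[x_0,\xi)$; for any fixed $\theta>0$ this tends to $+\infty$ with $r$, so a point at distance $d_\g$ and angle $\alpha$ need not lie in the horoball tangent to $x_0$ at all, let alone at depth $(1-\alpha)d_\g$. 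The estimate $b(x_\g')\le-(1-\alpha)d_\g$ therefore fails, and with it the whole depth bound. This is precisely the pitfall flagged just before the proposition (``control on angles does not immediately give control on distances''): Lemma~\ref{qr: prelims: sym spc: cor: small angle geodesic fellow travel} controls $d\big(\eta_1(t),\eta_2(t)\big)$ only for $t\le T$, with $\alpha=\alpha(T,\varepsilon)$ depending on $T$, whereas you invoke the angle at $t=d_\g\to\infty$ for a fixed $\alpha$.

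The second half of your write-up contains the mechanism that actually carries the proof, but you treat it as a side remark and, at the decisive moment (``which by the comparison estimate above \dots must be $\hbg_{i(\g)}$''), fall back on the false inequality. The paper's argument runs the other way around: the angle control is used only at one \emph{fixed} time $T$ (chosen larger than $T'(s)$ plus the uniform bound $L=\max_i d(\hl_i,\hg_i)$), where fellow-travelling legitimately places $x'_T=\eta'(T)$ inside $\hbg_{i(\g)}$; then, for $T<t\le d_\g$, each point $x'_t$ of $\eta'=[x_0,x_\g']$ carries a $\G$-free ball of radius $st>D_\G$ by Lemma~\ref{U: lem: large Lambda free balls near base point imply large Gamma free balls}, so the segment $\{x'_t\}_{T\le t\le d_\g}$ can never cross a horosphere of $\G$ and stays in $\hbg_{i(\g)}$ all the way to $x_\g'=x'_{d_\g}$. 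The final depth is read off from the $\G$-free ball $B(x_\g',sd_\g)$ and the $D_\G$-cocompactness of $\G\cdot x_0$ on $\hg_{i(\g)}$, together with $\hbg_{i(\g)}\subset\hbl_{i(\g)}$ --- no Busemann computation along $\eta^{i(\g)}$ is involved. If you wish to salvage your outline, the usable direction of the comparison is the reverse one: a large $\G$-free ball forces $x_\g'$ deep into \emph{some} horoball of $\G$, and being deep in a horoball tangent to $x_0$ forces a small angle toward its base point, which by disjointness of the sectors identifies it as $\hbg_{i(\g)}$; but that still requires the corridor input, not the claimed upper bound $d\le\alpha d_\g$.
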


\begin{proof}
The proof is a bit delicate but very similar to that of Lemma~\ref{U: lem: large Lambda free balls near base point imply large Gamma free balls}. In essence, I use the $\G$-free balls near $x_0$ to produce a $\G$-free cylinder, which would force a certain geodesic not to cross a horosphere of $\G$, i.e. force it to stay inside a $\G$-free horoball. 

As in Lemma~\ref{qr: lem: large d_g impliy small angle between [la_g,g] and hbl} it is only required to show that $x'=\lambda_\g^{-1}\g x_0$ is $sd_\g$ deep inside $\hbl_{i(\g)}$. I start with proving that $x'\in\hbg_{i(\g)}$. I learned the hard way that even this is not a triviality. Recall the notation $B_\g=B(\gamma x_0,d_\g)$. The ball $B'_\g=\la_\g^{-1}B_\g$ is a $\Lambda$-free ball of radius $d_\g$ about $x'=\la_\g^{-1}\g x_0$. Denote by $x'_t$ the point at time $t$ along the unit speed geodesic $\eta':=[x_0,x']$. It holds that $|x_t'|=t$ and, for $t\leq d_\g$, $x_t'$ is the centre of a $\La$-free ball of radius $t$ tangent to $x_0$. The constant $s$ is fixed and by Lemma~\ref{U: lem: large Lambda free balls near base point imply large Gamma free balls} there is $T'=T'(s)$ such that if $t>T'$, the ball $sB\big(x'_t,t\big)$ is $\G$-free. 

The next goal is to show that $x_T'\in \hbg$ for some adequate $T$. For any time $T>0$, let $\alpha=\alpha(\varepsilon,T)$ be the angle for which $d\big(\eta(T),\eta^{i(\g)}(T)\big)<\varepsilon$ for every $\eta$ in the $\alpha$-sector of $v^{i(\g)}$. By perhaps taking smaller $\alpha$ I may assume that $\alpha$ is uniformly small as stated in Lemma~\ref{qr: lem: large d_g impliy small angle between [la_g,g] and hbl}. Let $D(\alpha)$ be the bound given by Lemma~\ref{qr: lem: large d_g impliy small angle between [la_g,g] and hbl} guaranteeing 

$$D(\alpha)<d_\g \Rightarrow d\big(x'_T,\eta^{i(\g)}(T)\big)<\varepsilon$$

For my needs in this lemma $\varepsilon$ may as well be chosen to be $1$. I now choose a specific time $T$ for which I want $x_T'$ and $\eta^{i(\g)(T)}$ to be close. There are only finitely many $\Lambda$-free horoballs $\{\hbl_i\}_{i\in\N}$ tangent to $x_0$, giving rise to a uniform bound $L=\max_{i\in\N} \{d(\hl_i,\hg_i)\}$ on the distance $d(\hl_{i(\g)},\hg_{i(\g)})$. Fix $T$ to be any time in the open interval $(T'+L+\varepsilon,d_\g)$. The fact that $L+\varepsilon<T$ implies that $\eta^{i(\g)}(T)$ lies at least $\varepsilon$-deep inside $\hbg_{i(\g)}$, and therefore $\eta'(T)\in \hbg_{i(\g)}$. Recall that any point on $\hg$ is $D_\G$-close to a point $\g x_0\in \hg$. By perhaps enlarging $T$ and shrinking $\alpha$ if necessary, I may assume that $D_\G<sT$. Thus for all $T<t\leq d_\gamma$, $x_t'$ is the centre of a $\Gamma$-free ball of radius $st>sT>D_\G$, hence $\{x'_t\}_{T\leq t\leq d_\g}$ does not cross a horosphere of $\Gamma$. Since $x_T'\in \hbg_{i(\g)}$, this implies that $x_t'$ stays in $\hbg_{i(\g)}$ for all $T<t\leq d_\g$. In particular $x'_{d_\g}=x'\in\hbg_{i(\g)}$.

To get the result of the proposition, recall that $sB_\g'=B(x',sd_\g)$ is $\G$-free, so $x'$ must be at distance at least $sd_\g-D_\G$ from any horosphere of $\Gamma$, and in particular from $\hg_{i(\g)}$. In terms of Busemann functions, this means that $b_{\eta^{i(\g)}}(x')\leq -sd_\g+D_\G$ whenever one can find such $T'+L+\varepsilon<T<d_\g$. Since $\hbl_{i(\g)}$ is tangent to $x_0$, the corresponding horoball $\hbg_{i(\g)}$ lies inside it, and so $x'$ lies $(sd_\g-D_\G)$-deep inside $\hbl_{i(\g)}$. A close look at the argument yields the desired bound $D=D(s)$ such that the above holds whenever $d_\g>D(s)$. To help the reader take this closer look, I reiterate the choice of constants and their dependencies as they appear in the proof: 
\begin{enumerate}
    \item Fix $\varepsilon=1$.
    
    \item Let $T'=T'(s)$ the constant from Lemma~\ref{U: lem: large Lambda free balls near base point imply large Gamma free balls} and $L=\max_{i\in\{1,\dots,N\}}\{d(\hbl_i,\hbg_i)\}$.
    
    \item Fix $T>T'+L+1$.
    
    \item Fix $\alpha=\alpha(1,T)$.
    
    \item Fix $D(s)=\max\{D(\alpha),T+1\}$. 
\end{enumerate}

I remark, for the reader worried about the $D_\G$ which appears in the final bound but not in the statement, that (a) $D_\G$ is a fixed universal constant and may as well be ignored, and (b) the discrepancy can be formally corrected by taking a slightly larger $s<s'$ to begin with and as a result perhaps enlarging the bound $D$ for $d_\g$). Also note that $L=L(\La)$ is a universal constant. 

\end{proof}

\subsubsection{Intersection of \texorpdfstring{$\La$}{}-Free Regions and the Existence of a \texorpdfstring{$\La$}{}-Cocompact Horosphere}    \label{sec: cocompact horosphere}

In this section I find $\La$-orbit points that lie close to the bounding horosphere of a $\La$-free horoball $\hbl$. In order to find such points I need to make sure $\hbl$ is not contained inside a much larger $\La$-free horoball. I introduce the following definition. 

\begin{defn}\label{qr: def: almost maximal Lambda free horoball}
A $\La$-free horoball $\hbl$ is called \emph{maximal} if it is tangent to a point $x=\lambda x_0\in\La\cdot x_0$. It is called \emph{$\varepsilon$-almost maximal} if $d(\La\cdot x_0,\hl)<\varepsilon$.
\end{defn}

\begin{rmk}\label{qr: rmk: blow up to maximal lambda free horoball}
It may happen that a discrete group admits free but not \emph{maximally free} horoballs - see discussion in section $4$ of \cite{SullivanGarnnet}. In any case it is clear that any $\La$-free horoball can be `blown-up' to an $\varepsilon$-almost maximal $\La$-free horoball, for every $\varepsilon>0$. Moreover, every two $\varepsilon$-almost maximal horoballs based at the same point $\xi\in\wqg$ lie at distance at most $\varepsilon$ of one another. For my needs any fixed $\varepsilon$ would suffice, and I fix $\varepsilon=1$. 
\end{rmk}

\begin{lem} \label{qr: lem: almost Lambda cocompact horospheres}
There is $D_\La>0$ such that if $\hbl$ is $1$-almost maximal $\La$-free horoball then $\hl\subset \mn_{D_\La}(\La\cdot x_0)$, i.e.\ $d(x,\La\cdot x_0)\leq D_\La$ for all $x\in\hl$.
\end{lem}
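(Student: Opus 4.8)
The plan is to use the group invariance to reduce the statement about an arbitrary $1$-almost maximal $\La$-free horoball $\hbl$ to a statement near $x_0$, where the sublinear constraint is quantitatively under control. First I would exploit the fact that $\hbl$ is $1$-almost maximal: there is some $\la_0\in\La$ with $d(\la_0 x_0,\hl)<1$. Translating by $\la_0^{-1}$, I may assume $\hbl$ is a $\La$-free horoball with $d(x_0,\hl)<1$, i.e.\ $\hbl$ is (almost) tangent to $x_0$. By Corollary~\ref{qr: cor: finitely many lambda horoballs at x_0} (applied with $C=1$) there are only finitely many possibilities for the base point $\xi$ of $\hbl$, say $\xi\in\{\xi_1,\dots,\xi_K\}$, and for each the horoball is determined up to a translation of size $<1$ along the filtration. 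So it suffices to produce, for each of these finitely many base points $\xi_i$, a bound $D_i$ such that the (essentially unique) $1$-almost maximal $\La$-free horoball $\hbl_i$ based at $\xi_i$ and almost tangent to $x_0$ satisfies $\hl_i\subset\mn_{D_i}(\La\cdot x_0)$; then $D_\La:=\max_i D_i+C'$ works for some universal correction $C'$ absorbing the translation discrepancies (via Remark~\ref{qr: rmk: blow up to maximal lambda free horoball} and parallelism of horospheres).

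Next I would fix one such $\xi=\xi_i$ and its horoball $\hbl$, and argue by contradiction: suppose the conclusion fails for a sequence of such horoballs, i.e.\ there are points $y_n\in\hl$ with $d(y_n,\La\cdot x_0)\to\infty$. Here is where I bring the $\G$-structure into play: every point on $\hl$ lies on a horoball of $\G$ — more precisely, $\hl$ is parallel to the corresponding horosphere $\hg$ of $\G$ (Lemma~\ref{qr: lem: Lambda free horoballs are based at rational tits building wqg}, Lemma~\ref{qr: lem: distance bound between hbl and hbg near x_0}), and $\G\cdot x_0\cap\hg$ is $D_\G$-cocompact in $\hg$. Pushing along the geodesic toward $\xi$, each $y_n$ is close to a $\G$-orbit point $\g_n x_0$ lying on a horosphere of $\G$ parallel to $\hg$, and $|\g_n|$ is comparable to $|y_n|$ up to the universal bound $L(1)+D_\G$ from Lemma~\ref{qr: lem: distance bound between hbl and hbg near x_0}. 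Since $d(y_n,\La\cdot x_0)\to\infty$ and $\hbl$ is $\La$-free with $y_n\in\partial\hbl$, one gets $d_{\g_n}=d(\g_n x_0,\La\cdot x_0)\to\infty$ while $|\g_n|$ stays controlled — but then Proposition~\ref{qr: prop: gamma lies deep inside the la ga horoball} forces $\g_n x_0$ to lie $s\cdot d_{\g_n}$-deep inside a $\La$-free horoball tangent to $\la_{\g_n}x_0$, and in particular the point $y_n$ (which is within a uniform distance of $\g_n x_0$) cannot lie on the boundary $\hl$ of a $\La$-free horoball unless it is within a uniformly bounded distance of $\La\cdot x_0$ — contradiction. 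The quantitative heart is that all the bounds appearing (the bound $L(1)$ on $d(\hl,\hg)$, the cocompactness constant $D_\G$, the deep-penetration bound $D(s)$) depend only on $\G$, $\La$, and the fixed choices $C=1$, $s<S$, not on the particular horoball in the sequence.

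The main obstacle I anticipate is making the final contradiction genuinely uniform: one must be careful that moving from $y_n\in\hl$ to a nearby $\G$-orbit point $\g_n x_0$, then invoking Proposition~\ref{qr: prop: gamma lies deep inside the la ga horoball}, and then concluding something about $d(y_n,\La\cdot x_0)$, does not lose track of which $\La$-free horoball is tangent to $\la_{\g_n}x_0$ versus the horoball $\hbl$ we started with. The cleanest way around this is probably to first establish the statement for horoballs almost tangent to $x_0$ directly, arguing that a point $y\in\hl$ far from $\La\cdot x_0$ would, after translating $\hbl$ by the group element moving $\la_y x_0$ (the $\La$-orbit point closest to $y$) to a bounded neighbourhood of $x_0$, produce a $\La$-free horoball tangent to a point in $B(x_0, d(y,\La\cdot x_0)+C')$ whose boundary passes $d(y,\La\cdot x_0)$-deep past $x_0$ — and large $\La$-free regions near $x_0$ are exactly what Lemma~\ref{U: lem: large Lambda free balls near base point imply large Gamma free balls} and Corollary~\ref{qr: cor: finitely many lambda horoballs at x_0} forbid beyond a universal size. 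Once the $x_0$-tangent case is done with an explicit bound depending only on $\G$ and $\La$, the general $1$-almost maximal case follows by the translation reduction of the first paragraph. I would expect this reduction-plus-contradiction scheme to go through with only routine bookkeeping of the constants.
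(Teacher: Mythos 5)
Your overall strategy (normalize by a group translation so the sublinear constraint can be exploited near $x_0$, then locate a nearby $\G$-orbit point via the parallel horosphere of $\G$) is in the spirit of the paper's proof, but both of your proposed routes to the final contradiction have genuine gaps. In your second paragraph you normalize $\hbl$ to be almost tangent to $x_0$ and let $y_n$ range over the (unbounded) horosphere $\hl$; then $|y_n|\to\infty$, hence $|\g_n|\to\infty$ as well, and the assertion that ``$|\g_n|$ stays controlled'' is false. The sublinear constraint $d_{\g_n}\le u(|\g_n|)$ then yields no contradiction with $d_{\g_n}\to\infty$: the inequality $D_n-c_0\le u(|y_n|+c_0)$, where $D_n:=d(y_n,\La\cdot x_0)$, is vacuous when $|y_n|$ is much larger than $D_n$. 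Nor does Proposition~\ref{qr: prop: gamma lies deep inside the la ga horoball} rescue the argument: it places $\g_n x_0$ deep inside \emph{some} $\La$-free horoball tangent to $\la_{\g_n}x_0$, which need not be $\hbl$, and a point can lie deep inside one $\La$-free horoball while sitting on the boundary of another; unwinding this only re-derives that $d(y_n,\La\cdot x_0)$ is large, which is the hypothesis.

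Your third paragraph identifies the correct normalization --- translate by $\la_y^{-1}$ where $\la_y x_0$ is the $\La$-orbit point closest to $y$, so that the translated point $w$ satisfies $|w|=d(w,\La\cdot x_0)=D$, exactly as the paper does --- but the claimed contradiction is not delivered by the results you cite. Lemma~\ref{U: lem: large Lambda free balls near base point imply large Gamma free balls} does not forbid large $\La$-free regions near $x_0$; it converts the $\La$-free ball $B(w,D)$ tangent to $x_0$ into a large concentric $\G$-free ball, and large $\G$-free balls exist in abundance since $\G$ is non-uniform. Corollary~\ref{qr: cor: finitely many lambda horoballs at x_0} merely counts horoballs. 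The missing step is to produce a $\G$-orbit point within distance $o(D)$ of $w$, so that the reverse triangle inequality together with $d_\g\le u(|\g|)$ gives $D\le u\big(D+o(D)\big)+o(D)$ and hence a uniform bound on $D$. This requires controlling $d(\hl_0,\hg_0)$ \emph{after} the translation, when $\hbl_0$ is only known to be tangent to a point at distance $D$ from $x_0$: Lemma~\ref{qr: lem: distance bound between hbl and hbg near x_0} gives the sublinear bound $u'(D)$ only in the case $\hbg_0\subset\hbl_0$, while in the opposite case $\hbl_0\subset\hbg_0$ it gives only the useless bound $D$. The paper handles that case by a separate argument: the crossing point of $[x_0,w]$ with $\hg_0$ is the centre of a $\La$-free ball of radius $t_0$ and lies within $D_\G$ of $\G\cdot x_0$, forcing $t_0\le u(t_0+D_\G)+D_\G$ and hence a universal bound on $t_0$; consequently $\hbg_0$ ranges over finitely many horoballs of $\G$ and $d(\hbg_0,\hbl_0)$ is bounded by a universal constant. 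Without this case analysis the argument does not close.
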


Notice that Lemma~\ref{qr: lem: almost Lambda cocompact horospheres} does not state $\La\cdot x_0$ even intersects $\hl$.

\begin{proof}

I start with a short sketch of the proof. Consider a $1$-maximal horoball and a point $x$ on its bounding horosphere with $d(x,\La\cdot x_0)=D$. One may translate this situation to $x_0$, which results in a $\La$-free horoball $\hbl$ intersecting the (closed) $D$-ball about $x_0$ at a point $w$ with $B(w,D)$ $\La$-free. 

The proof differs depending on whether $\hbg\subset \hbl$ or the other way round, since I use the bounds from \ref{qr: lem: distance bound between hbl and hbg near x_0}:
\begin{enumerate}
    \item If $\hbg\subset\hbl$, there is a sublinear bound on $d(\hbl,\hbg)$, which readily yields a bound on $D$. 
    
    \item if $\hbl\subset \hbg$ there is a bound on $d(x_0,\hbg)$ that is independent of $D$. So there are only finitely many possibilities for $\hbg$, independent of $D$. Hence there are only finitely many possible base points for $\hbg$. These in turn correspond to possible base points for such $\hbl$, and this finiteness yields a bound on the distance $d(\hbg,\hbl)<L$ that is independent of $D$. The rest of the proof is quite routine. 
\end{enumerate}

Let $\hbl$ be a $1$-almost maximal $\Lambda$-free horoball. By definition there is $\la\in\La$ and $z\in\hl$ such that $d(\la x_0,z)<1$. Fix $D>0$. I show that if there is some $z'\in\hl$ for which $d(z',\La\cdot x_0)\geq D$, then $D$ must be uniformly small. Exactly how small will be set in the course of the proof. 

Fix $D>1$ and assume that there is $z'\in\hl$ with $d(z,\La\cdot x_0)\geq D$. Up to sliding $z'$ along $\hl$, the continuity of the function $x\mapsto d(x,\La\cdot x_0)$ together with Intermediate Value Theorem allows to assume that $d(z',\La\cdot x_0)=D$. Let $\la'\in\La$ be the element for which $d(z',\la' x_0)=D$. Translating by $\la'^{-1}$ yields

\begin{enumerate}
    \item A $\La$-free horoball $\hbl_0:=\la'^{-1}\hbl$.
    \item A point $w:=\la'^{-1}z'\in\hl_0$ for which  $|w|=d(w,x_0)=d(w,\La\cdot x_0)=D$.
\end{enumerate}

Assume first that $\hbg_0\subset\hbl_0$. By Lemma~\ref{qr: lem: distance bound between hbl and hbg near x_0} there is a sublinear function $u'$ such that $d(\hg_0,\hl_0)\leq u'(D)$. This yields a point $\g x_0\in \hg_0$ for which $d(w,\g x_0)\leq u'(D)+D_\G$. Thus $|\g x_0|\leq D+u'(D)+D_\G$ and the reverse triangle inequality gives

$$D-\big(u'(D)+D_\G\big)\leq d(w,\la_\g x_0)-d(w,\g x_0)<d(\g x_0,\la_\g x_0)$$

Together with the bound $d(\g x_0,\la_\g x_0)\leq u(|\g x_0|)$ and rearranging, one obtains

$$D\leq u\big(D+u'(D)+D_\G\big)+u'(D)+D_\G$$

The right hand side is clearly a sublinear function in $D$, hence this inequality may hold only for boundedly small $D$, say $D<D_1$. I conclude that $\hbg_0\subset\hbl_0$ may occur only when $D<D_1$. Notice that $D_1$ depends only on $u$ and $u'$, and not on $\hbl$.

Assume next that $\hbl_0\subset\hbg_0$, and that the containment is strict. Since $x_0\in\G\cdot x_0$, the geodesic $\tau:=[x_0,w]$ is of length $D$ and intersects $\hg_0$. Denote by $t_0\in[0,D)$ the time in which $\tau$ intersects $\hg_0$, and let $w':=\tau(t_0)\in\hg_0$ be the intersection point. In particular $|w'|=t_0$. It is clear that $B(w',t_0)$ is $\Lambda$-free, as a subset of the ball $B(w,D)$. Again there is $\g x_0\in B(w',D_\G)\cap\hg_0$ and so $|\g x_0|\leq t_0+D_\G$. By reverse triangle inequality 

$$t_0-D_\G\leq d(w',\la_\g x_0)-d(w',\g x_0)\leq d(\g x_0,\la_\g x_0)$$

and the sublinear constraint gives $t_0-D_\G\leq u(t_0+D_\G)$. This can only happen for boundedly small $t_0$, say $t_0<T$. I conclude that if $\hbl_0\subset \hbg_0$, then $\hbg_0$ is a horoball of $\G$ tangent to some point $y\in B(x_0,T)$. 

By Corollary~\ref{qr: prelims: sym spc: cor: every ball intersects finitely many horoballs of Gamma} there are finitely many horoballs of $\Gamma$ tangent to points in $B(x_0,T)$. In particular there is a finite set $\{\xi_1',\dots,\xi_K'\}\in\wqg$ of possible base points for $\hbg_0$. This set depends only on $T$, and since the choice of $T$ was completely independent of $D$, the set of possible base points is independent of $D$ as well. Let $\widetilde{\hbg_i}$ be the horoball of $\G$ based at $\xi_i'$. 

I can now bound the distance $d(\hbg_0,\hbl_0)$. Let $1\leq i\leq K$ be an index for which there is a $\La$-free horoball based at $\xi_i'$ that is contained in $\widetilde{\hbg_i}$. There is thus some $1$-almost-maximal $\La$-free horoball based at $\xi'_i$. Fix an arbitrary such $1$-almost-maximal $\La$-free horoball $\widetilde{\hbl_i}$ for each such $i$, and let $L_i:=d(\widetilde{\hbl_i},\widetilde{\hbg_i})$. Finally, define $L:=\max\{L_i\}+1$ among such $i$. As stated in Remark~\ref{qr: rmk: blow up to maximal lambda free horoball}, $d(\hbl_0, \widetilde{\hbl_i})\leq 1$ for some $i$, therefore $d(\hbg_0,\hbl_0)\leq L$.

Recall $|w|=D$ and $B(w,D)$ is $\Lambda$-free. It holds that $d(w,\hg_0)\leq L$, and so there is $\g x_0\in \hg_0$ for which $d(w,\g x_0)\leq L+D_\G$. In particular $|\g x_0|\leq D+L+D_\G$ (in fact it is clear that $|\g x_0|\leq T+D_\G$, but this won't be necessary). Reverse triangle inequality gives

$$D-(L+D_\G)\leq d(w,\la_\g x_0)-d(w,\g x_0)\leq d(\g x_0,\la_\g x_0)$$

and from the sublinear constraint I conclude $D-(L+D_\G)\leq u(D+L+D_\G)$. Since $L,D_\G$ are fixed constants independent of $D$, this can only hold for boundedly small $D$, say $D<D_2$. In particular, one gets a uniform bound $D_\La:=\max\{D_1,D_2\}$ such that $x\in\hl\Rightarrow d(x,\La\cdot x_0)<D_\La$.

\end{proof}

\begin{figure}
    \centering

\tikzset{every picture/.style={line width=0.75pt}} 

\begin{tikzpicture}[x=0.75pt,y=0.75pt,yscale=-1,xscale=1]

\draw  [fill={rgb, 255:red, 0; green, 0; blue, 0 }  ,fill opacity=1 ] (318.21,229.11) .. controls (318.21,226.38) and (316.1,224.16) .. (313.5,224.16) .. controls (310.89,224.16) and (308.78,226.38) .. (308.78,229.11) .. controls (308.78,231.85) and (310.89,234.07) .. (313.5,234.07) .. controls (316.1,234.07) and (318.21,231.85) .. (318.21,229.11) -- cycle ;
\draw  [dash pattern={on 1.69pt off 2.76pt}][line width=1.5]  (178.94,151.31) .. controls (178.94,100.49) and (218.16,59.29) .. (266.54,59.29) .. controls (314.92,59.29) and (354.14,100.49) .. (354.14,151.31) .. controls (354.14,202.13) and (314.92,243.33) .. (266.54,243.33) .. controls (218.16,243.33) and (178.94,202.13) .. (178.94,151.31) -- cycle ;
\draw  [fill={rgb, 255:red, 0; green, 0; blue, 0 }  ,fill opacity=1 ] (144.87,46.63) .. controls (144.87,43.89) and (142.76,41.67) .. (140.16,41.67) .. controls (137.55,41.67) and (135.44,43.89) .. (135.44,46.63) .. controls (135.44,49.36) and (137.55,51.58) .. (140.16,51.58) .. controls (142.76,51.58) and (144.87,49.36) .. (144.87,46.63) -- cycle ;
\draw  [color={rgb, 255:red, 208; green, 2; blue, 27 }  ,draw opacity=1 ][dash pattern={on 1.69pt off 2.76pt}][line width=1.5]  (153.95,135.33) .. controls (153.95,74.98) and (200.53,26.06) .. (257.98,26.06) .. controls (315.44,26.06) and (362.01,74.98) .. (362.01,135.33) .. controls (362.01,195.69) and (315.44,244.61) .. (257.98,244.61) .. controls (200.53,244.61) and (153.95,195.69) .. (153.95,135.33) -- cycle ;
\draw   (129.47,88.68) .. controls (129.47,46.33) and (162.15,12) .. (202.47,12) .. controls (242.79,12) and (275.47,46.33) .. (275.47,88.68) .. controls (275.47,131.04) and (242.79,165.37) .. (202.47,165.37) .. controls (162.15,165.37) and (129.47,131.04) .. (129.47,88.68) -- cycle ;
\draw  [fill={rgb, 255:red, 0; green, 0; blue, 0 }  ,fill opacity=1 ] (207.19,88.68) .. controls (207.19,85.95) and (205.07,83.73) .. (202.47,83.73) .. controls (199.87,83.73) and (197.76,85.95) .. (197.76,88.68) .. controls (197.76,91.42) and (199.87,93.64) .. (202.47,93.64) .. controls (205.07,93.64) and (207.19,91.42) .. (207.19,88.68) -- cycle ;
\draw    (88.25,54.18) -- (140.02,81.36) ;
\draw [shift={(141.79,82.29)}, rotate = 207.71] [color={rgb, 255:red, 0; green, 0; blue, 0 }  ][line width=0.75]    (10.93,-3.29) .. controls (6.95,-1.4) and (3.31,-0.3) .. (0,0) .. controls (3.31,0.3) and (6.95,1.4) .. (10.93,3.29)   ;
\draw  [color={rgb, 255:red, 208; green, 2; blue, 27 }  ,draw opacity=1 ][fill={rgb, 255:red, 208; green, 2; blue, 27 }  ,fill opacity=1 ] (189.24,56.57) .. controls (189.24,53.84) and (187.13,51.62) .. (184.52,51.62) .. controls (181.92,51.62) and (179.81,53.84) .. (179.81,56.57) .. controls (179.81,59.31) and (181.92,61.53) .. (184.52,61.53) .. controls (187.13,61.53) and (189.24,59.31) .. (189.24,56.57) -- cycle ;
\draw  [fill={rgb, 255:red, 0; green, 0; blue, 0 }  ,fill opacity=1 ] (179.24,70.16) .. controls (179.24,67.42) and (177.13,65.2) .. (174.53,65.2) .. controls (171.92,65.2) and (169.81,67.42) .. (169.81,70.16) .. controls (169.81,72.89) and (171.92,75.11) .. (174.53,75.11) .. controls (177.13,75.11) and (179.24,72.89) .. (179.24,70.16) -- cycle ;
\draw    (385.13,155.14) -- (301.96,153.9) ;
\draw [shift={(299.96,153.87)}, rotate = 0.86] [color={rgb, 255:red, 0; green, 0; blue, 0 }  ][line width=0.75]    (10.93,-3.29) .. controls (6.95,-1.4) and (3.31,-0.3) .. (0,0) .. controls (3.31,0.3) and (6.95,1.4) .. (10.93,3.29)   ;
\draw [color={rgb, 255:red, 208; green, 2; blue, 27 }  ,draw opacity=1 ]   (355.93,47.79) -- (289.79,47.79) ;
\draw [shift={(287.79,47.79)}, rotate = 360] [color={rgb, 255:red, 208; green, 2; blue, 27 }  ,draw opacity=1 ][line width=0.75]    (10.93,-3.29) .. controls (6.95,-1.4) and (3.31,-0.3) .. (0,0) .. controls (3.31,0.3) and (6.95,1.4) .. (10.93,3.29)   ;
\draw  [dash pattern={on 0.84pt off 2.51pt}]  (140.16,46.63) -- (202.47,88.68) ;

\draw (207.55,85.52) node [anchor=north west][inner sep=0.75pt]  [font=\large]  {$w$};
\draw (122.84,27.73) node [anchor=north west][inner sep=0.75pt]  [font=\small]  {$x_{0}$};
\draw (2.83,37.15) node [anchor=north west][inner sep=0.75pt]    {$ \begin{array}{l}
\Lambda -free\\
\ B( w,D)
\end{array}$};
\draw (157.05,72.57) node [anchor=north west][inner sep=0.75pt]  [font=\normalsize]  {$\tau ( t_{0})$};
\draw (152.77,46.85) node [anchor=north west][inner sep=0.75pt]    {$\tau $};
\draw (408.65,140.87) node [anchor=north west][inner sep=0.75pt]    {$\Lambda -free\ \mathcal{HB}^{\Lambda }{}$};
\draw (319.5,231.11) node [anchor=north west][inner sep=0.75pt]    {$\xi $};
\draw (373.93,38.62) node [anchor=north west][inner sep=0.75pt]  [color={rgb, 255:red, 208; green, 2; blue, 27 }  ,opacity=1 ]  {$\Gamma -free\ \mathcal{HB}^{\Gamma }{}$};
\draw (188.71,51.01) node [anchor=north west][inner sep=0.75pt]  [color={rgb, 255:red, 208; green, 2; blue, 27 }  ,opacity=1 ]  {$\gamma x_{0}$};

\end{tikzpicture}

    \caption[A $\G$-free horoball trapped between $x_0$ and the corresponding $\La$-free horoball]{Lemma~\ref{qr: lem: almost Lambda cocompact horospheres}, case $\hbl\subset\hbg$. The red horosphere of $\G$ is trapped between $x_0$ and $\hl$, and is at distance $t_0$ from $x_0$. A $\G$-orbit point on the red horosphere close to $x_0$ allows to use sublinearity to get a bound on $t_0$.}
    \label{fig: almost lambda cocompact horospheres}
\end{figure}
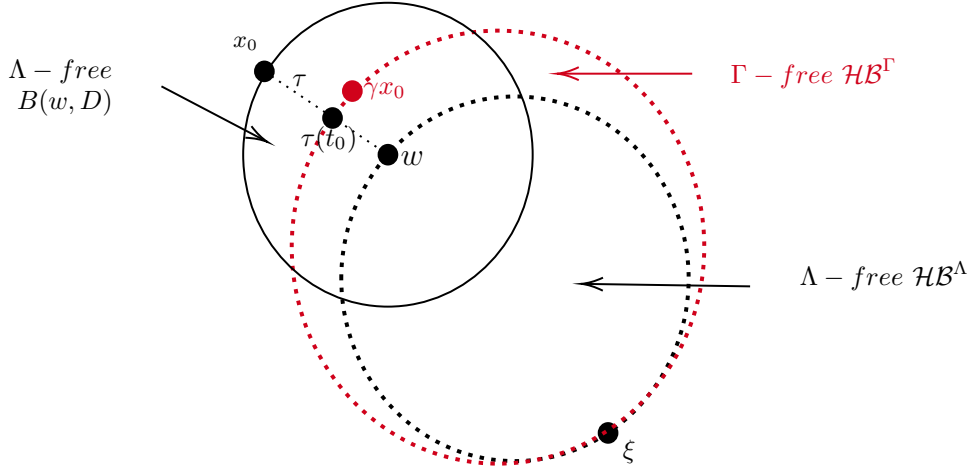

\begin{cor}\label{qr: cor: every Gamma conical is Lambda conical}
Every $\G$-conical limit point is a $\La$-conical limit point.
\end{cor}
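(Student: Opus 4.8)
The plan is to take a $\G$-conical limit point $\xi \in X(\infty)$ and show directly that the $\La$-orbit of $x_0$ returns infinitely often within bounded distance of the geodesic ray $\eta = [x_0,\xi)$. Since $\xi$ is $\G$-conical, there is a constant $D_0$ and a sequence of times $t_n \to \infty$ together with elements $\g_n \in \G$ such that $d\big(\g_n x_0, \eta(t_n)\big) \leq D_0$ for all $n$. First I would pass to the closest $\La$-orbit points $\la_n := \la_{\g_n}$, so that $d(\g_n x_0, \la_n x_0) = d_{\g_n} =: d_n \leq u(|\g_n|)$ and $|\g_n| \leq t_n + D_0$, hence $d_n \leq u(t_n + D_0)$, a sublinear quantity in $t_n$. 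This alone places $\la_n x_0$ within sublinear distance $u(t_n+D_0)$ of $\eta(t_n)$, which is \emph{not} bounded, so a naive argument fails — this is exactly the obstacle the section is built to overcome.

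The key step is to invoke Proposition~\ref{qr: prop: Lambda cocompact horosphreres} (whose hypothesis holds: if $\{d_\g\}$ were bounded we would be in the bounded case, where $\mathcal{L}_{\mathrm{con}}(\G) \subset \mathcal{L}_{\mathrm{con}}(\La)$ is immediate; so assume it is unbounded) to obtain, via Lemma~\ref{qr: lem: almost Lambda cocompact horospheres}, that \emph{every} $1$-almost maximal $\La$-free horoball $\hbl$ satisfies $\hl \subset \mn_{D_\La}(\La\cdot x_0)$ for a universal $D_\La$. The geometric content I want to extract is: the $\La$-free region around any point $\g x_0$ with $d_\g$ large is itself trapped near $\La\cdot x_0$ once one moves back toward $x_0$ along the relevant horosphere direction. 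Concretely, given the sequence $\g_n$ with $d_n \to \infty$, Proposition~\ref{qr: prop: gamma lies deep inside the la ga horoball} says $\g_n x_0$ lies $s d_n$-deep inside a $\La$-free horoball $\hbl_{\la_n}$ based at a point $\xi^{i(\g_n)}_{\la_n} \in \wqg$, and Lemma~\ref{qr: lem: large d_g impliy small angle between [la_g,g] and hbl} says the geodesic $[\la_n x_0, \g_n x_0]$ makes a small angle with $[\la_n x_0, \xi^{i(\g_n)}_{\la_n})$. Blowing $\hbl_{\la_n}$ up to a $1$-almost maximal horoball and applying Lemma~\ref{qr: lem: almost Lambda cocompact horospheres} gives a $\La$-orbit point $\mu_n x_0$ within $D_\La + 1$ of the bounding horosphere of $\hbl_{\la_n}$ near where $\eta$ (suitably translated) crosses it. Tracking the geometry — $\eta(t_n)$ is within $D_0 + d_n$ of $\la_n x_0$, but the projection $P_{\hl_{\la_n}}\big(\eta(t_n)\big)$ peels off a sublinear-in-$t_n$ amount while landing within $D_\La+1$ of an actual $\La$-orbit point — one gets that $\eta(t_n)$ is within $D_\La + 1 + (\text{sublinear in } t_n)$ of $\La\cdot x_0$. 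This still looks like it fails, so the actual argument must be slightly sharper: I would instead use that, because $\xi$ is $\G$-conical, $\xi \notin \wqg$ (by Hattori, Theorem~\ref{qr: thm: HattoriA}, or Corollary~\ref{qr: cor: limit points characterization of wqg rational building}), so $\eta$ escapes every horoball of $\G$ and hence spends only boundedly much time in each $\La$-free horoball $\hbl_i$ near $x_0$ — the set of such horoballs is finite (Corollary~\ref{qr: cor: finitely many lambda horoballs at x_0}), so there is a uniform bound $T_0$ on the time $\eta$ can spend in any of them. Translating by $\la_n$: for $t$ slightly beyond the crossing time, $\eta_n'$ has left $\hbl_{\la_n}$ and is in $\mn_{D_\La+1}(\La \cdot x_0)$ by an honest bounded distance, and conjugating back gives $\eta(t_n')$ within a \emph{uniform} constant $D = D(D_0, D_\La, T_0, \G)$ of $\La\cdot x_0$ for a sequence $t_n' \to \infty$.

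The hard part will be pinning down this last bounded-distance bookkeeping: converting "deep inside a $\La$-free horoball that is itself near $\La\cdot x_0$ modulo a sublinear error" into "the conical ray stays within a \emph{uniform} bounded distance of $\La\cdot x_0$ along an unbounded sequence of times." The resolution is that $\xi$ being conical (non-horospherical-into-$\La$-free-horoballs by the above, since those are based at $\wqg$ and $\eta$ exits them in bounded time) forces $\eta$ to cross \emph{out} of $\hbl_{\la_n}$ and into the region $\hl_{\la_n} \subset \mn_{D_\La}(\La \cdot x_0)$ at a controlled depth, killing the sublinear term. Once I have $\La\cdot x_0 \cap \mn_D(\eta)$ infinite for some fixed $D$, discreteness of $\La$ gives that $\xi$ is a $\La$-conical limit point by definition. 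I would present the argument in this order: (i) reduce to the unbounded case; (ii) fix the $\G$-conical data $(\g_n, t_n, D_0)$ and note $d_n$ may be taken $\to\infty$ along a subsequence (else the relevant tail is already within bounded distance of $\La$, done); (iii) apply Proposition~\ref{qr: prop: gamma lies deep inside the la ga horoball}, Lemma~\ref{qr: lem: large d_g impliy small angle between [la_g,g] and hbl}, and Lemma~\ref{qr: lem: almost Lambda cocompact horospheres} to locate $\La$-orbit points near $\eta(t_n)$ up to a uniform bound; (iv) conclude via the definition of conical limit point.
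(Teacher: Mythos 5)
Your overall strategy is the paper's: reduce to the case $d_n\to\infty$, use Proposition~\ref{qr: prop: gamma lies deep inside the la ga horoball} to place $\eta(t_n)$ at depth $sd_n$ inside a maximal $\La$-free horoball $\hbl_n$ based at some $\xi_n\in\wqg$, and then exit onto the bounding horosphere $\hl_n\subset\mn_{D_\La}(\La\cdot x_0)$ supplied by Lemma~\ref{qr: lem: almost Lambda cocompact horospheres} to get a uniform bound at an unbounded sequence of times. But your justification for the exit step is wrong. You claim a \emph{uniform} bound $T_0$ on the time $\eta$ spends in any of the finitely many $\La$-free horoballs tangent to $x_0$, and transport it by $\la_n$. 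This cannot hold: $\eta(t_n)$ lies $sd_n$-deep inside $\hbl_n$ with $sd_n\to\infty$, so (Busemann functions being $1$-Lipschitz) the time $\eta$ spends inside $\hbl_n$ is at least $2sd_n$, which is unbounded. The correct reason $\eta$ eventually leaves $\hbl_n$ is Tits-metric, not time-based: since $\xi$ is $\G$-conical, Hattori's Theorem~\ref{qr: thm: HattoriA} gives $d_T(\xi,\xi_n)\geq\fpi{2}$, and when this inequality is \emph{strict}, item $1$ of Lemma~\ref{qr: prelims: sym spc: lem: Hattori penetration} forces the Busemann function $b_{\xi_n}\big(\eta(t)\big)$ to grow linearly, so $\eta$ crosses $\hl_n$ at some $s_n>t_n$ and $d\big(\eta(s_n),\La\cdot x_0\big)\leq D_\La$.

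The second gap is the borderline case $d_T(\xi,\xi_n)=\fpi{2}$, which your argument does not address and which genuinely occurs (Hattori only excludes conical points from the \emph{open} $\fpi{2}$-neighbourhood of $\wqg$; by Theorem~\ref{qr: thm: HattoriB} the set $\{d_T(\cdot,\wqg)=\fpi{2}\}$ consists of conical points). Here item $2$ of Lemma~\ref{qr: prelims: sym spc: lem: Hattori penetration} says $b_{\xi_n}\circ\eta$ is monotone non-increasing and bounded below, so $\eta$ \emph{never} exits $\hbl_n$ and your "crossing out" step fails. The fix (as in the paper) is that in this case $d\big(\eta(t),\hl_n\big)$ is uniformly bounded for all $t$, so $\eta$ stays within $C+D_\La$ of $\La\cdot x_0$ for all time and $\xi$ is $\La$-conical directly. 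With these two repairs your argument coincides with the paper's proof; also note that your intermediate detour through "sublinear-in-$t_n$ errors" is unnecessary once the exit point lands on the almost-$\La$-cocompact horosphere.
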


\begin{proof}
Let $\xi\in X(\infty)$ be a $\G$-conical limit point. Let $\eta:\mrnn\rightarrow X$ be a geodesic with $\eta(\infty)=\xi$. By definition there is a bound $D>0$ and sequences $t_n\rightarrow \infty,\g_n\in \G$ such that $d\big(\g_n x_0,\eta(t_n)\big)<D$. Consider the corresponding $\la_n:=\la_{\g_n}$ and $\la_n x_0$. If $d_n$ is uniformly bounded, then $\xi$ is $\La$-conical by definition. Otherwise, assume $d_n$ is monotonically increasing to $\infty$. For some fixed $s\in (0,1)$ it holds that for all but finitely many $n\in\mathbb{N}$, $\g_n x_0$ is $sd_n$ deep inside $\hbl_n:=\hbl_{\la_n,i(\g_n)}$. I assume $d_n$ is large enough so that $sd_n>D$, and in particular $\eta(t_n)\in \hbl_n$. Let $\xi_n\in\wqg$ be the respective base points of $\hbl_n$. The point $\xi$ is $\G$-conical,  and by Theorem~\ref{qr: thm: HattoriB} $\frac{\pi}{2}\leq d(\xi,\wqg)\leq d(\xi,\xi_n)$.

The proof differs depending on whether the above inequality is strict or not for any $n\in\mathbb{N}$. 
Assume first that for some $m\in\mathbb{N}$,  $d(\xi,\xi_m)=\frac{\pi}{2}$. By item $2$ of Lemma~\ref{qr: prelims: sym spc: lem: Hattori penetration}, $d\big(\hl_m,\eta(t)\big)$ is uniformly bounded, i.e., there is $C>0$ such that for every $t>0$ there is $x_t\in \hl_m$ for which $d\big(x_t,\eta(t)\big)<C$. By Lemma~\ref{qr: lem: almost Lambda cocompact horospheres}, $d(x_t,\La\cdot x_0)<D_\La$, hence $d\big(\eta(t_n),x_{t_n}\big)\leq C+D_\La$. This means that $\xi$ is $\La$-conical. 

Otherwise, for all $n\in\mathbb{N}$ it holds that  $\frac{\pi}{2}<d(\xi,\xi_n)$. The fact that $\eta(t_n)\in \hbl_n$ together with Lemma~\ref{qr: prelims: sym spc: lem: Hattori penetration} implies that at some later time the geodesic ray $\eta$ leaves $\hbl_n$. Thus there is $s_n>t_n$ for which $\eta(s_n)\in \hl_n$. Since $\hbl_n$ are maximal $\La$-free horoballs, Lemma~\ref{qr: lem: almost Lambda cocompact horospheres} gives rise to points $\la_n x_0$ such that $d\big(\la_n x_0,\eta(s_n)\big)\leq D_\La$. This renders $\xi$ as a $\La$-conical limit point, as wanted. 

\end{proof}

\begin{proof} [Proof of Proposition~\ref{qr: prop: Lambda cocompact horosphreres}]

The strategy is as follows. For $\hbl=\hbl_{{\la_\g},i(\g)}$, one uses Proposition~\ref{qr: prop: gamma lies deep inside the la ga horoball} to get that $\hbg\subset \hbl$ and that the distance $d(\hl,\hg)$ is large with $d_\g$. The horosphere $\hg$ admits a $\G$-cocompact metric lattice, and so the projections of these metric lattice points onto $\hl$ form a cocompact metric lattice in $\hl$. It remains to show that for each $\g' x_0\in\Gamma\cdot x_0\cap \hg$, the corresponding $\la'=\la_{\g'}$ indeed lies on the same $\hl$ and boundedly close to the projection $P_{\hl}(\g' x_0)$. This is done by putting together all the geometric facts obtained up to this point, specifically Lemma~\ref{qr: lem: almost Lambda cocompact horospheres}. One delicate fact that will be of use is that two maximal $\La$-free horoballs that are based at the same point must be equal, because none of them can contain a $\La$-orbit point while on the other hand both bounding horospheres intersect $\La\cdot x_0$. 

Fix $s>0$ for which Proposition~\ref{qr: prop: gamma lies deep inside the la ga horoball} yields a corresponding bound $D(s)$, and let $\g\in \G$ such that $sd_\g>2\cdot \big(D_\La+D(s)\big)$. Consider the (maximal) $\La$-free horoball $\hbl_{\la_\g,i(\g)}$ based at $\xi^{i(\g)}_{\la}$. I show that $\Lambda\cdot x_0\cap \hl_{\la_\g,i(\g)}$ is a cocompact metric lattice in $\hl_{\la_\g,i(\g)}$. I keep the subscript notation because the proof is a game between $\hbl_{\la_\g,i(\g)}$ and another $\La$-free horoball.

Let $\hbg_{\la_\g,i(\g)}$ be the $\G$-horoball corresponding to $\hbl_{\la_\g,i(\g)}$. I can conclude that $\hbg_{\la_\g,i(\g)}\subset \hbl_{\la_\g,i(\g)}$, because the choice of $d_\g>D(s)$ guarantees $\g x_0$ is $sd_\g$ deep inside $\hbl_{\la_\g,i(\g)}$. In particular $\hbl_{\la_\g,i(\g)}$ is not $\G$-free. Moreover, it holds that $L:=d(\hl_{\la_\g,i(\g)},\hg_{\la_\g,i(\g)})\geq sd_\g$. Let $\g'\in \G$ be any element in the cocompact metric lattice $\G\cdot x_0\cap \hg_{\la_\g,i(\g)}$, and consider two associated points: (a) $\la' x_0=\la_{\g'} x_0$ and (b) the projection of $\g' x_0$ on $\hl_{\la_\g,i(\g)}$, denoted $p_\g':=P_{\hl_{\la_\g,i(\g)}}(\g' x_0)\in\hl_{\la_\g,i(\g)}$. The horoball $\hbl_{\la_\g,i(\g)}$ is a maximal $\La$-free horoball so it is also $1$-almost maximal, hence $d(p_\g',\La\cdot x_0)\leq D_\La$ and the following holds: 

\begin{equation} \label{qr: eq: Cocompact lattice: distance to projections of gamma'}
    sd_\g\leq L\leq d_{\g'}\leq d(\g' x_0,p_\g')+d(p_\g',\La\cdot x_0)\leq L+D_\La
\end{equation}

Consider $\xi^{i(\g')}_{\la'}$, and assume towards contradiction that $\xi^{i(\g')}_{\la_{\g'}}\ne \xi^{i(\g)}_{\la_\g}$. Both points lie in $\wqg$ and therefore must be at Tits distance $\pi$ of each other. Therefore the fact that $\g' x_0$ lies in $\hbl_{\la_\g,i(\g)}$ implies that the geodesic $[\g' x_0,\xi^{i(\g')}]$ leaves $\hbl_{\la_\g,i(\g)}$ at some point $z\in \hl_{\la_\g,i(\g)}$. 

The fact that $D(s)\leq sd_\g\leq d_{\g'}$ implies that $\g' x_0$ lies $s^2 d_\g$ deep inside $\hbl_{\la_{\g'},i(\g')}$. Therefore the point $z$ also lies at least $s^2 d_\g$ deep inside $\hbl_{\la_{\g'},i(\g')}$, and therefore $z$ is the centre of a $\La$-free horoball of radius at least $s^2 d_\g$. 


By choice of $d_\g$ the point $z$ therefore admits a $2D_\La$ neighbourhood that is $\La$-free. But $z$ lies on $\hl_{\la_\g,i(\g)}$, a maximal horosphere of $\La$, contradicting Lemma~\ref{qr: lem: almost Lambda cocompact horospheres}. I conclude that $\xi^{i(\g')}_{\la_{\g'}}= \xi^{i(\g)}_{\la_\g}$, so $\hbl_{\la_\g,i(\g)}$ and $\hbl_{\la_{\g'},i(\g')}$ are two $\La$-free horoballs that are tangent to a $\La\cdot x_0$ point  and based at the same point at $\infty$. This implies $\hbl_{\la_\g,i(\g)}=\hbl_{\la_{\g'},i(\g')}$, and in particular $\la_{\g'}x_0\in\hl$.  Finally, it is clearly seen from Inequality~\ref{qr: eq: Cocompact lattice: distance to projections of gamma'} that

$$d(\la' x_0,p_\g')\leq d(\la' x_0,\g' x_0)+d(\g' x_0,p_\g')\leq d_{\g'}+L\leq L+D_\La+L$$ 

The element $\g'x_0\in \G\cdot x_0\cap \hg_{\la_\g,i(\g)}$ was as arbitrary element, and the above argument shows that the corresponding $\La$-orbit points satisfy: 
\begin{enumerate}
    \item $\la' x_0$ all lie on $\hl_{\la_\g,i(\g)}$.
    \item Each $p_\g'$ is $2L+D_\La$ close to the point $\la' x_0$.
\end{enumerate}

This shows that the cocompact metric lattice $\{p_\g'\mid\g' x_0\in \hg_{\la_\g,i(\g)}\}$ lies in a bounded neighbourhood of the set of points $\La\cdot x_0\cap \hl_{\la_\g,i(\g)}$, proving that $\La\cdot x_0\cap \hl_{\la_\g,i(\g)}$ is a cocompact metric lattice in $\hl_{\la_\g,i(\g)}$. Lemma~\ref{qr: lem: cocompact metric lattice of horospheremay be intersected with the stabilizer of horosphere} elevates this to 

$$\big(\La\cap\mathrm{Stab}_G(\hl_{\la_\g,i(\g)})\big)\cdot x_0\cap  \hl_{\la_\g,i(\g)}$$

being a cocompact metric lattice in $ \hl_{\la_\g,i(\g)}$, completing the proof. 
\end{proof}

\begin{figure}
    \centering
    
\tikzset{every picture/.style={line width=0.75pt}} 

\begin{tikzpicture}[x=0.75pt,y=0.75pt,yscale=-1,xscale=1]

\draw  [dash pattern={on 1.69pt off 2.76pt}][line width=1.5]  (141,124) .. controls (141,53.31) and (193.59,-4) .. (258.46,-4) .. controls (323.33,-4) and (375.91,53.31) .. (375.91,124) .. controls (375.91,194.69) and (323.33,252) .. (258.46,252) .. controls (193.59,252) and (141,194.69) .. (141,124) -- cycle ;
\draw  [fill={rgb, 255:red, 0; green, 0; blue, 0 }  ,fill opacity=1 ] (335.87,20.63) .. controls (335.87,17.89) and (333.76,15.67) .. (331.16,15.67) .. controls (328.55,15.67) and (326.44,17.89) .. (326.44,20.63) .. controls (326.44,23.36) and (328.55,25.58) .. (331.16,25.58) .. controls (333.76,25.58) and (335.87,23.36) .. (335.87,20.63) -- cycle ;
\draw  [color={rgb, 255:red, 208; green, 2; blue, 27 }  ,draw opacity=1 ][dash pattern={on 1.69pt off 2.76pt}][line width=1.5]  (230.08,222.2) .. controls (230.08,205.73) and (242.79,192.39) .. (258.46,192.39) .. controls (274.13,192.39) and (286.83,205.73) .. (286.83,222.2) .. controls (286.83,238.66) and (274.13,252) .. (258.46,252) .. controls (242.79,252) and (230.08,238.66) .. (230.08,222.2) -- cycle ;
\draw  [color={rgb, 255:red, 208; green, 2; blue, 27 }  ,draw opacity=1 ][fill={rgb, 255:red, 208; green, 2; blue, 27 }  ,fill opacity=1 ] (318.24,47.57) .. controls (318.24,44.84) and (316.13,42.62) .. (313.52,42.62) .. controls (310.92,42.62) and (308.81,44.84) .. (308.81,47.57) .. controls (308.81,50.31) and (310.92,52.53) .. (313.52,52.53) .. controls (316.13,52.53) and (318.24,50.31) .. (318.24,47.57) -- cycle ;
\draw  [fill={rgb, 255:red, 0; green, 0; blue, 0 }  ,fill opacity=1 ] (258.46,252) .. controls (258.46,249.86) and (256.72,248.13) .. (254.58,248.13) .. controls (252.44,248.13) and (250.71,249.86) .. (250.71,252) .. controls (250.71,254.14) and (252.44,255.88) .. (254.58,255.88) .. controls (256.72,255.88) and (258.46,254.14) .. (258.46,252) -- cycle ;
\draw [color={rgb, 255:red, 208; green, 2; blue, 27 }  ,draw opacity=1 ]   (342,240) -- (270.77,228.12) ;
\draw [shift={(268.79,227.79)}, rotate = 9.47] [color={rgb, 255:red, 208; green, 2; blue, 27 }  ,draw opacity=1 ][line width=0.75]    (10.93,-3.29) .. controls (6.95,-1.4) and (3.31,-0.3) .. (0,0) .. controls (3.31,0.3) and (6.95,1.4) .. (10.93,3.29)   ;
\draw  [fill={rgb, 255:red, 0; green, 0; blue, 0 }  ,fill opacity=1 ] (138,43.88) .. controls (138,41.73) and (136.27,40) .. (134.13,40) .. controls (131.98,40) and (130.25,41.73) .. (130.25,43.88) .. controls (130.25,46.02) and (131.98,47.75) .. (134.13,47.75) .. controls (136.27,47.75) and (138,46.02) .. (138,43.88) -- cycle ;
\draw  [color={rgb, 255:red, 208; green, 2; blue, 27 }  ,draw opacity=1 ][fill={rgb, 255:red, 208; green, 2; blue, 27 }  ,fill opacity=1 ] (242,200.88) .. controls (242,198.73) and (240.27,197) .. (238.13,197) .. controls (235.98,197) and (234.25,198.73) .. (234.25,200.88) .. controls (234.25,203.02) and (235.98,204.75) .. (238.13,204.75) .. controls (240.27,204.75) and (242,203.02) .. (242,200.88) -- cycle ;
\draw    (331.16,20.63) .. controls (293.1,33.87) and (233.33,198.4) .. (253.93,246.7) ;
\draw [shift={(254.58,248.13)}, rotate = 244.17] [color={rgb, 255:red, 0; green, 0; blue, 0 }  ][line width=0.75]    (10.93,-3.29) .. controls (6.95,-1.4) and (3.31,-0.3) .. (0,0) .. controls (3.31,0.3) and (6.95,1.4) .. (10.93,3.29)   ;
\draw  [dash pattern={on 0.84pt off 2.51pt}]  (331.16,25.58) -- (313.52,47.57) ;
\draw  [fill={rgb, 255:red, 0; green, 0; blue, 0 }  ,fill opacity=1 ] (163,56.88) .. controls (163,54.73) and (161.27,53) .. (159.13,53) .. controls (156.98,53) and (155.25,54.73) .. (155.25,56.88) .. controls (155.25,59.02) and (156.98,60.75) .. (159.13,60.75) .. controls (161.27,60.75) and (163,59.02) .. (163,56.88) -- cycle ;
\draw  [dash pattern={on 1.69pt off 2.76pt}][line width=1.5]  (77,147.19) .. controls (77,84.89) and (123.35,34.38) .. (180.52,34.38) .. controls (237.69,34.38) and (284.03,84.89) .. (284.03,147.19) .. controls (284.03,209.49) and (237.69,260) .. (180.52,260) .. controls (123.35,260) and (77,209.49) .. (77,147.19) -- cycle ;
\draw  [fill={rgb, 255:red, 0; green, 0; blue, 0 }  ,fill opacity=1 ] (210.88,236) .. controls (210.88,233.86) and (209.14,232.13) .. (207,232.13) .. controls (204.86,232.13) and (203.13,233.86) .. (203.13,236) .. controls (203.13,238.14) and (204.86,239.88) .. (207,239.88) .. controls (209.14,239.88) and (210.88,238.14) .. (210.88,236) -- cycle ;
\draw  [dash pattern={on 0.84pt off 2.51pt}]  (134.13,47.75) -- (159.13,60.75) ;
\draw    (66.13,79.14) -- (118.12,98.31) ;
\draw [shift={(120,99)}, rotate = 200.23] [color={rgb, 255:red, 0; green, 0; blue, 0 }  ][line width=0.75]    (10.93,-3.29) .. controls (6.95,-1.4) and (3.31,-0.3) .. (0,0) .. controls (3.31,0.3) and (6.95,1.4) .. (10.93,3.29)   ;
\draw  [fill={rgb, 255:red, 0; green, 0; blue, 0 }  ,fill opacity=1 ] (180.52,260) .. controls (180.52,257.86) and (178.78,256.13) .. (176.64,256.13) .. controls (174.5,256.13) and (172.77,257.86) .. (172.77,260) .. controls (172.77,262.14) and (174.5,263.88) .. (176.64,263.88) .. controls (178.78,263.88) and (180.52,262.14) .. (180.52,260) -- cycle ;
\draw    (134.13,47.75) .. controls (186.47,116.31) and (193.38,191.6) .. (177.14,254.23) ;
\draw [shift={(176.64,256.13)}, rotate = 285] [color={rgb, 255:red, 0; green, 0; blue, 0 }  ][line width=0.75]    (10.93,-3.29) .. controls (6.95,-1.4) and (3.31,-0.3) .. (0,0) .. controls (3.31,0.3) and (6.95,1.4) .. (10.93,3.29)   ;
\draw  [dash pattern={on 0.84pt off 2.51pt}]  (159.13,56.88) -- (238.13,197) ;
\draw  [dash pattern={on 0.84pt off 2.51pt}]  (238.13,200.88) -- (207,236) ;
\draw   (188.76,236) .. controls (188.76,225.42) and (196.92,216.84) .. (207,216.84) .. controls (217.08,216.84) and (225.24,225.42) .. (225.24,236) .. controls (225.24,246.58) and (217.08,255.16) .. (207,255.16) .. controls (196.92,255.16) and (188.76,246.58) .. (188.76,236) -- cycle ;
\draw    (90.13,247.14) -- (196,240.13) ;
\draw [shift={(198,240)}, rotate = 176.21] [color={rgb, 255:red, 0; green, 0; blue, 0 }  ][line width=0.75]    (10.93,-3.29) .. controls (6.95,-1.4) and (3.31,-0.3) .. (0,0) .. controls (3.31,0.3) and (6.95,1.4) .. (10.93,3.29)   ;
\draw    (407.13,120.14) -- (332.96,105.39) ;
\draw [shift={(331,105)}, rotate = 11.25] [color={rgb, 255:red, 0; green, 0; blue, 0 }  ][line width=0.75]    (10.93,-3.29) .. controls (6.95,-1.4) and (3.31,-0.3) .. (0,0) .. controls (3.31,0.3) and (6.95,1.4) .. (10.93,3.29)   ;

\draw (308.71,53.01) node [anchor=north west][inner sep=0.75pt]  [color={rgb, 255:red, 208; green, 2; blue, 27 }  ,opacity=1 ]  {$\gamma x_{0}$};
\draw (319.31,39.94) node [anchor=north west][inner sep=0.75pt]  [font=\scriptsize,rotate=-310.75]  {$d_{\gamma }$};
\draw (221,172) node [anchor=north west][inner sep=0.75pt]  [color={rgb, 255:red, 208; green, 2; blue, 27 }  ,opacity=1 ]  {$\gamma 'x_{0}$};
\draw (128.22,15.27) node [anchor=north west][inner sep=0.75pt]  [font=\footnotesize]  {$\lambda _{\gamma '} x_{0}$};
\draw (23,50) node [anchor=north west][inner sep=0.75pt]    {$\mathcal{HB}_{\lambda _{\gamma '} ,i( \gamma ')}^{\Lambda }$};
\draw (169.13,54.88) node [anchor=north west][inner sep=0.75pt]  [font=\small]  {$z':=\pi _{\mathcal{HB}_{\lambda _{\gamma '} ,i( \gamma ')}^{\Lambda }}( \gamma 'x_{0})$};
\draw (147.67,34.37) node [anchor=north west][inner sep=0.75pt]  [font=\tiny,rotate=-29.18]  {$\overbrace{}^{D_{1}}_{1}$};
\draw (148.5,259) node [anchor=north west][inner sep=0.75pt]    {$\xi _{\lambda _{\gamma '}}^{i( \gamma ')}$};
\draw (242.5,252) node [anchor=north west][inner sep=0.75pt]    {$\xi _{\lambda _{\gamma }}^{i( \gamma )}$};
\draw (200.51,117.64) node [anchor=north west][inner sep=0.75pt]  [font=\footnotesize,rotate=-12.99]  {$\geq \frac{1}{2} d_{\gamma }$};
\draw (195,240) node [anchor=north west][inner sep=0.75pt]  [font=\large]  [color={rgb, 255:red, 2; green, 27; blue, 208 }  ,opacity=1 ] {$\mathbf{z}$};
\draw (191.84,210.53) node [anchor=north west][inner sep=0.75pt]  [font=\scriptsize,rotate=-327.08]  {$\geq \frac{1}{2} d_{\gamma '}$};
\draw (6.83,215) node [anchor=north west][inner sep=0.75pt]    {$ \begin{array}{l}
\Lambda -free\ ball\\
\ B\left( z,\frac{1}{2} d_{\gamma }\right)
\end{array}$};
\draw (404,112) node [anchor=north west][inner sep=0.75pt]    {$\mathcal{HB}_{\lambda _{\gamma } ,i( \gamma )}^{\Lambda }$};
\draw (342,235) node [anchor=north west][inner sep=0.75pt]  [color={rgb, 255:red, 208; green, 2; blue, 27 }  ,opacity=1 ]  {$\mathcal{HB}_{\lambda _{\gamma } ,i( \gamma )}^{\Gamma }$};
\draw (339.22,0.27) node [anchor=north west][inner sep=0.75pt]    {$\lambda _{\gamma } x_{0}$};

\end{tikzpicture}

    \caption[A contradiction to almost cocompactness]{Proposition~\ref{qr: prop: Lambda cocompact horosphreres}. Assuming towards contradiction that $\xi^{i(\g)}_{\la_\g}\ne \xi^{i(\g')}_{\la_{\g'}}$ results in a point $z\in\hbl_{\la_\g,i(\g)}$ (blue coloured and bold faced in the bottom part of the figure) admitting a large $\La$-free neighbourhood, contradicting almost cocompactness.}
    \label{fig: proposition lambda cocompactness on horospheres}
\end{figure}
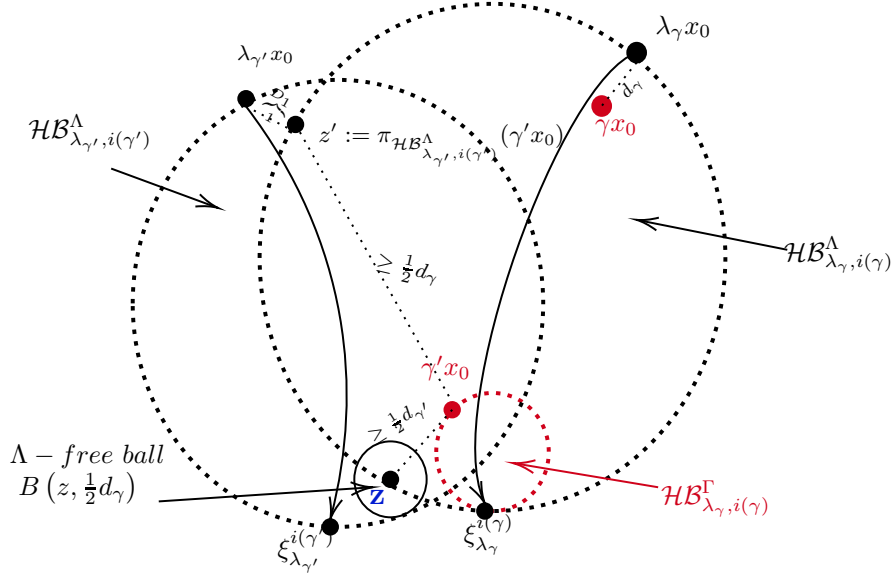

\subsection{The Bounded Case}\label{sec: qr: bounded case}

Proposition~\ref{qr: prop: Lambda cocompact horosphreres} is enough in order to prove Theorem~\ref{qr: thm: main for qr} in the case that $\G$ does not lie in a bounded neighbourhood of $\La$. The case where $\G$ and $\La$ lie at bounded Hausdorff distance, i.e.\ where $\G\subset\mn_D(\La)$ and $\La\subset\mn_D(\G)$ for $D>0$, arose naturally in the context of quasi-isometric completeness of non-uniform lattices. The precise statements are given in Theorems~\ref{qr: thm: eskin bounded case} and~\ref{qr: thm: Schwartz: finite distance imples lattice} above. I present their proofs in Section~\ref{sec: qr: bounded: Schwartz} below.

The notable difference between Theorem~\ref{qr: thm: eskin bounded case} and Theorem~\ref{qr: thm: Schwartz: finite distance imples lattice} is that for higher rank groups, the inclusion $\La\subset\mn_D(\G)$ is only required to prove commensurability. In view of Corollary~\ref{qr: cor: bounded case real rank 1 one sided}, this allows me to omit that assumption from Theorem~\ref{thm: main}. Notice also that for groups with property (T) the result easily follows from the (much more recent) result by  Leuzinger in Theorem~\ref{T: thm: Leuzinger criterion}. 

In the context of commensurability in the sublinear setting, I can only prove a limited result, Namely that $\La$ is commensurable to $\G$ if $\G$ is an irreducible \qr lattice and both $\G\subset\mn_D(\La)$ and $\La\subset\mn_u(\G)$ for some constant $D>0$ and a sublinear function $u$. This is done via a reduction to the bounded case.

\begin{prop}\label{qr: prop: bounded and sublinear implies bunded and bounded}
Let $G$ be a real finite-centre semisimple Lie group without compact factors, $\G\leq G$ an irreducible lattice of \qrc $\La\leq G$ a discrete subgroup, and $u$ a sublinear function. If $\G\subset \mn_D(\La)$ for some $D>0$ and $\La\subset \mn_u(\G)$, then actually $\La\subset \mn_{D'}(\G)$ for some $D'>0$. Moreover, if $G$ is of $\rr$, the conclusion holds under the relaxed assumption that  $u(r)\preceq_\infty \varepsilon r$ for some $\varepsilon<1$. 
\end{prop}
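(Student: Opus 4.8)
The plan is to verify that $\La$ lies in a bounded neighbourhood of $\G$, so that the bounded‑distance results of Eskin and Schwartz (Theorems~\ref{qr: thm: eskin bounded case},~\ref{qr: thm: Schwartz: finite distance imples lattice}, and hence Corollary~\ref{cor: commensurablity for two sided inclusion}) can be applied; that is, this is the advertised ``reduction to the bounded case''. I argue by contradiction: suppose there are $\la_n\in\La$ with $D_n:=d(\la_n x_0,\G\cdot x_0)\to\infty$. From $\La\subset\mn_u(\G)$ we get $D_n\le u(|\la_n|)$, and sublinearity of $u$ forces $|\la_n|\to\infty$. Two preliminary observations. First, in the present (bounded) situation $\La$ is already known to be a lattice — this is Theorem~\ref{qr: thm: eskin bounded case} in higher rank and Corollary~\ref{qr: cor: bounded case real rank 1 one sided} for $\rr$ — and it must be non‑uniform, since a uniform lattice cannot be contained in a sublinear neighbourhood of the non‑uniform lattice $\G$. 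Second, because $\La\subset\mn_u(\G)$ the group $\La$ preserves the rational Tits building $\wqg$ (Lemma~\ref{qr: lem: if Lambda is in sublinear of Gamma, then Lambda preserves wqg rational tits building}), while from $\G\subset\mn_D(\La)$ and the cocompactness of $\La$ on its compact core $X_0^\La$ (with cocompactness number $D_\La$) one obtains $\G\cdot x_0\subset\mn_{D+D_\La}\!\big(X_0^\La\big)$.

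Since $\G$ is of \qrc $\G\cdot x_0$ is $D_\G$‑dense in its compact core $X_0^\G$ and avoids the interiors of the (pairwise disjoint) horoballs of $\G$; hence $d(\la_n x_0,X_0^\G)\ge D_n-D_\G$, so $\la_n x_0$ lies at depth $\ge D_n-D_\G$ inside some horoball $\hbg_n$ of $\G$. Passing to a subsequence, either (Case~1) the depth of $\la_n x_0$ inside the horoballs of $\La$ tends to $\infty$, or (Case~2) that depth stays bounded, i.e.\ $\la_n x_0$ remains within a bounded distance of $X_0^\La$.

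In Case~1 the contradiction mirrors the uniform case. If $\la_n x_0$ lies $h_n$‑deep in a horoball of $\La$ with $h_n\to\infty$, then using cocompactness of $\La$ on $X_0^\La$ one translates by an element of $\La$ (which leaves $\La\cdot x_0$ and $X_0^\La$ invariant) so that this horoball becomes one of the finitely many horoballs of $\La$ meeting a fixed ball about $x_0$ and the foot of $\la_n x_0$ on its boundary lands within $D_\La$ of $x_0$; the result is a genuine $\La$‑orbit point $v_n$ lying $h_n$‑deep in a horoball $\mathcal C_n$ of $\La$ with $|v_n|=h_n+O(1)$. As every $\G$‑orbit point is within $D+D_\La$ of $X_0^\La$, which is disjoint from $\mathcal C_n^{\,\mathrm{int}}$, any geodesic from $v_n$ to $\G\cdot x_0$ must exit $\mathcal C_n$, so $d(v_n,\G\cdot x_0)\ge h_n-(D+D_\La)$; on the other hand $d(v_n,\G\cdot x_0)\le u(|v_n|)\le u(h_n)+O(1)$ by subadditivity. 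Thus $h_n\le u(h_n)+O(1)$, impossible for a sublinear $u$ once $h_n$ is large.

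Case~2 is the heart of the matter, and I expect it to be the main obstacle. Here $\la_n x_0$ sits simultaneously close to $X_0^\La$ and arbitrarily deep inside a horoball $\hbg_n$ of $\G$. Because only finitely many horoballs of $\G$ meet a ball around $x_0$ (Corollary~\ref{qr: prelims: sym spc: cor: every ball intersects finitely many horoballs of Gamma}), translating by a suitable element of $\G$ — which fixes $\G\cdot x_0$ setwise and hence preserves distances to it — I may assume $\hbg_n$ is one fixed horoball $\hbg^*$ of $(\G,x_0)$, with $\G\cdot x_0$ cocompact on $\hg^*=\partial\hbg^*$. Using $\G\subset\mn_D(\La)$ together with this cocompactness, the goal is to manufacture an honest $\La$‑orbit point of norm $\Theta(D_n)$ whose distance to $\G\cdot x_0$ is still $\gtrsim D_n$, which by $\La\subset\mn_u(\G)$ would bound $D_n$ and give the contradiction. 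The difficulty is the asymmetry of the two hypotheses: a $\G$‑translation keeps distances to $\G\cdot x_0$ under control but destroys $\La$‑orbit membership, whereas a $\La$‑translation does the reverse — and reconciling them, by routing the configuration through the \qr cusp geometry of $\G$ and through the fact that $\La$ preserves $\wqg$, is precisely where the work lies. Finally, when $G$ is of $\rr$ the space $X$ is Gromov‑hyperbolic and the same scheme applies verbatim, the only change being that the closing inequalities in Cases~1 and~2 take the form $D_n\le\varepsilon' D_n+O(1)$ with $\varepsilon'<1$ rather than $D_n\le u(D_n)+O(1)$; hence it suffices to assume $u(r)\preceq_\infty\varepsilon r$ for some $\varepsilon<1$.
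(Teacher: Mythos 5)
Your proposal is not complete: you yourself flag that Case~2 is ``precisely where the work lies,'' and that case is exactly the content of the proposition. Everything before it (the reduction to $D_n\to\infty$, the observation that $\la_n x_0$ must sit $\ge D_n-D_\G$ deep in some horoball of $\G$, and the easy contradiction when $\la_n x_0$ is also deep in a $\La$-free region) is correct but is the routine part; the situation where $\la_n x_0$ stays boundedly close to $\La\cdot x_0$ while sinking arbitrarily deep into horoballs of $\G$ is the one that actually needs an argument, and you have only stated the goal there, not reached it. A secondary issue: your dichotomy presupposes that $\La$ has a compact core $X_0^\La$ with a cocompactness number $D_\La$ and a system of horoballs with disjoint interiors. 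Knowing that $\La$ is a non-uniform lattice (via Eskin/Kapovich--Liu) does not give you this unless $\La$ is itself of $\mathbb{Q}$-rank $1$, which is not available at this stage; so even Case~1 rests on unjustified structure.

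For comparison, the paper closes the hard case without ever invoking a compact core for $\La$. First it shows that \emph{every} horoball of $\G$ contains a $\La$-free horoball: if some $\hbg$ did not, then $\La\subset\mn_u(\G)$ forces large $\La$-free balls centred on every horosphere parallel to $\hg$ and deep enough inside $\hbg$; translating by $\la_n^{-1}$ and combining with $\G\subset\mn_D(\La)$ shows each such horosphere also carries a large $\G$-free ball, so none of them can be the horosphere of $\G$ at the translated base point $\xi_n$ --- forcing the horosphere through $x_0$ to be it, which is absurd. Second, it blows each $\hbg_n$ up to a $1$-almost maximal $\La$-free horoball tangent to (essentially) $\la_n x_0$, pulls back by $\la_n^{-1}$ to get horoballs tangent near $x_0$, and uses the finiteness of horoballs of $\G$ meeting a fixed ball (Corollary~\ref{qr: prelims: sym spc: cor: every ball intersects finitely many horoballs of Gamma}) plus pigeonhole to fix a single base point $\xi'\in\wqg$. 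The $\la_n^{-1}$-translates of the $(D+2D_\G)$-almost $\La$-cocompact horospheres $\hg_n$ then form a sequence of further and further out horospheres based at $\xi'$, all almost $\La$-cocompact; a geodesic from $\xi'$ to any other $\zeta\in\wqg$ eventually lies deep in a $\La$-free horoball (by the first step) yet passes within bounded distance of these almost-cocompact horospheres infinitely often --- a contradiction. If you want to complete your write-up, this two-step mechanism (or something equivalent) is what has to replace the paragraph where you currently describe the difficulty.
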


\begin{rmk}
While the setting of Proposition~\ref{qr: prop: bounded and sublinear implies bounded} is indeed rather limited, the situation that both $\G\subset\mn_u(\La)$ and $\La\subset\mn_u(\G)$ arises naturally from the motivating example of SBE-rigidity in Theorem~\ref{sbe: thm: from SBE to sublinearly close}. Notice however that Theorem~\ref{sbe: thm: from SBE to sublinearly close} is not known for groups $G$ that admit $\rr$ factors, which is the only setting for which I can prove Proposition~\ref{qr: prop: bounded and sublinear implies bounded}.
\end{rmk}

\subsubsection{A Reduction}
I start with the proof of Proposition~\ref{qr: prop: bounded and sublinear implies bunded and bounded}. The first step is to establish the fact that $\La$ must preserve $\wqg$. 

\begin{lem}\label{qr: lem: if Lambda is in sublinear of Gamma, then Lambda preserves wqg rational tits building}
Let $G$ be a real finite-centre semisimple Lie group without compact factors, $\G\leq G$ an irreducible non-uniform lattice of \qrc $\La\leq G$ a discrete subgroup. Assume that $\G\subset \mn_u(\La)$ and that $\La\subset \mn_{u'}(\G)$ for sublinear functions $u,u'$. Then $\La\cdot \wqg\subset\wqg$. Moreover, if $G$ is of $\rr$, the conclusion holds under the relaxed assumption that $u'(r)\preceq_\infty \varepsilon r$ for some $\varepsilon<1$.
\end{lem}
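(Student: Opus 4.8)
The plan is to show that for every $\la \in \La$ and every $\xi \in \wqg$, the point $\la \xi$ again lies in $\wqg$. Using the characterization of $\wqg$ from Corollary~\ref{qr: cor: limit points characterization of wqg rational building} (in the higher rank case) and Theorem~\ref{qr: sym spc: thm: in rank 1 all horospherical limit points are conical} (in the $\rr$ case), it suffices to show that $\la$ maps non-conical limit points of $\G$ to non-conical limit points of $\G$; equivalently, since $\la$ is a bijection of $X(\infty)$ and a lattice has full limit set, that $\la$ maps $\G$-conical points to $\G$-conical points. Actually the cleanest route is to work directly with horospherical limit points: recall that $\xi \in \wqg$ is \emph{non-horospherical} for $\G$, i.e.\ there is a horoball $\hb$ based at $\xi$ with $\G \cdot x_0 \cap \hb = \emptyset$. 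I will show that then $\la \xi$ is also non-horospherical for $\G$, which by Hattori's characterization forces $\la\xi \in \wqg$.

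First I would fix $\la \in \La$ and $\xi \in \wqg$, and pick a $\G$-free horoball $\hb$ based at $\xi$ (its existence is exactly the statement that $\xi$ is non-horospherical, which holds for points of $\wqg$ by Theorem~\ref{qr: thm: HattoriA}). Then $\la \hb$ is a horoball based at $\la \xi$, and it is \emph{$\la\G$-free}; but I want a \emph{$\G$-free} horoball based at $\la\xi$, so I need to transfer freeness from $\la \G \cdot x_0$ to $\G \cdot x_0$ at the cost of shrinking the horoball sublinearly. This is where the hypotheses enter. The key geometric input is the convexity-of-Busemann-function argument already used in the proof of Lemma~\ref{qr: lem: limit set of Lambda equals limit set of Gamma}: if $\la \G \cdot x_0$ avoids a horoball $\la\hb$, and $\G$ lies within sublinear distance of $\La$ (hence, after left-translating by $\la$, $\la \G$ lies within sublinear distance of $\la\La = \la\La$, which still has the same orbit set as $\La$ up to the choice of origin; more to the point, the two-sided hypotheses $\G \subset \mn_u(\La)$ and $\La \subset \mn_{u'}(\G)$ give that $\G\cdot x_0$ and $\La\cdot x_0$ are \emph{sublinearly Hausdorff-close}), then $\G \cdot x_0$ cannot penetrate deeply into $\la\hb$ either. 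Concretely: take any $\g \in \G$ with $\g x_0$ inside the sub-horoball $\{b_{\la\xi} < -t\}$ for large $t$; its closest $\la\La$-orbit point is at distance $\le u(|\g|)$, and by the convexity estimate the Busemann value $b_{\la\xi}$ changes by at most a sublinear amount along that short segment (I will need to run the estimate relative to the base point $\la x_0$ and use that $|\g|$ is comparable to $t$ when $\g x_0$ is deep in the horoball and $\la$ is a fixed isometry). This forces $\la\La \cdot x_0$ — hence $\la\G \cdot x_0$, using $\La \subset \mn_{u'}(\G)$ once more — to come within a sublinear, therefore $o(t)$, distance of $\{b_{\la\xi} < -t\}$; choosing $t$ large enough relative to the (linear) rate at which $\G$-orbit points spread near a horosphere, I get that the horoball $\{b_{\la\xi} < -t_0\}$ is genuinely $\G$-free for some finite $t_0$. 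That is a $\G$-free horoball based at $\la\xi$, so $\la\xi$ is non-horospherical for $\G$, and Hattori's Theorem~\ref{qr: thm: HattoriA} (resp.\ Theorem~\ref{qr: sym spc: thm: in rank 1 all horospherical limit points are conical} in rank one) yields $\la\xi \in \wqg$.

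The main obstacle I anticipate is bookkeeping the two directions of sublinear containment simultaneously and making sure the sublinear error in the Busemann estimate is controlled \emph{in terms of the depth $t$} rather than in terms of some a priori unrelated quantity: I want $f(t) \le u\bigl(f(t) + g(t)\bigr)$-type inequalities of the shape handled by Lemma~\ref{qr: lem: property 1 of sublinear functions}, with $f(t)$ the failure of $\G$-freeness at depth $t$ and $g(t) \asymp t$ the depth itself, so that $f(t) = o(t)$ and a fixed horoball survives. A secondary point is that translating by the fixed isometry $\la$ distorts $|\cdot|$ only additively by $|\la x_0|$, so it does not affect sublinearity — but this must be stated. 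For the $\rr$ case the argument is the same, using Remark~\ref{qr: prelims: sym spc: rmk: Hattori penetration also good for rank 1} to know that every limit point is conical or non-horospherical, and allowing $u'$ to be merely $\varepsilon$-linear with $\varepsilon < 1$ (the convexity estimate still gives enough decay because $\frac{1-\varepsilon}{1}$-type constants stay bounded away from the critical threshold, exactly as in the uniform-lattice argument of Lemma~\ref{U: lem: large Lambda free balls near base point imply large Gamma free balls}).
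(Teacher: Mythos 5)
Your reduction in the first paragraph (show that $\la$ carries non-conical $\G$-limit points to non-conical ones, then use Corollary~\ref{qr: cor: limit points characterization of wqg rational building}) is essentially the paper's route, but the ``cleaner'' route you actually pursue has a genuine gap at its central step. You want to show that some fixed sub-horoball $\{b_{\la\xi}<-t_0\}$ is $\G$-free, and for this you assert that $|\g|$ is comparable to the depth $t$ when $\g x_0$ lies at depth $t$ in $\la\hb$. That is false: a point at depth $t$ in a horoball can be arbitrarily far from the base point, since the horosphere at depth $t$ is unbounded. All the sublinear control you have ($d(\g x_0,\La\cdot x_0)\le u(|\g|)$, $d(\la'' x_0,\G\cdot x_0)\le u'(|\la''|)$) is phrased in terms of $|\g|$, so chasing the two containments only yields $t\le u(|\g|)+u'\bigl(|\la|+|\g|+u(|\g|)\bigr)$, i.e.\ a bound on the depth \emph{in terms of} $|\g|$ --- which permits $\G$-orbit points at every depth $t$, merely forcing them to have large norm. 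So no fixed $\G$-free horoball based at $\la\xi$ comes out of this computation, and Lemma~\ref{qr: lem: property 1 of sublinear functions} does not apply because your $g(t)$ is not $\asymp t$. The paper's proof avoids exactly this trap by arguing along a single geodesic ray $\eta$, where $|\eta(s)|\asymp s$ does hold: it assumes $\la\xi\notin\wqg$, produces (via Corollary~\ref{qr: cor: limit points characterization of wqg rational building}) a $\G$-conical point $\xi'\in\mn_{\pi/2}(\la\xi)$, uses $\G\subset\mn_u(\La)$ to put $\La$-orbit points in a sublinear neighbourhood of $[\,\cdot\,,\xi')$ arbitrarily far out, translates by $\la^{-1}$, and contradicts the fact that $\La\subset\mn_{u'}(\G)$ confines $\La\cdot x_0\cap\hbg$ to a sublinear neighbourhood of $\hg$ while the translated ray penetrates $\hbg$ linearly (Lemma~\ref{qr: prelims: sym spc: lem: Hattori penetration}).

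A secondary problem: even granting a $\G$-free horoball based at $\la\xi$, the implication ``$\la\xi$ non-horospherical $\Rightarrow\la\xi\in\wqg$'' is not what Hattori's Theorem~\ref{qr: thm: HattoriA} says --- that theorem gives the reverse inclusion $\wqg\subset\{\text{non-horospherical points}\}$. In the higher-rank case the paper's characterization of $\wqg$ is a condition on the entire $\fpi{2}$-Tits-neighbourhood of the point (absence of conical limit points there), so you must control all of $\mn_{\fpi{2}}(\la\xi)$, not just $\la\xi$ itself; your proposal never addresses this neighbourhood. In the $\rr$ case the implication you want does follow from the paper's conventions (there $X(\infty)\setminus\wqg$ consists of conical points only), but the quantitative gap of the first paragraph remains. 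I would recommend returning to your original reduction and running the estimate along geodesic rays toward conical points, which is where the norm and the penetration depth are genuinely comparable.
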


\begin{proof}
The proof is similar to the argument of Lemma~\ref{qr: lem: Lambda free horoballs are based at rational tits building wqg}, and uses the linear penetration rate of a geodesic into a horoball. Let $\xi\in \wqg$, and let $\hg$ be a horosphere bounding a $\G$-free horoball $\hbg$ with $\hg\cap \G\cdot x_0$ a metric lattice in $\hg$. Assume first that $u'$ is sublinear. Since $\hbg$ is $\G$-free and $\La\subset \mn_{u'}(\G)$, I can conclude that $\La\cdot x_0 \cap \hbg\subset \mn_{u'}(\hg)$. Recall (Lemma~\ref{qr: prelims: sym spc: lem: Hattori penetration}) that every geodesic ray $\eta$ with limit point $\xi'\in \mn_\fpi{2}(\xi)$ penetrates $\hbg$ at linear rate. Therefore for every such geodesic ray $\eta$ and every sublinear function $v$ there is $R=R(\eta,v)>0$ for which $\mn_v(\eta_{\restriction_{r>R}})$ is $\La$-free. 

On the other hand, let $\la\in \La$, and assume towards contradiction that $\la\xi\notin \wqg$. Then by Proposition~\ref{qr: cor: limit points characterization of wqg rational building} there is a $\G$-conical limit point $\xi'\in \mn_\fpi{2}(\la\xi)$. The hypothesis that $\G\subset\mn_u(\La)$ then implies that for every $R>0$, $\mn_u(\eta_{\restriction{r>R}})\cap \La\cdot x_0\ne\emptyset$. Translating by $\la^{-1}$ yields a contradiction to the previous paragraph. I conclude that $\la\xi\in\wqg$. 

I now modify the argument to include $u'(r)\preceq_\infty \varepsilon r$ when $G$ is of $\rr$. In this case, the only point $\xi'\in\mn_\fpi{2}(\la\xi)$ is $\la\xi$ itself. Therefore by the same argument as above, the assumption that $\la\xi\notin\wqg$ implies that the $u$-sublinear neighbourhood of every geodesic ray with limit point $\xi$ intersects $\La\cdot x_0$. I.e., for every $\eta$ with limit point $\xi$ and every $R>0$ it holds that $\mn_u(\eta_{\restriction_{r>R}})\cap \La\cdot x_0\ne\emptyset$. On the other hand, every such geodesic penetrates $\hbg$ at $1$-linear rate. This amounts to the following fact: if $v'(r)=\varepsilon r$ for some $\varepsilon\in(0,1)$, then for some $R>0$, the set $\mn_u(\eta_{\restriction_{r>R}})\cap\mn_{v'}(\hg)=\emptyset$. This is a contradiction to $\La\subset\mn_{u'}(\G)$.
\end{proof}

\begin{proof}[Proof of Proposition~\ref{qr: prop: bounded and sublinear implies bunded and bounded}]
Assume towards contradiction that there is a sequence $\la_n$ such that $d(\la_n x_0,\G\cdot x_0)>n$. Recall that $\G\cdot x_0$ is a cocompact metric lattice in the compact core of $\G$. This implies that there is a number $D'>0$ such that any $\la\in \La$ for which $\la x_0\notin\mn_{D'}(\G\cdot x_0)$ must lie at least $\frac{1}{2}D'$-deep inside a horoball of $\G$. I can assume that for all $n\in \mathbb{N}$ there are corresponding horoballs of $\G$, which I denote $\hbg_n$, for which $\la_n\cdot x_0\in \hbg_n$. The fact that $\G\subset\mn_D(\La)$ then implies that $\mn_D(\La\cdot x_0)$ covers a cocompact metric lattice in $\hg_n$, namely the metric lattice $\G\cdot x_0\cap \hg_n$. In the terminology of Section~\ref{sec: qr: cocompact horosphere}, $\hg_n$ is almost $\La$-cocompact, or $D$-almost $\La$-cocompact. 

I first prove that every horoball of $\G$ contains a $\La$-free horoball (this is of course immediate if $\La\subset\mn_{C}(\G)$ for some $C>0$). Assume towards contradiction that there is a horoball $\hbg$ of $\G$ that does not contain a $\La$-free horoball. Denote $\hg:=\partial\hbg$. In the notations of the previous paragraph, I can assume without loss of generality that $\hbg=\hbg_n$ for all $n\in\mathbb{N}$. Denote by $\xi$ the base point of $\hbg$, fix some arbitrary $x\in \hg$ and consider the geodesic ray $\eta:=[x,\xi)$. The constraint that $\La\subset\mn_u(\G)$ implies that for every $R>0$ there is some $L>0$ for which the ball $B\big(\eta(L+t),R\big)$ is $\La$-free, for all $t\geq 0$. In particular, for all large enough $n\in \mathbb{N}$ (depending on $R$), the horosphere $\h(\xi,\la_n x_0)$ that is parallel to $\hg$ and that passes through $\la_n x_0$ contains a point that is the centre of $\La$-free ball of radius $R$. This property is $\La$-invariant, as well as the fact that $\hbg$ is based at $\wqg$. In particular, these two properties hold for the horoballs $\hb_n:=\la_n^{-1}\cdot \hbg$, whose respective base points I denote $\xi_n:=\la_n^{-1}\xi\in \wqg$.

Fix $R=D+2D_\G$ (recall that $D_\G$ is such that every horosphere $\h$ of $\G$ admits $\h\subset\mn_{D_\G}(\G\cdot x_0\cap \h)$). Let $L=L(D+2D_\G)$ be the corresponding bound from the previous paragraph. For every $n>L$ the horoball $\hb_n$ has bounding horosphere $\h_n$ that admits a point $z_n\in \h_n$ for which $B(z_n,D+2D_\G)$ is $\La$-free. Moreover, the same is true for every horosphere that is parallel to $\h_n$ which lies inside $\hb_n$. Since $\G\subset \mn_D(\La)$, this means that every horosphere that lies inside $\hb_n$ admits a point that is the centre of a $\G$-free ball of radius $2D_\G$. I conclude that none of those horospheres could be the horosphere of $\G$ corresponding to the parabolic limit point $\xi_n\in \wqg$. Since $x_0\in \h_n$ it must therefore be that $\h_n$ is a horosphere of $\G$. But this contradicts the fact that $z_n\in\h_n$ and $B(z_n,2D_\G)$ is $\G$-free. This shows that no horoball of $\G$ contains a sequence of $\La$-orbit points that lie deeper and deeper in that horoball. Put differently, it shows that every horoball of $\G$ contains a $\La$-free horoball. 

I remark that the above argument shows something a bit stronger, which I will not use but which I find illuminating. It proves that as soon as $d(\la x_0,\G\cdot x_0)$ is uniformly large enough, say more than $M$, then $\la x_0$ must lie on a $(D+2D_\G$)-almost $\La$-cocompact horosphere parallel to $\hg$, where $\hg$ is the bounding horosphere of any horoball of $\G$ in which $\la x_0$ lies (recall that it must lie in at least one such horoball). On the other hand if $d(\la x_0,\G\cdot x_0)<M$, then since every point in the $\G$-orbit lies on a horosphere of $\G$ one concludes that $\la x_0$ lies on a horosphere $\h$ based at $\wqg$ that is $(M+D)$-almost $\La$-cocompact.

I can now assume that every $\hbg_n$ contains a $\La$-free horoball. In particular it contains a $1$-almost maximal $\La$-free horoball $\hbl_n$ (see Definition~\ref{qr: def: almost maximal Lambda free horoball}). By definition there is a point $\la_n' x_0$ that is at distance at most $1$ from $\hl_n=\partial \hbl_n$. Up to enlarging $d(\la_n x_0,\G\cdot x_0)$ or decreasing it by at most $1$, I can assume $\la_n=\la_n'$ to begin with. Consider $\hb_n:=\la_n^{-1}\cdot \hbl_n$ with $\h_n=\partial \hb_n$. This is a sequence of horoballs, each of which contains a $\La$-free horoball at depth at most $1$,  based at corresponding parabolic limit points $\xi_n\in \wqg$, and tangent to points that are at distance at most $1$ from $x_0$, i.e., $\h_n\cap B(x_0,1)\ne\emptyset$.

Since $\G\subset \mn_D(\La)$ I conclude that each of the $\hb_n$ contain a horoball of depth at most $D+1$ that is $\G$-free. Therefore the horoball of $\G$ that is based at $\xi_n$ must have its bounding horosphere intersecting $B(x_0,D+2)$. By Corollary~\ref{qr: prelims: sym spc: cor: every ball intersects finitely many horoballs of Gamma} there are only finitely such horoballs. I conclude that there are finitely many points $\xi'_1,\dots,\xi'_K\in\wqg$ such that for every $n\in\mathbb{N}$ there is $i(n)\in\{1,\dots,K\}$ with $\xi_n=\xi'_{i(n)}$. From the Pigeonhole Principle there is some $\xi'\in\{\xi'_1\dots,\xi'_K\}$ for which $\xi_n=\xi'$ for infinitely many $n\in\mathbb{N}$. Passing to a subsequence I assume that this is the case for all $n\in \mathbb{N}$. 

To begin with the $\hbg_n$ are horoballs of $\G$, and therefore as in the first case the bounding horospheres $\hg_n$ are $D+2D_\G$-almost $\La$-cocompact. This is a $\La$-invariant property and therefore the same holds for the $\la_n^{-1}$ translate of it. These are the horospheres which are based at $\xi'$ and lie outside $\hb_n$ at distance $d(\la_n x_0,\G\cdot x_0)>n-1$ from $\h_n$. They form a sequence of outer and outer horospheres based at the same point at $\wqg$, all of which are $D+2D_\G$-almost $\La$-cocompact. This is a contradiction, since the union of such horospheres intersect every horoball of $X$, contradicting the existence of $\La$-free horoballs. Formally, take some $\zeta\in \wqg$ different from $\xi'$. Since both $\xi'$ and $\zeta$ lie in $\wqg$, they admit $d_T(\zeta,\xi')=\pi$ and there is a geodesic $\eta$ with $\eta(-\infty)=\xi'$ and $\eta(\infty)=\zeta$. Let $\hbg_\zeta$ be the horoball of $\G$ that is based at $\zeta$. By the first step of this proof, every such horoball must contain a $\La$-free horoball $\hbl_\zeta$. Therefore there is some $T>0$ such that for all $t>T$ the point $\eta(t)$ lies $2(D+2D_\G)$ deep in $\hbl_\zeta$. I conclude that for all $t>T$, $B\big(\eta(t),2D\big)$ is $\La$-free. On the other hand, for arbitrarily large $t$ it holds that the horosphere based at $\xi'$ and tangent to $\eta(t)$ is $D+2D_\G$-almost $\La$-cocompact, and in particular $d\big(\eta(t),\La\cdot x_0\big)<D+2D_\G$, a contradiction.

I conclude that $\La\subset\mn_{D'}(\G)$ for some $D'>0$, as claimed. 
\end{proof}

\begin{proof}[Proof of Theorem~\ref{thm: commensurability and uniform statement intro}]
The theorem follows immediately from Proposition~\ref{qr: prop: bounded and sublinear implies bunded and bounded} together with Theorems~\ref{qr: thm: eskin bounded case} and~\ref{qr: thm: Schwartz: finite distance imples lattice}.
\end{proof}

\subsubsection{The Arguments of Schwartz and Eskin}\label{sec: qr: bounded: Schwartz}

\paragraph{The $\rr$ case.}
The statement of Theorem~\ref{qr: thm: Schwartz: finite distance imples lattice} is a slight modification of Schwartz's original formulation. His framework leads to a discrete subgroup $\Delta\leq G$ such that:

\begin{enumerate}
    \item Every element of $\Delta$ \emph{quasi-preserves} the compact core of the lattice $\G$. Namely, each element of $\Delta$ is an isometry of $X$ that preserves $\wqg$ and that maps every horosphere of $\G$ to within the $D=D(\Delta)$ neighbourhood of some other horosphere of $\G$. 
    
    \item It holds that $\G\subset\mn_D(\Delta)$.
\end{enumerate}

From these two properties Schwartz is able to deduce that $\Delta$ has finite covolume, i.e.\ that $\Delta$ is a lattice in $G$. Here is a sketch of his argument, which works whenever $\G$ is a \qr lattice. 

\begin{thm}\label{qr: thm: Schwartz argument}
In the setting described above, $\Delta$ is a lattice in $G$.
\end{thm}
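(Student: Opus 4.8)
\textbf{Proof proposal for Theorem~\ref{qr: thm: Schwartz argument}.}

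The plan is to exploit the two structural properties of $\Delta$ --- that it quasi-preserves the compact core of $\G$ and that $\G\subset\mn_D(\Delta)$ --- in order to build an explicit fundamental domain of finite volume. The strategy mimics the way one proves that the \qr lattice $\G$ itself has finite covolume: cut $X$ along the horospheres of $\G$ into the compact core $X_0$ and the countably many disjoint horoballs $\{\hbg_i\}_{i\in I}$, show that $\Delta$ acts cocompactly on (a bounded neighbourhood of) $X_0$, and show that the part of a $\Delta$-fundamental domain lying inside each horoball has finite, uniformly controlled volume because $\Delta\cap\mathrm{Stab}_G(\hg_i)$ acts cocompactly on the bounding horosphere.

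First I would establish cocompactness of the $\Delta$-action on the thick part. Since $\G\subset\mn_D(\Delta)$ and $\G$ acts cocompactly on $X_0$, the orbit $\Delta\cdot x_0$ is $D$-dense in $X_0$; because $\Delta$ also quasi-preserves $\wqg$, the complement $X\setminus\bigcup_i\hbg_i$ (the compact core of $\G$) is carried by $\Delta$ into itself up to a bounded error, so $\Delta$ acts cocompactly on the complement of the (open) $D$-shrunk horoballs $\hbg_i^{-D}$. This gives a compact piece of a fundamental domain, of finite volume. Next I would analyze a single horoball $\hbg_i$. By property (1), each $\delta\in\Delta$ either fixes the base point $\xi_i$ of $\hbg_i$ or moves $\hbg_i$ to within bounded distance of another horoball $\hbg_j$; passing to $\Delta_i:=\Delta\cap\mathrm{Stab}_G(\xi_i)=\Delta\cap\mathrm{Stab}_G(\hg_i)$ (using Remark~\ref{qr: prelims: rmk: heintze-im hof results about metric on horospheres} and the fact that stabilizers of horospheres agree with stabilizers of base points), I want to show that $\Delta_i$ acts cocompactly on $\hg_i$. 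This follows because the $\G$-orbit is cocompact on $\hg_i$ and $\Delta$ is $D$-dense with respect to $\G$, combined with a Pigeonhole argument of exactly the type in Lemma~\ref{qr: lem: cocompact metric lattice of horospheremay be intersected with the stabilizer of horosphere}: boundedly many $\Delta$-translates of $\hg_i$ are within distance $D$ of $\hg_i$, so the subgroup actually stabilizing $\hg_i$ is already $D'$-cocompact on it.

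Once $\Delta_i$ is cocompact on the horosphere $\hg_i$, the quotient of the horoball $\Delta_i\backslash\hbg_i$ has finite volume: a horoball is a warped product of the (now compact) horosphere quotient with the ray $[0,\infty)$, and the volume form decays exponentially along the ray, so the integral converges. Summing over the finitely many $\Delta$-orbits of horoballs (finiteness of orbits follows because $\G\backslash X$ has finitely many cusps and $\Delta$ is a bounded perturbation of $\G$), and adding the finite volume of the compact thick piece, yields $\mathrm{vol}(\Delta\backslash X)<\infty$. Since $\Delta$ is discrete, finite covolume means $\Delta$ is a lattice. I expect the main obstacle to be the horoball volume estimate done $\Delta$-equivariantly --- specifically verifying that the ``bounded error'' in property (1) does not destroy the product structure needed to integrate, and handling the possibility that $\Delta$ permutes horoballs nontrivially (so one must quotient by the full stabilizer of an orbit of horoballs, not just of one horoball). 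This is the place where Schwartz's argument needs the geometry of strictly negative curvature (cusps are honest collars), and it is exactly where one must be careful that the perturbation constant $D$ is uniform over all cusps, which it is because there are only finitely many $\Delta$-orbits of them.
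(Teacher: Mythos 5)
Your proposal is correct in outline and reaches the same conclusion, but it handles the cusps differently from the paper. The paper never proves that the stabilizer $\Delta\cap\mathrm{Stab}_G(\h_i)$ is cocompact on the horosphere $\h_i$. Instead it forms $X_0':=\bigcup_{g\in\Delta}gX_0$, notes that $\Delta$ acts cocompactly on $X_0'$ (because $\G\subset\mn_D(\Delta)$ and $\G$ is cocompact on $X_0$), deduces that the $\Delta$-orbit of \emph{every} point on \emph{every} bounding horosphere passes through a fixed ball $B$, hence through one of finitely many compact sets $D_i\subset\h_i$ (finiteness by Corollary~\ref{qr: prelims: sym spc: cor: every ball intersects finitely many horoballs of Gamma}), and then takes $B\cup\bigcup_i\mathrm{Cone}(D_i)$ as a finite-volume set containing a fundamental domain. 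Your route---pass to $\Delta_i=\Delta\cap\mathrm{Stab}_G(\h_i)$, prove it is cocompact on $\h_i$ by Pigeonhole, then integrate the warped-product volume of $\Delta_i\backslash\hbg_i$---is viable and yields strictly more information (the structure of the cusp stabilizers), but it costs you two verifications the paper's version avoids. First, Lemma~\ref{qr: lem: cocompact metric lattice of horospheremay be intersected with the stabilizer of horosphere} as stated needs $\Delta$-orbit points lying \emph{on} $\h_i$ and a genuinely $\Delta$-free horoball; here the orbit is only $D$-close to $\h_i$ and the horoball is only $\Delta$-free below depth $D$, so you need a thickened version of the Pigeonhole argument. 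Second, the identity $\Delta\cap\mathrm{Stab}_G(\xi_i)=\Delta\cap\mathrm{Stab}_G(\h_i)$ is not automatic for $\Delta$ (it is for $\G$): you must argue that an element of $\Delta$ fixing $\xi_i$ has zero translation length on the horosphere filtration, which follows by iterating and contradicting quasi-preservation of $X_0$, but it is a step that must be said. Finally, a small correction: nothing here uses strict negative curvature---the theorem is applied in the paper precisely in higher rank; what makes both arguments work is that $\G$ has $\mathbb{Q}$-rank $1$, so its horoballs are disjoint and cones over compact subsets of horospheres have finite volume in any rank.
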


\begin{proof}[Proof sketch]
Consider $X_0':=\bigcup_{g\in\Delta}g\cdot X_0$, where $X_0$ is the compact core of $\G$. This space serves as a `compact core' for $\Delta$: the fact that $\Delta$ quasi-preserves $X_0$ implies that $X_0'\subset\mn_D(X_0)$. It is a $\Delta$-invariant space, and therefore one gets an isometric action of $\Delta$ on $X_0'$. This action is cocompact: the reason is that $\G$ acts cocompactly on $X_0$, and $\G\subset\mn_D(\Delta)$. Formally, every point in $X_0'$ is $D$-close to a point in $X_0$. Every point in $X_0$ is $D_\G$-close to a point in $\G\cdot x_0$. Every point in $\G\cdot x_0$ is $D$-close to a point in $\Delta\cdot x_0$. Therefore the ball of radius $2D+D_\G$ contains a fundamental domain for the action of $\Delta$ on $X_0'$. 

It remains to see that the action of $\Delta$ on $X\setminus X_0'$ is of finite covolume. As a result of the cocompact action of $\Delta$ on $X_0'$, there is $B:=B(x_0,R)$ so that $X_0'\subset \Delta\cdot B$. $X_0'$ is the complement of a union of horoballs, which one may call \emph{horoballs of $\Delta$}, with bounding \emph{horospheres of $\La$}. The fact that $\G$ is of \qr means that the horoballs of $\G$ are disjoint, and therefore those of $\La$ are almost disjoint: there is some $C>0$ such that for every horosphere $\h$ of $\La$ and every point $x\in \h$, $d(x,X_0')<C$. Up to enlarging the radius of $B$ by $C$, I can assume that $\h\subset \Delta\cdot B$ for every horosphere $\h$ of $\La$. 

Each horoball of $\La$ is based at $\wqg$, and each lies uniformly boundedly close to the corresponding horoballs of $\G$. From Corollary~\ref{qr: prelims: sym spc: cor: every ball intersects finitely many horoballs of Gamma} one therefore sees that there are finitely many horoballs of $\Delta$ that intersect $B$. Denote them by $\hb_1,\dots,\hb_N$, their bounding horospheres by $\h_i=\partial\hb_i$, and their intersection with $B$ by $B_i:=B\cap \hb_i$. Let also $\xi_i\in\wqg$ denote the base point of each $\hb_i$. Each $B_i$ is pre-compact and therefore the projection of each $B_i$ on $\h_i$ is pre-compact as well (this is a consequence e.g.\ of the results of Heintze-Im hof recalled in Remark~\ref{qr: prelims: rmk: heintze-im hof results about metric on horospheres}). Let $D_i\subset \h_i$ be a compact set that contains this projection, i.e.\ $P_{\h_i}(B_i)\subset D_i\subset\h_i$. In particular $B\cap\h_i\subset D_i$.

Observe now that for every horoball $\hb$ of $\Delta$, with bounding horosphere $\h=\partial\hb$, the $\Delta$-orbit of every point $x\in \h$ intersects some $D_i$. First notice that for $x\in \h$ the choice of $B$ implies that the  $\Delta$-orbit of $x$ must intersect $B$, say $gx\in B$. In particular $g\h\cap B\ne\emptyset$, and since $g\hb$ is a horoball of $\Delta$ then by definition $g\h=\h_i$ for some $i\in\{1,\dots,N\}$. One concludes that indeed $gx\subset D_i$. 

Moreover, let $y\in X$ is any point that lies inside a horoball $\hb$ of $\Delta$, and $x=P_\h(y)$ its projection on the bounding horosphere $\h=\partial \hb$. By the previous paragraph there is some $g\in\Delta$ and $i\in\{1,\dots,N\}$ for which $gx\in D_i$, and therefore it is clear that $gy$ lies on a geodesic emanating from $D_i$ to $\xi_i$.

Finally, define $\mathrm{Cone}(D_i)$ to be the set of all geodesic rays that emanate from $D_i$ and with limit point $\xi_i$. The previous paragraph proves that $\bigcup_{i=1}^N \mathrm{Cone}(D_i)$ contains a fundamental domain for the action of $\Delta$ on $X\setminus X_0'$. Moreover, the fact that $D_i\subset\h_i$ is compact readily implies that each $\mathrm{Cone}(D_i)$ has finite volume, and so this fundamental domain is of finite volume.

To conclude, $B\cup \big(\bigcup_{i=1}^N \mathrm{Cone}(D_i)\big)$ is a set of finite volume and it contains a fundamental domain for the $\Delta$-action on $X$, as claimed. The proof of  commensurability of $\Delta$ and $\G$ is given in full in \cite{Schwartz}. 

\end{proof}

There is one essential difference between Theorem~\ref{qr: thm: Schwartz: finite distance imples lattice} and Theorem~\ref{qr: thm: Schwartz argument}, namely the assumption that $\La\subset\mn_D(\G)$ rather than that $\La$ quasi-preserves the compact core of $\G$. In Schwartz's work, the fact that $\Delta\subset\mn_D(\G)$ is not relevant (even though it easily follows from the construction of his embedding of $\Delta$ in $G$). He only uses the two properties described above, namely the quasi-preservation of $X_0$ and $\G\subset\mn_D(\Delta)$.

The assumption that $\La$ quasi-preserves the compact core of $\G$ does not feel appropriate in the context of sublinear rigidity, while the metric condition $\La\subset\mn_D(\G)$ seems much more natural. It is a stronger condition as I now show. By Lemma~\ref{qr: lem: if Lambda is in sublinear of Gamma, then Lambda preserves wqg rational tits building}, $\La\cdot \wqg\subset \wqg$. Let $\hg_1$ be a horosphere of $\G$, based at $\xi\in \wqg$, and let $\g x_0\in \hg_1$ be some point on the metric lattice of $\G\cdot x_0$ on $\hg_1$. There is an element $\la\in \La$ such that $d(\la x_0,\g x_0)<D$. Moreover, since $\La\subset \mn_D(\G)$ one knows that the parallel horoball that lies $D$-deep inside $\hbg_1$ is $\La$-free.

Let $\la'\in \La$ be an arbitrary element of $\La$, and consider $\la'\cdot \hg_1$. The last statement in the previous paragraph is $\La$-invariant, and so the horoball that lies $D$-deep inside $\la'\cdot \hbg_1$ is $\La$-free. The fact that $\G\subset\mn_D(\La)$ then implies that the parallel horoball that lies $2D$ deep inside $\la'\hbg_1$ is $\G$-free. Let $\hg_2$ be the horosphere of $\G$ that is based at $\la'\xi$. The last statement amounts to saying that $\hg_2$ lies at most $2D$-deep inside $\la'\hbg_1$. On the other hand, one has $d(\la'\la x_0,\la'\hg_1)=d(\la x_0,\hg_1)\leq D$, so there is a $\La$-orbit point that lies within $D$ of $\la'\hg_1$. The parallel horoball that lies $D$-deep inside $\hbg_2$ must also be $\La$-free, so I conclude that $\hg_2$ must be contained in the parallel horoball to $\la'\hbg_1$ which contains it and that is at distance $D$ from it. I conclude that  $d(\la'\hg_1,\hg_2)\leq 2D$, and so that $\La$ quasi-preserves $X_0$. 

\begin{rmk}
It is interesting to note that Schwartz's arguments are similar in spirit to my arguments in Section~\ref{sec: qr: cocompact horosphere}. In fact, one could also prove Theorem~\ref{qr: thm: Schwartz: finite distance imples lattice} using the same type of arguments that appear repeatedly in section~\ref{sec: qr: cocompact horosphere}, namely by moving $\La$-free horoballs around the space, specifically the proof of Proposition~\ref{qr: prop: bounded and sublinear implies bunded and bounded}. I do not present this alternative proof here. 
\end{rmk}

\paragraph{Higher rank.}

Eskin's proof is ergodic, and based on results of Mozes~\cite{mozesEpimorphic} and Shah~\cite{ShahMumbai}. I produce it here without the necessary preliminaries, which are standard.

\begin{proof}[Proof of Theorem~\ref{qr: thm: eskin bounded case}]
To prove that $\La$ is a lattice amounts to finding a finite non-zero $G$-invariant measure on $\La\backslash G$. By Theorem~$2$ in \cite{mozesEpimorphic}, if $P\leq G$ is a parabolic subgroup then every $P$-invariant measure on $\La\backslash G$ is automatically $G$-invariant. Fix a minimal parabolic subgroup  $P\leq G$ and let $\mu_0$ be some fixed probability measure on $\La\backslash G$. Since $P$ is amenable it admits a tempered F\o{}lner sequence $F_n\subset P$, and one can average $\mu_0$ along each $F_n$ to get a sequence of probability measures $\mu_n$. The weak* compactness of the unit ball in the space of measures on $\La\backslash G$ implies that there exists a weak* limit $\mu$ of the $\mu_n$. The measure $\mu$ is automatically a finite $P$-invariant measure. It remains to show that $\mu$ is not the zero measure. To see this it is enough to show that for some compact set $C_\La\subset\La\backslash G$ and some $\La g=x\in \La\backslash G$, one has 

\begin{equation}\label{bounded case: Eskin: eq: liminf}
0<\liminf_n\frac{1}{|F_n|}\int_{F_n}{\mathbbm{1}_{C_\La}(xp^{-1})dp}
\end{equation}

Fix some compact neighbourhood $C_\G\subset\G\backslash G$ of the trivial coset $\G e$. The hypothesis $\G\subset\mn_D(\La)$ implies that there is a corresponding compact neighbourhood $C_\La\subset \La\backslash G$ of the trivial coset $\La e$ such that for any $p\in P$, it holds that $\G gp^{-1}\in C_\G\Rightarrow \La gp^{-1}\in C_\La$ (simply take $C_\La$ to be the $D+1$-blowup of $C_\G$). The action of $P$ on $\G\backslash G$ is uniquely ergodic, therefore

$$0<\mu_\G(C_\G)=\lim_n\frac{1}{|F_n|}\int_{F_n}{\mathbbm{1}_{C_\G}(\G p^{-1})dp}$$

where $\mu_\G$ denotes the natural $G$-invariant measure on $\G\backslash G$. The defining property of $C_\La$ ensures that Inequality~\eqref{bounded case: Eskin: eq: liminf} is satisfied, implying that $\mu$ is a non-zero $P$-invariant probability measure on $\La\backslash G$. I conclude that $\mu$ is also $G$-invariant, and that $\La$ is a lattice.  If moreover $\La\subset\mn_D(\G)$, one may use Shah's Corollary \cite{ShahMumbai} to conclude that $\La$ is commensurable to $\G$.
\end{proof}

\subsection{Translating Geometry into Algebra}\label{sec: qr: prelims: algebra} \label{sec: qr: geometry to algebra}

The goal of this section is to prove that the results of Section~\ref{sec: qr: cocompact horosphere} imply that $\La$ satisfies the hypotheses of the Benoist-Miquel criterion Theorem~\ref{qr: thm: Benoist-Miquel arithmeticity}. Namely, that $\La$ is Zariski dense, and that it intersects a horospherical subgroup in a cocompact indecomposable lattice. These are algebraic properties, and the proof that $\La$ satisfies them is in essence just a translation of the geometric results of Section~\ref{sec: qr: cocompact horosphere} to an algebraic language. The geometric data given by Section~\ref{sec: qr: cocompact horosphere} is that for some horosphere $\h$ bounding a $\La$-free horoball, $\La\cap\mathrm{Stab}_G(\h) \cdot x_0$ intersects $\h$ in a cocompact metric lattice (Proposition~\ref{qr: prop: Lambda cocompact horosphreres}), and that the set of $\La$-conical limit points contains the set of $\G$-conical limit points (Corollary~\ref{qr: cor: every Gamma conical is Lambda conical}). Note that since $K$ is compact the former implies that $\La\cap\mathrm{Stab}_G(\h)$ is a uniform lattice in $\mathrm{Stab}_G(\h)$.

\subsubsection{A Horospherical Lattice}
I assume that $\La\cap \mathrm{Stab}_G(\h)$ is a lattice in $\mathrm{Stab}_G\h$, and I want to show that $\La$ intersects a horospherical subgroup $U$ of $G$ in a lattice. This step requires quite a bit of algebraic background, which I give below in full. In short, the first goal is to show that $\mathrm{Stab}_G(\h)$ admits a subgroup $U\leq \mathrm{Stab}_G(\h)$ that is a horospherical subgroup of $G$. A lemma of Mostow (Lemma~\ref{qr: prelims: translation: lem: Mostow nilpotent radical intersection} below) allows to conclude that $\La$ intersects $U$ in a lattice. 

\begin{lem}[Lemma $3.9$ in \cite{MostowAtirhmeticIntersectionNilRad}] \label{qr: prelims: translation: lem: Mostow nilpotent radical intersection}
Let H be a Lie group having no compact connected normal semisimple non-trivial Lie subgroups, and let $N$ be the maximal connected nilpotent normal Lie subgroup of $H$. Let $\G\leq H$ be a lattice. Then $N/N\cap \G$ is compact.
\end{lem}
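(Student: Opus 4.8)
The plan is to reproduce the classical argument of Mostow, via a chain of reductions that brings the statement down to the case of a connected solvable group, where the essential content sits.

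\textbf{Reduction to connected $H$.} Write $H^{\circ}$ for the identity component. Since $N\subseteq H^{\circ}$ we have $N\cap\Gamma=N\cap\Gamma_{0}$ with $\Gamma_{0}:=\Gamma\cap H^{\circ}$. As $H^{\circ}$ is open, $\Gamma H^{\circ}$ is an open (hence closed) subgroup, so the projection $\Gamma\backslash H\to \Gamma H^{\circ}\backslash H$ has discrete base, and each of its fibres is a copy of the nonempty open set $\Gamma_{0}\backslash H^{\circ}\subseteq\Gamma\backslash H$, all of the same positive finite volume; finiteness of the $H$-invariant volume of $\Gamma\backslash H$ then forces $[H:\Gamma H^{\circ}]<\infty$, so $\Gamma_{0}$ is a lattice in $H^{\circ}$. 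Moreover the maximal compact connected normal semisimple subgroup of $H^{\circ}$ is characteristic in $H^{\circ}$, hence normal in $H$, hence trivial, so $H^{\circ}$ inherits the hypothesis. Thus we may assume $H$ connected.

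\textbf{Reduction to solvable $H$.} Let $R$ be the solvable radical and $H=R\cdot S$ a Levi--Mostow decomposition, $S$ semisimple. Because $[\mathfrak{h},\mathfrak{r}]\subseteq\mathfrak{n}$ (with $\mathfrak{n}=\mathrm{Lie}(N)$ the largest nilpotent ideal of $\mathfrak h$), one gets $N\subseteq R$, and a short Lie-algebra computation shows that $N$ is also the maximal connected nilpotent normal subgroup of $R$. The hypothesis that $H$ has no nontrivial compact connected normal semisimple subgroup is exactly what is needed for Mostow's structure theory to conclude that $\Gamma R$ is closed in $H$ and that $\Delta:=\Gamma\cap R$ is a lattice in $R$; I would invoke this from \cite{MostowAtirhmeticIntersectionNilRad}. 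Hence it suffices to prove: for $R$ connected solvable and $\Delta\leq R$ a lattice, $N/(N\cap\Delta)$ is compact.

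\textbf{The solvable case.} Mostow's theorem that every lattice in a connected solvable Lie group is cocompact gives that $\Delta\backslash R$ is compact; and by Lie's theorem $[\mathfrak{r},\mathfrak{r}]\subseteq\mathfrak{n}$, so $R/N$ is abelian. The statement now collapses to the single assertion that $\Delta N$ is \emph{closed} in $R$: granting it, $(N\cap\Delta)\backslash N\cong\Delta\backslash\Delta N$ is a closed, hence compact, subset of the compact space $\Delta\backslash R$, and a discrete cocompact subgroup of a nilpotent Lie group is a lattice. I would prove closedness by induction on $\dim R$. After passing to the universal cover (harmless, up to a central subgroup) one may pick a minimal nonzero connected $R$-invariant subgroup $A\cong\mathbb{R}^{d}$ inside the connected centre $Z(N)^{\circ}$ of $N$ (which is a nontrivial connected normal subgroup of $R$, and one takes an $\mathrm{ad}(\mathfrak{r})$-irreducible ideal in its Lie algebra). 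Then $\bar R:=R/A$ is connected solvable with largest connected nilpotent normal subgroup $\bar N:=N/A$, and if $\Delta A$ is closed then $\bar\Delta:=\Delta A/A$ is a lattice in $\bar R$, so by induction $\bar N/(\bar N\cap\bar\Delta)$ is compact; feeding this, together with the fact that $\Delta\cap A$ is a lattice in $A$, into the extension $1\to A\to N\to\bar N\to 1$ yields that $N\cap\Delta$ is cocompact in $N$.

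\textbf{The main obstacle.} The delicate point --- and the technical heart of Mostow's paper --- is precisely the claim that $\Delta A$ (equivalently $\Delta N$, equivalently the image of the discrete group $\Delta$ in $R/A$) is closed: a priori this image could be dense in a positive-dimensional subgroup, as irrational lines in tori show. Ruling this out is the structure theory of solvable lattices, equivalently the assertion that the Fitting subgroup of the polycyclic group $\Delta$ is a lattice in $N$ (in modern language, that $\Delta$ sits as a syndetic arithmetic-type subgroup of the real points of its algebraic hull, with unipotent radical meeting $\Delta$ in a lattice). I would either cite this directly, or, for a self-contained treatment, derive it from the rigidity of unipotent orbits on the finite-volume homogeneous space $\Delta\backslash R$ together with the weight analysis of the $R/N$-action on $A$; this is where essentially all of the work goes.
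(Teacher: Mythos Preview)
The paper does not prove this lemma at all: it is stated as Lemma~3.9 of Mostow's paper \cite{MostowAtirhmeticIntersectionNilRad} and used as a black box, with only a remark clarifying the terminology ``analytic group''. So there is no proof in the paper to compare your proposal against.

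Your outline is a reasonable sketch of the classical argument, and you correctly identify where the real content lies (closedness of $\Delta N$ in the solvable case). A few comments on the sketch itself. In the reduction to solvable $H$, you invoke from \cite{MostowAtirhmeticIntersectionNilRad} that $\Gamma R$ is closed and $\Gamma\cap R$ is a lattice in $R$; this is fine, but be aware that this is itself a substantial theorem of Mostow (often stated as: under the no-compact-semisimple-factor hypothesis, the radical is ``$\Gamma$-closed''), and in Mostow's paper it is proved alongside, not prior to, the statement you are trying to establish. So the reduction is not quite a clean logical separation of steps. In the inductive step you assume $\Delta A$ closed in order to apply induction, and then say the ``main obstacle'' is exactly proving $\Delta A$ closed; as written this is circular in presentation, though you clearly understand that this is the crux and would need independent input. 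The honest summary is that your proposal is an accurate roadmap that defers the hard step to a citation, which is precisely what the paper itself does for the entire lemma.
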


\begin{rmk}
In the original statement Mostow uses the term `analytic group', which I replaced here with `connected Lie subgroup'. This appears to be Mostow's definition of an analytic group. See e.g.\ Section $10$, Chapter $1$ in \cite{knappliegroups}. In Chevalley's \emph{Theory of Lie Groups}, he defines a \emph{Lie group} as a locally connected topological group whose identity component is an analytic group (Definition $1$, Section $8$, Chapter $4$ in \cite{ChavalleyTheoryofLieGroups}), and proves (Theorem $1$, Section $4$, Chapter $4$ therein) a 1-1 correspondence between analytic subgroups of an analytic group and Lie subalgebras of the corresponding Lie algebra. 
\end{rmk}

Lemma~\ref{qr: prelims: translation: lem: Mostow nilpotent radical intersection} lays the rationale for the rest of this section. Explicitly, I prove that $\mathrm{Stab}_G(\h)$ admits a subgroup that is a horospherical subgroup $U$ of $G$ (Corollary~\ref{qr: prelims: translation: cor: structure of horosphere stabilizer}), and that $U$ is maximal connected nilpotent normal Lie subgroup of $\mathrm{Stab}_G(\h)$ (Corollary~\ref{qr: prelims: translation: cor: horospherical is maximal nilpotent in stabilizer of horosphere}).

In order to use Lemma~\ref{qr: prelims: translation: lem: Mostow nilpotent radical intersection}, I show that the horospherical subgroup $N_\xi$ is a maximal normal nilpotent connected Lie subgroup of $\mathrm{Stab}_G(\h)^\circ$, and that $\mathrm{Stab}_G(\h)^\circ$ admits no compact normal factors. This requires to establish the structure of $\mathrm{Stab}_G(\h)^\circ$.

\begin{defn}
In the notation $h_\xi^t=\exp(tX)$ and $A_\xi=\exp\big(Z(X)\cap \mf{p}\big)$ of Proposition~\ref{qr: prop: Eberline Generalized Iwasawa}, define $A_\xi^\perp$ to be the codimension-$1$ submanifold of $A_\xi$ that is orthogonal to $\{h_\xi(t)\}_{t\in \mathbb{R}}$ (with respect to the Killing form in the Lie algebra).
\end{defn}

\begin{claim}\label{qr: prelims: translation: claim: A perp stabilizes horosphere}
Every element $a\in A_\xi^\perp$ stabilizes $\h=\h(x_0,\xi)$.
\end{claim}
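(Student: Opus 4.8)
\textbf{Proof plan for Claim~\ref{qr: prelims: translation: claim: A perp stabilizes horosphere}.}

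The plan is to reduce the statement to a computation inside a single flat $F$ containing the geodesic $[x_0,\xi)$. By item (1) of Proposition~\ref{qr: prop: Eberline Generalized Iwasawa} (the Langlands decomposition), every element $a\in A_\xi$ lies in some conjugate $A^g=gAg^{-1}$ with the property that $[x_0,\xi)\subset F^g:=A^g x_0$; in particular the orbit $A_\xi x_0$ is contained in such a flat $F$, and $A_\xi$ acts on $F$ by translations (it is a subgroup of the translation group of the Euclidean space $F$). So the first step is to fix the flat $F$, identify it with a Euclidean space $\mathbb{R}^k$ with $x_0$ at the origin, and recall that under this identification the $1$-parameter group $h_\xi^t=\exp(tX)$ acts as translation along the line $\ell$ spanned by the unit vector $X$ (pointing towards $\xi$), while $A_\xi$ acts as the full translation group of some linear subspace $V\subseteq\mathbb{R}^k$ containing $\ell$ (this is exactly $A_\xi=\exp(Z(X)\cap\mf p)$ translated to the level of $F$).

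The second step is to describe the horosphere $\h=\h(x_0,\xi)$ inside $F$: since $\xi\in F(\infty)$, the intersection $\h\cap F$ is the affine hyperplane of $F$ through $x_0$ orthogonal to $\ell$ — this is the standard fact that a horoball based at a point $\xi\in F(\infty)$ meets a flat $F$ with $\xi\in F(\infty)$ in a Euclidean half-space (stated in the paragraph on Busemann functions in the excerpt), so the bounding horosphere meets $F$ in the orthogonal affine hyperplane, and ``orthogonal'' here is with respect to the flat metric, which on $F$ agrees up to scale with the restriction of the Killing form. By definition $A_\xi^\perp$ is the codimension-$1$ subgroup of $A_\xi$ orthogonal (in the Killing form) to $\{h_\xi^t\}$, so on the level of $F$ it acts as translation by the subspace $V\cap X^\perp\subseteq\ell^\perp$. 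Since translation by a vector in $\ell^\perp$ preserves every affine hyperplane orthogonal to $\ell$, in particular it preserves $\h\cap F$.

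The third step is to promote ``$a$ preserves $\h\cap F$'' to ``$a$ preserves $\h$''. For this I use that the Busemann function $f=f_\xi$ normalized by $f(x_0)=0$ is the relevant invariant: $\h=f^{-1}(0)$. An element $a\in A_\xi\subseteq G_\xi$ satisfies $a\h_t=\h_{t+l(a)}$ for the translation number $l(a)\in\mathbb{R}$ (the paragraph on parabolic/horospherical subgroups in the excerpt), i.e. $f\circ a^{-1}=f-l(a)$; equivalently $l(a)=-f(a x_0)$ since $f(x_0)=0$. Thus $a$ stabilizes $\h$ if and only if $f(ax_0)=0$, i.e. if and only if $ax_0\in\h$. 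But $ax_0\in A_\xi^\perp x_0\subseteq (\h\cap F)$ by the second step (translation of $x_0\in\h\cap F$ by a vector in $\ell^\perp$ stays in $\h\cap F$), so $f(ax_0)=0$ and hence $l(a)=0$, giving $a\h=\h$. This completes the argument.

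I do not expect a serious obstacle here; the one point requiring a little care is matching up the two notions of orthogonality — the Killing-form orthogonality used to define $A_\xi^\perp$ at the Lie-algebra level versus the metric orthogonality of the affine hyperplane $\h\cap F$ to the line $\ell$ inside the Euclidean flat $F$. These agree because the Riemannian metric on $X$ (hence on $F$) is induced, up to a positive scalar on each de Rham factor, by the Killing form restricted to $\mf p$, and $X\in\mf p$ is precisely the tangent vector to $[x_0,\xi)$; so $Z(X)\cap\mf p$ orthogonal to $\mathbb{R}X$ in the Killing form corresponds exactly to the translation directions in $F$ orthogonal to $\ell$. Once that identification is in hand the claim is immediate from the description of $\h\cap F$ as an orthogonal affine hyperplane.
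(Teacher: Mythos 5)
Your argument is essentially the paper's own: reduce to a flat $F$ containing $[x_0,\xi)$, observe that $ax_0$ lies on the Euclidean hyperplane $\h\cap F$ orthogonal to the direction of $\xi$, and then upgrade $ax_0\in\h$ to $a\h=\h$ using $a\in G_\xi$ (the paper writes $a\h=\h(ax_0,\xi)=\h(x_0,\xi)$ where you compute the translation number $l(a)=-f(ax_0)=0$; these are the same observation). One small correction: you should phrase the first step per element rather than claiming that the whole orbit $A_\xi x_0$ sits in a single flat on which $A_\xi$ acts as a translation group — when $\xi$ is singular, $Z(X)\cap\mf{p}$ need not be abelian and $A_\xi x_0$ need not be a flat; Proposition~\ref{qr: prop: Eberline Generalized Iwasawa}(1) only gives, for each individual $a\in A_\xi$, a conjugate flat $F^g$ containing both $[x_0,\xi)$ and $ax_0$, which is all the argument needs.
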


\begin{proof}
An element in $A_\xi$ is an element that maps $x_0$ to a point on a flat $F\subset X$ that contains the geodesic ray $[x_0,\xi)$. If $a\in A_\xi^\perp$, then the geodesic $[x_0,ax_0]$ is orthogonal to $[x_0,\xi)$, and lies in $F$. From Euclidean geometry and structure of horospheres in Euclidean spaces, it is clear that $ax_0\in \h(x_0,\xi)$. Since $a\in G_\xi$, this means $a\h=\h(ax_0,\xi)=\h(x_0,\xi)=\h$.
\end{proof}

\begin{cor}\label{qr: prelims: translation: cor: structure of horosphere stabilizer}
Let $\h$ be a horosphere based at $\xi$. Then  $\mathrm{Stab}_G(\h)^\circ=(K_\xi A_\xi^\perp)^\circ N_\xi$, and in particular it contains a horospherical subgroup of $G$. Moreover, $\mathrm{Stab}_G(\h)^\circ$ is normal in $\mathrm{Stab}_G(\xi)^\circ$ and acts transitively on $\h$. 
\end{cor}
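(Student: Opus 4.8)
\textbf{Proof proposal for Corollary~\ref{qr: prelims: translation: cor: structure of horosphere stabilizer}.}

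The plan is to assemble the description of $\mathrm{Stab}_G(\h)^\circ$ from the Langlands decomposition $G_\xi = N_\xi A_\xi K_\xi$ (Proposition~\ref{qr: prop: Eberline Generalized Iwasawa}) together with the observation already recorded in Claim~\ref{qr: prelims: translation: claim: A perp stabilizes horosphere}. Recall from the discussion following Proposition~\ref{qr: prop: Eberline Generalized Iwasawa} that $\mathrm{Stab}_G(\h)\subseteq \mathrm{Stab}_G(\xi)=G_\xi$, and that $G_\xi$ acts on the filtration of $X$ by horospheres based at $\xi$ by translation: each $g\in G_\xi$ satisfies $g\h_t=\h_{t+l(g)}$ for a real number $l(g)$, and $g\mapsto l(g)$ is a homomorphism $G_\xi\to\mathbb{R}$. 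The key point is that $\mathrm{Stab}_G(\h)=\ker l$ (intersected with $G_\xi$, but that is automatic for $\h$ based at $\xi$), since stabilizing one horosphere based at $\xi$ is equivalent to stabilizing the whole parallel family with zero translation. So the task reduces to computing $\ker l$ on each of the three Langlands factors.

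First I would show $N_\xi\subseteq\ker l$: since $N_\xi=\ker(T_\xi)$ is the unipotent radical, every element of $N_\xi$ fixes $\xi$ and, being unipotent with $\lim_{n}h_\xi^{-n}nh_\xi^n=\mathrm{id}$, cannot translate horospheres — more concretely, $l$ restricted to $N_\xi$ is a homomorphism from a connected nilpotent (in fact unipotent) group to $\mathbb{R}$ whose image must be trivial because $N_\xi$ is generated by unipotents which have trivial displacement along the $h_\xi$-direction. (Alternatively: the Busemann function based at $\xi$ is $N_\xi$-invariant, which is a standard fact about horospherical subgroups.) Next, $K_\xi\subseteq\ker l$ because $K_\xi\leq K=\mathrm{Stab}_G(x_0)$ fixes $x_0\in\h$ and fixes $\xi$, hence fixes $\h=\h(x_0,\xi)$. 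For the abelian factor $A_\xi=\exp(Z(X)\cap\mf p)$, the map $a\mapsto l(a)$ is a linear functional on the vector group $A_\xi$; its kernel is exactly the codimension-$1$ subgroup orthogonal (in the Killing form) to the one-parameter group $\{h_\xi^t\}=\exp(tX)$ generating the translations, because $l(\exp(tX))=\pm t\neq0$ while $l$ vanishes on directions orthogonal to $X$ — and that kernel is precisely $A_\xi^\perp$ by definition. This direction is also exactly the content of Claim~\ref{qr: prelims: translation: claim: A perp stabilizes horosphere}. Combining these with the decomposition $G_\xi^\circ=(K_\xi A_\xi)^\circ N_\xi$ from part (7) of Proposition~\ref{qr: prop: Eberline Generalized Iwasawa} and the commutation $K_\xi A_\xi=A_\xi K_\xi$, I get $\mathrm{Stab}_G(\h)^\circ=(K_\xi A_\xi^\perp)^\circ N_\xi$. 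Since $N_\xi$ is a horospherical subgroup of $G$ and is contained in this group, the "in particular" clause follows immediately.

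For the final assertions: normality of $\mathrm{Stab}_G(\h)^\circ$ in $\mathrm{Stab}_G(\xi)^\circ=G_\xi^\circ$ follows because $\mathrm{Stab}_G(\h)^\circ$ is the identity component of $\ker l$, and $\ker l$ is normal in $G_\xi$ as the kernel of a homomorphism to the abelian group $\mathbb{R}$; taking identity components preserves normality. Transitivity on $\h$ was already noted in the text (right before Proposition~\ref{qr: prelims: algebra: prop: stabilizer at infinity by limit of conjugations}: "$G_\h:=\mathrm{Stab}_G(\h)$ acts transitively on $\h$, and the same holds for $G_\h^\circ$"); if one wants a self-contained argument, one observes $N_\xi$ already acts transitively on $\h$ (standard for horospherical subgroups acting on a horosphere), or derives it from the transitivity of $G_\xi^\circ$ on $X$ together with the fact that $\mathrm{Stab}_G(\h)^\circ$ is the stabilizer of a leaf in a one-dimensional $G_\xi^\circ$-equivariant fibration. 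I expect the only genuinely delicate point to be the clean identification $\ker(l|_{A_\xi})=A_\xi^\perp$ with the correct nondegeneracy of the Killing form pairing restricted to $Z(X)\cap\mf p$ — but this is exactly what Claim~\ref{qr: prelims: translation: claim: A perp stabilizes horosphere} provides in one direction, and the reverse inclusion is a dimension count: $l|_{A_\xi}$ is a nonzero functional, so its kernel has codimension one, matching $\dim A_\xi^\perp$.
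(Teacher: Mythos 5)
Your proposal follows essentially the same route as the paper: reduce to the translation homomorphism $l:G_\xi\rightarrow\mathbb{R}$ on the parallel family of horospheres, kill $l$ on each Langlands factor, and get the equality from the codimension-one count inside $G_\xi^\circ=(K_\xi A_\xi)^\circ N_\xi$; normality and transitivity are also handled the same way the paper does (kernel of a homomorphism to $\mathbb{R}$, resp.\ transitivity of $G_\xi^\circ$ on $X$). Two of your side-justifications, however, are false as stated, even though in each case your fallback argument is the correct one. First, a homomorphism from a connected nilpotent (even unipotent) Lie group to $\mathbb{R}$ need not be trivial --- $\mathbb{R}^n$ is unipotent and admits plenty of nonzero functionals --- so ``image must be trivial because the group is unipotent'' does not work; what does work is exactly the conjugation-limit computation you allude to: $l(h_\xi^{-t}nh_\xi^t)=l(n)$ for all $t$ while $h_\xi^{-t}nh_\xi^t\rightarrow e$, so continuity of $l$ forces $l(n)=0$. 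This is the paper's argument. Second, $N_\xi$ does \emph{not} in general act transitively on $\h$: for a singular $\xi$ (e.g.\ in $\mathbb{H}^2\times\mathbb{H}^2$ with $\xi$ the endpoint of a geodesic moving only in the first factor) one has $\dim N_\xi=1$ while $\dim\h=3$, so the $K_\xi A_\xi^\perp$ part is genuinely needed; transitivity should be derived, as you also suggest and as the paper does, from transitivity of $G_\xi^\circ$ on $X$ together with the fact that an element $ka_ta_\perp n$ preserves $\h_0$ if and only if $t=0$.
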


\begin{proof}
Clearly $(K_\xi A_\xi^\perp)^\circ N_\xi$ is a codimension-$1$ subgroup of $\mathrm{Stab}_G(\xi)^\circ$. Since $\mathrm{Stab}_G(\h)\ne \mathrm{Stab}_G(\xi)$ (e.g.\ $h_\xi^t\notin \mathrm{Stab}_G(\h)$ for $t\ne 0$), it is enough to show that $(K_\xi A_\xi^\perp)^\circ N_\xi\leq \mathrm{Stab}_G(\h)$. Let $kan\in (K_\xi A_\xi^\perp)^\circ N_\xi$. It fixes $\xi$, so it is enough to show that $kanx_0\in \h$. Since $k\in K_\xi$ and $kx_0=x_0$, it stabilizes $\h$. From Claim~\ref{qr: prelims: translation: claim: A perp stabilizes horosphere} $a\in \mathrm{Stab}_G(\h)$. So it remains to check that $N_\xi$ stabilizes $\h$, but this is more or less the definition: fixing a base point $x_0$, the horospheres based at $\xi$ are parameterized by $\mathbb{R}$. Denote them by $\{\h_t\}_{t\in \mathbb{R}}$, where $\h=\h_0$. In this parameterization, any element $g\in G_\xi$ acts on $\{\h_t\}_{t\in \mathbb{R}}$ by translation. I can thus define for $g\in \mathrm{Stab}_G(\xi)$ the real number $l(g)$ to be that number for which $g\h_t=\h_{t+l(g)}$. Clearly $l\big(h_\xi(t)\big)=t$. The element $n$ fixes $\xi$, so one has 

$$h_\xi^{-t}nh_\xi^t\h_0 = h_\xi^{-t}\h_{t+l(n)}=\h_{t+l(n)-t}=\h_{l(n)}$$

The fact that $n\in Ker(T_\xi)$, i.e.\ that $\lim_{t\rightarrow \infty}h_\xi^{-t}nh_\xi^t=e_G$ readily implies that necessarily $l(n)=0$. I conclude that $(K_\xi A_\xi^\perp)^\circ N_\xi=\mathrm{Stab}_G(\h)^\circ$, as wanted.

Next recall that $\mathrm{Stab}_G(\h)^\circ$ acts transitively on $X$.  Let $x,y\in \h$, and consider $g\in \mathrm{Stab}_G(\xi)^\circ$ with $gx=y$. Writing an element $g\in G_\xi$ as $ka_ta_\perp n\in K_\xi h_\xi^tA_\xi^\perp N_\xi$, the argument above shows that $kh_\xi^ta_\perp n\h_0=\h_0$ if and only if $t=0$, i.e., if and only if $g\in \mathrm{Stab}_G(\h)^\circ$.

Finally, let $g\in \mathrm{Stab}_G(\xi)$ and $h\in \mathrm{Stab}_G(\h)$. By the discussion above $h\cdot\h_t=\h_t$ for all $t\in \mathbb{R}$. Clearly $-l(g)=l(g^{-1})$, and therefore 
$$ghg^{-1}\h_0=gh\h_{-l(g)}=g\cdot \h_{-l(g)}=\h_0$$

Therefore $\mathrm{Stab}_G(\h)$ is normal in $\mathrm{Stab}_G{\xi}$, and the same is true for the respective identity components. 
\end{proof}

\begin{cor}\label{qr: prelims: translation: cor: stab horosphere has no compact factors}
$\mathrm{Stab}_G(\h)^\circ$ is a connected Lie group with no connected compact normal semisimple non-trivial Lie subgroups. 
\end{cor}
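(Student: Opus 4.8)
\textbf{Proof plan for Corollary~\ref{qr: prelims: translation: cor: stab horosphere has no compact factors}.}
The plan is to exploit the explicit structure $\mathrm{Stab}_G(\h)^\circ=(K_\xi A_\xi^\perp)^\circ N_\xi$ obtained in Corollary~\ref{qr: prelims: translation: cor: structure of horosphere stabilizer}, together with the Langlands decomposition $G_\xi=N_\xi A_\xi K_\xi$ of Proposition~\ref{qr: prop: Eberline Generalized Iwasawa}. The first observation is that $N_\xi$ is a horospherical (hence unipotent, in particular simply connected) subgroup, so it contains no non-trivial compact subgroup at all; and $A_\xi^\perp$ is a vector group, again with no non-trivial compact subgroup. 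Thus any compact subgroup of $\mathrm{Stab}_G(\h)^\circ$ is, up to conjugacy, contained in the compact factor $K_\xi$. So the task reduces to showing that $K_\xi$ contains no non-trivial connected normal (in $\mathrm{Stab}_G(\h)^\circ$) semisimple Lie subgroup.

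First I would recall that $K_\xi=K\cap G_\xi$ is the group of elements of $K$ fixing the bi-infinite geodesic determined by $[x_0,\xi)$ pointwise; in the Langlands/parabolic picture this is precisely the maximal compact subgroup of the reductive Levi factor $M_\xi A_\xi$ of the parabolic $G_\xi$, i.e.\ $K_\xi$ is a maximal compact subgroup of $M_\xi$. Since $G$ is semisimple without compact factors, the Levi factor $M_\xi$ of a \emph{proper} parabolic is a reductive group whose semisimple part $M_\xi^{ss}$ has strictly smaller real rank and no compact factors either — this is the standard structure theory of parabolic subgroups of semisimple Lie groups. Now suppose $C\le \mathrm{Stab}_G(\h)^\circ$ is a connected compact normal semisimple Lie subgroup. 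By the previous paragraph, after conjugating inside $\mathrm{Stab}_G(\h)^\circ$ we may assume $C\subseteq K_\xi$. Being normal in $\mathrm{Stab}_G(\h)^\circ=(K_\xi A_\xi^\perp)^\circ N_\xi$, in particular $C$ is normalised by $N_\xi$ and by $A_\xi^\perp$. A compact semisimple subgroup normalised by the unipotent group $N_\xi$ must in fact be centralised by $N_\xi$ (conjugation gives a map $N_\xi\to\mathrm{Aut}(C)$, whose image, being a unipotent subgroup of the algebraic group $\mathrm{Aut}(C)$ which has compact identity component, is trivial). Hence $C$ centralises $N_\xi$. But the centraliser of a horospherical subgroup $N_\xi$ in $G$ is contained in $A_\xi Z(N_\xi)\cdot(\text{centre})$ — more precisely it meets $K_\xi$ only in the centre of $M_\xi$, which is abelian — so $C$, being semisimple and connected, must be trivial.

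The cleanest way to package the centraliser computation is: the Lie algebra $\mf n_\xi=\mathrm{Lie}(N_\xi)$ is, under $\mathrm{ad}$, a sum of positive restricted root spaces for the parabolic, and its centraliser in $\mf g$ is $Z_{\mf g}(\mf n_\xi)\subseteq \mf m_\xi\oplus\mf a_\xi\oplus\mf n_\xi$ with the $\mf m_\xi$-component landing in the centre of $\mf m_\xi$; intersecting with $\mathrm{Lie}(K_\xi)\subseteq\mf m_\xi$ forces the $K_\xi$-part into $Z(\mf m_\xi)\cap\mathrm{Lie}(K_\xi)$, which is abelian, hence contains no non-zero semisimple ideal. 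Therefore $\mathrm{Lie}(C)=0$ and $C$ is trivial, proving the corollary. I expect the main obstacle to be purely expository: assembling the parabolic-subgroup structure theory (identification of $K_\xi$ as a maximal compact of the Levi, and the no-compact-factors statement for Levi factors of parabolics in a group with no compact factors) in the geometric language of Eberlein's book that the paper has been using, rather than in the algebraic-groups language where it is standard; the actual argument that a connected compact semisimple subgroup centralising $N_\xi$ is trivial is short once that framework is in place.
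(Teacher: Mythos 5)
Your overall strategy can be made to work, but as written it has a genuine gap in the last step, precisely in the case where $G$ is not simple. The claim you lean on — that the centraliser of $N_\xi$ meets $K_\xi$ only in the (abelian) centre of the Levi — is false when $\xi$ is a point at infinity that is ``constant'' on some de Rham factor. Concretely, take $G=G_1\times G_2$ and $\xi$ represented by a geodesic $t\mapsto(\eta_1(t),x_2)$; then $N_\xi=N_{\xi_1}\times\{e\}\subset G_1$, the horosphere is $\h=\h_1\times X_2$, the entire factor $G_2$ centralises $N_\xi$, and $K_2=K\cap G_2\subseteq K_\xi$ is a non-trivial compact connected semisimple subgroup of $K_\xi$ centralising $N_\xi$. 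So ``$C\subseteq K_\xi$, $C$ centralises $N_\xi$, $C$ semisimple connected'' does not force $C$ trivial. The corollary survives because such a $K_2$ is not normal in $\mathrm{Stab}_G(\h)^\circ$ (which contains all of $G_2$, and $G_2$ has no compact normal factors) — but that means you must re-invoke normality of $C$ at the end, e.g.\ by projecting to each simple factor, which your write-up drops after the $N_\xi$ step. Since the corollary is stated for an arbitrary horosphere in a semisimple $G$, this case cannot be waved away. Two smaller points: for singular $\xi$ the set $A_\xi=\exp\big(Z(X)\cap\mf p\big)$ is a union of flats through $x_0$, not a subgroup, so ``$A_\xi^\perp$ is a vector group'' is not literally correct (the reduction of $C$ into $K_\xi$ should instead be justified by the conjugacy of maximal compact subgroups in $G_\xi^\circ=(K_\xi A_\xi)^\circ N_\xi$); and the triviality of the map $N_\xi\to\mathrm{Aut}(C)$ needs the algebraicity you invoke, since a continuous homomorphism from a nilpotent group to a compact group need not be trivial.

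For comparison, the paper's proof is a three-line geometric argument that sidesteps all of this structure theory: every compact subgroup of $G$ fixes a point of $X$; a \emph{normal} subgroup of $H=\mathrm{Stab}_G(\h)^\circ$ fixing a point fixes the whole $H$-orbit of that point, and $H$ acts transitively on $\h$, so a compact normal subgroup fixes $\h$ pointwise; an isometry fixing a horosphere pointwise together with its base point is the identity. That argument uses normality exactly where your version forgets it, and it handles the reducible/singular cases uniformly. I would either adopt that route or repair yours by adding the factor-by-factor normality argument at the end.
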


\begin{proof}
Every compact subgroup of $G$ fixes a point. Let $H\leq G$ be some closed subgroup. It is standard to note that a normal $N\leq H$ that fixes a point $x\in X$ must fix every point in the orbit $H\cdot x$: $hnh^{-1}hx=hx$. Since $H=\mathrm{Stab}_G(\h)^\circ$ acts transitively on $\h$, it shows that a normal compact subgroup of $\mathrm{Stab}_G(\h)^\circ$ fixes every point in $\h$. An isometry fixing a horosphere pointwise while fixing its base point is clearly the identity, proving the claim.  
\end{proof}

The following fact is well known but I could not find it in the literature. 

\begin{cor}\label{qr: prelims: cor: horospheres are not convex}
A horosphere in $X$ is not convex.
\end{cor}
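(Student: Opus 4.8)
The plan is to show that a horosphere $\h$ in $X$ cannot be a convex subset, by exploiting the transitivity and normality properties just established in Corollary~\ref{qr: prelims: translation: cor: structure of horosphere stabilizer}. First I would argue by contradiction: suppose $\h=\h(x_0,\xi)$ is convex. Since $\mathrm{Stab}_G(\h)^\circ$ acts transitively on $\h$ (by Corollary~\ref{qr: prelims: translation: cor: structure of horosphere stabilizer}), convexity of $\h$ would mean that for any two points $x,y\in\h$ the entire geodesic $[x,y]$ lies in $\h$. Taking $x=x_0$ and $y=n x_0$ for a nontrivial $n\in N_\xi$, I would look at the midpoint $m$ of $[x_0,nx_0]$; convexity forces $m\in\h$, and more generally $\h$ would contain a geodesic segment through $x_0$ in the direction of $nx_0$.

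The key step is then to derive a contradiction from the existence of a geodesic segment inside $\h$. I would use the fact that a horosphere is a level set of the Busemann function $f=f_\eta$ based at $\xi$, and that $f$ is a convex function on $X$ (non-positive curvature). If a geodesic $\sigma:[0,1]\to X$ has both endpoints on the level set $\{f=0\}$, then convexity of $f\circ\sigma$ forces $f(\sigma(t))\le 0$ for all $t$, i.e.\ $\sigma$ dips into the open horoball $\hb$ (or else $f\circ\sigma$ is constant $0$). But $f\circ\sigma$ constant would mean $\sigma$ is contained in a horosphere; in a symmetric space of noncompact type this can only happen if $\sigma$ is contained in a flat $F$ with $\xi\notin F(\infty)$ and $\sigma$ perpendicular to the fibering — and even then, Busemann functions restricted to flats are affine and strictly decreasing along the $[x_0,\xi)$ direction, so a geodesic in the horosphere would have to avoid that direction entirely. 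The point to push is that $N_\xi$ moves $x_0$ in a direction that is \emph{not} tangent to any flat lying in $\h$ through $x_0$: concretely, $\dot\sigma(0)$ for $\sigma=[x_0,nx_0]$ generates, together with the $X$-direction, a two-dimensional subspace on which the sectional curvature is strictly negative (this is a standard feature of the nilpotent radical acting on the symmetric space). Hence $f\circ\sigma$ is \emph{strictly} convex, its interior values are strictly negative, and $\sigma$ genuinely enters $\hb$, contradicting $\sigma\subset\h$.

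Concretely, the cleanest route I would take: pick any $y\in\h$ with $y\ne x_0$ such that the geodesic $[x_0,y]$ is not contained in a flat through $[x_0,\xi)$ — such $y$ exists because $\h$ is at least two-dimensional and the flats through $[x_0,\xi)$ meet $\h$ only in a lower-dimensional set (a single Euclidean hyperplane per flat). Then $f([x_0,y](t))$ is a convex function vanishing at the endpoints, and strict convexity at an interior point (guaranteed by the strictly negative curvature in the plane spanned by $\dot{[x_0,y]}$ and the gradient of $f$) gives $f<0$ somewhere on $(0,1)$, so that part of $[x_0,y]$ lies in $\hb$ and not in $\h$. This contradicts convexity of $\h$, proving the corollary.

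The main obstacle will be making rigorous the claim that one can choose $y\in\h$ with $[x_0,y]$ genuinely ``curving into'' the horoball, i.e.\ that not every geodesic segment with endpoints on $\h$ stays flat. This is where one must use the structure of $X$ as a symmetric space of noncompact type: the only geodesics along which a Busemann function is affine are those lying in a flat containing the corresponding direction at infinity, and such flats intersect a given horosphere in a set of codimension at least one. Since $\h$ itself has codimension one in $X$ (so dimension $\ge 1$, and in fact $\ge 2$ whenever $\dim X\ge 3$; the case $\dim X=2$, i.e.\ $X=\mathbb{H}^2$, is classical and handled directly), there is always a point $y\in\h$ off all these ``flat'' directions, and for that $y$ the Busemann function is strictly convex along $[x_0,y]$. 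I would cite Eberlein~\cite{eberlein1996geometry} for the relevant convexity and rigidity statements about Busemann functions, and otherwise the argument is short.
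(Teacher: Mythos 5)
Your argument is correct in outline but takes a genuinely different route from the paper. The paper's proof assumes $\h$ is convex, shows that for a nontrivial $n\in N_\xi$ the orbit curve $t\mapsto a_t n x$ is a unit-speed geodesic staying at bounded distance from the axis $t\mapsto a_t x$, invokes the Flat Strip Theorem to place $nx$ and the axis in a common flat, and concludes that $n$ commutes with $a_t$ --- contradicting $n\in N_\xi=\mathrm{Ker}(T_\xi)$. You instead work with the Busemann function directly: it vanishes at both endpoints of a chord of $\h$, is convex, and is \emph{strictly} convex at $x_0$ in any direction $v$ with $[v,X]\neq 0$ (since the Hessian of $b_\xi$ at $x_0$ is governed by $(\mathrm{ad}\,X)^2$ on $\mf{p}$), so any such chord dips into the open horoball. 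The paper's route needs no curvature/Jacobi-field input beyond the Flat Strip Theorem and the already-developed Langlands machinery; yours needs the standard formula for the second fundamental form of horospheres in a symmetric space, but in exchange it shows that \emph{almost every} chord of $\h$ leaves $\h$, and it degenerates gracefully to the classical rank-one picture. Two points to tighten: the clause ``$\sigma$ is contained in a flat $F$ with $\xi\notin F(\infty)$'' has the wrong sign --- a geodesic along which $b_\xi$ is affine lies in a flat \emph{asymptotic to} $\xi$; and when $\xi$ is singular there are infinitely many flats through $[x_0,\xi)$, so the clean way to run the dimension count is to note that the union of all of them through $x_0$ is $A_\xi x_0=\exp\bigl(Z(X)\cap\mf{p}\bigr)x_0$, whose intersection with $\h$ has dimension $\dim\bigl(Z(X)\cap\mf{p}\bigr)-1<\dim\h$ because $Z(X)\cap\mf{p}\subsetneq\mf{p}$ for $X\neq 0$ in a semisimple $\mf{g}$. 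With those repairs the proof is complete.
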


\begin{proof}
Let $\h'$ be some horosphere in $X$, with base point $\zeta\in X(\infty)$, and assume towards contradiction that it is convex. Fix $x\in \h'$ and $a_t'$ the one parameter subgroup with $\eta'(\infty)=a_t'x$, and denote $\h'_t=\h(a_t'x,\zeta)$. Let $e_G\ne n\in N_\zeta$ ($N_\zeta$ defined with respect to $a_t'$ in a corresponding Langlands decomposition), and consider the curve $\eta_n'(t):=a_t'nx$. I claim that this is a geodesic. On the one hand, the fact that $\h'$ is convex implies that the geodesic segment $[x,nx]$ is contained in $\h'$. Therefore $a_t'[x,nx]=[a_t'x,a_t'nx]\subset \h'_t$. More generally it is clear that because $a_t'\h'_s=\h'_{s+t}$ it holds that $\h'_t$ is convex for every $t$ as soon as it is convex for some $t$. 

On the other hand, for every point $y\in [x,nx]$, $d(y,\h'_t)=t$, and more generally for any $y\in  [a_s'nx,a_s'x]$ it holds that $d(y,\h_t')=|s-t|$. In particular this is true for $\eta_n'(t)=y_t:=a_t'nx$. I get that $d\big(\eta_n'(t),\eta_n'(s)\big)=|s-t|$. Therefore $\eta_n'$ is a geodesic (to be pedantic one has to show that $\eta_n'$ is a continuous curve, which is a result of the fact that $a_t'$ is a one parameter subgroup of isometries). Clearly

$$d\big(\eta_n'(t),\eta'(t)\big)=d(a_t'nx,a_t'x)=d(nx,x)$$

and therefore $\eta_n'$ is at uniformly bounded distance to $\eta'$. This bounds $d(\eta_n,\eta_n')$ as bi-infinite geodesics, i.e.\ for all $t\in \mathbb{R}$, not just as infinite rays. The Flat Strip Theorem (Theorem $2.13$, Chapter $2.2$ in \cite{BridsonHaefliger}), then implies that the geodesics $\eta_n,\eta_n'$ bound a flat strip: an isometric copy of $\mathbb{R}\times [0,l]$ (where $l=d(x,nx)$).

Up to now I did not use the fact that $n\in N_\xi$, only that the point $nx$ lies on a geodesic that is contained in $\h'=\h'_0$. Therefore the entire bi-infinite geodesic that is determined by $[x,nx]$ lies on a $2$-dimensional flat $F$ that contains $\eta'$. The two elements $n,a_t'$ therefore admit $nx,a_t'x\in F$. It is a fact that two such elements must commute. I can conclude therefore that $[n,a_t']=e_G$, which contradicts the fact that  that $n\in N_\zeta=Ker(T_\zeta)$.
\end{proof}

\begin{lem}[Theorem $11.13$ in \cite{sagleIntroductionLieGroupsAlgebras}]\label{qr: prelims: translation: lem: N connected nilpotent if an only if lie algebra is nilpotent}
Let $N$ be a connected real Lie group. Then $\mathrm{Lie}(N)$ is a nilpotent Lie algebra if and only if $N$ is a nilpotent Lie group.
\end{lem}

\begin{prop}[Proposition $13$, Section $4$, Chapter $1$ in  \cite{bourbakiLie1-3}]\label{qr: prelims: translation: prop: n is maximal nilpotent ideal in parabolic}
In the notation of Proposition~\ref{qr: prop: Eberline Generalized Iwasawa},  $\mf{n}_\xi=\mathrm{Lie}(N_\xi)$ is a maximal nilpotent ideal in $\mf{g}_\xi=\mathrm{Lie}(G_\xi)$.
\end{prop}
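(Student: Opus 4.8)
The plan is to identify $\mf{g}_\xi$ and $\mf{n}_\xi$ concretely via the grading of $\mf{g}$ determined by the vector $X\in\mf{p}$ of Proposition~\ref{qr: prelims: algebra: prop: stabilizer at infinity by limit of conjugations}, and then to show that $\mf{n}_\xi$ is the nilradical of $\mf{g}_\xi$, i.e.\ the unique maximal nilpotent ideal. Since $X\in\mf{p}$, the operator $\mr{ad}(X)$ is self-adjoint for the inner product associated with the Cartan involution, hence diagonalizable over $\mb{R}$; write $\mf{g}=\bigoplus_{c\in\mb{R}}\mf{g}_c$ for its eigenspace decomposition, $\mf{g}_c=\{Y\in\mf{g}:[X,Y]=cY\}$, so that $[\mf{g}_a,\mf{g}_b]\subseteq\mf{g}_{a+b}$. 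Translating Proposition~\ref{qr: prelims: algebra: prop: stabilizer at infinity by limit of conjugations} to the Lie-algebra level, $Y\in\mf{g}_\xi$ if and only if $\lim_{t\to\infty}e^{-t\,\mr{ad}(X)}Y$ exists, which holds precisely when $Y$ has no components in negative grades; hence $\mf{g}_\xi=\bigoplus_{c\geq0}\mf{g}_c$, the differential of $T_\xi$ is the projection onto $\mf{g}_0$, and $\mf{n}_\xi=\ker dT_\xi=\bigoplus_{c>0}\mf{g}_c$. From $[\mf{g}_a,\mf{g}_b]\subseteq\mf{g}_{a+b}$ one reads off at once that $\mf{n}_\xi$ is an ideal of $\mf{g}_\xi$ and, since only finitely many grades occur, that iterated brackets of $\mf{n}_\xi$ eventually vanish, so it is nilpotent (this also follows from Lemma~\ref{qr: prelims: translation: lem: N connected nilpotent if an only if lie algebra is nilpotent} applied to the horospherical group $N_\xi$).

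For maximality, let $\mf{i}$ be any nilpotent ideal of $\mf{g}_\xi$; since the sum of two nilpotent ideals is again a nilpotent ideal, I may enlarge $\mf{i}$ and assume $\mf{n}_\xi\subseteq\mf{i}$, and it then suffices to prove $\mf{i}=\mf{n}_\xi$. As $X\in\mf{g}_\xi$, the operator $\mr{ad}(X)$ preserves $\mf{i}$, so $\mf{i}$ is a graded subspace; combined with $\mf{n}_\xi=\bigoplus_{c>0}\mf{g}_c\subseteq\mf{i}$ this yields $\mf{i}=\mf{i}_0\oplus\mf{n}_\xi$ with $\mf{i}_0:=\mf{i}\cap\mf{g}_0$. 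Here $\mf{i}_0$ is an ideal of $\mf{g}_0$ (brackets of $\mf{g}_0$ with $\mf{i}_0$ stay in $\mf{g}_0\cap\mf{i}$) and is nilpotent as a subalgebra of $\mf{i}$. Now $\mf{g}_0=Z_{\mf{g}}(X)$ is the centralizer of the semisimple element $X$, hence reductive; a nilpotent, a fortiori solvable, ideal of a reductive Lie algebra lies in its radical, which is its centre, so $\mf{i}_0\subseteq\mf{z}(\mf{g}_0)$.

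It remains to rule out $\mf{i}_0\ne0$. Suppose $0\ne Z\in\mf{i}_0\subseteq\mf{z}(\mf{g}_0)$, and decompose $Z=Z_p+Z_k$ into its components in $\mf{p}$ and in $\mf{k}$ (which both lie in $\mf{z}(\mf{g}_0)$ since $\mf{g}_0$ is $\theta$-stable). Then $\mr{ad}(Z_p)$ is self-adjoint and $\mr{ad}(Z_k)$ skew-adjoint for the Cartan inner product, and they commute, so $\mr{ad}(Z)$ is semisimple on $\mf{g}$, hence also on the invariant subspace $\mf{n}_\xi$. On the other hand $Z\in\mf{i}$ and $\mf{i}$ is nilpotent, so by Engel's theorem $\mr{ad}(Z)$ is nilpotent on $\mf{i}\supseteq\mf{n}_\xi$; an operator that is both semisimple and nilpotent vanishes, so $\mr{ad}(Z)|_{\mf{n}_\xi}=0$. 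Since $Z$ is central in $\mf{g}_0$ we also have $\mr{ad}(Z)|_{\mf{g}_0}=0$, hence $\mr{ad}(Z)$ kills all of $\mf{g}_\xi=\mf{g}_0\oplus\mf{n}_\xi$. Finally, invariance of the Killing form $B$ together with the fact that $B$ restricts to a nondegenerate pairing $\mf{g}_{-c}\times\mf{g}_c\to\mb{R}$ propagates this to the negative grades: for $Y'\in\mf{g}_{-c}$ ($c>0$) and every $Y\in\mf{g}_c$ one has $B(\mr{ad}(Z)Y',Y)=-B(Y',\mr{ad}(Z)Y)=0$, and nondegeneracy forces $\mr{ad}(Z)Y'=0$. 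Thus $\mr{ad}(Z)=0$ on $\mf{g}$, so $Z\in\mf{z}(\mf{g})=\{0\}$, a contradiction. Therefore $\mf{i}_0=0$ and $\mf{i}=\mf{n}_\xi$, proving that $\mf{n}_\xi$ is the maximal nilpotent ideal of $\mf{g}_\xi$.

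The main obstacle is exactly this last step. A crude count with the $\mr{ad}(X)$-grading only pins $\mf{i}_0$ down inside $\mf{z}(\mf{g}_0)$, and a reductive Lie algebra genuinely carries a nonzero nilpotent (even central) ideal, so one cannot conclude $\mf{i}_0=0$ on formal grounds; the point is that any nonzero central element of $\mf{g}_0$ acts on $\mf{g}$ \emph{semisimply} (via the Cartan involution) and \emph{nontrivially} (via nondegeneracy of $B$), which is incompatible with its belonging to a nilpotent ideal. Everything else is bookkeeping with the grading and the Langlands decomposition of Proposition~\ref{qr: prop: Eberline Generalized Iwasawa}.
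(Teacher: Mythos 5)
Your proof is correct. The paper does not actually prove this proposition: it is quoted from Bourbaki, and the adjacent remark notes that the reference presents $\mf{n}_\xi$ via the full root-space decomposition of the parabolic subalgebra. You instead work with the coarser grading $\mf{g}=\bigoplus_c\mf{g}_c$ by the single semisimple element $X$, which is precisely the object underlying Proposition~\ref{qr: prelims: algebra: prop: stabilizer at infinity by limit of conjugations} and the Langlands decomposition of Proposition~\ref{qr: prop: Eberline Generalized Iwasawa}; this keeps the argument self-contained, tied to the geometric definitions the paper actually uses, and it delivers the stronger conclusion that $\mf{n}_\xi$ is the unique \emph{largest} nilpotent ideal of $\mf{g}_\xi$, not merely a maximal one. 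The decisive step --- ruling out a nonzero component $\mf{i}_0\subseteq\mf{z}(\mf{g}_0)$ by playing the semisimplicity of $\mathrm{ad}(Z)$ (obtained from the $\theta$-stability of $\mf{g}_0$) against the nilpotency of $\mathrm{ad}(Z)$ on the nilpotent ideal $\mf{i}$, and then propagating $\mathrm{ad}(Z)=0$ to the negative grades through the nondegenerate Killing pairing $\mf{g}_{-c}\times\mf{g}_c$ to force $Z\in\mf{z}(\mf{g})=\{0\}$ --- is exactly where a naive dimension or grading count would fail, and you identify and execute it correctly. Two facts are used silently and deserve a citation or a line of proof: that the sum of two nilpotent ideals is nilpotent (needed only for the ``largest'' formulation; maximality as literally stated only requires treating ideals containing $\mf{n}_\xi$), and that $\mf{g}_0$, being the $\theta$-stable centralizer of the semisimple element $X$, is reductive with radical equal to its centre. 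Both are standard, so the argument stands; relative to the paper's bare citation, your route buys a proof readable entirely in the notation already set up in Section~\ref{sec: qr: prelims: algebra}.
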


\begin{rmk}
\begin{enumerate}
    \item The presentation of $\mf{n}_\xi$ in \cite{bourbakiLie7-9} is given by means of the root space decomposition of $\mathrm{Stab}_G(\xi)$, that appears in Proposition $2.17.13$ in \cite{eberlein1996geometry}. 
    
    \item There are two main objects in the literature that are referred to as the \emph{nilpotent radical} or the \emph{nilradical} of a Lie algebra. These are: (a) the maximal nilpotent ideal of the Lie algebra, and (b) the intersection of the kernels of all irreducible finite-dimensional representations. Proposition $13$ in Section $4$ of Chapter $9$ in \cite{bourbakiLie1-3} shows that in the case of Lie algebras of parabolic Lie groups, these notions coincide.
\end{enumerate}
\end{rmk}

\begin{cor}\label{qr: prelims: translation: cor: horospherical is maximal nilpotent in stabilizer of horosphere}
$N_\xi$ is a maximal connected nilpotent normal Lie subgroup of the identity connected component  $\mathrm{Stab}_G(\h)^\circ$.
\end{cor}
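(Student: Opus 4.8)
The plan is to combine the structural results just established with the two facts from Bourbaki and Mostow–Sagle. By Corollary~\ref{qr: prelims: translation: cor: structure of horosphere stabilizer}, $\mathrm{Stab}_G(\h)^\circ=(K_\xi A_\xi^\perp)^\circ N_\xi$, so $N_\xi$ is a subgroup of $\mathrm{Stab}_G(\h)^\circ$; recall also from Proposition~\ref{qr: prop: Eberline Generalized Iwasawa} that $N_\xi$ is connected and normal in $G_\xi=\mathrm{Stab}_G(\xi)$, hence a fortiori normal in the subgroup $\mathrm{Stab}_G(\h)^\circ\leq G_\xi$. That $N_\xi$ is nilpotent follows from Lemma~\ref{qr: prelims: translation: lem: N connected nilpotent if an only if lie algebra is nilpotent} together with Proposition~\ref{qr: prelims: translation: prop: n is maximal nilpotent ideal in parabolic}, which says $\mf{n}_\xi=\mathrm{Lie}(N_\xi)$ is a (maximal) nilpotent ideal of $\mf{g}_\xi$ and in particular a nilpotent Lie algebra. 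So $N_\xi$ is a connected, nilpotent, normal Lie subgroup of $\mathrm{Stab}_G(\h)^\circ$, and only maximality among such subgroups remains.

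For maximality I would argue at the level of Lie algebras. Suppose $N'\leq \mathrm{Stab}_G(\h)^\circ$ is a connected nilpotent normal Lie subgroup containing $N_\xi$; let $\mf{n}'=\mathrm{Lie}(N')\supseteq\mf{n}_\xi$. Since $N'$ is normal in the connected group $\mathrm{Stab}_G(\h)^\circ$, the subalgebra $\mf{n}'$ is an ideal of $\mathrm{Lie}\big(\mathrm{Stab}_G(\h)^\circ\big)$. The key point is that $\mf{n}'$ is then also an ideal of the full parabolic algebra $\mf{g}_\xi$: indeed $\mathrm{Lie}\big(\mathrm{Stab}_G(\h)^\circ\big)$ is a codimension-one ideal of $\mf{g}_\xi$ (Corollary~\ref{qr: prelims: translation: cor: structure of horosphere stabilizer} shows $\mathrm{Stab}_G(\h)^\circ\trianglelefteq\mathrm{Stab}_G(\xi)^\circ$), and the missing direction is spanned by $X$ with $[X,\mf{n}']\subseteq[X,\mathrm{Lie}(\mathrm{Stab}_G(\h)^\circ)]\subseteq\mathrm{Lie}(\mathrm{Stab}_G(\h)^\circ)$; combined with the fact that $\mathrm{ad}(X)$ acts on the root-space decomposition of $\mf{g}_\xi$ preserving the nilpotent ideal $\mf{n}_\xi$ and that $\mf{n}'$ is $\mathrm{ad}(X)$-stable because $N'$ is normalized by $h_\xi^t$, one gets that $\mf{n}'$ is an $\mathrm{ad}(\mf{g}_\xi)$-stable nilpotent ideal. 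By the maximality clause of Proposition~\ref{qr: prelims: translation: prop: n is maximal nilpotent ideal in parabolic}, $\mf{n}'\subseteq\mf{n}_\xi$, whence $\mf{n}'=\mf{n}_\xi$ and, both groups being connected, $N'=N_\xi$.

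The step I expect to be the main obstacle is making the transfer ``$\mf{n}'$ ideal in $\mathrm{Lie}(\mathrm{Stab}_G(\h)^\circ)$ $\Rightarrow$ $\mf{n}'$ ideal in $\mf{g}_\xi$'' fully rigorous: one must verify that $[X,\mf{n}']\subseteq\mf{n}'$, not merely $[X,\mf{n}']\subseteq\mathrm{Lie}(\mathrm{Stab}_G(\h)^\circ)$. The clean way is to observe that $\mf{n}'$ is a nilpotent ideal of $\mathrm{Lie}(\mathrm{Stab}_G(\h)^\circ)$, hence contained in the nilradical of that algebra, which — since $\mathrm{Lie}(\mathrm{Stab}_G(\h)^\circ)$ differs from $\mf{g}_\xi$ only by the one-dimensional (reductive) piece $\mathbb{R}X$ inside the Levi factor — has the same nilradical $\mf{n}_\xi$ as $\mf{g}_\xi$ does. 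This already gives $\mf{n}'\subseteq\mf{n}_\xi$ directly and bypasses the need to check $\mathrm{ad}(X)$-invariance by hand. Finally, Corollary~\ref{qr: prelims: translation: cor: stab horosphere has no compact factors} guarantees $\mathrm{Stab}_G(\h)^\circ$ has no compact normal semisimple factors, which is exactly the hypothesis needed so that, once $N_\xi$ is identified as this maximal connected nilpotent normal subgroup, Lemma~\ref{qr: prelims: translation: lem: Mostow nilpotent radical intersection} applies to $H=\mathrm{Stab}_G(\h)^\circ$ and the lattice $\La\cap\mathrm{Stab}_G(\h)$.
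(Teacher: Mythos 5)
Your setup and your overall strategy --- push the question down to the Lie algebra and invoke the Bourbaki maximality statement (Proposition~\ref{qr: prelims: translation: prop: n is maximal nilpotent ideal in parabolic}) --- coincide with the paper's, and everything up to the maximality step is fine. The gap sits in the bridge from ``$\mf{n}'$ is a nilpotent ideal of $\mf{q}:=\mathrm{Lie}\big(\mathrm{Stab}_G(\h)^\circ\big)$'' to ``$\mf{n}'\subseteq\mf{n}_\xi$'', and it occurs in both of your versions. In the first, you assert that $\mf{n}'$ is $\mathrm{ad}(X)$-stable ``because $N'$ is normalized by $h_\xi^t$''; but $h_\xi^t\notin\mathrm{Stab}_G(\h)$ for $t\ne 0$, so normality of $N'$ in $\mathrm{Stab}_G(\h)^\circ$ gives no control whatsoever over conjugation by $h_\xi^t$ --- that invariance is precisely the difficulty, not a hypothesis you get for free. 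In the second, you assert that the nilradical of $\mf{q}$ equals $\mf{n}_\xi$ because $\mf{q}$ differs from $\mf{g}_\xi$ only by the line $\mathbb{R}X$; but Proposition~\ref{qr: prelims: translation: prop: n is maximal nilpotent ideal in parabolic} is a statement about $\mf{g}_\xi$, not about its codimension-one ideal $\mf{q}$, and a priori the nilradical of $\mf{q}$ could be strictly larger than $\mf{n}_\xi$: any element $v_0$ of the centre of the Levi factor $\mf{k}_\xi\oplus\mf{a}_\xi^\perp$ of $\mf{q}$ that centralizes $\mf{n}_\xi$ would make $\mathbb{R}v_0\oplus\mf{n}_\xi$ a nilpotent ideal of $\mf{q}$ properly containing $\mf{n}_\xi$. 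Ruling this out is the actual content of the step, and ``only a one-dimensional reductive piece was removed'' does not do it.

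The hole is fillable. One route: such a $v_0$ commutes with $\mf{k}_\xi\oplus\mf{a}_\xi^\perp$ and with $\mf{n}_\xi$, and also with $X$ (since $\mf{a}_\xi^\perp\subseteq Z(X)$ and $\mathrm{Ad}(K_\xi)$ fixes $X$), hence $v_0\in Z(\mf{g}_\xi)$; by Lemma~\ref{SHEEVY} the centre of a parabolic lies in the centre of $G$, which is trivial, so $v_0=0$. Another route, closer in spirit to what the paper does: since $\mf{q}$ is an ideal of $\mf{g}_\xi$ (Corollary~\ref{qr: prelims: translation: cor: structure of horosphere stabilizer}), each $\mathrm{Ad}(g)\mf{n}'$ with $g\in\mathrm{Stab}_G(\xi)^\circ$ is again a nilpotent ideal of $\mf{q}$, and the finite-dimensional sum $\sum_g\mathrm{Ad}(g)\mf{n}'$ is a nilpotent, $\mathrm{Ad}\big(\mathrm{Stab}_G(\xi)^\circ\big)$-invariant ideal, i.e.\ a nilpotent ideal of $\mf{g}_\xi$ containing $\mf{n}'$, to which Proposition~\ref{qr: prelims: translation: prop: n is maximal nilpotent ideal in parabolic} applies. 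The paper itself performs this upgrade at the group level, arguing that a normal subgroup of $\mathrm{Stab}_G(\h)$ containing $N_\xi$ is normal in $\mathrm{Stab}_G(\xi)$ and then applying the Bourbaki statement directly to $\mf{g}_\xi$; either way, some version of this passage from $\mf{q}$ to $\mf{g}_\xi$ must be argued, not asserted.
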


\begin{proof}
Lemma~\ref{qr: prelims: translation: lem: N connected nilpotent if an only if lie algebra is nilpotent} implies $N_\xi$ is nilpotent. Since  $\mathrm{Stab}_G{\h}\vartriangleleft \mathrm{Stab}_G(\xi)$, every normal subgroup of $\mathrm{Stab}_G(\h)$ containing $N_\xi$ is in fact a normal subgroup of $\mathrm{Stab}_G(\xi)$, still containing $N_\xi$. It remains to prove maximality of $N_\xi$ among all connected nilpotent normal Lie subgroups of $\mathrm{Stab}_G(\xi)$. Any such subgroup $N'\vartriangleleft \mathrm{Stab}_G(\xi)$ gives rise to an ideal $\mf{n}'$ of $\mf{g}_\xi=\mathrm{Lie}\big(\mathrm{Stab}_G(\xi)\big)$, and by Lemma~\ref{qr: prelims: translation: lem: N connected nilpotent if an only if lie algebra is nilpotent} it is a nilpotent ideal. Therefore by Proposition~\ref{qr: prelims: translation: prop: n is maximal nilpotent ideal in parabolic} it is contained in $\mf{n}_\xi=\mathrm{Lie}(N_\xi)$, implying that $N'\leq N_\xi$. 
 
\end{proof}

\begin{cor}\label{qr: prelims: translation: cor: Mostow hypotheses met}
A lattice in $\mathrm{Stab}_G(\h)$ intersects the horospherical subgroup $N_\xi$ in a lattice.
\end{cor}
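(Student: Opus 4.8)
The plan is to deduce the statement directly from Mostow's Lemma~\ref{qr: prelims: translation: lem: Mostow nilpotent radical intersection}, applied to the group $H:=\mathrm{Stab}_G(\h)$ with $N_\xi$ in the role of the maximal connected nilpotent normal Lie subgroup. Note first that $H$ is a Lie group in the sense discussed in the Remark following Lemma~\ref{qr: prelims: translation: lem: Mostow nilpotent radical intersection} (a locally connected topological group whose identity component is analytic), so disconnectedness of $H$ is not an issue; alternatively, since $G_\xi$ has finitely many connected components (Proposition~\ref{qr: prop: Eberline Generalized Iwasawa}) and $\mathrm{Stab}_G(\h)$ is the kernel of the continuous level homomorphism $G_\xi\to\mathbb{R}$, $g\mapsto l(g)$, whose target is connected, the long exact homotopy sequence gives $\pi_0(H)\cong\pi_0(G_\xi)$, so $H$ itself has finitely many components and one may harmlessly replace a given lattice $\Delta\leq H$ by the finite-index lattice $\Delta\cap H^\circ$ in $H^\circ$.

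Next I would check that $H$ satisfies the hypotheses of the lemma. By Corollary~\ref{qr: prelims: translation: cor: stab horosphere has no compact factors}, $H^\circ$ has no non-trivial connected compact normal semisimple Lie subgroups; since any connected compact normal semisimple subgroup of $H$ is contained in, and normal in, $H^\circ$, the same holds for $H$. By Corollary~\ref{qr: prelims: translation: cor: horospherical is maximal nilpotent in stabilizer of horosphere}, $N_\xi$ is the maximal connected nilpotent normal Lie subgroup of $H^\circ$; again, any connected nilpotent normal Lie subgroup $N'$ of $H$ lies in $H^\circ$ and is normal there, hence $N'\leq N_\xi$, so $N_\xi$ is maximal in $H$ as well (and it is normal in $H$ because it is already normal in $G_\xi\supseteq H$, by Proposition~\ref{qr: prop: Eberline Generalized Iwasawa}). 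Thus Lemma~\ref{qr: prelims: translation: lem: Mostow nilpotent radical intersection} applies to the lattice $\Delta\leq H$ and yields that $N_\xi/(N_\xi\cap\Delta)$ is compact.

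Finally, $\Delta$ is discrete in $H$ and $N_\xi$ is closed, so $N_\xi\cap\Delta$ is a discrete subgroup of $N_\xi$; a discrete subgroup with compact quotient is automatically a uniform lattice. Hence $N_\xi\cap\Delta$ is a cocompact lattice in the horospherical subgroup $N_\xi$, as asserted. I do not expect a real obstacle here: all the structural work — identifying $\mathrm{Stab}_G(\h)^\circ$, showing it has no compact semisimple factors, and showing $N_\xi$ is its nilpotent radical — has already been carried out in Corollaries~\ref{qr: prelims: translation: cor: structure of horosphere stabilizer}--\ref{qr: prelims: translation: cor: horospherical is maximal nilpotent in stabilizer of horosphere}, so the only mildly delicate points are the passage between $H$ and $H^\circ$ and the elementary remark that a cocompact discrete subgroup is a lattice.
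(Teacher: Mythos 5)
Your proposal is correct and follows essentially the same route as the paper: the paper's proof likewise just invokes Mostow's Lemma~\ref{qr: prelims: translation: lem: Mostow nilpotent radical intersection} for the pair $N_\xi\vartriangleleft\mathrm{Stab}_G(\h)$, citing Corollaries~\ref{qr: prelims: translation: cor: stab horosphere has no compact factors} and~\ref{qr: prelims: translation: cor: horospherical is maximal nilpotent in stabilizer of horosphere} to verify the hypotheses. Your extra care about the finitely many components of $\mathrm{Stab}_G(\h)$ and the passage between $H$ and $H^\circ$ is a harmless (and welcome) elaboration of details the paper leaves implicit.
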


\begin{proof}
Corollaries~\ref{qr: prelims: translation: cor: stab horosphere has no compact factors} and \ref{qr: prelims: translation: cor: horospherical is maximal nilpotent in stabilizer of horosphere} imply that the pair $N_\xi\vartriangleleft \mathrm{Stab}_G(\h)$ satisfy the hypotheses of Mostow's Lemma~\ref{qr: prelims: translation: lem: Mostow nilpotent radical intersection}. 
\end{proof}

\subsubsection{Indecomposable Horospherical Lattices}\label{sec: indecomposible horospherical lattice}
It is shown in \cite{BenoistMiquelArithmeticity} that if a horospherical lattice is contained in a Zariski dense discrete subgroup, then the indecomposability condition is equivalent to the irreducibility of the ambient group. The latter is imposed on $\La$ as a hypothesis in Theorem~\ref{qr: thm: main for qr}. The precise definitions and statements are as follows.

\begin{defn}[Definition $2.14$ in \cite{BenoistMiquelArithmeticity}]\label{qr: defn: indecompsable and irreducibale lattice}

For a semisimple real algebraic Lie group $G$ and $U$ a horospherical subgroup of $G$, let $\Delta_U$ be a lattice in $U$. 
\begin{enumerate} 
    \item $\Delta_U$ is \emph{irreducible} if for any proper normal subgroup $N$ of $G^\circ$, one has $\Delta_U\cap N=\{e\}$.
    
    \item $\Delta_U$ is \emph{indecomposable} if one cannot write $G^\circ$ as a product $G^\circ=N'N''$ of two proper normal subgroups $N',N''\vartriangleleft G$ with finite intersection such that the group
    
    $$\Delta_U':=(\Delta_U\cap N')(\Delta_U\cap N'')$$
    has finite index in $\Delta_U$. 
\end{enumerate}
\end{defn}

\begin{defn}[See Section $2.4.1$ in \cite{BenoistMiquelArithmeticity}]
Let $G$ be a semisimple real algebraic Lie group. A discrete subgroup $\La\leq G$ is said to be \emph{irreducible} if, for all proper normal subgroups $N\vartriangleleft G$, the intersection $\La\cap N$ is finite.
\end{defn}

\begin{lem}[Lemma $4.3$ in \cite{BenoistMiquelArithmeticity}]\label{qr: prelims: translation: lem:  indecomposability irredicubility criterion}
Let $G$ be a semisimple real algebraic Lie group, $U\subset G$ a non-trivial horospherical subgroup, and $\Delta_U\leq U$ a lattice of $U$ which is contained in a discrete Zariski dense subgroup $\Delta$ of $G$. Then the following are equivalent: 
\begin{enumerate}
    \item $\Delta$ is irreducible.
    \item $\Delta_U$ is irreducible.
    \item $\Delta_U$ is indecomposable.
\end{enumerate}
\end{lem}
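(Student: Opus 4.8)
The plan is to prove Lemma~\ref{qr: prelims: translation: lem:  indecomposability irredicubility criterion} by reducing everything to the combinatorics of the simple‑factor decomposition of $G^\circ$. Write $G^\circ=G_1\cdots G_k$ as an almost direct product of its simple factors; since distinct $G_i$ commute, the horospherical subgroup decomposes as a genuine direct product $U=U_1\times\cdots\times U_k$ with $U_i:=U\cap G_i$ horospherical in $G_i$ (possibly trivial), and one gets commuting projections $\pi_i\colon U\to U_i$. Two structural facts come first: (a) $\Delta_U$ is torsion‑free, being discrete in the unipotent group $U$, so that for a subgroup $H\le U$ the conditions ``$\Delta_U\cap H$ finite'' and ``$\Delta_U\cap H=\{e\}$'' coincide; (b) $\Delta_U$, a lattice in the nilpotent group $U$, is Zariski dense in $U$ (its logarithm spans $\mathfrak u:=\mathrm{Lie}(U)$ over $\mathbb Q$ by Mal'cev theory), hence each $\pi_i(\Delta_U)$ is Zariski dense in $U_i$. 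Fact (b) attaches to $\Delta_U$ a $\mathbb Q$‑form $\mathfrak u_{\mathbb Q}:=\mathrm{span}_{\mathbb Q}\log\Delta_U$ of $\mathfrak u$; call $S\subseteq\{1,\dots,k\}$ \emph{rational} if $\mathfrak u_S:=\bigoplus_{i\in S}\mathfrak u_i$ is defined over $\mathbb Q$ relative to $\mathfrak u_{\mathbb Q}$. Rational subsets are closed under intersection and union (intersections and sums of $\mathbb Q$‑defined subspaces are $\mathbb Q$‑defined), so there is a well‑defined finest partition $\mathcal P=\{S_1,\dots,S_r\}$ of $\{1,\dots,k\}$ into rational blocks, and $\Delta_U$ is commensurable with $\prod_{j}\bigl(\Delta_U\cap\exp\mathfrak u_{S_j}\bigr)$.

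With this in hand the two easy implications are short. For $(2)\Rightarrow(3)$ I argue contrapositively: if $\Delta_U$ is decomposable via $G^\circ=N'N''$ with $N',N''\vartriangleleft G$ proper in $G^\circ$, then $\Delta_U'=(\Delta_U\cap N')(\Delta_U\cap N'')$ has finite index in the infinite group $\Delta_U$, so one of the two factors is infinite, hence nontrivial; as that $N'$ is in particular a proper normal subgroup of $G^\circ$, this shows $\Delta_U$ is reducible. For $(1)\Rightarrow(2)$, let $N\vartriangleleft G^\circ$ be proper; replace $N$ by its $G$‑normal closure $\widehat N$ (the product of the simple factors occurring in some $G$‑conjugate of $N$, which is $G$‑normal because $G$ permutes the $G_i$). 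If $\widehat N\ne G^\circ$ then $\Delta_U\cap N\subseteq\Delta\cap\widehat N$ is finite by irreducibility of $\Delta$, hence trivial by (a). The remaining case, where $N$ already meets every $G$‑orbit of simple factors, is settled by the same $G$‑orbit bookkeeping together with the Zariski density of the projections $\pi_i(\Delta_U)$ — this is the point where one genuinely uses that $U$ is a horospherical subgroup of $G$ and that its $\mathbb Q$‑structure cannot be supported on a non‑$G$‑stable collection of factors.

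The heart of the lemma is the converse direction: $\neg(2)\Rightarrow\neg(3)$ and $\neg(2)\Rightarrow\neg(1)$. Here one must turn a single nontrivial intersection $\Delta_U\cap N\ne\{e\}$, with $N=\prod_{i\in S}G_i$ proper, into a genuine $G$‑stable product decomposition of $G^\circ$ along which $\Delta_U$ splits up to finite index, and also promote it to a nontrivial intersection of $\Delta$ with a proper $G$‑normal subgroup. For the first I would observe that $\{e\}\ne\Delta_U\cap\exp\mathfrak u_S$ forces $\mathfrak u_{\mathbb Q}\cap\mathfrak u_S\ne 0$, so the rational saturation of $S$ (the smallest rational superset) is a \emph{proper} rational subset $T$; hence $\{1,\dots,k\}=T\sqcup T^{c}$ is a partition into two nonempty rational subsets, both $\mathfrak u_T,\mathfrak u_{T^{c}}$ are $\mathbb Q$‑defined, so $\mathfrak u_{\mathbb Q}=(\mathfrak u_{\mathbb Q}\cap\mathfrak u_T)\oplus(\mathfrak u_{\mathbb Q}\cap\mathfrak u_{T^{c}})$ and $\Delta_U$ splits up to finite index along $T\sqcup T^{c}$. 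Finally replace $T$ by the union of the $G$‑orbits of its blocks to get a $G$‑stable nontrivial partition; this coarsening only enlarges the two subproducts (and rationality is preserved under unions), so the product of the two intersections still has finite index, yielding the required $N',N''\vartriangleleft G$ and thus decomposability. The promotion to $\Delta$ comes from Zariski closures: $\overline{\Delta_U\cap N}^{\,Z}$ is a positive‑dimensional subgroup of $N$, and running the same saturation inside the $\Delta$‑stable decomposition of $\mathfrak g$ into minimal ideals produces a proper $G$‑normal, hence $\Delta$‑normal, subgroup whose intersection with $\Delta$ is infinite.

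I expect this last saturation‑and‑coarsening step to be the main obstacle. The subtlety is precisely that a nonzero intersection $\mathfrak u_{\mathbb Q}\cap\mathfrak u_S$ need not itself be the $\mathbb Q$‑points of any $\mathfrak u_T$, so one is forced through the rational saturation and must check both that it stays proper and that the resulting two‑block partition is compatible with the $G$‑action on simple factors; keeping the finite‑index condition of Definition~\ref{qr: defn: indecompsable and irreducibale lattice} intact through these manipulations, and reconciling the ``$N\vartriangleleft G^\circ$'' of the irreducibility condition with the ``$N',N''\vartriangleleft G$'' of the decomposability condition, is where essentially all the work lies. Everything else is a translation between the $\mathbb Q$‑form of $\mathfrak u$ furnished by Mal'cev theory, the torsion‑freeness of $\Delta_U$, and Zariski density.
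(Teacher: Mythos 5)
First, a point of comparison: the paper does not prove this statement at all — it is quoted verbatim as Lemma $4.3$ of Benoist--Miquel \cite{BenoistMiquelArithmeticity} and used as a black box — so there is no in-paper argument to measure yours against; I can only assess the proposal on its own terms.

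The proposal has a genuine gap precisely at the step you yourself flag as ``the main obstacle'', and it is not a repairable detail: it is the wrong approach. Your argument for $\neg(2)\Rightarrow\neg(3)$ uses only the Mal'cev $\mathbb{Q}$-form $\mathfrak u_{\mathbb{Q}}$ and the decomposition $\mathfrak u=\bigoplus_i\mathfrak u_i$, and never invokes the ambient discrete Zariski dense group $\Delta$. But ``reducible $\Rightarrow$ decomposable'' is false at that level of generality. Take $G=\mathrm{SL}_3(\mathbb{R})\times\mathrm{SL}_2(\mathbb{R})$ and $U=U_1\times U_2$, where $U_1\cong\mathbb{R}^2$ is the (abelian) unipotent radical of a maximal parabolic of the first factor and $U_2\cong\mathbb{R}$ is a root group of the second, so $\mathfrak u\cong\mathbb{R}^3$ is abelian with $\mathfrak u_1=\mathbb{R}e_1\oplus\mathbb{R}e_2$ and $\mathfrak u_2=\mathbb{R}e_3$. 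Let $\Delta_U=\exp\big(\mathbb{Z}e_1\oplus\mathbb{Z}(e_2+\sqrt2\,e_3)\oplus\mathbb{Z}(\sqrt2\,e_2+e_3)\big)$, a lattice in $U$. Then $\Delta_U\cap G_1=\exp(\mathbb{Z}e_1)\neq\{e\}$, so $\Delta_U$ is reducible; yet $\mathfrak u_{\mathbb{Q}}\cap\mathfrak u_1=\mathbb{Q}e_1$ and $\mathfrak u_{\mathbb{Q}}\cap\mathfrak u_2=0$, so neither $\mathfrak u_1$ nor $\mathfrak u_2$ is defined over $\mathbb{Q}$, the only admissible decomposition $G^\circ=G_1G_2$ gives $(\Delta_U\cap G_1)(\Delta_U\cap G_2)\cong\mathbb{Z}$ of infinite index in $\Delta_U\cong\mathbb{Z}^3$, and $\Delta_U$ is indecomposable. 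Concretely, the rational saturation of $S=\{1\}$ here is all of $\{1,2\}$: your claim that $\mathfrak u_{\mathbb{Q}}\cap\mathfrak u_S\neq0$ forces the saturation to be proper is false (and, separately, the complement of a rational subset need not be rational). The lemma survives only because such a $\Delta_U$ cannot be contained in any discrete Zariski dense $\Delta\leq G$; any correct proof of $(3)\Rightarrow(2)$ must therefore make essential use of $\Delta$, which your outline does not.

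There is also a structural problem independent of the above. The implications you propose to establish are $(2)\Rightarrow(3)$, $(1)\Rightarrow(2)$ and $(3)\Rightarrow(2)$ (your ``$\neg(2)\Rightarrow\neg(1)$'' is again the contrapositive of $(1)\Rightarrow(2)$). Even if every step were correct, this yields $(1)\Rightarrow(2)\Leftrightarrow(3)$ and nothing implying $(1)$, so the three conditions are not shown to be equivalent. The missing direction $(2)\Rightarrow(1)$ — starting from $\Delta\cap N$ infinite, passing to its Zariski closure, which is a positive-dimensional normal algebraic subgroup of $G$ because it is normalized by the Zariski dense $\Delta$, and then manufacturing a nontrivial element of $\Delta_U$ inside a proper normal subgroup of $G^\circ$ — is a genuine argument and is absent. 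Finally, your treatment of $(1)\Rightarrow(2)$ in the case where the $G$-normal closure of $N$ is all of $G^\circ$ is only a promissory note.
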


\subsubsection{Zariski Density}
The last requirement is for $\La$ to be Zariski dense. I use a geometric criterion which is well known to experts.

\begin{lem}[Proposition $2$ in \cite{KimMarkedLengthRigidity}]\label{qr: lem: geometric criterion for zariski dense subgroups}
Let $X$ be a symmetric space of noncompact type, $G=\mathrm{Isom}(X)^\circ$. A subgroup $\Delta\leq G$ is Zariski dense if and only if:
\begin{enumerate}
    \item The group $\Delta$ does not
globally fix a point in $X(\infty)$, i.e.\ $\Delta\not\leq \mathrm{Stab}_G(\zeta)$ for any $\zeta\in X(\infty)$.
    \item The identity component of the Zariski closure of $\Delta$ does not leave invariant any proper totally geodesic submanifold in $X$. 
\end{enumerate} 
\end{lem}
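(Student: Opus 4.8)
The plan is to prove both directions of the equivalence, with the reverse implication "(1) and (2) $\Rightarrow$ Zariski density" carrying all of the real content. Throughout, write $H:=\overline{\Delta}^{Z}\leq G$ for the Zariski closure of $\Delta$ in $G$ (viewed as a real algebraic group via the adjoint representation) and let $H^\circ$ be its Zariski-identity component, a connected real algebraic subgroup which is normal of finite index in $H$; in particular $\Delta\leq H$ normalizes $H^\circ$.

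\emph{Forward direction.} Suppose $\Delta$ is Zariski dense, so $H=G$. If $\Delta$ fixed some $\zeta\in X(\infty)$, then $\Delta\leq\mathrm{Stab}_G(\zeta)$; but $\mathrm{Stab}_G(\zeta)$ is a (nontrivial, hence proper) parabolic subgroup of $G$, and parabolic subgroups are Zariski closed, so $\overline{\Delta}^{Z}\leq\mathrm{Stab}_G(\zeta)\subsetneq G$, a contradiction. Hence (1) holds. For (2), $H^\circ=G$ acts transitively on $X$ (e.g.\ by the Iwasawa-type decomposition $G=N_\xi A_\xi K$), so if $G$ preserved a proper totally geodesic submanifold $Y\subsetneq X$ then $X=G\cdot y\subseteq Y$ for $y\in Y$ would be absurd; hence (2) holds.

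\emph{Reverse direction.} Assume (1) and (2), and suppose for contradiction that $\Delta$ is not Zariski dense, i.e.\ $H\neq G$; since $G$ is connected this forces $H^\circ\neq G$, so $H^\circ$ is a \emph{proper} connected algebraic subgroup. I distinguish two cases by the unipotent radical $R_u(H^\circ)$. If $R_u(H^\circ)\neq\{e\}$, then by the Borel--Tits theorem there is a proper parabolic $\mathbb{R}$-subgroup $P\leq G$ with $R_u(H^\circ)\leq R_u(P)$ and $N_G\!\big(R_u(H^\circ)\big)\leq P$. As $R_u(H^\circ)$ is characteristic in $H^\circ$, this yields $\Delta\leq H\leq N_G(H^\circ)\leq N_G\!\big(R_u(H^\circ)\big)\leq P$; since $P=\mathrm{Stab}_G(\sigma)$ stabilizes a simplex $\sigma$ of the spherical Tits building at infinity it fixes the barycentre $\zeta_\sigma\in X(\infty)$, so $\Delta$ fixes $\zeta_\sigma$, contradicting (1). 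If instead $R_u(H^\circ)=\{e\}$, then $H^\circ$ is reductive; after conjugating $H^\circ$ by an element of $G$ we may assume it is stable under the Cartan involution $\theta$ attached to the basepoint $x_0$ (a reductive real algebraic subgroup is conjugate to a $\theta$-stable, "self-adjoint" one, by Mostow). Then $\mathfrak{h}:=\mathrm{Lie}(H^\circ)$ decomposes as $(\mathfrak{h}\cap\mathfrak{k})\oplus(\mathfrak{h}\cap\mathfrak{p})$, the summand $\mathfrak{h}\cap\mathfrak{p}$ is a Lie triple system, and $Y:=\exp(\mathfrak{h}\cap\mathfrak{p})\cdot x_0$ is a totally geodesic submanifold equal to the orbit $H^\circ\cdot x_0$, hence $H^\circ$-invariant. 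If $Y=X$ then $\mathfrak{h}\cap\mathfrak{p}=\mathfrak{p}$, so $\mathfrak{h}\supseteq\mathfrak{p}+[\mathfrak{p},\mathfrak{p}]=\mathfrak{g}$ (using that $\mathfrak{g}$ has no compact ideals and $X$ is of noncompact type), contradicting $H^\circ\neq G$; therefore $Y\subsetneq X$ is a proper totally geodesic $H^\circ$-invariant submanifold, contradicting (2). In all cases we reach a contradiction, so $H=G$, i.e.\ $\Delta$ is Zariski dense.

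\emph{Main obstacle.} The soft part is the unipotent case, which is essentially a bookkeeping repackaging of Borel--Tits together with the identification (used elsewhere in this paper) of geometrically parabolic subgroups with algebraic parabolic subgroups. The delicate step is the reductive case: one must (i) justify conjugating a reductive real algebraic subgroup into $\theta$-stable position, and (ii) verify that the resulting orbit is a \emph{genuinely proper} totally geodesic submanifold — this properness is precisely where the noncompact-type hypothesis and the absence of compact factors enter, and where the degenerate subcases (e.g.\ $H^\circ$ compact, so that $Y$ is a point) must be checked to fit the same template. A secondary point to be careful with is the convention for $X(\infty)$ when passing from "$\Delta$ lies in a parabolic" to "$\Delta$ fixes a point of $X(\infty)$" (barycentres of Tits simplices in the cone topology), which is straightforward but should be stated explicitly.
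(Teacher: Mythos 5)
This lemma is not proved in the paper at all: it is quoted verbatim as Proposition~$2$ of Kim's paper and used as a black box, so there is no in-paper argument to compare yours against. Judged on its own, your proof is the standard derivation of this criterion and is essentially correct. The forward direction is immediate as you say (parabolics are Zariski closed and proper; $G$ acts transitively on $X$). For the converse, the dichotomy on $R_u(H^\circ)$ is the right move: Borel--Tits puts $N_G(R_u(H^\circ))\supseteq\Delta$ inside a proper parabolic, and since the paper itself identifies proper parabolics with the stabilizers $G_\xi$ of points $\xi\in X(\infty)$, you get a global fixed point at infinity contradicting (1) without needing the barycentre detour. In the reductive case, Mostow's theorem on self-adjoint (equivalently $\theta$-stable) reductive subgroups, the Lie-triple-system argument, and the identity $[\mathfrak{p},\mathfrak{p}]=\mathfrak{k}$ (valid because $X$ has no compact or Euclidean factors) are all correctly deployed; note that conjugating $H^\circ$ into $\theta$-stable position is harmless for condition (2) since total geodesy of the invariant submanifold is preserved under translating back by the conjugating isometry. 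The one convention you should make explicit is that a single point counts as a proper totally geodesic submanifold, so that the degenerate case $\mathfrak{h}\cap\mathfrak{p}=0$ (e.g.\ $H^\circ$ compact or trivial) still violates (2); with that convention stated, there is no gap.
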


In the proof I  use several facts - mostly algebraic, and two geometric. I warmly thank Elyasheev Leibtag for his help and erudition in algebraic groups. The first property I need is very basic. 

\begin{lem}\label{qr: alg: lem: identity comp of Zariski closure is finite index}

Let $\Delta\leq G$ be a discrete subgroup, and let $H\leq G$ be the Zariski closure of $\Delta$. Then $\Delta\cap H^\circ$ is of finite index in $\Delta$. 
\end{lem}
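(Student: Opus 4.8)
The plan is to reduce the statement to the elementary fact that an algebraic group has only finitely many connected components, and then transport this finiteness to $\Delta$ via the natural projection. First I would recall that the Zariski closure $H$ of the subgroup $\Delta\leq G$ is itself a (real) algebraic subgroup of $G$: the Zariski closure of a subgroup of an algebraic group is always an algebraic subgroup, since the group operations are morphisms and hence the defining closed conditions pass to the closure. In particular $H$ carries the structure of a linear algebraic group defined over $\mathbb{R}$.

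Next I would invoke the standard structure theory: a linear algebraic group has only finitely many irreducible (equivalently, connected) components in the Zariski topology, and the identity component $H^{\circ}$ is a Zariski-closed normal subgroup of finite index in $H$. Thus the quotient $H/H^{\circ}$ is a finite group, and one has the canonical projection $\pi\colon H\to H/H^{\circ}$. Since $\Delta\subseteq H$, restricting $\pi$ to $\Delta$ gives a homomorphism $\pi|_{\Delta}\colon \Delta\to H/H^{\circ}$ whose kernel is exactly $\Delta\cap H^{\circ}$. Because the target is finite, the image $\pi(\Delta)$ is finite, and therefore
$$[\Delta:\Delta\cap H^{\circ}]=|\pi(\Delta)|\leq |H/H^{\circ}|<\infty,$$
which is the desired conclusion.

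The only point requiring a word of care is the meaning of $H^{\circ}$, since in the present paper $G$ is simultaneously a real Lie group and a real algebraic group: one could read $H^{\circ}$ either as the Zariski-connected component of the identity or as the topological identity component of the Lie group $H(\mathbb{R})$. Both readings are harmless here, because a real algebraic group $H$ has finitely many connected components in either topology; equivalently, the topological identity component of $H(\mathbb{R})$ is an open, hence finite-index, subgroup of $H(\mathbb{R})$, and it is contained in the Zariski identity component. So the argument above applies verbatim with whichever notion of $H^{\circ}$ is intended, and no genuine obstacle arises — this lemma is purely a bookkeeping step preparing the application of the Zariski-density criterion (Lemma~\ref{qr: lem: geometric criterion for zariski dense subgroups}), where one will want to replace $\Delta$ by its finite-index subgroup $\Delta\cap H^{\circ}$ without changing the Zariski closure's identity component.
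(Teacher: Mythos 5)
Your proposal is correct and is essentially the paper's own argument: the paper's proof consists of the single observation that $H^{\circ}$ is normal and of finite index in $H$, from which the finite-index statement for $\Delta\cap H^{\circ}$ in $\Delta$ follows exactly as you spell out via the projection $H\to H/H^{\circ}$. Your additional remarks on the two possible readings of $H^{\circ}$ are a reasonable clarification but do not change the substance.
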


\begin{proof}
The subgroup $H^\circ$ is normal and of finite index in $H$.
\end{proof}

The following fact is probably known to experts. It appears in a recent work by Bader and Leibtag~\cite{Sheevy}.

\begin{lem}[Lemma $3.9$ in \cite{Sheevy}]\label{SHEEVY}
Let $k$ be a field, $\mathbf{G}$ a connected $k$ algebraic group, $P\leq G=\mathbf{G}(\mathbb{R})$ a parabolic subgroup. Then the centre of $G$ contains the centre of $P$.
\end{lem}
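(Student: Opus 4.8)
$\textbf{Plan for the proof of Lemma~\ref{SHEEVY}.}$

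The statement asserts $Z(G)\subseteq Z(P)$ where $P=\mathbf{P}(\mathbb{R})$ is a parabolic subgroup; phrased the other way, the centre of a parabolic subgroup $P$ contains the centre $Z(G)$ of the whole group. Actually $Z(G)\subseteq P$ always, since every parabolic contains the centre (it contains a maximal torus, hence its centre), and any central element certainly commutes with every element of $P$, so $Z(G)\subseteq Z(P)$ is essentially immediate. The content of the lemma is really the reverse containment $Z(P)\subseteq Z(G)$, and that is what I would prove. The plan is to use the Langlands/Levi decomposition $P=L\ltimes U$, where $U$ is the unipotent radical of $P$ and $L$ is a Levi factor, together with the fact that $G$ is generated by $P$ and the opposite parabolic $P^-$ (equivalently, by $U$, $U^-$ and $L$).

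First I would reduce to showing that any $z\in Z(P)$ centralizes $U^-$, the unipotent radical of the opposite parabolic $P^-$; once that is known, $z$ centralizes $P$ and $U^-$, hence it centralizes the subgroup generated by $L$, $U$, $U^-$, which is all of $G$ (here I use connectedness of $\mathbf{G}$ and the Bruhat-type generation statement, valid over $\mathbb{R}$ for the real points since $\mathbf{G}$ is connected as a $k$-group). So $z\in Z(G)$. Second, to see that $z$ centralizes $U^-$: pick a maximal $\mathbb{R}$-split torus $S$ in $L$ with $P$ the standard parabolic attached to a subset of simple roots. Writing $z=\ell u$ with $\ell\in L$, $u\in U$ according to the Levi decomposition, the fact that $z$ commutes with all of $U$ forces (projecting to $L$) that $\ell$ is central in $L$ modulo the part acting on $U$; more directly, $z$ commutes with $S\subseteq P$, and an element commuting with $S$ must lie in the centralizer $Z_G(S)=L$ (the Levi), so in fact $u=e$ and $z=\ell\in L$. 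Now $z\in Z(P)$ commutes with every root subgroup $U_\alpha\subseteq U$; conjugating by the longest Weyl element, or simply using that $L$ is stable under the opposition involution, one shows $z$ also commutes with each $U_{-\alpha}\subseteq U^-$: indeed $z\in L$ commutes with $U_\alpha$ for all $\alpha$ occurring in $U$, and since $L$ is reductive with root system the roots orthogonal to the defining subset, the condition of commuting with those $U_\alpha$ together with $S$-invariance pins $z$ down to $Z(L)\cdot(\text{something central in }G)$; one then checks $Z(L)\cap(\text{acts trivially on }U)$ forces triviality on $U^-$ as well by symmetry.

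The cleanest route, which I would actually write up, avoids case analysis: observe $Z_G(S)=L$, so $Z(P)\subseteq Z_G(S)=L$; then $Z(P)\subseteq Z(L)$ because $Z(P)$ is central in $P\supseteq L$; but $Z(L)=Z_G(L)$ is reductive and $S$-invariant, and an element of $Z(L)$ that in addition centralizes $U$ (as every element of $Z(P)$ does) must centralize the Lie algebra $\mathfrak{u}$, hence all root spaces $\mathfrak{g}_\alpha$ for $\alpha$ outside the Levi; combined with centralizing $\mathfrak{l}$ this element centralizes $\mathfrak{g}_\alpha$ for $\alpha$ in the Levi too, and by $S$-conjugation symmetry (the $\mathrm{Ad}$-action on $\mathfrak{g}_{-\alpha}$ is the inverse character of that on $\mathfrak{g}_\alpha$) it also kills $\mathfrak{g}_{-\alpha}$ for $\alpha$ outside the Levi, i.e.\ $\mathfrak{u}^-$. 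So it centralizes all of $\mathfrak{g}=\mathfrak{u}^-\oplus\mathfrak{l}\oplus\mathfrak{u}$, hence lies in the centre of $G$ by connectedness. The main obstacle is bookkeeping: making the ``$S$-conjugation symmetry'' step rigorous over $\mathbb{R}$ (as opposed to over an algebraically closed field) requires care, since one must work with the relative root system of $(\mathbf{G},S)$ and possibly non-abelian, non-one-dimensional relative root groups; I would handle this by passing to $\mathrm{Ad}$ and the $\mathfrak{g}$-level statement, where the weight-space decomposition under $S$ behaves symmetrically ($\mathfrak{g}_\chi$ versus $\mathfrak{g}_{\chi^{-1}}$) regardless of the ground field, and then invoking that for connected $\mathbf{G}$ the kernel of $\mathrm{Ad}$ is exactly $Z(\mathbf{G})$. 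For the application in this paper one could alternatively cite the lemma directly from~\cite{Sheevy}, but the above is the self-contained argument.
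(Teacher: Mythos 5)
The paper itself does not prove this lemma: it is quoted as Lemma $3.9$ of Bader--Leibtag~\cite{Sheevy} and used as a black box, so there is no in-paper argument to compare yours against. One small reading error first: ``the centre of $G$ contains the centre of $P$'' is the containment $Z(P)\subseteq Z(G)$, not $Z(G)\subseteq Z(P)$ as your opening sentence says; you recover from this and correctly identify and attack the nontrivial direction $Z(P)\subseteq Z(G)$, so no harm done. Your architecture is the standard and correct one: $z\in Z(P)$ centralizes a maximal split torus $S\subseteq P$, hence lies in $Z_G(S)$ (note $Z_G(S)$ equals the Levi $L$ only when $P$ is minimal, but the containment $Z_G(S)\subseteq L$ is all you need); then $\mathrm{Ad}(z)$ is trivial on $\mathfrak{l}\oplus\mathfrak{u}$ and preserves every $S$-weight space; if it is also trivial on $\mathfrak{u}^-$, then $z\in\ker(\mathrm{Ad})=Z(\mathbf{G})$ by connectedness and you are done.

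The genuine gap is exactly at the step you flag as delicate and then do not actually close. The assertion that ``the $\mathrm{Ad}$-action on $\mathfrak{g}_{-\alpha}$ is the inverse character of that on $\mathfrak{g}_\alpha$'' is true only for elements of $S$ itself, which act by scalars. For a general $z\in Z_G(S)$ the relative root spaces are typically higher-dimensional, $\mathrm{Ad}(z)|_{\mathfrak{g}_\alpha}$ is an arbitrary invertible linear map rather than a character value, and there is no a priori relation between $\mathrm{Ad}(z)|_{\mathfrak{g}_\alpha}$ and $\mathrm{Ad}(z)|_{\mathfrak{g}_{-\alpha}}$; observing that the weight decomposition ``behaves symmetrically'' ($\mathfrak{g}_\chi$ versus $\mathfrak{g}_{\chi^{-1}}$) only says both spaces are preserved, not that triviality on one forces triviality on the other. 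The standard one-line repair is the invariant form: for $\mathfrak{g}$ semisimple the Killing form $B$ pairs $\mathfrak{g}_\alpha$ and $\mathfrak{g}_{-\alpha}$ nondegenerately and is $\mathrm{Ad}(z)$-invariant, so for $Y\in\mathfrak{g}_{-\alpha}$ and every $X\in\mathfrak{g}_\alpha$ one gets $B\big(X,\mathrm{Ad}(z)Y\big)=B\big(\mathrm{Ad}(z)^{-1}X,Y\big)=B(X,Y)$, whence $\mathrm{Ad}(z)Y=Y$. (Alternatively, run the rank-one argument inside the subgroup generated by $U_\alpha$ and $U_{-\alpha}$.) With that inserted your ``cleanest route'' paragraph becomes a complete proof for $\mathbf{G}$ reductive --- which covers the semisimple $G$ of this paper --- though the lemma as quoted allows an arbitrary connected group, where the Killing form degenerates and one would first have to reduce to the reductive quotient. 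The middle paragraph of your proposal (``pins $z$ down to $Z(L)\cdot(\text{something central})$\dots by symmetry'') should simply be deleted, as it is the same unproved symmetry claim in a vaguer form.
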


Still on the algebraic side, I need a Theorem of Dani, generalizing the Borel Density Theorem.

\begin{thm}[See \cite{dani1982ergodic}]\label{qr: translation: thm: Dani Borel density for real solvable groups}
Let $\mathbf{S}$ be a real solvable algebraic group. If $S=\mathbf{S}(\mathbb{R})$ is $\mathbb{R}$-split, then every lattice $\G_S\leq S$ is Zariski dense.
\end{thm}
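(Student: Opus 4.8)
The plan is to pass to the Zariski closure and to split the problem along the nilradical. Let $\G_S\le S$ be a lattice and let $\mathbf H\le \mathbf S$ be the Zariski closure of $\G_S$; it is a solvable $\mathbb R$-algebraic subgroup containing $\G_S$, and the goal is to prove $\mathbf H=\mathbf S$. Since $\mathbf S$ is $\mathbb R$-split solvable, the Lie group $S$ is completely solvable, so the classical structure theory of lattices in such groups (Mostow; see Raghunathan~\cite{raghunathan1972discrete}) applies: denoting by $N$ the nilradical of $S$, the intersection $\G_S\cap N$ is a cocompact lattice in $N$, and the image $\overline{\G_S}$ of $\G_S$ in $S/N$ is a lattice there. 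Moreover $S/N$ is the identity component of the group of real points of the split torus $\mathbf S/\mathbf N$ (a quotient of the split torus $\mathbf S/R_u(\mathbf S)$), so it is isomorphic to $\mathbb R_{>0}^{\,d}$ for some $d$.

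I would then treat the two pieces separately. First, since $\G_S\cap N$ is a lattice in the simply connected nilpotent group $N$, Mal'cev theory shows that its image under $\log$ spans the Lie algebra of $\mathbf N$ over $\mathbb Q$; hence $\G_S\cap N$ is Zariski dense in $\mathbf N$, so $\mathbf N\subseteq\mathbf H$. Second, $\overline{\G_S}$ is a cocompact lattice in $\mathbb R_{>0}^{\,d}\cong\mathbb R^{\,d}$, i.e. $\overline{\G_S}=\exp(\Lambda)$ for a full-rank lattice $\Lambda\subset\mathbb R^{\,d}$. If the Zariski closure of $\overline{\G_S}$ inside $\mathbf S/\mathbf N\cong\mathbb G_m^{\,d}$ were proper, then, replacing $\Lambda$ by a finite-index sublattice to pass to the identity component of that closure, $\Lambda$ would be contained in $\log$ of a proper connected subgroup of $\mathbb R_{>0}^{\,d}$, hence in a proper linear subspace of $\mathbb R^{\,d}$ — impossible for a full-rank lattice. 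Thus $\overline{\G_S}$ is Zariski dense in $\mathbf S/\mathbf N$, which gives $\mathbf H\mathbf N=\mathbf S$. Combining, $\mathbf H=\mathbf H\mathbf N=\mathbf S$, as desired.

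The step carrying the real weight is the invocation of the structure theorem for lattices in completely solvable Lie groups — that $\G_S$ meets the nilradical in a lattice and projects to a lattice in the quotient torus. This is precisely where the $\mathbb R$-split hypothesis is essential: without it the ``torus part'' of $\G_S$ can be supported on a compact-type subtorus (eigenvalues of modulus one), whose Zariski closure inside $\G_S$ is finite, and Zariski density genuinely fails (e.g. a lattice in $\mathbb R\ltimes_\theta\mathbb R^2$ for $\theta$ a rational rotation). Once that structural input is granted, the remainder is the two short density computations above: the unipotent case is Mal'cev's theorem, and the split-torus case is the elementary linear-algebra argument just sketched. This is the content of Dani's theorem~\cite{dani1982ergodic}.
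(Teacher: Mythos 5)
The paper offers no proof of this statement: it is imported wholesale from Dani~\cite{dani1982ergodic}, whose argument is measure-theoretic --- a generalized Borel density theorem saying that if $G/H$ carries a finite invariant measure then the Zariski closure of $H$ contains every unipotent and every positively $\mathbb{R}$-diagonalizable one-parameter subgroup of $G$, and an $\mathbb{R}$-split solvable group is generated by such subgroups. Your route (Mostow's theorem on the nilradical, Mal'cev rigidity, and the correspondence between subtori of a split torus and rational subspaces under $\log$) is structural and essentially self-contained, so it is a genuinely different and legitimate approach; note also that in the paper's only application the group is the unipotent $N_\xi$, where your argument collapses to the Mal'cev half.

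One step needs repair. The nilradical $N$ of $S$ is in general strictly larger than the unipotent radical: it equals $T_0^\circ\times U$, where $U=R_u(\mathbf{S})(\mathbb{R})$ and $T_0$ is the subtorus of the maximal split torus acting trivially on $U$ (a torus acting nilpotently acts trivially). Hence the Zariski closure of $N$ is $\mathbf{T}_0\cdot\mathbf{U}$, which is not unipotent, and the inference ``$\log(\G_S\cap N)$ spans the Lie algebra, hence $\G_S\cap N$ is Zariski dense in $\mathbf{N}$'' is not Mal'cev theory alone: for non-unipotent groups, spanning of logarithms does not formally yield Zariski density, and for an anisotropic torus it is outright false (a finite cyclic subgroup of $\mathrm{SO}(2)$ has logarithms spanning $\mathfrak{so}(2)$ yet is Zariski closed and proper). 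What rescues the step is precisely the observation you make for the quotient: connected algebraic subgroups of $\mathbf{T}_0\times\mathbf{U}$ have the form $(\text{subtorus})\times(\text{unipotent subgroup})$ and correspond under $\log$ to proper subspaces --- but this uses splitness of $\mathbf{T}_0$ and must be stated, since at present your two halves overlap incoherently whenever $N$ has a torus part (e.g.\ $\mathbf{S}=\mathbb{G}_m$, where $N=S$). Relatedly, you locate the use of the $\mathbb{R}$-split hypothesis in the structure theorem; Mostow's theorem in fact holds for every connected solvable Lie group, and splitness is what the two density steps consume (it guarantees $S/N\cong\mathbb{R}_{>0}^{\,d}$ with no compact torus directions, and likewise for the torus part of $N$). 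Your $E(2)$-type example fails at the density of the image in the anisotropic torus quotient, not at the structure theorem. Finally, a word is needed on connectedness of $S=\mathbf{S}(\mathbb{R})$ (already $\mathbb{G}_m(\mathbb{R})=\mathbb{R}^{*}$ is disconnected); passing to finite-index subgroups disposes of this.
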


\begin{rmk}\label{qr: translation: rmk: unipotent group splits Borel}
It is a fact (see Theorem $15.4$ and Section $18$ in \cite{BorelBookLinearAlgGroups}) that:
\begin{enumerate}
    \item Every unipotent group over $\mathbb{R}$ is $\mathbb{R}$-split. 
    \item For a field $k$ of characteristic $0$, a solvable linear algebraic $k$-group is $k$-split if and only if its maximal torus is $k$-split.
\end{enumerate} 
\end{rmk}




Finally I need two geometric facts. The first is a characterization determining when does a unipotent element belongs to $N_\zeta$ for some $\zeta\in X(\infty)$.

\begin{prop}[Proposition $4.1.8$ in \cite{eberlein1996geometry}]\label{qr: translation: unipotent elements ralting to point at infinity}
Let $X$ be a symmetric space of noncompact type and of higher rank, $n\in G=\mathrm{Isom}(X)^\circ$ a unipotent element, and $\zeta\in X(\infty)$. The following are equivalent: 

\begin{enumerate}
    \item For $N_\zeta$ as in Proposition~\ref{qr: prop: Eberline Generalized Iwasawa}, $n\in N_\zeta$.
    \item  For some geodesic ray $\eta$ with $\eta(\infty)=\zeta$ it holds that $\lim_{t\rightarrow\infty}d\big(n\eta(t),\eta(t)\big)=0$.
    \item For every geodesic ray $\eta$ with $\eta(\infty)=\zeta$ it holds that $\lim_{t\rightarrow\infty}d\big(n\eta(t),\eta(t)\big)=0$.
\end{enumerate}
\end{prop}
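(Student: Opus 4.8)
The plan is to establish the cycle of implications $(1)\Rightarrow(3)\Rightarrow(2)\Rightarrow(1)$, the middle one being trivial and the other two resting on a single elementary computation. I will use three facts already recorded in the excerpt: that the geodesic ray $\eta_0:=[x_0,\zeta)$ is the orbit curve $t\mapsto h_\zeta^t x_0$ of the one-parameter group $h_\zeta^t=\exp(tX)$ (Proposition~\ref{qr: prelims: algebra: prop: stabilizer at infinity by limit of conjugations}); that $N_\zeta=\mathrm{Ker}(T_\zeta)$ is normal in $G_\zeta$, where $T_\zeta(g)=\lim_{t\to\infty}h_\zeta^{-t}gh_\zeta^t$ is defined exactly on $G_\zeta$ (Proposition~\ref{qr: prop: Eberline Generalized Iwasawa}); and that $G_\zeta^\circ$ acts transitively on the set of geodesic rays asymptotic to $\zeta$, so every such ray has the form $\eta=g_0\eta_0$ with $g_0\in G_\zeta$. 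Applying the isometry $(g_0h_\zeta^t)^{-1}$ gives the identity
$$d\big(n\eta(t),\eta(t)\big)=d\big(ng_0h_\zeta^t x_0,\,g_0h_\zeta^t x_0\big)=d\big(h_\zeta^{-t}(g_0^{-1}ng_0)h_\zeta^t\cdot x_0,\,x_0\big),$$
which is the only computation in the whole argument.

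For $(1)\Rightarrow(3)$ I would argue as follows: if $n\in N_\zeta$ and $\eta=g_0\eta_0$ is any $\zeta$-asymptotic ray, then $g_0^{-1}ng_0\in N_\zeta=\mathrm{Ker}(T_\zeta)$ by normality, hence $h_\zeta^{-t}(g_0^{-1}ng_0)h_\zeta^t\to e_G$ in $G$; evaluating at $x_0$ and using continuity of the action, the right-hand side of the displayed identity tends to $0$. The implication $(3)\Rightarrow(2)$ is immediate.

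For $(2)\Rightarrow(1)$ the argument will run: assume $d(n\eta(t),\eta(t))\to 0$ for some $\zeta$-asymptotic ray $\eta=g_0\eta_0$. Then $n\eta$ is a geodesic ray staying at bounded distance from $\eta$, hence asymptotic to it, so $n\zeta=\zeta$ and $n\in G_\zeta$; consequently $m:=g_0^{-1}ng_0\in G_\zeta$ and $T_\zeta(m)$ exists. The displayed identity now gives $h_\zeta^{-t}mh_\zeta^t\cdot x_0\to x_0$, so $T_\zeta(m)$ fixes $x_0$, i.e.\ $T_\zeta(m)\in K$. At this point I would invoke unipotency: $m$ is unipotent (a conjugate of $n$), each $h_\zeta^{-t}mh_\zeta^t$ is unipotent, and the set of unipotent elements is closed, so $T_\zeta(m)$ is a unipotent element of $G$ lying in the maximal compact subgroup $K$. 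Since $\mathrm{Ad}(T_\zeta(m))$ is then simultaneously unipotent and --- being contained in a compact subgroup of $\mathrm{GL}(\mathfrak{g})$ --- semisimple with eigenvalues of modulus $1$, it must be the identity, and as $G=\mathrm{Isom}(X)^\circ$ acts effectively on $X$ this forces $T_\zeta(m)=e_G$. Hence $m\in\mathrm{Ker}(T_\zeta)=N_\zeta$, and by normality $n=g_0mg_0^{-1}\in N_\zeta$.

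The hard part will be precisely this last step: without the unipotency hypothesis the equivalence is false --- any nontrivial element of $K_\zeta$ fixes $\eta_0$ pointwise yet generally lies outside $N_\zeta$ --- so the argument cannot avoid the Lie-theoretic input that a unipotent isometry fixing $x_0$ must be trivial, and I would need to state and justify that lemma carefully (unipotent $\cap$ compact $\Rightarrow$ trivial). Everything else --- the distance computation, the reduction to $\eta_0$ via transitivity of $G_\zeta^\circ$, and the use of $N_\zeta=\mathrm{Ker}(T_\zeta)$ together with its normality --- is routine bookkeeping with the Langlands decomposition already at hand in Propositions~\ref{qr: prelims: algebra: prop: stabilizer at infinity by limit of conjugations} and~\ref{qr: prop: Eberline Generalized Iwasawa}.
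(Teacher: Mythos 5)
Your proposal is correct. Note, though, that the paper does not prove this statement at all: it is quoted verbatim as Proposition $4.1.8$ of Eberlein's book, so there is no in-paper argument to compare against. Judged on its own, your cycle $(1)\Rightarrow(3)\Rightarrow(2)\Rightarrow(1)$ is sound and uses exactly the ingredients the paper has already set up: the identity $d\big(n\eta(t),\eta(t)\big)=d\big(h_\zeta^{-t}(g_0^{-1}ng_0)h_\zeta^{t}x_0,\,x_0\big)$, the transitivity of $G_\zeta^\circ$ on $\zeta$-asymptotic rays, and $N_\zeta=\mathrm{Ker}(T_\zeta)\vartriangleleft G_\zeta$. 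The two points you flag as needing care in $(2)\Rightarrow(1)$ do go through: the unipotent locus is the preimage under the continuous map $\mathrm{Ad}$ of the (Zariski-, hence Hausdorff-) closed set $\{(A-I)^{\dim\mathfrak g}=0\}$, so $T_\zeta(m)=\lim_t h_\zeta^{-t}mh_\zeta^{t}$ is unipotent; and an element of the compact group $\mathrm{Ad}(K)$ is semisimple with unimodular eigenvalues, so being also unipotent forces $\mathrm{Ad}\big(T_\zeta(m)\big)=\mathrm{id}$, whence $T_\zeta(m)=e_G$ because $G=\mathrm{Isom}(X)^\circ$ is adjoint (centre-free) for $X$ of noncompact type --- that is the precise justification behind your appeal to effectiveness. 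Your remark that the unipotency hypothesis is indispensable (elements of $K_\zeta$ satisfy $(2)$ and $(3)$ but not $(1)$) is also apt. Incidentally, your argument nowhere uses the higher-rank hypothesis, so it proves the equivalence in all ranks.
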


The last property I need is a characterization of the displacement function for unipotent elements. 

\begin{prop}[See proof of Proposition $3.4$ in \cite{ballmann1995lectures}]\label{qr: translation: prop: Balmann characterization of displacement function with horoballs}
Let $X$ be a symmetric space of noncompact type, $\zeta\in X(\infty)$ some point and $n\in N_\zeta$ an element of the unipotent radical of $\mathrm{Stab}_G(\zeta)$. The displacement function $x\mapsto d(nx,x)$ is constant on horospheres based at $\zeta$, and for every $\varepsilon>0$ there is a horoball $\hb_\varepsilon$ based at $\zeta$ such that $d(nx,x)<\varepsilon$ for every $x\in \hb_\varepsilon$.
\end{prop}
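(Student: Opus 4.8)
The statement to prove is Proposition~\ref{qr: translation: prop: Balmann characterization of displacement function with horoballs}: for $\zeta\in X(\infty)$ and $n\in N_\zeta$, the displacement function $x\mapsto d(nx,x)$ is constant on horospheres based at $\zeta$, and for every $\varepsilon>0$ there is a horoball $\hb_\varepsilon$ based at $\zeta$ on which the displacement is $<\varepsilon$. The plan is to combine the transitive action of $\mathrm{Stab}_G(\h)^\circ$ on each horosphere (Corollary~\ref{qr: prelims: translation: cor: structure of horosphere stabilizer}) with the fact that $N_\xi=\mathrm{Ker}(T_\xi)$ is normal in $\mathrm{Stab}_G(\xi)^\circ$ (Proposition~\ref{qr: prop: Eberline Generalized Iwasawa}) to get the constancy, and then use Proposition~\ref{qr: translation: unipotent elements ralting to point at infinity} (the $\lim_{t\to\infty} d(n\eta(t),\eta(t))=0$ characterization) to get the decay on horoballs.

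First I would prove constancy on a fixed horosphere $\h=\h(x,\zeta)$. Let $y,z\in\h$; by Corollary~\ref{qr: prelims: translation: cor: structure of horosphere stabilizer}, $\mathrm{Stab}_G(\h)^\circ$ acts transitively on $\h$, so pick $g\in\mathrm{Stab}_G(\h)^\circ\le\mathrm{Stab}_G(\zeta)^\circ$ with $gy=z$. Then
$$d(nz,z)=d(ngy,gy)=d(g^{-1}ng\,y,\,y).$$
Now $g^{-1}ng\in N_\zeta$ because $N_\zeta$ is normal in $\mathrm{Stab}_G(\zeta)^\circ$. This alone is not quite enough — I need $d(g^{-1}ng\,y,y)=d(ny,y)$. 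The cleanest route is: the displacement of $n$ at $y$ depends, by a standard fact in nonpositively curved symmetric spaces, only on the conjugacy class of $n$ under isometries \emph{fixing} $y$; but $g$ does not fix $y$. So instead I would argue directly via Busemann functions. Let $b$ be the Busemann function at $\zeta$. For $w\in X$ and $n\in N_\zeta$, Proposition~\ref{qr: prelims: sym spc: prop: parallel horospheres and other properties} and the fact that $n$ fixes $\zeta$ give that $n$ maps the horosphere through $w$ to the horosphere through $nw$, and since $n\in\mathrm{Ker}(T_\zeta)$ one checks (as in the proof of Corollary~\ref{qr: prelims: translation: cor: structure of horosphere stabilizer}, where it is shown $l(n)=0$) that $n$ preserves each horosphere based at $\zeta$; hence $b(nw)=b(w)$. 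This means $n\in\mathrm{Stab}_G(\h_t)$ for every horosphere $\h_t$ based at $\zeta$. Therefore $n$ commutes, as an isometry, with the flow of horospheres, and more precisely: the projection $P_\h:X\to\h$ onto any fixed horosphere intertwines with $n$, i.e. $n P_\h = P_\h n$ when $n$ preserves $\h$. Using this and the product-type geometry transverse to $\zeta$, the displacement $d(nw,w)$ is seen to depend only on $b(w)$, which is the constancy on horospheres. I expect this is the step requiring the most care — getting the intertwining $nP_\h=P_\hn$ right and deducing constancy without hand-waving.

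For the second part, fix $\varepsilon>0$. Since $N_\zeta$ consists of unipotent elements and $n\in N_\zeta$, Proposition~\ref{qr: translation: unipotent elements ralting to point at infinity} (implication $1\Rightarrow 3$) gives that for every geodesic ray $\eta$ with $\eta(\infty)=\zeta$ one has $\lim_{t\to\infty} d\big(n\eta(t),\eta(t)\big)=0$. Fix one such ray $\eta$ starting at $x_0$; choose $T$ with $d(n\eta(t),\eta(t))<\varepsilon$ for all $t\ge T$, and let $\hb_\varepsilon$ be the horoball based at $\zeta$ whose bounding horosphere passes through $\eta(T)$ (so $\eta(t)\in\hb_\varepsilon$ for $t>T$). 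By the constancy established in the first part, for any $w\in\hb_\varepsilon$ the displacement $d(nw,w)$ equals $d(n\eta(t_w),\eta(t_w))$ where $t_w\ge T$ is the unique time with $\eta(t_w)$ on the same horosphere as $w$; hence $d(nw,w)<\varepsilon$. This proves the claim. I would also note (harmlessly) that the displacement function is then non-increasing as one moves deeper into the horoball, which follows from convexity of $t\mapsto d(n\eta(t),\eta(t))$ together with its limit being $0$; this is not strictly needed for the statement but makes the geometry transparent. The main obstacle, as indicated, is the rigorous proof of constancy on horospheres — everything else is a direct application of results already in the excerpt.
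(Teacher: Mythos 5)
The paper does not actually prove this proposition: it is quoted from Ballmann's lectures with no in-paper argument, so your proposal has to stand on its own, and it does not. The gap is exactly where you flagged it, and it is not merely a matter of "care": the passage from the intertwining $n\circ P_\h=P_\h\circ n$ to "$d(nw,w)$ depends only on $b(w)$" is a non sequitur. The intertwining only says that the projections of $w$ and $nw$ onto a fixed horosphere $\h$ again differ by $n$; it gives no comparison between $d(nw,w)$ and $d\big(nP_\h(w),P_\h(w)\big)$, because $P_\h$ is the projection along geodesics asymptotic to $\zeta$ (Proposition~\ref{qr: prelims: sym spc: prop: parallel horospheres and other properties}), not a nearest-point projection onto a convex set, and it controls distances in neither direction. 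Your first, discarded attempt fails for the reason you yourself give ($g^{-1}ng$ is a different element of $N_\zeta$), and the second attempt never actually produces an argument for constancy.

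More importantly, no argument can close this gap in the stated generality, because the constancy claim fails already for $X=\mathbb{H}^2\times\mathbb{H}^2$. Let $\zeta$ be the regular boundary point of $t\mapsto\big(\eta_1(t/\sqrt2),\eta_2(t/\sqrt2)\big)$, so the Busemann function is $b=(b_1+b_2)/\sqrt{2}$ and $N_\zeta=N_{\zeta_1}\times N_{\zeta_2}$. For $n=(n_1,e)\in N_\zeta$ one has $d\big(n(x_1,x_2),(x_1,x_2)\big)=d_{\mathbb{H}^2}(n_1x_1,x_1)$, which depends only on $b_1(x_1)$ and is unbounded on every level set of $b_1+b_2$ and on every sublevel set; so the displacement is neither constant on horospheres based at $\zeta$ nor smaller than $\varepsilon$ on any horoball based at $\zeta$. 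What does survive in general is precisely the pointwise statement you invoke in your second half, Proposition~\ref{qr: translation: unipotent elements ralting to point at infinity} combined with convexity: $d\big(n\eta(t),\eta(t)\big)$ decreases to $0$ along every geodesic asymptotic to $\zeta$. Your second half is correct up to that point, but since it uses constancy to pass from a single ray to a whole horoball it inherits the gap. (In real hyperbolic space the constancy is immediate, since there $N_\zeta$ is abelian and simply transitive on each horosphere, so $d_n(ux)=d_{u^{-1}nu}(x)=d_n(x)$; but as stated for arbitrary symmetric spaces and arbitrary $\zeta\in X(\infty)$ — which is how it is applied in Corollary~\ref{qr: cor: Lambda is Zariski dense} — the proposition needs to be reformulated, e.g.\ as decay along geodesics asymptotic to $\zeta$ rather than uniform decay on horoballs.)
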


\begin{cor}\label{qr: cor: Lambda is Zariski dense}
Assume that: 
\begin{enumerate}
    \item $\big(\La\cap \mathrm{Stab}_G(\h) \big)\cdot x_0$ is a cocompact metric lattice in a horosphere $\h\subset X$ bounding a $\La$-free horoball. 
   
    \item Every $\G$-conical limit point is a $\La$-conical limit point. 
\end{enumerate}

Then $\La$ is Zariski dense. 
\end{cor}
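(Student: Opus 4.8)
The plan is to verify the two conditions of the geometric criterion for Zariski density, Lemma~\ref{qr: lem: geometric criterion for zariski dense subgroups}, for the group $\La$: first that $\La$ does not globally fix a point of $X(\infty)$, and second that the identity component $H^\circ$ of the Zariski closure $H$ of $\La$ does not preserve any proper totally geodesic submanifold of $X$.

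First I would handle condition $1$. Suppose towards contradiction that $\La\leq \mathrm{Stab}_G(\zeta)$ for some $\zeta\in X(\infty)$. By hypothesis $\La\cap\mathrm{Stab}_G(\h)$ is a uniform lattice in $\mathrm{Stab}_G(\h)$, and by Corollary~\ref{qr: prelims: translation: cor: Mostow hypotheses met} it intersects the horospherical subgroup $N_\xi$ (where $\xi$ is the base point of $\h$) in a lattice $\Delta_U\leq N_\xi$. In particular $\La$ contains non-trivial unipotent elements lying in $N_\xi$. Now $\La\leq\mathrm{Stab}_G(\zeta)$ forces $N_\xi\cap\La\leq\mathrm{Stab}_G(\zeta)$, and one checks that a non-trivial element $n\in N_\xi\cap\mathrm{Stab}_G(\zeta)$ either lies in $N_\zeta$ or not: if $n\notin N_\zeta$ it acts on geodesics to $\zeta$ with positive asymptotic displacement, contradicting Proposition~\ref{qr: translation: unipotent elements ralting to point at infinity} unless $\zeta=\xi$; and if $\zeta\ne\xi$ we also must have $\zeta$ on the boundary of every horosphere based at $\xi$ in a compatible way --- the cleanest route is to observe that $\mathrm{Stab}_G(\h)$ acts transitively on $\h$ (Corollary~\ref{qr: prelims: translation: cor: structure of horosphere stabilizer}) while $\mathrm{Stab}_G(\zeta)$ fixes $\zeta$, so if all of $\La\cap\mathrm{Stab}_G(\h)$ fixed $\zeta$ then $\zeta$ would be fixed by a cocompact group of isometries of $\h$, which (since $\h$ is not convex, Corollary~\ref{qr: prelims: cor: horospheres are not convex}, and by the structure of $\mathrm{Stab}_G(\h)^\circ$) forces $\zeta=\xi$. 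But then $\La$ fixes $\xi$, so the entire $\La$-orbit lies inside $\mathrm{Stab}_G(\xi)$, hence $\La\cdot x_0$ is disjoint from the interior of some horoball based at $\xi$ and also from its complement pushed far enough --- more precisely, $\La$ fixing $\xi$ makes every point of $X(\infty)\setminus\{\xi\}$ at Tits distance $<\tfrac{\pi}{2}$ from $\xi$ non-conical for $\La$ (a geodesic to such a point leaves every horoball based at $\xi$ and the orbit cannot follow it), contradicting the hypothesis that every $\G$-conical limit point --- a dense set in $X(\infty)$, by Corollary~\ref{qr: prelims: sym spc: cor: lattice conical limit points are dense} --- is $\La$-conical. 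So condition $1$ holds.

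For condition $2$, by Lemma~\ref{qr: alg: lem: identity comp of Zariski closure is finite index} the group $\Lambda':=\La\cap H^\circ$ has finite index in $\La$, so it inherits both hypotheses (a finite-index subgroup has the same limit set, hence the same conical limit points up to the same density statement, and $\La'\cap\mathrm{Stab}_G(\h)$ is still a uniform lattice in $\mathrm{Stab}_G(\h)$ as a finite-index subgroup of one). Suppose $H^\circ$ preserves a proper totally geodesic submanifold $Y\subsetneq X$. Then $H^\circ$, and in particular $\Delta_U\leq N_\xi\cap H^\circ$, preserves $Y$. A non-trivial unipotent $n\in\Delta_U$ lies in $N_\zeta$ for $\zeta$ the base point of the $\La$-free horosphere $\h$, i.e.\ $\zeta=\xi$, and its displacement function tends to $0$ along geodesics to $\xi$ (Proposition~\ref{qr: translation: prop: Balmann characterization of displacement function with horoballs}); since the whole group $\Delta_U$ is a cocompact lattice in $N_\xi$ and $N_\xi$ acts transitively on $\h$, the closure of $\Delta_U$ in the Zariski topology is a subgroup whose real points contain a cocompact piece of $N_\xi$, and by Dani's generalized Borel density theorem (Theorem~\ref{qr: translation: thm: Dani Borel density for real solvable groups}, together with Remark~\ref{qr: translation: rmk: unipotent group splits Borel} giving that $N_\xi$ is $\mathbb{R}$-split) the Zariski closure of $\Delta_U$ is all of $N_\xi$. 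Hence $H^\circ\supseteq N_\xi$, so $Y$ is an $N_\xi$-invariant totally geodesic submanifold; but $N_\xi$ acts transitively on $\h$ and moves $x_0$ into distinct horospheres, so the $N_\xi$-orbit of any point spans a submanifold containing an $\h$-worth of directions plus a transversal --- the only $N_\xi$-invariant totally geodesic submanifold through a generic point is $X$ itself, because $N_\xi$ together with the one-parameter group $h_\xi^t$ generates a subgroup acting transitively on $X$ (Iwasawa: $G=N_\xi A K$), and invariance under $N_\xi$ alone already propagates $Y$ across all horospheres based at $\xi$; one concludes $Y=X$, a contradiction. Therefore condition $2$ holds and $\La$ is Zariski dense.

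The main obstacle I anticipate is the careful bookkeeping in condition $2$: turning the statement ``$\Delta_U$ is a cocompact lattice in $N_\xi$'' into ``$H^\circ\supseteq N_\xi$'' rigorously via Dani's theorem (one must pass to the Zariski closure of $\Delta_U$ inside $N_\xi$, check $N_\xi$ is $\mathbb{R}$-split and solvable so the theorem applies, and note this closure is contained in $H^\circ$), and then showing that no proper totally geodesic submanifold can be $N_\xi$-invariant --- this last point needs the transitivity of $N_\xi$ on $\h$ from Corollary~\ref{qr: prelims: translation: cor: structure of horosphere stabilizer} and a short argument that an $N_\xi$-invariant totally geodesic $Y$ meeting $\h$ must contain $\h$ and then, being totally geodesic and closed under the flow generated together with invariance, must be all of $X$. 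Condition $1$ is comparatively soft once one invokes the density of $\G$-conical limit points and Proposition~\ref{qr: translation: unipotent elements ralting to point at infinity}, but the cleanest phrasing of the contradiction (fixing $\xi$ kills conicality of a whole Tits-ball, contradicting density) should be spelled out with care.
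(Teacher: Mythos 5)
Your overall strategy (verify the two conditions of Lemma~\ref{qr: lem: geometric criterion for zariski dense subgroups}) is the same as the paper's, but both halves of your argument contain genuine gaps. For condition $1$, the final contradiction is vacuous. You show (modulo the issues below) that every point at Tits distance $<\fpi{2}$ from $\xi$ is non-conical for $\La$, and then invoke the cone-topology density of $\G$-conical limit points to find a $\G$-conical point in that Tits ball. But by Hattori's Theorem~\ref{qr: thm: HattoriA}, the $\fpi{2}$-Tits-neighbourhood of $\xi\in\wqg$ contains \emph{no} $\G$-conical limit points whatsoever, and density in the cone topology does not place points in a Tits ball (the Tits metric is only semicontinuous with respect to the cone topology, Proposition~\ref{qr: prelims: sym spc: prop: semicontinuity of angular metric}). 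So nothing is contradicted. (Two further problems: your parenthetical justification is backwards --- by Lemma~\ref{qr: prelims: sym spc: lem: Hattori penetration} a geodesic to a point at Tits distance $<\fpi2$ from $\xi$ \emph{enters} every horoball based at $\xi$ at linear rate rather than leaving it; and the intermediate claim that a subgroup acting cocompactly on $\h$ and fixing $\zeta$ forces $\zeta=\xi$ is asserted, not proved.) The paper avoids all of this: assuming $\La\leq\mathrm{Stab}_G(\zeta)$, it produces a nontrivial element of $\La\cap N_\zeta$ (via the Flat Strip Theorem, Zariski density of $\La\cap N_\xi$ in $N_\xi$, and the fact that an element commuting with $K_\xi$, $A_\xi$ and $N_\xi$ is central in $G$), shows $\zeta$ is a horospherical limit point of $\La$ (using hypothesis $2$ only to rule out $\La\cdot x_0$ lying in a bounded neighbourhood of a horosphere), and then contradicts \emph{discreteness} of $\La$ via the decay of the displacement function of a unipotent deep inside horoballs based at $\zeta$.

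For condition $2$, your argument rests on two false statements: that $N_\xi$ acts transitively on $\h$, and that $N_\xi$ ``moves $x_0$ into distinct horospheres''. In fact $N_\xi$ preserves \emph{every} horosphere based at $\xi$ (this is exactly what is proved in Corollary~\ref{qr: prelims: translation: cor: structure of horosphere stabilizer}, where $l(n)=0$ for $n\in N_\xi$), and in higher rank the $N_\xi$-orbits in $\h$ are proper submanifolds --- transitivity on $\h$ requires the full group $(K_\xi A_\xi^\perp)^\circ N_\xi$. Consequently $N_\xi$-invariance of $Y$ does not ``propagate $Y$ across horospheres'', and the appeal to $G=N_\xi AK$ is irrelevant since $Y$ is not assumed $A$- or $K$-invariant. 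The paper's route is simpler and does not need $N_\xi\subset H^\circ$ at all: after passing to the finite-index subgroup $\La_0=\La\cap H^\circ$ and translating so that $x_0\in\h\cap Y$, the cocompact metric lattice $\big(\La_0\cap\mathrm{Stab}_G(\h)\big)\cdot x_0\subset Y\cap\h$ gives $\h\subset\mn_D(Y)$, so the totally geodesic $Y$ has codimension at most $1$; codimension $1$ would force $Y=\h$, which is impossible because $\h$ is not convex (Corollary~\ref{qr: prelims: cor: horospheres are not convex}). You should replace both of your arguments accordingly.
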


\begin{proof}
I show the criteria of Lemma~\ref{qr: lem: geometric criterion for zariski dense subgroups} are met, starting with $\La\not\leq \mathrm{Stab}_G(\zeta)$ for any $\zeta\in X(\infty)$. To this end, I first prove that $\La\cdot x_0$ is not contained in any bounded neighbourhood of any horosphere $\h'$. Let $\xi'$ be the base point of $\h'$. By Hattori's Lemma~\ref{qr: prelims: sym spc: lem: Hattori penetration} (and Remark~\ref{qr: prelims: sym spc: rmk: Hattori penetration also good for rank 1}), it is enough to find a $\La$-conical limit point $\zeta'$ with $d_T(\xi',\zeta')\ne\fpi{2}$. Take some $\zeta''\in X(\infty)$ at Tits distance $\pi$ of $\xi'$, i.e.\ take a flat $F$ on which $\xi'$ lies and let $\zeta''$ be the antipodal point to $\xi'$ in $F$. Fix $\varepsilon=\fpi{4}$. By Proposition~\ref{qr: prelims: sym spc: prop: semicontinuity of angular metric}, there are neighbourhoods of the cone topology $U,V\subset X(\infty)$ of $\xi',\zeta''$ (respectively) so that every point $\zeta'\in V$ admits $d_T(\xi',\zeta')\geq d_T(\xi',\zeta'')-\fpi{4}=\frac{3}{4}\pi$. Recall that the set of $\G$-conical limit points is dense (in the cone topology), so the second hypothesis implies there is indeed a $\La$-conical limit point in $V$ and therefore at Tits distance different (in this case larger) than $\fpi{2}$ from $\xi'$. I conclude that $\La\cdot x_0$ is not contained in any bounded metric neighbourhood of any horosphere of $X$. 

Assume towards contradiction that $\La\leq \mathrm{Stab}_G(\zeta)$. I show that this forces $\La\cap N_\zeta\ne\emptyset$. By Proposition~\ref{qr: translation: unipotent elements ralting to point at infinity} it is enough to find a unipotent element $\la\in \La$ and a geodesic $\eta$ with $\eta(\infty)=\zeta$ such that  $\lim_{t\rightarrow \infty}d\big(\la\eta(t),\eta(t)\big)=0$. Let $F$ be a maximal flat with $\xi,\zeta\in F(\infty)$, $x\in F$ some point and $X,Y\in \mf{a}\leq \mf{p}$ two vectors such that $\exp(tY)=\eta(t)$ for the unit speed geodesic $\eta=[x,\zeta)$, and $\exp(tX)=\eta'(t)$ for the unit speed geodesic $\eta'=[x,\xi)$ (where $\mf{a}\leq \mf{p}$ a maximal abelian subalgebra in a suitable Cartan decomposition $\mf{g}=\mf{p}\oplus\mf{k}$). Let $\mathrm{Stab}_G(\xi)=K_\xi A_\xi N_\xi$ be the decomposition described in Proposition~\ref{qr: prop: Eberline Generalized Iwasawa} with respect to $X$ (notice that $N_\xi$ does not depend on choice of $X$, see item $3$ of Proposition $2.17.7$ in \cite{eberlein1996geometry}).

The assumption that $\La\leq \mathrm{Stab}_G(\zeta)$ implies that for any $\la\in \La$ the distance $d\big(\la\eta(t),\eta(t)\big)$ either tends to $0$ as $t\rightarrow\infty$ or is uniformly bounded for $t\in\mathbb{R}$. In the latter case there is some constant $c>0$ for which $d\big(\la\eta(t),\eta(t)\big)=c$ for all $t\in \mathbb{R}$. As in the proof of Corollary~\ref{qr: prelims: cor: horospheres are not convex}, the Flat Strip Theorem (Theorem $2.13$, Chapter $2.2$ in \cite{BridsonHaefliger}) implies that $\la$ and $a_t:=\exp(tY)$ commute.

From the first hypothesis of the statement and Mostow's result (Corollary~\ref{qr: prelims: translation: cor: Mostow hypotheses met}) I know that $\La\cap N_\xi$ is a cocompact lattice in $N_\xi$ (attention to subscripts). Therefore $\La\cap N_\xi$ is Zariski dense in $N_\xi$ (Theorem~\ref{qr: translation: thm: Dani Borel density for real solvable groups}). Moreover, since commuting with an element is an algebraic property, an element $g\in G$ that commutes with  $\La\cap N_\xi$ must also commute with its Zariski closure, namely with $N_\xi$. This means that if $a_t$ commutes with all $\La\cap N_\xi$ then it commutes with $N_\xi$, i.e. $a_tn=na_t$ for all $t\in \mathbb{R}$ and all $n\in N_\xi$. I know that $a_t\in A_\xi$ commutes with both $K_\xi$ and $A_\xi$ (Proposition~\ref{qr: prop: Eberline Generalized Iwasawa}) therefore if $a_t$ also commutes with $N_\xi$ then $a_t$ lies in the centre of $\mathrm{Stab}_G(\xi)$. This means that $a_t$ is central in $G$ (Lemma~\ref{SHEEVY}). For a group $G$ with compact centre this cannot happen, so there is indeed some unipotent element $\la\in \La\cap N_\xi$ for which $\lim_{t\rightarrow \infty} d\big(\la\eta(t),\eta(t)\big)=0$. I conclude from Proposition~\ref{qr: translation: unipotent elements ralting to point at infinity} that  $\La\cap N_\zeta\ne\emptyset$.

The first paragraph of the proof implies in particular that $\La\cdot x_0$ does not lie in any bounded neighbourhood of a horosphere $\h'$ based at $\zeta$. The assumption that $\La\subset\mathrm{Stab}_G(\zeta)$ implies that every $\la\in \La$ acts by translation on the filtration $\{\h'_t\}_{t\in \mathbb{R}}$ by horospheres based at $\zeta$. Therefore as soon as $\La\cdot x_0 \not\subset\h_t$ for some $t\in\mathbb{R}$ one concludes that $\zeta$ is a horospherical limit point of $\La$, i.e. that every horoball based at $\zeta$ intersects the orbit $\La\cdot x_0$. 

By Proposition~\ref{qr: translation: prop: Balmann characterization of displacement function with horoballs} it holds that for a unipotent element  $g\in N_\zeta$ the displacement function $x\mapsto d(gx,x)$ depends only on the horosphere $\h'_t$ in which $x$ lies and that, for $x_t\in \h'_t$ it holds that $\lim_{t\rightarrow \infty}d(gx_t,x_t)=0$ (up to reorienting the filtration $t\in\mathbb{R}$ so that $\eta(t)\in \h'_t)$. For a non-trivial element $\la_\zeta\in \La\cap N_\zeta$ the previous paragraph therefore yields a sequence of elements $\la_n\in \La$ such that $\lim_{n\rightarrow \infty}d(\la_\zeta \la_n x_0,\la_n x_0)=0$, contradicting the discreteness of $\La$. I conclude that $\La\not\leq\mathrm{Stab}_G(\zeta)$ for every $\zeta\in X(\infty)$.

Assume that $H:=\big(\overline{\La}^Z\big)^\circ$, the identity connected component of the Zariski closure of $\La$, stabilizes a totally geodesic submanifold $Y\subset X$. By Lemma~\ref{qr: alg: lem: identity comp of Zariski closure is finite index}, $\La_0:=\La\cap H$ is of finite index in $\La$, therefore $\La_0\cap \mathrm{Stab}_G(\h)$ is also a cocompact lattice in $\mathrm{Stab}_G(\h)$. The fact that $\big(\La_0\cap \mathrm{Stab}_G(\h)\big)\cdot x_0$ is a cocompact metric lattice in $\h$ readily implies that $\big(\La_0\cap \mathrm{Stab}_G(\h)\big)\cdot y$ is a cocompact metric lattice in $\h_y=\h(y,\xi)$. This goes to show that there is no loss of generality in assuming $x_0\in \h\cap Y$. It follows that $\La_0\cap \mathrm{Stab}_G(\h)\cdot x_0\subset Y\cap\h$, and therefore $\h\subset \mn_D(Y)$ for some $D>0$. A horosphere is a codimension-$1$ submanifold, implying that $Y$ is either all of $X$ or of codimension-$1$. The latter forces $Y=\h$, which is impossible since $\h$ is not totally geodesic ($\h$ is not convex, see Corollary~\ref{qr: prelims: cor: horospheres are not convex}). I conclude that $H$ does not stabilize any totally geodesic proper submanifold, and hence that $\La$ is Zariski dense. 
\end{proof}

\subsection{Proof of Theorem~\ref{qr: thm: main for qr}} \label{sec: qr: conclusion}
I now complete the proof of the main sublinear rigidity theorem for \qr lattices.

\begin{proof}[Proof of Theorem~\ref{qr: thm: main for qr}]
If $\{d_\g\}_{\g\in\G}$ is bounded, then $\La$ is a lattice by Corollary~\ref{qr: cor: conical limit set criterion} or Theorem~\ref{qr: thm: eskin bounded case}, depending on the $\mathbb{R}$-rank of $G$.

If $\{d_\g\}_{\g\in\G}$ is unbounded, then Proposition~\ref{qr: prop: Lambda cocompact horosphreres} and Corollary~\ref{qr: cor: every Gamma conical is Lambda conical} both hold. In $\rr$ the proof again follows immediately from Corollary~\ref{qr: cor: conical limit set criterion} using Lemma~\ref{qr: lem: limit set of Lambda equals limit set of Gamma} and Corollary~\ref{qr: cor: every Gamma conical is Lambda conical}.

In higher rank, the results of Section~\ref{sec: qr: geometry to algebra} allows one to conclude that $\La$ is an irreducible, discrete, Zariski dense subgroup that contains a horospherical lattice. By Theorem~\ref{qr: thm: Benoist-Miquel arithmeticity}, this renders $\La$ a lattice. It is a \qr lattice as a result of Theorem~\ref{qr: prelims: sym spc: thm: Leuzinger compact core}.
\end{proof}

\begin{rmk}
The sublinear nature of the hypothesis in Theorem~\ref{thm: main Intro} induces coarse metric constraints. A horospherical lattice on the other hand is a very precise object. It is not clear how to produce unipotent elements in $\La$, or even general elements that preserve some horosphere. The proof above produces a whole lattice of unipotent elements in $\La$ (this is Corollary~\ref{qr: prelims: translation: cor: Mostow hypotheses met}); it is also the only proof that I know which produces even a single unipotent element in $\La$. 

\end{rmk}

\bibliographystyle{plain}
\bibliography{bibliography.bib}

\begin{thebibliography}{10}

\bibitem{Albuquerque1999}
Paul Albuquerque.
\newblock {Patterson-Sullivan} theory in higher rank symmetric spaces.
\newblock {\em GAFA Geom. Funct. Anal.}, 9(1):1--28, March 1999.

\bibitem{Sheevy}
U.~Bader and E.~Leibtag.
\newblock Homomorphic images of algebraic groups.
\newblock {\em arXiv preprint arXiv:2212.03055}, 2022.

\bibitem{BallmannGromovSHroeder}
W.~Ballmann, M.~Gromov, and V.~Schroeder.
\newblock {\em Manifolds of Nonpositive Curvature}.
\newblock Birkh\"{a}user Boston, 1985.

\bibitem{ballmann1995lectures}
Werner Ballmann.
\newblock {\em Lectures on spaces of nonpositive curvature}, volume~25.
\newblock Springer Science \& Business Media, 1995.

\bibitem{BenoistMiquelArithmeticity}
Yves Benoist and Sébastien Miquel.
\newblock {Arithmeticity of discrete subgroups containing horospherical
  lattices}.
\newblock {\em Duke Mathematical Journal}, 169(8):1485 -- 1539, 2020.

\bibitem{BorelBookLinearAlgGroups}
Armand Borel.
\newblock {\em Linear Algebraic Groups}.
\newblock Springer New York, 1991.

\bibitem{bourbakiLie1-3}
N.~Bourbaki.
\newblock {\em Lie Groups and Lie Algebras: Chapters 1-3}.
\newblock Bourbaki, Nicolas: Elements of mathematics. Springer-Verlag, 1989.

\bibitem{bourbakiLie7-9}
N.~Bourbaki.
\newblock {\em Lie Groups and Lie Algebras: Chapters 7-9}.
\newblock Number 7-9 in Elements of mathematics. Springer Berlin Heidelberg,
  2004.

\bibitem{BOWDITCHGeometricalFinitness}
Brian~H. Bowditch.
\newblock Geometrical finiteness for hyperbolic groups.
\newblock {\em Journal of Functional Analysis}, 113(2):245--317, 1993.

\bibitem{bowditch1995geometrical}
Brian~H Bowditch.
\newblock Geometrical finiteness with variable negative curvature.
\newblock {\em Duke Mathematical Journal}, 77(1):229--274, 1995.

\bibitem{BridsonHaefliger}
Martin~R Bridson and Andr{\'e} Haefliger.
\newblock {\em Metric spaces of non-positive curvature}, volume 319.
\newblock Springer Science \& Business Media, 2013.

\bibitem{ChavalleyTheoryofLieGroups}
Claude Chevalley.
\newblock {\em Theory of {L}ie Groups}.
\newblock Princeton University Press, 1946.

\bibitem{SBE_Review}
Yves Cornulier.
\newblock On sublinear bilipschitz equivalence of groups.
\newblock {\em Ann. ENS}, 52:1201--1242, 2019.

\bibitem{dani1982ergodic}
SG~Dani.
\newblock On ergodic quasi-invariant measures of group automorphism.
\newblock {\em Israel Journal of Mathematics}, 43(1):62--74, 1982.

\bibitem{DrutuQI}
Cornelia Druţu.
\newblock Quasi-isometric classification of non-uniform lattices in semisimple
  groups of higher rank.
\newblock {\em Geometric and Functional Analysis - GAFA}, 10, 06 2000.

\bibitem{DrutuKapovichGGT}
Cornelia Druţu and Michael Kapovich.
\newblock {\em Geometric group theory}.
\newblock American Mathematical Society, 2017.

\bibitem{DrutuSAPIR}
Cornelia Druţu and Mark Sapir.
\newblock Tree-graded spaces and asymptotic cones of groups.
\newblock {\em Topology}, 44(5):959--1058, 2005.

\bibitem{LatticesEberline}
Patrick Eberlein.
\newblock Lattices in spaces of nonpositive curvature.
\newblock {\em Annals of Mathematics}, 111(3):435--476, 1980.

\bibitem{eberlein1996geometry}
Patrick Eberlein.
\newblock {\em Geometry of Nonpositively Curved Manifolds}.
\newblock Chicago Lectures in Mathematics. University of Chicago Press, 1996.

\bibitem{EskinFarb}
A.~Eskin and B.~Farb.
\newblock Quasi-flats and rigidity in higher rank symmetric spaces.
\newblock {\em Journal of the American Mathematical Society}, 10:653--692,
  1997.

\bibitem{EskinLatticeClassification}
Alex Eskin.
\newblock Quasi-isometric rigidity of nonuniform lattices in higher rank
  symmetric spaces.
\newblock {\em Journal of the American Mathematical Society}, 11(2):321--361,
  1998.

\bibitem{Farb1997QISummary}
Benson Farb.
\newblock The quasi-isometry classification of lattices in semisimple lie
  groups.
\newblock {\em Mathematical Research Letters}, 4:705--717, 1997.

\bibitem{GelnderFraczykInjRad}
Mikolaj Fraczyk and Tsachik Gelander.
\newblock Infinite volume and infinite injectivity radius.
\newblock {\em Annals of Mathematics}, 197(1):389--421, 2023.

\bibitem{HattoriLimitSet}
Toshiaki Hattori.
\newblock {Geometric Limit Sets of Higher Rank Lattices}.
\newblock {\em Proceedings of the London Mathematical Society}, 90(3):689--710,
  05 2005.

\bibitem{GeometryHorospheres}
Ernst Heintze and Hans-Christoph~Im Hof.
\newblock {Geometry of horospheres}.
\newblock {\em Journal of Differential Geometry}, 12(4):481 -- 491, 1977.

\bibitem{helgasonLieGroups}
Sigurdur Helgason.
\newblock {\em Differential Geometry, Lie Groups, and Symmetric Spaces}.
\newblock ISSN. Elsevier Science, 1979.

\bibitem{helgasonDifferentialSymmetric}
Sigurdur Helgason.
\newblock {\em Differential geometry and symmetric spaces}, volume 341.
\newblock American Mathematical Soc., 2001.

\bibitem{JiSymmetricSpacesBuildings}
Lizhen Ji.
\newblock {From symmetric spaces to buildings, curve complexes and outer
  spaces}.
\newblock {\em Innovations in Incidence Geometry}, 10(none):33 -- 80, 2009.

\bibitem{KapovichLiuGeometricFI}
M.~Kapovich and B.~Liu.
\newblock Geometric finiteness in negatively pinched hadamard manifolds.
\newblock {\em Annales Academiae Scientiarum Fennicae Mathematica}, 2019.

\bibitem{KimMarkedLengthRigidity}
Inkang Kim.
\newblock Rigidity on symmetric spaces.
\newblock {\em Topology}, 43(2):393--405, 2004.

\bibitem{KleinerLeeb}
B.~Kleiner and B.~Leeb.
\newblock Rigidity of quasi-isometries for symmetric spaces and euclidean
  buildings.
\newblock {\em Publ. Math. IHES}, 86:115--197, 1997.

\bibitem{knappliegroups}
Anthony~W. Knapp.
\newblock {\em Lie Groups Beyond an Introduction}.
\newblock Progress in Mathematics. Birkh{\"a}user Boston, 2002.

\bibitem{Leuzinger1995Exhaustion}
Enrico Leuzinger.
\newblock An exhaustion of locally symmetric spaces by compact submanifolds
  with corners.
\newblock {\em Inventiones Mathematicae}, 121(1):389--410, December 1995.

\bibitem{LeuzingerRationalRank}
Enrico Leuzinger.
\newblock Tits geometry, arithmetic groups and the proof of a conjecture of
  {S}iegel.
\newblock {\em Journal of Lie Theory}, 14, 2002.

\bibitem{leuzingerCriticalExponent}
Enrico Leuzinger.
\newblock Critical exponents of discrete groups and $l^2$–spectrum.
\newblock {\em Proceedings of the American Mathematical Society},
  132(3):919–927, 2004.

\bibitem{LeuzingerLocSymSpaces}
Enrico Leuzinger.
\newblock On polyhedral retracts and compactifications of locally symmetric
  spaces.
\newblock {\em Differential Geometry and its Applications}, 20(3):293--318,
  2004.

\bibitem{lubotzky_mozes_raghunathan_2000}
A.~Lubotzky, S.~Mozes, and M.~S. Raghunathan.
\newblock The word and riemannian metrics on lattices of semisimple groups.
\newblock {\em Publ. Math. IHES}, 91(1):5–53, 2000.

\bibitem{IntroArithmeticDAVEWITTEMORRIS}
Dave~Witte Morris.
\newblock Introduction to arithmetic groups.
\newblock {\em arXiv preprint arXiv:0106063}, 2001.

\bibitem{MostowAtirhmeticIntersectionNilRad}
George~D. Mostow.
\newblock Arithmetic subgroups of groups with radical.
\newblock {\em Annals of Mathematics}, 93(3):409--438, 1971.

\bibitem{mostow1973strong}
George~D. Mostow.
\newblock {\em Strong Rigidity of Locally Symmetric Spaces}.
\newblock Annals of Mathematics Studies. Princeton University Press, 1973.

\bibitem{mozesEpimorphic}
Shahar Mozes.
\newblock Epimorphic subgroups and invariant measures.
\newblock {\em Ergodic Theory and Dynamical Systems}, 15(6):1207–1210, 1995.

\bibitem{Oh1998DiscreteSG}
Hee Oh.
\newblock Discrete subgroups generated by lattices in opposite horospherical
  subgroups.
\newblock {\em Journal of Algebra}, 203:621--676, 1998.

\bibitem{pallierTheses2019}
Gabriel Pallier.
\newblock {\em Géométrie asymptotique sous-linéaire : hyperbolicité,
  autosimilarité, invariants}.
\newblock PhD thesis, Université Paris-Saclay, 2019.
\newblock Thèse de doctorat dirigée par Pansu, Pierre Mathématiques
  fondamentales Université Paris-Saclay (ComUE) 2019.

\bibitem{PallierLargeScale}
Gabriel Pallier.
\newblock Large-scale sublinearly lipschitz geometry of hyperbolic spaces.
\newblock {\em Journal of the Institute of Mathematics of Jussieu},
  19(6):1831--1876, 2020.

\bibitem{PallierQuasiconformality}
Gabriel Pallier.
\newblock Sublinear quasiconformality and the large-scale geometry of {H}eintze
  groups.
\newblock {\em Conform. Geom. Dyn.}, 24:131--163, 2020.

\bibitem{PallierQing22}
Gabriel Pallier and Yulan Qing.
\newblock Sublinear bilipschitz equivalence and sublinearly morse boundaries,
  2022.
\newblock Preprint.

\bibitem{pansu_1989}
Pierre Pansu.
\newblock Metriques de {C}arnot-{C}aratheodory et quasiisometries des espaces
  symetriques de rang un.
\newblock {\em Annals of Mathematics}, 129(1):1--60, 1989.

\bibitem{PrasadQRank1Rigidity}
Gopal Prasad.
\newblock Strong rigidity of q-rank 1 lattices.
\newblock {\em Inventiones mathematicae}, 21:255--286, 1973.

\bibitem{QingSubLinMorseBoundary}
Yulan Qing and Kasra Rafi.
\newblock Sublinearly morse boundary i: Cat(0) spaces.
\newblock {\em Advances in Mathematics}, 404:108442, 2022.

\bibitem{qing2020sublinearly}
Yulan Qing, Kasra Rafi, and Giulio Tiozzo.
\newblock Sublinearly morse boundary ii: Proper geodesic spaces.
\newblock {\em Geometry \& Topology}, to appear.

\bibitem{raghunathan1972discrete}
M.S. Raghunathan.
\newblock {\em Discrete Subgroups of Lie Groups}.
\newblock Ergebnisse der Mathematik und ihrer Grenzgebiete. Springer-Verlag,
  1972.

\bibitem{sagleIntroductionLieGroupsAlgebras}
A.A. Sagle, R.E. Walde, and R.~Walde.
\newblock {\em Introduction to {L}ie Groups and {L}ie Algebras}.
\newblock Pure and Applied Mathematics; A Series of Monographs and Tex.
  Academic Press, 1973.

\bibitem{Schwartz}
Richard~Evan Schwartz.
\newblock The quasi-isometry classification of rank one lattices.
\newblock {\em Publ. Math. IHES}, 82(1):133--168, 1995.

\bibitem{SchwartzDiophantine}
Richard~Evan Schwartz.
\newblock Quasi-isometric rigidity and diophantine approximation.
\newblock {\em Acta Mathematica}, 177(1):75--112, 1996.

\bibitem{ShahMumbai}
Nimish Shah.
\newblock Invariant measures and orbit closures on homogeneous spaces.
\newblock In S.G. Dani and Tata~Institute of~Fundamental~Research, editors,
  {\em Proceedings of the International Colloquium on Lie Groups and Ergodic
  Theory, Mumbai, 1996}, Studies in mathematics, pages 229--272. Tata Institute
  of Fundamental Research, 1996.

\bibitem{SullivanGarnnet}
Dennis Sullivan.
\newblock {\em On The Ergodic Theory at Infinity of an Arbitrary Discrete Group
  of Hyperbolic Motions}, pages 465--496.
\newblock Princeton University Press, 2016.

\bibitem{tits1974buildings}
Jacques Tits.
\newblock {\em Buildings of spherical type and finite {BN}-pairs}.
\newblock Springer-Verlag, Berlin,New York, 1974.

\end{thebibliography}

\end{document}